\newif\ifmsbmloaded@
\def\loadmsbm{\msbmloaded@true
  \font\tenmsb=msbm10 scaled 1\@ptsize00
  \font\sevenmsb=msbm7 scaled 1\@ptsize00
  \font\fivemsb=msbm5 scaled 1\@ptsize00
  \alloc@8\fam\chardef\sixt@@n\msbfam
  \textfont\msbfam=\tenmsb
  \scriptfont\msbfam=\sevenmsb
  \scriptscriptfont\msbfam=\fivemsb
  }
\def\R{\mathbb R}
\def\Z{\mathbb Z}
\def\C{\mathbb C}
\def\T{{\mathbb T}}
\def\no{\noindent}
\def\f#1#2{\frac{#1}{#2}}
\def\f{\frac}
\def\eps{\epsilon}
\def\veps{\varepsilon}
\def\pa{\partial}
\def\p{\partial}
\def\om{\omega}
\def\Om{\Omega}
\def\be{\beta}
\def\ve{\varepsilon}
\def\na{\nabla}
\def\la{\lambda}
\def\al{\alpha}
\def\cL{{\mathcal L}}
\def\bu{\overline u}
\def\tu{\widetilde u}
\def\tW{\widetilde W}
\newcommand{\beq}{\begin{equation}}
\newcommand{\eeq}{\end{equation}}
\newcommand{\ben}{\begin{eqnarray}}
\newcommand{\een}{\end{eqnarray}}
\newcommand{\beno}{\begin{eqnarray*}}
\newcommand{\eeno}{\end{eqnarray*}}
\newtheorem{Theorem}{Theorem}[section]
\newtheorem{Lemma}[Theorem]{Lemma}
\newtheorem{remark}{Remark}[section]
\newtheorem{Proposition}{Proposition}[section]
\begin{document}

\title[Transition threshold for the 3D Couette flow]
{Transition threshold for the  3D Couette flow in a finite channel}

\author{Qi Chen}
\address{School of Mathematical Science, Peking University, 100871, Beijing, P. R. China}
\email{chenqi940224@gmail.com}

\author{Dongyi Wei}
\address{School of Mathematical Science, Peking University, 100871, Beijing, P. R. China}
\email{jnwdyi@163.com}

\author{Zhifei Zhang}
\address{School of Mathematical Science, Peking University, 100871, Beijing, P. R. China}
\email{zfzhang@math.pku.edu.cn}

\date{\today}

\maketitle

\begin{abstract}
In this paper, we study nonlinear stability of the 3D plane Couette flow $(y,0,0)$ at high Reynolds number ${Re}$ in a finite channel $\T\times [-1,1]\times \T$. It is well known that the plane Couette flow is linearly stable for any Reynolds number. However, it could become nonlinearly unstable and transition to turbulence for small but finite perturbations at high Reynolds number. This is so-called Sommerfeld paradox. One resolution of this paradox is to study the transition threshold problem, which is concerned with how much disturbance will lead to the instability of the flow and the dependence of disturbance on the Reynolds number. This work shows that if the initial velocity $v_0$ satisfies $\|v_0-(y,0,0)\|_{H^2}\le c_0{Re}^{-1}$ for some $c_0>0$ independent of $Re$, then the solution of the 3D Navier-Stokes equations is global in time and does not transition away from the Couette flow in the $L^\infty$ sense, and rapidly converges to a streak solution for $t\gg Re^{\f13}$ due to the mixing-enhanced dissipation effect. This result confirms the transition threshold conjecture proposed by Trefethen et al.(Science, 261(1993), 578-584). To this end, we develop the resolvent estimate method to establish the space-time estimates for the full linearized Navier-Stokes system around the flow $(V(t,y,z), 0,0)$,  where $V(t,y,z)$ is a small perturbation(but independent of $Re$) of  the Couette flow $y$.
\end{abstract}

\tableofcontents

\section{Introduction}

Hydrodynamic stability at high Reynolds number has been an important and very active field in the fluid mechanics. Beginning with Reynolds's famous paper in 1883 \cite{Rey},  many famous physicists and mathematicians made an important contribution to this field, such as Rayleigh, Kelvin, Orr, Sommerfeld, Heisenberg, Prandtl, Taylor, Arnold, Kolmogorov,  Lin etc. This field is mainly concerned with how the laminar flows become unstable and transition to turbulence \cite{Sch, Yag}.

On one hand, the eigenvalue analysis showed that the plane Couette flow is linearly stable for any Reynolds number $Re\ge 0$ \cite{Rom, DR}. It has been a folklore conjecture since Reynolds's experiment in 1883  that the pipe Poiseuille flow is linearly stable for any Reynolds number. In a recent work \cite{CWZ},  we prove the linear stability of pipe Poiseuille flow for general perturbations at high Reynolds number regime. On the other hand, the experiments showed that these flows could be unstable and transition to turbulence for small but finite perturbations at high Reynolds number \cite{Cha, GG, OK, Sch, Yag}. In addition, some laminar flows such as plane Poiseuille flow become turbulent at much lower Reynolds number than the critical Reynolds number predicted by the eigenvalue analysis.
This is so-called Sommerfeld paradoxes. The resolution of these paradoxes is a long-standing problem in fluid mechanics. There are many attempts to resolve these paradoxes(see \cite{Cha} and references therein).

The linear mechanism leading to the transition is very different from those due to the existence of growing modes(or unstable eigenvalues). This kind of transition is called subcritical transition or by-pass transition in physical literature. In \cite{TTR}, Trefethen et al provided an  important understanding of this transition from the viewpoint of pseudospectra of the linear operator. For both Couette flow and Poiseuille flow,  their pseudospectra includes the unstable domain, although their spectrum is stable. This phenomena is due to the non-normality of the linear operator. Psuedospectra has become an important concept for analyzing the stability of non-normal operators \cite{Tre}. Another consequence of non-normality gives rise to the transient growth of the solution of the linear evolution equation:
\beno
\pa_tu+A u=0, \quad u(0)=u_0,
\eeno
where $A$ is a non-normal operator. That is, the solution $\|u(t)\|_X$ could grow polynomially in time, even if $A$ has no unstable eigenvalues.

To understand how the interaction of linear and nonlinear mechanisms leads to the transition to turbulence, an important question firstly proposed by Trefethen et al. \cite{TTR} is to study the transition threshold problem, which is concerned with how much disturbance will lead to the instability of the flow and the dependence of disturbance on the Reynolds number. This idea may be traced back to Kelvin, who wrote in 1887(see \cite{Kel, TTR}): \smallskip

{\it
It seems probable, almost certain, indeed, that $\cdots$ the steady motion is stable for any viscosity, however small; and the practical unsteadiness pointed out by Stokes forty-four years ago, and so admirable investigated by Osborn Reynolds, is to be explained by limits of stability becoming narrower and narrower the smaller is the viscosity.
}\smallskip

The following mathematical version of  {\bf transition threshold problem} was formulated by Bedrossian, Germain and Masmoudi \cite{BGM-AM, BGM-BAMS}: \smallskip

{\it
Given a norm $\|\cdot\|_X$, find a $\beta=\beta(X)$ so that
\beno
&&\|u_0\|_X\le Re^{-\beta}\Longrightarrow  {stability},\\
&&\|u_0\|_X\gg Re^{-\beta}\Longrightarrow  {instability}.
\eeno
}
The exponent $\beta$ is referred to as the transition threshold. It was conjectured by Trefethen et al. in \cite{TTR}  that $\beta\le 1$:\smallskip

{\it Notwithstanding these qualification, we conjecture that transition to turbulence of eigenvalue-stable shear flows proceeds analogously to our model in that the destabilizing mechanism is essentially linear in the sense described above and the amplitude threshold for transition is $O(Re^\gamma)$ for some $\gamma<-1$.
}
\smallskip

A lot of works \cite{BT, Cha, DBL, LK, LHR, RSB, Wal, Yag}  in applied mathematics and physics are devoted to estimating $\beta$. Numerical experiments by Lundbladh, Henningson and  Reddy \cite{LHR} and  formal asymptotic analysis  by Chapman
\cite{Cha}  indicated that

1. Plane Couette flow
\begin{itemize}

\item[(1)] Numerical experiments: $\beta=1$ for streamwise perturbation and $\beta=\f 54$ for oblique perturbation;

\item[(2)] Asymptotic analysis: $\beta=1$ for streamwise  and  oblique perturbation.

\end{itemize}

2. Plane Poiseuille flow

\begin{itemize}

\item[(1)]  Numerical experiments: $\beta=\f74$ for streamwise  and  oblique perturbation;

\item[(2)]  Asymptotic analysis: $\beta=\f32$  for streamwise perturbation and $\beta=\f 54$ for oblique perturbation.
\end{itemize}
Furthermore, it was shown in \cite{Cha} why the numerically determined threshold exponents are not the true asymptotic values.
Formal asymptotic analysis in \cite{Cha}  confirms the conjecture for the Couette flow proposed by Trefethen et al. in \cite{TTR}.
\smallskip

In the absence of physical boundary(i.e., $\mathbb{T}\times \R\times \T$),  in the works \cite{BGM-AM, BGM-MAMS-1, BGM-MAMS-2},  Bedrossian, Germain and Masmoudi  made an important progress on the transition threshold problem for the 3-D Couette flow. It was shown that  $\beta\le 1$ for the perturbations in Gevrey class and $\beta\le \f32$ for the perturbations in Sobolev space.
More precisely, the authors in \cite{BGM-AM} showed that if the initial perturbation $u_0$ satisfies $\|u_0\|_{H^\sigma}\le \delta \nu^{\f32}$ for $\sigma>\f92$, then the solution is global in time, remains within $O(\nu^\f12)$ of the Couette flow in $L^2$ for any time, and converges to the streak solution for $t\gg \nu^{-\f13}$. In a recent work \cite{WZ-3D},  the later two authors proved that $\beta\le 1$ also for the perturbations in Sobolev space, which means that the regularity of the initial data(at least above $H^2$ regularity)  does not play an important role in determining the transition threshold.

In the presence of physical boundary, at high Reynolds number regime, the boundary layer  could  affect the stability of the flow. To understand the boundary layer effect, in a joint work \cite{CLWZ} with Li, we study the transition threshold problem of the 2-D Couette flow in a finite channel $\T\times  [-1,1]$. Since the 2-D Navier-Stokes equations have no lift-up effect, nonlinear effect is weaker so that the threshold is much smaller. More precisely, it was showed that if $\|u_0\|_{H^2}\le c_0\nu^\f12$ for some $c_0>0$,  then the solution will remain within $O(\nu^\f12)$ of the Couette flow in $L^\infty$ for any time. This result is consistent with one for the case of  $\Om=\T\times \R$
considered in \cite{BMV}. In a recent work \cite{MZ}, the threshold has been improved to  $\beta\le \f13$ when $\Om=\T\times \R$.
It remains a very interesting problem whether the threshold can be improved to $\beta\le \f13$ when $\Om=\T\times [-1,1]$.
Our previous work in 2D provides a foundation to study the 3D problem. In particular, the resolvent estimate method developed in \cite{CLWZ} is still very key in 3D case. Main challenges in 3D is to study how various linear effects(especially, boundary layer effect) and strong nonlinear  effect  interact to determine the transition threshold. \smallskip

The goal of this paper is to solve the transition threshold conjecture for the 3D plane Couette flow $U(y)=(y,0,0)$ in a finite channel $\Om=\T\times [-1,1]\times \T$. We consider the 3D incompressible Navier-Stokes equations at high Reynolds number $Re$ regime:
\begin{align*}
\left\{
\begin{aligned}
&\partial_tv-\nu\Delta v+v\cdot \nabla v+\nabla p=0,\\
&\nabla\cdot v=0,\\
&v(0,x,y,z)=v_0(x,y,z),\quad x,z\in\T,\, y\in [-1,1].
\end{aligned}
\right.
\end{align*}
where $v=\big(v^1(t,x,y,z),v^2(t,x,y,z),v^3(t,x,y,z)\big)$ is the velocity, $p(t,x,y,z)$ is the pressure, and $\nu=Re^{-1}>0$ is the viscosity coefficient.

We introduce the perturbation $u(t,x,y,z)=v(t,x,y,z)-U(y)$, which solves
\begin{align}\label{eq:NSp}
\left\{
\begin{aligned}
&\partial_t u-\nu\Delta u+y\partial_x u+\left(\begin{array}{l}u^2\\0\\0\end{array}\right)+\nabla p^{L}+u\cdot\nabla u+\nabla p^{NL}=0,\\
&\nabla\cdot u=0,\\
&u(0,x,y,z)=u_0(x,y,z),
\end{aligned}
\right.
\end{align}
together with the nonslip boundary condition
\ben\label{eq:NSp-bc}
u(t,x,\pm 1,z)=0.
\een
Here the pressure $p^{L}$ and $p^{NL}$ are determined by
\begin{align}
\label{eq:pressure}
\left\{
\begin{aligned}
&\Delta p^{L}=-2\partial_xu^2,\\
&\Delta p^{NL}=-\text{div}(u\cdot\nabla u)=-\partial_iu^j\partial_ju^i,\\
&(\partial_yp^{L}-\nu\Delta u^2)|_{y=\pm1}=0,\quad \partial_yp^{NL}|_{y=\pm1}=0.
\end{aligned}
\right.
\end{align}

To state our result, we define
\beno
P_0f=\overline{f}=\f1 {2\pi}\int_{\T}f(x,y,z)dx,\quad P_{\neq}f=f_{\neq}=f-P_0f.
\eeno

Our main result is stated as follows.

\begin{Theorem}\label{thm:stability}
Assume that $u_0\in H^1_0(\Om)\cap H^2(\Om)$ with $\text{div}\,u_0=0$. There exist constants $\nu_0, c_0, \epsilon, C>0,$ independent of $\nu$ so that if $\|u_0\|_{H^2}\le c_0\nu$, ${0<\nu\leq \nu_0}$,
then the solution $u$ of the system \eqref{eq:NSp} is global in time and satisfies the following stability estimates:

\begin{itemize}

\item Uniform bounds and decay of the background streak:
\begin{align}
&\|\bar{u}^1(t)\|_{H^2}+\|\bar{u}^1(t)\|_{L^{\infty}}\leq C\nu^{-1}\min(\nu t+\nu^{2/3},e^{-\nu t})\|u_{0}\|_{H^2},\label{eq:u-uniform0}\\
&\|\bar{u}^2(t)\|_{H^2}+\|\bar{u}^3(t)\|_{H^1}+\|(\bar{u}^2,\bar{u}^3)(t)\|_{L^{\infty}}\leq Ce^{-\nu t}\|u_{0}\|_{H^2}.\label{eq:u-uniform1}\end{align}

\item Rapid convergence to a streak:\begin{align}
&\|(\partial_x,\partial_z)\partial_xu_{\neq}(t)\|_{L^2}+\|(\partial_x,\partial_z)\nabla u_{\neq}^2(t)\|_{L^{2}}+\|(\partial_x^2+\partial_z^2)u_{\neq}^3(t)\|_{L^2}+ \nu^{1/4}\|u_{\neq}^2(t)\|_{H^2}\label{eq:u-uniform2}\\ \nonumber&\quad+ \nu^{1/3}\|(u_{\neq}^1,u_{\neq}^3)(t)\|_{H^1}+\|u_{\neq}^2(t)\|_{L^{\infty}}+\nu^{1/6}\|(u_{\neq}^1,u_{\neq}^3)(t)\|_{L^{\infty}}\leq Ce^{-2\epsilon\nu^{1/3}t}\|u_{0}\|_{H^2},\\&\|u_{\neq}\|_{L^{\infty}L^2}+\sqrt{\nu}\|t(u_{\neq}^1,u_{\neq}^3)\|_{L^{2}L^2}+\|\nabla u_{\neq}^2\|_{L^{\infty}L^2}+\|\nabla u_{\neq}^2\|_{L^{2}L^2}\leq C\|u_{0}\|_{H^2}.\label{eq:u-uniform3}
\end{align}
\end{itemize}

\end{Theorem}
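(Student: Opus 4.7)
The plan is to run a bootstrap/continuity argument, treating the zero-in-$x$ mode (the streak $\bar u$) and the non-zero mode $u_{\neq}$ as coupled but qualitatively different unknowns. I would introduce the bootstrap hypotheses that the quantities appearing on the left-hand sides of \eqref{eq:u-uniform0}--\eqref{eq:u-uniform3} are controlled by $2C\|u_0\|_{H^2}$ on some maximal time interval $[0,T^*)$, and try to close them with constant $C$. Defining the base flow $V(t,y,z)=y+\bar u^1(t,y,z)$, the equation for $u_{\neq}$ becomes the linearized Navier--Stokes system around $(V,0,0)$, forced by nonlinear interactions; under the bootstrap, $V-y$ is $O(\nu^{-1}\|u_0\|_{H^2})=O(c_0)$-small and independent of $\nu$, which is exactly the setting advertised in the abstract.

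First I would collect the \emph{linear} estimates for the $u_{\neq}$ system. The centerpiece is the resolvent estimate method extended from \cite{CLWZ}: write the linearization in terms of $(u_{\neq}^2,\om^2_{\neq})$ (the normal velocity and normal vorticity), take time-Fourier or Laplace transform, and obtain sharp $L^2$ and weighted space-time bounds for the resolvent $(-\nu\Delta+i\la+V\p_x\cdot)^{-1}$, including inviscid damping of $u_{\neq}^2$, enhanced dissipation at rate $e^{-\epsilon\nu^{1/3}t}$, and boundary-layer-corrected estimates capturing the non-slip condition. Passing back to physical time, these resolvent estimates yield precisely the space-time norms on the left of \eqref{eq:u-uniform2}--\eqref{eq:u-uniform3} for the linear propagator plus a source; the $\nu^{1/4}, \nu^{1/3}, \nu^{1/6}$ weightings reflect the boundary-layer thickness and the price paid to recover control in each component.

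Next I would treat the \emph{streak} $(\bar u^1,\bar u^2,\bar u^3)$. The system for $(\bar u^2,\bar u^3)$ is 2D Navier--Stokes in $(y,z)$ with forcing $\overline{u_{\neq}\cdot\nabla u_{\neq}}$, which the bootstrap estimates on $u_{\neq}$ render integrable and quadratically small; standard energy estimates on the 2D NS system with non-slip boundary in $y$ give the exponential-in-$\nu t$ decay of $(\bar u^2,\bar u^3)$ in \eqref{eq:u-uniform1}. Then $\bar u^1$ obeys a forced heat equation $(\p_t-\nu\Delta_{y,z})\bar u^1 = -\bar u^2 - \bar u\cdot\na \bar u^1 - \overline{u_{\neq}\cdot\na u_{\neq}^1}$, where the $-\bar u^2$ term is the lift-up: integrating it directly produces the $\nu t+\nu^{2/3}$ factor in \eqref{eq:u-uniform0} on the short time scale, while the heat semigroup on $\T\times[-1,1]$ with Dirichlet boundary gives the $e^{-\nu t}$ decay; the $H^2$ bound follows by maximal regularity for the heat equation, using that $\bar u^2$ is $O(\|u_0\|_{H^2})$ and decays exponentially.

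Finally I would close the bootstrap by estimating the nonlinear terms. The interactions split into $0\cdot\neq\to\neq$, $\neq\cdot\neq\to\neq$, $0\cdot 0\to 0$, and $\neq\cdot\neq\to 0$; each is fed into the appropriate linear estimate. The key gains are the exponential-in-$\nu^{1/3}t$ decay of $u_{\neq}$ (so that time integrals of quadratic $\neq\cdot\neq$ terms are $O(\nu^{-1/3})$ times a square of the bootstrap norm, i.e. $O(\nu^{-1/3}\|u_0\|_{H^2}^2)\le O(c_0\|u_0\|_{H^2})$ under $\|u_0\|_{H^2}\le c_0\nu$), and smallness of $V-y$ which makes the resolvent estimates insensitive to $\bar u^1$. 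Choosing $c_0$ small absorbs all nonlinear contributions and beats the constant down from $2C$ to $C$, so $T^*=\infty$. The main obstacle, and what I expect to occupy most of the work, is the \emph{non-slip boundary layer} in the resolvent estimate: one must prove enhanced dissipation and inviscid damping with sharp constants in a channel despite the absence of Fourier symmetry in $y$, handle pressure with Neumann-type boundary conditions \eqref{eq:pressure}, and control the boundary correctors so that the $O(\nu^{1/2})$-thick layer does not spoil the $\nu^{1/3}$ enhanced-dissipation rate; this is where the methods of \cite{CLWZ} need to be upgraded to the three-dimensional, $z$-dependent, fully nonlinear setting.
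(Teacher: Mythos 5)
Your overall architecture --- bootstrap on streak plus fluctuation, resolvent estimates for the linearization around $(y+\bar u^1,0,0)$, lift-up producing the $\nu t+\nu^{2/3}$ factor, boundary layer as the main obstruction --- matches the paper. But two of the paper's essential new ingredients are missing, and without them the argument as proposed does not close.

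First, you propose to ``take time-Fourier or Laplace transform'' of the linearized system around $V(t,y,z)=y+\bar u^1(t,y,z)$. The coefficient depends on $t$, so resolvent estimates do not transfer to space-time estimates by transforming in time; this is precisely the obstruction the paper isolates. It is overcome by freezing the coefficient at $V_j(y,z)=y+\bar u^{1,0}(t_j,y,z)$ on time intervals $I_j=[t_j,t_{j+1})$ with $t_j=j\nu^{-1/3}$, solving a frozen-coefficient problem on each $I_j$, and summing a telescoping decomposition $u_{\neq}=\sum_k\vec u_{[k]}$ in which the commutators $(A_k-A_j)\vec u_{[k]}$ are controlled using $\|\partial_t\bar u^{1,0}\|_{H^2}\lesssim\nu E_1$, so that $\|V_j-V_k\|_{H^2}\lesssim\nu^{2/3}|j-k|E_1$. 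The choice of interval length $\nu^{-1/3}$, matched to the enhanced dissipation rate, and the geometric summation in $j$ are what make the accumulated errors summable; none of this is automatic, and it occupies all of Section 14.

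Second, your bootstrap quantities are the left-hand sides of the theorem, which only control $\bar u^1$ in $H^2$. The variable-coefficient resolvent estimates (good unknown $W_g$, boundary corrector $U_b$, singular part $W_s$) require $\|V-y\|_{H^4}<\varepsilon_0$, and $H^4$ control of the full $\bar u^1$ is not available: the initial data is only $H^2$, and in the presence of the boundary one cannot propagate $H^4$ regularity of $u_{\neq}$ (this is why the $E_4$-type energy from the $\T\times\R\times\T$ case is unusable here). The paper's fix is the decomposition $\bar u^1=\bar u^{1,0}+\bar u^{1,\neq}$ of \eqref{eq:u10}--\eqref{eq:u1n}: $\bar u^{1,0}$ has zero initial data and is forced by the regular streak components, so it gains $H^4$ regularity and can serve as the coefficient, while $\bar u^{1,\neq}$ is only $O(\nu^{2/3})$ in $H^2$ and is treated as a perturbative transport term $\bar u^{1,\neq}\partial_x$. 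Without this splitting your claim that $V-y$ is $O(c_0)$-small in the relevant norm is unjustified. (A smaller inaccuracy: the boundary layer width here is $\nu^{1/3}$, not $\nu^{1/2}$, coming from the Airy scaling $L=(|k|/\nu)^{1/3}$.)
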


Let us give some remarks on our results.

\begin{itemize}

\item[1.] Our rigorous analysis shows that various linear effects(including 3D lift-up effect, boundary layer effect, inviscid damping and enhanced dissipation) and nonlinear interaction play an import role in determining the transition threshold.  Surprisingly, the transition threshold obtained in this paper is consistent with one for the case of $\Om=\T\times \R\times \T$ obtained in \cite{WZ-3D}.
This shows that 3D lift-up may be the main mechanism leading to the instability of the flow even in the presence of boundary layer effect. Our explanation on this surprise result is that weak nonlinear interaction(or null structure of nonlinear terms)  and good linear mechanisms(inviscid damping and enhanced dissipation)  counteract  the bad effect of the boundary layer.

\item[2.] Global stability estimates in particular imply that
\beno
\|u(t)\|_{L^\infty}\le Cc_0e^{-\nu t}\to 0\quad \text{as}\quad t\to +\infty.
\eeno
This means that the 3D Couette flow is  nonlinearly stable in $L^\infty$ sense when the perturbation is  $o(\nu)$  in $H^2$.

\item[3.] In \cite{MZ}, the authors formulated the following question on nonlinear enhanced
dissipation and inviscid damping:\smallskip

{\it
Given a norm $\|\cdot\|_X\ (X\subset L^2)$, find a $\beta=\beta(X)$ so that for $\|u_0\|_X\ll \nu^{\beta} $ and for any $t>0$
\beno
\|u_{\neq}(t)\|_{L^2}\le Ce^{-c\nu^{1/3}t}\|u_{0}\|_{X}\quad \text{and}\quad \|u_{\neq}^2\|_{L^2L^2}\le C\|u_{0}\|_{X}.
\eeno
}
Our results answer this question for the 3D Couette flow.

\item[4.] The transition threshold problem is very interesting in an infinite channel $\Om=\R\times [-1,1]\times \T$. In this case, we need to understand the long wave effect in the $x$ variable on the stability. In fact, we conjecture that the threshold may be strictly less than 1 in this case.

\item[5.] The dynamics above the threshold should be a challenging problem, which is out of our current method.

\item[6.]  Formal asymptotic analysis conducted in \cite{Cha} indicates that the profile of shear flows may affect the transition threshold. From the results in \cite{Cha}, it seems reasonable to conjecture that the threshold $\beta\le \f32$ for the plane Poiseuille flow. In \cite{LWZ},  Li, Wei and Zhang proved that the threshold $\beta\le \f74$ for the 3D Kolmogorov flow.
It is a very interesting question whether one can improve the threshold to $\beta\le \f32$.

\item[7.] The transition threshold for the pipe Poiseuille flow is completely open. However, this flow is probably the most interesting and important, because it is close to the setting of the experiment conducted by Reynolds in 1883. In fact, the linear stability is just proved by our work \cite{CWZ}.

\end{itemize}

\medskip

\no{\bf Notations.}  Throughout this paper, we denote by $C$ a constant independent of $\nu, k,\ell$ and $c_0, \veps_0, \eps_1$, which may be different from line to line. Moreover, $\veps_0, \eps_1$ are absolute small constants independent of $\nu, k,\ell$.\smallskip

The following notations will be constantly used throughout this paper:

\begin{itemize}

\item $\Gamma_j=\big\{(x,j,z)|x,z\in\T\}$ for $j\in\{\pm1\big\}$ and $\partial\Omega=\Gamma_1\cup\Gamma_{-1}.$

\item $P_0f=\overline{f}=\f1 {2\pi}\int_{\T}f(x,y,z)dx,\,\, P_{\neq}f=f_{\neq}=f-P_0f$.

\item $\eta=(k^2+\ell^2)^\f12$ and $\delta=\nu^\f13|k|^{-\f13}$.

\item $\kappa={\partial_zV}/{\partial_yV}, \,  \rho_1=\dfrac{\partial_y\kappa+\kappa\partial_z\kappa}{\partial_y V(1+\kappa^2)},\,\rho_2=\dfrac{\partial_z\kappa-\kappa\partial_y\kappa}{(1+\kappa^2)}.$
\item We denote by $\|\cdot\|_{L^p}$ the $L^p(D)$ norm with $D=\Omega$ or $D=I=(-1,1)$, which is easy to distinguish from the context(for example, $D=\Om$ in sections 4, 6, 8, 9).

\item We denote by $\|\cdot\|_{H^k}$ the Sobolev norm $\|\cdot\|_{H^k(D)}$ with  $D=\Omega$ or $D=I=(-1,1)$.

\item We denote by $\|\cdot\|_{L^qL^p}$ the space-time norm $\|\cdot\|_{L^q(0,t; L^p(D))}$ with $D=\Om$ or $I$ and $t=T$( in sections 11, 12, 13) or $+\infty$(somewhere in section 10, 14).

\item Summation convention: the repeated upper and lower indices are summed over $i,j\in\{1,2,3\}$ and $\al,\be\in\{2,3\}.$

\end{itemize}

\section{Linear and nonlinear mechanisms affecting the threshold}

There are four kinds of linear effects: 3D lift-up, boundary layer, inviscid damping and enhanced dissipation, which play a key role in determining the transition threshold.

\subsection{3D lift-up effect}
To avoid the boundary, we consider $y\in \R$. In this case,  the linearized system of \eqref{eq:NSp}  reads
\begin{align*}
\partial_t u-\nu\Delta u+y\partial_x u+\left(u^2, 0, 0\right)-\nabla\Delta^{-1} 2\partial_xu^2=0.
\end{align*}
Introduce new variables $(\overline{x},y,z)=(x-ty,y,z)$ and set $\tu(t,\overline{x},y,z)={u}(t,{x},y,z)$, which solves
\begin{align*}
\partial_t \tu-\nu\Delta_{L} \tu+\left(\tu^2, 0, 0\right)-\nabla_{L}\Delta_{L}^{-1} 2\partial_{\overline{x}}\tu^2=0,
\end{align*}
where $\nabla_{L}=(\partial_{\overline{x}},\partial_y-t\partial_{\overline{x}},\partial_z)$ and $\Delta_{L}=\nabla_{L}\cdot\nabla_{L}. $
Notice that $P_0\tu=\bu$, and hence it reads
\begin{align*}
\partial_t\bu-\nu\Delta {\bu}+\left({\bu}^2,0, 0\right)=0.
\end{align*}
The solution of this linear problem is given by
\begin{align*}
\bu(t)=\left(\begin{array}{c}e^{\nu t\Delta}({\bu}^1(0)-t{\bu}^2(0))\\e^{\nu t\Delta}{\bu}^2(0)\\e^{\nu t\Delta}{\bu}^3(0)\end{array}\right).
\end{align*}
This means that
\beno
\|\bu^1(t)\|_{L^2}\le Cte^{-\nu t}\|u(0)\|_{L^2}.
\eeno
This linear growth for times $t\lesssim1/\nu$ is known as the {\bf lift-up effect} first observed in \cite{EP}.
This effect is related to the non-normality of the linearized operator,
which may give rise to the transient growth of the solution even if the operator is spectrally stable \cite{TTR, Tre}.

This turns out to be the main mechanism leading to nonlinear instability of the flow in 3D case,  which is absence in 2D fluid flow due to the beautiful structure of the vorticity $\om=\pa_yv^1-\pa_xv^2$:
\beno
\pa_t\om-\nu\Delta \om+v\cdot\na\om=0.
\eeno

\subsection{Inviscid damping}

Let us consider the 2D linearized Euler equation around a shear flow $(U(y),0)$ in terms of the vorticity:
\beno
\pa_t\om+U(y)\pa_x\om+U''(y)\pa_x(-\Delta)^{-1}\om=0.
\eeno
In particular, when $U(y)=y$,  there holds
\beno
\pa_t\om+y\pa_x\om=0.
\eeno
Thus, $\|\om(t)\|_{L^p}$ is conserved for any time. However, Orr [25]  observed that the velocity will tend to $0$ as $t\to \infty$.
More precisely, there holds
\beno
\|u(t)\|\le C(1+t)^{-1}\|u_0\|_{H^2},\quad \|u^2(t)\|_{L^2}\le C(1+t)^{-2}\|u_0\|_{H^3}.
\eeno
This is so-called the {\bf inviscid damping}, which is  due to the mixing of the vorticity induced by a shear flow. This phenomena is analogous to Landau damping in plasma physics found by Landau \cite{Lan}.

For general shear flows, linear inviscid damping is also a  difficult problem. In a series of work \cite{WZZ-CPAM, WZZ-APDE, WZZ-AM}, Wei, Zhang and Zhao proved the linear inviscid damping for monotone flows and non-monotone flows including the Poiseuille and Kolmogorov flows. Let us refer to \cite{Z1, Z2, BCV, WZZ-CMP, RZ, Jia, CZ} and references therein  for related works and recent progress on linear inviscid damping.

Nonlinear inviscid damping is a challenging problem. Nonlinear Landau damping was proved by Mouhot and Villani \cite{MV}.
Bedrossian and Masmoudi \cite{BM} proved nonlinear inviscid damping for the Couette flow in the domain $\Om=\T\times \R$.
On the other hand, nonlinear Landau damping and inviscid damping do not hold  for the perturbations in Sobolev spaces of low regularity \cite{LZ1, LZ2}. Let us refer to \cite{DM, IJ1, IJ2} for recent important progress on nonlinear inviscid damping.

Let us turn  to the 2D linearized Navier-Stokes equations around the Couette flow in a finite channel $\T\times [-1,1]$:
\ben\label{eq:2DNS}
\left\{
\begin{aligned}
&\pa_t\om-\nu\Delta\om+y\pa_x\om=0,\\
&\Delta \varphi=\om,\,\, \varphi|_{y=\pm 1}=\pa_y\varphi|_{y=\pm 1}=0,\quad  u=\big(-\pa_y\varphi,\pa_x\varphi\big).
\end{aligned}\right.
\een
In a joint work \cite{CLWZ} with Li, we established the inviscid damping result of \eqref{eq:2DNS}  in the sense
\beno
\|u_{\neq}\|_{L^2L^2}\le C\|\om(0)\|_{H^1},
\eeno
which plays an important role for 2D transition threshold problem. \smallskip

In 3D, $\Delta u^2$ has a similar structure as the vorticity in 2D:
\beno
\pa_tW-\nu\Delta W+y\pa_xW=0, \quad \Delta u^2=W,\quad u^2|_{y=\pm1}=\pa_y u^2|_{y=\pm 1}=0.
\eeno

\subsection{Enhanced dissipation}
Let  us consider the diffusion-convection equation in $\Om=\T\times \R$:
\beno
\pa_t\om-\nu\Delta \om+y\pa_x\om=0.
\eeno
Introduce new variables $(\overline{x},y)=(x-ty,y)$ and set $\widetilde{\om}(t,\overline{x},y)=\om(t,{x},y)$.
Then the solution $\widehat{\widetilde{\om}}(t,k,\xi)=\int_{\T\times \R} \widetilde{\om}(t,x,y)e^{-ikx-i\xi y}dxdy$ takes the form
\beno
\widehat{\widetilde{\om}}_{\neq}(t,k,\xi)=e^{-\nu(2\pi)^2\int_0^t(k^2+(\xi-k\tau)^2)d\tau}
\widehat{{\om}}_{\neq}(0,k,\xi).
\eeno
Due to $\int_0^t(k^2+(\xi-k\tau)^2)d\tau\geq k^2t^3/12 $,  we deduce that
 \begin{align*}
\|{\om}_{\neq}(t)\|_{L^{2}}\leq e^{-c\nu t^3}\|{\om_{\neq}}(0)\|_{L^2}\leq Ce^{-c\nu^{1/3}t}\|{\om_{\neq}}(0)\|_{L^2},
\end{align*}
which also gives
\ben\label{eq:om-enh}
\nu^\f16\|e^{c\nu^{1/3}t}{\om}_{\neq}(t)\|_{L^2L^{2}}\le C\|\om_{\neq}(0)\|_{L^2}.
\een
Here the exponent $\nu t^3$ gives a dissipation time scale $\nu^{-1/3}$, which is much shorter than the dissipation time scale $\nu^{-1}$. We refer to this phenomenon as the {\bf enhanced dissipation}, which is also due to the mixing mechanism.
For the system \eqref{eq:2DNS},  the following enhanced dissipation estimate was essentially proved in \cite{CLWZ}:
\beno
\nu^\f14\|e^{c\nu^{1/3}t}{\om}_{\neq}(t)\|_{L^2L^{2}}\le C\|\om_{\neq}(0)\|_{H^1}.
\eeno
Compared with \eqref{eq:om-enh}, the loss of  $\nu^\f1 {12}$ is due to the boundary layer effect. \smallskip

In \cite{CKR}, Constantin et al. gave a sufficient and necessary condition for general incompressible flow on a compact manifold.
However, the quantitative enhanced dissipation rate is usually hard to obtain except for some special flows such as shear flow, spiral flow and  Anosov flow \cite{CDE}.

The enhanced dissipation for the Kolmogorov flow $(e^{\nu t}\cos y,0)$, which is a solution of the 2D Navier-Stokes equations on the torus, has been proved by using different methods: resolvent estimate method \cite{LWZ, IMM}, wave operator method \cite{WZZ-AM} and hypocoercivity method \cite{WZ-SCM, BW}. More precisely, consider the linearized 2D Navier-Stokes equations around the Kolmogorov flow in $\T_{2\pi\delta}\times\T_{2\pi}$:
\beno
\pa_t\om-\nu\Delta \om+e^{-\nu t}\cos y\pa_x\big(1+\Delta^{-1}\big)\om=0.
\eeno
If $\delta\in (0,1)$, then it holds that for $t\lesssim \nu^{-1}$,
\beno
\|\om_{\neq}(t)\|_{L^2}\le Ce^{-c\nu^\f12 t}\|\om_{\neq}(0)\|_{L^2}.
\eeno
Here the enhanced dissipation rate is smaller than one for the Couette flow. This leads to conjecture that for stable monotone shear flows to the Euler equations, the enhanced dissipation rate should be $\nu^\f13$, and the rate should be $\nu^\f12$ for stable shear flows with non degenerate critical point.

In additional to the transition threshold problem, the enhanced dissipation also plays an important role for the suppression of blow-up in the Keller-Segel system \cite{KX, BH, He}  and axisymmertrization of 2D viscous vortices \cite{Ga}.  Let us refer to \cite{BC, GNR, LX} for more relevant works.

\subsection{Boundary layer effect}
Using the Laplace transform, the system \eqref{eq:2DNS}  can be reduced to solving the Orr-Sommerfeld(OS)  equation
\begin{align}\label{eq:OS}
\left\{
\begin{aligned}
&-\nu(\partial_y^2-k^2)^2\phi+ik(y-\lambda)(\partial_y^2-k^2)\phi=F,\\
&\phi(\pm1)=0,\quad\phi'(\pm1)=0.
\end{aligned}
\right.
\end{align}
In general, when $\nu\to 0$,  the solution of  \eqref{eq:OS} does not converge in a strong sense to the solution of  the  Rayleigh equation
\beno
ik(y-\lambda)(\partial_y^2-k^2)\phi=F,\quad \phi(\pm1)=0,
\eeno
because of the mismatch of their boundary conditions.

In \cite{CLWZ},  we decompose the solution of  \eqref{eq:OS}  as
\beno
\phi=\phi_{Na}+c_1\phi_{b,1}+c_2\phi_{b,2},
\eeno
where $\phi_{Na}$ solves the OS equation with the Navier-slip boundary condition:
\begin{align*}
\left\{
\begin{aligned}
&-\nu(\partial_y^2-k^2)^2\phi_{Na}+ik(y-\lambda)(\partial_y^2-k^2)\phi_{Na}=F,\\
&\phi_{Na}(\pm1)=0,\quad\phi_{Na}''(\pm1)=0,
\end{aligned}
\right.
\end{align*}
and $\phi_{b,i},i=1,2$ solves the homogeneous OS equation:
\begin{align*}
\left\{
\begin{aligned}
&-\nu (\partial_y^2-k^2)^2\phi_{b,1}+ik(y-\lambda) (\partial_y^2-k^2)\phi_{b,1}=0, \\
& \phi_{b,1}(\pm 1)=0, \quad\phi_{b,1}'(1)=1,\quad\phi_{b,1}'(-1)=0,
\end{aligned}
\right.
\end{align*}
and
\begin{align*}
\left\{
\begin{aligned}
&-\nu (\partial_y^2-k^2)^2\phi_{b,2}+ik(y-\lambda) (\partial_y^2-k^2)\phi_{b,2}=0, \\
& \phi_{b,2}(\pm 1)=0, \quad\phi_{b,2}'(-1)=1,\quad\phi_{b,2}'(1)=0.
\end{aligned}
\right.
\end{align*}
Via this decomposition, the boundary behavior of the solution $\phi$ can be described by $\phi_{b,i}$, and the coefficients $c_i, i=1,2$
are determined by $\phi_{Na}$. Let $w_{b,i}=(\pa_y^2-k^2)\phi_{b,i}$, which is a linear combination of the following two independent  Airy functions:
\begin{align*}
W_1(y)=Ai\big(e^{i\f{\pi}{6}}(L(y-\lambda-i k\nu))\big),\quad W_2(y)=Ai\big(e^{i\f{5\pi}{6}}(L(y-\lambda-i k\nu))\big),
\end{align*}
where $L=\big(\f k \nu\big)^\f13$. In some sense, this means that the width of the boundary layer is $\nu^{\f13}$. In particular,
there is a loss of $L$ when taking the derivative in $y$ variable.

Main advantage of this decomposition is that  the resolvent estimate of $\phi_{Na}$ can be derived by using the energy method under the Navier-slip boundary conditions, and the estimates of $\phi_{b,i}$ can be obtained by using the estimates of the Airy function. This idea introduced in  \cite{CLWZ} could be used to study the stability problem for general shear flows.

Recently, Grenier, Guo and Nguyen developed Rayleigh-Airy iteration method to solve the OS equation \cite{GGN-DM, GGN-AM}.
Gerard-Varet, Maekawa and Masmoudi  applied their method to the stability of the boundary layer shear flows \cite{GMM, GM}.

\subsection{Streak solution and nonlinear interaction}

 If the initial data in \eqref{eq:NSp} is independent of $x$, then so is the solution, i.e., $ u(t,x,y,z)=u(t,y,z)$. In this case, $(u^{2}, u^3)$ solves the 2D Navier-Stokes equations, and  $u^1$ solves the linear advection-diffusion equation
\begin{align*}
\left\{
\begin{aligned}
&\partial_t {u}^1-\nu\Delta {u}^1+(u^2\partial_y+u^3\partial_z){u}^1+u^2=0,\\
&\partial_t {u}^2-\nu\Delta {u}^2+(u^2\partial_y+u^3\partial_z){u}^2+\pa_yp=0,\\
&\partial_t {u}^3-\nu\Delta {u}^2+(u^2\partial_y+u^3\partial_z){u}^3+\pa_zp=0,\\
&\pa_yu^2+\pa_zu^3=0.
\end{aligned}\right.
\end{align*}
This solution is referred to as {\bf the streak.} Our result shows that for $t\gg \nu^{-\f13}$, the streak solutions describe the dynamics of the system if the perturbation is below the threshold.\smallskip

If we decompose the solution $u$ into $\bu+u_{\neq}$, where $\bu$ denotes zero mode and $u_{\neq}$ denotes nonzero mode, then nonlinear interactions can be classified as follows:
\begin{itemize}
\item zero mode and zero mode interaction: $0\cdot0 \to0$;

\item zero mode and nonzero mode interaction: $0\cdot\neq \to\neq$;

\item nonzero mode and nonzero mode interaction: $\neq\cdot\neq \to\neq$ or $ \neq\cdot\neq \to0.$
\end{itemize}

 Due to the lift-up effect, main nonlinear effect comes from the interaction between the streak solution and nonzero modes, especially, $\bu^1\pa_xu_{\neq}, u^j_{\neq}\pa_j\bu^1(j=2,3)$. This seems a primary source so that the solution could become unstable and transition to turbulence if  the perturbation exceeds some threshold.

To study how nonlinear and linear mechanisms interact to bring about transition to turbulence,  in \cite{TTR}, the authors considered a $2\times 2$ model problem:
\beno
\f {d u} {dt}=Au+\|u\|B u,
\eeno
 where
 \beno
A= \left(
 \begin{array}{l}
 -R^{-1} \quad 1\\
 0 \quad -2R^{-1}
 \end{array}\right),\qquad  B= \left(
 \begin{array}{l}
 0 \quad -1\\
 1 \quad  0
 \end{array}\right).
  \eeno
  and $R$ is a large parameter. Now the linear part has a transient growth due to the non-normality of $A$, and nonlinear term does not
  create or destroy energy since $B$ is skew-symmetric. This simple model has a strong nonlinear bootstrapping effect so that the threshold amplitude is of order $R^{-3}$ not $R^{-1}$. However,  the Navier-Stokes  equations are different from this simple model in two aspects: (1) there are infinitely many different modes, most of which do not experience non-normal linear growth; (2)  nonlinear interactions between different modes probably make the quadratic nonlinearity  unrealistically strong. Thus, they conjectured that the amplitude threshold of  transition for the Naver-stokes equation is $O(Re^\gamma)$ for some $\gamma<-1$.

Indeed, for the Navier-Stokes equations, nonlinear term has some good(null) structures similar to null forms introduced in \cite{Kla}. These structures  may avoid the worst nonlinear interactions. In some sense, $u^2$ could be viewed as a good component, $\bu^1$ a bad component, and $\pa_x, \pa_z$ good derivatives, while $\pa_y$
 is a bad derivative. Since the nonlinear term takes the form $u^i\pa_iu^j$, there are no worst interactions such as  the interaction between $\bu^1$ and itself. For the term $\bu^2\pa_y u_{\neq}$,
 although $\pa_y$ is bad,  $\bu^2$ is good. In other words,  a bad term(or derivative) always accompanies a good one for nonlinear interactions.\smallskip

\section{Key ideas and ingredients of the proof}

\subsection{Reformulation of the perturbation system}

Recall that the perturbation $u=v-U$ satisfies
\begin{align*}
&\big(\partial_t -\nu\Delta +y\partial_x\big)u+\left(\begin{array}{l}u^2\\0\\0\end{array}\right)+\nabla p^{L}+u\cdot\nabla u+\nabla p^{NL}=0.
\end{align*}

For the zero mode $\bu$, there holds
\begin{align}
&(\partial_t-\nu\Delta )\bu^1+\bu^2+\overline{u\cdot\nabla u^1}=0,\label{eq:u1-zero}\\
&(\partial_t-\nu\Delta )\bar{u}^j+\partial_j \bar{p}+(\bar{u}^2\partial_y+\bar{u}^3\partial_z)\bar{u}^j+\overline{u_{\neq}\cdot\nabla u^j_{\neq}}=0,\quad j=2,3.\label{eq:u23-zero}
\end{align}

To estimate the nonzero modes, we will use the formulation in terms of the shear wise velocity $u^2$ and vorticity $\om^2=\pa_zu^1-\pa_xu^3$:
\begin{align}\label{eq:u2-w2}
\left\{\begin{aligned}
  &\partial_t(\Delta u^2)-\nu\Delta^2 u^2+y\partial_x\Delta u^2
  +(\partial_x^2+\partial_z^2)(u\cdot\nabla
  u^2)\\
  &\qquad-\partial_y\big[\partial_x(u\cdot\nabla u^1)+\partial_z(u\cdot\nabla
  u^3)\big]=0,\\
   &\partial_t \omega^2- \nu\Delta\omega^2 +y\partial_x\omega^2+\partial_zu^2
  +\partial_z(u\cdot\nabla u^1)-\partial_x(u\cdot\nabla u^3)=0,\\
   &\partial_y u^2(t,x,\pm1,z)=u^2(t,x,\pm1,z)=0,\quad \omega^2(x,\pm1,z)=0.
     \end{aligned}\right.
\end{align}

The idea of using $\Delta u^2$ may go back to Kelvin's original paper \cite{Kel}. The coupled system of $(\Delta u^2, \om^2)$ was used in many physical literatures such as \cite{Cha, Sch}, and our recent work on the stability of 3D Kolmogorov flow \cite{LWZ}, and
blow-up criterion in terms of one velocity component \cite{CheZ, CZZ}.
The main advantage of using $\Delta u^2$ is that the equation of $\Delta u^2$ does not destroy the linear structure. This important point has played an important role in the works \cite{BGM-AM, WZ-3D}.

\subsection{Key ingredients when $\Om=\T\times \R\times \T$}

In this subsection, we will review the framework and key ingredients  when  $\Om=\T\times \R\times \T$  in \cite{WZ-3D}, which will help  understand the difficulties and ideas of this work.

For $a\ge 0$, we introduce two norms
\begin{align*}
\|w\|_{Y_0}=&\|w\|_{L^{\infty}L^2}+\nu^\f12\|\nabla w\|_{L^{2}L^2},\\
\|w\|_{X_a}=&\|e^{a\nu^{1/3}t}w\|_{L^{\infty}L^2}+\nu^\f12\|e^{a\nu^{1/3}t}\nabla w\|_{L^{2}L^2}\\
&\quad+\nu^{1/6}\|e^{a\nu^{1/3}t} w\|_{L^{2}L^2}+\|e^{a\nu^{1/3}t}\nabla\Delta^{-1}\partial_x w\|_{L^{2}L^2}.
\end{align*}
The norm of $Y_0$ corresponds to the heat diffusion.
The weight $e^{a\nu^{1/3}t}$ in $X_a$ corresponds  to the enhanced dissipation,  and the fourth part of $X_a$ corresponds to the inviscid damping.

In \cite{WZ-3D},  the following energy functionals were introduced:
\begin{align*}
&E_1=\|\bu\|_{L^{\infty}H^4}+
\nu^{\frac{1}{2}}\|\nabla \bu\|_{L^{2}H^4}+\big(\|\partial_t\bu\|_{L^{\infty}H^2}+\|\bu(1)\|_{H^2}\big)/\nu,\\
&E_2=\|\Delta \bu^2\|_{Y_0}+\|\bu^3\|_{Y_0}+\|\nabla \bu^3\|_{Y_0}+\|\min(\nu^{\frac{2}{3}}+\nu t,1)^{\frac{1}{2}}\Delta \bu^3\|_{Y_0},\\
&E_3=\|\Delta u_{\neq}^2\|_{X_2}+\| (\partial_x^2+\partial_z^2)u_{\neq}^3\|_{X_2}+\nu^{\frac{2}{3}}\|\Delta u_{\neq}^3\|_{X_3},\\
&E_4= \|e^{2\nu^{1/3}t}(\partial_x,\partial_z)u_{\neq}\|_{L^{\infty}H^3}+
\nu^{\frac{1}{2}}\|e^{2\nu^{1/3}t}\nabla(\partial_x,\partial_z)u_{\neq}\|_{L^{2}H^3},\\
\tag{E5}\label{E5eq}&E_5=\|\partial_x^2 u^2\|_{X_3}+\| \partial_x^2u^3\|_{X_3}.
\end{align*}

Let us give some explanations about the energy functional:

\begin{itemize}

\item $E_1$ is introduced to control the zero mode.  Due to the lift-up effect, $E_1$ is expected to be $o(1)$ at best.

\item $E_2$  is introduced to control good components $\bu^2, \bu^3$. Since there is no lift-up in the equations of $\bu^2$ and $\bu^3$,  $E_2$ is expected to be $o(\nu)$.

\item $E_4$ is mainly introduced to control $E_1$, since $E_1$ involves the fourth order derivative of the solution. Due to the lift-up effect, it is also expected to be $o(1)$.

\item $E_3$ is introduced to control good components $\Delta u^2$ and $(\partial_x^2+\partial_z^2)u_{\neq}^3$.

\item The most key part $E_5$ is introduced to control $E_3$. A key difference with $E_3$  is to use  the  $X_3$ norm instead of the $X_2$ norm, which is very crucial to control some nonlinear terms with the lift-up effect such as $\bu^1\pa_xu_{\neq}$ and $u_{\neq}^j\pa_j\bu^1(j=2,3)$.
 \end{itemize}

The estimates of $E_1$ and $E_2$ are based on the direct energy method. It holds that
\begin{align}
&E_1\le C\big(\|\bu(1)\|_{H^4}+\nu^{-1}\|\bu(1)\|_{H^2}+\nu^{-1}E_2+\nu^{-1}E_3^2+\nu^{-1}E_3E_4\big),\label{eq:E1-old}\\
&E_2\le C\big(\|u(1)\|_{H^2}+\nu^{-1}E_3^2\big).\label{eq:E2-old}
\end{align}

The estimate of $E_3$ is based on the space-time estimates for the following linearized system:
\beno
\cL W=\Delta f_1,\quad \cL U-2\partial_x\partial_z\Delta^{-2}W=f_2,
\quad  \cL=\pa_t-\nu \Delta+y\pa_x.
\eeno
That is, if $P_0 W=P_0U=P_0 f_1=P_0 f_2=0,$ then it holds that
\begin{align*}
\|W\|_{X_a}^2+\|(\partial_x^2+\partial_z^2)U\|_{X_a}^2\leq& C\Big(\|W(1)\|_{L^2}^2+\|U(1)\|_{H^2}^2\\&+\nu^{-1}\|e^{a\nu^{1/3}t} \nabla f_1\|_{L^2L^2}^2+\nu^{-1}\|e^{a\nu^{1/3}t} (\partial_x,\partial_z) f_2\|_{L^2L^2}^2\Big).
\end{align*}
Taking $(W,U)=(\Delta u^2_{\neq}, u^3_{\neq})$,  it was proved that
\ben\label{eq:E3-old}
E_3\le C\big(\|u(1)\|_{H^2}+\nu^{-1}E_2E_3+E_1^2E_3+E_5+\nu^{-1}E_3^2\big).
\een

The estimate of $E_4$ is based on the space-time estimates for the linear equation:
\beno
\cL w=\partial_xf_1+f_2+\text{div}f_3.
\eeno
That is, if $P_0 w=P_0f_1=P_0f_2=P_0f_3=0$, then it holds that
 \begin{align*}
&\|w\|_{X_a}^2\leq C\Big(\|w(1)\|_{L^2}^2+\|e^{a\nu^{1/3}t}\nabla f_1\|_{L^2L^2}^2+\nu^{-\frac{1}{3}}\|e^{a\nu^{1/3}t} f_2\|_{L^2L^2}^2+\nu^{-1}\|e^{a\nu^{1/3}t} f_3\|_{L^2L^2}^2\Big).
\end{align*}
On the right hand side, the second part used the inviscid damping, the third part used the enhanced dissipation, and the last part used the heat diffusion.
Taking $w=u_{\neq}$, it was proved that
\begin{align}
E_4\leq& C\Big(\|u(1)\|_{H^4}+\nu^{-1}E_3+E_4\big((E_3+E_2)/\nu+E_1\big)\nonumber\\&\qquad+E_1(E_3+E_5)/\nu+E_3E_2/\nu^2\Big).\label{eq:E4-old}
\end{align}

The estimate of $E_5$ is the most difficult.  For this part, we first need to establish the space-time estimates of  the linearized operator with variable coefficient:
\beno
\cL_V=\pa_t-\nu \Delta+V\pa_x,\quad V=y+\bu^1(t,x,z).
\eeno
This can be reduced to the case of $\cL$ by using the coordinate transform under the following key assumption:
\ben\label{ass:u1-old}
\|\bu^1\|_{L^\infty H^4}+\nu^{-1}\|\pa_t\bu^1\|_{L^\infty H^2}\le \veps.
\een
To estimate $E_5$, the most important idea in \cite{CLWZ}  is to introduce a quantity $W^2$ defined by
\beno
W^2=u_{\neq}^2+\kappa u_{\neq}^3,
\eeno
where  $\kappa(t,y,z)={\partial_zV}/{\partial_yV}$.
Then $\Delta W^2$ satisfies
\beno
\cL_V\Delta W^2=\Delta\big(-2\nu\nabla\kappa\cdot\nabla u_{\neq}^3\big)+\text{good terms}.
\eeno
However, the term $\Delta\big(-2\nu\nabla\kappa\cdot\nabla u_{\neq}^3\big)$ is still very singular. To handle it,  an important decomposition was introduced
\ben
 \nabla\kappa\cdot\nabla u_{\neq}^3=\rho_1\nabla V\cdot\nabla u_{\neq}^3+\rho_2(\partial_z-\kappa\partial_y)u_{\neq}^3,\label{eq:u3-decom}
\een
where
\beno
 \rho_1=\dfrac{\partial_y\kappa+\kappa\partial_z\kappa}{\partial_y V(1+\kappa^2)},\quad \rho_2=\dfrac{\partial_z\kappa-\kappa\partial_y\kappa}{(1+\kappa^2)}.
 \eeno
Since $(\partial_z-\kappa\partial_y)$ has a good commutative relation with $\cL_V$, it is a good derivative.  So, the second term in \eqref{eq:u3-decom} is good. To remove the singularity from the first term in \eqref{eq:u3-decom}, we need to introduce $W^{2,2}$ with solving
\beno
\cL_VW^{2,2}=-\rho_1\na V\cdot \na u^3_{\neq},\quad W^{2,2}(1)=0.
\eeno
We define
\beno
W^{2,1}=W^{2}-\nu W^{2,2},
\eeno
With this decomposition, it was found that $\Delta W^{2,1}$
satisfies a good equation:
\beno
\cL_V\Delta W^{2,1}=\text{good terms}.
\eeno
Then the space-time estimate $\|\Delta W^{2,1}\|_{X_3}$ together with the space-time estimate
\beno
\sum_{j=2}^3\big(\|\pa_x^2u^j_{\neq}\|_{X_3}+\|\pa_x(\pa_x-\kappa\pa_z)u^j_{\neq}\|_{X_3}\big)+\nu^{\f 23}\|\Delta u^3\|_{X_3}
\eeno
yields that
\ben\label{eq:E5-old}
E_5\le C\big(\|u(1)\|_{H^2}+\nu^{-1}E_3^2\big).
\een

With the estimates of $E_1$-$E_5$ and $\|u_0\|_{H^2}\le c_0\nu$, we can conclude by using a bootstrap argument that
\beno
E_1+E_4\le Cc_0, \quad E_2+E_3+E_5\le Cc_0\nu.
\eeno

\subsection{New ingredients and ideas when $\Om=\T\times [-1,1]\times \T$} It seems hard to apply Fourier multiplier method used in \cite{BGM-AM, BGM-MAMS-1} based on Fourier analysis to the case of finite channel.  There are two main advantages  of the framework introduced in \cite{WZ-3D}:

\begin{itemize}

\item[(1)]  the method is not strongly dependent on the use of Fourier analysis, although we used the Fourier transform for the space-time estimates of the linearized system;

\item[(2)] global stability is established in the Sobolev spaces of low regularity, which avoid the singularity due to the boundary layer effect when taking high order derivatives.

\end{itemize}

 Despite these advantages, there are still many challenging problems when applying this framework to the case of finite channel.  \smallskip

First of all,  the space-time estimates of $\cL_V$ strongly rely on the  assumption $\eqref{ass:u1-old}$. On the other hand, the estimate of $E_1$ depends on the energy $E_4$,  while $E_4$ involves the $H^4$ estimate of $u_{\neq}$. However, in the presence of the boundary, the high order derivative estimates in $y$ variable lead to singularity due to the boundary layer effect. Our new observations are:

\begin{itemize}

\item[(1)] The estimates of $E_3$ and $E_5$ do not depend on $E_4$(see \eqref{eq:E3-old} and \eqref{eq:E5-old});

\item[(2)] The assumption $\eqref{ass:u1-old}$ could be replaced by
\beno
\|\bar{u}^{1,0}\|_{L^{\infty}H^4}+\nu^{-1}\|\partial_t \bar{u}^{1,0}\|_{L^{\infty}H^2}\le \veps.
\eeno
\end{itemize}
Here we decompose $\bu^1=\bu^{1,0}+\bu^{1,\neq}$ with
\begin{align}
\label{eq:u10}&(\partial_t-\nu\Delta )\bar{u}^{1,0}+\bar{u}^2+\bar{u}^2\partial_y\bar{u}^{1,0}+\bar{u}^3\partial_z\bar{u}^{1,0}=0,\\
\label{eq:u1n}&(\partial_t-\nu\Delta )\bar{u}^{1,\neq}+\bar{u}^2\partial_y\bar{u}^{1,\neq}+\bar{u}^3\partial_z\bar{u}^{1,\neq}+\overline{u_{\neq}\cdot\nabla u_{\neq}^1}=0,\\
\label{boundary condition}& \bar{u}^{1,0}|_{t=0}=0,\quad\bar{u}^{1,\neq}|_{t=0}=\bar{u}^1(0),\quad\bar{u}^{1,0}|_{y=\pm1}=0,\quad \bar{u}^{1,\neq}|_{y=\pm1}=0.
\end{align}
The key point of this decomposition is that $\bu^{1,\neq}$ has better decay in $\nu$, and thus $\bu^{1,\neq}\pa_x$ could be
viewed as a perturbation. Therefore,  we avoid the use of the energy $E_4$. We introduce the following energy functional to control the zero mode:
\beno
E_1=E_{1,0}+\nu^{-2/3} E_{1,\neq},
\eeno
where
\begin{align*}
&E_{1,0}=\|\bar{u}^{1,0}\|_{L^{\infty}H^4}+\nu^{-1}\|\partial_t \bar{u}^{1,0}\|_{L^{\infty}H^2}
+\nu^{-\f12}\|\partial_t\bar{u}^{1,0}\|_{L^2H^3},\\
&E_{1,\neq}=\|\bar{u}^{1,\neq}\|_{L^{\infty}H^2}+\nu^{\f12}\|\nabla\bar{u}^{1,\neq}\|_{L^2H^2},
\end{align*}
and the energy $E_2$ is defined by
\begin{align*}
E_2=&\|\Delta\bar{u}^2\|_{L^{\infty}L^2}
+\nu^{\f12}\|\nabla\Delta\bar{u}^2\|_{L^2L^2}+\nu^{\f12}\|\Delta\bar{u}^2\|_{L^2L^2}+\nu^{-\f12}\|\partial_t\nabla\bar{u}^2\|_{L^2L^2}\\
&+\|\nabla\bar{u}^3\|_{L^{\infty}L^2}+\nu^{\f12}\|\Delta\bar{u}^3\|_{L^2L^2}+\nu^{\f12}\|\nabla\bar{u}^3\|_{L^2L^2}
+\nu^{-\f12}\|\partial_t\bar{u}^3\|_{L^2L^2}\\&+\|\min((\nu^{\frac{2}{3}}+\nu t)^{\frac{1}{2}},1-y^2)\Delta\bar{u}^3\|_{L^{\infty}L^2}+\nu^{-\f12}\|\min((\nu^{\frac{2}{3}}+\nu t)^{\frac{1}{2}},1-y^2)\nabla\partial_t\bar{u}^3\|_{L^{\infty}L^2}\\
&\quad+\nu^{\f12}\|\min((\nu^{\frac{2}{3}}+\nu t)^{\frac{1}{2}},1-y^2)\nabla\Delta\bar{u}^3\|_{L^{2}L^2}.
\end{align*}

Next we introduce a similar part of $E_3$ defined by
\begin{align*}
 E_3=E_{3,0}+E_{3,1},
\end{align*}
where $E_{3,0}$ and $E_{3,1}$ are given by
\begin{align*}
 &E_{3,0}=\nu^{\f12}\|e^{2\epsilon\nu^{\f13}t}(\partial_x,\partial_z) \Delta u^{2}_{\neq}\|_{L^2L^2}+\nu^{\f34}\|e^{2\epsilon\nu^{\f13}t}\nabla\Delta u^{2}_{\neq}\|_{L^2L^2}+\|e^{2\epsilon\nu^{\f13}t}(\partial_x,\partial_z)\nabla u^{2}_{\neq}\|_{L^{\infty}L^2}\\
&\qquad\quad+\|e^{2\epsilon\nu^{\f13}t}\partial_x\nabla u^{2}_{\neq}\|_{L^{2}L^2}+\|e^{2\epsilon\nu^{\f13}t}(\partial_x^2+\partial_z^2) u^{3}_{\neq}\|_{L^{\infty}L^2}+\nu^{\f12}\|e^{2\epsilon\nu^{\f13}t}(\partial_x^2+\partial_z^2)\nabla u^{3}_{\neq}\|_{L^2L^2},\\
&E_{3,1}=\nu^{\f13}\big(\|e^{2\epsilon\nu^{\f13}t}\nabla \omega^2_{\neq}\|_{L^{\infty}L^2}
   +\nu^\f12\|e^{2\epsilon\nu^{\f13}t}\Delta \omega^2_{\neq}\|_{L^{2}L^2}\big).
\end{align*}
Here and in what follows, the space-time norm $\|\cdot\|_{L^qL^p}=\|\cdot\|_{L^q(0,T; L^p(\Om))}$ for $T>0$.\smallskip

The estimate of $E_3$ is based on the space-time estimates for the coupled system \eqref{eq:u2-w2} of $(\Delta u^2, \om^2)$.
For this, we need to make the space-time estimates for the linear equation $\pa_tw-\nu \Delta w+y\pa_x w=f$ in a finite channel with
nonslip boundary condition or Navier-slip boundary condition. Due to the boundary layer effect, this is highly nontrivial.
In our work \cite{CLWZ}, we have developed the resolvent estimate method to solve this problem. In 3-D case, the idea is basically similar.
\smallskip

The adaption of $E_5$ to the present setting is the most challenging.
The first important observation is that it is enough to replace $E_5$(given by \eqref{E5eq})  by
\begin{align*}
  E_5=\nu^{1/6}\|e^{3\epsilon\nu^{1/3}t}\partial^2_xu^2_{\neq}\|_{L^2L^2}
  +\nu^{1/6}\|e^{3\epsilon\nu^{1/3}t}\partial_x^2u^3_{\neq}\|_{L^2L^2}.
\end{align*}
Let $V=y+\bu^{1,0}(t,y,z)$ and $Au=\mathbb{P}\Big(\nu\Delta u-V\pa_xu-\big(\pa_yV(u^2+\kappa u^3), 0,0\big)\Big)$, here $\mathbb{P}$ is the Leray projection.
To estimate $E_5$, we need to consider the following linearized system
\begin{align*}
\partial_tu_{\neq}-Au_{\neq}+\vec{g}=0.
\end{align*}
Thus, we need to establish the space-time estimates (without exponential growth) for the linearized system with variable coefficient, which is completely open
in 3D case.  In fact, even for the following linearized equation
\ben\label{eq:LS-scalar}
\left\{
\begin{aligned}
&\pa_tw-\nu\Delta w+V\pa_xw=f,\\
&\Delta \varphi=w,\quad \varphi|_{y=\pm1}=\pa_y\varphi|_{y=\pm1}=0,
\end{aligned}\right.
\een
the linear stability also remains unknown.  In a very recent work by  Almog and Helffer \cite{AH}, the linear stability of the Couette flow under a small perturbation $U(y)$  was just proved.\smallskip

 In this work, main challenges are that
\begin{itemize}

\item[(1)]  we need to consider a system, which is much more complicated than the scalar equation \eqref{eq:LS-scalar}. All the difficulties for the domain $\Om=\T\times \R\times \T$ still exist, while the main difficulty for \eqref{eq:LS-scalar} only lies in the boundary conditions.

\item[(2)]  we need to consider general perturbations of the Couette flow, which depend on $(t,y,z)$;

\item[(3)] we need to establish both linear stability and  uniform resolvent estimates, which should embody various linear effects: boundary layer,  enhanced dissipation and inviscid damping;

\item[(4)] we need to derive the space-time estimates from the resolvent estimates by using the Laplace transform. The trouble is that it is not direct in the case when the perturbation depends on $t$.
\end{itemize}

\subsection{Sketch of the estimate of $E_5$}

First of all,  to derive the space-time estimates from the resolvent estimate,  we will use the method of freezing the coefficient to estimate $E_5$, which is sketched as follows. \smallskip

$\bullet$  {\bf Separation of the time interval}:  $[0,T]=\bigcup I_j $, where  $I_j=[t_j,t_{j+1})\cap[0,T]$ with  $t_j=j\nu^{-\f13}$ for
$ j\in[0,\nu^{\f13}T)\cap\Z$. Here the choice of $t_j$ is due to the enhanced dissipation rate.  We define
\beno
&&V_j(y,z)=y+\overline{u}^{1,0}(t_j,y,z),\quad  A_j=A_{[V_j]}=\mathbb{P}\Big(\nu\Delta-V_j\pa_x-\big(\pa_yV_j(u^2+\kappa_j u^3),0,0\big)\Big).
\eeno
It can be reduced to considering  the system in each interval $I_j$:
\begin{align*}
&\partial_tu_{\neq}-A_ju_{\neq}+\vec{g}=0\quad\text{for}\quad t\in I_j.
\end{align*}

$\bullet$ {\bf Decomposition of the solution}:  we define
\beno
\vec{g}_{(j)}(t)=0\quad \text{for}\quad t\not\in I_j,\quad \vec{g}_{(j)}(t)=\vec{g}+\sum_{k=0}^{j-1}(A_k- A_j)\vec{u}_{[k]}\quad \text{for}\quad t\in I_j,
\eeno
and  let $\vec{u}_{[j]} $  solve
\begin{align*}
&\partial_t\vec{u}_{[j]}-A_j\vec{u}_{[j]}+\vec{g}_{(j)}=0,\\
&\vec{u}_{[0]}(0)=P_{\neq}u(0),\quad\vec{u}_{[j]}(0)=0\quad \text{for}\quad j\in(0,\nu^{\f13}T)\cap\Z.
\end{align*}
Then there holds
\begin{align*}
u_{\neq}=\sum_{k=0}^{j}\vec{u}_{[k]}\quad \text{for}\quad t\in I_j,\ j\in[0,\nu^{\f13}T)\cap\Z,\quad \vec{u}_{[j]}(t)=0\quad \text{for}\quad 0\leq t<t_j.
\end{align*}

$\bullet$ {\bf Space-time estimates for fixed $j$} :  for  $ j\in(0,\nu^{\f13}T)\cap\Z$,  there holds
\begin{align*}
&\|e^{4\epsilon\nu^{1/3}t}\vec{u}_{[j]}\|_{L^2Z_j^2}\leq Ce^{4\epsilon j}\|\vec{g}_{(j)}\|_{L^2Z_j^1},\\
&\|e^{4\epsilon\nu^{1/3}t}\vec{u}_{[0]}\|_{L^2Z_0^2}\leq C\big(\|u(0)\|_{H^2}+\|\vec{g}\|_{L^2(I_0,Z_0^1)}\big),
\end{align*}
which can be deduced from the resolvent estimates. The definition of  $Z_j^k(k=1,2)$ norm is given in section 14.2.

$\bullet$ {\bf Summation}:  For $j\in[0,\nu^{\f13}T)\cap\Z$, let
\beno
&&a_j=\sum_{k=0}^j(j-k+1)\|\vec{u}_{[k]}\|_{L^2(I_j,Z_{j}^2)},\quad  b_j=e^{-4\epsilon j}\|e^{4\epsilon\nu^{1/3}t}\vec{u}_{[j]}\|_{L^2Z_j^2},\\
&&E_6^2=\sum_{j=0}^{N}e^{6\epsilon j}\|u_{\neq}\|_{L^2(I_j,Z_j^2)}^2,\quad N=\max( [0,\nu^{\f13}T)\cap\Z).
\eeno
Using the following important facts that
\begin{align*}
&\|{v}\|_{Z_{j}^2}-\|{v}\|_{Z_{k}^2}\leq C|j-k|^{\f12}E_1\|{v}\|_{Z_{k}^2},\\
&\|(A_k- A_j)\vec{u}_{[k]}\|_{Z_j^1}\leq C|j-k|E_1\|\vec{u}_{[k]}\|_{Z_{j}^2},
\end{align*}
we can deduce from the space-time estimates  that
\begin{align*}
&b_j\le C\big(\|\vec{g}\|_{L^2(I_j,Z_j^1)}+E_1a_j\big),\\
&\|\vec{u}_{[k]}\|_{L^2(I_j,Z_{j}^2)}\leq\big(1+C|j-k|^{\f12}E_1\big)e^{-4\epsilon (j-k)}b_k,
\end{align*}
which will yield that
\beno
a_j^2 \leq C\sum_{k=0}^j(j-k+1)^{5}e^{-8\epsilon (j-k)}b_k^2\leq C\sum_{k=0}^je^{-7\epsilon (j-k)}b_k^2.
\eeno
Then we can conclude that
\begin{align*}
E_6^2\leq\sum_{j=0}^{N}e^{6\epsilon j}a_j^2&\leq  C\|u(0)\|_{H^2}^2+C\sum_{j=0}^{N}e^{6\epsilon j}\|\vec{g}\|_{L^2(I_j,Z_j^1)}^2+CE_1^2E_6^2.
\end{align*}

Through the above procedure, the problem is reduced to  considering the following linearized resolvent system
 \begin{align*}\left\{\begin{aligned}
    &\big(-\nu\Delta+V\partial_x-i\lambda-a\nu^{\f13}\big){u}+\big(\partial_yV(u^2+\kappa u^3),0,0\big)+\nabla P+\vec{g}=0,\\
    &\text{div}\,{u}=0,\quad \Delta P=-2\partial_yV(\partial_xu^2+\kappa\partial_xu^3),\\
    &{u}|_{y=\pm1}=(\partial_yP+\nu\Delta v^2)|_{y=\pm1}=0.
    \end{aligned}\right.
 \end{align*}
 We need to prove that if $\vec{g}=0 $ and $ \lambda\in\R,\ a\in[0,\epsilon_1]$, then $u=0$. To our knowledge, these results are
new and may be of independent interest. The proof is highly nontrivial.\smallskip

 Motivated by \cite{WZ-3D}, it is natural to introduce $W=u^2+\kappa u^3,\, U=u^3$. Then $(W,U)$ satisfies
  \begin{align*}\left\{\begin{aligned}
     & -\nu\Delta W+(\partial_xV-i\lambda)W-a\nu^{\f13}W+(\partial_y+\kappa\partial_z)P\\
     &\qquad\quad+(g^2+\kappa g^3)+2\nu\nabla\kappa\cdot\nabla U+\nu(\Delta\kappa)U=0,\\
    & -\nu\Delta U+(\partial_xV-i\lambda)U-a\nu^{\f13}U+\partial_zP+g^3=0,\\
    &\Delta P=-2\partial_x(\partial_yVW),\\
    &W|_{y=\pm1}=\partial_yW|_{y=\pm1}=U|_{y=\pm1}=0.
  \end{aligned}\right.
  \end{align*}
Taking Fourier transform in $x$, it can be reduced to the following system
\begin{align}\label{eq:WU-s3}
  \left\{
  \begin{aligned}
   &-\nu\Delta W+ik(V(y,z)-\lambda)W-a(\nu k^2)^{1/3}W+(\partial_y+\kappa\partial_z)P\\&\qquad+G_1+\nu(\Delta \kappa) U+2\nu\nabla\kappa\cdot\nabla U=0,\\&-\nu\Delta U+ik(V(y,z)-\lambda)U-a(\nu k^2)^{1/3}U+G_2+\partial_zP=0,\\
   & W|_{y=\pm1}=\partial_yW|_{y=\pm1}=U|_{y=\pm1}=0,
   \end{aligned}\right.
\end{align}
where
\beno
\Delta P=-2ik\partial_yVW,\quad \partial_xW=ikW,\ \partial_xU=ikU,\ \partial_xP=ikP.
\eeno

To estimate $E_5$, the key ingredient is to establish the following resolvent estimates for the system \eqref{eq:WU-s3} under the assumption \eqref{ass:V}:
\begin{align*}
&\nu^{\f{1}{3}}\big(\|\partial_x^2U\|_{L^2}^2+\|\partial_x(\partial_z-\kappa\partial_y)U\|_{L^2}^2\big)+
\nu\big(\|\nabla\partial_x^2U\|_{L^2}^2+\|\nabla\partial_x(\partial_z-\kappa\partial_y)U\|_{L^2}^2\big)
\\&\qquad+\nu^{\f{1}{3}} \|\partial_x\nabla W\|_{L^2}^2+\nu\|\partial_x\Delta W\|_{L^2}^2+\nu^{\f{5}{3}} \|\partial_x\Delta U\|_{L^2}^2\\
&\leq C\nu^{-1}\big(\|\nabla G_1\|_{L^2}^2+\|\partial_x G_2\|_{L^2}^2\big).
\end{align*}

To this end,  we first need to remove the singular part $\nu W_s$ of $W$ with $W_s$ given by
\begin{align*}
  \left\{
  \begin{aligned}
   &-\nu\Delta  W_s+ik(V(y,z)-\lambda) W_s-a(\nu k^2)^{1/3} W_s+\rho_1\nabla V\cdot\nabla U=0, \\
    & W_s|_{y=\pm1}=0,\quad \partial_x W_s=ik W_s.
   \end{aligned}\right.
\end{align*}
Thus, the good unknown $W_g$ seems to be naturally defined by
\beno
W_g=W-\nu W_s.
\eeno
Since $\Delta U|_{y=\pm 1}\neq 0$, we need to introduce a boundary corrector $U_b$ of $U$ defined by Lemma \ref{lem:UB}.  The new good unknown $W_g$ is defined by (for the case $|\lambda|<1$)
\begin{align*}
  W_g=W-\nu\theta W_s-\kappa U_b,
\end{align*}
where $\theta(y)=\theta_0(|y-\lambda|/(1-|\lambda|))$ with a fixed $\theta_0\in  C_0^{\infty}(\R)$ so that $\theta_0(y)=1 $ for $|y|\leq 1/4,$ $\theta_0(y)=0 $ for $|y|\geq 1/2.$ Now $W_g$ satisfies the following system
\begin{align}\label{eq:Wg-s3}
\left\{
\begin{aligned}
&-\nu\Delta W_g+ik(V(y,z)-\lambda)W_g-a(\nu k^2)^{1/3}W_g+(\partial_y+\kappa\partial_z)\big(P^1+P^2\big)=F,\\
&\Delta P^{1}=-2ik\partial_yVW_g,\quad \partial_yP^{1}|_{y=\pm1}=0,\quad \Delta P^{2}=0,\\\
& W_g|_{y=\pm1}=0,\quad \partial_xW_g=ikW_g,\,\,\pa_xP^j=ikP^j(j=1,2).
\end{aligned}\right.
\end{align}
Finally, $w_g=\Delta W_g$ satisfies
\beno
-\nu\Delta w_g+ik(V(y,z)-\lambda)w_g-a(\nu k^2)^{1/3}w_g=\text{good terms}
\eeno
together with $W_g|_{y=\pm 1}=0$ and good Neumann data $\pa_yW_g|_{y=\pm 1}$.\smallskip

The construction of $W_g$ may be the most key part of this paper. The second key  part is to establish the resolvent estimates for the linearized system (given $\partial_y\varphi|_{y=\pm1}$)
\begin{align}\label{eq:OS-NB-s3}
  \left\{
  \begin{aligned}
   &-\nu\Delta w+ik(V(y,z)-\lambda)w-a(\nu k^2)^{1/3}w=F_1+F_2,\\
   &\Delta \varphi=w,\quad \varphi|_{y=\pm1}=0,\\
   &\partial_xw=ikw,\quad \partial_xF_1=ikF_1, \quad \partial_xF_2=ikF_2.
      \end{aligned}\right.
\end{align}

\subsection{Resolvent estimates with non-vanishing  Neumann data}

To estimate $E_3$ and $E_5$, we need to establish the space-time estimates for the linearized system. In the presence of the boundary, we can not use the Fourier transform method introduced in \cite{WZ-3D}. Instead, we will use the resolvent estimate method developed in \cite{CLWZ}.

To estimate $E_3$, it is enough to establish the resolvent estimates for the following linearized system when $V=y$:
\begin{align*}
  \left\{
  \begin{aligned}
   &-\nu\Delta w+ik(V-\lambda)w-a(\nu k^2)^{1/3}w=F,\\
   &\Delta \varphi=w,\quad\varphi|_{y=\pm1}=0,
   \end{aligned}\right.
\end{align*}
together with the Navier-slip boundary condition(i.e., $w|_{y=\pm 1}=0$) or  the Neumann data $\pa_y\varphi|_{y=\pm 1}\neq 0$.
Even in the case of  Navier-slip boundary condition or nonslip boundary condition(i.e., $\pa_y\varphi|_{y=\pm 1}=0$),  the results in \cite{CLWZ} can not be applied to the 3D case. In this work, we will develop a general framework for $V$ satisfying  \eqref{ass:V},
then apply general results to the special case of $V=y$.

To estimate $E_5$, we need to establish the resolvent estimates for the  linearized system \eqref{eq:OS-NB-s3}.
One of the key differences with \cite{CLWZ} is that in our applications, $\pa_y\varphi|_{y=\pm 1}\neq 0$. Thus, the resolvent estimates have to show the precise dependence on the Neumann data $\pa_y\varphi|_{y=\pm 1}$. For example, we show that
\begin{align*}
&\nu^{\f16}|k|^{\f43}\|\nabla\varphi\|_{L^2} \leq C\Big(\|F_1\|_{L^2}+|\nu/k|^{-\f13}\|{F}_2\|_{H^{-1}}+\nu^{\f13}|k|^{\f76}\|\partial_y\varphi\|_{L^2(\partial\Omega)}\Big),\\
&\|w\|_{L^2} \leq C\Big(\|(|k(y-\lambda)/\nu|^{\f14}+|k/\nu|^{\f16})\partial_y\varphi\|_{L^2(\partial\Omega)}
+|\nu/k|^{\f16}\|\partial_y\partial_z\varphi\|_{L^2(\partial\Omega)}\\&\quad\qquad\qquad+(\nu k^2)^{-\f{5}{12}}\|F_1\|_{L^2}+\nu^{-\f34} |k|^{-\f{1}{2}}\|{F}_2\|_{H^{-1}}\Big).
\end{align*}
In the case of nonslip boundary condition(i.e., $\pa_y\varphi|_{y=\pm 1}=0$), the above result shows
\begin{align*}
&\nu^{\f16}|k|^{\f43}\|\nabla\varphi\|_{L^2} \leq C\big(\|F_1\|_{L^2}+|\nu/k|^{-\f13}\|{F}_2\|_{H^{-1}}\big),\\
&\|w\|_{L^2} \leq C\big((\nu k^2)^{-\f{5}{12}}\|F_1\|_{L^2}+\nu^{-\f34} |k|^{-\f{1}{2}}\|{F}_2\|_{H^{-1}}\big),
\end{align*}
which are the same as those in the case when $V=y$(see \cite{CLWZ}). This result in particular implies  the linear stability of the flow near the Couette flow under the nonslip boundary condition. Thus, our work also gives a new  proof of  linear stability  in \cite{AH}.\smallskip

Following the idea introduced in \cite{CLWZ}, we decompose the solution of \eqref{eq:OS-NB-s3} as $w=w_{Na}+w_{I}$, where $w_{Na}$ and $w_I$ solve
\begin{align*}
  \left\{
  \begin{aligned}
   &-\nu\Delta w_{Na}+ik(V-\lambda)w_{Na}-a(\nu k^2)^{1/3}w_{Na}=F,\\
   &w_{Na}=\Delta\varphi_{Na},\ \varphi_{Na}|_{y=\pm1}=0,\ w_{Na}|_{y=\pm1}=0,\\
   &-\nu\Delta w_{I}+ik(V-\lambda)w_{I}-a(\nu k^2)^{1/3}w_{I}=0,\\
   &\Delta  \varphi_I=w_I,\,\,\varphi_{I}|_{y=\pm1}=0   \end{aligned}\right.
\end{align*}
Since $w_{Na}$ satisfies the Navier-slip boundary condition, under the assumption \eqref{ass:V}, we can follow the energy method developed in \cite{CLWZ} to establish the resolvent estimates in the case when the force $F\in H^1, F\in L^2$ or $F\in H^{-1}$(see Proposition \ref{prop:res-nav-s1}) and weak type resolvent estimates when $F\in H^{-1}$(see Proposition \ref{prop:res-weak}).

New difficulty  is that the solution $w_I$ can not be expressed by the Airy function when $V\neq y$. Consider the homogeneous problem
\begin{align*}
  \left\{
  \begin{aligned}
   &-\nu\Delta w+ik(V-\lambda)w-a(\nu k^2)^{1/3}w=0,\\
   &\Delta  \varphi=w,\,\,\varphi|_{y=\pm1}=0.   \end{aligned}\right.
\end{align*}
Our new idea is to make the decomposition $w=w_{Na}+w_b$, where
\begin{align*}
  \left\{
  \begin{aligned}
   &-\nu\Delta w_{Na}+ik(V-\lambda)w_{Na}-a(\nu k^2)^{1/3}w_{Na}=-ik(V-y)w_{b},\\
   &w_{Na}=\Delta\varphi_{Na},\ \varphi_{Na}|_{y=\pm1}=0,\ w_{Na}|_{y=\pm1}=0,\\
   &-\nu\Delta w_{b}+ik(y-\lambda)w_{b}-a(\nu k^2)^{1/3}w_{b}=0,\\
   &w_{b}=\Delta\varphi_b,\ \varphi_b|_{y=\pm1}=0.
 \end{aligned}\right.
\end{align*}
Since $w_{b}$ solves the homogeneous OS equation with constant coefficient,  the solution $\widehat{w}_{b,\ell}=\int_{\mathbb{T}}e^{-i\ell z}w_b(x,y,z)dz$ can be expressed  as
\begin{align*}
   & \widehat{w}_{b,\ell}=\partial_y\widehat{\varphi}_{b,\ell}(1)w_{`1,\ell} +\partial_y\widehat{\varphi}_{b,\ell}(-1)w_{2,\ell},
\end{align*}
where $\widehat{\varphi}_{b,\ell}=\int_{\mathbb{T}}e^{-i\ell z}\varphi_b(x,y,z)dz$, and  $(w_{1,\ell}, w_{2,\ell})$  given by \eqref{eq:w1l} and \eqref{eq:w2l} is the boundary corrector.
The estimates of $(w_{1,\ell}, w_{2,\ell})$ can be obtained by using the properties of the Airy function. Thus, ${w}_b$ can be controlled by the Neumann data $\pa_y{\varphi}|_{y=\pm 1}$
and $\pa_y\varphi_{Na}|_{y=\pm 1}$ due to $\pa_y\varphi_b=\pa_y\varphi-\pa_y\varphi_{Na}$. Main reason why this decomposition works well is due to the fact that $(V-y)|_{y=\pm 1}=0$ so that $(V-y)w_b$ is a good remainder.\smallskip

It should be emphasized  that the estimates of the Neumann data $\pa_y\varphi_{Na}|_{y=\pm 1}$ are  very skilled.
The proof will be based on the following key fact: if $\Delta\varphi=w,\, \varphi|_{y=\pm 1}=0$, then we have
\begin{align*}
   \|\partial_y\varphi\|_{L^2(\{y=1\})} &=\f{1}{(2\pi)^2}\sup_{f\in\mathcal{F}_1}|\left\langle w,f\right\rangle|,
\end{align*}
where
\begin{align*}
   \mathcal{F}_1=\Big\{f(x,y,z)=\sum_{\ell \in\mathbb{Z}}a_{\ell}\f{\sinh(\eta(1+y))}{\sinh(2\eta)}e^{ikx+i\ell z}\Big|\|\{a_\ell\}\|_{\ell^2}=1,\ \sup\{|\ell||a_{\ell}\neq 0\}<+\infty\Big\}.
\end{align*}
To estimate $\left\langle w,f\right\rangle$, we need to use the resolvent estimates of $w_{Na}$, especially weak type resolvent estimates given by Proposition \ref{prop:res-weak}. \smallskip

We believe that the resolvent estimate method developed in this paper and \cite{CLWZ} could be applied to the stability problem of general shear flows and the other related problems.

\section{Resolvent estimates with Navier-slip boundary condition}

In this section, we establish the resolvent estimates of the Orr-Sommerfeld equation with variable coefficient:
\begin{align}\label{eq:OS-Nav}
  \left\{
  \begin{aligned}
   &-\nu\Delta w+ik(V(y,z)-\lambda)w-a(\nu k^2)^{1/3}w=F,\\
   &w|_{y=\pm1}=0,\,\, \partial_xw=ikw,\,\, \partial_xF=ikF,
   \end{aligned}\right.
\end{align}
where $V(y,z)$ satisfies
\ben\label{ass:V}
\|V-y\|_{H^4}<\veps_0,\quad (V(y,z)-y)|_{y=\pm1}=0,
\een
with $\veps_0$ small enough determined later.\smallskip

In this section, we always assume that $\la\in\R$ and $a\in [0,\eps_1]$ for some $\eps_1>0$ small enough
determined later, and  let $\varphi$ solve
\ben\label{def:varphi}
\Delta\varphi=w, \quad \varphi|_{y=\pm1}=0.
\een

Let us first give some basic estimates involving $V$.

\begin{Lemma}\label{lem:V}
  It holds that for any $(y,z)\in [-1,1]\times \T$,
  \begin{align*}
      |V(y,z)-y| \leq C\varepsilon_0(1-y^2),\quad \f{1}{2}\leq \f{j-V(y,z)}{j-y}\leq 2\quad j\in\{\pm1\}.
  \end{align*}
\end{Lemma}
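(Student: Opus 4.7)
\medskip

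\noindent\textbf{Proof proposal for Lemma \ref{lem:V}.}

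Set $h(y,z)=V(y,z)-y$, so that by assumption $h\in H^4$ with $\|h\|_{H^4}<\veps_0$ and $h(\pm 1,z)=0$ for all $z\in\T$. The first step is to convert the $H^4$ control into pointwise control of $\partial_y h$: since the base domain $[-1,1]\times\T$ is two-dimensional, the Sobolev embedding $H^3\hookrightarrow C^1$ yields
\[
\|\partial_y h\|_{L^\infty([-1,1]\times\T)} \le C\|h\|_{H^3}\le C\|h\|_{H^4}\le C\veps_0.
\]

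To get the claimed bound with the factor $(1-y^2)$ I will split into the two half-intervals. For $y\in[0,1]$, applying the fundamental theorem of calculus with the right endpoint gives $|h(y,z)|=|\int_y^1\partial_s h(s,z)\,ds|\le (1-y)\|\partial_y h\|_{L^\infty}$; since $1-y\le(1-y)(1+y)=1-y^2$ on $[0,1]$, this yields $|h(y,z)|\le(1-y^2)\|\partial_y h\|_{L^\infty}$. Symmetrically, for $y\in[-1,0]$ I use the left endpoint to obtain $|h(y,z)|\le(1+y)\|\partial_y h\|_{L^\infty}\le(1-y^2)\|\partial_y h\|_{L^\infty}$. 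Combining these two cases and the Sobolev bound above yields the first assertion
\[
|V(y,z)-y|\le C\veps_0(1-y^2).
\]

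For the ratio bound, I write
\[
\frac{j-V(y,z)}{j-y}=1-\frac{h(y,z)}{j-y},\qquad j\in\{\pm 1\},
\]
and apply the first estimate: for $j=1$, $|h(y,z)|/(1-y)\le(1-y^2)/(1-y)\cdot C\veps_0=(1+y)C\veps_0\le 2C\veps_0$, and analogously for $j=-1$. Choosing $\veps_0$ small enough (depending only on the absolute constant $C$ from Sobolev embedding) so that $2C\veps_0\le 1/2$ gives
\[
\Big|\frac{h(y,z)}{j-y}\Big|\le \tfrac{1}{2},
\]
hence $(j-V(y,z))/(j-y)\in[1/2,3/2]\subset[1/2,2]$ as desired.

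The argument is essentially a Hardy-type observation combined with Sobolev embedding; the only subtlety worth flagging is the need to split $[-1,1]$ at $y=0$ to upgrade the one-sided bound $(1\pm y)$ into the symmetric factor $(1-y^2)$, since a single one-sided Taylor expansion from one endpoint would not control the behavior near the opposite endpoint. No serious obstacle is expected.
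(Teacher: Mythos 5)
Your proof is correct and follows essentially the same route as the paper: the Sobolev embedding to bound $\|\partial_y(V-y)\|_{L^\infty}$ by $C\veps_0$, the fundamental theorem of calculus from the nearer endpoint on each half-interval to produce the factor $1-y^2$, and then the elementary ratio estimate with $\veps_0$ small. No issues.
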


\begin{proof} Since $\|V-y\|_{H^4}\leq \varepsilon_0$ and $V-y|_{y=\pm1}=0$, we deduce that for $y\in [-1,0]$,
  \begin{align*}
     |V(y,z)-y|=&\left|\int_{-1}^{y} \partial_y(V-y)dy\right|\leq (1+y)\|\nabla(V-y)\|_{L^\infty}\\
     \leq& C\|V-y\|_{H^4}(1+y)\leq C\varepsilon_0(1+y)\leq C\varepsilon_0(1-y^2).
  \end{align*}
Similarly, for $y\in[0,1]$, we have $|V-y|\leq C\varepsilon_0(1-y^2)$.

Using the first inequality of the lemma, we get
\begin{align*}& \f{j-V}{j-y}=1+\f{y-V}{j-y}\leq 1+\f{|V-y|}{|j-y|}\leq 1+C\varepsilon_0\f{1-y^2}{|j-y|}\leq 1+C\varepsilon_0,\\
   & \f{j-V}{j-y}=1+\f{y-V}{j-y}\geq 1-\f{|V-y|}{|j-y|}\geq 1-C\varepsilon_0\f{1-y^2}{|j-y|}\geq 1-C\varepsilon_0,
\end{align*}
which imply by taking $\varepsilon_0$ sufficiently small so that $C\varepsilon_0\leq\f12$ that
\begin{align*}
 \f{1}{2}\leq \f{j-V(y,z)}{j-y}\leq 2\quad j\in\{\pm1\}.
  \end{align*}
  \end{proof}

 \begin{Lemma}\label{lem:chi1}
Let $\chi_1=(V-\lambda-i\delta)^{-1}$ for some $\delta>0$. It holds that
  \begin{align*}
  &\|\chi_1\|_{L^\infty}\leq C\delta^{-1},\quad \|\chi_1\|_{L^\infty_{x,z}L^2_y}\leq C\delta^{-\f12},\\
     &\|\nabla\chi_1\|_{L^\infty}\leq C\delta^{-2},\quad \|\nabla\chi_1\|_{L^\infty_{x,z}L^2_y}\leq C\delta^{-\f32}.
  \end{align*}
 Here $C$ is a constant independent of $\lambda, \delta$.
\end{Lemma}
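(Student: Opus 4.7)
The plan is elementary: reduce everything to the scalar inequality $|V(y,z)-\lambda-i\delta|\ge\max(|V(y,z)-\lambda|,\delta)$ together with a change of variables in $y$. Since $\|V-y\|_{H^4}<\varepsilon_0$ with $V$ a function of the two variables $(y,z)\in[-1,1]\times\mathbb{T}$, Sobolev embedding gives $V\in C^2$ with $\|\partial_yV-1\|_{L^\infty}+\|\nabla V\|_{C^1}\le C\varepsilon_0$. In particular, for $\varepsilon_0$ small, $\partial_yV(y,z)\in[1/2,2]$ uniformly, and $|\nabla V|\le C$.

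The pointwise bounds are immediate. From $|V-\lambda-i\delta|\ge\delta$ one gets $\|\chi_1\|_{L^\infty}\le\delta^{-1}$. Since $\chi_1$ is independent of $x$ and $\nabla\chi_1=-\nabla V\,\chi_1^2$, one gets $|\nabla\chi_1|\le C|\chi_1|^2\le C\delta^{-2}$.

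For the $L^\infty_{x,z}L^2_y$ bounds I would freeze $(x,z)$ and perform the change of variable $u=V(y,z)-\lambda$ in the $y$-integral. Because $\partial_yV\ge 1/2$, the map $y\mapsto u$ is a bi-Lipschitz diffeomorphism onto its image with $|dy|\le 2|du|$. Therefore
\begin{align*}
\int_{-1}^{1}|\chi_1(y,z)|^2\,dy=\int_{-1}^{1}\frac{dy}{(V-\lambda)^2+\delta^2}\le 2\int_{\mathbb{R}}\frac{du}{u^2+\delta^2}=\frac{2\pi}{\delta},
\end{align*}
which yields $\|\chi_1\|_{L^\infty_{x,z}L^2_y}\le C\delta^{-1/2}$. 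The same substitution, combined with $|\nabla\chi_1|^2\le C|\chi_1|^4$, gives
\begin{align*}
\int_{-1}^{1}|\nabla\chi_1|^2\,dy\le C\int_{-1}^{1}\frac{dy}{((V-\lambda)^2+\delta^2)^2}\le 2C\int_{\mathbb{R}}\frac{du}{(u^2+\delta^2)^2}\le C\delta^{-3},
\end{align*}
and hence $\|\nabla\chi_1\|_{L^\infty_{x,z}L^2_y}\le C\delta^{-3/2}$.

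There is no real obstacle here; the only thing to be careful about is verifying the monotonicity $\partial_yV\ge 1/2$ uniformly in $(y,z)$, which is the point where the smallness assumption \eqref{ass:V} and Sobolev embedding $H^4([-1,1]\times\mathbb{T})\hookrightarrow C^1$ are used. The constants in the final inequalities depend only on $\|V-y\|_{H^4}$ through this lower bound on $\partial_yV$, hence are independent of $\lambda$ and $\delta$ as asserted.
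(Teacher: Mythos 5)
Your proof is correct and follows essentially the same route as the paper: the pointwise bounds come from $|V-\lambda-i\delta|\ge\delta$ and $\nabla\chi_1=-(\nabla V)\chi_1^2$, and the $L^\infty_{x,z}L^2_y$ bounds are obtained by using $\partial_yV\ge 1/2$ (from the smallness assumption on $\|V-y\|_{H^4}$) to change variables in $y$ and reduce to the explicit one-dimensional integrals of $(u^2+\delta^2)^{-1}$ and $(u^2+\delta^2)^{-2}$. The paper phrases the substitution by inserting the factor $\partial_yV$ into the integrand, but this is the same argument.
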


\begin{proof}
The first and third inequality is obvious. Note that
  \begin{align*}
     &\|1/(|V-\lambda|+\delta)\|_{L^\infty_{x,z}L^2_{y}}^2=\left\|\int_{-1}^{1} \f{1}{(|V-\lambda|+\delta)^2}dy\right\|_{L^\infty}\\
     &\leq C\left\|\int_{-1}^{1} \f{\partial_yV}{(|V-\lambda|+\delta)^2}dy\right\|_{L^\infty}\|1/(\partial_yV)^{-1}\|_{L^\infty}\leq C\|1/(|y-\lambda|+\delta)\|_{L^2_y}^2\leq C\delta^{-1},
  \end{align*}
which gives the second inequality. The last inequality can be proved similarly.
\end{proof}

 \subsection{Resolvent estimates}

 In this subsection, we establish the resolvent estimates in the case when
 $F\in H^1, L^2$ or $H^{-1}$. The proof is similar to the case of $V=y$
 considered in \cite{CLWZ}.

 \begin{Proposition}\label{prop:res-nav-s1}
Let $w\in H^2(\Omega)$ be a solution of \eqref{eq:OS-Nav}  with $F\in H^1(\Omega)$. Then it holds that
\begin{align*}
&\nu^{\f23}|k|^{\f13}\|\nabla w\|_{L^2} +(\nu k^2)^{\f13}\|w\|_{L^2} +\nu\|\Delta w\|_{L^2}+|k|\|(V-\lambda) w\|_{L^2}\leq C\|F\|_{L^2},\\&\nu^{\f23}|k|^{\f13}\|\nabla w\|_{L^2} +(\nu k^2)^{\f13}\|w\|_{L^2} +\nu\|\Delta w\|_{L^2}\leq C\nu^{\f16}|k|^{-\f23}\|\nabla F\|_{L^2},\\
&\nu^{\f23}|k|^{\f13}\|\nabla w\|_{L^2} +(\nu k^2)^{\f13}\|w\|_{L^2}\leq C\nu^{-\f13}|k|^{\f13}\|F\|_{H^{-1}},\\
&\nu^{\f16}|k|^{\f43}\|\nabla\varphi\|_{L^2} +\nu^{\f16}|k|^{\f56}\|w\|_{L^2_{x,z}L^1_y } \leq C\|F\|_{L^2}.
\end{align*}
If $\nu k^2\leq 1$, then we have
\begin{align*}
   &\nu^{\f16}|k|^{\f43}\|\nabla^2[(V-\lambda)\varphi]\|_{L^2} +\nu^{\f16}|k|^{\f{11}{6}}\|\nabla[(V-\lambda)\varphi]\|_{L^2_{x,z}L^\infty_y}\leq C\|F\|_{L^2}.
\end{align*}
\end{Proposition}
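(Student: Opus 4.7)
The plan is to follow the energy-multiplier approach of \cite{CLWZ} (developed there for the Couette case $V=y$) and adapt it to the variable coefficient $V(y,z)$ under the smallness assumption \eqref{ass:V}. The idea is to test \eqref{eq:OS-Nav} against three kinds of multipliers — $\bar w$, $(V-\lambda)\bar w$, and a critical-layer multiplier $\chi_1\bar w$ with $\chi_1=(V-\lambda-i\de\,\mathrm{sgn}(k))^{-1}$ and $\de=(\nu/|k|)^{1/3}$ the Airy width — and combine the real/imaginary parts of the resulting identities. The commutator terms produced by the non-constancy of $V$ will be controlled using Lemma~\ref{lem:V}, Lemma~\ref{lem:chi1}, and the smallness $\|V-y\|_{H^4}<\veps_0$; since $a\le \eps_1$ is also small, the $-a(\nu k^2)^{1/3}w$ term can be treated perturbatively.

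First I would derive the master inequality with right-hand side $\|F\|_{L^2}$. Testing with $\bar w$: the real part yields $\nu\|\na w\|_{L^2}^2\le a(\nu k^2)^{1/3}\|w\|_{L^2}^2+\|F\|_{L^2}\|w\|_{L^2}$, and the imaginary part controls $k\int(V-\lambda)|w|^2$. Testing with $(V-\lambda)\bar w$ and taking the imaginary part isolates $k\|(V-\lambda)w\|_{L^2}^2$; the commutator $\nu\int\na w\cdot(\na V)\bar w$ is absorbed via the first step and Young. Testing with $\chi_1\bar w$, the imaginary part concentrates $|w|^2$ in the critical layer $\{|V-\lambda|\lesssim\de\}$ (by Lemma~\ref{lem:V}, $\partial_yV$ is bounded below, so the level-set measures behave as in the Couette case), and combined with the basic energy estimate it extracts the enhanced-dissipation bound $(\nu k^2)^{1/3}\|w\|_{L^2}^2\le C\|F\|_{L^2}\|w\|_{L^2}$. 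These close to give the claimed bounds on $\nu^{2/3}|k|^{1/3}\|\na w\|$, $(\nu k^2)^{1/3}\|w\|$ and $|k|\|(V-\lambda)w\|$; then $\nu\|\Delta w\|$ follows directly by solving \eqref{eq:OS-Nav} for $\nu\Delta w$ in $L^2$.

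The $H^1$ version of the right-hand side is obtained by integrating by parts in $\langle F,\bar w\rangle$ and absorbing against the $\nu^{2/3}|k|^{1/3}\|\na w\|$ bound just obtained; the $H^{-1}$ version is dual, obtained by writing $F=\pa_j F^j$ and shifting the derivative onto $\bar w$ in the energy identity. For the $L^2_{x,z}L^1_y$ estimate I would use the splitting $\|w\|_{L^1_y}\le C\bigl(\de^{1/2}\|w\|_{L^2_y}+\de^{-1/2}\|(V-\lambda)w\|_{L^2_y}\bigr)$ with $\de=(\nu/|k|)^{1/3}$, which upon inserting the already-proved bounds yields $\nu^{1/6}|k|^{5/6}\|w\|_{L^2_{x,z}L^1_y}\le C\|F\|_{L^2}$. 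For $\na\vphi$, integrate by parts in $\|\na\vphi\|_{L^2}^2=-\int \bar\vphi\,w$ and use $\|\vphi\|_{L^\infty_yL^2_{x,z}}\le \sqrt{2}\|\pa_y\vphi\|_{L^2}\le C\|\na\vphi\|_{L^2}$ from $\vphi|_{y=\pm 1}=0$, giving $\|\na\vphi\|_{L^2}\le C\|w\|_{L^2_{x,z}L^1_y}$; combined with $|k|\|\vphi\|_{L^2}\le\|\na\vphi\|_{L^2}$ (from $\pa_x\vphi=ik\vphi$) this furnishes the $\nu^{1/6}|k|^{4/3}\|\na\vphi\|_{L^2}$ bound. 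Finally, under $\nu k^2\le 1$, expanding
\begin{align*}
\Delta[(V-\lambda)\vphi]=(V-\lambda)w+2\na V\cdot\na\vphi+(\Delta V)\vphi
\end{align*}
and using the previously established $|k|\|(V-\lambda)w\|_{L^2}$ and $\na\vphi$ bounds, together with standard elliptic regularity for the Dirichlet data $(V-\lambda)\vphi|_{y=\pm 1}=0$, gives the $\nabla^2$ estimate; the $L^2_{x,z}L^\infty_y$ version follows by one-dimensional Sobolev embedding in $y$ together with $\pa_x=ik$ absorbing the extra factor of $|k|^{1/2}$.

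The main obstacle I anticipate is the bookkeeping for the variable-coefficient commutators $\na V\cdot\na(\cdot)$, $(\Delta V)(\cdot)$, and the terms arising when a derivative falls on $\chi_1$ rather than on $w$; these do not appear in the $V=y$ analysis of \cite{CLWZ}. Lemma~\ref{lem:V} and the weighted pointwise/integral bounds on $\chi_1$ and $\na\chi_1$ in Lemma~\ref{lem:chi1} should be exactly tight enough to absorb them once $\veps_0$ is taken sufficiently small, but the $\de^{-2}$ growth of $\|\na\chi_1\|_{L^\infty}$ forces one to be careful with where each derivative lands and to make consistent use of Young's inequality with the enhanced-dissipation small parameter $(\nu k^2)^{1/3}$.
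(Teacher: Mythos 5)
Your overall architecture — the multipliers $\bar w$, $(V-\lambda)\bar w$ and $\chi_1\bar w$ with $\chi_1=(V-\lambda-i\delta)^{-1}$, $\delta=(\nu/|k|)^{1/3}$, commutators absorbed via Lemmas \ref{lem:V}--\ref{lem:chi1}, the splitting $w=\chi_1(V-\lambda-i\delta)w$ for the $L^2_{x,z}L^1_y$ bound, and the elliptic identity for $(V-\lambda)\varphi$ — is the same as the paper's, and your treatment of the $L^2$ and $H^{-1}$ data cases and of the last display is essentially sound (the paper gets the enhanced dissipation for $F\in L^2$ from the identity $-2\mathbf{Re}\langle(V-\lambda)w,\partial_yw\rangle=\langle\partial_yV,|w|^2\rangle\ge\|w\|_{L^2}^2/2$ rather than from $\chi_1$, but your route also closes). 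However, there is a genuine gap in your plan for the $H^1$ case. The target there is $\nu^{2/3}|k|^{1/3}\|\na w\|_{L^2}+\nu\|\Delta w\|_{L^2}\le C\nu^{1/6}|k|^{-2/3}\|\na F\|_{L^2}=C|k|^{-1}(\nu k^2)^{1/6}\|\na F\|_{L^2}$, i.e. a gain of $(\nu k^2)^{1/6}$ over what the $L^2$-data estimate already gives via $\|F\|_{L^2}\le|k|^{-1}\|\na F\|_{L^2}$. "Integrating by parts in $\langle F,\bar w\rangle$ and absorbing" cannot produce this gain: the energy identity leaves you with $\mathbf{Re}\langle F,w\rangle\le\|F\|_{L^2}\|w\|_{L^2}\le\nu^{-1/6}|k|^{-7/3}\|\na F\|_{L^2}^2$, which is larger than the required $|k|^{-2}\|\na F\|_{L^2}^2$ by exactly $(\nu k^2)^{-1/6}$ when $\nu k^2\le1$; and reading $\nu\Delta w$ off the equation leaves a bare $\|F\|_{L^2}$ on the right, which is likewise too big. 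The paper's Step 2 needs three further ingredients you do not supply: (i) for $\|w\|_{L^2}$, the sharpened critical-layer bound $\|\chi_1F\|_{L^2}\le C\delta^{-1/2}\|F\|_{L^2_{x,z}L^\infty_y}\le C\delta^{-1/2}|k|^{-1/2}\|\na F\|_{L^2}$ (Lemma \ref{lem:sob-f}) in place of the crude $\delta^{-1}\|F\|_{L^2}$; (ii) for $\|\na w\|_{L^2}$, a second application of the resolvent identity tested against $\chi_1F$ in order to bound $\mathbf{Re}\langle F,w\rangle$ by $\delta\|w\|_{L^2}\|\chi_1F\|_{L^2}+|k|^{-1}(\nu\|\Delta w\|_{L^2}\|\chi_1F\|_{L^2}+\delta\|\chi_1F\|_{L^2}^2)$; (iii) for $\|\Delta w\|_{L^2}$, testing the equation against $\Delta w$ and integrating $\langle F,\Delta w\rangle$ by parts (with the boundary term $\|F\|_{L^2(\partial\Omega)}\|\partial_yw\|_{L^2(\partial\Omega)}$), together with a Young-inequality bootstrap among the three quantities. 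Without (i)--(iii) the second display of the proposition is out of reach.

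A smaller slip: in your $\na\varphi$ estimate you bound $\|\na\varphi\|_{L^2}^2\le\|\varphi\|_{L^2_{x,z}L^\infty_y}\|w\|_{L^2_{x,z}L^1_y}$ with $\|\varphi\|_{L^2_{x,z}L^\infty_y}\le C\|\partial_y\varphi\|_{L^2}$, which only yields $\|\na\varphi\|_{L^2}\le C\|w\|_{L^2_{x,z}L^1_y}$ and hence $\nu^{1/6}|k|^{5/6}\|\na\varphi\|_{L^2}\le C\|F\|_{L^2}$ — short of the claimed $|k|^{4/3}$ by a factor $|k|^{1/2}$. You need the interpolation $\|\varphi\|_{L^2_{x,z}L^\infty_y}\le C\|\varphi\|_{L^2}^{1/2}\|\na\varphi\|_{L^2}^{1/2}\le C|k|^{-1/2}\|\na\varphi\|_{L^2}$ (using $\partial_x\varphi=ik\varphi$), as in Lemma \ref{lem:elliptic-weight}. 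This is easily repaired, but the $H^1$ issue above is structural.
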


\begin{proof}
{\bf Step 1}. Case of $F\in L^2(\Om)$.\smallskip

Taking $L^2$ inner product with $w$ to \eqref{eq:OS-Nav}, and integrating by parts, we obtain
\begin{align}\label{eq:w-grad}
   & \nu\|\nabla w\|^2_{L^2} \leq \|F\|_{L^2}\|w\|_{L^2}+a(\nu k^2)^{\f13}\|w\|^2_{L^2}.
\end{align}
Taking $L^2$ inner product with $(V-\lambda)w$ to \eqref{eq:OS-Nav}, we get
\begin{align*}
   &\left\langle -\nu\Delta w+ik(V(y,z)-\lambda)w-a(\nu k^2)^{\f13}w, (V(y,z)-\lambda)w\right\rangle=\langle F,(V(y,z)-\lambda)w\rangle,
\end{align*}
and then taking the imaginary part, we get
\begin{align*}
   &|k|\|(V-\lambda) w\|^2_{L^2}
   \leq\left|\mathbf{Im}\big( \langle \nu\Delta w, (V-\lambda)w\rangle\big)\right|+ \|F\|_{L^2}\|(V-\lambda)w\|_{L^2},
\end{align*}
where
\begin{align*}
 \left|\mathbf{Im}\big( \langle \nu\Delta w, (V-\lambda)w\rangle\big)\right|=&\left|\mathbf{Im}\big( -\langle \nu\nabla w, (V-\lambda)\nabla w\rangle-\langle \nu\nabla w, (\nabla V) w\rangle\big)\right|\\ \nonumber=&\left|\mathbf{Im}\big(-\langle \nu\nabla w, (\nabla V) w\rangle\big)\right|\leq\nu\|\nabla V\|_{L^\infty}\|\nabla w\|_{L^2}\|w\|_{L^2}.
\end{align*}
Then we infer that
\begin{align}\label{eq:w-weight}
   |k|\|(V-\lambda)w\|^2_{L^2}
   \leq& C\big(\|\nabla w\|_{L^2}\|w\|_{L^2}+|k|^{-1}\|F\|^2_{L^2}\big).
\end{align}
Here we used $\|\nabla V\|_{L^\infty}\leq C\|V-y\|_{H^3}+1\leq C$.

Using the fact that
\begin{align*}
 -2\mathbf{Re}\left\langle (V-\lambda)w, \partial_yw\right\rangle=-\left\langle (V-\lambda), \partial_y|w|^2\right\rangle=\left\langle \partial_yV, |w|^2\right\rangle\geq\|w\|_{L^2}^2/2,
\end{align*}
we infer that
\begin{align*}
\|w\|_{L^2}^2&\leq 4\|(V-\lambda)w\|_{L^2}\|\partial_y w\|_{L^2}\\
  & \leq C\Big( \big(\nu|k|^{-1}\|\nabla w\|_{L^2}\|w\|_{L^2}\big)^{\f12}+|k|^{-1}\|F\|_{L^2}\Big)\|\nabla w\|_{L^2},\end{align*}
which along with \eqref{eq:w-grad} implies that (for $a$ small enough)
\begin{align}\label{eq:w-L2}
   &(\nu k^2)^{\f13}\|w\|_{L^2}\leq C\|F\|_{L^2}.
\end{align}
By \eqref{eq:w-weight}, \eqref{eq:w-L2} and \eqref{eq:OS-Nav}, we get
\beno
\nu\|\Delta w\|_{L^2}\leq |k|\|(V-\lambda)w\|_{L^2}+(\nu k^2)^{\f13}\|w\|_{L^2}+\|F\|_{L^2}.
\eeno
This shows that
\begin{align}
   & \nu^{\f23}|k|^{\f13}\|\nabla w\|_{L^2}+(\nu k^2)^{\f13}\|w\|_{L^2} +\nu\|\Delta w\|_{L^2}+|k|\|(V-\lambda)w\|_{L^2}\leq C\|F\|_{L^2}.\label{eq:w-L2-f}
\end{align}

{\bf Step 2.} Case of $F\in H^1(\Om)$.\smallskip

If  $\nu k^2\geq1$, the proof is obvious due to
  \begin{align*}
     & \nu^{\f23}|k|^{\f13}\|\nabla w\|_{L^2} +(\nu k^2)^{\f13}\|w\|_{L^2} +\nu\|\Delta w\|_{L^2}\\
     &\leq C\|F\|_{L^2}= C|k|^{-1}\|kF\|_{L^2}\leq C(\nu k^2)^{\f16}|k|^{-1}\|kF\|_{L^2}\leq C\nu^{\f16}|k|^{-\f23}\|\nabla F\|_{L^2}.
  \end{align*}
  
Now we assume  $\nu k^2\leq1$ and $k>0$ in this step (the case of $k<0$ can be proved by taking conjugation).  Let $\chi_1(y,z)=1/(V(y,z)-\lambda-i\delta)$ with $ \delta=(\nu/|k|)^{\f13}$. Then we have
\beno
ik(V-\lambda)w-a(\nu k^2)^{\f13}w=ik(V-\lambda+ia\delta)w.
\eeno
Taking $L^2$ inner product with $\chi_1w$ to \eqref{eq:OS-Nav}, and integrating by parts, we obtain
  \begin{align*}
 & \nu\big\langle\nabla w,\nabla(\chi_1w)\rangle +ik\langle w,(V-\lambda-ia\delta)\chi_1 w\big\rangle  = \langle F,\chi_1 w\rangle.
   \end{align*}
  Taking the imaginary part, we get
  \begin{align}\label{eq:w-H1-1}
     & |k|\left|\mathbf{Re}\langle w,(V-\lambda-ia\delta)\chi_1w\rangle\right| \leq \nu\left|\langle \nabla w,\nabla(\chi_1 w)\rangle\right| +\left|\langle F,\chi_1 w\rangle\right|.
  \end{align}
Using the fact that
  \begin{align*}
  &(V-\lambda-ia\delta)\chi_1=1+i(1-a)\delta\chi_1=1+i(1-a)(V(y,z)-\lambda+i\delta)\delta|\chi_1|^2,\\
  &{\rm thus},\quad \mathbf{Re}\big((V-\lambda+ia\delta)\chi_1\big)=1-(1-a)\delta^2|\chi_1|^2\geq 1-\delta^2|\chi_1|^2,
  \end{align*}
 we infer that
   \begin{align*}
   \left|\mathbf{Re}\langle w,(V-\lambda-ia\delta)\chi_1w\rangle\right| \geq\| w\|_{L^2}^2-\delta^2\|\chi_1 w\|_{L^2}^2.
  \end{align*}
  By Lemma \ref{lem:sob-f}, $\delta^2\|\chi_1 w\|_{L^2}^2\leq C \delta\| w\|_{L^2}\|\nabla w\|_{L^2},$ hence,
  \begin{align}\label{eq:w-H1-2}
    & |\mathbf{Re}\langle w,(V-\lambda-ia\delta)\chi_1w\rangle\rangle|\geq \f34\|w\|^2_{L^2}-C\delta^2\|\nabla w\|^2_{L^2}.
  \end{align}
By Lemma \ref{lem:chi1}, we have
  \begin{align}\label{eq:w-H1-3}
    &\left|\langle\nabla w,\nabla(\chi_1 w)\rangle\right|\leq \|\nabla w\|_{L^2}\|\nabla (\chi_1w)\|_{L^2}\leq C\|\nabla w\|_{L^2}\big(\delta^{-2}\|w\|_{L^2}+\delta^{-1}\|\nabla w\|_{L^2}\big),\\
  \label{eq:w-H1-4}
     & |\langle F,\chi_1w\rangle|\leq \|\chi_1 F\|_{L^2} \|w\|_{L^2}.
  \end{align}
  Summing up \eqref{eq:w-H1-1}-\eqref{eq:w-H1-4}, we obtain
  \begin{align*}
  &|k|\Big(\f34\|w\|_{L^2}^2-C\delta^2\|\nabla w\|^2_{L^2}\Big)\\
     &\leq C\nu\|\nabla w\|_{L^2}(\delta^{-2}\|w\|_{L^2}+\delta^{-1}\|\nabla w\|_{L^2})+C\|\chi_1 F\|_{L^2}\|w\|_{L^2},
  \end{align*}
which along with Lemma \ref{lem:sob-f} and by recalling $\nu|k|^{-1}=\delta^3$ gives
  \begin{align*}
     \|w\|_{L^2}^2\leq& C\Big( \delta^2\|\nabla w\|_{L^2}^2 +\nu|k|^{-1}\|\nabla w\|_{L^2}\big(\delta^{-2}\|w\|_{L^2}+\delta^{-1}\|\nabla w\|_{L^2}\big)+|k|^{-1}\|\chi_1 F\|_{L^2} \|w\|_{L^2} \Big)\\
     \leq& C\big( \delta\|\nabla w\|_{L^2}\|w\|_{L^2} +\delta^2\|\nabla w\|^2_{L^2}+\delta^{-\f12}|k|^{-\f32}\|\nabla F\|_{L^2} \|w\|_{L^2}\big).
  \end{align*}
 Then  Young's inequality  gives
  \begin{align}\label{eq:w-H1-44}
     & \|w\|_{L^2}\leq C\big(\delta\|\nabla w\|_{L^2}+\delta^{-\f12}|k|^{-\f32}\|\nabla F\|_{L^2}\big).
  \end{align}

  Next we estimate $\|\na w\|_{L^2}$ and $\|\Delta w\|_{L^2}$.   First of all, we have
   \begin{align}\label{eq:w-H1-5}
     &\nu\|\nabla w\|_{L^2}^2\leq a(\nu k^2)^{\f13}\|w\|^2_{L^2}+|\mathbf{Re}\langle F,w\rangle|.
  \end{align}
Taking the imaginary part to
\begin{align*}
   & \big\langle -\nu\Delta w+ik(V-\lambda)w-a(\nu k^2)^{\f13}w, \chi_1(y,z) F\big\rangle=\langle F,\chi_1
      F\rangle,
\end{align*}
we get
\begin{align*}
   &|k|\left|\mathbf{Re}\langle w,(V-\lambda-ia\delta)\chi_1 F\rangle\right|
   \leq\nu\|\Delta w\|_{L^2} \|\chi_1 F\|_{L^2}+ \left|\mathbf{Im}\big( \langle F, \chi_1 F\rangle\big)\right|.
\end{align*}
Using the facts that $(V-\lambda-ia\delta)\chi_1=1+i(1-a)\delta\chi_1,\ \mathbf{Im}\chi_1=\delta|\chi_1|^2,$
we have
  \begin{align*}
  &\left|\mathbf{Re}\langle w,(V-\lambda-ia\delta)\chi_1 F\rangle\right| \geq|\mathbf{Re}\langle F,w\rangle|-\delta\|w\|_{L^2}\|\chi_1 F\|_{L^2},\\&\left|\mathbf{Im}\langle F, \chi_1 F\rangle\right|=\delta\|\chi_1 F\|^2_{L^2}\leq C|k|^{-1}\|\nabla F\|^2_{L^2}.
  \end{align*}
 This shows that
  \begin{align*}
  &|\mathbf{Re}\langle F,w\rangle|\leq\delta\|w\|_{L^2}\|\chi_1 F\|_{L^2}+|k|^{-1}\big(\nu\|\Delta w\|_{L^2} \|\chi_1 F\|_{L^2}+\delta\|\chi_1 F\|^2_{L^2}\big),
  \end{align*}
 which along with \eqref{eq:w-H1-5} and $ \delta^3=\nu/|k|$ implies that
 \begin{align}\label{eq:w-H1-6}
 &\nu\|\nabla w\|_{L^2}^2\leq a(\nu k^2)^{\f13}\|w\|^2_{L^2}+\big(\delta\|w\|_{L^2}+\delta^3\|\Delta w\|_{L^2}\big)\|\chi_1 F\|_{L^2}+|k|^{-1}\delta\|\chi_1 F\|^2_{L^2}.
  \end{align}

Taking $L^2$ inner product with $\Delta w$ to \eqref{eq:OS-Nav}, we have
  \begin{align*}
 \nu\|\Delta w\|^2_{L^2}- k\mathbf{Im}\left(\langle \nabla((V-\lambda)w),\nabla w \rangle\right)-a(\nu k^2)^{\f13}\|\nabla w\|^2_{L^2}=\mathbf{Re}\big(\langle F,-\Delta w\rangle\big).
  \end{align*}
 Due to $\|\nabla V\|_{L^\infty}\leq C$, we have
  \begin{align*}
     &\nu\|\Delta w\|^2_{L^2}-a(\nu k^2)^{\f13}\|\nabla w\|^2_{L^2}-|\langle F,\Delta w\rangle|\\ &\quad\leq |k|\|\nabla V\|_{L^\infty}\|w\|_{L^2}\|\nabla w\|_{L^2}\leq C|k|\|w\|_{L^2}\|\nabla w\|_{L^2}.
  \end{align*}
 Due to $w|_{y=\pm 1}=0,$ we get by integration by parts and Lemma \ref{lem:sob-f} that
  \begin{align*}
  |\langle F,\Delta w\rangle|\leq& \|\nabla F\|_{L^2}\|\nabla w\|_{L^2}+\| F\|_{L^2(\partial\Omega)}\|\partial_y w\|_{L^2(\partial\Omega)}\\
     \leq& \|\nabla F\|_{L^2}\|\nabla w\|_{L^2}+C|k|^{-\f12}\|\nabla F\|_{L^2}\|\partial_y w\|_{L^2}^{\f12}\|\nabla\partial_y w\|_{L^2}^{\f12}\\
     \leq& \|\nabla F\|_{L^2}\|\nabla w\|_{L^2}+C|k|^{-\f12}\|\nabla F\|_{L^2}\|\nabla w\|_{L^2}^{\f12}\|\Delta w\|_{L^2}^{\f12}.\end{align*}
 Summing up, we arrive at
  \begin{align*}
      \|\Delta w\|^2_{L^2}\leq& a\nu^{-\f23} |k|^{\f23}\|\nabla w\|^2_{L^2}+C\nu^{-1}|k|\|w\|_{L^2}\|\nabla w\|_{L^2}\\&+\nu^{-1}\|\nabla F\|_{L^2}\|\nabla w\|_{L^2}+C\nu^{-1}|k|^{-\f12}\|\nabla F\|_{L^2}\|\nabla w\|_{L^2}^{\f12}\|\Delta w\|_{L^2}^{\f12}.
  \end{align*}
Then  Young's inequality gives
\begin{align*}
      \|\Delta w\|^2_{L^2}\leq& C\nu^{-\f23} |k|^{\f23}\|\nabla w\|^2_{L^2}+C\nu^{-1}|k|\|w\|_{L^2}\|\nabla w\|_{L^2}\\&+C\nu^{-1}\|\nabla F\|_{L^2}\|\nabla w\|_{L^2}+C\nu^{-\f43}|k|^{-\f23}\|\nabla F\|_{L^2}^{\f43}\|\nabla w\|_{L^2}^{\f23}\\ \leq& C\nu^{-\f23} |k|^{\f23}\|\nabla w\|^2_{L^2}+C\nu^{-\f43}|k|^{\f43}\|w\|_{L^2}^2+C\big(\nu^{-\f53}|k|^{-\f43}+\nu^{-\f43}|k|^{-\f23}\big)\|\nabla F\|_{L^2}^{2}.
 \end{align*}
Since $ \delta=(\nu/|k|)^{\f13},\ \nu k^2\leq 1,\ \nu^{-\f43}|k|^{-\f23}\leq\nu^{-\f53}|k|^{-\f43}=\delta^{-5}|k|^{-3}$, we have
\begin{align}\label{eq:w-H1-7}
\|\Delta w\|_{L^2}\leq C\delta^{-1} \|\nabla w\|_{L^2}+C\delta^{-2}\|w\|_{L^2}+C\delta^{-\f52}|k|^{-\f32}\|\nabla F\|_{L^2}.
  \end{align}
Plugging \eqref{eq:w-H1-7} into \eqref{eq:w-H1-6}, we get  by \eqref{eq:w-H1-44} that
 \begin{align*}\nu\|\nabla w\|_{L^2}^2\leq& a(\nu k^2)^{\f13}\|w\|^2_{L^2}+C\big(\delta\|w\|_{L^2}+\delta^2\|\nabla w\|_{L^2}+\delta^{\f12}|k|^{-\f32}\|\nabla F\|_{L^2}\big)\|\chi_1 F\|_{L^2}\\&+|k|^{-1}\delta\|\chi_1 F\|^2_{L^2}\\ \leq& a(\nu k^2)^{\f13}\|w\|^2_{L^2}+C\big(\delta^2\|\nabla w\|_{L^2}+\delta^{\f12}|k|^{-\f32}\|\nabla F\|_{L^2}\big)\|\chi_1 F\|_{L^2}+|k|^{-1}\delta\|\chi_1 F\|^2_{L^2},
  \end{align*}
 from which, Young's inequality, \eqref{eq:w-H1-44} and Lemma \ref{lem:sob-f}, we infer that
 \begin{align*}\nu\|\nabla w\|_{L^2}^2\leq& C\big(a(\nu k^2)^{\f13}\|w\|^2_{L^2}+\delta^{\f12}|k|^{-\f32}\|\nabla F\|_{L^2}\|\chi_1 F\|_{L^2}+|k|^{-1}\delta\|\chi_1 F\|^2_{L^2}\big)\\ \leq& Ca\big(\nu k^2)^{\f13}(\delta\|\nabla w\|_{L^2}+\delta^{-\f12}|k|^{-\f32}\|\nabla F\|_{L^2}\big)^2+C|k|^{-2}\|\nabla F\|_{L^2}^2\\ \leq& Ca\big(\nu\|\nabla w\|_{L^2}^2+|k|^{-2}\|\nabla F\|_{L^2}^2\big)+C|k|^{-2}\|\nabla F\|_{L^2}^2.
  \end{align*}
 Taking $\eps_1$ small  enough so that $Ca\leq C\eps_1\leq \f12$, we conclude
 \beno
 \nu\|\nabla w\|_{L^2}^2\leq C|k|^{-2}\|\nabla F\|_{L^2}^2,
 \eeno
 which along with \eqref{eq:w-H1-44} and  \eqref{eq:w-H1-7} gives
   \begin{align*}
     & \nu\|\Delta w\|_{L^2} + \nu^{\f23}|k|^{\f13}\|\nabla w\|_{L^2}+(\nu k^2)^{\f13}\|w\|_{L^2}\leq C\nu^{\f16}|k|^{-\f23}\|\nabla F\|_{L^2}.
  \end{align*}

{\bf Step 3.} Case of $F\in H^{-1}(\Om)$.\smallskip

If  $\nu k^2\geq1$, taking $L^2$ inner product with $w$ to \eqref{eq:OS-Nav}, and integrating by parts, we obtain
\begin{align*}
   & \nu\|\nabla w\|^2_{L^2} \leq \|F\|_{H^{-1}}\|w\|_{H^1}+a(\nu k^2)^{\f13}\|w\|^2_{L^2}\leq C\|F\|_{H^{-1}}\|\nabla w\|_{L^2}+a(\nu k^2)\|w\|^2_{L^2}.
\end{align*}
As $a(\nu k^2)\|w\|^2_{L^2}\leq a\nu\|\nabla w\|^2_{L^2}\leq (\nu/2)\|\nabla w\|^2_{L^2}$, we deduce that $\nu\|\nabla w\| \leq C\|F\|_{H^{-1}} $ and\begin{align*}
    \nu^{\f23}|k|^{\f13}\|\nabla w\|_{L^2} +(\nu k^2)^{\f13}\|w\|_{L^2}&\leq (\nu^{\f23}|k|^{\f13}+(\nu k^2)^{\f13}|k|^{-1})\|\nabla w\|_{L^2}\\&\leq 2\nu^{\f23}|k|^{\f13}\|\nabla w\|_{L^2}
     \leq C\nu^{-\f13}|k|^{\f13}\|F\|_{H^{-1}}.
\end{align*}
Now we assume  $\nu k^2\leq1$ and $k>0$ in this step. Then $\delta\leq1$. By Lemma \ref{lem:chi1}, we have
  \begin{align*}
      |\langle F,\chi_1w\rangle|\leq& \|F\|_{H^{-1}}\big(\|\nabla(\chi_1 w)\|_{L^2}+\|\chi_1 w\|_{L^2}\big)\\
      \leq& C\|F\|_{H^{-1}}\big(\|\chi_1\|_{L^\infty}\|\nabla w\|_{L^2}+\|\nabla\chi_1\|_{L^\infty}\|w\|_{L^2}+\|\chi_1\|_{L^\infty}\|w\|_{L^2}\big)\\
      \leq& C\delta^{-2}\|F\|_{H^{-1}}\big(\delta\|\nabla w\|_{L^2}+\|w\|_{L^2}+\delta\|w\|_{L^2}\big).
  \end{align*}
  As $\delta\leq1$, this shows that
  \begin{align}\label{eq:w-H-1-1}
  |\langle F,\chi_1w\rangle|\leq C\|F\|_{H^{-1}}\big(\delta^{-1}\|\nabla w\|_{L^2}+\delta^{-2}\|w\|_{L^2}\big).
  \end{align}
  Summing up \eqref{eq:w-H1-1}-\eqref{eq:w-H1-3} and \eqref{eq:w-H-1-1}, we get
  \begin{align*}
 & |k|\Big(\f34\|w\|_{L^2}^2-C\delta^2\|\nabla w\|^2_{L^2}\Big)\\
 & \leq \nu\|\nabla w\|_{L^2}\big(\delta^{-2}\|w\|_{L^2}+\delta^{-1}\|\nabla w\|_{L^2}\big) +\delta^{-2}\|F\|_{H^{-1}}\big(\delta\|\nabla w\|_{L^2}+\|w\|_{L^2}\big),
  \end{align*}
which gives
  \begin{align*}
     \|w\|_{L^2}^2\leq& C\Big(\delta^2\|\nabla w\|_{L^2}^2 +\nu|k|^{-1}\|\nabla w\|_{L^2}\big(\delta^{-2}\|w\|_{L^2}+\delta^{-1}\|\nabla w\|_{L^2}\big)\\
     &+\delta^{-2}|k|^{-1}\| F\|_{H^{-1}} \big(\delta\|\nabla w\|_{L^2}+\|w\|_{L^2}\big) \Big)\\
     \leq& C\Big( \delta\|\nabla w\|_{L^2}\|w\|_{L^2} +\delta^2\|\nabla w\|^2_{L^2}+\delta^{-2}|k|^{-1}\| F\|_{H^{-1}} \big(\delta\|\nabla w\|_{L^2}+\|w\|_{L^2}\big)\Big).
  \end{align*}
Then  Young's inequality gives
  \begin{align}\label{eq:w-H-1-2}
   \|w\|_{L^2}\leq C\big(\delta\|\nabla w\|_{L^2}+\delta^{-2}|k|^{-1}\|F\|_{H^{-1}}\big).
  \end{align}
  On the other hand, we have
  \begin{align*}
     \nu\|\nabla w\|_{L^2}^2\leq& a(\nu k^2)^{\f13}\|w\|^2_{L^2}+|\langle F,w\rangle|\\
     \leq& a(\nu k^2)^{\f13}\|w\|_{L^2}^{2}+\| F\|_{H^{-1}}\big(\|\nabla w\|_{L^2}+\|w\|_{L^2}\big)\\
     \leq &a(\nu k^2)^{\f13}\|w\|_{L^2}^{2}+\| F\|_{H^{-1}}\big(\|\nabla w\|_{L^2}+\delta^{-1}\|w\|_{L^2}\big).
  \end{align*}
from which and \eqref{eq:w-H-1-2},  and by taking  $\eps_1$ small enough so that $Ca^{\f12}\leq C\eps_1^{\f12}\leq \f12$, we infer that
  \begin{align}\label{9}
     & \nu^{\f23}|k|^{\f13}\|\nabla w\|_{L^2}+(\nu k^2)^{\f13}\|w\|_{L^2}\leq C\nu^{-\f13}|k|^{\f13}\| F\|_{H^{-1}}.
  \end{align}

{\bf Step 4.} Estimates of $\|w\|_{L^2_{x,z}L^1_y}$ and $\|\na\varphi\|_{L^2}$.\smallskip

By Lemma \ref{lem:chi1} and  \eqref{eq:w-L2-f},  we have
\begin{align*}
   \|w\|_{L^2_{x,z}L^1_y}=&\|\chi_1(V-\lambda-i\delta)w\|_{L^2_{x,z}L^1_y}\\\leq& \|\chi_1\|_{L^\infty_{x,z}L^2_{y}} \|(V-\lambda-i\delta)w\|_{L^2}\\
   \leq& C\delta^{-\f12}\big(\delta\|w\|_{L^2}+\|(V-\lambda)w\|_{L^2}\big)\\
   \leq &C\delta^{\f12}|\nu k^2|^{-\f13}\|F\|_{L^2}+C\delta^{-\f12}|k|^{-1}\|F\|_{L^2}\leq C\nu^{-\f16}|k|^{-\f56}\|F\|_{L^2},
\end{align*}
from which and Lemma \ref{lem:elliptic-weight}, we infer that
\begin{align}
  \nu^{\f16}|k|^{\f43}\|\nabla\varphi\|_{L^2}+\nu^{\f16}|k|^{\f56}\|w\|_{L^2_{x,z}L^1_y} \leq C\nu^{\f16}|k|^{\f56}\|w\|_{L^2_{x,z}L^1_y}\leq C\|F\|_{L^2}.\label{eq:w-L1}
\end{align}

{\bf Step 5.} Estimate of $\|\na^k((V-\lambda)\varphi)\|, k=1,2$.\smallskip

Let $F_1=(V-\lambda)\varphi$. Then we have
\beno
\Delta F_1=(V-\lambda)w+ 2\nabla V\cdot\nabla\varphi+ \varphi\Delta V, \quad F_1|_{y=\pm1}=0,\quad \partial_xF_1=ikF_1.
\eeno
Thus, we get by \eqref{eq:w-L2-f} and \eqref{eq:w-L1} that
  \begin{align*}
  \|\Delta F_1\|_{L^2}\leq&\|(V-\lambda)w\|_{L^2}+2\|\nabla V\cdot\nabla\varphi\|_{L^2}+\|\varphi\Delta V\|_{L^2}\\ \leq&C\big(|k|^{-1}\|F\|_{L^2}+\|\nabla\varphi\|_{L^2}+\|\varphi\|_{L^2}\big)
  \leq C\nu^{-\f16}|k|^{-\f43}\|F\|_{L^2}.
  \end{align*}
As $F_1|_{y=\pm1}=0,\ \partial_xF_1=ikF_1$, by Lemma \ref{lem:sob-f},  we have
\begin{align*}
 &\|\nabla^2 F_1\|_{L^2}\leq C\|\Delta F_1\|_{L^2}\leq C\nu^{-\f16}|k|^{-\f43}\|F\|_{L^2},\\&\|\nabla F_1\|_{L^2_{x,z}L^\infty_y}\leq C|k|^{-\f12}\|\nabla^2 F_1\|_{L^2}\leq C\nu^{-\f16}|k|^{-\f{11}{6}}\|F\|_{L^2}.
\end{align*}
\begin{align*}
   &\nu^{\f16}|k|^{\f43}\|\nabla^2[(V-\lambda)\varphi]\|_{L^2} +\nu^{\f16}|k|^{\f{11}{6}}\|\nabla[(V-\lambda)\varphi]\|_{L^2_{x,z}L^\infty_y}\leq C\|F\|_{L^2}.
\end{align*}
This completes the proof of the proposition.
\end{proof}\smallskip

The following proposition is a simple corollary of Proposition \ref{prop:res-nav-s1}.

\begin{Proposition}\label{prop:res-nav-s2}
Let $w\in H^2(\Omega)$ be a solution of \eqref{eq:OS-Nav} with $F\in L^2(\Omega)$. If $F=F_1+F_2+\partial_x f_1+\partial_y f_2+\partial_z f_3$ and $F_2|_{y=\pm1}=0$, then it holds that
\begin{align*}
&\nu^{\f23}\|\nabla w\|_{L^2} +(\nu |k|)^{\f13}\|w\|_{L^2} \leq C\Big(|k|^{-\f13}\|F_1\|_{L^2}+\nu^{\f16}|k|^{-1}\|\nabla F_2\|_{L^2}+\nu^{-\f13}\|(f_1,f_2,f_3)\|_{L^2}\Big).
\end{align*}
\end{Proposition}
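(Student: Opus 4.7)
The plan is a direct linear decomposition. By linearity and by uniqueness of solutions to \eqref{eq:OS-Nav} (which follows at once from the a priori $L^2$ bound in Proposition \ref{prop:res-nav-s1} applied to the homogeneous problem, taking $\varepsilon_1$ small), we may write $w=w^{(1)}+w^{(2)}+w^{(3)}$, where each $w^{(i)}\in H^2(\Omega)$ solves \eqref{eq:OS-Nav} with the Navier-slip condition $w^{(i)}|_{y=\pm1}=0$ and constraint $\partial_xw^{(i)}=ikw^{(i)}$, and with respective right-hand sides $F_1$, $F_2$, and $\partial_xf_1+\partial_yf_2+\partial_zf_3$. Existence of each $w^{(i)}$ is standard once the a priori estimates are in hand.

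Now I would apply Proposition \ref{prop:res-nav-s1} termwise, matching each piece of $F$ with the appropriate case. For $w^{(1)}$, since $F_1\in L^2$, the first inequality of Proposition \ref{prop:res-nav-s1} yields
\begin{align*}
\nu^{2/3}|k|^{1/3}\|\nabla w^{(1)}\|_{L^2}+(\nu k^2)^{1/3}\|w^{(1)}\|_{L^2}\leq C\|F_1\|_{L^2}.
\end{align*}
For $w^{(2)}$, since $F_2\in H^1(\Omega)$ with $F_2|_{y=\pm1}=0$ and $\partial_xF_2=ikF_2$ (these hypotheses are exactly what the $H^1$ case of Proposition \ref{prop:res-nav-s1} uses to control the boundary trace of $F$ that appears in the integration by parts against $\Delta w$), the second inequality yields
\begin{align*}
\nu^{2/3}|k|^{1/3}\|\nabla w^{(2)}\|_{L^2}+(\nu k^2)^{1/3}\|w^{(2)}\|_{L^2}\leq C\nu^{1/6}|k|^{-2/3}\|\nabla F_2\|_{L^2}.
\end{align*}
For $w^{(3)}$, the elementary duality bound
\begin{align*}
\|\partial_xf_1+\partial_yf_2+\partial_zf_3\|_{H^{-1}}\leq \|(f_1,f_2,f_3)\|_{L^2},
\end{align*}
obtained by integrating by parts against $\phi\in H^1_0(\Omega)$ (no boundary contribution because $\phi|_{\partial\Omega}=0$), combined with the third inequality of Proposition \ref{prop:res-nav-s1}, gives
\begin{align*}
\nu^{2/3}|k|^{1/3}\|\nabla w^{(3)}\|_{L^2}+(\nu k^2)^{1/3}\|w^{(3)}\|_{L^2}\leq C\nu^{-1/3}|k|^{1/3}\|(f_1,f_2,f_3)\|_{L^2}.
\end{align*}

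To finish, I would divide each of the three displayed inequalities by $|k|^{1/3}$. This converts $\nu^{2/3}|k|^{1/3}$ into $\nu^{2/3}$ and $(\nu k^2)^{1/3}$ into $(\nu|k|)^{1/3}$, and transforms the right-hand sides precisely into $C|k|^{-1/3}\|F_1\|_{L^2}$, $C\nu^{1/6}|k|^{-1}\|\nabla F_2\|_{L^2}$, and $C\nu^{-1/3}\|(f_1,f_2,f_3)\|_{L^2}$. Summing via $\|\nabla w\|_{L^2}\leq\sum_i\|\nabla w^{(i)}\|_{L^2}$ and $\|w\|_{L^2}\leq\sum_i\|w^{(i)}\|_{L^2}$ then produces the claimed estimate. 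There is no substantive obstacle here: the proposition really is a corollary obtained by pairing the three additive components of the forcing with the three regularity regimes of Proposition \ref{prop:res-nav-s1} at the common scale $|k|^{-1/3}$.
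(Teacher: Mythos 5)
Your proof is correct and follows essentially the same route as the paper: decompose $w=w_1+w_2+w_3$ according to the three forcing components, apply the $L^2$, $H^1$, and $H^{-1}$ cases of Proposition \ref{prop:res-nav-s1} respectively, and divide by $|k|^{1/3}$ before summing. (One minor remark: the $H^1$ case of Proposition \ref{prop:res-nav-s1} as stated does not actually require $F_2|_{y=\pm1}=0$ — its boundary trace is controlled via Lemma \ref{lem:sob-f} — so that hypothesis is harmless but not the mechanism you describe.)
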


\begin{proof}
We decompose $w$ as $w=w_1+w_2+w_3$, where $w_j(j=1,2,3)$ solves
\begin{align*}\left\{\begin{aligned}
                & -\nu\Delta w_1+ik(V(y,z)-\lambda)w_1-a(\nu k^2)^{\f13}w_1=F_1,\\&-\nu\Delta w_2+ik(V(y,z)-\lambda)w_2-a(\nu k^2)^{\f13}w_2=F_2,\\&-\nu\Delta w_3+ik(V(y,z)-\lambda)w_3-a(\nu k^2)^{\f13}w_3=\partial_xf_1+\partial_yf_2+\partial_zf_3,\\&
                w_j|_{y=\pm1}=0,\quad \partial_xw_j=ikw_j,\quad j=1,2,3.
              \end{aligned}\right.
\end{align*}
It follows from Proposition \ref{prop:res-nav-s1}  that
\begin{align*}
&\nu^{\f23}|k|^{\f13}\|\nabla w_1\|_{L^2} +(\nu k^2)^{\f13}\|w_1\|_{L^2} \leq C\|F_1\|_{L^2},\\&\nu^{\f23}|k|^{\f13}\|\nabla w_2\|_{L^2} +(\nu k^2)^{\f13}\|w_2\|_{L^2} \leq C\nu^{\f16}|k|^{-\f23}\|\nabla F_2\|_{L^2},\\
&\nu^{\f23}|k|^{\f13}\|\nabla w_3\|_{L^2} +(\nu k^2)^{\f13}\|w_3\|_{L^2}\leq C\nu^{-\f13}|k|^{\f13}\|(f_1,f_2,f_3)\|_{L^2}.
\end{align*}
Then we have
\begin{align}\label{12}
 \nu^{\f23}\|\nabla w\|_{L^2}+(\nu|k|)^{\f13}\|w\|_{L^2}\leq& |k|^{-\f13} \sum_{j=1}^3\Big(\nu^{\f23}|k|^{\f13}\|\nabla w_j\|_{L^2} +(\nu k^2)^{\f13}\|w_j\|_{L^2}\Big)\nonumber\\
   \leq& C\Big(|k|^{-\f13}\|F_1\|_{L^2} +\nu^{\f16}|k|^{-1}\|\nabla F_2\|_{L^2}+\nu^{-\f13}\|(f_1,f_2,f_3)\|_{L^2}\Big).
\end{align}
\end{proof}

The following proposition gives the estimates of $w$ when taking good derivatives $\pa_x$ and $\pa_z-\kappa\pa_y$.

\begin{Proposition}\label{prop:res-nav-good}
Let $w\in H^2(\Omega)$ be a solution of \eqref{eq:OS-Nav} with  $F=f_1+f_2+f_3$ and $P_0f_i=0(i=1,2,3)$.  Then it holds that
\begin{align*}
&(\nu |k|)^{\f13}\Big(\|\partial_x^2w\|_{L^2}+\|\partial_x(\partial_z-\kappa\partial_y)w\|_{L^2}\Big)+
\nu^{\f23}\Big(\|\nabla\partial_x^2w\|_{L^2}+\|\nabla\partial_x(\partial_z-\kappa\partial_y)w\|_{L^2}\Big) \\
&\quad\leq C\Big(\nu^{\f16}\| f_1\|_{H^2}+|k|^{-\f13}\|\partial_x^2 f_2\|_{L^2}+|k|^{-\f13}\|\partial_x(\partial_z-\kappa\partial_y) f_2\|_{L^2}+\nu^{-\f13}\| \partial_xf_3\|_{L^2}\Big).
\end{align*}
\end{Proposition}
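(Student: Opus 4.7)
The plan is to exploit the two ``good'' derivative operators $\partial_x$ (which commutes with everything) and $D:=\partial_z-\kappa\partial_y$, which satisfies the crucial identity $DV=\partial_zV-(\partial_zV/\partial_yV)\partial_yV=0$. I apply $\partial_x^2$ and $\partial_x D$ to \eqref{eq:OS-Nav}, derive resolvent equations for $W_1:=\partial_x^2 w$ and $W_2:=\partial_x Dw$, and then invoke Proposition \ref{prop:res-nav-s1} separately on $W_1$ and $W_2$ with a force decomposition tailored to the three pieces $f_1,f_2,f_3$.

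\textbf{Equations and boundary data.} Since $V,\kappa$ are independent of $x$ and $DV=0$, a direct computation yields
\begin{align*}
  &-\nu\Delta W_1+ik(V-\lambda)W_1-a(\nu k^2)^{1/3}W_1=\partial_x^2 F,\\
  &-\nu\Delta W_2+ik(V-\lambda)W_2-a(\nu k^2)^{1/3}W_2=\partial_x DF+\nu\partial_x[D,\Delta]w,
\end{align*}
with $[D,\Delta]w=(\partial_y^2\kappa+\partial_z^2\kappa)\partial_yw+2\partial_y\kappa\,\partial_y^2w+2\partial_z\kappa\,\partial_y\partial_zw$. For the boundary data: $W_1|_{y=\pm1}=0$ is immediate. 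For $W_2$, since $(V-y)|_{y=\pm1}=0$ by Lemma \ref{lem:V}, $\partial_zV|_{y=\pm1}=0$, hence $\kappa|_{y=\pm1}=0$; combined with $\partial_xw|_{y=\pm1}=\partial_zw|_{y=\pm1}=0$ this gives $W_2|_{y=\pm1}=(\partial_x\partial_zw-\kappa\partial_x\partial_yw)|_{y=\pm1}=0$. Thus both $W_1$ and $W_2$ are admissible inputs for Proposition \ref{prop:res-nav-s1}.

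\textbf{Main estimates.} Splitting $F=f_1+f_2+f_3$, I apply Proposition \ref{prop:res-nav-s1} in the three regularity cases: the $H^1$-case for the $f_1$-piece (using $\|\nabla\partial_x^2 f_1\|_{L^2}\le|k|\|f_1\|_{H^2}$ and similarly $\|\nabla\partial_x Df_1\|_{L^2}\le C|k|\|f_1\|_{H^2}$ via $\|\kappa\|_{W^{1,\infty}}\le C$), the $L^2$-case for the $f_2$-piece (giving $|k|^{-1/3}\|\partial_x^2 f_2\|_{L^2}$ and $|k|^{-1/3}\|\partial_x Df_2\|_{L^2}$ after dividing by $|k|^{1/3}$), and the $H^{-1}$-case for the $f_3$-piece (using $\|\partial_x^2 f_3\|_{H^{-1}}\le\|\partial_x f_3\|_{L^2}$ and $\|\partial_x Df_3\|_{H^{-1}}\le C\|\partial_xf_3\|_{L^2}$ via the decomposition $\partial_x Df_3=\partial_z(\partial_xf_3)-\partial_y(\kappa\partial_xf_3)+(\partial_y\kappa)\partial_xf_3$). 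Dividing each estimate by $|k|^{1/3}$ converts $(\nu k^2)^{1/3}$ into $(\nu|k|)^{1/3}$ and reproduces precisely the four terms of the target right-hand side.

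\textbf{The commutator (main obstacle).} The delicate part is the extra term $\nu\partial_x[D,\Delta]w$ in the equation for $W_2$, which mixes first and second derivatives of $w$ with derivatives of $\kappa$. I control it by combining: (a) the smallness $\|\kappa\|_{W^{2,\infty}}\le C\veps_0$ coming from $\|V-y\|_{H^4}<\veps_0$, and (b) the elliptic regularity on $w$ itself from Proposition \ref{prop:res-nav-s1}, namely the bound $\nu\|\Delta w\|_{L^2}\le C\|F\|_{L^2}$ ($L^2$-case), together with its sharper $H^1$-version $\nu\|\Delta w\|_{L^2}\le C\nu^{1/6}|k|^{-2/3}\|\nabla f_1\|_{L^2}$ which is needed for the $f_1$-piece at small $|k|$. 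A case-by-case check then shows that the commutator contributes at most $C\veps_0$ times each corresponding target term: for $f_1$ one obtains $C\veps_0\nu^{1/6}\|f_1\|_{H^2}$; for $f_2$ one uses $|k|^{2/3}\|f_2\|_{L^2}=|k|^{-4/3}\|\partial_x^2 f_2\|_{L^2}\le|k|^{-1/3}\|\partial_x^2 f_2\|_{L^2}$; for $f_3$ one uses $|k|^{-1/3}\le\nu^{-1/3}$ (valid for $|k|\ge1$, $\nu\le1$). Choosing $\veps_0$ sufficiently small absorbs this extra contribution into the main estimate.
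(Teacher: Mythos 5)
Your overall strategy coincides with the paper's: apply the two good derivatives $\partial_x^2$ and $\partial_x(\partial_z-\kappa\partial_y)$, use $(\partial_z-\kappa\partial_y)V=0$ so the transport term commutes, check $\partial_x(\partial_z-\kappa\partial_y)w|_{y=\pm1}=0$ via $\kappa|_{y=\pm1}=0$, and feed the resulting equations into the resolvent estimates with the force split into $H^1$ / $L^2$ / divergence-form pieces (this is exactly Proposition \ref{prop:res-nav-s2}). The bookkeeping for $f_1,f_2,f_3$ and the final powers of $\nu$ and $k$ all match.

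The genuine gap is in the commutator. You invoke $\|\kappa\|_{W^{2,\infty}}\le C\varepsilon_0$, but this does not follow from \eqref{ass:V}: one only has $\kappa\in H^3$, hence $\Delta\kappa\in H^1\hookrightarrow L^p$ for every finite $p$ but \emph{not} $L^\infty$ (already for a function of $(y,z)$ alone, $H^1\not\hookrightarrow L^\infty$). So the lower-order piece $\nu\,\Delta\kappa\,\partial_x\partial_yw$ of $\nu\partial_x[D,\Delta]w$ cannot be bounded in $L^2$ by $C\varepsilon_0\,\nu\|\partial_x\partial_yw\|_{L^2}$, and your case-by-case absorption fails for this term. (The top-order piece $2\nu\nabla\kappa\cdot\nabla\partial_x\partial_yw$ is fine with $\|\nabla\kappa\|_{L^\infty}\le C\varepsilon_0$, though your route through $\nu\|\Delta\partial_xw\|_{L^2}$ is heavier than needed; the paper instead writes it as $2\nu\big(\partial_y(\partial_y\kappa\,\partial_x\partial_yw)+\partial_z(\partial_z\kappa\,\partial_x\partial_yw)\big)$ minus a $\Delta\kappa$ remainder and puts it in the $H^{-1}$ slot, which costs only $\nu^{2/3}\|\nabla\partial_x^2w\|_{L^2}$ — already controlled by the $\partial_x^2$ step.)

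The paper's repair of the $\Delta\kappa$ term is the one nontrivial idea you are missing: introduce $h_1$ solving $\Delta h_1=\Delta\kappa\,\partial_x\partial_yw$, $h_1|_{y=\pm1}=0$, so that this term equals $\nu\,\mathrm{div}(\nabla h_1)$ and enters the divergence-form slot at cost $\nu^{2/3}\|\nabla h_1\|_{L^2}$; then the duality estimate $\|\nabla h_1\|_{L^2}^2=-\langle\Delta\kappa\,\partial_x\partial_yw,\,h_1\rangle\le\|\Delta\kappa\|_{L^4}\|\partial_x\partial_yw\|_{L^2}\|h_1\|_{L^4}$, with $\|\Delta\kappa\|_{L^4}\le C\|\kappa\|_{H^3}\le C$, yields $\|\nabla h_1\|_{L^2}\le C\|\nabla\partial_x^2w\|_{L^2}$, again absorbed by the first step. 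With that substitution (or any equivalent argument using only $\Delta\kappa\in L^4$), the rest of your proof goes through.
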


\begin{proof}
 Thanks to $\partial_x\big[(V(y,z)-\lambda)w\big]=(V(y,z)-\lambda)\partial_xw$, $\partial_xf_1=ik f_1$, we have
  \begin{align*}
     & -\nu\Delta \partial_x^2w+ik(V(y,z)-\lambda)\partial_x^2w-a(\nu k^2)^{1/3}\partial_x^2w=ik\partial_xf_1+\partial_x^2f_2+\partial_x^2f_3.
  \end{align*}
Then it follows from Proposition \ref{prop:res-nav-s2} that
  \begin{align}
     &(\nu |k|)^{\f13}\|\partial_x^2w\|_{L^2}+\nu^{\f23}\|\nabla\partial_x^2w\|_{L^2}\nonumber\\
     &\leq C\big(\nu^{\f16}|k|^{-1}\|ik\nabla\partial_xf_1\|_{L^2}+ |k|^{-\f13}\|\partial_x^2 f_2\|_{L^2}+\nu^{-\f13}\|\partial_xf_3\|_{L^2}\big)\nonumber\\
     &\leq C\big( \nu^{\f16}\| f_1\|_{H^2}+|k|^{-\f13}\|\partial_x^2f_2\|_{L^2}+\nu^{-\f13}\|\partial_xf_3\|_{L^2}\big).\label{eq:w-x-good}
  \end{align}

Thanks to $(\partial_z-\kappa\partial_x)V(y,z)=0$, we have
  \begin{align*}
     & -\nu\Delta (\partial_z-\kappa\partial_y)w+ik(V(y,z)-\lambda)(\partial_z-\kappa\partial_y)w -a(\nu k^2)^{1/3}(\partial_z-\kappa\partial_y)w\\&=-\nu\Delta \kappa\partial_yw+2\nu(\partial_y(\partial_y\kappa\partial_yw)+\partial_z(\partial_z\kappa\partial_yw))
     +(\partial_z-\kappa\partial_y)F,
  \end{align*}
and we write
  \begin{align*}
     & (\partial_z-\kappa\partial_y)F=(\partial_zf_1-\kappa \partial_yf_1)+\partial_zf_3-\partial_y(\kappa f_3)+\partial_y\kappa f_3+(\partial_z-\kappa\partial_y)f_2.
  \end{align*}
  Thus, we have
  \begin{align*}
     &-\nu\Delta \partial_x (\partial_z-\kappa\partial_y)w+ik(V(y,z)-\lambda)(\partial_z-\kappa\partial_y)\partial_xw -a(\nu k^2)^{1/3}(\partial_z-\kappa\partial_y)\partial_xw\\
     &=-\nu\Delta \kappa\partial_x\partial_yw+2\nu(\partial_y(\partial_y\kappa\partial_y\partial_xw)+\partial_z(\partial_z\kappa\partial_y\partial_xw))
     \\&\quad +ik(\partial_zf_1-\kappa \partial_yf_1)+\partial_z\partial_xf_3+\partial_x(f_3\partial_y\kappa) -\partial_y\partial_x(\kappa f_3)+\partial_x(\partial_z-\kappa\partial_y)f_2.
  \end{align*}
By Proposition \ref{prop:res-nav-s2}  again, we get
  \begin{align*}
     & (\nu |k|)^{\f13}\|\partial_x(\partial_z-\kappa\partial_y)w\|_{L^2} +\nu^{\f23}\|\nabla\partial_x(\partial_z-\kappa\partial_y)w\|_{L^2}\\
     &\leq C\Big(\nu^{\f16}\|\nabla(\partial_zf_1-\kappa \partial_yf_1)\|_{L^2} +\nu^{-\f13}(\|\partial_xf_3\|_{L^2}+\|\partial_x(\kappa f_3)\|_{L^2}+\|f_3\partial_y\kappa\|_{L^2})\\
     &\quad+|k|^{-\f13}\|\partial_x(\partial_z-\kappa\partial_y)f_2\|_{L^2}+
     \nu^{\f23}(\|(\partial_y\kappa,\partial_z\kappa)\partial_y\partial_xw\|_{L^2}
     +\|\nabla h_1\|_{L^2})\Big),
  \end{align*}
 where $ \Delta h_1=\Delta \kappa\partial_x\partial_yw,\ h_1|_{y=\pm 1}=0.$
 Note that
\beno
\|\Delta \kappa\|_{L^{4}}\leq C\|\Delta \kappa\|_{H^{1}}\leq C\|\kappa\|_{H^3}\leq C,
\eeno
which gives
\begin{align*}
\|\nabla h_1\|_{L^2}^2=&-\langle\Delta h_1,h_1\rangle=-\langle\Delta \kappa\partial_x\partial_yf,h_1\rangle\\ \leq& \|\Delta \kappa\|_{L^{4}}\|\partial_x\partial_yw\|_{L^{2}}\|h_1\|_{L^{4}}\leq C\|\partial_x\nabla w\|_{L^{2}}\|h_1\|_{H^{1}}\leq C\|\nabla\partial_x^2 w\|_{L^{2}}\|\nabla h_1\|_{L^2}.
\end{align*}
Thus, $\|\nabla h_1\|_{L^2}\leq C\|\nabla\partial_x^2 w\|_{L^{2}}$.
This along with the facts that $\|\nabla\kappa\|_{L^\infty}+\|\nabla\kappa\|_{H^2}\leq C\|V-y\|_{H^4}\leq C\varepsilon_0$, shows that
\begin{align*}
   & (\nu |k|)^{\f13}\|\partial_x(\partial_z-\kappa\partial_y)w\|_{L^2} +\nu^{\f23}\|\nabla\partial_x(\partial_z-\kappa\partial_y)w\|_{L^2}\\
     &\leq C\Big(\nu^{\f16}\| f_1\|_{H^2} +\nu^{-\f13}\|\partial_xf_3\|_{L^2} +|k|^{-\f13}\|\partial_x(\partial_z-\kappa\partial_y)f_2\|_{L^2}+\nu^{\f23}\|\nabla\partial_x^2w\|_{L^2}\Big),
\end{align*}
from which and \eqref{eq:w-x-good}, we deduce our result.
\end{proof}

\subsection{Weak type resolvent estimates}

In this subsection, we always assume $\nu k^2\le 1$, so that $|k\delta|\le 1$($\delta=\nu^\f13|k|^{-\f13}$). We establish various weak type resolvent estimates, which will play an important role for the estimate of the Neumann data $\pa_y\varphi|_{y=\pm 1}$. Without lose of generality, we may assume $k>0$.

\begin{Proposition}\label{prop:res-weak}
Let $w\in H^2(\Omega)$ be a solution of \eqref{eq:OS-Nav}  with $F\in L^2(\Omega)$. If $f\in H^1(\Omega),\ j\in\{\pm1\},\ f|_{y=-j}=0$, then it holds that
\begin{align*}
    |\langle w(V-\widetilde{\lambda}),f\rangle| \le C|k|^{-1}\|F\|_{H^{-1}}\Big( \|f\|_{H^1}+\big(\|f\|_{L^2(\Gamma_j)}+\delta_1\|(\partial_x,\partial_z) f\|_{L^2(\Gamma_j)}\big)(|j-\lambda|+\delta)^{\f14} \delta^{-\f34} \Big),
 \end{align*}
 and
 \begin{align*}
  \|\langle w,f\rangle|\leq& C|k|^{-1}\|{F}\|_{H^{-1}}\Big(\big(\|f\|_{L^2(\Gamma_j)}+\delta_1\|(\partial_x,\partial_z) f\|_{L^2(\Gamma_j)}\big) (|j-\lambda|+\delta)^{-\f34}\delta^{-\f34}\\
&\quad
+\|\nabla f/(|V-\lambda|+\delta)\|_{L^2}+\delta^{-1}\|f/(|V-\lambda|+\delta)\|_{L^2}\Big),\\ |\langle w,f\rangle|\leq & C|k|^{-1}\|{F}\|_{H^{-1}}\Big(\delta^{-\f32}\|f\|_{L_{x,z}^2L_y^{\infty}} +\delta^{-\f12}\|(\partial_x,\partial_z)f\|_{L^2(\Gamma_j)}+\delta^{-1} \|\nabla f\|_{L^2}\Big)\\ \leq& C|k\delta|^{-\f32}\|{F}\|_{H^{-1}}\|\nabla f\|_{L^2} +C|k|^{-1}\delta^{-\f12}\|F\|_{H^{-1}}\|(\partial_x,\partial_z)f\|_{L^2(\Gamma_j)},\\|\langle w,f\rangle|\leq& C\nu^{-1}\|{F}\|_{H^{-1}}\|(1-y^2) f\|_{L^2},
  \end{align*}
where
\beno
\widetilde{\lambda}=\lambda-ia\delta,\quad \delta_1=\delta\big(|k(1-\lambda)|+1\big)^{-\f12}.
\eeno
\end{Proposition}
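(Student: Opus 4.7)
Plan. The proof proceeds by a combination of duality with Proposition~4.1 (which supplies the basic $\|F\|_{H^{-1}}$-type resolvent bounds) and explicit boundary-layer test functions encoding the Airy-type behaviour of solutions to the Orr--Sommerfeld operator near $\Gamma_j$.

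First estimate. Starting from the resolvent identity $ik(V-\widetilde\lambda)w=F+\nu\Delta w$, I write
\[
\langle w(V-\widetilde\lambda),f\rangle
=\tfrac{1}{ik}\langle F,f\rangle+\tfrac{\nu}{ik}\langle \Delta w,f\rangle.
\]
Integrating by parts the second term using $w|_{y=\pm 1}=0$ and $f|_{y=-j}=0$ yields $-\langle\nabla w,\nabla f\rangle$ plus the single boundary integral $j\int_{\Gamma_j}\partial_y w\,\bar f\,d\sigma$. The bulk pieces are handled by Proposition~4.1 in the $H^{-1}$-data case, which gives $\nu\|\nabla w\|_{L^2}\leq C\|F\|_{H^{-1}}$, producing the $\|f\|_{H^1}$ term on the right-hand side. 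For the boundary integral $(\nu/k)\int_{\Gamma_j}\partial_y w\,\bar f$, I construct mode-by-mode in $\ell$ a tailored extension of $f|_{\Gamma_j}$ concentrated in a boundary layer of width $\sim\max(|j-\lambda|,\delta)$ near $\Gamma_j$, built from Airy-profile cut-offs calibrated to the OS operator. Testing the equation against this extension and appealing again to Proposition~4.1 turns the boundary data into an inner product paired with $F$; optimising the $H^1_0$-norm of the extension produces exactly the weight $(|j-\lambda|+\delta)^{1/4}\delta^{-3/4}$, while the tangential-smoothing factor $\delta_1=\delta(|k(1-\lambda)|+1)^{-1/2}$ reflects the Poisson-type decay in $\eta=(k^2+\ell^2)^{1/2}$.

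Second and third estimates. The second inequality follows from the first by the multiplier $\chi_1=(V-\lambda-i\delta)^{-1}$ of Lemma~4.3. Writing $w=\chi_1(V-\widetilde\lambda)w-i(1+a)\delta\chi_1 w$ gives
\[
\langle w,f\rangle=\langle w(V-\widetilde\lambda),\chi_1 f\rangle-i(1+a)\delta\,\langle w,\chi_1 f\rangle,
\]
and I apply the first estimate to the test function $\chi_1 f$, then absorb the remaining $\delta$-term by self-improvement. The pointwise bounds $|\chi_1|\lesssim (|V-\lambda|+\delta)^{-1}$, $|\nabla\chi_1|\lesssim \delta^{-1}|\chi_1|$, and $|\chi_1|_{\Gamma_j}|\sim (|j-\lambda|+\delta)^{-1}$ convert $\|\chi_1 f\|_{H^1}$ and its trace into the precise weighted norms appearing in the second inequality. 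The third inequality then follows from the second by H\"older's inequality: Lemma~4.3 gives $\|1/(|V-\lambda|+\delta)\|_{L^2_y}\lesssim \delta^{-1/2}$, hence $\|f/(|V-\lambda|+\delta)\|_{L^2}\lesssim \delta^{-1/2}\|f\|_{L^2_{x,z}L^\infty_y}$, and $\|f\|_{L^2(\Gamma_j)}\leq \|f\|_{L^2_{x,z}L^\infty_y}$ absorbs the boundary piece since $(|j-\lambda|+\delta)^{-3/4}\delta^{-3/4}\leq \delta^{-3/2}$. The second line of the third estimate is then obtained from the fundamental theorem of calculus in $y$ (using $f|_{y=-j}=0$) which gives $\|f\|_{L^2_{x,z}L^\infty_y}\leq C\|\partial_y f\|_{L^2}$, combined with the single-mode identity $\|\partial_x f\|_{L^2}=|k|\|f\|_{L^2}$ to redistribute the $|k|$- and $\delta$-powers.

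Fourth estimate. This part is direct: since $w|_{y=\pm1}=0$, Hardy's inequality gives $\|w/(1-y^2)\|_{L^2}\leq C\|\partial_y w\|_{L^2}$, and the $H^{-1}$ case of Proposition~4.1 yields $\|\nabla w\|_{L^2}\leq C\nu^{-1}\|F\|_{H^{-1}}$. Cauchy--Schwarz then gives
\[
|\langle w,f\rangle|\leq \|w/(1-y^2)\|_{L^2}\,\|(1-y^2)f\|_{L^2}\leq C\nu^{-1}\|F\|_{H^{-1}}\|(1-y^2)f\|_{L^2}.
\]

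Main obstacle. The principal difficulty lies in the boundary term of the first estimate. A naive trace bound on $\partial_y w|_{\Gamma_j}$ would produce only a crude $\delta^{-1/2}$ weight, whereas the proposition requires the sharper $(|j-\lambda|+\delta)^{1/4}\delta^{-3/4}$ (together with the tangential gain $\delta_1$). Obtaining this sharper weight forces the mode-by-mode construction of a boundary corrector that simultaneously resolves the Airy boundary-layer scale $\delta$ of the OS operator and the possibly overlapping critical layer at $y=\lambda$; all subsequent inequalities cascade from this one, so any looseness here propagates through the entire chain.
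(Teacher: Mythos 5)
Your overall architecture --- duality against $H^1_0$ test functions via Proposition \ref{prop:res-nav-s1}, a thin boundary-layer correction near $\Gamma_j$ for the first estimate, the multiplier $\chi_1=(V-\lambda-i\delta)^{-1}$ to pass to the second and third estimates, and Hardy's inequality for the fourth --- coincides with the paper's, and the last three estimates are essentially fine (modulo bookkeeping with the conjugate of $\chi_1$, namely $(V-\widetilde{\lambda})\bar{\chi}_1=1+i(a-1)\delta\bar{\chi}_1$, and the fact that the final line of the third estimate requires $\|f\|_{L^2_{x,z}L^\infty_y}\le C|k|^{-1/2}\|\nabla f\|_{L^2}$ from Lemma \ref{lem:sob-f}, not the crude $\|f\|_{L^2_{x,z}L^\infty_y}\lesssim\|\partial_yf\|_{L^2}$, which loses a factor $|k|^{1/2}$).

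The gap is in the first estimate, which drives everything else. Your detour through the trace $\partial_yw|_{\Gamma_j}$ is both unnecessary and circular: testing the equation against an extension $Ef$ of $f|_{\Gamma_j}$ yields
\begin{align*}
\nu j\int_{\Gamma_j}\partial_yw\,\bar f\,d\sigma=\nu\langle\nabla w,\nabla Ef\rangle+ik\langle(V-\widetilde{\lambda})w,Ef\rangle-\langle F,Ef\rangle,
\end{align*}
so you are back to bounding $\langle(V-\widetilde{\lambda})w,Ef\rangle$, the very quantity at issue. The paper never sees the trace of $\partial_yw$: it writes $f=f_0+f(x,j,z)\chi_2$ with $\chi_2$ a piecewise-linear cut-off of width $\delta_*$, applies the $H^1_0$ duality bound to $f_0$, and estimates the layer term directly by $\|f\|_{L^2(\Gamma_j)}\,\|w\|_{L^2_{x,z}L^1_y(\mathrm{layer})}\,\|(V-\widetilde{\lambda})\chi_2\|_{L^\infty}$. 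The mechanism that makes this work --- and which is absent from your plan --- is that $w|_{\partial\Omega}=0$ forces $\|w\|_{L^2_{x,z}L^1_y(\mathrm{layer})}\le\delta_*^{3/2}\|\nabla w\|_{L^2}\le C\delta_*^{3/2}\nu^{-1}\|F\|_{H^{-1}}$. Balancing this against the $\delta_*^{-1/2}\|f\|_{L^2(\Gamma_j)}$ cost of the cut-off in $H^1$ gives $\delta_*=\delta^{3/2}(|j-\lambda|+\delta)^{-1/2}\le\delta_1$, hence the weight $(|j-\lambda|+\delta)^{1/4}\delta^{-3/4}$ and the tangential factor $\delta_1$. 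Your stated layer width $\max(|j-\lambda|,\delta)$ is the wrong scale --- substituting it produces $\delta^{-3}(|j-\lambda|+\delta)^{5/2}$, far worse than the target when $|j-\lambda|\sim1$ --- and no Airy profiles or mode-by-mode construction are needed; a linear cut-off times the boundary trace suffices.
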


\begin{proof}
  First of all, for any $f_0\in H^1_0(\Omega)$ with $f_0|_{y=\pm1}=0$,  we get by integration by parts that
\begin{align*}
  \|F\|_{H^{-1}}\|f_0\|_{H^1} \geq& |\langle F,f_0\rangle|= |\langle -\nu \Delta w+ik(V-\tilde{\lambda})w, f_0\rangle|\\
  \geq& |\langle k(V-\tilde{\lambda})w,f_0 \rangle|-\nu\|\nabla w\|_{L^2}\|\nabla f_0\|_{L^2},
\end{align*}
from which and Proposition \ref{prop:res-nav-s1},  we infer that
\begin{align}
    \big|\big\langle w(V-\tilde{\lambda}),f_0\big\rangle\big|\leq  |k|^{-1}\|F\|_{H^{-1}}\|f_0\|_{H^1}.\label{eq:w-weak2}
\end{align}

Next we consider the case when $f\in H^1(\Omega),\ f(x,-1,z)=0$. In this case, for every $\delta_{*}\in(0,\delta]\subseteq [-1,1]$, let
\beno
\chi_2(y)=\max(1-(1-y)/\delta_{*},0),\quad f_0(x,y,z)=f(x,y,z)-f(x,1,z)\chi_2(y).
\eeno
Then we have $\chi_2\in H^1(\Omega)$, $f_0\in H^1_0(\Omega),\ I_2:={\rm supp}\chi_2=\mathbb{T}\times[1-\delta_*,1]\times \mathbb{T}$ and
\begin{align*}
   &\|\chi_{2}\|_{L^\infty}=1,\quad \|\chi_{2}\|_{L^\infty_{x,z} L_y^2}\leq\delta_{*}^{\f12},\quad\|\nabla\chi_2\|_{L^\infty_{x,z}L_y^2}\leq \delta_{*}^{-\f12},\\
   &\|(V-\tilde{\lambda})\chi_{2}\|_{L^\infty}\leq \|V-\tilde{\lambda}\|_{L^\infty([1-\delta_{*},1])}\|\chi_{2}\|_{L^\infty} \leq |1-\lambda|+\delta.
\end{align*}
Due to $ w(x,1,z)=0$, we have
\beno
|w(x,y,z)|=\left|\int_y^1\partial_yw(x,y_1,z)dy_1\right|\leq |1-y|^{\f12}\|\partial_yw(x,\cdot,z)\|_{L^2_y}\leq \delta_{*}^{\f12}\|\partial_yw(x,\cdot,z)\|_{L^2_y}
\eeno
for $y\in[1-\delta_{*},1]$ and then
\beno
 \|w\|_{L^2_{x,z}L^{1}_y(I_2)}\leq \delta_{*}^{\f32}\|\nabla w\|_{L^2}.
 \eeno
Then it follows from Proposition \ref{prop:res-nav-s1} and \eqref{eq:w-weak2} that
\begin{align*}
&|\langle w(V-\tilde{\lambda}),f\rangle|\\
&\leq |\langle w(V-\tilde{\lambda}),f(x,1,z)\chi_{2}\rangle|+|\langle w(V-\tilde{\lambda}),f_0\rangle|\\
&\leq  \|f\|_{L^2(\Gamma_1)}\|w\|_{L^2_{x,z}L^{1}_y(I_2)}\|(V-\tilde{\lambda})\chi_2\|_{L^{\infty}} +C|k|^{-1}\|{F}\|_{H^{-1}}\|f_0\|_{H^1}\\ &\leq  \|f\|_{L^2(\Gamma_1)}\delta_{*}^{\f32}\|\nabla w\|_{L^2}(|1-\lambda|+\delta)
+C|k|^{-1}\|{F}\|_{H^{-1}}\|f(x,1,z)\chi_2\|_{H^1} +C|k|^{-1}\|{F}\|_{H^{-1}}\|f\|_{H^1}\\
&\leq C\|f\|_{L^2(\Gamma_1)}\delta_{*}^{\f32}\nu^{-1}\|{F}\|_{H^{-1}}(|1-\lambda|+\delta)
+C|k|^{-1}\|{F}\|_{H^{-1}}\Big(\|f(x,1,z)\|_{L^2_{x,z}L^\infty_y}\|\nabla \chi_2\|_{L^\infty_{x,z}L^2_y} \\&\qquad+\big(\|\nabla f(x,1,z)\|_{L^2_{x,z}L^\infty_y}+\| f(x,1,z)\|_{L^2_{x,z}L^\infty_y}\big)\|\chi_2\|_{L^\infty_{x,z}L^2_y}\Big) +C|k|^{-1}\|{F}\|_{H^{-1}}\|f\|_{H^1}\\ &\leq C\|f\|_{L^2(\Gamma_1)}\delta_{*}^{\f32}\nu^{-1}\|{F}\|_{H^{-1}}(|1-\lambda|+\delta)
+C|k|^{-1}\|{F}\|_{H^{-1}}\Big(\|f\|_{L^2(\Gamma_1)}(\delta^{-\f12}_{*}+\delta_*^{\f12}) \\&\qquad+\|(\partial_x,\partial_z) f\|_{L^2(\Gamma_1)}\delta^{\f12}_{*}\Big) +C|k|^{-1}\|{F}\|_{H^{-1}}\|f\|_{H^1}\\ &\leq C|k|^{-1}\|f\|_{L^2(\Gamma_1)}\|{F}\|_{H^{-1}}\big(\delta_{*}^{\f32}(|1-\lambda|+\delta)\delta^{-3}+\delta_{*}^{-\f12}\big) +C|k|^{-1}\delta_*^{\f12}\|(\partial_x,\partial_z) f\|_{L^2(\Gamma_1)}\|F\|_{H^{-1}}\\ &\quad+C|k|^{-1}\|{F}\|_{H^{-1}}\|f\|_{H^1}.
\end{align*}
Here we used $ \nu^{-1}|k|=\delta^{-3}.$

Taking $\delta_{*}=(|1-\lambda|+\delta)^{-\f12}\delta^{\f32}\leq \delta_1$ due to $\nu k^2\le 1$, we obtain
\begin{align}\label{wyf2}
|\langle w(V-\tilde{\lambda}),f\rangle|\leq& C|k|^{-1}\big(\|f\|_{L^2(\Gamma_1)}+\delta_1\|(\partial_x,\partial_z) f\|_{L^2(\Gamma_1)}\big)\|{F}\|_{H^{-1}}(|1-\lambda|+\delta)^{\f14}\delta^{-\f34}\\ \nonumber& +C|k|^{-1}\|{F}\|_{H^{-1}}\|f\|_{H^1}.
\end{align}
This proves the first inequality of the proposition.

For $f\in H^1(\Omega),\ f|_{y=-1}=0,$ let $\phi=\chi_1f$, where $\chi_1=(V-\lambda-i\delta)^{-1}$. Then we have $\phi\in H^1(\Omega),\ \phi|_{y=-1}=0$. Thus, by \eqref{wyf2} and the fact $(V-\tilde{\lambda})\bar{\chi}_1=1+i(a-1)\delta\bar{\chi}_1$, we have
\begin{align*}
|\langle w,f\rangle|\leq& |\langle w(V-\tilde{\lambda}),\phi\rangle|+ |i(a-1)\delta\langle w,\chi_1f\rangle|\\
 \leq& |\langle w(V-\tilde{\lambda}),\phi\rangle|+ C\delta\|w\|_{L^2}\|\chi_1f\|_{L^2}\\
 \leq& C|k|^{-1}\big(\|\phi\|_{L^2(\Gamma_1)}+\delta_1\|(\partial_x,\partial_z) \phi\|_{L^2(\Gamma_1)}\big)\|{F}\|_{H^{-1}}(|1-\lambda|+\delta)^{\f14}\delta^{-\f34}\\ &+C|k|^{-1}\|{F}\|_{H^{-1}}\|\phi\|_{H^1}+C\delta\|w\|_{L^2}\|\chi_1f\|_{L^2}.
\end{align*}
Thanks to the facts that for $y\in [-1,1]$,
\beno
|\chi_1(y,z)|\leq C(|V-\lambda|+\delta)^{-1},\quad  |\nabla\chi_1(y,z)|\leq C(|V-\lambda|+\delta)^{-2},
\eeno
we deduce that
\begin{align*}
\|\phi\|_{L^2(\Gamma_1)}\leq&\|f\|_{L^2(\Gamma_1)}\|\chi_1(1,z)\|_{L^\infty_z}\leq C\|f\|_{L^2(\Gamma_1)}(|1-\lambda|+\delta)^{-1}, \\
\|(\partial_x,\partial_z)\phi\|_{L^2(\Gamma_1)}=&\|(\partial_x,\partial_z) f\|_{L^2(\Gamma_1)}\|\chi_1(1,z)\|_{L^\infty_z} +\|f\|_{L^2(\Gamma_1)}\|\nabla\chi_1(1,z)\|_{L^\infty_z}\\
\leq& C\|(\partial_x,\partial_z) f\|_{L^2(\Gamma_1)}(|1-\lambda|+\delta)^{-1} +C\|f\|_{L^2(\Gamma_1)}(|1-\lambda|+\delta)^{-2}, \\
\|\phi\|_{H^1}\leq&
\|\nabla f\chi_1\|_{L^2}+\|f\nabla\chi_1\|_{L^2}
+\|f\chi_1\|_{L^2}\\ \leq& C\|\nabla f/(|V-\lambda|+\delta)\|_{L^2}+C\|f/(|V-\lambda|+\delta)^2\|_{L^{2}} +C\|f/(|V-\lambda|+\delta)\|_{L^{2}}\\
\leq& C\|\nabla f/(|V-\lambda|+\delta)\|_{L^2}+C\delta^{-1}\|f/(|V-\lambda|+\delta)\|_{L^{2}},
\end{align*}
and by Proposition \ref{prop:res-nav-s1}, we get
\begin{align*}
&\delta\|w\|_{L^2}\|\chi_1f\|_{L^2}\leq C\delta^{-1}|k|^{-1}\|f/(|V-\lambda|+\delta)\|_{L^2}\|F\|_{H^{-1}}.
\end{align*}
Then,  using $\delta_1\leq\delta,\ \delta_1(|1-\lambda|+\delta)^{-2}\leq(|1-\lambda|+\delta)^{-1}$, we conclude that
\begin{align*}
|\langle w,f\rangle|\leq&  C|k|^{-1}\|{F}\|_{H^{-1}}\Big(
(\|f\|_{L^2(\Gamma_1)}+\delta_1\|(\partial_x,\partial_z) f\|_{L^2(\Gamma_1)})(|1-\lambda|+\delta)^{-\f34}\delta^{-\f34}\\&
+\|\nabla f/(|V-\lambda|+\delta)\|_{L^2}+\delta^{-1}\|f/(|V-\lambda|+\delta)\|_{L^{2}}\Big),
\end{align*}
which gives the second inequality.

The third inequality follows from the second inequality and the following facts that
\begin{align*}
&\|f\|_{L^2(\Gamma_j)}(|j-\lambda|+\delta)^{-\f34}\delta^{-\f34}\leq \|f\|_{L^2(\Gamma_j)}\delta^{-\f32}\leq \delta^{-\f32}\|f\|_{L^2_{x,z}L^{\infty}_y},\\
&\delta_1\|(\partial_x,\partial_z) f\|_{L^2(\Gamma_j)}(|j-\lambda|+\delta)^{-\f34}\delta^{-\f34}\leq \|(\partial_x,\partial_z) f\|_{L^2(\Gamma_j)}\delta^{-\f12},\\
&\|\nabla f/(|V-\lambda|+\delta)\|_{L^2}\leq\delta^{-1}\| \nabla f\|_{L^2},\\&\delta^{-1}\|f/(|V-\lambda|+\delta)\|_{L^{2}}\leq \delta^{-1}\|(|V-\lambda|+\delta)^{-1}\|_{L^\infty_{x,z}L^2_y}\|f\|_{L^2_{x,z} L_y^{\infty}}\leq C\delta^{-\f32}\|f\|_{L^2_{x,z}L^{\infty}_y},\\&\|f\|_{L_{x,z}^2L_y^{\infty}}=|k|^{-1}\|\partial_xf\|_{L_{x,z}^2L_y^{\infty}}\leq C|k|^{-\f12}\| \nabla f\|_{L^2},
\end{align*}
where we used Lemma \ref{lem:sob-f} in the last inequality.

By Hardy's inequality and Proposition \ref{prop:res-nav-s1}, we have (the fourth inequality)
\begin{align*}
   |\langle w,f\rangle|\leq & \left\|\f{w}{1-y^2}\right\|_{L^2}\|(1-y^2)f\|_{L^2}\\
   \leq& C\|\partial_yw\|_{L^2}\|(1-y^2)f\|_{L^2}\leq C\|\nabla w\|_{L^2}\|(1-y^2)f\|_{L^2}\\
   \leq& C\nu^{-1}\|F\|_{H^{-1}}\|(1-y^2)f\|_{L^2}.
\end{align*}

The case of $f|_{y=1}=0$ can be proved similarly.
\end{proof}

\subsection{Estimates of the Neumann data}

In this subsection, we will present some uniform estimates of the Neumann data  $\pa_y\varphi|_{y=\pm 1}$.

\begin{Proposition}\label{prop:res-nav-b1}
Let $w\in H^2(\Omega)$ be a solution of \eqref{eq:OS-Nav} with $F\in L^2(\Omega)$.
Then it holds that
\begin{align*}
&\nu^{\f16}|k|^{\f43}\|\varphi\|_{L^2}+(\nu k^2)^{\f13}\big(\|\nabla \varphi\|_{L^2}+\|(1-y^2)w\|_{L^2}\big) \leq C\big(\|(1-y^2)F\|_{L^2}+|\nu/k|^{\f13}\|F\|_{L^2}\big),\\
&\|\partial_z\partial_y\varphi\|_{L^2(\partial\Omega)}\leq C|\nu k|^{-\f12}\|F\|_{L^2},\\
&\|\partial_z\partial_y\varphi\|_{L^2(\partial\Omega)}\leq C\nu^{-\f56}|k|^{-\f16}\|{F}\|_{H^{-1}}.
\end{align*}
If  $\nu k^2\le 1$, then we have
\begin{align*}
&\big(1+|k(\lambda-j)|\big)\|\partial_y\varphi\|_{L^2(\Gamma_j)}\leq C\nu^{-\f16} |k|^{-\f56}\min(1,|k(\lambda+j)|+\nu^{\f16}|k|^{\f13})\|F\|_{L^2},\ j\in\{\pm1\},\\
&\big\||k(y-\lambda)|^{\f12}\partial_y\varphi\big\|_{L^2(\partial\Omega)}\leq C\nu^{-\f16}|k|^{-\f56}\big(\min(|k(\lambda-1)|^{\f12},|k(\lambda+1)|^{\f12}\big) +\nu^{\f16}|k|^{\f13}\big)\|F\|_{L^2}.
\end{align*}
\end{Proposition}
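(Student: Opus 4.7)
The proposition consists of an interior weighted estimate (the first inequality) and four boundary estimates on $\partial_y\varphi$ or $\partial_z\partial_y\varphi$. I would treat them by complementary methods: a weighted reapplication of Proposition \ref{prop:res-nav-s1} for the former, and the duality representation of the Neumann trace recalled in Section 3 combined with Proposition \ref{prop:res-weak} for the latter.

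For the first inequality, my plan is to apply Proposition \ref{prop:res-nav-s1} not to $w$ directly but to the rescaled quantity $(1-y^2)w$, which vanishes on $\partial\Omega$ and satisfies
\begin{align*}
&-\nu\Delta\bigl((1-y^2)w\bigr)+ik(V-\lambda)(1-y^2)w-a(\nu k^2)^{1/3}(1-y^2)w\\
&\qquad=(1-y^2)F+4\nu y\,\partial_yw+2\nu w.
\end{align*}
Proposition \ref{prop:res-nav-s1} then gives $(\nu k^2)^{1/3}\|(1-y^2)w\|_{L^2}\lesssim\|(1-y^2)F\|_{L^2}+\nu\|\nabla w\|_{L^2}+\nu\|w\|_{L^2}$; the two error terms are absorbed into $\delta\|F\|_{L^2}=|\nu/k|^{1/3}\|F\|_{L^2}$ via the basic bounds $\nu^{2/3}|k|^{1/3}\|\nabla w\|_{L^2}+(\nu k^2)^{1/3}\|w\|_{L^2}\lesssim\|F\|_{L^2}$ of Proposition \ref{prop:res-nav-s1}. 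The control on $\|\nabla\varphi\|_{L^2}$ then follows from $\|\nabla\varphi\|_{L^2}^2=-\mathbf{Re}\langle w,\varphi\rangle\le\|(1-y^2)w\|_{L^2}\|\varphi/(1-y^2)\|_{L^2}\lesssim\|(1-y^2)w\|_{L^2}\|\nabla\varphi\|_{L^2}$ via Hardy's inequality at both endpoints $y=\pm 1$, and the control on $\|\varphi\|_{L^2}$ by interpolating between the Poincar\'e-type bound $|k|\|\varphi\|_{L^2}\le\|\nabla\varphi\|_{L^2}$ and the already established $\nu^{1/6}|k|^{4/3}\|\nabla\varphi\|_{L^2}\lesssim\|F\|_{L^2}$ of Proposition \ref{prop:res-nav-s1}, treating the regimes $\nu k^2\ge 1$ and $\nu k^2\le 1$ separately.

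The two $\partial_z\partial_y\varphi$ estimates are the simplest. By trace and elliptic regularity, using $\Delta\varphi=w$ and $\varphi|_{\partial\Omega}=0$,
\[\|\partial_z\partial_y\varphi\|_{L^2(\partial\Omega)}^2\lesssim\|\partial_z\partial_y\varphi\|_{L^2}\|\nabla\partial_z\partial_y\varphi\|_{L^2}\lesssim\|w\|_{L^2}\|\nabla w\|_{L^2}.\]
Inserting the $L^2$-source bounds $\|w\|_{L^2}\lesssim(\nu k^2)^{-1/3}\|F\|_{L^2}$ and $\|\nabla w\|_{L^2}\lesssim\nu^{-2/3}|k|^{-1/3}\|F\|_{L^2}$ produces $|\nu k|^{-1/2}\|F\|_{L^2}$; the $H^{-1}$-source bounds $\|\nabla w\|_{L^2}\lesssim\nu^{-1}\|F\|_{H^{-1}}$ and $\|w\|_{L^2}\lesssim\nu^{-2/3}|k|^{-1/3}\|F\|_{H^{-1}}$ produce $\nu^{-5/6}|k|^{-1/6}\|F\|_{H^{-1}}$.

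For the two refined Neumann bounds under $\nu k^2\le 1$, I would use the duality formula $\|\partial_y\varphi\|_{L^2(\Gamma_j)}\sim\sup_{f\in\mathcal{F}_j}|\langle w,f\rangle|$ with $\mathcal{F}_j$ the family of $k$-mode harmonic functions $f=\sum_\ell a_\ell\sinh(\eta(1\mp y))/\sinh(2\eta)e^{ikx+i\ell z}$ (unit $\ell^2$ norm, vanishing on $\Gamma_{-j}$). Since $f\in H^1$ with $f|_{y=-j}=0$, the weak-type inequalities of Proposition \ref{prop:res-weak} apply, and the problem reduces to evaluating the norms $\|f\|_{L^2(\Gamma_j)}$, $\|(\partial_x,\partial_z)f\|_{L^2(\Gamma_j)}$, $\|\nabla f/(|V-\lambda|+\delta)\|_{L^2}$, $\|f/(|V-\lambda|+\delta)\|_{L^2}$ mode-by-mode in $\ell$ from the explicit $\sinh$ profile (splitting the $y$-integration around $y=\lambda$ for the weighted norms), then summing in $\ell^2$. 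The main obstacle is producing the refined $\min$-structure on both sides --- the prefactor $(1+|k(\lambda-j)|)$ on the left and $\min(1,|k(\lambda+j)|+\nu^{1/6}|k|^{1/3})$ on the right --- which encodes the asymmetry between $|\lambda-1|$ and $|\lambda+1|$ and the Airy-type concentration of $w$ near the critical layer $y=\lambda$. Extracting these factors requires partitioning the range of $\lambda$ into the three regimes $|\lambda-1|\lesssim\delta$, $|\lambda+1|\lesssim\delta$, and $|\lambda|$ bounded away from $\pm 1$, and in each regime exploiting the refined parameter $\delta_1=\delta(|k(1-\lambda)|+1)^{-1/2}$ built into Proposition \ref{prop:res-weak}, which is designed precisely to yield these min factors after sharp evaluation of the trace integrals of the $\sinh$ profile against the two boundary weights.
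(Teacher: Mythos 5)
Your treatment of the first inequality follows the paper's route (apply Proposition \ref{prop:res-nav-s1} to $W=(1-y^2)w$ and absorb the commutator terms $4\nu y\partial_yw+2\nu w$), and the two $\partial_z\partial_y\varphi$ bounds are correct as written. However, the step recovering $\nu^{1/6}|k|^{4/3}\|\varphi\|_{L^2}$ from the Poincar\'e bound $|k|\|\varphi\|_{L^2}\le\|\nabla\varphi\|_{L^2}$ does not close when $\nu k^2\le1$: chaining it with either $(\nu k^2)^{1/3}\|\nabla\varphi\|_{L^2}\le \mathrm{RHS}$ or $\nu^{1/6}|k|^{4/3}\|\nabla\varphi\|_{L^2}\le C\|F\|_{L^2}$ yields at best $|k|^{-1}\|F\|_{L^2}$, which is not dominated by $\|(1-y^2)F\|_{L^2}+|\nu/k|^{1/3}\|F\|_{L^2}$ when $F$ concentrates near $y=\pm1$ and $\nu k^2\ll1$. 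The correct ingredient is the $L^2_{x,z}L^1_y$ conclusion of Proposition \ref{prop:res-nav-s1} applied to the $W$-equation, $\nu^{1/6}|k|^{5/6}\|W\|_{L^2_{x,z}L^1_y}\le C(\|(1-y^2)F\|_{L^2}+|\nu/k|^{1/3}\|F\|_{L^2})$, combined with $|k|^{1/2}\|\varphi\|_{L^2}\le C\|(1-y^2)w\|_{L^2_{x,z}L^1_y}$ from Lemma \ref{lem:elliptic-weight}; it is this extra half power of $|k|$ from the $L^1_y$ duality, not Poincar\'e, that produces $\nu^{1/6}|k|^{4/3}\|\varphi\|_{L^2}$.

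The more serious gap is in the last two inequalities. Proposition \ref{prop:res-weak} controls $\langle w,f\rangle$ only through $\|F\|_{H^{-1}}$; running your plan with the $f^{l}/f^{h}$ splitting reproduces at best the third inequality of Proposition \ref{prop:res-weak-b}, namely a weight $(1+|k(\lambda-j)|)^{3/4}$ with prefactor $|\nu k|^{-1/2}\|F\|_{H^{-1}}$, which after $\|F\|_{H^{-1}}\le|k|^{-1}\|F\|_{L^2}$ is weaker than the claimed full power $(1+|k(\lambda-j)|)$, is larger than $\nu^{-1/6}|k|^{-5/6}$ by the factor $(\nu k^2)^{-1/3}$, and carries no $\min(1,|k(\lambda+j)|+\nu^{1/6}|k|^{1/3})$ factor. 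The paper gets the full critical-layer weight not from the weak-type estimates but from the last conclusion of Proposition \ref{prop:res-nav-s1}: since $\partial_y[(V-\lambda)\varphi]=(j-\lambda)\partial_y\varphi$ on $\Gamma_j$, the bound $\nu^{1/6}|k|^{11/6}\|\nabla[(V-\lambda)\varphi]\|_{L^2_{x,z}L^\infty_y}\le C\|F\|_{L^2}$ immediately yields $|k(\lambda-j)|\,\|\partial_y\varphi\|_{L^2(\Gamma_j)}\le C\nu^{-1/6}|k|^{-5/6}\|F\|_{L^2}$. Moreover, the $\min$ factor does not come from partitioning the range of $\lambda$ and tuning $\delta_1$; it comes, in the regime $|k(\lambda+1)|+\nu^{1/6}|k|^{1/3}\le1$, from the decomposition $\langle w,f\rangle=\langle(V-\lambda)w,\,f/(1+V)\rangle+(\lambda+1)\langle w,\,f/(1+V)\rangle$ for $f\in\mathcal{F}_1$, using $|k|\|(V-\lambda)w\|_{L^2}+\nu^{1/6}|k|^{5/6}\|w\|_{L^2_{x,z}L^1_y}\le C\|F\|_{L^2}$ together with the bounds on $f/(1+y)$ from Lemma \ref{lem:F1}; the explicit factor $|\lambda+1|$ in front of the second term is the sole source of $|k(\lambda+j)|$ in the $\min$. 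Without this decomposition your scheme cannot produce the stated right-hand side.
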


To estimate $\pa_y\varphi$, we need to use the following facts. First of all, we know that
\begin{align*}
   &\|\partial_y\varphi\|_{L^2(\Gamma_1)}^2=\f{1}{(2\pi)^2}\sum_{\ell\in\mathbb{Z}} \left|\left\langle w,\f{\sinh(\eta(y+1))}{\sinh(2\eta)} e^{i\ell z+ikx}\right\rangle\right|^2.
\end{align*}
Let $\{a_\ell\}$ be any complex valued sequence, such that $\{a_\ell\}_{\ell\in\mathbb{Z}}\in \ell_c^2(\mathbb{Z})$, i.e. $\big\{\ell \in\Z|a_\ell\neq0\big\}$ is a finite set and $\|\{a_\ell\}\|_{\ell^2}=1$.
Then the duality argument gives
\begin{align*}
  \|\partial_y\varphi\|_{L^2(\Gamma_1)} &=\f{1}{(2\pi)^2}\sup_{\{a_\ell\}\in l_c^2,\|\{a_\ell\}\|_{\ell^2}=1}\left|\sum_{\ell\in\mathbb{Z}}a_\ell\left\langle w,\f{\sinh(\eta(y+1))}{\sinh(2\eta)} e^{i\ell z+ikx}\right\rangle\right| \\&=\f{1}{(2\pi)^2}\sup_{\{a_\ell\}\in l_c^2,\ \|\{a_\ell\}\|_{\ell^2}=1}\left|\left\langle w,\sum_{\ell\in\mathbb{Z}}a_\ell\f{\sinh(\eta(y+1))}{\sinh(2\eta)} e^{i\ell z+ikx}\right\rangle\right|.
\end{align*}
We define the set
\begin{align*}
   \mathcal{F}_1=\left\{f(x,y,z)=\sum_{\ell\in\mathbb{Z}}a_\ell\f{\sinh(\eta(1+y))}{\sinh(2\eta)}e^{ikx+i\ell z}\bigg|\{a_\ell\}_{\ell\in\mathbb{Z}}\in \ell_c^2(\mathbb{Z}),\|\{a_\ell\}\|_{\ell^2}=1\right\}.
\end{align*}
Thus, we have
\begin{align}\label{eq:varphi-dual}
   \|\partial_y\varphi\|_{L^2(\Gamma_1)} &=\f{1}{(2\pi)^2}\sup_{f\in\mathcal{F}_1}|\left\langle w,f\right\rangle|.
\end{align}

The following lemma gives some basic estimates of functions in the set $\mathcal{F}_1$.

\begin{Lemma}\label{lem:F1}
For $f\in \mathcal{F}_1$, it holds that
 \begin{align}
  &\|f\|_{L^2(\Gamma_1)}\leq C,\quad  \|f\|_{L^2}\leq C|k|^{-\f12},\label{eq:F1-L2}\\
   &\|(1-y)\nabla f\|_{L^\infty_yL^2_{x,z}}\leq C,\quad \|(1-y)\nabla f\|_{L^2}\leq C|k|^{-\f12},\label{eq:F1-H1}\\
   &\|f\|_{L^2_{x,z}L^\infty_y}\le C,\quad \|(1-y)f\|_{L^2_{x,z}L^\infty_y}\le C|k|^{-1},\label{eq:F1-L2-infty}\\
   &\| f/(1+y)\|_{L^2_{x,z}L^\infty_y}\leq C,\quad\|f/(1+y)\|_{L^2}\leq C|k|^{-\f12}.\label{eq:F1-L2-w}
  \end{align}
\end{Lemma}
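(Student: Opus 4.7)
My strategy is Plancherel in $(x,z)$, which reduces each norm to a 1D quantity in $g_\ell(y) := \sinh(\eta(1+y))/\sinh(2\eta)$ with $\eta := (k^2+\ell^2)^{1/2} \ge |k|$. The driving estimate is the pointwise bound, valid for $\eta \ge 1$ and $y \in [-1,1]$,
\[
g_\ell(y) \le C e^{-\eta(1-y)},\qquad |\partial_y g_\ell(y)| \le C\eta e^{-\eta(1-y)},
\]
immediate from $\sinh(\eta(1+y)) \le \tfrac12 e^{\eta(1+y)}$ and $\sinh(2\eta) \ge \tfrac13 e^{2\eta}$. The trace bound $\|f\|_{L^2(\Gamma_1)}\le C$ is immediate from $g_\ell(1)=1$ (and Plancherel). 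The $L^\infty_y L^2_{xz}$ inequality in \eqref{eq:F1-H1} reduces via Plancherel to the pointwise bound $(1-y)^2\eta^2 e^{-2\eta(1-y)} \le C$. The $L^2(\Omega)$ bounds in \eqref{eq:F1-L2}, \eqref{eq:F1-H1}, and the second inequality of \eqref{eq:F1-L2-w} all reduce to the elementary integrals $\int_0^2 e^{-2\eta s}\,ds,\ \int_0^2 s^2\eta^2 e^{-2\eta s}\,ds \le C/\eta$ and (via the change of variables $u = \eta(1+y)$) $\int_{-1}^1 g_\ell^2/(1+y)^2\,dy \le C/\eta$, each contributing a factor $\sum|a_\ell|^2/\eta \le 1/|k|$.

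For the $L^2_{xz}L^\infty_y$ bounds in \eqref{eq:F1-L2-infty} (first inequality) and \eqref{eq:F1-L2-w} (first inequality), I would exploit the key observation that $f$ is \emph{harmonic} on $\Omega$: each mode satisfies $(-k^2 - \ell^2 + \eta^2)[g_\ell(y) e^{ikx+i\ell z}] = 0$. Thus $f$ is the harmonic extension of $h(x,z) := \sum a_\ell e^{ikx+i\ell z}$ (with $\|h\|_{L^2_{xz}}\le C$) from $y=1$, subject to $f|_{y=-1}=0$. Writing $f(x,y,z) = e^{ikx}\int_{\T} K_y(z-z')h_0(z')\,dz'$ with $h_0(z) := \sum a_\ell e^{i\ell z}$ and slab Poisson kernel $K_y(z) := \sum_\ell g_\ell(y)e^{i\ell z}$, the maximum principle gives $K_y \ge 0$, and direct computation yields $\|K_y\|_{L^1(\T)} = \sinh(|k|(1+y))/\sinh(2|k|) \le 1$; the normalized kernel $K_y/\|K_y\|_{L^1(\T)}$ is an approximate identity as $y\to 1$. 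Standard maximal-function theory then bounds $\sup_y|K_y \ast h_0(z)|$ pointwise by $CMh_0(z)$, with $M$ the Hardy--Littlewood maximal operator on $\T$, and its $L^2$-boundedness gives $\|f\|_{L^2_{xz}L^\infty_y}\le C$. The same argument with kernel $\widetilde K_y := K_y/(1+y)$ yields $\|f/(1+y)\|_{L^2_{xz}L^\infty_y}\le C$, the uniform bound $\|\widetilde K_y\|_{L^1(\T)}\le 1/2$ being a consequence of the monotonicity of $\sinh(u)/u$ in $u\ge 0$.

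The remaining weighted estimate $\|(1-y)f\|_{L^2_{xz}L^\infty_y}\le C|k|^{-1}$ in \eqref{eq:F1-L2-infty} uses that $(1-y)f$ vanishes at both $y=\pm 1$: by the fundamental theorem of calculus,
\[
\sup_y|(1-y)f(x,y,z)|^2 \le 2\int_{-1}^1 |(1-s)f|\,|\partial_s((1-s)f)|\,ds,
\]
so integrating in $(x,z)$ and applying Cauchy--Schwarz yield $\|(1-y)f\|_{L^2_{xz}L^\infty_y}^2 \le 2\|(1-y)f\|_{L^2(\Omega)}\|\partial_y((1-y)f)\|_{L^2(\Omega)}$. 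A Plancherel computation using $\int_0^2 s^2 e^{-2\eta s}\,ds \le C/\eta^3$ gives $\|(1-y)f\|_{L^2(\Omega)}\le C|k|^{-3/2}$, and $\|\partial_y((1-y)f)\|_{L^2(\Omega)} \le \|f\|_{L^2}+\|(1-y)\partial_y f\|_{L^2} \le C|k|^{-1/2}$ by the bounds just established, so their product is $\le C|k|^{-1}$ as desired. The main technical obstacle I anticipate lies in the $L^2_{xz}L^\infty_y$ estimates: one must justify the maximal-function bound for the slab Poisson extension with a constant \emph{uniform in $k$}, which reduces to checking a $k$-uniform approximate-identity or decreasing-rearrangement control on the normalized family $\{K_y/\|K_y\|_{L^1(\T)}\}$; this should follow from the exponential localization $g_\ell(y) \le Ce^{-\eta(1-y)}$, which only tightens as $|k|$ grows.
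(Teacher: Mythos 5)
Your computational estimates --- Plancherel plus the pointwise bounds $g_\ell(y)\le Ce^{-\eta(1-y)}$, $|\partial_y g_\ell|\le C\eta e^{-\eta(1-y)}$ --- are exactly the paper's route for \eqref{eq:F1-L2}, \eqref{eq:F1-H1}, the $|k|^{-1}$ bound in \eqref{eq:F1-L2-infty} (the paper phrases the interpolation as $\|(1-y)f\|_{L^2_{x,z}L^\infty_y}\le C|k|^{-1/2}\|\nabla((1-y)f)\|_{L^2}$ via Lemma \ref{lem:sob-f}, which is your FTC/Cauchy--Schwarz step), and they also suffice for $\|f/(1+y)\|_{L^2}$, where your direct integral $\int_{-1}^1 g_\ell^2/(1+y)^2\,dy\le C/\eta$ is correct (it needs the splitting $\sinh(\eta s)/s\le\min(\eta,1/s)e^{\eta s}$ rather than a single substitution, but it goes through) and replaces the paper's decomposition $2f/(1+y)=(1-y)f/(1+y)+f$ plus Hardy.

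Where you genuinely diverge is the two $L^2_{x,z}L^\infty_y$ bounds without the $|k|^{-1}$ gain. You fix $k$ and run a maximal-function argument for the modified two-dimensional kernels $K_y$ and $K_y/(1+y)$, whose symbol involves $e^{-\sqrt{k^2+\ell^2}(1-y)}$; as you note, the whole burden then sits on a \emph{$k$-uniform} radially-decreasing-majorant bound for this family, and that is a real gap as written --- domination of Fourier coefficients does not give domination of kernels, so one has to work (e.g.\ via subordination of $e^{-\sqrt{k^2+\ell^2}t}$ to the heat semigroup) to produce the majorants. The paper dissolves this entirely by exploiting that $f$ is harmonic in all three variables and vanishes on $\Gamma_{-1}$: Lemma \ref{lem:har-max} compares $|f|$, via the maximum principle, with the half-space Poisson extension of $|f|_{\Gamma_1}|$ in the $(x,z)$-plane, so the maximal operator is the standard two-dimensional Hardy--Littlewood one and the constant ($=6$) is absolute, with no $k$ anywhere. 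For $\|f/(1+y)\|_{L^2_{x,z}L^\infty_y}$ the paper likewise avoids kernels: it writes $4f/(1+y)=(1-y)^2f/(1+y)+(3-y)f$, notes $\Delta[(1-y)^2f]=2f-4(1-y)\partial_yf$ with zero Dirichlet data so that $\|\nabla^2[(1-y)^2f]\|_{L^2}\le C|k|^{-1/2}$, and then uses the pointwise Hardy inequality from the endpoint $y=-1$ together with the already-established $\|f\|_{L^2_{x,z}L^\infty_y}\le C$. I would recommend either importing that harmonic-extension comparison (which also handles your $\widetilde K_y$ case after the algebraic split) or supplying the subordination argument explicitly; with either repair your proof is complete.
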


\begin{proof}
The first inequality of \eqref{eq:F1-L2} is obvious. The second one follows from
\begin{align*}
 \|f\|_{L^2}^2 =(2\pi)^2 \sum_{\ell\in\mathbb{Z}}\left\|a_l\f{\sinh(\eta(1+y))}{\sinh(2\eta)}\right\|^2_{L^2_y} \leq C\sum_{l\in\mathbb{Z}}(|a_\ell|^2\eta^{-1})\leq C|k|^{-1}.
\end{align*}

 Notice that\begin{align*}
     \|\nabla f\|_{L^2_{x,z}}^2&=(2\pi)^2 \sum_{\ell\in\mathbb{Z}}\bigg(\left|a_\ell\f{\eta\sinh(\eta(1+y))}{\sinh(2\eta)}\right|^2 +\left|a_\ell\f{\eta\cosh(\eta(1+y))}{\sinh(2\eta)}\right|^2\bigg)\\
     &\leq C\sum_{\ell\in\mathbb{Z}}\eta^2|a_\ell|^2e^{-2\eta(1-y)}\leq C(1-y)^{-2}\sum_{\ell\in\mathbb{Z}}|a_\ell|^2e^{-\eta(1-y)}\\
     &\leq C(1-y)^{-2}e^{-|k|(1-y)},
  \end{align*}
which gives  $\|(1-y)\nabla f\|_{L^2_{x,z}}\leq Ce^{-|k|(1-y)/2}$, and then
\begin{align*}
     &\|(1-y)\nabla f\|_{L^\infty_yL^2_{x,z}}\leq C\|e^{-|k|(1-y)/2}\|_{L^\infty_y}\leq C,\\ &\|(1-y)\nabla f\|_{L^2}\leq C\|e^{-|k|(1-y)/2}\|_{L^2_y}\leq  C|k|^{-\f12}.
  \end{align*}

 Since $f|_{y=-1}=0,\ \Delta f=0,$  we get by Lemma \ref{lem:har-max} that
\begin{align*}
  \|f\|_{L^2_{x,z}L^\infty_y}\le C\|f\|_{L^2(\Gamma_1)}\leq C,
\end{align*}
and by \eqref{eq:F1-H1} and \eqref{eq:F1-L2},  we have
\begin{align*}
 \|(1-y) f\|_{L^2_{x,z}L^\infty_y}\leq C|k|^{-\f12}\|\nabla((1-y) f)\|_{L^2}\leq C|k|^{-1}.
  \end{align*}

 Since $f|_{y=-1}=0,\ \Delta f=0,$ we have
\beno
\Delta((1-y)^2 f)=2f-4(1-y)\partial_y f,\quad (1-y)^2f|_{y=\pm1}=0,
\eeno
 and then the elliptic estimate gives
 \begin{align}\label{y2f}
 \|\nabla^2[(1-y)^2 f]\|_{L^2}&\leq C\|\Delta[(1-y)^2 f]\|_{L^2}\nonumber\\
 &\leq C(\|f\|_{L^2}+\|(1-y)\nabla f\|_{L^2})\leq C|k|^{-\f12}.
  \end{align}
Using the fact that $4f/(1+y)=(1-y)^2 f/(1+y)+(3-y)f,$ Hardy's inequality, \eqref{eq:F1-L2} and \eqref{y2f}, we have
\begin{align*}
     \| f/(1+y)\|_{L^2_{x,z}L^\infty_y}&\leq(\|(1-y)^2 f/(1+y)\|_{L^2_{x,z}L^\infty_y}+\|(3-y)f\|_{L^2_{x,z}L^\infty_y})/4\\&\leq \|\nabla[(1-y)^2 f]\|_{L^2_{x,z}L^\infty_y}+\|f\|_{L^2_{x,z}L^\infty_y}\\&\leq C|k|^{-\f12}\|\nabla^2[(1-y)^2 f]\|_{L^2}+C\leq C .
  \end{align*}
 Noting that $2f/(1+y)=(1-y) f/(1+y)+f,$ by Hardy's inequality and \eqref{eq:F1-H1}, we have
 \begin{align*}
     \| f/(1+y)\|_{L^2}&\leq(\|(1-y) f/(1+y)\|_{L^2}+\|f\|_{L^2})/2\\&\leq C\|\nabla[(1-y) f]\|_{L^2}+\|f\|_{L^2}\leq C|k|^{-\f12} .
  \end{align*}

This completes the proof of the lemma.
\end{proof}
\smallskip

Now we prove Proposition \ref{prop:res-nav-b1}.

\begin{proof}
    Let $W=(1-y^2)w$, which satisfies
  \begin{align*}\left\{\begin{aligned}
     &-\nu\Delta W+ik(V-\lambda)W-a(\nu k^2)^{1/3}W=(1-y^2)F+4\nu y\partial_yw+2\nu w,\\
     &W|_{y=\pm1}=0.
     \end{aligned}\right.
  \end{align*}
It follows from Proposition \ref{prop:res-nav-s1} that
  \begin{align*}
     \nu^{\f16}|k|^{\f56}\|W\|_{L^2_{x,z}L^1_y}+ (\nu k^2)^{\f13}\|W\|_{L^2}\leq& C\|(1-y^2)F+4\nu y\partial_yw+2\nu w\|_{L^2}\\
     \leq &C\|(1-y^2)F\|_{L^2}+C\nu\big(\|y\partial_yw\|_{L^2}+\|w\|_{L^2}\big),
  \end{align*}
and by  Proposition \ref{prop:res-nav-s1} again,
\begin{align*}
 \|y\partial_yw\|_{L^2}+\|w\|_{L^2}&\leq \|\nabla w\|_{L^2}+ \|w\|_{L^2}\\
 &\leq C\nu^{-\f23}|k|^{-\f13}(1+|\nu/k|^{\f13})\|F\|_{L^2}\leq C\nu^{-\f23}|k|^{-\f13}\|F\|_{L^2}.
\end{align*}
Then we obtain
\begin{align}\label{eq:W-L2}
   & \nu^{\f16}|k|^{\f56}\|W\|_{L^2_{x,z}L^1_y}+ (\nu k^2)^{\f13}\|W\|_{L^2}\leq C\big(\|(1-y^2)F\|_{L^2} +|\nu/k|^{\f13}\|F\|_{L^2}\big).
\end{align}
By Lemma \ref{lem:elliptic-weight}, we have
\beno
&&\|\nabla \varphi\|_{L^2}\leq C\|(1-y^2)w\|_{L^2}=C\|W\|_{L^2},\\
&&|k|^{\f12}\|\varphi\|_{L^2}\le C\|(1-y^2)w\|_{L^2_{x,z}L^1_y} \leq C\|W\|_{L^2_{x,z}L^1_y},
\eeno
which together with \eqref{eq:W-L2}  show that
\begin{align*}
   & \nu^{\f16}|k|^{\f43}\|\varphi\|_{L^2}+(\nu k^2)^{\f13}(\|\nabla \varphi\|_{L^2}+\|(1-y^2)w\|_{L^2}) \leq C\big(\|(1-y^2)F\|_{L^2}+|\nu/k|^{\f13}\|F\|_{L^2}\big).
\end{align*}

Thanks to $\int_{-1}^{1}\partial_z\partial_y\varphi(x,y_1,z)dy_1=0$, we get
\begin{align*}
   \|\partial_y\partial_z\varphi\|_{L^2(\partial\Omega)}\leq & \|\partial_y\partial_z\varphi\|_{L^2_{x,z}L^\infty_y}\leq C\|\partial_y\partial_z\varphi\|_{L^2}^{\f12} \|\partial^2_y\partial_z\varphi\|^{\f12}_{L^2}\leq C\|w\|_{L^2}^{\f12}\|\nabla w\|_{L^2}^{\f12},
\end{align*}
from which and Proposition \ref{prop:res-nav-s1}, we infer that
\begin{align*}
   \|\partial_y\partial_z\varphi\|_{L^2(\partial\Omega)}\leq C|\nu k|^{-\f12}\|F\|_{L^2},\quad  \|\partial_y\partial_z\varphi\|_{L^2(\partial\Omega)}\leq C\nu^{-\f56}|k|^{-\f16}\|F\|_{H^{-1}}.
   \end{align*}

For the third inequality of the proposition, we just consider the case of $j=1$. Another case is similar.
Notice that $\partial_y[(V-\lambda)\varphi]=(V-\lambda)\partial_y\varphi=(j-\lambda)\partial_y\varphi$ on $ \Gamma_j$. We get by Proposition \ref{prop:res-nav-s1}  that
\begin{align*}
   |j-\lambda|\|\partial_y\varphi\|_{L^2(\Gamma_j)}\leq& \|\partial_y[(V-\lambda)\varphi]\|_{L^2(\Gamma_j)} \\\leq& \|\nabla[(V-\lambda)\varphi]\|_{L_{x,z}^{2}L_y^\infty}\leq C\nu^{-\f16}|k|^{-\f{11}{6}}\|F\|_{L^2}.
\end{align*}
If $|\lambda-1|\geq |k|^{-1}$ and $|k(\lambda+1)|+\nu^{\f16}|k|^{\f13}\geq 1$, then $1+|k(\lambda-j)|\leq2|k(\lambda-j)|$, and then
\begin{align*}
(1+|k(\lambda-1)|)\|\partial_y\varphi\|_{L^2(\Gamma_1)}&\leq 2|k(\lambda-1)|\|\partial_y\varphi\|_{L^2(\Gamma_1)}\leq C\nu^{-\f16}|k|^{-\f{5}{6}}\|F\|_{L^2}\\ &= C\nu^{-\f16} |k|^{-\f56}\min(1,|k(\lambda+1)|+\nu^{\f16}|k|^{\f13})\|F\|_{L^2}.
\end{align*}
If  $|k(\lambda+1)|+\nu^{\f16}|k|^{\f13}\leq1$, then we have $1+|k(\lambda-1)|\leq 2|k|+2\leq 4|k|$.
Thus, by Proposition \ref{prop:res-nav-s1}, Lemma \ref{lem:V} and Lemma \ref{lem:F1}, we deduce that for $f\in\mathcal{F}_1$,
\begin{align*}
  |\langle w,f \rangle| &=|\left\langle (1+V)w,f/(1+V)\right\rangle|\\& =\left|\big\langle (V-\lambda)w,f/(1+V) \big\rangle+ \big\langle (\lambda+1)w,f/(1+V) \big\rangle\right| \\
  &\leq \|(V-\lambda)w\|_{L^2}\left\|f/(1+V)\right\|_{L^2} +|\lambda+1|\|w\|_{L^2_{x,z}L^1_{y}}\left\|f/(1+V )\right\|_{L^2_{x,z}L^\infty_y(\Omega_1)}\\
  &\leq C\Big(|k|^{-1}\left\|f/(1+y)\right\|_{L^2}+ \nu^{-\f16}|k|^{-\f56}|\lambda+1|\| f/(1+y)\|_{L^2_{x,z}L^\infty_y}\Big)\|F\|_{L^2}\\
  &\leq C\big(|k|^{-\f32}+\nu^{-\f16}|k|^{-\f56}|\lambda+1|\big)\|F\|_{L^2},
\end{align*}
which gives
\begin{align*}
   (1+|k(\lambda-1)|)|\langle w,f \rangle|&\leq 4|k||\langle w,f \rangle|\leq C\big(|k|^{-\f12}+\nu^{-\f16}|k|^{-\f56}|k(\lambda+1)|\big)\|F\|_{L^2}\\
   &= C\nu^{-\f16}|k|^{-\f56}\big(|k(\lambda+1)|+\nu^{\f16}|k|^{\f13}\big)\|F\|_{L^2}.
\end{align*}
If  $|\lambda-1|\leq |k|^{-1}$, we get by Proposition \ref{prop:res-nav-s1} and Lemma \ref{lem:F1} that
  \begin{align*}
   |\langle w,f \rangle|&\leq C\|f\|_{L^2_{x,z}L^\infty_{y}}\|w\|_{L^2_{x,z}L^1_y}\leq C\nu^{-\f16}|k|^{-\f56}\|F\|_{L^2}\\
   &\leq C(1+|k(\lambda-1)|)^{-1}\nu^{-\f16}|k|^{-\f56}\|F\|_{L^2}.
  \end{align*}
Combining with two cases, we get by \eqref{eq:varphi-dual} that
\begin{align*}
  (1+|k(1-\lambda)|)\|\partial_y\varphi\|_{L^2(\Gamma_1)} &=\f{1+|k(1-\lambda)|}{(2\pi)^2}\sup_{f\in\mathcal{F}_1} |\langle w,f\rangle| \\
  &\leq C\nu^{-\f16}|k|^{-\f56}\min(1,|k(\lambda+1)|+\nu^{\f16}|k|^{\f13}) \|F\|_{L^2}.
\end{align*}

For $j\in\{\pm1\} $, by the third inequality of the propsition, we have
\begin{align*}
 |k(\lambda-j)|^{\f12}\|\partial_y\varphi\|_{L^2(\Gamma_j)}\leq C\nu^{-\f16}|k|^{-\f56}|k(\lambda-j)|^{\f12}\|F\|_{L^2},
\end{align*}
and
\begin{align*}
   |k(\lambda-j)|^{\f12}\|\partial_y\varphi\|_{L^2(\Gamma_j)}\leq& (1+|k(\lambda-j)|)\|\partial_y\varphi\|_{L^2(\Gamma_j)}\\
   \leq& C\nu^{-\f16}|k|^{-\f56}(\min(1,|k(\lambda+j)|)+\nu^{\f16}|k|^{\f13})\|F\|_{L^2}\\
   \leq&
   C\nu^{-\f16}|k|^{-\f56}(|k(\lambda+j)|^{\f12}+\nu^{\f16}|k|^{\f13})\|F\|_{L^2},
\end{align*}
from which, it follows that
\begin{align*}
\big\||k(y-\lambda)|^{\f12}\partial_y\varphi\big\|_{L^2(\partial\Omega)}\leq C\nu^{-\f16}|k|^{-\f56}(\min(|k(\lambda-1)|^{\f12},|k(\lambda+1)|^{\f12}) +\nu^{\f16}|k|^{\f13})\|F\|_{L^2}.
\end{align*}

This completes the proof of the proposition.
\end{proof}\smallskip

Next we consider the case when $F\in H^{-1}$.

\begin{Proposition}\label{prop:res-weak-b}
Let $\nu k^2\le 1$, and $w\in H^2(\Omega)$ be a solution of \eqref{eq:OS-Nav} with  $F\in L^2(\Omega)$.  Then it holds that
\begin{align*}&\nu^{\f12}|k|\|\nabla\varphi\|_{L^2} \leq C\|{F}\|_{H^{-1}},\\
    &\nu^{\f12}|k|\|\varphi\|_{L^2} \leq C\max(1-|\lambda|,\nu^{\f13}|k|^{-\f13})\|{F}\|_{H^{-1}},\\
&\|(|k(y-\lambda)|+1)^{\f34}\partial_y\varphi\|_{L^2(\partial\Omega)} \leq C|\nu k|^{-\f12}\|{F}\|_{H^{-1}}.
  \end{align*}
\end{Proposition}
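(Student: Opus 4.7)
The plan is to prove the three inequalities by combining the weak type estimates in Proposition \ref{prop:res-weak} with duality arguments analogous to those used in Proposition \ref{prop:res-nav-b1}.

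For the first inequality, I would apply the third inequality of Proposition \ref{prop:res-weak} with the admissible test function $f=-\varphi$. Since $\varphi|_{y=\pm 1}=0$, every boundary contribution in that estimate drops out, and integration by parts gives $\|\nabla\varphi\|_{L^2}^{2}=-\langle w,\varphi\rangle$. Therefore
\begin{align*}
\|\nabla\varphi\|_{L^2}^{2}\le C|k\delta|^{-3/2}\|F\|_{H^{-1}}\|\nabla\varphi\|_{L^2},
\end{align*}
which together with $|k\delta|^{-3/2}=\nu^{-1/2}|k|^{-1}$ yields $\nu^{1/2}|k|\|\nabla\varphi\|_{L^2}\le C\|F\|_{H^{-1}}$.

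For the second inequality, I would split according to whether $\max(1-|\lambda|,\delta)\gtrsim |k|^{-1}$ or not. In the first regime, Poincaré-type control $\|\varphi\|_{L^2}\le |k|^{-1}\|\partial_x\varphi\|_{L^2}\le |k|^{-1}\|\nabla\varphi\|_{L^2}$ combined with the first inequality already closes the argument, since $|k|^{-1}\le C\max(1-|\lambda|,\delta)$. In the remaining regime $|\lambda|$ lies within $|k|^{-1}$ of $\pm 1$ and $\delta\le |k|^{-1}$, so the critical layer sits extremely close to the boundary; I would introduce $\varphi_2$ defined by $\Delta\varphi_2=\varphi$, $\varphi_2|_{y=\pm 1}=0$, use $\|\varphi\|_{L^2}^{2}=\langle w,\varphi_2\rangle$, and invoke the second inequality of Proposition \ref{prop:res-weak} (with $f=\varphi_2$, whose boundary data vanishes) to get
\begin{align*}
\|\varphi\|_{L^2}^{2}\le C|k|^{-1}\|F\|_{H^{-1}}\bigl(\|\nabla\varphi_2/(|V-\lambda|+\delta)\|_{L^2}+\delta^{-1}\|\varphi_2/(|V-\lambda|+\delta)\|_{L^2}\bigr).
\end{align*}
The pointwise bound $|\varphi_2(y)|\le \min(1-y,1+y)^{1/2}\|\partial_y\varphi_2\|_{L^2_y}$ together with $\|\nabla\varphi_2\|_{L^2}\le C|k|^{-1}\|\varphi\|_{L^2}$, $\|\varphi_2\|_{L^2}\le C|k|^{-2}\|\varphi\|_{L^2}$, is then combined with a splitting of the integral into a region $|y-\lambda|\le r$ around the critical layer (where $\varphi_2$ is small by Hardy because $\lambda$ is near $\pm 1$) and its complement (where the weight $(|V-\lambda|+\delta)^{-1}\lesssim 1/(1-|\lambda|+\delta)$). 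Optimizing $r$ produces the factor $(1-|\lambda|+\delta)\sim\max(1-|\lambda|,\delta)$.

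For the third inequality, I would use the duality formula
\begin{align*}
\|\partial_y\varphi\|_{L^2(\Gamma_j)}=\frac{1}{(2\pi)^2}\sup_{f\in\mathcal{F}_j}|\langle w,f\rangle|,\qquad j\in\{\pm 1\},
\end{align*}
and apply the second inequality of Proposition \ref{prop:res-weak} to $f\in\mathcal{F}_j$, controlling $\|f\|_{L^2(\Gamma_j)}$, $\|\nabla f/(|V-\lambda|+\delta)\|_{L^2}$ and $\|f/(|V-\lambda|+\delta)\|_{L^2}$ by Lemma \ref{lem:F1}. The weight $(|k(j-\lambda)|+1)^{3/4}$ on the left pairs with the factor $(|j-\lambda|+\delta)^{-3/4}\delta^{-3/4}$ in the weak estimate, and because $\nu k^2\le 1$ one has $(|k(j-\lambda)|+1)^{3/4}(|j-\lambda|+\delta)^{-3/4}\le C\delta^{-3/4}$; combined with the prefactor $|k|^{-1}$ this yields exactly $|k|^{-1}\delta^{-3/2}=|\nu k|^{-1/2}$. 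The troublesome term involving $\|(\partial_x,\partial_z)f\|_{L^2(\Gamma_j)}$, whose $\ell$-dependence is not uniformly bounded on $\mathcal{F}_j$, is handled either by the $\delta_1=\delta(|k(1-\lambda)|+1)^{-1/2}$ weight absorbing exactly the required growth, or by the complementary estimate $\|\partial_y\partial_z\varphi\|_{L^2(\partial\Omega)}\le C\nu^{-5/6}|k|^{-1/6}\|F\|_{H^{-1}}$ already proved in Proposition \ref{prop:res-nav-s1}.

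The hard step will be the second inequality: when $1-|\lambda|$ is tiny, the simultaneous use of the Hardy vanishing of $\varphi_2$ at the nearby boundary and the $(|V-\lambda|+\delta)^{-1}$ singularity of the weight must be delicately balanced to produce the sharp prefactor $1-|\lambda|+\delta$ rather than the cruder $\delta$ or $1$.
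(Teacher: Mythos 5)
Your treatment of the first inequality is exactly the paper's: test against $\varphi$ itself, use the third estimate of Proposition \ref{prop:res-weak}, and note that the boundary terms vanish. That part is fine.

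The second inequality is where your proposal has a genuine gap. You propose to get the factor $\max(1-|\lambda|,\delta)$ from the second (weighted) estimate of Proposition \ref{prop:res-weak} applied to $f=\varphi_2$, $\Delta\varphi_2=\varphi$, using the Hardy-type vanishing of $\varphi_2$ at the boundary near the critical layer. A scaling check shows this cannot close. Writing $\|\varphi\|_{L^2}^2=|\langle w,\varphi_2\rangle|\le C|k|^{-1}\|F\|_{H^{-1}}X$ with $X=\|\nabla\varphi_2/(|V-\lambda|+\delta)\|_{L^2}+\delta^{-1}\|\varphi_2/(|V-\lambda|+\delta)\|_{L^2}$, the target bound in the worst case $\lambda=1$ requires $X\le C\delta^{-1/2}|k|^{-1}\|\varphi\|_{L^2}$, i.e. $\|\varphi_2/(1-y+\delta)\|_{L^2}\le C\delta^{1/2}|k|^{-1}\|\varphi\|_{L^2}$. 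But the only available controls are $\|\nabla\varphi_2\|_{L^2}\le|k|^{-1}\|\varphi\|_{L^2}$ and $\|\varphi_2\|_{L^2}\le|k|^{-2}\|\varphi\|_{L^2}$; generically $\partial_y\varphi_2|_{y=1}\neq 0$, so $\varphi_2\sim c(1-y)$ near $y=1$ and Hardy gives only $\|\varphi_2/(1-y+\delta)\|_{L^2}\lesssim\|\partial_y\varphi_2\|_{L^2}\le|k|^{-1}\|\varphi\|_{L^2}$, with no $\delta^{1/2}$ to spare; no choice of the splitting radius $r$ recovers it. The paper's mechanism is structurally different: after a low/high frequency split in $z$ it factors the (low-frequency) test function as $\phi^l=(y-1)\phi_1$ and writes $\langle w,\phi^l\rangle=\langle(V-\tilde\lambda)w,\phi_1\rangle+(\tilde\lambda-1)\langle w,\phi_1\rangle+\langle w,(y-V)\phi_1\rangle$, so that the small factor appears as the \emph{explicit coefficient} $|\tilde\lambda-1|\le 2\max(1-|\lambda|,\delta)$ of the second term (estimated by the third inequality of Proposition \ref{prop:res-weak}), while the first term is handled by the $\langle(V-\tilde\lambda)w,\cdot\rangle$ estimate, which is insensitive to the critical layer. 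Note also that $\phi_1$ does \emph{not} vanish on $\Gamma_1$, so the boundary terms in Proposition \ref{prop:res-weak} must be kept and estimated — another reason the frequency cutoff is needed. Your second regime also omits $|\lambda|\ge 1+\delta$ (where $\max(1-|\lambda|,\delta)=\delta$ but $\lambda$ is not near $\pm1$); the paper treats this as a separate case via $f_1=(V-\tilde\lambda)^{-1}\phi$.

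For the third inequality your outline is directionally right, but the direct application to $f\in\mathcal{F}_1$ fails because $\|\nabla f\|_{L^2}$ and $\|(\partial_x,\partial_z)f\|_{L^2(\Gamma_1)}$ are unbounded over $\mathcal{F}_1$; the paper resolves this with the split $f=f^l+f^h$ at $\eta\sim\delta_1^{-1}$ (Lemma \ref{lem:F1-lh}), using the second weak estimate plus the weighted bounds of Lemma \ref{lem:F1-V} for $f^l$ (where the $\delta_1$ weight does absorb the tangential derivatives, as you guessed) and the fourth weak estimate $|\langle w,f^h\rangle|\le C\nu^{-1}\|F\|_{H^{-1}}\|(1-y^2)f^h\|_{L^2}$ for $f^h$; your alternative via $\|\partial_y\partial_z\varphi\|_{L^2(\partial\Omega)}\le C\nu^{-5/6}|k|^{-1/6}\|F\|_{H^{-1}}$ loses a factor $(|k(1-\lambda)|+1)^{1/4}$ and does not yield the stated weight.
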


In what follows, we assume $\nu k^2\le 1$. We need the following lemmas.

\begin{Lemma}\label{lem:F1-lh}
Let $f\in \mathcal{F}_1$. We decompose $f=f^l+f^h$, where
\begin{align*}
   f^{l}(x,y,z)&=\sum_{\ell^2\leq N_1(k)}a_\ell\f{\sinh(\eta(1+y))}{\sinh(2\eta)}e^{ikx+i\ell z},\\
   f^{h}(x,y,z)&=\sum_{\ell^2> N_1(k)}a_\ell\f{\sinh(\eta(1+y))}{\sinh(2\eta)}e^{ikx+i\ell z},
\end{align*}
where $N_1(k)=\max(\delta^{-2}(|k(1-\lambda)|+1)-k^2,0)$. Then it holds that
  \begin{align*}
     &\|(f^{l},f^{h})\|_{L^2(\Gamma_1)}\leq C,\quad   \|(1-y^2)f^{h}\|_{L^2}\leq C\delta^{\f32}(|k(1-\lambda)|+1)^{-\f34},\\
     &\|(f^{l},f^{h})\|_{L^2}\leq C|k|^{-\f12},\quad \|\nabla f^{l}\|_{L^2}\leq C\delta^{-\f12}(|k(1-\lambda)|+1)^{\f14},\\
    &\|(f^{l},f^{h})\|_{L^2_{x,z}L^\infty_y}\leq C,\quad \|(1-y) f^{l}\|_{L^2_{x,z}L^\infty_y}\leq C|k|^{-1}.
  \end{align*}
\end{Lemma}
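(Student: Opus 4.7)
All quantities to estimate are Fourier series in $(x,z)$, so by Plancherel each squared norm reduces to $\sum_{\ell} |a_\ell|^2\, G(\eta)$ for an explicit $y$-integral $G(\eta)$. The plan is to compute $G(\eta)$ once and for all using the elementary bounds
\[
\Big|\tfrac{\sinh(\eta(1+y))}{\sinh(2\eta)}\Big|\le 2e^{-\eta(1-y)},\qquad
\Big|\tfrac{\eta\cosh(\eta(1+y))}{\sinh(2\eta)}\Big|\le 2\eta\, e^{-\eta(1-y)}\quad(y\in[-1,1]),
\]
from which every $y$-integral becomes a moment $\int_{-1}^{1}(1-y)^{j}e^{-2\eta(1-y)}dy \lesssim \eta^{-j-1}$. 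The splitting $\ell^2\lessgtr N_1(k)$ is then used through the two facts
\[
\ell^2>N_1(k)\ \Longrightarrow\ \eta^2>\delta^{-2}(|k(1-\lambda)|+1),\qquad
\ell^2\le N_1(k)\ \Longrightarrow\ \eta^2\le \delta^{-2}(|k(1-\lambda)|+1).
\]

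\textbf{Main computations.} First, the trace bounds $\|(f^l,f^h)\|_{L^2(\Gamma_1)}\le C$ are immediate, since restricting to $y=1$ gives $\sum_{\ell}a_\ell e^{ikx+i\ell z}$ on disjoint sets of $\ell$'s. The bulk $L^2$ bound $\|(f^l,f^h)\|_{L^2}\le C|k|^{-1/2}$ follows from $G(\eta)\lesssim\eta^{-1}\le|k|^{-1}$. For $\|(1-y^2)f^h\|_{L^2}$ I use $(1-y^2)^2\le 4(1-y)^2$ so $G(\eta)\lesssim\eta^{-3}$; since every term has $\eta^{-3}\le\delta^{3}(|k(1-\lambda)|+1)^{-3/2}$, summation yields the claimed $\delta^{3/2}(|k(1-\lambda)|+1)^{-3/4}$. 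For $\|\nabla f^l\|_{L^2}$ the derivative brings out $G(\eta)\lesssim\eta$, and the low-frequency constraint bounds $\eta$ by $\delta^{-1}(|k(1-\lambda)|+1)^{1/2}$, giving the stated bound after taking square root.

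\textbf{Pointwise-in-$y$ bounds.} For $\|(f^l,f^h)\|_{L^2_{x,z}L^\infty_y}$ I use that each truncation is still harmonic in $\Omega$ and vanishes on $\Gamma_{-1}$, so Lemma \ref{lem:har-max} bounds $\|\cdot\|_{L^2_{x,z}L^\infty_y}$ by the trace on $\Gamma_1$, which is $\le C$ as above. The weighted bound $\|(1-y)f^l\|_{L^2_{x,z}L^\infty_y}\le C|k|^{-1}$ follows the proof of Lemma \ref{lem:F1}: use $\partial_x f^l=ikf^l$ together with Lemma \ref{lem:sob-f} to write
\[
\|(1-y)f^l\|_{L^2_{x,z}L^\infty_y}\le C|k|^{-1/2}\|\nabla((1-y)f^l)\|_{L^2},
\]
and then control $\|\nabla((1-y)f^l)\|_{L^2}\le \|(1-y)\nabla f^l\|_{L^2}+\|f^l\|_{L^2}\le C|k|^{-1/2}$, where the weighted gradient bound comes from the moment computation $\int(1-y)^2\eta^2 e^{-2\eta(1-y)}dy\lesssim \eta^{-1}\le|k|^{-1}$ summed over $\ell$.

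\textbf{Expected obstacle.} Nothing here is deep; the only points that demand care are keeping the balance $\eta^{2}\sim\delta^{-2}(|k(1-\lambda)|+1)$ consistent through the moment integrals (so that the negative powers of $\eta$ supplied by the $(1-y^2)$ weight in the high-frequency piece, and the positive power of $\eta$ coming from $\nabla$ in the low-frequency piece, turn into exactly the stated $\delta,|k(1-\lambda)|+1$ dependence), and handling the degenerate case $N_1(k)=0$ (where either $f^l\equiv 0$ or $f^h=f$, both cases being covered by Lemma \ref{lem:F1} directly).
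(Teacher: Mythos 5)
Your proof is correct and follows essentially the same route as the paper: the two genuinely new estimates ($\|(1-y^2)f^h\|_{L^2}$ and $\|\nabla f^l\|_{L^2}$) are obtained exactly as in the paper by Plancherel plus the moment bound $\int_{-1}^1(1-y)^je^{-2\eta(1-y)}dy\lesssim\eta^{-j-1}$ combined with the dichotomy $\eta\lessgtr\delta_1^{-1}$ where $\delta_1=\delta(|k(1-\lambda)|+1)^{-1/2}$, and the remaining four bounds are, as the paper itself says, verbatim from Lemma \ref{lem:F1}. One small remark: your parenthetical on the degenerate case $N_1(k)=0$ is slightly misstated ($f^l$ would then be the $\ell=0$ mode, not identically zero), but this case cannot occur under the standing assumption $\nu k^2\le 1$ of this subsection (it would force $|k|>\delta^{-1}$, i.e.\ $\nu k^2>1$), and even at the borderline the bound $\eta=|k|\le\delta_1^{-1}$ still holds, so nothing is lost.
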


\begin{proof}
  Let $\delta_1\triangleq \delta(|k(1-\lambda)|+1)^{-\f12}$. It is easy to see that
 \beno
 \ell^2\leq N_1(k)\Leftrightarrow \eta\leq \delta_1^{-1},\quad \ell^2> N(k)\Leftrightarrow \eta>\delta_1^{-1}
 \eeno
 Then we have
 \begin{align*}
  \|(1-y^2)f^{h}\|^2_{L^2}=&\sum_{\ell^2>N_1(k)}(2\pi)^2\left\| a_\ell(1-y^2)\f{\sinh(\eta(1+y))}{\sinh(2\eta)}\right\|_{L^2_y}^2\leq C\sum_{\ell^2> N_1(k)}|a_\ell|^2\eta^{-3}\\
   \leq& C\sum_{\ell^2>N_1(k)}|a_\ell|^2\delta_1^3\leq C\delta_1^{3},
  \end{align*}
   and
   \begin{align*}
  \|\nabla f^{l}\|_{L^2}^2 =&(2\pi)^2 \sum_{\ell^2\leq N_1(k)}\bigg(\left\|a_\ell\f{\eta\sinh(\eta(1+y))}{\sinh(2\eta)}\right\|^2_{L^2_y} +\left\|a_\ell\f{\eta\cosh(\eta(1+y))}{\sinh(2\eta)}\right\|^2_{L^2_y}\bigg)\\
\leq& C\sum_{\ell^2\leq N_1(k)}|a_\ell|^2\eta=C\delta_1^{-1}.
\end{align*}

The proof of the other inequalities is the same as Lemma \ref{lem:F1}.
\end{proof}

\begin{Lemma}\label{lem:F1-V}
Let $f^l$ be as in Lemma \ref{lem:F1-lh}. Then it holds that
  \begin{align*}
     &\| f^{l}/(|V-\lambda|+\delta)\|_{L^2} +\delta\|\nabla f^{l}/(|V-\lambda|+\delta)\|_{L^2} \leq C\delta^{-\f12}\big(|k(1-\lambda)|+1\big)^{-\f34}.
  \end{align*}
\end{Lemma}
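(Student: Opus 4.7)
I would write $f^l = e^{ikx}G^l(y,z)$ with $G^l(y,z) = \sum_{\eta_\ell \le \delta_1^{-1}} a_\ell g_\ell(y) e^{i\ell z}$ and $g_\ell(y) = \sinh(\eta_\ell(1+y))/\sinh(2\eta_\ell)$, and set $R := |k(1-\lambda)|+1$, so $\delta_1 = \delta R^{-1/2}$. By Lemma \ref{lem:V}, $\partial_yV \ge 1/2$, which yields the pointwise bound $|V(y,z)-\lambda|+\delta \ge \tfrac12(|y-y^*(z)|+\delta)$, where $y^*(z)\in[-1,1]$ is the unique zero of $V(\cdot,z)-\lambda$ whenever $\lambda\in[-1,1]$; the case $|\lambda|\ge 1$ is strictly easier since then $|V-\lambda|$ is bounded below uniformly. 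After Parseval in $x$, the two claimed estimates reduce to controlling the single weighted integral $\int_{-1}^1\int_{\T} |G^l|^2(|V-\lambda|+\delta)^{-2}dzdy$ and its gradient analogue.

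\textbf{Small-$R$ regime.} When $R\le 2$, the target reduces to $\|f^l/(|V-\lambda|+\delta)\|_{L^2}\le C\delta^{-1/2}$ and $\|\nabla f^l/(|V-\lambda|+\delta)\|_{L^2}\le C\delta^{-3/2}$. I would obtain the first from $\|f^l\|_{L^2_{xz}L^\infty_y} \le C$ (Lemma \ref{lem:F1-lh}) combined with $\|(|V-\lambda|+\delta)^{-1}\|_{L^\infty_{xz}L^2_y}\le C\delta^{-1/2}$ (immediate from Lemma \ref{lem:chi1}), and the second from $\|\nabla f^l\|_{L^2}\le C\delta_1^{-1/2}\asymp C\delta^{-1/2}$ (Lemma \ref{lem:F1-lh}) together with the trivial $(|V-\lambda|+\delta)^{-1}\le \delta^{-1}$.

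\textbf{Large-$R$ regime.} For $R\gg 1$ the extra factor $R^{-3/4}$ must be gained through two simultaneous effects. First, $\eta_\ell \ge |k|$ combined with the uniform pointwise bound $g_\ell(y)^2 \le Ce^{-2\eta_\ell(1-y)}$ (which holds for all $y\in[-1,1]$, essentially because $\sinh(\eta_\ell(1+y)) \le e^{\eta_\ell(1+y)}$ and $\sinh(2\eta_\ell) \gtrsim e^{2\eta_\ell}$) yields $g_\ell(\lambda)^2 \le Ce^{-2R}$, so the integrand is exponentially small on a window of width $\sim R/|k|$ centered at $y=\lambda$. Second, on the complementary window $y\ge 1-R/(2|k|)$, one has $|V-\lambda|+\delta \gtrsim R/|k|$, which provides the polynomial gain $(|k|/R)^2$; combined with $\|f^l\|_{L^2}\le C|k|^{-1/2}$ from Lemma \ref{lem:F1-lh}, this delivers the target $\delta^{-1}R^{-3/2}$ for $\|f^l/(|V-\lambda|+\delta)\|_{L^2}^2$. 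For the gradient bound the same partition applies: the extra $\eta_\ell^2$ appearing per mode is absorbed via the elementary inequality $\eta^2 e^{-2\eta s}\le Cs^{-2}$, together with the constraint $\nu k^2\le 1$ (equivalently $|k|\delta\le 1$), yielding the target $\delta^{-3}R^{-3/2}$. The main obstacle will be the bookkeeping of the partition when $\lambda$ lies strictly in the interior of $(-1,1)$: one must verify that the exponential-decay and polynomial-gain zones together cover $[-1,1]$ with consistent constants, and that the tail contribution from $y$ near $-1$ (where $g_\ell$ is additionally damped by the factor $(1+y)$) does not spoil the bound.
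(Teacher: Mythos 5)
Your route is genuinely different from the paper's, which never localizes in $y$ and uses no exponential decay. The paper's proof is purely algebraic: since $|1-\lambda|\leq|V-\lambda|+|1-V|\leq|V-\lambda|+C(1-y)$, one has
\begin{align*}
\big(1+|k(1-\lambda)|\big)\Big\|\f{f^{l}}{|V-\lambda|+\delta}\Big\|_{L^2}\leq \Big\|\f{f^{l}}{|V-\lambda|+\delta}\Big\|_{L^2}+|k|\|f^{l}\|_{L^2}+C|k|\Big\|\f{(1-y)f^{l}}{|V-\lambda|+\delta}\Big\|_{L^2},
\end{align*}
and the right-hand side is $\leq C\delta^{-1/2}$ by the weighted bounds $\|(1-y)f^{l}\|_{L^2_{x,z}L^\infty_y}\leq C|k|^{-1}$, $\|f^l\|_{L^2}\le C|k|^{-1/2}$ of Lemma \ref{lem:F1-lh} combined with $\|(|V-\lambda|+\delta)^{-1}\|_{L^\infty_{x,z}L^2_y}\leq C\delta^{-1/2}$; the gradient term is handled the same way with $\|(1-y)\nabla f^l\|_{L^2}\le C|k|^{-1/2}$. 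Dividing by $1+|k(1-\lambda)|=:R$ even yields $R^{-1}$ (resp.\ $R^{-3/4}$ for the gradient piece), with no case distinction in $\lambda$. Your real-space partition into an exponential-decay window around $y=\lambda$ and a polynomial-gain zone near $y=1$ can also be made to work and would give a slightly stronger (exponential) bound on the window, but it is considerably heavier.

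There is, however, a concrete soft spot in your large-$R$ step. As written, ``the integrand is exponentially small on a window of width $\sim R/|k|$'' suggests bounding the window contribution by $\sup(\text{integrand})\times\text{width}$. That does not close: on the window the numerator is $O(e^{-cR})$ but the weight $(|V-\lambda|+\delta)^{-2}$ reaches $\delta^{-2}$ pointwise, so sup-times-width gives $\delta^{-2}e^{-cR}R/|k|$, and comparing with the target $\delta^{-1}R^{-3/2}$ forces $e^{cR}\gtrsim \delta^{-1}$, i.e.\ $R\gtrsim\log(1/\nu)$ --- which is not available, since $R$ may equal $10$ while $\delta^{-1}=(|k|/\nu)^{1/3}$ is arbitrarily large. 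The repair is to pair the numerator in $L^2_zL^\infty_y$ of the window with the weight in $L^2_y$, using $\int_{-1}^{1}(|V-\lambda|+\delta)^{-2}dy\leq C\delta^{-1}$ (exactly the estimate you already invoke in the small-$R$ regime); then the window contributes $C\delta^{-1}e^{-cR}\leq C\delta^{-1}R^{-3/2}$ as needed, and similarly for the gradient after absorbing $\eta_\ell^2$ via $\eta^2e^{-\eta s}\leq Cs^{-2}$. Separately, the claim that ``$|\lambda|\geq1$ is strictly easier since $|V-\lambda|$ is bounded below uniformly'' is false for $\lambda$ just outside $[-1,1]$; what is true, and suffices, is that for $\lambda>1$ one has $V\leq1$ everywhere (by Lemma \ref{lem:V}), hence $|V-\lambda|\geq\lambda-1\geq(R-1)/|k|$ on all of $\Omega$, so the entire domain falls into your polynomial-gain zone; the case $\lambda<-1$ needs the $(1+y)$ damping of $f^l$ near $y=-1$ that you mention.
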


\begin{proof}
By Lemma \ref{lem:F1-lh} and Lemma \ref{lem:chi1}, we have
\begin{align*}
   &\|f^{l}/(|V-\lambda|+\delta)\|_{L^2}\leq \|1/(|V-\lambda|+\delta)\|_{L^\infty_{x,z}L^2_y}\|f^{l}\|_{L^2_{x,z}L^\infty_y} \leq C\delta^{-\f12} ,\\
   &\delta\|\nabla f^{l}/(|V-\lambda|+\delta)\|_{L^2}\leq \|\nabla f^{l}\|_{L^2}\leq C\delta^{-\f12}(|k(1-\lambda)|+1)^{\f14}.
\end{align*}
By Lemma \ref{lem:F1-lh},  Lemma \ref{lem:chi1},  Lemma \ref{lem:V} and $|k\delta|\leq 1$, we have \begin{align*}
   |1-\lambda|\|f^{l}/(|V-\lambda|+\delta)\|_{L^2}&\leq \|(V-\lambda)f^{l}/(|V-\lambda|+\delta)\|_{L^2}+\|(1-V)f^{l}/(|V-\lambda|+\delta)\|_{L^2}\\ &\leq \|f^{l}\|_{L^2}+C\|(1-y)f^{l}/(|V-\lambda|+\delta)\|_{L^2}\\&\leq C|k|^{-\f12}+C\|1/(|V-\lambda|+\delta)\|_{L^\infty_{x,z}L^2_y}\|(1-y)f^{l}\|_{L^2_{x,z}L^\infty_y} \\&\leq C|k|^{-\f12}+C\delta^{-\f12}|k|^{-1}\leq C\delta^{-\f12}|k|^{-1},
   \end{align*}
 and
 \begin{align*}
   &|1-\lambda|\delta\|\nabla f^{l}/(|V-\lambda|+\delta)\|_{L^2}\\ &\leq \delta\|(V-\lambda)\nabla f^{l}/(|V-\lambda|+\delta)\|_{L^2}+\delta\|(1-V)\nabla f^{l}/(|V-\lambda|+\delta)\|_{L^2}\\ &\leq\delta\|\nabla f^{l}\|_{L^2}+\|(1-V)\nabla f^{l}\|_{L^2}\leq\delta\|\nabla f^{l}\|_{L^2}+C\|(1-y)\nabla f^{l}\|_{L^2}\\ &\leq C\delta^{\f12}(|k(1-\lambda)|+1)^{\f14}+C|k|^{-\f12}\leq C\delta^{\f12}(|k(1-\lambda)|+1)^{\f14}.
\end{align*}

Summing up, we conclude that
\begin{align*}
   &(1+|k(\lambda-1)|)\|f^{l}/(|V-\lambda|+\delta)\|_{L^2}\leq C\delta^{-\f12} ,\\
   &(1+|k(\lambda-1)|)\delta\|\nabla f^{l}/(|V-\lambda|+\delta)\|_{L^2}\\
   &\qquad\leq C(\delta^{-\f12}+|k|\delta^{\f12})(|k(1-\lambda)|+1)^{\f14}\leq C\delta^{-\f12}(|k(1-\lambda)|+1)^{\f14},
\end{align*}
which show that
\begin{align*}
     &\| f^{l}/(|V-\lambda|+\delta)\|_{L^2} +\delta\|\nabla f^{l}/(|V-\lambda|+\delta)\|_{L^2}\\ \leq& C(1+|k(\lambda-1)|)^{-1}\delta^{-\f12}[1+(|k(1-\lambda)|+1)^{\f14}]\leq C\delta^{-\f12}(|k(1-\lambda)|+1)^{-\f34}.
  \end{align*}
\end{proof}

Now we are in a position to prove Proposition \ref{prop:res-weak-b}.

\begin{proof}
Using Proposition \ref{prop:res-weak} and  the fact that $ \varphi|_{y=\pm 1}=0$, we deduce that
\begin{align*}
     \|\nabla\varphi\|^2_{L^2}=|\langle w,\varphi\rangle|\leq C|k\delta|^{-\f32}\|{F}\|_{H^{-1}}\|\nabla \varphi\|_{L^2},
     \end{align*}
which gives
\begin{align}\label{eq:varphi-na}
 \|\nabla\varphi\|_{L^2}&\leq C|k\delta|^{-\f32}\|{F}\|_{H^{-1}}=C\nu^{-\f12}|k|^{-1}\|{F}\|_{H^{-1}}.
\end{align}

We denote
\beno
N(k)\triangleq\max(|\nu/k|^{-\f23}-k^2,0),\quad \widetilde{\lambda}=\lambda-ia\delta,
\eeno
and let $\Delta\phi=\varphi,\, \phi|_{y=\pm1}=0.$  We decompose $\phi=\phi^{l}+\phi^{h}$, where
  \begin{align*}
     &\phi^{l}(x,y,z)=\sum_{\ell^2\leq N(k)}\f{1}{2\pi}\int_{\mathbb{T}}\phi(x,y,z_1)e^{i\ell(z-z_1)}dz_1,\\
     &\phi^{h}(x,y,z)=\sum_{\ell^2> N(k)}\f{1}{2\pi}\int_{\mathbb{T}}\phi(x,y,z_1)e^{i\ell(z-z_1)}dz_1.
  \end{align*}
It is obvious that $\forall\ \alpha\in\mathbb{Z}^+,\ \beta=\{0,1,2\}$,
  \begin{align}\label{eq:phi-low}
     &\|(\partial_x,\partial_z)^{\alpha}\nabla^\beta\phi^{l}\|_{L^2}\leq \delta^{-\alpha}\|\nabla^\beta\phi^{l}\|_{L^2},\quad \|\nabla^\beta\phi^{h}\|_{L^2}\leq \delta^{\alpha}\|(\partial_x,\partial_z)^{\alpha}\nabla^\beta\phi^{h}\|_{L^2}.
  \end{align}

Next we discuss the following two cases.\smallskip

\no\textbf{Case 1.} $|\lambda|\leq1+\nu^{\f13}|k|^{-\f13}$.\smallskip

Without loss of generality, we may assume that $0\leq\lambda\leq1+\nu^{\f13}|k|^{-\f13}$ . Then we have
\beno
|1-\tilde{\lambda}|\leq \max(1-\lambda,\lambda-1)+a\delta\leq \max(1-|\lambda|,\nu^{\f13}|k|^{-\f13})+\delta\leq 2\max(1-|\lambda|,\nu^{\f13}|k|^{-\f13}).
\eeno
Let  $\phi_1(x,y,z)=\phi^{l}(x,y,z)/(y-1)$. Then we have
\begin{align*}
     &\phi_1(x,y,z)=\frac{1}{1-y}\int_{y}^1\partial_y\phi^{l}(y_1)dy_1= \int_{0}^1\partial_y\phi^{l}(x,1-(1-y)s,z)ds,\\ &\partial_y\phi_1(x,y,z)=\int_{0}^1s\partial_y^2\phi^{l}(x,1-(1-y)s,z)ds,
  \end{align*}
which imply that
\begin{align*}
     &\|\phi_1\|_{L^2}+\|\nabla\phi_1\|_{L^2}\leq C\big(\|\partial_y\phi^l\|_{L^2}+\|\partial_y\nabla\phi^l\|_{L^2}\big)\leq C\|\varphi\|_{L^2},\\
     &\|\partial_z\nabla\phi_1\|_{L^2}\leq C\|\partial_z\partial_y\nabla\phi^l\|_{L^2}\leq C\delta^{-1}\|\nabla^2\phi^l\|_{L^2}\leq C\delta^{-1}\|\varphi\|_{L^2}.
 \end{align*}

 Notice that $\phi_1|_{y=1}=(\partial_y\phi^{l})|_{y=1}$, $(\partial_x,\partial_z)\phi_1|_{y=1}=((\partial_x,\partial_z)\partial_y\phi^{l})|_{y=1}$, $\phi_1|_{y=-1}=0$. Then we infer that
 \begin{align*}
  \|\phi_1\|_{L^2(\Gamma_{1})}\leq& C\|\partial_y\phi^{l}\|_{L^2(\Gamma_1)}\leq \|\partial_y\phi^{l}\|_{L^2_{x,z}L^\infty_y}\\\leq& C\|\partial_y\phi^{l}\|_{L^2}^{\f12}\|\partial^2_y\phi^{l}\|^{\f12}_{L^2}\leq C|k|^{-\f12}\|\varphi\|_{L^2},
 \end{align*}
 and  by \eqref{eq:phi-low},
 \begin{align*}
   \|(\partial_x,\partial_z)\phi_1\|_{L^2(\Gamma_{1})}\leq& C \|(\partial_x,\partial_z)\partial_y\phi^{l}\|_{L^2(\Gamma_1)}\leq \|(\partial_x,\partial_z)\partial_y\phi^{l}\|_{L^2_{x,z}L^\infty_y}\\
    \leq& C\|(\partial_x,\partial_z)\partial_y\phi^{l}\|_{L^2}^{\f12} \|(\partial_x,\partial_z)\partial^2_y\phi^{l}\|^{\f12}_{L^2}\\ \leq& C\delta^{-1}\|\nabla\phi^{l}\|_{L^2}^{\f12} \|\nabla^2\phi^{l}\|^{\f12}_{L^2}\leq C\delta^{-1}|k|^{-\f12}\|\varphi\|_{L^2}.
 \end{align*}

By Proposition \ref{prop:res-weak} and Lemma \ref{lem:sob-f},  we get (here $ \delta_1$ is defined in Proposition \ref{prop:res-weak})
\begin{align*}
  |\langle w,\phi^l\rangle|=& |\langle w,(y-1)\phi_1\rangle|\\
  =&|\langle (V-\tilde{\lambda})w,\phi_1\rangle+(\tilde{\lambda}-1)\langle w,\phi_1\rangle +\langle w, (y-V)\phi_1\rangle|\\ \leq &C|k|^{-1}\|F\|_{H^{-1}}\Big( \|\phi_1\|_{H^1}+(\|\phi_1\|_{L^2(\Gamma_1)}+ \delta_1\|(\partial_x,\partial_z)\phi_1\|_{L^2(\Gamma_1)})(|1-\lambda|+\delta)^{\f14} \delta^{-\f34} \Big)\\
     &+|1-\tilde{\lambda}||\langle w,\phi_1\rangle|+ |\langle w, (y-V)\phi_1\rangle|,
  \end{align*}
  and
  \begin{align*}
     &|\langle w,\phi_1\rangle|\leq C|k\delta|^{-\f32}\|{F}\|_{H^{-1}}\|\nabla\phi_1\|_{L^2}+ C|k|^{-\f32}\delta^{-\f12}\|F\|_{H^{-1}}\|\partial_z\nabla \phi_1\|_{L^2}.
  \end{align*}
  By Lemma \ref{lem:V} and integration by parts(using $(y-V)\phi_1|_{y=\pm1}=0$), we get
  \begin{align*}
     |\langle w, (y-V)\phi_1\rangle|&=|\langle \varphi, \Delta[(y-V)\phi_1]\rangle|\\
     &\leq |\langle \varphi, (y-V)\Delta\phi_1\rangle| +2|\langle \varphi, \nabla(y-V)\cdot\nabla\phi_1\rangle|+|\langle \varphi, [\Delta(y-V)]\phi_1\rangle|\\
     &\leq C\varepsilon_0\|\varphi\|_{L^2}\big(\|(1-y)\Delta\phi_1\|_{L^2} +\|\nabla\phi_1\|_{L^2}+\|\phi_1\|_{L^2}\big)\\
     &\leq C\varepsilon_0\|\varphi\|_{L^2}\big(\|\Delta[(1-y)\phi_1] +2\partial_y\phi_1\|_{L^2} +\|\nabla\phi_1\|_{L^2}+\|\phi_1\|_{L^2}\big)\\
     &\leq C\varepsilon_0\|\varphi\|_{L^2}\big(\|\varphi\|_{L^2} +\|\nabla\phi_1\|_{L^2}+\|\phi_1\|_{L^2}\big)\leq C\varepsilon_0\|\varphi\|^2_{L^2}.
  \end{align*}

Summing up, we conclude that
\begin{align*}
     |\langle w,\phi^{l}\rangle| \leq &C|k|^{-1}\|F\|_{H^{-1}}\Big( \|\phi_1\|_{H^1}+(\|\phi_1\|_{L^2(\Gamma_1)}+ \delta_1\|(\partial_x,\partial_z)\phi_1\|_{L^2(\Gamma_1)})(|1-\lambda|+\delta)^{\f14} \delta^{-\f34} \Big)\\
     &+C|1-\tilde{\lambda}|\Big(|k\delta|^{-\f32}\|{F}\|_{H^{-1}}\|\nabla\phi_1\|_{L^2}+ |k|^{-\f32}\delta^{-\f12}\|F\|_{H^{-1}}\|\partial_z\nabla \phi_1\|_{L^2}\Big)+C\varepsilon_0\|\varphi\|^2_{L^2}\\
     \leq& C(|1-\lambda|+\delta)|k|^{-1}\|F\|_{H^{-1}} \Big(\delta^{-1}\|\phi_1\|_{H^1}+\delta^{-\f32}\big(\|\phi_1\|_{L^2(\Gamma_1)} +\delta\|(\partial_x,\partial_z\big)\phi_1\|_{L^2(\Gamma_1)})\\
     &+|k|^{-\f12}\delta^{-\f32}\|\nabla\phi_1\|_{L^2} +|k|^{-\f12}\delta^{-\f12}\|\partial_z\nabla\phi_1\|_{L^2}\Big) +C\varepsilon_0\|\varphi\|^2_{L^2}\\
     \leq&C(|1-\lambda|+\delta)|k|^{-1}\|F\|_{H^{-1}}\|\varphi\|_{L^2}\big(\delta^{-1} +|k|^{-\f12}\delta^{-\f32}\big)+C\varepsilon_0\|\varphi\|^2_{L^2}\\
     \leq& C\max(1-\lambda,|\nu/k|^{\f13})(\nu k^2)^{-\f12}\|F\|_{H^{-1}}\|\varphi\|_{L^2}+C\varepsilon_0\|\varphi\|^2_{L^2}.
  \end{align*}
  By \eqref{eq:varphi-na}, we have
  \begin{align*}
     |\langle w,\phi^{h}\rangle|\leq& \|\nabla\varphi\|_{L^2}\|\nabla\phi^h\|_{L^2}\leq \|\nabla\varphi\|_{L^2}(\delta^{2}\|\partial_z^2\nabla\phi^h\|_{L^2})\\
     \leq& C \delta^{2}\|\nabla\varphi\|^2_{L^2}\leq C\nu^{-\f13}|k|^{-\f83}\|F\|^2_{H^{-1}}.
  \end{align*}
  Thus, we obtain
  \begin{align*}
     \|\varphi\|^2_{L^2}&=|\langle w,\phi\rangle|\leq |\langle w,\phi^l\rangle|+| \langle w,\phi^h\rangle|\\
     &\leq C\max(1-\lambda,|\nu/k|^{\f13})(\nu k^2)^{-\f12}\|F\|_{H^{-1}}\|\varphi\|_{L^2} +C\nu^{-\f13}|k|^{-\f83}\|F\|^2_{H^{-1}}+C\varepsilon_0\|\varphi\|^2_{L^2},
  \end{align*}
 which  implies (taking $\varepsilon_0$ sufficiently small) that
  \begin{align*}
     \nu^{\f12}|k|\|\varphi\|_{L^2}&\leq  C\max(1-\lambda,|\nu/k|^{\f13})\|F\|_{H^{-1}}.
  \end{align*}

\no\textbf{Case 2.} $|\lambda|\geq1+\nu^{\f13}|k|^{-\f13}$.\smallskip

Let $f_1(x,y,z)=(V-\tilde{\lambda})^{-1}\phi$. Then $f_1|_{y=\pm1}=0$, and by Proposition \ref{prop:res-weak}, we have
\begin{align*}
   \|\varphi\|^2_{L^2}=&|\langle w(V-\tilde{\lambda}),f_1 \rangle|\leq C|k|^{-1}\|F\|_{H^{-1}}\|f_1\|_{H^1}.
\end{align*}
Using the fact that $|V-\tilde{\lambda}|^{-1}\leq C(1+\delta-|y|)^{-1}$, we deduce that
\begin{align*}
   &\left\|\f{\nabla\phi}{V-\tilde{\lambda}}\right\|_{L^2}\leq C \|\nabla\phi\|_{L^2_{x,z}L^\infty_y}\left\|(1+\delta-|y|)^{-1} \right\|_{L^\infty_{x,z}L^2_{y}}\leq C|k|^{-\f12}\delta^{-\f12}\|\varphi\|_{L^2},\\
   &\left\|\f{\phi(\nabla V)}{(V-\tilde{\lambda})^2}\right\|_{L^2} \leq \left\|\f{\phi}{1-|y|}\right\|_{L^2_{x,z}L^\infty_y} \left\|(1+\delta-|y|)^{-1}\right\|_{L^\infty_{x,z}L^2_y}\|\nabla V\|_{L^\infty}\\
   &\qquad\qquad\qquad\leq C\|\partial_y\phi\|_{L^2_{x,z}L^\infty_y}\left\|(1+\delta-|y|)^{-1}\right\|_{L^2} \leq C|k|^{-\f12}\delta^{-\f12}\|\varphi\|_{L^2},\\
   &\left\|\f{\phi}{V-\tilde{\lambda}}\right\|_{L^2}\leq C\left\|\f{\phi}{1-|y|}\right\|_{L^2}\leq C\|\partial_y\phi\|_{L^2} \leq C|k|^{-1}\|\varphi\|_{L^2},
\end{align*}
which give
\begin{align*}
   & \|f_1\|_{H^1}\leq \left\|\f{\nabla\phi}{V-\tilde{\lambda}}\right\|_{L^2} +\left\|\f{\phi(\nabla V)}{(V-\tilde{\lambda})^2}\right\|_{L^2} +\left\|\f{\phi}{V-\tilde{\lambda}}\right\|_{L^2} \leq C|k|^{-\f12}\delta^{-\f12}\|\varphi\|_{L^2}
\end{align*}
Thus, we conclude
\begin{align*}
   & \|\varphi\|^2_{L^2}\leq C|k|^{-1}|k|^{-\f12}\delta^{-\f12}\|F\|_{H^{-1}}\|\varphi\|_{L^2}= C(\nu k^2)^{-\f12}\delta\|F\|_{H^{-1}}\|\varphi\|_{L^2},
\end{align*}
which gives
\begin{align*}
   \nu^{\f12}|k|\|\varphi\|_{L^2}&\leq C |\nu/k|^{\f13}\|F\|_{H^{-1}}\leq C\max(1-|\lambda|,|\nu/k|^{\f13})\|F\|_{H^{-1}}
\end{align*}
Combining with both cases, we prove the second inequality of the proposition.

We decompose $f$ as $f=f^{h}+f^{l}$, where $f^l$ and $f^h$ is as in Lemma \ref{lem:F1-lh}. Recall that
$N_1(k)=\max(\delta^{-2}(|k(1-\lambda)|+1)-k^2,0)$. Then $l^2\leq N_1(k)\Leftrightarrow \eta\leq \delta^{-1}(|k(1-\lambda)|+1)^{\f12}$. Thus, we have
\beno
\|(\partial_x,\partial_z) f^{l}\|_{L^2(\Gamma_1)}\leq\delta^{-1}(|k(1-\lambda)|+1)^{\f12}\|f^{l}\|_{L^2(\Gamma_1)}=\delta_1^{-1}\|f^{l}\|_{L^2(\Gamma_1)}.
\eeno
Now, by Proposition \ref{prop:res-weak}, Lemma \ref{lem:F1-lh} and Lemma \ref{lem:F1-V}, we get
  \begin{align*}
    |\langle w, f^{l}\rangle| \leq& C|k|^{-1}\|{F}\|_{H^{-1}}\Big(\big(\|f^{l}\|_{L^2(\Gamma_1)}+\delta_1\|(\partial_x,\partial_z) f^{l}\|_{L^2(\Gamma_1)}\big) (|1-\lambda|+\delta)^{-\f34}\delta^{-\f34}\\
   &\quad
   +\|\nabla f^{l}/(|V-\lambda|+\delta)\|_{L^2}+\delta^{-1}\|f^{l}/(|V-\lambda|+\delta)\|_{L^2}\Big)\\
   \leq&C|k|^{-1}\|F\|_{H^{-1}}\Big((|1-\lambda|+\delta)^{-\f34}\delta^{-\f34} +C\delta^{-\f32}(|k(1-\lambda)|+1)^{-\f34}\Big)\\
   \leq &C|k|^{-1}\delta^{-\f32}(|k(1-\lambda)|+1)^{-\f34}\|F\|_{H^{-1}}
   =C|\nu k|^{-\f12}(|k(1-\lambda)|+1)^{-\f34}\|F\|_{H^{-1}},
  \end{align*}
and
  \begin{align*}
     |\langle w,f^{h}\rangle|&\leq C\nu^{-1}\|F\|_{H^{-1}}\|(1-y^2)f^{h}\|_{L^2}\leq C\nu^{-1}\delta^{\f32}(|k(1-\lambda)|+1)^{-\f34}\|F\|_{H^{-1}}\\
     &= C|\nu k|^{-\f12}(|k(1-\lambda)|+1)^{-\f34}\|F\|_{H^{-1}}.
  \end{align*}
  This shows that
   \begin{align*}
    |\langle w,f\rangle| &\leq |\langle w,f^l\rangle| +|\langle w,f^h\rangle|\leq C|\nu k|^{-\f12}(|k(1-\lambda)|+1)^{-\f34}\|F\|_{H^{-1}},
  \end{align*}
  which along with \eqref{eq:varphi-dual}  gives
\begin{align*}
  \|\partial_y\varphi\|_{L^2(\Gamma_1)}\leq  C|\nu k|^{-\f12}(|k(1-\lambda)|+1)^{-\f34}\|F\|_{H^{-1}}.
  \end{align*}

  For the case of $j=-1$,we can similarly get
  \begin{align*}
  \|\partial_y\varphi\|_{L^2(\Gamma_{-1})}\leq  C|\nu k|^{-\f12}(|k(1+\lambda)|+1)^{-\f34}\|F\|_{H^{-1}}.
  \end{align*}

  This completes the proof of the proposition.
\end{proof}

\section{$L^p$ estimate of the solutions for the homogeneous OS}

Let $w_{1,\ell}$ and $w_{2,\ell}$ be the solution to the homogeneous Orr-Sommerfeld equation
 \begin{align}\label{eq:w1l}
     \left\{\begin{aligned}
      &-\nu(\partial^2-\eta^2)w_{1,\ell}+ik(y-\lambda)w_{1,\ell}-a(\nu k^2)^{\f13}w_{1,\ell}=0,\\
      &w_{1,\ell}=(\partial_y-\eta^2)\varphi_{1,l}, \quad \varphi_{1,\ell}|_{y=\pm1}=0,\\
      &\partial_y\varphi_{1,\ell}(-1)=0,\quad \partial_y\varphi_{1,\ell}(1)=1,
     \end{aligned}\right.
  \end{align}
 and
  \begin{align}\label{eq:w2l}
     \left\{\begin{aligned}
      &-\nu(\partial^2-\eta^2)w_{2,\ell}+ik(y-\lambda)w_{2,\ell}-a(\nu k^2)^{\f13}w_{2,\ell}=0,\\
      &w_{2,\ell}=(\partial_y-\eta^2)\varphi_{2,l},\quad \varphi_{2,\ell}|_{y=\pm1}=0,\\
      &\partial_y\varphi_{2,\ell}(-1)=1,\quad \partial_y\varphi_{2,\ell}(1)=0.
     \end{aligned}\right.
  \end{align}
Here $\la\in \R,  a\in [0,\eps_1]$ with $\eps_1\le \delta_1$ and $\delta_1$ given by Lemma \ref{lem:A0-w}.
In the sequel, we assume that $k>0$ without loss of generality and let $L=(|k|/\nu)^{1/3}$(so $L=\delta^{-1}$). \smallskip

The goal of  this section is to  establish the following $L^p$ type estimates of $w_{1,\ell}$ and $w_{2,\ell}$, which could be viewed as boundary correctors in the case of nonslip boundary condition, hence describe the boundary behavior of the solution.

\begin{Proposition}\label{prop:w12-bounds}
There exists $k_0>1$ independent of $\nu$ so that
if $L\ge  k_0$, then we have
\begin{align*}
&\|w_{1,\ell}\|_{L^{\infty}}\leq C\Big(\big(|k(\lambda-1)/\nu|+\eta^2\big)^{\f12}+(|k|/\nu)^{\f13}\Big),\\
&\|w_{2,\ell}\|_{L^{\infty}}\leq C\Big(\big(|k(\lambda+1)/\nu|+\eta^2\big)^{\f12}+(|k|/\nu)^{\f13}\Big),\\
&\|(1-|y|)^{\alpha}w_{1,\ell}\|_{L^1}+\|(1-|y|)^{\alpha}w_{2,\ell}\|_{L^1}\leq CL^{-\alpha},\quad \alpha\geq 0,\\
&\|(1-|y|)^{\beta}w_{1,\ell}\|_{L^{\infty}}+\|(1-|y|)^{\beta}w_{2,\ell}\|_{L^{\infty}}\leq CL^{1-\beta},\quad\beta\geq1.
\end{align*}
\end{Proposition}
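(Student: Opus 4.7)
The plan is to reduce the homogeneous OS equation to an explicit Airy representation and extract the stated bounds from the standard Airy asymptotics. Writing $L=(|k|/\nu)^{\f13}$ and dividing by $\nu$, the ODE becomes $w''=[iL^{3}(y-\lambda)+\eta^{2}-aL^{2}]w$, and the change of variable
\begin{align*}
\tau(y)=e^{i\pi/6}\bigl(L(y-\lambda)-i(\eta^{2}/L^{2}-a)\bigr)
\end{align*}
reduces it to the Airy equation $w_{\tau\tau}=\tau w$. A fundamental pair of solutions is $W_{1}(y)=\mathrm{Ai}(\tau(y))$ and $W_{2}(y)=\mathrm{Ai}(e^{i2\pi/3}\tau(y))$, whose decay sectors under $\mathrm{Ai}$ are rotated by $\pi/6$ and $5\pi/6$ respectively. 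In the regime $L\geq k_0$ with $k_0$ large, this forces $W_{1}$ to be exponentially localized near the upper boundary $y=1$ and $W_{2}$ near the lower boundary $y=-1$, with each being exponentially suppressed at the opposite endpoint.

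I would then write $w_{j,\ell}=c_{j,1}W_{1}+c_{j,2}W_{2}$ and recover the stream function by the Dirichlet Green's function
\begin{align*}
\varphi_{j,\ell}(y)=\int_{-1}^{1}G_{\eta}(y,s)w_{j,\ell}(s)\,ds,\qquad G_{\eta}(y,s)=-\f{\sinh(\eta(1+y_{<}))\sinh(\eta(1-y_{>}))}{\eta\sinh(2\eta)}.
\end{align*}
Imposing the Neumann data from \eqref{eq:w1l}-\eqref{eq:w2l} yields a $2\times 2$ linear system
\begin{align*}
\begin{pmatrix}M_{++}&M_{+-}\\ M_{-+}&M_{--}\end{pmatrix}\begin{pmatrix}c_{j,1}\\ c_{j,2}\end{pmatrix}=\begin{pmatrix}\partial_{y}\varphi_{j,\ell}(1)\\ \partial_{y}\varphi_{j,\ell}(-1)\end{pmatrix},\quad M_{\sigma\sigma'}=\int_{-1}^{1}\partial_{y}G_{\eta}(\sigma,s)W_{\sigma'}(s)\,ds.
\end{align*}
The off-diagonal entries $M_{\pm\mp}$ are exponentially small in $L$ by the localization above, so the matrix is a small perturbation of its diagonal. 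A direct computation with $\partial_{y}G_{\eta}(1,s)=\sinh(\eta(1+s))/\sinh(2\eta)$ followed by the rescaling $s=1-\sigma/L$ shows $|M_{++}|^{-1}$ is comparable to $(|k(\lambda-1)/\nu|+\eta^{2})^{\f12}+L$, with an analogous statement at $y=-1$.

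Given the coefficient bounds, the pointwise estimate $|\mathrm{Ai}(\zeta)|\lesssim(1+|\zeta|)^{-1/4}\exp\bigl(-\tfrac{2}{3}\mathrm{Re}\,\zeta^{3/2}\bigr)$ produces the stated $L^{\infty}$ bounds immediately. For the weighted $L^{1}$ estimate I rescale near $y=1$ via $\sigma=L(1-y)$, giving $\int_{-1}^{1}(1-|y|)^{\alpha}|W_{1}(y)|\,dy\lesssim L^{-\alpha-1}\int_{0}^{2L}\sigma^{\alpha}|\mathrm{Ai}(\tau)|\,d\sigma$, and the rapid Airy decay in $\sigma$ renders the latter uniformly bounded in $L$; multiplying by the coefficient bound yields $L^{-\alpha}$. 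The weighted $L^{\infty}$ bound $\|(1-|y|)^{\beta}w_{j,\ell}\|_{L^{\infty}}\lesssim L^{1-\beta}$ for $\beta\ge 1$ follows because $|w_{j,\ell}|\lesssim L$ on the boundary layer of width $L^{-1}$ (where the weight $(1-|y|)^{\beta}$ is at most $L^{-\beta}$), and because the Airy exponential dominates any algebraic factor outside that layer. The main obstacle is the regime $|\lambda\mp 1|\lesssim L^{-1}$, where the Airy turning point $\tau=0$ enters $[-1,1]$ and $W_{j}$ transitions between the exponential-decay and oscillatory asymptotic regimes. The diagonal entries $M_{\pm\pm}$ must be tracked uniformly across this turning layer; their size shifts continuously from being controlled by $L$ in the balanced case to being controlled by $(|k(\lambda\mp 1)/\nu|+\eta^{2})^{\f12}$ as $|\lambda\mp 1|$ grows, and this is precisely what produces the two additive contributions in the $L^{\infty}$ bound of the proposition.
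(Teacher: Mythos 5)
Your overall architecture is the same as the paper's: represent $w_{j,\ell}$ in the basis of the two Airy solutions, convert the Neumann data into a $2\times 2$ linear system via the Green's function $\sinh(\eta(1\pm y))/\sinh(2\eta)$, invert that system by showing it is diagonally dominant, and then read off the four estimates from Airy asymptotics. However, one key step would fail as written: the off-diagonal entries of your matrix are \emph{not} exponentially small in $L$, and "localization" alone does not give their smallness. The cross term $M_{+-}=\int_{-1}^{1}\f{\sinh(\eta(1+s))}{\sinh(2\eta)}W_{-}(s)\,ds$ receives its main contribution not from the exponentially small tail of $W_{-}$ at $s=1$, but from the bulk of $W_{-}$ in the layer of width $L^{-1}$ at $s=-1$, where the test function does not vanish identically but only linearly, of size $\sim \eta(1+s)$. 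Integrating this against the layer gives a contribution of order $\eta/L^{2}$ relative to $\sinh(2\eta)$, i.e.\ only a factor $O(1/L)$ smaller than the diagonal entry (and when $\eta\sim L$ one even needs the monotonicity of $\sinh x/x$ to control $\f{\eta}{L}\int_0^{2L}\cosh(\eta t/L)\,\omega(z,t)\,dt$). This $O(1/L)$ — not exponential — gain is precisely the content of Lemma \ref{lem:A0-w3} and is the reason the proposition requires the threshold $L\ge k_0$ at all; your argument as stated gives no reason for $k_0$ to appear.

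A second, related gap is in the weighted estimates: the integral $\int_{0}^{2L}\sigma^{\alpha}|\mathrm{Ai}(\tau)|\,d\sigma$ is \emph{not} uniformly bounded, since $|\mathrm{Ai}(\tau)|$ peaks at the turning point and grows exponentially on one side of it; when the critical layer sits in the interior this integral is exponentially large. The quantity that decays monotonically from the wall, uniformly in the location of $\lambda$, is the normalized profile $\mathrm{Ai}(\tau(y))/A_0(\tau(\mp 1))=\big(A_0'/A_0\big)(\tau(y))\cdot\omega(\tau(\mp1),L(1\pm y))$, with $|\omega(z,x)|\le e^{-c(x|z|^{1/2}+x^{3/2})}$ (Lemma \ref{lem:A0-w2}); the coefficient bound $|C_{ij}|\lesssim L^{1-\delta_{ij}}/|A_0(\tau(\mp1))|$ must therefore be inserted \emph{before} estimating the profile pointwise, not multiplied in at the end. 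With these two corrections — the quantitative $O(1/L)$ diagonal dominance and the boundary-normalized Airy decay — your plan coincides with the paper's proof (Lemmas \ref{lem:W12}, \ref{lem:A0-w3} and \ref{lem:C-ij}).
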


\begin{Proposition}\label{prop:w12-L2}
There exists $k_0>1$ independent of $\nu$ so that
if $L\ge  k_0$, then we have
\begin{align*}
&\|w_{1,\ell}\|_{L^{2}}\leq C\Big(\big(|k(\lambda-1)/\nu|^{\f14}+|k/\nu|^{\f16}+|k|^{\f12}\big)+(|k/\nu|^{\f13}+|k|)^{-\f12}|\ell|\Big),\\
&\|w_{2,\ell}\|_{L^{2}}\leq C\Big(\big(|k(\lambda+1)/\nu|^{\f14}+|k/\nu|^{\f16}+|k|^{\f12}\big)+(|k/\nu|^{\f13}+|k|)^{-\f12}|\ell|\Big),\\
&\|(1-|y|)^{\beta}w_{1,\ell}\|_{L^{2}}+\|(1-|y|)^{\beta}w_{2,\ell}\|_{L^{2}}\leq CL^{1/2-\beta},\quad\beta\geq1/2,\\
&\|(\partial_y,\eta)\varphi_{1,\ell}\|_{L^2}+\|(\partial_y,\eta)\varphi_{2,\ell}\|_{L^2}\leq C|\nu/k|^{\f16},\\
&|k|^{\f12}\|\varphi_{1,\ell}\|_{L^2}+|k|^{\f12}\|\varphi_{2,\ell}\|_{L^2}\leq C|\nu/k|^{\f13}.
\end{align*}
\end{Proposition}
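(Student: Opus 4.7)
The plan is to derive every estimate in Proposition~\ref{prop:w12-L2} from Proposition~\ref{prop:w12-bounds}, using elementary $L^p$ interpolation for the bounds on $w_{j,\ell}$ and Green's-function analysis of the Dirichlet Poisson problem $(\partial_y^2-\eta^2)\varphi_{j,\ell}=w_{j,\ell}$ for the bounds on $\varphi_{j,\ell}$.

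First, for the $L^2$ bound on $w_{j,\ell}$, I would apply the elementary inequality $\|f\|_{L^2}^2\leq \|f\|_{L^1}\|f\|_{L^\infty}$. Combining the $L^\infty$ estimate $\|w_{1,\ell}\|_{L^\infty}\leq C((|k(\lambda-1)/\nu|+\eta^2)^{1/2}+L)$ with the $L^1$ estimate (Proposition~\ref{prop:w12-bounds} with $\alpha=0$) and taking square roots yields
\begin{equation*}
  \|w_{1,\ell}\|_{L^2}\leq C\bigl(|k(\lambda-1)/\nu|^{1/4}+|k|^{1/2}+L^{1/2}+|\ell|^{1/2}\bigr).
\end{equation*}
To match the stated $(L+|k|)^{-1/2}|\ell|$ term, I would split into two cases: if $|\ell|\leq L+|k|$ then $|\ell|^{1/2}\leq L^{1/2}+|k|^{1/2}$ and is absorbed into the other terms, while if $|\ell|> L+|k|$ then $|\ell|^{1/2}\leq (L+|k|)^{-1/2}|\ell|$. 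The estimate for $w_{2,\ell}$ is symmetric under $y\mapsto -y$. The weighted $L^2$ bound is obtained in the same way: $\|(1-|y|)^\beta w_{j,\ell}\|_{L^2}^2\leq \|(1-|y|)^\beta w_{j,\ell}\|_{L^1}\|(1-|y|)^\beta w_{j,\ell}\|_{L^\infty}\leq CL^{-\beta}\cdot CL^{1-\beta}=CL^{1-2\beta}$.

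Next, for the bounds on $\varphi_{j,\ell}$, I would use the Green's-function representation
\begin{equation*}
  \varphi_{j,\ell}(y)=-\int_{-1}^{1}G(y,z)\,w_{j,\ell}(z)\,dz,\qquad G(y,z)=\frac{\sinh(\eta(1+y_<))\sinh(\eta(1-y_>))}{\eta\sinh(2\eta)}.
\end{equation*}
The key observation is that $G(y,\pm 1)=0$, so the order-one contribution $\bigl(\int w_{j,\ell}\,dz\bigr)\cdot G(y,\pm 1)$ vanishes. Taylor expanding $G$ about $z=1$ (for $w_{1,\ell}$, which concentrates near $y=1$) gives $G(y,z)\approx (z-1)\sinh(\eta(1+y))/\sinh(2\eta)$, so applying $\|(1-|y|)w_{1,\ell}\|_{L^1}\leq CL^{-1}$ (the $\alpha=1$ case of Proposition~\ref{prop:w12-bounds}) yields, for $y$ outside the boundary layer, $|\varphi_{1,\ell}(y)|\lesssim L^{-1}|\sinh(\eta(1+y))/\sinh(2\eta)|$, with a matching pointwise bound inside the layer via the $L^\infty$ estimate with $\beta=1$. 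Taking $L^2$ norms and using the standard hyperbolic identities $\|\sinh(\eta(1+\cdot))/\sinh(2\eta)\|_{L^2}\lesssim (1+\eta)^{-1/2}$ gives $\|\varphi_{1,\ell}\|_{L^2}\lesssim L^{-1}(1+\eta)^{-1/2}$; since $\eta\geq |k|$, this implies the claimed $|k|^{1/2}\|\varphi_{1,\ell}\|_{L^2}\leq C|\nu/k|^{1/3}$. An analogous argument with $\partial_y G$ and $\eta G$ handles $\|(\partial_y,\eta)\varphi_{1,\ell}\|_{L^2}\leq C|\nu/k|^{1/6}$.

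The main obstacle is the boundary-layer matching in the Green's-function step, especially in the transition regime $\eta\sim L$, since the effective support width of $w_{j,\ell}$ passes from $L^{-1}$ (Airy layer, $\eta\ll L$) to $\eta^{-1}$ (hyperbolic layer, $\eta\gg L$). In particular, for the $\|\partial_y\varphi_{j,\ell}\|_{L^2}$ estimate when $\eta>L$, the naive use of $\|(1-|y|)w_{j,\ell}\|_{L^1}\leq CL^{-1}$ loses a factor $L^{1/2}\eta^{-1/2}$; one must instead exploit the sharper weighted $L^\infty$ bounds from Proposition~\ref{prop:w12-bounds} (with $\beta>1$) together with the enhanced exponential decay of $w_{j,\ell}$ in that regime to close the estimate.
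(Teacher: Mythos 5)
Your interpolation argument for the first two estimates is exactly the paper's proof ($\|w_{j,\ell}\|_{L^2}\le\|w_{j,\ell}\|_{L^\infty}^{1/2}\|w_{j,\ell}\|_{L^1}^{1/2}$ followed by absorbing $|\ell|^{1/2}$ into $(L+|k|)^{1/2}+(L+|k|)^{-1/2}|\ell|$), so that part is fine. Two of the remaining steps have genuine gaps.

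For the weighted $L^2$ bound you use $\|(1-|y|)^{\beta}w_{j,\ell}\|_{L^2}^2\le\|(1-|y|)^{\beta}w_{j,\ell}\|_{L^1}\,\|(1-|y|)^{\beta}w_{j,\ell}\|_{L^\infty}$ together with $\|(1-|y|)^{\beta}w_{j,\ell}\|_{L^\infty}\le CL^{1-\beta}$. But Proposition \ref{prop:w12-bounds} only provides that weighted $L^\infty$ estimate for $\beta\ge1$, and for $\beta<1$ it is actually false in general: the boundary layer of $w_{j,\ell}$ has height $\sim L(1+|Ld|)^{1/2}$ over a width $\sim L^{-1}(1+|Ld|)^{-1/2}$, so $\|(1-|y|)^{\beta}w_{j,\ell}\|_{L^\infty}\sim L^{1-\beta}(1+|Ld|)^{(1-\beta)/2}$, which is not $O(L^{1-\beta})$ when $|Ld|$ is large (large $L|\lambda\mp1|$ or $\eta\gg L$). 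Since the statement covers all $\beta\ge1/2$, your argument does not close on $\beta\in[\tfrac12,1)$. The repair is the paper's asymmetric splitting of the weight, $\|(1-|y|)^{\beta}w\|_{L^2}\le\|(1-|y|)^{\beta+1/2}w\|_{L^\infty}^{1/2}\|(1-|y|)^{\beta-1/2}w\|_{L^1}^{1/2}$, which keeps the $L^\infty$ exponent $\ge1$ and the $L^1$ exponent $\ge0$.

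For the $\varphi_{j,\ell}$ bounds, your Green's-function route is a legitimate substitute for the paper's Lemma \ref{lem:ell-w}: for $|k|^{1/2}\|\varphi_{j,\ell}\|_{L^2}$ it is a hands-on version of the duality estimate $\|\varphi\|_{L^2}\le C\eta^{-1/2}\|(1-|y|)w\|_{L^1}$ and can be made rigorous. But the estimate $\|(\partial_y,\eta)\varphi_{j,\ell}\|_{L^2}\le C|\nu/k|^{1/6}$ is not actually proved in your write-up: you correctly identify that the $L^1$-based bookkeeping loses a factor in the regime $\eta\gtrsim L$ and only gesture at a fix via sharper weighted $L^\infty$ bounds and enhanced decay. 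The clean argument, which needs no case split in $\eta$ versus $L$, is the energy identity $\|(\partial_y,\eta)\varphi\|_{L^2}^2=|\langle w,\varphi\rangle|\le\|\varphi/(1-|y|)\|_{L^2}\,\|(1-|y|)w\|_{L^2}\le C\|\partial_y\varphi\|_{L^2}\,\|(1-|y|)w\|_{L^2}$ by Hardy's inequality, which reduces the claim to the weighted $L^2$ bound with $\beta=1$ that you have already established. I would replace the boundary-layer matching for both $\varphi$ estimates by these one-line duality/Hardy arguments (the second and third inequalities of Lemma \ref{lem:ell-w}).
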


In the following sections(section 6-15), we always assume $0<\nu\leq \nu_0\leq k_0^{-3}$. Then $L\ge  k_0$.

\subsection{Basic properties of the Airy function}

Let $Ai(y)$ be the Airy function, which is a nontrivial solution of $u''-yu=0$. We introduce some notations
\begin{align*}
&A_0(z)=\int_{e^{i\pi/6}z}^{\infty}Ai(t)dt=e^{i\pi/6}\int_{z}^{\infty}Ai(e^{i\pi/6}t)dt,\\
&\omega(z,x)=\frac{A_0(z+x)}{A_{0}(z)}=\exp\Big(\int_{0}^{x}\frac{A'_0(z+t)}{A_0(z+t)}dt\Big).
\end{align*}

The following two lemmas come from \cite{CLWZ}.

\begin{Lemma}\label{lem:Ao-more}
There exists $c>0,\ \delta_0$ so that for ${{\rm Im }z}\le \delta_0$,
\begin{align}
&\Big|\f{A_0'(z)}{A_0(z)}\Big|\le C\big(1+|z|^{\f12}\big),\quad {\rm Re}\f{A_0'(z)}{A_0(z)}\leq-c\big(1+|z|^{\f12}\big).\nonumber
\end{align}
\end{Lemma}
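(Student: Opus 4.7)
The plan is to split into the large-$|z|$ and bounded-$|z|$ regimes, using classical Airy asymptotics for the former and a compactness plus nonvanishing argument for the latter.

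For large $|z|$, I start from the identities
\beno
A_0(z)=e^{i\pi/6}\int_z^\infty Ai(e^{i\pi/6}t)\,dt,\qquad A_0'(z)=-e^{i\pi/6}Ai(e^{i\pi/6}z),
\eeno
and apply Laplace's method along a path of steepest descent for the phase $-\tfrac{2}{3}(e^{i\pi/6}t)^{3/2}$, using the standard asymptotics $Ai(s)\sim \tfrac{1}{2\sqrt{\pi}}s^{-1/4}\exp(-\tfrac{2}{3}s^{3/2})$ valid in any closed subsector of $|\arg s|<\pi$. This yields, uniformly in the region of interest for $|z|$ large,
\beno
A_0(z)\;\sim\;\f{Ai(e^{i\pi/6}z)}{(e^{i\pi/6}z)^{1/2}}\bigl(1+O(|z|^{-3/2})\bigr),\qquad \f{A_0'(z)}{A_0(z)}\;=\;-e^{i\pi/4}z^{1/2}\bigl(1+O(|z|^{-3/2})\bigr),
\eeno
where $z^{1/2}$ is the principal branch.

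Writing $z=|z|e^{i\theta}$, the leading term of $A_0'/A_0$ has real part $-|z|^{1/2}\cos(\pi/4+\theta/2)$ and modulus $|z|^{1/2}$. The constraint $\mathrm{Im}\,z\le\delta_0$ with $|z|$ large forces $\theta\in[-\pi,0]$ up to a correction of size $O(\delta_0/|z|)$ at the upper boundary. For such $\theta$, $\pi/4+\theta/2\in[-\pi/4,\pi/4]+o(1)$, so $\cos(\pi/4+\theta/2)\ge \tfrac{\sqrt 2}{2}-o(1)$, which delivers both the upper bound $|A_0'/A_0|\le C|z|^{1/2}$ and the lower bound $-\mathrm{Re}(A_0'/A_0)\ge c|z|^{1/2}$ for $|z|\ge R_0$ with $c$ slightly smaller than $\tfrac{\sqrt 2}{2}$ and $C$ slightly larger than $1$.

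For the compact region $\{|z|\le R_0\}\cap\{\mathrm{Im}\,z\le\delta_0\}$, continuity of $A_0'/A_0$ yields uniform bounds absorbed into the additive constants $1$ and $c\cdot 1$ in the statement, provided $A_0$ is nonzero on this set. The main obstacle is this nonvanishing. I would first establish it on the closed lower half plane: for $z\in\{\mathrm{Im}\,z\le 0\}$, deform the integration contour in the definition of $A_0(z)$ to a steepest-descent path along which $|Ai(e^{i\pi/6}t)|$ decreases monotonically in $t$, and apply Laplace's method to obtain a strict positive lower bound $|A_0(z)|\gtrsim (1+|z|)^{-3/4}\exp\bigl(-\tfrac{2}{3}\mathrm{Re}(e^{i\pi/6}z)^{3/2}\bigr)>0$. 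Combined with the large-$|z|$ asymptotic of the previous step, this rules out zeros outside a compact set in the closed lower half plane, and a continuity/compactness argument then extends the nonvanishing to the slab $\{\mathrm{Im}\,z\le\delta_0\}$ for $\delta_0$ sufficiently small. The hardest aspect is arranging the Laplace lower bound uniformly across the Stokes-line transitions near $\arg z=-\pi/2$ and $\arg z=-\pi$, where the naive leading-order asymptotic becomes subdominant and the contour must be chosen carefully so that the dominant exponential factor in the integrand stays monotone in modulus along the path.
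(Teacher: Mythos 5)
The paper does not actually prove this lemma: it is imported from \cite{CLWZ} ("The following two lemmas come from \cite{CLWZ}"), so there is no in-paper argument to compare against, and your proposal must stand on its own. Your large-$|z|$ analysis is essentially correct: for ${\rm Im}\,z\le\delta_0$ the point $s=e^{i\pi/6}z$ stays in the sector $|\arg s|\le 5\pi/6+o(1)$, where the Poincar\'e asymptotics of $Ai(s)$ and of $\int_s^\infty Ai$ hold uniformly, giving $A_0'(z)/A_0(z)=-e^{i\pi/4}z^{1/2}(1+O(|z|^{-3/2}))$, and your computation of the modulus and of $-\cos(\pi/4+\theta/2)\le-\tfrac{\sqrt2}{2}$ for $\theta\in[-\pi,0]$ is right. (Your worry about Stokes-line transitions at $\arg z=-\pi/2$ and $-\pi$ is unfounded: these correspond to $\arg s=-\pi/3$ and $-5\pi/6$, both safely inside $|\arg s|<\pi$, so no subdominant-term issue arises.)

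The genuine gap is the nonvanishing of $A_0$ on the compact part of $\{{\rm Im}\,z\le\delta_0\}$. Your proposed resolution --- deform to a path along which $|Ai(e^{i\pi/6}t)|$ decreases monotonically and ``apply Laplace's method to obtain a strict positive lower bound'' --- does not constitute a proof. Laplace's method is an asymptotic statement and yields nothing on a bounded set; and monotone decay of the \emph{modulus} of a complex integrand does not prevent the integral from vanishing, since the phase of $Ai(e^{i\pi/6}(z+t))$ rotates along the path and cancellation must be excluded separately. The asserted bound $|A_0(z)|\gtrsim(1+|z|)^{-3/4}\exp\bigl(-\tfrac23{\rm Re}(e^{i\pi/6}z)^{3/2}\bigr)$ on the whole closed lower half-plane is precisely the nontrivial content of the lemma (it is equivalent to zero-freeness plus the asymptotics), so invoking it is circular. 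To close the gap you need an actual zero-free argument on the compact set: for instance an argument-principle count of the zeros of $A_0$ in ${\rm Im}\,z\le 0$ using the large-$|z|$ asymptotics on the boundary of a large half-disc, or a positivity identity of the form ${\rm Re}\,(e^{i\alpha}A_0(z))>0$ obtained by choosing a contour from $e^{i\pi/6}z$ to $+\infty$ on which the argument of $Ai$ is confined to an interval of length strictly less than $\pi$. As written, the proof is incomplete at exactly the step you yourself identify as the main obstacle.
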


\begin{Lemma}\label{lem:A0-w}
There exists $\delta_1\in(0,\delta_0/2]$ so that for ${\rm Im}z\le \delta_1$ and $x\ge 0$,
\begin{align*}
  |\omega(z,x)|\leq e^{-\f{x}{3}}.
\end{align*}
\end{Lemma}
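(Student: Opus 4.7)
The plan is to reduce the pointwise bound on $|\omega(z,x)|$ to a one-dimensional integral estimate and then invoke Lemma \ref{lem:Ao-more}. Starting from the definition
\[
\omega(z,x)=\exp\Big(\int_{0}^{x}\frac{A_{0}'(z+t)}{A_{0}(z+t)}\,dt\Big),
\]
taking moduli gives
\[
\log|\omega(z,x)|=\int_{0}^{x}\operatorname{Re}\frac{A_{0}'(z+t)}{A_{0}(z+t)}\,dt.
\]
Since $t$ runs over real values, the path of integration is the horizontal ray $\{z+t:t\in[0,x]\}$, along which $\operatorname{Im}(z+t)=\operatorname{Im}z\le\delta_{1}\le\delta_{0}/2\le\delta_{0}$. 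So Lemma \ref{lem:Ao-more} is applicable uniformly on the entire ray.

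Applying that lemma pointwise in $t$ yields
\[
\operatorname{Re}\frac{A_{0}'(z+t)}{A_{0}(z+t)}\le -c\bigl(1+|z+t|^{1/2}\bigr)\le -c,
\]
and therefore $|\omega(z,x)|\le e^{-cx}$ with the constant $c>0$ furnished by Lemma \ref{lem:Ao-more}. If this $c$ already satisfies $c\ge 1/3$, I would simply fix $\delta_{1}=\delta_{0}/2$ and conclude. Otherwise, the plan is to shrink $\delta_{1}$ so that the effective decay rate is improved to $1/3$: split the ray into a far part $|z+t|\ge R$ and a near part $|z+t|\le R$. On the far part, Lemma \ref{lem:Ao-more} already gives $\operatorname{Re}(A_{0}'/A_{0})\le -cR^{1/2}$, which is $\le -1/3$ once $R$ is large. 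On the compact near part, the function $\operatorname{Re}(A_{0}'/A_{0})$ is continuous (no zeros of $A_{0}$ are picked up when $\operatorname{Im}z\le\delta_{0}$, as ensured implicitly by Lemma \ref{lem:Ao-more}) and Lemma \ref{lem:Ao-more} forces it to be strictly negative, bounded above by $-c<0$. A continuity-plus-compactness argument then shows that by taking $\delta_{1}$ smaller than some threshold depending on $R$, one has $\operatorname{Re}(A_{0}'/A_{0})(z+t)\le -1/3$ uniformly on $\{|z+t|\le R,\ \operatorname{Im}(z+t)\le\delta_{1}\}$ as well.

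Combining the two regimes yields $\operatorname{Re}(A_{0}'/A_{0})(z+t)\le -1/3$ for every $t\in[0,x]$ whenever $\operatorname{Im}z\le\delta_{1}$, and integrating in $t$ gives
\[
\log|\omega(z,x)|\le -\tfrac{x}{3},
\]
which is the desired bound.

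The main obstacle is the quantitative tuning of the constant: Lemma \ref{lem:Ao-more} only produces an unspecified $c>0$, whereas the statement demands the concrete rate $1/3$. So the real content is arranging the splitting into ``far'' and ``near'' regimes and choosing $\delta_{1}$ in terms of the radius $R$, which in turn is chosen from the asymptotics $A_{0}'/A_{0}\sim -|z|^{1/2}$ encoded in Lemma \ref{lem:Ao-more}. Everything else—identifying $\log|\omega|$ with a line integral, and applying a pointwise bound—is essentially bookkeeping.
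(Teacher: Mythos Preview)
Your core reduction is correct: writing $\log|\omega(z,x)|=\int_0^x \operatorname{Re}\bigl(A_0'(z+t)/A_0(z+t)\bigr)\,dt$ and applying Lemma~\ref{lem:Ao-more} pointwise along the horizontal ray gives $|\omega(z,x)|\le e^{-cx}$ with the unspecified constant $c$ from that lemma. This is the substance of the result, and the paper itself does not prove the lemma but quotes it from \cite{CLWZ}.

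However, your mechanism for upgrading $c$ to the specific value $1/3$ does not work. You propose to shrink $\delta_1$ so that $\operatorname{Re}(A_0'/A_0)(w)\le -1/3$ on the near region $\{|w|\le R,\ \operatorname{Im}w\le\delta_1\}$. But shrinking $\delta_1$ only excludes points with $\operatorname{Im}w\in(\delta_1,\delta_0]$; it does nothing to points on or below the real axis, which remain in the near region for every $\delta_1>0$. If some real $w_0\in[-R,R]$ satisfies $\operatorname{Re}(A_0'/A_0)(w_0)=-c_0$ with $c_0<1/3$, then $w_0$ persists in the near region no matter how small $\delta_1$ is, and your claimed uniform bound fails there. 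The continuity--compactness argument can only recover the same constant $c$ already furnished by Lemma~\ref{lem:Ao-more}.

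To nail down the constant $1/3$ one needs quantitative information about $A_0'/A_0$ beyond what Lemma~\ref{lem:Ao-more} states---for instance, explicit numerical bounds on a compact real segment, derived from known values and asymptotics of the Airy function. Alternatively, for every downstream use in this paper any fixed positive constant in place of $1/3$ would suffice after adjusting later absolute constants, so your bound $e^{-cx}$ is morally enough; it just does not literally establish the lemma as stated.
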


\begin{Lemma}\label{lem:Ld}
Let $z\in\mathbb{C}$. It holds that for any $x\ge 0$,
\begin{align}
\int_0^{x}\big(1+|t+z|^{\f12}\big)dt\gtrsim x|z|^{\f12}+|x|^{\f32}.
\end{align}
\end{Lemma}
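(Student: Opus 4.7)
The plan is to reduce the complex bound to two real one-variable estimates by decomposing $z = a + ib$. Observe that $|t+z|^{1/2} = \bigl((t+a)^2 + b^2\bigr)^{1/4}$, and since $(A^2+B^2)^{1/2} \geq (A+B)/\sqrt{2}$ for $A,B \geq 0$ and $\sqrt{A+B} \geq (\sqrt{A}+\sqrt{B})/2$, I get the pointwise inequality
\[
|t+z|^{1/2} \;\geq\; c\bigl(|t+a|^{1/2} + |b|^{1/2}\bigr).
\]
Integrating, the $|b|^{1/2}$ contribution is immediate: $\int_0^x |b|^{1/2}\,dt = x|b|^{1/2}$.

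Next I would handle the real piece $\int_0^x |t+a|^{1/2}\,dt$. Changing variables $s = t + a$ converts this to $\int_a^{a+x} |s|^{1/2}\,ds$. The interval $[a, a+x]$ has length $x$, and the subset where $|s| < x/4$ is an interval of length at most $x/2$; hence $\{s \in [a,a+x] : |s| \geq x/4\}$ has measure at least $x/2$, giving
\[
\int_a^{a+x} |s|^{1/2}\,ds \;\geq\; \tfrac{x}{2}\cdot(x/4)^{1/2} \;\gtrsim\; x^{3/2}.
\]
To also get the $x|a|^{1/2}$ lower bound I split into cases: if $|a| \geq 2x$ then $|s| \geq |a|/2$ on all of $[a,a+x]$, so the integral is $\gtrsim x |a|^{1/2}$; if $|a| \leq 2x$ then $x|a|^{1/2} \lesssim x^{3/2}$ and the previous bound already suffices. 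Combining,
\[
\int_0^x |t+a|^{1/2}\,dt \;\gtrsim\; x|a|^{1/2} + x^{3/2}.
\]

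Finally I would assemble the two pieces. Since $|a|^{1/2} + |b|^{1/2} \geq \sqrt{|a|+|b|} \geq (a^2+b^2)^{1/4} = |z|^{1/2}$, I conclude
\[
\int_0^x |t+z|^{1/2}\,dt \;\gtrsim\; x|a|^{1/2} + x|b|^{1/2} + x^{3/2} \;\gtrsim\; x|z|^{1/2} + x^{3/2},
\]
and adding the trivial contribution from the constant $1$ only strengthens the bound, proving the lemma. There is no real obstacle here; the only mild care required is the measure-of-sublevel-set argument in the real case, and the elementary verification that $(A^2+B^2)^{1/4}$ controls $A^{1/2}+B^{1/2}$ up to an absolute constant, both of which are routine.
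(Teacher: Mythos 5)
Your proof is correct. The pointwise bound $|t+z|^{1/2}\ge c\big(|t+a|^{1/2}+|b|^{1/2}\big)$ is valid, the measure-of-sublevel-set argument for $\int_a^{a+x}|s|^{1/2}\,ds\gtrsim x^{3/2}$ is sound (the set where $|s|<x/4$ meets $[a,a+x]$ in measure at most $x/2$), the dichotomy $|a|\gtrless 2x$ correctly yields the $x|a|^{1/2}$ term, and the recombination $|a|^{1/2}+|b|^{1/2}\ge|z|^{1/2}$ is elementary. The route differs from the paper's in how the one-dimensional real integral is handled: the paper runs a three-way case analysis on $|z_1|$ versus $1$ and versus $x$, computing explicit antiderivatives in each regime and, in the regime $|z_1|\le 1$, leaning on the constant $1$ in the integrand to absorb $|z_1|^{1/2}$. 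Your sublevel-set trick gives the $x^{3/2}$ contribution uniformly in $a$ with no case split, and as a result you prove the slightly stronger statement $\int_0^x|t+z|^{1/2}\,dt\gtrsim x|z|^{1/2}+x^{3/2}$ without ever using the "$1+$" in the integrand; the paper's version buys nothing extra here beyond being a purely computational check. Either argument is perfectly adequate for the application.
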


\begin{proof}
Let  $z=z_1+iz_2$ with $z_1,z_2\in\mathbb{R}$.\smallskip

If $|z_1|\leq 1$, then we have
\beno
1+|t+z|^{\f12}\gtrsim 2+|t+z_1|^{\f12}+|z_2|^{\f12}\geq 2+|t|^{\f12}-|z_1|^{\f12}+|z_2|^{\f12}\geq 1+|t|^{\f12}+|z_2|^{\f12},
\eeno
which gives
  \begin{align*}
     &\int_{0}^{x}(1+|t+z|^{\f12})dt\gtrsim \int_{0}^{x}(1+|t|^{\f12}+|z_2|^{\f12})dt\gtrsim x+x^{\f32}+x|z_2|^{\f12}\geq x|z|^{\f12}+x^{\f32}.
  \end{align*}

  If $|z_1|\geq1$ and $x\geq|z_1|$, then we have
  \begin{align*}
     \int_{0}^{x}|t+z_1|^{\f12}dt&=\int_{0}^{|z_1|}|t+z_1|^{\f12}dt+\int_{|z_1|}^{x}|t+z_1|^{\f12} dt\geq \int_{0}^{|z_1|}(|z_1|^{\f12}-t^{\f12})dt+\int_{|z_1|}^{x}(t-|z_1|)^{\f12}dt\\
     &\gtrsim |z_1|^{\f32}+(x-|z_1|)^{\f32}\gtrsim x^{\f32}\gtrsim x|z_1|^{\f12}+x^{\f32},
  \end{align*}
which gives
  \begin{align*}
     &\int_{0}^{x}(1+|t+z|^{\f12})dt\gtrsim \int_{0}^{x}(1+|t+z_1|^{\f12}+|z_2|^{\f12})dt\gtrsim x|z_1|^{\f12}+x^{\f32}+x|z_2|^{\f12}\gtrsim x|z|^{\f12}+x^{\f32}.
  \end{align*}

  If $|z_1|\geq 1$ and $|z_1|\geq x$, then we have
  \begin{align*}
     &\int_{0}^{x}|t+z_1|^{\f12}dt\geq \int_{0}^{x/2}|t+z_1|^{\f12}dt\geq \int_{0}^{x/2}(|z_1|-t)^{\f12}dt\geq (|z_1|-x/2)^{\f12}x/2\gtrsim x|z_1|^{\f12},
  \end{align*}
  which gives

  \begin{align*}
     &\int_{0}^{x}(1+|t+z|^{\f12})dt\gtrsim \int_{0}^{x}(1+|t+z_1|^{\f12}+|z_2|^{\f12})dt\gtrsim x|z_1|^{\f12}+x|z_2|^{\f12}\gtrsim x|z|^{\f12}+x^{\f32}.
  \end{align*}

Summing up, we conclude the lemma.
\end{proof}

\begin{Lemma}\label{lem:A0-w2}
Let $\delta_0$ be as in Lemma \ref{lem:Ao-more}. Then it holds that for ${\rm Im}z\le \delta_0$ and $x\ge 0$,
\begin{align*}
  |\omega(z,x)|\leq e^{-c(x|z|^{\f12}+x^{3/2})}.
\end{align*}
\end{Lemma}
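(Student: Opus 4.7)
The plan is to combine the pointwise logarithmic-derivative bound from Lemma \ref{lem:Ao-more} with the integral lower bound from Lemma \ref{lem:Ld}, applied to the explicit representation
\[
\omega(z,x)=\exp\Bigl(\int_{0}^{x}\frac{A_0'(z+t)}{A_0(z+t)}\,dt\Bigr).
\]
Taking modulus gives $|\omega(z,x)|=\exp\bigl(\mathrm{Re}\int_{0}^{x}\frac{A_0'(z+t)}{A_0(z+t)}\,dt\bigr)$, so the task reduces to bounding the real part of this integrand from above.

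The key observation is that if $\mathrm{Im}\, z\le \delta_0$ and $t\ge 0$ is real, then $\mathrm{Im}(z+t)=\mathrm{Im}\,z\le \delta_0$, so Lemma \ref{lem:Ao-more} applies uniformly along the entire path of integration, yielding
\[
\mathrm{Re}\,\frac{A_0'(z+t)}{A_0(z+t)}\le -c\bigl(1+|z+t|^{1/2}\bigr)\quad \text{for all } t\in[0,x].
\]
Integrating this pointwise inequality from $0$ to $x$ gives
\[
|\omega(z,x)|\le \exp\Bigl(-c\int_{0}^{x}\bigl(1+|z+t|^{1/2}\bigr)\,dt\Bigr).
\]

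Finally I would invoke Lemma \ref{lem:Ld}, which tells us $\int_{0}^{x}\bigl(1+|z+t|^{1/2}\bigr)\,dt\gtrsim x|z|^{1/2}+x^{3/2}$, to absorb the implicit constant into a (possibly smaller) positive constant $c$ and conclude
\[
|\omega(z,x)|\le e^{-c(x|z|^{1/2}+x^{3/2})}.
\]
There is essentially no obstacle here: the proof is a two-line application of the two preceding lemmas, and the only minor care required is confirming that $\mathrm{Im}(z+t)$ stays in the admissible half-plane for the whole integration range, which is immediate since $t$ is real and nonnegative.
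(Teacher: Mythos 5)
Your proposal is correct and is essentially identical to the paper's own proof: both take the real part of the logarithmic-derivative representation of $\omega$, apply the pointwise bound of Lemma \ref{lem:Ao-more} (valid since ${\rm Im}(z+t)={\rm Im}\,z\le\delta_0$ for real $t\ge 0$), and then invoke Lemma \ref{lem:Ld} to bound the resulting integral from below.
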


\begin{proof}
By Lemma \ref{lem:Ao-more} and Lemma \ref{lem:Ld}, we get
\begin{align*}
 |\omega(z,x)|\leq& \Big|\exp\Big({\rm Re}\int_{0}^{x}\frac{A'_0(z+t)}{A_0(z+t)}dt\Big)\Big|\\
 \leq&\exp\Big(-c\int_{0}^{x}(1+|z+t|^{\f12})dt\Big)\leq e^{-c(x|z|^{\f12}+x^{3/2})}.
\end{align*}
\end{proof}

\begin{Lemma}\label{lem:A0-w3}
Let $\delta_1$ be as in Lemma \ref{lem:A0-w}. There exists $k_0>1$ so that if $L\geq k_0$, $\eta\geq 1$, $ \mathbf{Im}z\leq \delta_1-\eta^2/L^2$, then we have
 \begin{align*}
&\Big|\sinh(2\eta)- \frac{\eta}{L}\int_{0}^{2L}\cosh\Big(2\eta-\frac{\eta t}{L}\Big)\omega(z,t) dt\Big|\\
&\quad\geq\sqrt{2}\Big|\sinh(2\eta)\omega(z,2L)- \frac{\eta}{L}\int_{0}^{2L}\cosh\Big(\frac{\eta t}{L}\Big)\omega(z,t) dt\Big|.
 \end{align*}
\end{Lemma}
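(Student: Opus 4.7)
The plan is to apply Lemma \ref{lem:A0-w} under the hypothesis $\mathbf{Im}\,z \leq \delta_1 - \eta^2/L^2 \leq \delta_1$ to obtain the uniform bound $|\omega(z,t)| \leq e^{-t/3}$ for $t \geq 0$, and then estimate the integrals on both sides explicitly. The key observation is that the LHS is essentially $\sinh(2\eta)$ up to a small correction, while the RHS is much smaller, so that the ratio exceeds $\sqrt{2}$ once $L$ is large enough.

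For the LHS, I would write $\cosh(2\eta-\eta t/L)=(e^{2\eta-\eta t/L}+e^{\eta t/L-2\eta})/2$ and evaluate the resulting exponential integrals directly. For $\eta/L<1/3$ this yields
\begin{align*}
\Big|\frac{\eta}{L}\int_0^{2L}\cosh(2\eta-\eta t/L)\,\omega(z,t)\,dt\Big|\leq\frac{3\eta e^{2\eta}}{2(3\eta+L)}+\frac{3\eta e^{-2\eta}}{2(L-3\eta)}.
\end{align*}
Combined with $\sinh(2\eta)\geq(1-e^{-4})e^{2\eta}/2$ for $\eta\geq1$ and the smallness of $\eta/L$ (discussed below), this yields $|A|\geq c_*\sinh(2\eta)$ for an absolute constant $c_*>0$.

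For the RHS, the boundary piece satisfies $|\sinh(2\eta)\omega(z,2L)|\leq\sinh(2\eta)e^{-2L/3}$, while the integral piece, computed in the same manner, is bounded by
\begin{align*}
\Big|\frac{\eta}{L}\int_0^{2L}\cosh(\eta t/L)\,\omega(z,t)\,dt\Big|\leq\frac{3\eta L}{L^2-9\eta^2}\leq\frac{C\eta}{L}.
\end{align*}
Using $\sinh(2\eta)\geq2\eta$, these combine to give $|B|\leq\sinh(2\eta)\bigl(e^{-2L/3}+C/L\bigr)$, so that $|A|/|B|\geq c_*/\bigl(e^{-2L/3}+C/L\bigr)$, which exceeds $\sqrt{2}$ for all $L\geq k_0$ provided $k_0$ is chosen large enough in terms of $c_*$ and $C$.

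The main technical obstacle is controlling the ratio $\eta/L$. Since the hypothesis gives $\eta^2/L^2\leq\delta_1-\mathbf{Im}\,z$, we automatically have $\eta/L\leq\sqrt{\delta_1}$ whenever $\mathbf{Im}\,z\geq 0$, which is the typical situation in the intended applications; then the naive bound above suffices. In the remaining regime where $\mathbf{Im}\,z$ is so negative that $\eta/L$ fails to be small, we instead invoke the sharper decay $|\omega(z,t)|\leq e^{-c(t|z|^{1/2}+t^{3/2})}$ from Lemma \ref{lem:A0-w2}, where $|z|^{1/2}\gtrsim\eta/L$ forced by the hypothesis supplies the additional exponential suppression that compensates the growth of $\cosh(2\eta-\eta t/L)$ in the LHS integral and of $\cosh(\eta t/L)$ in the RHS integral. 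An entirely analogous estimate then goes through, and taking $k_0$ sufficiently large in terms of the Airy-function constant $c$ from Lemma \ref{lem:Ao-more} completes the proof.
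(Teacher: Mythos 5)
Your strategy is, at the level of ideas, the same as the paper's: both rest on the uniform bound $|\omega(z,t)|\le e^{-t/3}$ from Lemma \ref{lem:A0-w} together with the observation that the hypothesis $\mathbf{Im}\,z\le\delta_1-\eta^2/L^2$ forces $1+|z|\ge 1-\mathbf{Im}\,z\ge\eta^2/L^2$, so that the enhanced rate $c_*|z|^{1/2}$ from Lemma \ref{lem:A0-w2} dominates $\eta/L$ exactly in the regime where the naive estimate degenerates. The difference is in execution. The paper sets $b=\max(1/3,c_*|z|^{1/2})\ge c_1\eta/L$ and integrates by parts,
\begin{align*}
\sinh(2\eta)-\frac{\eta}{L}\int_0^{2L}\cosh\Big(2\eta-\frac{\eta t}{L}\Big)e^{-bt}\,dt=\int_0^{2L}b\,\sinh\Big(2\eta-\frac{\eta t}{L}\Big)e^{-bt}\,dt\ge\sinh(2\eta-1)\big(1-e^{-bL/\eta}\big),
\end{align*}
which gives the lower bound $c_2\sinh(2\eta)$ in one stroke, uniformly in $\eta/L$ and for an arbitrarily small $c_1$; a second integration by parts plus the monotonicity of $(\sinh x)/x$ bounds the right-hand side by $(3/L)\sinh(2\eta)$ using only the $e^{-t/3}$ decay. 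Your direct evaluation of the exponential integrals can be made to work, but two spots need repair. First, the formulas $\frac{3\eta e^{-2\eta}}{2(L-3\eta)}$ and $\frac{3\eta L}{L^2-9\eta^2}$ blow up as $\eta/L\uparrow 1/3$ and must be replaced there by the cruder bound $\frac{\eta}{L}\int_0^{2L}e^{(\eta/L-1/3)t}e^{-2\eta}\,dt\le 2\eta e^{-2\eta}\le 2e^{-2}$; moreover the claim that the naive bound suffices whenever $\eta/L\le\sqrt{\delta_1}$ tacitly assumes $\delta_1$ is small, which the paper never asserts. Second, in the large-$\eta/L$ regime your estimate closes less automatically than ``entirely analogous'' suggests: you get $I_1\le\frac{e^{2\eta}}{2}\cdot\frac{\eta/L}{\eta/L+c|z|^{1/2}}\le\frac{e^{2\eta}}{2(1+\tilde c)}$ with $\tilde c$ possibly tiny (it is controlled by the unspecified Airy constants), and since $\frac{e^{2\eta}}{2}=\sinh(2\eta)/(1-e^{-4\eta})$ you need $(1+\tilde c)(1-e^{-4\eta})>1$ with room to spare; this is rescued only because $\eta\ge\sqrt{\delta_1}\,L\ge\sqrt{\delta_1}\,k_0$ in that regime, so $e^{-4\eta}$ can be beaten by choosing $k_0$ large in terms of $\tilde c$ --- exactly what your last sentence gestures at, but it should be spelled out, whereas the paper's integration-by-parts identity sidesteps the issue entirely.
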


\begin{proof}
Let $c_*>0$ be the constant $c$ in Lemma \ref{lem:A0-w2} and $b=\max(1/3,c_*|z|^{\f12})$. It follows from Lemma \ref{lem:A0-w} and Lemma \ref{lem:A0-w2} that for ${\rm Im}z\le \delta_1,\ x\ge0$ we have $|\omega(z,x)|\leq e^{-bx} $ and
\begin{align*}
\sinh(2\eta)- \Big|\frac{\eta}{L}\int_{0}^{2L}\cosh(2\eta-\frac{\eta t}{L})\omega(z,t) dt\Big|\geq&\sinh(2\eta)- \frac{\eta}{L}\int_{0}^{2L}\cosh\Big(2\eta-\frac{\eta t}{L}\Big)|\omega(z,t)| dt\\
\geq&\sinh(2\eta)- \frac{\eta}{L}\int_{0}^{2L}\cosh\Big(2\eta-\frac{\eta t}{L}\Big)e^{-bt} dt\\
=&\int_{0}^{2L}\sinh\Big(2\eta-\frac{\eta t}{L}\Big)(be^{-bt}) dt\\
\geq\int_{0}^{L/\eta}\sinh(2\eta-1)(be^{-bt}) dt
=&\sinh(2\eta-1)\big(1-e^{-bL/\eta}\big),
\end{align*}
where
 \begin{align*}
 \sinh(2\eta-1)=e^{2\eta-1}(1-e^{-2(2\eta-1)})/2\geq e^{2\eta-1}(1-e^{-2})/2\geq e^{-1}\sinh(2\eta)(1-e^{-2}),
 \end{align*}
 and
 \begin{align*}
 &b=\max(1/3,c_*|z|^{\f12})=[\max(1/9,c_*^2|z|)]^{\f12}\geq[(1+|z|)/(9+c_*^{-2})]^{\f12},\\&1+|z|\geq 1-\mathbf{Im}z\geq \eta^2/L^2,\quad b\geq c_1(1+|z|)^{\f12}=c_1\eta/L,\quad 1-e^{-bL/\eta}\geq 1-e^{-c_1},
 \end{align*}with $c_1=(9+c_*^{-2})^{-\f12}>0$.
 This shows that
 \begin{align*}
2\sinh(2\eta)\geq\Big|\sinh(2\eta)- \frac{\eta}{L}\int_{0}^{2L}\cosh\Big(2\eta-\frac{\eta t}{L}\Big)\omega(z,t) dt\Big|\geq& c_2\sinh(2\eta)
\end{align*}
with $c_2=e^{-1}(1-e^{-2})(1-e^{-c_1})\in (0,1)$.

On the other hand, we have
 \begin{align*}
  &\Big|\sinh(2\eta)\omega(z,2L)- \frac{\eta}{L}\int_{0}^{2L}\cosh\Big(\frac{\eta t}{L}\Big)\omega(z,t) dt\Big|\\ &\quad\leq \sinh(2\eta)|\omega(z,2L)|+ \frac{\eta}{L}\int_{0}^{2L}\cosh\Big(\frac{\eta t}{L}\Big)|\omega(z,t)| dt
  \\\ &\quad\leq \sinh(2\eta)e^{-2L/3}+\frac{\eta}{L}\int_{0}^{2L}\cosh\Big(\frac{\eta t}{L}\Big)e^{-t/3} dt
  \\&\quad=2\sinh(2\eta)e^{-2L/3}+ \int_{0}^{2L}\sinh\Big(\frac{\eta t}{L}\Big)\frac{e^{-t/3}}{3} dt\\
  &\quad\leq 2\sinh(2\eta)e^{-2L/3}+ \sinh(2\eta)\int_{0}^{2L}\frac{ t}{2L}\frac{e^{-t/3}}{3} dt\\&\quad {\leq}\sinh(2\eta)(2e^{-2L/3}+3/(2L))\leq (3/L)\sinh(2\eta).
 \end{align*}
 Here we used the fact that $(\sinh x)/x$ is increasing. Now the result follows by choosing $k_0=3\sqrt{2}/c_2>1$.
\end{proof}

\subsection{The solution of the homogeneous OS equation}
Let
\begin{align*}
u_1(y)=Ai(e^{i\f{\pi}{6}}y),\quad u_2(y)=Ai(e^{i\f{5\pi}{6}}y).
\end{align*}
Then $u_1$ and $u_2$ are two linearly independent solutions of $u''-iyu=0$. Hence,
\begin{align*}
W_{1,\ell}(y)=Ai\big(e^{i\f{\pi}{6}}(L(y-\lambda-i \nu\eta^2/k)+ia)\big),\quad W_{2,\ell}(y)=Ai\big(e^{i\f{5\pi}{6}}(L(y-\lambda-i \nu\eta^2/k)+ia)\big)
\end{align*}
are two linearly independent solutions of the homogeneous OS equation
\beno
-\nu (w''-\eta^2w)+ik(y-\lambda)w-a\nu^{\f13}|k|^{\f23}w=0.
\eeno
We denote
\begin{align*}
d=-1-\lambda-i\nu\eta^2/k,\quad\widetilde{d}=-1+\lambda-i\nu\eta^2/k.
\end{align*}

We have the following estimates for $W_{1,\ell}$ and $W_{2,\ell}$.

\begin{Lemma}\label{lem:W12}
It holds that
\begin{align*}
&\f{L}{|A_0(Ld+ia)|}\|W_{1,\ell}\|_{L^{\infty}}\leq C\Big(( |k(\lambda+1)/\nu|+\eta^2)^{\f12}+(|k|/\nu)^{\f13}\Big),\\
&\f{L}{|A_0(L\widetilde{d}+ia)|}\|W_{2,\ell}\|_{L^{\infty}}\leq C\Big(( |k(\lambda-1)/\nu|+\eta^2)^{\f12}+(|k|/\nu)^{\f13}\Big),
\end{align*}
and  for $\al\ge 0, \be\ge 1$,
\begin{align*}
&\f{L}{|A_0(Ld+ia)|}\|(1-|y|)^{\alpha}W_{1,\ell}\|_{L^1}+\f{L}{|A_0(L\widetilde{d}+ia)|}\|(1-|y|)^{\alpha}W_{2,\ell}\|_{L^1}\leq CL^{-\alpha},\\
&\f{L}{|A_0(Ld+ia)|}\|(1-|y|)^{\beta}W_{1,\ell}\|_{L^{\infty}}+\f{L}{|A_0(L\widetilde{d}+ ia)|}\|(1-|y|)^{\beta}W_{2,\ell}\|_{L^{\infty}}\leq CL^{1-\beta}.
\end{align*}
Here $C$ may depend on $\al,\beta$.
\end{Lemma}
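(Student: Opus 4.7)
\emph{Proof proposal.} My plan is to express the ratio $W_{1,\ell}/A_0(Ld+ia)$ through $A_0'/A_0$ and $\omega$, sharpen the pointwise bound on $\omega$, and then read off all four inequalities by direct computation. Using $A_0'(z) = -e^{i\pi/6}Ai(e^{i\pi/6}z)$ and $L^3 = k/\nu$, I will set $z_1 := Ld+ia = -L(1+\lambda)+i(a-\eta^2/L^2)$ and $x := L(y+1)\in [0,2L]$; then $W_{1,\ell}(y) = -e^{-i\pi/6}A_0'(z_1+x)$, so
\[
\frac{W_{1,\ell}(y)}{A_0(Ld+ia)} = -e^{-i\pi/6}\,\frac{A_0'(z_1+x)}{A_0(z_1+x)}\,\omega(z_1,x).
\]
Since $\mathrm{Im}(z_1) = a-\eta^2/L^2\le \eps_1\le \delta_1$, Lemmas \ref{lem:Ao-more} and \ref{lem:A0-w} apply and give $|A_0'(z_1+x)/A_0(z_1+x)|\le C(1+|z_1|^{1/2}+x^{1/2})$, together with $|z_1|^{1/2}\lesssim (L|1+\lambda|)^{1/2}+\eta/L+a^{1/2}$. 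The first real step is to sharpen Lemma \ref{lem:A0-w2} by combining $\mathrm{Re}(A_0'/A_0)\le -c(1+|\cdot|^{1/2})$ with Lemma \ref{lem:Ld} and the trivial bound $\int_0^x 1\,dt = x$, which yields
\[
|\omega(z_1,x)|\;\le\;\exp\bigl(-c\,x(1+|z_1|^{1/2})\bigr).
\]

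The $L^\infty$ estimate is immediate: $(1+|z_1|^{1/2}+x^{1/2})e^{-x/3}\le C(1+|z_1|^{1/2})$, so $|W_{1,\ell}|/|A_0(Ld+ia)|\le C(1+|z_1|^{1/2})$; multiplying by $L$ and using $L^{3/2}|1+\lambda|^{1/2} = |k(1+\lambda)/\nu|^{1/2}$ and $L = (k/\nu)^{1/3}$ gives the first bound. For the weighted $L^1$ estimate I will use $(1-|y|)\le 1+y = x/L$ and change variables $x = L(y+1)$, reducing to $\int_0^{2L}x^\alpha(1+|z_1|^{1/2}+x^{1/2})|\omega(z_1,x)|\,dx$. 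Writing $s := 1+|z_1|^{1/2}\ge 1$ and substituting $u = csx$, the first part gives $\int_0^\infty x^\alpha\,s\,e^{-csx}\,dx\lesssim s^{-\alpha}\le 1$, while $\int_0^\infty x^{\alpha+1/2}e^{-x/3}\,dx\lesssim 1$; dividing by $L^{\alpha+1}$ then yields $L\|(1-|y|)^\alpha W_{1,\ell}\|_{L^1}/|A_0(Ld+ia)|\lesssim L^{-\alpha}$.

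The weighted $L^\infty$ bound will be handled by splitting $[-1,1]$ at $y=0$. On $[-1,0]$ the weight equals $(x/L)^\beta$, and $x^\beta s\,e^{-csx}$ is maximized at $x = \beta/(cs)$ with value $C_\beta s^{1-\beta}\le C_\beta$ (using $s\ge 1,\ \beta\ge 1$), while $(x/L)^\beta x^{1/2}e^{-x/3}\le L^{-\beta}\cdot x^{\beta+1/2}e^{-x/3}\lesssim L^{-\beta}$. On $[0,1]$ the weight is $\le 1$ and $x\ge L$; since $s\,e^{-cLs}$ is decreasing on $s\ge 1$ once $L\ge k_0$ is large enough, $(1+|z_1|^{1/2})|\omega(z_1,x)|\le e^{-cL}\ll L^{-\beta}$, and similarly $x^{1/2}e^{-x/3}\lesssim e^{-cL}$. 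Multiplying by $L$ gives $L\|(1-|y|)^\beta W_{1,\ell}\|_{L^\infty}/|A_0(Ld+ia)|\le CL^{1-\beta}$. Finally, the identities $e^{i5\pi/6} = -e^{-i\pi/6}$ and $\overline{Ai(z)} = Ai(\bar z)$ yield $W_{2,\ell}(y;\lambda) = \overline{W_{1,\ell}(-y;-\lambda)}$, which interchanges $d\leftrightarrow \widetilde d$ and preserves the weight $(1-|y|)$, so the four estimates for $W_{2,\ell}$ reduce to those for $W_{1,\ell}$. I expect the sharpened decay bound $|\omega(z_1,x)|\le e^{-cx(1+|z_1|^{1/2})}$ to be the main technical point; once in hand, the rest is bookkeeping that exploits the rescaling $x = L(y+1)$ and the condition $\beta\ge 1$ ensuring $s^{1-\beta}\le 1$.
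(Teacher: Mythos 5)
Your proposal is correct and follows essentially the same route as the paper: both write $W_{1,\ell}/A_0(Ld+ia)$ as $(A_0'/A_0)(z_1+x)\,\omega(z_1,x)$ and combine Lemma \ref{lem:Ao-more} with the decay of $\omega$ from Lemmas \ref{lem:A0-w} and \ref{lem:A0-w2}--\ref{lem:Ld}, the effective bound $|\omega(z_1,x)|\le e^{-cx(1+|z_1|^{1/2})}$ you isolate being exactly what the paper derives inline. Your only departures are cosmetic: the split of $[-1,1]$ at $y=0$ (the paper uses $(1-|y|)\le 1+y$ uniformly) and the explicit conjugation symmetry reducing $W_{2,\ell}$ to $W_{1,\ell}$, which the paper leaves as ``similar''; both are fine.
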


\begin{proof}
   Thanks to the definition of $W_{1,\ell}$ and $\omega(z,x)$, Lemma \ref{lem:Ao-more}, Lemma \ref{lem:A0-w}, we have
  \begin{align*}
    \f{L|W_{1,\ell}(y)|}{|A_0(Ld+ia)|} &\leq \left|\f{L Ai(e^{i\f{\pi}{6}}L(y-\lambda-i\nu\eta^2/|k|)+ia)}{A_0(Ld+ia)}\right|= \left|\f{L A'_0(L(y+1)+Ld+ia)}{A_0(Ld+ia)}\right|\\
    &=L\left|\f{ A'_0(L(y+1)+Ld+ia)}{A_0(L(y+1)+Ld+ia)}\right| |\omega(Ld+ia,L(y+1))|\\
    & \leq CL(1+|Ld+ia+L(y+1)|^{\f12}) e^{-L(y+1)/3}\\
    &\leq C(L(1+|Ld|)^{\f12})+CL^{\f32}(y+1)^{\f12}e^{-L(y+1)/3}\\&\leq C(L+L|Ld|^{\f12})
    \leq C(|k(1+\lambda)/\nu|+\eta^2)^{\f12}+|k/\nu|^{\f13},
  \end{align*}
  here we use $L|Ld|^{\f12}\leq (L^3(|1+\lambda|+\nu\eta^{2}/|k|))^{\f12}=(|k(1+\lambda)/\nu|+\eta^2)^{\f12}$. This yields
  \begin{align*}
     &\f{L}{|A_0(Ld+ia)|}\|W_{1,\ell}\|_{L^{\infty}}\leq C\Big(( |k(\lambda+1)/\nu|+\eta^2)^{\f12}+|k/\nu|^{\f13}\Big).
  \end{align*}

  Thanks to Lemma \ref{lem:Ao-more}, Lemma \ref{lem:A0-w2}, we have
  \begin{align*}
     \f{L\|(1-|y|)^{\alpha}W_{1,\ell}\|_{L^1} }{|A_0(Ld+ia)|}
     =& L\int_{-1}^{1} (1-|y|)^{\alpha} \bigg|\f{Ai(e^{i\f{\pi}{6}}(L(y-\lambda-i\nu\eta^2/|k|)+ia))}{A_0(Ld+ia)}\bigg| dy\\
     =&L\int_{0}^{2} (1-|x-1|)^{\alpha} \bigg|\f{Ai(e^{i\f{\pi}{6}}(Lx+Ld+ia))}{A_0(Ld+ia)}\bigg| dx\\
     \leq &CL\int_{0}^{2} x^{\alpha}\bigg|\f{A_0'(Lx+Ld+ia)}{A_0(Lx+Ld+ia)}\bigg| |\omega(Ld+ia,Lx)|dx\\
     \leq&CL\int_{0}^{2}x^{\alpha} (1+|Lx+Ld+ia|)^{\f12}e^{-c(|Lx||Ld+ia|^{\f12}+|Lx|^{\f32})}dx\\
     \leq&CL\int_{0}^{2} x^{\alpha} (1+|Ld+ia|)^{\f12}e^{-c|Lx|(1+|Ld+ia|^{\f12})}dx\\
     \leq &CL^{-\alpha}(1+|Ld+ia|)^{-\f{\alpha}{2}}\int_{0}^{+\infty}t^{\alpha}e^{-ct}dt \leq CL^{-\alpha}.
     \end{align*}
  Here we use $(1+|Lx|)^\f12e^{-c|Lx|^{3/2}}\leq Ce^{-c|Lx|} $ for every $x\geq 0$ and fixed $c>0,$ and make a change of variable $t=Lx(1+|Ld+ia|)^{\f12}$.\smallskip

 Let $x=y+1\in[0,2]$ and $\beta\geq1 $. Thanks to Lemma \ref{lem:Ao-more}, Lemma \ref{lem:A0-w2}, we have
  \begin{align*}
     &\f{L(1-|y|)^{\beta}}{|A_0(Ld+ia)|}|W_{1,\ell}(y)|\\
     & \leq (1-|y|)^{\beta}\left|\f{L Ai(e^{i\f{\pi}{6}}(L(y-\lambda-i\nu\eta^2/|k|)+ia))}{A_0(Ld+ia)}\right|\\
     &= (1-|y|)^{\beta}\left|\f{L A'_0(L(y+1)+Ld+ia)}{A_0(Ld+ia)}\right|\\
    &=L(1-|y|)^{\beta}\left|\f{ A'_0(L(y+1)+Ld+ia)}{A_0(L(y+1)+Ld+ia)}\right| |\omega(Ld+ia,L(y+1))|\\
    & \leq CLx^{\beta}(1+|Ld+ia+Lx|^{\f12}) e^{-c(Lx|Ld+ia|^{\f12}+|Lx|^{\f32})}\\
    &\leq CLx^{\beta}(1+|Ld+ia|)^{\f12}e^{-c(Lx|Ld+ia|^{\f12}+Lx)}\\
    &\leq CL^{1-\beta} (1+|Ld+ia|)^{1/2-\beta/2} \big((Lx(1+|Ld+ia|)^{\f12})^{\beta}e^{-cLx(1+|Ld+ia|)^{\f12}}\big)\\
    &\leq CL^{1-\beta} (1+|Ld+ia|)^{1/2-\beta/2}\leq CL^{1-\beta},
   \end{align*}
which gives
  \begin{align*}
    \f{L}{|A_0(Ld+ia)|}\|(1-|y|)^{\beta}W_{1,\ell}\|_{L^\infty} &\leq CL^{1-\beta}.
  \end{align*}

Thus,  we finish the estimate for $W_{1,\ell}$. The proof for $W_{2,\ell}$ is similar.
  \end{proof}

\subsection{Proof of  Proposition \ref{prop:w12-bounds} and Proposition \ref{prop:w12-L2}}

The solution $w_{1,\ell}$ and $w_{2,\ell}$ of  \eqref{eq:w1l} and \eqref{eq:w2l} can be expressed as
\begin{align}
\label{two linear solutions}w_{1,\ell}=C_{11}W_{1,\ell}(y)+C_{12} W_{2,\ell}(y),\quad w_{2,\ell}=C_{21}W_{1,\ell}(y)+C_{22}W_{2,\ell}(y),
\end{align}
where $C_{ij}, i,j=1,2$ are constants (depending only on $\nu,k,\ell$). Thanks to the facts that
\begin{align*}
&\int_{-1}^1e^{\eta y}w_{1,\ell}(y)dy=e^\eta,\quad \int_{-1}^1e^{-\eta y}w_{1,\ell}(y)dy=e^{-\eta},
\end{align*}
we deduce that
\begin{align*}
\left\{
\begin{aligned}
&\sinh(2\eta)=C_{11}\int_{-1}^1\sinh(\eta(y+1))W_{1,\ell}(y)dy+C_{12}\int_{-1}^1\sinh(\eta(y+1))W_{2,\ell}(y)dy,\\
&0=C_{11}\int_{-1}^1\sinh(\eta(1-y))W_{1,\ell}(y)dy+C_{12}\int_{-1}^1\sinh(\eta(1-y))W_{2,\ell}(y)dy.
\end{aligned}
\right.
\end{align*}
We denote
\begin{align*}
&A_1=\int_{-1}^1\sinh(\eta(y+1))W_{1,\ell}(y)dy,\quad A_2=\int_{-1}^1\sinh(\eta(1-y))W_{2,\ell}(y)dy,\\
&B_1=\int_{-1}^1\sinh(\eta(1-y))W_{1,\ell}(y)dy,\quad B_2=\int_{-1}^1\sinh(\eta(y+1))W_{2,l}(y)dy.
\end{align*}
If $A_1A_2-B_1B_2\neq0 $, then we have
\begin{equation}\nonumber
\left(   C_{11} ,  C_{12}
\right)=\f{\sinh(2\eta)( A_2,  -B_1)}{A_1A_2-B_1B_2}.
\end{equation}
Similarly, we have
\begin{equation}\nonumber
\left(   C_{21},  C_{22}
 \right)=\f{\sinh(2\eta)( B_2,  -A_1)}{A_1A_2-B_1B_2}.
\end{equation}

\begin{Lemma}\label{lem:C-ij}
Let $k_0$ be as in Lemma \ref{lem:A0-w3} and $\delta_1$ be as in Lemma \ref{lem:A0-w}. If $L\geq k_0$, then it holds that
\begin{align*}
&|C_{11}|\leq \f{C}{|A_0(Ld+ia)|},\quad|C_{12}|\leq \f{CL}{|A_0(L\widetilde{d}+ia)|},\\
&|C_{21}|\leq \f{CL}{|A_0(Ld+ia)|},\quad
|C_{22}|\leq \f{C}{|A_0(L\widetilde{d}+ia)|}.
\end{align*}
\end{Lemma}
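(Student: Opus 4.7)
My plan is to bound each of $A_1,A_2,B_1,B_2$ and then show the denominator $A_1A_2-B_1B_2$ is comparable to the larger product $B_1B_2$. The starting observation is that $Ai(e^{i\pi/6}z)=-e^{-i\pi/6}A_0'(z)$, so writing $\zeta_0:=Ld+ia$ we have $W_{1,\ell}(y)=-e^{-i\pi/6}A_0'(\zeta_0+L(y+1))$. After the substitution $t=L(y+1)$ and an integration by parts (exchanging $A_0'$ for $A_0$ at the cost of a boundary term plus a hyperbolic factor), the integrals $A_1$ and $B_1$ rearrange into
\begin{align*}
-\f{L\,A_1}{e^{-i\pi/6}A_0(\zeta_0)}&=\sinh(2\eta)\,\omega(\zeta_0,2L)-\f{\eta}{L}\int_0^{2L}\cosh\Big(\f{\eta t}{L}\Big)\omega(\zeta_0,t)\,dt,\\
\f{L\,B_1}{e^{-i\pi/6}A_0(\zeta_0)}&=\sinh(2\eta)-\f{\eta}{L}\int_0^{2L}\cosh\Big(2\eta-\f{\eta t}{L}\Big)\omega(\zeta_0,t)\,dt.
\end{align*}
These are exactly the two quantities appearing in Lemma \ref{lem:A0-w3}. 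Since $a\le\varepsilon_1\le\delta_1$ guarantees $\mathbf{Im}\,\zeta_0=a-\eta^2/L^2\le\delta_1-\eta^2/L^2$, and $\eta\ge|k|\ge1$, its hypothesis holds, yielding $|B_1|\ge\sqrt 2\,|A_1|$; inspecting the proof of that lemma further supplies the sharp two-sided bounds $|A_1|\le C\sinh(2\eta)|A_0(\zeta_0)|/L^2$ and $c\sinh(2\eta)|A_0(\zeta_0)|/L\le|B_1|\le C\sinh(2\eta)|A_0(\zeta_0)|/L$ with absolute constants $c,C>0$.

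To transfer these estimates to $A_2,B_2$, I would exploit the reflection $y\mapsto-y$. A short calculation using $e^{i5\pi/6}=-e^{-i\pi/6}$ together with $\overline{Ai(z)}=Ai(\bar z)$ yields $W_{2,\ell}(-y)=\overline{W_{1,\ell}(y)\big|_{\lambda\mapsto-\lambda}}$, so substituting $y\to-y$ in the integrals gives $A_2=\overline{A_1|_{\lambda\mapsto-\lambda}}$ and $B_2=\overline{B_1|_{\lambda\mapsto-\lambda}}$. Since $d|_{\lambda\mapsto-\lambda}=\widetilde d$, all bounds just proved for $A_1,B_1$ transfer verbatim to $A_2,B_2$ with $|A_0(L\widetilde d+ia)|$ replacing $|A_0(Ld+ia)|$; in particular $|B_2|\ge\sqrt 2\,|A_2|$.

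Combining $|B_i|\ge\sqrt 2\,|A_i|$ for $i=1,2$ gives $|A_1A_2|\le\tfrac12|B_1B_2|$, hence
\begin{equation*}
|A_1A_2-B_1B_2|\ge\tfrac12|B_1B_2|\ge cL^{-2}|A_0(Ld+ia)||A_0(L\widetilde d+ia)|\sinh^2(2\eta).
\end{equation*}
Plugging this into the Cramer-style formulas for $C_{ij}$ and pairing with the matching upper bound on the relevant numerator ($A_2$ for $C_{11}$, $B_1$ for $C_{12}$, $B_2$ for $C_{21}$, $A_1$ for $C_{22}$) yields all four estimates in one stroke, since the $\sinh^2(2\eta)$ and one power of $|A_0|$ cancel. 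The main obstacle will be the integration-by-parts step: one must rearrange $A_1,B_1$ so as to land precisely in the form required by Lemma \ref{lem:A0-w3} and verify the phase condition $\mathbf{Im}\,\zeta_0\le\delta_1-\eta^2/L^2$. Once Lemma \ref{lem:A0-w3} is invoked, everything else is algebraic bookkeeping.
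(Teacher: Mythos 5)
Your proposal is correct and follows essentially the same route as the paper: rewrite $A_1,B_1$ via the identity $A_0'(z)=-e^{i\pi/6}Ai(e^{i\pi/6}z)$ and an integration by parts into exactly the two expressions of Lemma \ref{lem:A0-w3}, verify $\mathbf{Im}(Ld+ia)=a-\eta^2/L^2\le\delta_1-\eta^2/L^2$, extract both the ratio bound $|A_i|\le |B_i|/\sqrt2$ and the two-sided size bounds from that lemma's proof, treat $A_2,B_2$ by the reflection/conjugation symmetry (the paper invokes $Ai(z)=\overline{Ai(\bar z)}$ for the same purpose), and conclude $|A_1A_2-B_1B_2|\gtrsim|B_1B_2|$ before reading off the $C_{ij}$ from the Cramer formulas. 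No gaps.
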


\begin{proof}

Let $y+1=x=\frac{t}{L}$. Due to $A'_0(z)=-e^{i\pi/6}Ai(e^{i\pi/6}z)$,
we have
\begin{align}
 \nonumber B_1  =&\int_{0}^{2}\sinh(\eta(2-x))Ai(e^{i\pi/6}(L(x+d)+ia))dx\\
  \nonumber =&\f1L\int_{0}^{2L}\sinh\Big(2\eta-\frac{\eta t}{L}\Big)Ai(e^{i\pi/6}((t+Ld)+ia))dt\\
  \nonumber =&-\frac{e^{-i\pi/6}}{L} \int_{0}^{2L}\sinh\Big(2\eta-\frac{\eta t}{L}\Big)A'_0(t+Ld+ia)dt\\
  \nonumber  =& -\frac{e^{-i\pi/6}}{L} \Big[- \sinh(2\eta)A_0(Ld+ia)+ \frac{\eta}{L}\int_{0}^{2L}\cosh\Big(2\eta-\frac{\eta t}{L}\Big)A_0(t+Ld+ia) dt\Big]\\
=& A_0(Ld+ia)\frac{e^{-i\pi/6}}{L} \Big[\sinh(2\eta)- \frac{\eta}{L}\int_{0}^{2L}\cosh\Big(2\eta-\frac{\eta t}{L}\Big)\omega(Ld+ia,t) dt\Big].\label{eq:B1}
\end{align}
Similarly, we have
 \begin{align*}
 A_1 & = -A_0(Ld+ia)\frac{e^{-i\pi/6}}{L}\Big [\sinh(2\eta)\omega(Ld+ia,2L)-  \frac{\eta}{L}\int_{0}^{2L}\cosh\Big(\frac{\eta t}{L}\Big)\omega(Ld+ia,t)dt\Big].
 \end{align*}
 Due to $ d=-1-\lambda-i\nu\eta^2/k,\ \mathbf{Im}(Ld+ia)=-L\nu\eta^2/k+a=-\eta^2/L^2+a\leq \delta_1-\eta^2/L^2$.
 Then we infer from Lemma \ref{lem:A0-w3} that
 \begin{align}\label{eq:A1-B1}
  \Big|\frac{A_1}{B_1}\Big| = \Big|\frac{\sinh(2\eta)\omega(Ld+ia,2L)-  \frac{\eta}{L}\int_{0}^{2L}\cosh(\frac{\eta t}{L})\omega(Ld+ia,t)dt}{\sinh(2\eta)- \frac{\eta}{L}\int_{0}^{2L}\cosh(2\eta-\frac{\eta t}{L})\omega(Ld+ia,t) dt}\Big|\le \f {\sqrt{2}} 2.
   \end{align}
Similarly, using ${Ai}(z)=\overline{{A{i}}(\bar{z})}$ and Lemma \ref{lem:A0-w3}, we have
 \begin{align}\label{eq:A2-B2}
        \Big|\frac{A_2}{B_2}\Big|\le \f {\sqrt{2}} 2.
          \end{align}
Now it follows from \eqref{eq:A1-B1} and \eqref{eq:A2-B2} that
\beno
|A_1A_2-B_1B_2|\gtrsim |B_1B_2|.
\eeno
From the proof of Lemma \ref{lem:A0-w3} and \eqref{eq:B1}, we know that
\begin{align*}
|B_1|\geq&\f{1}{L}|A_0(Ld+ia)|\Big[\sinh(2\eta)- \frac{\eta}{L}\int_{0}^{2L}\cosh\Big(2\eta-\frac{\eta t}{L}\Big)\omega(Ld+ia,t) dt\Big]\\
\geq&c_2\f{\sinh(2\eta)}{L}|A_0(Ld+ia)|.
\end{align*}
Similarly, $|B_2|\geq c_2\f{\sinh(2\eta)}{L}|A_0(L\widetilde{d}+ia)|$. Thus, we get
\begin{align*}
|A_1A_2-B_1B_2|\gtrsim|A_0(Ld+ia)||A_0(L\widetilde{d}+ia)|\f{[\sinh(2\eta)]^{2}}{L^2}.
\end{align*}
Furthermore, we can deduce that
\begin{align*}
&|B_1|\leq 2 \f{\sinh(2\eta)}{L} |A_0(Ld+ia)|,\quad
 |B_2|\leq 2\f{\sinh(2\eta)}{L}|A_0(L\widetilde{d}+ia)|,\\
&|A_1| \le 3\f{\sinh(2\eta)}{L^2} |A_0(Ld+ia)|,\quad
  |A_2| \leq 3\f{\sinh(2\eta)}{L^2}|A_0(L\widetilde{d}+ia)| .
\end{align*}

Summing up, we can conclude the estimates of $C_{ij}$.
\end{proof}

\smallskip

Now Proposition \ref{prop:w12-bounds} is an immediate consequence of Lemma \ref{lem:C-ij} and Lemma \ref{lem:W12}. Next, let us prove Proposition \ref{prop:w12-L2}.

\begin{proof}
By Proposition \ref{prop:w12-bounds} and H\"older inequality, we get
\begin{align*}
\|w_{1,\ell}\|_{L^{2}}\leq&\|w_{1,\ell}\|_{L^{\infty}}^{\f12}\|w_{1,\ell}\|_{L^1}^{\f12}\leq C\Big(( |k(\lambda-1)/\nu|+\eta^2)^{\f12}+(|k|/\nu)^{\f13}\Big)^{\f12}\\ \leq &C\Big(|k(\lambda-1)/\nu|^{\f14}+|k/\nu|^{\f16}+|\eta|^{\f12}\Big)\leq C\Big(|k(\lambda-1)/\nu|^{\f14}+|k/\nu|^{\f16}+|k|^{\f12}+|\ell|^{\f12}\Big)\\ \leq & C\Big(|k(\lambda-1)/\nu|^{\f14}+|k/\nu|^{\f16}+|k|^{\f12}+(|k/\nu|^{\f13}+|k|)^{\f12}+(|k/\nu|^{\f13}+|k|)^{-\f12}|\ell|\Big)\\ \leq & C\Big(|k(\lambda-1)/\nu|^{\f14}+|k/\nu|^{\f16}+|k|^{\f12}+(|k/\nu|^{\f13}+|k|)^{-\f12}|\ell|\Big),
\end{align*}
which gives the first inequality. The second inequality can be proved similarly.

For fixed $\beta\geq1/2, $ by Proposition \ref{prop:w12-bounds} and H\"older inequality, we have\begin{align*}
&\|(1-|y|)^{\beta}w_{1,\ell}\|_{L^{2}}\leq\|(1-|y|)^{\beta+\f12}w_{1,\ell}\|_{L^{\infty}}^{\f12}\|(1-|y|)^{\beta-\f12}w_{1,\ell}\|_{L^1}^{\f12}\leq CL^{1/2-\beta}.
\end{align*}
Similarly, $\|(1-|y|)^{\beta}w_{2,\ell}\|_{L^{2}}\leq CL^{1/2-\beta}$. This proves the third inequality.

By Lemma \ref{lem:ell-w} and the third inequality with $\beta=1$, we have
\begin{align*}
&\|(\partial_y,\eta)\varphi_{1,\ell}\|_{L^{2}}\leq\|(1-|y|)w_{1,\ell}\|_{L^{2}}\leq CL^{-1/2}= C|\nu/k|^{\f16}.
\end{align*}
Similarly, $\|(\partial_y,\eta)\varphi_{2,\ell}\|_{L^2}\leq C|\nu/k|^{\f16}$, thus we conclude the fourth inequality.

By Lemma \ref{lem:ell-w} and Proposition \ref{prop:w12-bounds} with $\alpha=1$, we have
\begin{align*}
&|k|^{\f12}\|\varphi_{1,\ell}\|_{L^2}\leq\eta^{\f12}\|\varphi_{1,\ell}\|_{L^{2}}\leq\|(1-|y|)w_{1,\ell}\|_{L^{1}}\leq CL^{-1}= C|\nu/k|^{\f13}.
\end{align*}
Similarly, $|k|^{\f12}\|\varphi_{2,\ell}\|_{L^2}\leq C|\nu/k|^{\f13}, $ thus we conclude the fifth inequality.\end{proof}

\section{Resolvent estimates via the Neumann boundary data}

In this section, we consider the Orr-Sommerfeld equation with variable coefficient:
\begin{align}\label{eq:OS-NB}
  \left\{
  \begin{aligned}
   &-\nu\Delta w+ik(V(y,z)-\lambda)w-a(\nu k^2)^{1/3}w=F,\\
   &\Delta \varphi=w,\quad \varphi|_{y=\pm1}=0, \\
   &\partial_xw=ikw,\quad \partial_xF=ikF.
   \end{aligned}\right.
\end{align}
Our goal  of this section is to control $w$ via the Neumann boundary data $\pa_y\varphi|_{y=\pm 1}$ and $F$.\smallskip

We assume that $\la\in \R, a\in [0,\eps_1]$ and $V$ satisfies \eqref{ass:V}. In section 6.1 and section 6.2, we also assume that
$L=\big(|k|/\nu\big)^\f13\ge k_0$ with $k_0$ given by Proposition \ref{prop:w12-bounds}.

\subsection{Resolvent estimates when $V=y$ and $F=0$}

\begin{Proposition}\label{prop:res-NB-y}
Let $w\in H^2(\Omega)$ be a solution of \eqref{eq:OS-NB} with $V=y$ and $F=0$. Then it holds that
\begin{align*}
&\|w\|_{L^2} \leq C\Big(\|(|k(y-\lambda)/\nu|^{\f14}+|k/\nu|^{\f16}+|k|^{\f12})\partial_y\varphi\|_{L^2(\partial\Omega)}
+(|k/\nu|^{\f13}+|k|)^{-\f12}\|\partial_y\partial_z\varphi\|_{L^2(\partial\Omega)}\Big),
\\&\|w\|_{L^2(\partial\Omega)} \leq C\Big(\|(|k(y-\lambda)/\nu|^{\f12}+|k/\nu|^{\f13}+|k|)\partial_y\varphi\|_{L^2(\partial\Omega)}
+\|\partial_y\partial_z\varphi\|_{L^2(\partial\Omega)}\Big),\\ &\|\nabla \varphi\|_{L^2}+\|(1-y^2)w\|_{L^2}\leq C|\nu/k|^{\f16}\|\partial_y\varphi\|_{L^2(\partial\Omega)},\\
&|k|^{\f12}\| \varphi\|_{L^2}\leq C|\nu/k|^{\f13}\|\partial_y\varphi\|_{L^2(\partial\Omega)},\\
&\|(1-y^2)^2w\|_{L^2}\leq C|\nu/k|^{\f12}\|\partial_y\varphi\|_{L^2(\partial\Omega)}.
\end{align*}
\end{Proposition}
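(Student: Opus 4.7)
The plan is to reduce the whole proposition to a superposition of one-dimensional boundary-corrector problems already solved in Section~5. Since $V=y$ and $F=0$, the equation \eqref{eq:OS-NB} separates into $z$-Fourier modes. Writing
\begin{align*}
w(x,y,z)=e^{ikx}\sum_{\ell\in\mathbb{Z}}\widehat{w}_{\ell}(y)e^{i\ell z},\qquad \varphi(x,y,z)=e^{ikx}\sum_{\ell\in\mathbb{Z}}\widehat{\varphi}_{\ell}(y)e^{i\ell z},
\end{align*}
each $\widehat{w}_\ell$ solves the homogeneous OS equation with constant coefficient
\beno
-\nu(\partial_y^2-\eta^2)\widehat{w}_\ell+ik(y-\lambda)\widehat{w}_\ell-a(\nu k^2)^{1/3}\widehat{w}_\ell=0,\qquad \widehat{\varphi}_\ell|_{y=\pm1}=0,
\eeno
where $\eta=(k^2+\ell^2)^{1/2}$. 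The two homogeneous solutions $(w_{1,\ell},w_{2,\ell})$ of \eqref{eq:w1l}-\eqref{eq:w2l} were precisely built as the Neumann boundary correctors: $w_{1,\ell}$ corresponds to $\partial_y\varphi_{1,\ell}(\pm1)=(1,0)$ and $w_{2,\ell}$ to $(0,1)$. By linearity and uniqueness (under our smallness assumption on $a$),
\ben\label{eq:plan-decomp}
\widehat{w}_\ell(y)=\partial_y\widehat{\varphi}_\ell(1)\,w_{1,\ell}(y)+\partial_y\widehat{\varphi}_\ell(-1)\,w_{2,\ell}(y).
\een
So each of the five estimates reduces to a Parseval-style sum whose ingredients are exactly the norms of $w_{1,\ell}, w_{2,\ell}, \varphi_{1,\ell}, \varphi_{2,\ell}$ already listed in Propositions \ref{prop:w12-bounds}-\ref{prop:w12-L2}.

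For the first inequality, I use the $L^2_y$-bound from Proposition \ref{prop:w12-L2}, split the $\ell$-dependent piece $(|k/\nu|^{1/3}+|k|)^{-1/2}|\ell|$ and the $\ell$-independent piece $(|k(\lambda\mp1)/\nu|^{1/4}+|k/\nu|^{1/6}+|k|^{1/2})$, and observe that by Parseval on the boundary $\Gamma_{\pm1}$,
\beno
\sum_{\ell}|\ell|^{2}|\partial_y\widehat{\varphi}_\ell(\pm1)|^{2}=C\|\partial_y\partial_z\varphi\|_{L^{2}(\Gamma_{\pm1})}^{2},
\eeno
while the $|k(y-\lambda)/\nu|^{1/4}$ weight on the boundary reproduces the factor $|k(\lambda\mp1)/\nu|^{1/4}$. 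The second inequality is handled identically, but using the $L^\infty_y$ bound from Proposition \ref{prop:w12-bounds} together with $\eta\le |k|+|\ell|$ to extract the $|\ell|$ contribution. The third, fourth, and fifth inequalities follow from the weighted bounds in Proposition \ref{prop:w12-L2} for $(\partial_y,\eta)\varphi_{j,\ell}$, $\varphi_{j,\ell}$, and $(1-|y|)^{\beta}w_{j,\ell}$ respectively (noting $(1-y^2)^2\lesssim (1-|y|)^2$ and taking $\beta=2$); in each case the $\ell$-independent prefactor yields the claimed scaling, and Parseval converts $\sum_\ell|\partial_y\widehat{\varphi}_\ell(\pm1)|^2$ into $\|\partial_y\varphi\|_{L^2(\partial\Omega)}^2$.

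The only delicate point is the bookkeeping in the first two inequalities: the factor $|k(y-\lambda)/\nu|^{1/4}$ (resp.\ $|k(y-\lambda)/\nu|^{1/2}$) on the boundary is \emph{pointwise in} $y=\pm1$, so I must pair the $y=1$ contribution of $\widehat{w}_\ell$ with $|k(1-\lambda)/\nu|^{1/4}|\partial_y\widehat{\varphi}_\ell(1)|$ from $w_{1,\ell}$, and the $y=-1$ contribution with $|k(1+\lambda)/\nu|^{1/4}|\partial_y\widehat{\varphi}_\ell(-1)|$ from $w_{2,\ell}$, then combine via the triangle inequality. After this matching, a direct application of Parseval yields each estimate. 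The homogeneous OS estimates in Section~5 thus provide the full proposition with no further work beyond this summation step, which I expect to be the only mildly technical part of the proof.
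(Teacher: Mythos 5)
Your proposal is correct and follows essentially the same route as the paper: Fourier decomposition in $z$, representation $\widehat{w}_\ell=\partial_y\widehat{\varphi}_\ell(1)w_{1,\ell}+\partial_y\widehat{\varphi}_\ell(-1)w_{2,\ell}$, then Plancherel combined with the $L^2$, $L^\infty$, and weighted bounds of Propositions \ref{prop:w12-bounds} and \ref{prop:w12-L2}, with exactly the matching of $|k(\lambda\mp1)/\nu|$ weights to the respective boundary components that you describe. No gaps.
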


\begin{proof}
  Let $\hat{w}_{\ell}=\int_{\mathbb{T}}e^{-i\ell z}w(x,y,z)dz$, which satisfies
  \begin{align*}
  \left\{
  \begin{aligned}
   &-\nu(\partial_y^2-\eta^2) \hat{w}_{\ell}+ik(y-\lambda)\hat{w}_{\ell}-a(\nu k^2)^{1/3}\hat{w}_{\ell}=0,\\
   &\hat{w}_{\ell}=(\partial^2_y-\eta^2)\hat{\varphi}_{\ell},\ \hat{\varphi}_{\ell}|_{y=\pm1}=0,\ \partial_x\hat{w}_{\ell}=ik\hat{w}_{\ell}.
   \end{aligned}\right.
\end{align*}
Then we have
\begin{align*}
   & \hat{w}_{\ell}=\partial_y\hat{\varphi}_{\ell}(1)w_{`1,\ell} +\partial_y\hat{\varphi}_{\ell}(-1)w_{2,\ell},
\end{align*}
where $w_{1,\ell}$ and $w_{2,\ell}$ are given by \eqref{eq:w1l} and \eqref{eq:w2l}.

By Plancherel's theorem,  we get
\begin{align*}
   &\|w\|_{L^2}^2=C\sum_{\ell\in\mathbb{Z}}\|\hat{w}_{\ell}\|_{L^2}^2 \leq C\sum_{\ell\in\mathbb{Z}}\Big(|\partial_y\hat{\varphi}_{\ell}(1)|^2\|w_{1,\ell}\|_{L^2}^2 +|\partial_y\hat{\varphi}_{\ell}(-1)|^2\|w_{2,\ell}\|_{L^2}^2\Big).
\end{align*}
By Proposition \ref{prop:w12-L2},  we have
\begin{align*}
   &\sum_{\ell\in\mathbb{Z}}|\partial_y\hat{\varphi}_{\ell}(1)|^2\|w_{1,\ell}\|_{L^2}^2
   \leq C \sum_{\ell\in\mathbb{Z}}\big( |k(\lambda-1)/\nu|^{\f12}+(|k|/\nu)^{\f13}+|k|+l^2(|k/\nu|^{\f13}+|k|)^{-1}\big) |\partial_y\hat{\varphi}_{\ell}(1)|^2\\
 &\quad=C\big(|k(\lambda-1)/\nu|^{\f12}+(|k|/\nu)^{\f13}+|k|\big) \|\partial_y\varphi\|^2_{L^2(\Gamma_{1})}+C(|k/\nu|^{\f13}+|k|)^{-1} \|\partial_z\partial_y\varphi\|_{L^2(\Gamma_1)}^2.
\end{align*}
Similarly we have
\begin{align*}
   &\sum_{\ell\in\mathbb{Z}}|\partial_y\hat{\varphi}_{\ell}(-1)|^2\|w_{2,\ell}\|_{L^2}^2\\ &\leq C\big(|k(\lambda+1)/\nu|^{\f12}+(|k|/\nu)^{\f13}+|k|\big) \|\partial_y\varphi\|^2_{L^2(\Gamma_{-1})}+C(|k/\nu|^{\f13}+|k|)^{-1} \|\partial_z\partial_y\varphi\|_{L^2(\Gamma_{-1})}^2.
\end{align*}
Thus, we conclude that
\begin{align*}
   &\|w\|_{L^2}^2\leq C\sum_{\ell\in\mathbb{Z}}\big(|\partial_y\hat{\varphi}_{\ell}(1)|^2\|w_{1,\ell}\|_{L^2}^2 +|\partial_y\hat{\varphi}_{\ell}(-1)|^2\|w_{2,\ell}\|_{L^2}^2\big)\\
   &\leq C\Big(\sum_{j\in\{-1,1\}}\big( |k(\lambda-j)/\nu|^{\f12}+(|k|/\nu)^{\f13}+|k|\big) \|\partial_y\varphi\|^2_{L^2(\Gamma_{j})}+(|k/\nu|^{\f13}+|k|)^{-1} \|\partial_z\partial_y\varphi\|_{L^2(\Gamma_{j})}^2\Big) \\
   &=C\|(|k(y-\lambda)/\nu|^{\f14}+|k/\nu|^{\f16}+|k|^{\f12})\partial_y\varphi\|_{L^2(\partial\Omega)}^2+C(|k/\nu|^{\f13}+|k|)^{-1} \|\partial_z\partial_y\varphi_l\|_{L^2(\partial\Omega)}^2 .
\end{align*}
This shows the first inequality.

By Plancherel's theorem again,  we have
\begin{align*}
   \|w\|_{L^2(\partial\Omega)}^2\leq& \|w\|^2_{L^\infty_yL^2_{z}}=C\|\sum_{\ell\in\mathbb{Z}}|\hat{w}_\ell|^2\|_{L^\infty_y}\leq C\sum_{\ell\in\mathbb{Z}}\|\hat{w}_{\ell}\|_{L^\infty}^2\\
   \leq &C\sum_{\ell\in\mathbb{Z}}\Big(|\partial_y\hat{\varphi}_{\ell}(1)|^2\|w_{1,\ell}\|_{L^\infty}^2 +|\partial_y\hat{\varphi}_{\ell}(-1)|^2\|w_{2,\ell}\|_{L^\infty}^2\Big).
\end{align*}
By Proposition \ref{prop:w12-bounds}, we have
\begin{align*}
   &\sum_{\ell\in\mathbb{Z}}|\partial_y\hat{\varphi}_{\ell}(1)|^2\|w_{1,\ell}\|_{L^\infty}^2
   \leq C \sum_{\ell\in\mathbb{Z}}\big(|k(\lambda-1)/\nu|+(|k|/\nu)^{\f23}+k^2+\ell^2\big) |\partial_y\hat{\varphi}_{\ell}(1)|^2\\
   &= C\big( |k(\lambda-1)/\nu|+(|k|/\nu)^{\f23}+k^2\big) \|\partial_y\varphi\|^2_{L^2(\Gamma_{1})} +C\|\partial_z\partial_y\varphi\|_{L^2(\Gamma_1)}^2,
\end{align*}
and
\begin{align*}
   &\sum_{\ell\in\mathbb{Z}}|\partial_y\hat{\varphi}_{\ell}(-1)|^2\|w_{2,\ell}\|_{L^\infty}^2\\ &\leq C\big( |k(\lambda+1)/\nu|+(|k|/\nu)^{\f23}+k^2\big) \|\partial_y\varphi\|^2_{L^2(\Gamma_{-1})} +C\|\partial_z\partial_y\varphi\|_{L^2(\Gamma_{-1})}^2,
\end{align*}
which imply that
\begin{align*}
   &\|w\|_{L^2(\partial\Omega)}^2\leq C\sum_{\ell\in\mathbb{Z}}\Big(|\partial_y\hat{\varphi}_{\ell}(1)|^2\|w_{1,\ell}\|_{L^\infty}^2 +|\partial_y\hat{\varphi}_{\ell}(-1)|^2\|w_{2,\ell}\|_{L^\infty}^2\Big)\\
   &\leq C\sum_{j\in\{-1,1\}}\big( |k(\lambda-j)/\nu|+(|k|/\nu)^{\f23}+k^2\big) \|\partial_y\varphi\|^2_{L^2(\Gamma_{j})}+C\sum_{j\in\{-1,1\}} \|\partial_z\partial_y\varphi\|_{L^2(\Gamma_{j})}^2 \\
   &=C \|(|k(y-\lambda)/\nu|^{\f12}+|k/\nu|^{\f13}+|k|)\partial_y\varphi\|_{L^2(\partial\Omega)}^2+C \|\partial_z\partial_y\varphi\|_{L^2(\partial\Omega)}^2,
\end{align*}
which gives  the second inequality.

By Plancherel's theorem and Proposition \ref{prop:w12-L2}, we get
\begin{align*}
\|\nabla\varphi\|_{L^2}^2=&C\sum_{\ell\in\mathbb{Z}}\|(\partial_y,\eta)\hat{\varphi}_{\ell}\|_{L^2}^2 \\
\leq& C\sum_{\ell\in\mathbb{Z}}\Big(|\partial_y\hat{\varphi}_{\ell}(1)|^2\|(\partial_y,\eta)\varphi_{1,\ell}\|_{L^2}^2 +|\partial_y\hat{\varphi}_{\ell}(-1)|^2\|(\partial_y,\eta)\varphi_{2,\ell}\|_{L^2}^2\Big)\\ \leq& C\sum_{\ell\in\mathbb{Z}}\Big(|\partial_y\hat{\varphi}_{\ell}(1)|^2|\nu/k|^{\f13} +|\partial_y\hat{\varphi}_{\ell}(-1)|^2|\nu/k|^{\f13}\Big)=C(\nu/|k|)^{\f13} \|\partial_y\varphi\|^2_{L^2(\partial\Omega)},
\end{align*}
and
\begin{align*}
|k|\|\varphi\|_{L^2}^2=&C|k|\sum_{\ell\in\mathbb{Z}}\|\hat{\varphi}_{\ell}\|_{L^2}^2 \\ \leq& C\sum_{\ell\in\mathbb{Z}}|k|\Big(|\partial_y\hat{\varphi}_{\ell}(1)|^2\|\varphi_{1,\ell}\|_{L^2}^2 +|\partial_y\hat{\varphi}_{\ell}(-1)|^2\|\varphi_{2,\ell}\|_{L^2}^2\Big)\\ \leq& C\sum_{\ell\in\mathbb{Z}}\Big(|\partial_y\hat{\varphi}_{\ell}(1)|^2|\nu/k|^{\f23} +|\partial_y\hat{\varphi}_{\ell}(-1)|^2|\nu/k|^{\f23}\Big)=C(\nu/|k|)^{\f23} \|\partial_y\varphi\|^2_{L^2(\partial\Omega)}.
\end{align*}

For $ \beta\in\{1,2\}$, by Plancherel's theorem and Proposition \ref{prop:w12-L2}, we get
\begin{align*}
  \|(1-y^2)^{\beta}w\|_{L^2}^2=&C\sum_{\ell \in\mathbb{Z}}\|(1-y^2)^{\beta}\hat w_\ell\|_{L^2}^2 \\ \leq& C\sum_{\ell\in\mathbb{Z}}\Big(|\partial_y\hat{\varphi}_{\ell}(1)|^2\|(1-y^2)^{\beta}w_{1,\ell}\|_{L^2}^2 +|\partial_y\hat{\varphi}_{\ell}(-1)|^2\|(1-y^2)^{\beta}w_{2,\ell}\|_{L^2}^2\Big)\\ \leq& C\sum_{\ell \in\mathbb{Z}}\Big(|\partial_y\hat{\varphi}_{\ell}(1)|^2L^{1-2\beta} +|\partial_y\hat{\varphi}_{\ell}(-1)|^2L^{1-2\beta}\Big)=CL^{1-2\beta} \|\partial_y\varphi\|^2_{L^2(\partial\Omega)},
\end{align*}
which gives
\begin{align*}
 &\|(1-y^2)w\|_{L^2}^2\leq CL^{-1} \|\partial_y\varphi\|^2_{L^2(\partial\Omega)}=C|\nu/k|^{\f13} \|\partial_y\varphi\|^2_{L^2(\partial\Omega)},\\
 &\|(1-y^2)^{2}w\|_{L^2}^2\leq CL^{-3} \|\partial_y\varphi\|^2_{L^2(\partial\Omega)}=C|\nu/k| \|\partial_y\varphi\|^2_{L^2(\partial\Omega)}.
\end{align*}
\end{proof}

\subsection{Resolvent estimates with general $V$ and $F=0$}

The following proposition gives the resolvent estimate of \eqref{eq:OS-NB} in the homogeneous case(i.e., $F=0$).
The proof is based on Proposition \ref{prop:res-NB-y} and the perturbation argument.

\begin{Proposition}\label{prop:res-NB-hom}
Let $w\in H^2(\Omega)$ be a solution of \eqref{eq:OS-NB} with $F=0$. Then it holds that
\begin{align*}
&\|w\|_{L^2} \leq C\Big(\|(|k(y-\lambda)/\nu|^{\f14}+|k/\nu|^{\f16}+|k|^{\f12})\partial_y\varphi\|_{L^2(\partial\Omega)}
+(|k/\nu|^{\f13}+|k|)^{-\f12}\|\partial_y\partial_z\varphi\|_{L^2(\partial\Omega)}\Big),
\\&\|w\|_{L^2(\partial\Omega)} \leq C\Big(\|(|k(y-\lambda)/\nu|^{\f12}+|k/\nu|^{\f13}+|k|)\partial_y\varphi\|_{L^2(\partial\Omega)}
+\|\partial_y\partial_z\varphi\|_{L^2(\partial\Omega)}\Big),\\ &\|\nabla \varphi\|_{L^2}+\|(1-y^2)w\|_{L^2}\leq C|\nu/k|^{\f16}\|\partial_y\varphi\|_{L^2(\partial\Omega)},\\
&|k|^{\f12}\| \varphi\|_{L^2}\leq C|\nu/k|^{\f13}\|\partial_y\varphi\|_{L^2(\partial\Omega)}.
\end{align*}
\end{Proposition}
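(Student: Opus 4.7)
My approach is perturbative: I would decompose $w=w_{Na}+w_b$, where $w_b$ solves the constant-coefficient ($V$ replaced by $y$) homogeneous Orr--Sommerfeld problem already handled in Proposition~\ref{prop:res-NB-y}, and $w_{Na}$ absorbs the correction $V-y$ via a Navier-slip forced problem. Concretely, define $w_b$ as the unique solution of
\[
-\nu\Delta w_b+ik(y-\lambda)w_b-a(\nu k^2)^{1/3}w_b=0,\quad \Delta\varphi_b=w_b,\quad \varphi_b|_{y=\pm 1}=0,\quad w_b|_{y=\pm 1}=w|_{y=\pm 1},
\]
and set $w_{Na}:=w-w_b$. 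Then $w_{Na}$ satisfies the Navier-slip conditions $\varphi_{Na}|_{y=\pm 1}=w_{Na}|_{y=\pm 1}=0$ together with
\[
-\nu\Delta w_{Na}+ik(V-\lambda)w_{Na}-a(\nu k^2)^{1/3}w_{Na}=F_{Na},\qquad F_{Na}:=-ik(V-y)w_b,
\]
while on $\partial\Omega$ the Neumann data couple via $\partial_y\varphi_b=\partial_y\varphi-\partial_y\varphi_{Na}$.

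Next, I would apply Proposition~\ref{prop:res-NB-y} to $w_b$, which reproduces the five target inequalities but with $\partial_y\varphi_b|_{\partial\Omega}$ in place of $\partial_y\varphi|_{\partial\Omega}$ on the right-hand side. For $w_{Na}$ I would apply Propositions~\ref{prop:res-nav-s1} and~\ref{prop:res-nav-b1} with the forcing $F_{Na}$. The crucial gain is that $(V-y)|_{y=\pm 1}=0$, so by Lemma~\ref{lem:V} we have $|V-y|\le C\varepsilon_0(1-y^2)$. Combined with the weighted bounds $\|(1-y^2)w_b\|_{L^2}\le C|\nu/k|^{1/6}\|\partial_y\varphi_b\|_{L^2(\partial\Omega)}$ and $\|(1-y^2)^2 w_b\|_{L^2}\le C|\nu/k|^{1/2}\|\partial_y\varphi_b\|_{L^2(\partial\Omega)}$ from Proposition~\ref{prop:res-NB-y}, this gives
\[
\|F_{Na}\|_{L^2}\le C\varepsilon_0\,\nu^{1/6}|k|^{5/6}\|\partial_y\varphi_b\|_{L^2(\partial\Omega)},\quad \|(1-y^2)F_{Na}\|_{L^2}\le C\varepsilon_0\,\nu^{1/2}|k|^{1/2}\|\partial_y\varphi_b\|_{L^2(\partial\Omega)}.
\]
Feeding these into the Navier-slip resolvent estimates, each target norm of $w_{Na}$ (including every weighted boundary norm of $\partial_y\varphi_{Na}$ and $\partial_y\partial_z\varphi_{Na}$) is bounded by $C\varepsilon_0$ times the analogous weighted norm of $\partial_y\varphi_b|_{\partial\Omega}$.

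To close the argument, I would use $\partial_y\varphi_b=\partial_y\varphi-\partial_y\varphi_{Na}$ on $\partial\Omega$: for each weight $\mathrm{wt}$ appearing in the statement,
\[
\|\mathrm{wt}\cdot\partial_y\varphi_b\|_{L^2(\partial\Omega)}\le \|\mathrm{wt}\cdot\partial_y\varphi\|_{L^2(\partial\Omega)}+C\varepsilon_0\|\mathrm{wt}\cdot\partial_y\varphi_b\|_{L^2(\partial\Omega)},
\]
and similarly for $\partial_y\partial_z\varphi$; taking $\varepsilon_0$ small enough absorbs the second term, yielding $\|\mathrm{wt}\cdot\partial_y\varphi_b\|_{L^2(\partial\Omega)}\le 2\|\mathrm{wt}\cdot\partial_y\varphi\|_{L^2(\partial\Omega)}$. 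Substituting back into the bounds on $w_b$ and $w_{Na}$ and summing via $w=w_{Na}+w_b$ gives the five inequalities of Proposition~\ref{prop:res-NB-hom}.

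The hard part will be verifying that each weighted boundary norm of $\partial_y\varphi_{Na}$ produced by Proposition~\ref{prop:res-nav-b1} genuinely comes with a factor $\varepsilon_0$ and with exactly the right exponent pattern $\nu^{\alpha}|k|^{\beta}$ to match the various target weights $|k(y-\lambda)/\nu|^{1/4}$, $|k/\nu|^{1/6}$, $|k|^{1/2}$ on $\partial_y\varphi$, and $(|k/\nu|^{1/3}+|k|)^{-1/2}$ on $\partial_y\partial_z\varphi$. This will require invoking the sharper inequalities of Proposition~\ref{prop:res-nav-b1}, notably the $\||k(y-\lambda)|^{1/2}\partial_y\varphi\|$ estimate and the $\min(1,|k(\lambda+j)|+\nu^{1/6}|k|^{1/3})$ refinement, together with a short case split on whether $|k(1\pm\lambda)|\ge 1$ or $<1$; in each case the product of the Navier-slip boundary bound $\sim\nu^{-1/6}|k|^{-5/6}\|F_{Na}\|_{L^2}$ with the forcing estimate $\sim\varepsilon_0\,\nu^{1/6}|k|^{5/6}\|\partial_y\varphi_b\|_{L^2(\partial\Omega)}$ should reproduce exactly the desired weight on each boundary component.
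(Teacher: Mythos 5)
Your proposal is correct and is essentially the paper's own argument: the paper decomposes $w=w_{Na}+w_I$ with $w_{Na}$ solving the forced Navier-slip problem and $w_I$ (your $w_b$) the constant-coefficient homogeneous problem, applies Proposition~\ref{prop:res-NB-y} to $w_I$ and Propositions~\ref{prop:res-nav-s1}, \ref{prop:res-nav-b1} to $w_{Na}$ with forcing $-ik(V-y)w_I$, exploits $(V-y)|_{y=\pm1}=0$ through the weighted bounds on $(1-y^2)w_I$ and $(1-y^2)^2w_I$, and closes by absorbing the $C\varepsilon_0$ terms via $\partial_y\varphi_I=\partial_y\varphi-\partial_y\varphi_{Na}$ (using interpolation in the weight exponent exactly as you anticipate). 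The only cosmetic difference is that you define the homogeneous piece by its Dirichlet data $w_b|_{y=\pm1}=w|_{y=\pm1}$ whereas the paper defines the Navier-slip piece first and takes the difference; the resulting decomposition is the same.
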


\begin{proof}
We decompose $w=w_{Na}+w_I$, where $w_{Na}$ and $w_I$ solve
\begin{align*}
  \left\{
  \begin{aligned}
   &-\nu\Delta w_{Na}+ik(y-\lambda)w_{Na}-a(\nu k^2)^{1/3}w_{Na}=-ik(V(y,z)-y)w,\\
   &w_{Na}=\Delta\varphi_{Na},\ \varphi_{Na}|_{y=\pm1}=0,\ w_{Na}|_{y=\pm1}=0,\ \partial_xw_{Na}=ikw_{Na}.\\
   &-\nu\Delta w_{I}+ik(y-\lambda)w_{I}-a(\nu k^2)^{1/3}w_{I}=0,\\
   &w_{I}=\Delta\varphi_I,\ \varphi_I|_{y=\pm1}=0,\ \partial_xw_{I}=ikw_{I}.
   \end{aligned}\right.
\end{align*}Then we have\begin{align*}
   -\nu\Delta w_{Na}+ik(V(y,z)-\lambda)w_{Na}-a(\nu k^2)^{1/3}w_{Na}=-ik(V(y,z)-y)w_I.
\end{align*}
By Lemma \ref{lem:V}, we have
\begin{align}\label{eq: V-y}
   &\|(1-y^2)(V-y)w_I\|_{L^2}\leq C\varepsilon_0\|(1-y^2)^2w_I\|_{L^2},\\ \label{eq: V-y1}&\|(V-y)w_I\|_{L^2}\leq C\varepsilon_0\|(1-y^2)w_I\|_{L^2}.
\end{align}

By Proposition \ref{prop:res-nav-s1} and \eqref{eq: V-y1},  we have
\begin{align}\label{42}
   &\|w_{Na}\|_{L^2}\leq C\varepsilon_0|\nu/k|^{-\f13}\|(1-y^2)w_I\|_{L^2}.
\end{align}

Using Proposition \ref{prop:res-nav-b1} with $F=-ik(V-y)w$, we get by  by \eqref{eq: V-y} and \eqref{eq: V-y1} that
\begin{align}
   &\nu^{\f16}|k|^{\f43}\|\varphi_{Na}\|_{L^2}+ (\nu k^2)^{\f13}(\|\nabla \varphi_{Na}\|_{L^2}+\|(1-y^2)w_{Na}\|_{L^2})\label{31} \\
   &\quad \leq C\varepsilon_0\big(\|k(1-y^2)^2w_I\|_{L^2} +(\nu k^2)^{\f13}\|(1-y^2)w_I\|_{L^2}\big),\nonumber\\
   &\left\||k(y-\lambda)|^{\f12}\partial_y\varphi_{Na}\right\|_{L^2(\partial\Omega)} \label{33}\\ &\quad\leq C\varepsilon_0\nu^{-\f16}|k|^{\f16}\big(\min(|k(\lambda-1)|^{\f12} ,|k(\lambda+1)|^{\f12}) +\nu^{\f16}|k|^{\f13}\big)\|(1-y^2)w_I\|_{L^2},\nonumber
\end{align}
and
\begin{align}
   &\|\partial_y\partial_z\varphi_{Na}\|_{L^2(\partial\Omega)} \leq C\varepsilon_0\nu^{-\f12}|k|^{\f12}\|(1-y^2)w_I\|_{L^2},\label{32}\\
   &\|\partial_y\varphi_{Na}\|_{L^2(\partial\Omega)} \leq C\varepsilon_0 \nu^{-\f16}|k|^{\f16} \|(1-y^2)w_I\|_{L^2}.\label{40}
   \end{align}

For $w_I$, we get by Proposition \ref{prop:res-NB-y}  that
\begin{align}
\label{34}&\|w_I\|_{L^2} \leq C\Big(\|(|k(y-\lambda)/\nu|^{\f14}+|k/\nu|^{\f16}+|k|^{\f12})\partial_y\varphi_I\|_{L^2(\partial\Omega)}
\\ &\qquad\quad+(|k/\nu|^{\f13}+|k|)^{-\f12}\|\partial_y\partial_z\varphi_I\|_{L^2(\partial\Omega)}\Big),
\nonumber\\ \label{35}&\|w_I\|_{L^2(\partial\Omega)} \leq C\Big(\|(|k(y-\lambda)/\nu|^{\f12}+|k/\nu|^{\f13}+|k|)\partial_y\varphi_I\|_{L^2(\partial\Omega)}
+\|\partial_y\partial_z\varphi_I\|_{L^2(\partial\Omega)}\Big),\\ \label{36}&\|\nabla \varphi_I\|_{L^2}+\|(1-y^2)w_I\|_{L^2}\leq C|\nu/k|^{\f16}\|\partial_y\varphi_I\|_{L^2(\partial\Omega)},\\
\label{37}&|k|^{\f12}\| \varphi_I\|_{L^2}\leq C|\nu/k|^{\f13}\|\partial_y\varphi_I\|_{L^2(\partial\Omega)},\\
\label{41}&\|(1-y^2)^2w_I\|_{L^2}\leq C|\nu/k|^{\f12}\|\partial_y\varphi_I\|_{L^2(\partial\Omega)}.
\end{align}

By \eqref{36} and \eqref{40}, we get
\begin{align*}
   \|(1-y^2)w_I\|_{L^2}&\leq C|\nu/k|^{\f16}\|\partial_y\varphi_I\|_{L^2(\partial\Omega)}\leq C|\nu/k|^{\f16}\big(\|\partial_y\varphi\|_{L^2(\partial\Omega)}+ \|\partial_y\varphi_{Na}\|_{L^2(\partial\Omega)}\big)\\
   &\leq C|\nu/k|^{\f16}\|\partial_y\varphi\|_{L^2(\partial\Omega)} +C\varepsilon_0\|(1-y^2)w_I\|_{L^2}.
\end{align*}
Taking $C\varepsilon_0\leq1/2$, we conclude that
\begin{align}\label{eq: (1-y^2)w_I}
  & \|(1-y^2)w_I\|_{L^2} \leq C|\nu/k|^{\f16}\|\partial_y\varphi\|_{L^2(\partial\Omega)},
\end{align}
which along with \eqref{40} gives
\begin{align}\label{eq: partial varphi I}
   & \|\partial_y\varphi_I\|_{L^2(\partial\Omega)}\leq \|\partial_y\varphi\|_{L^2}+C\varepsilon_0|\nu/k|^{-\f16}\|(1-y^2)w_I\|_{L^2}\leq  C\|\partial_y\varphi\|_{L^2(\partial\Omega)}.
\end{align}
By  \eqref{33}, \eqref{40} and \eqref{eq: (1-y^2)w_I}, we have
\begin{align*}
   & \big\||k(y-\lambda)|^{\f12}\partial_y\varphi_{Na}\big\|_{L^2(\partial\Omega)}
   \leq C\varepsilon_0\big(\min(|k(\lambda-1)|^{\f12},|k(\lambda+1)|^{\f12})+\nu^{\f16}|k|^{\f13}\big) \|\partial_y\varphi\|_{L^2(\partial\Omega)},\\
  & \|\partial_y\varphi_{Na}\|_{L^2(\partial\Omega)}\leq C\varepsilon_0\nu^{-\f16}|k|^{\f16}\|(1-y^2)w_I\|_{L^2} \leq C\varepsilon_0\|\partial_y\varphi\|_{L^2(\partial\Omega)}.
\end{align*}
Using the interpolation,  we deduce that  or $\gamma\in[0,1]$,
\begin{align*}
   \||k(y-\lambda)|^{\f{\gamma}{2}}\partial_y\varphi_{Na}\|_{L^2(\partial\Omega)}\leq& \||k(y-\lambda)|^{\f12}\partial_y\varphi_{Na}\|_{L^2(\partial\Omega)}^{\gamma}\|\partial_y\varphi_{Na}\|_{L^2(\partial\Omega)}^{1-\gamma} \\   \leq &C\varepsilon_0\big(\min(|k(\lambda-1)|^{\f{\gamma}2},|k(\lambda+1)|^{\f{\gamma}2})+\nu^{\f{\gamma}6}|k|^{\f{\gamma}3}\big)\|\partial_y\varphi\|_{L^2(\partial\Omega)}\\
  \leq &C\varepsilon_0\Big(\||k(y-\lambda)|^{\f{\gamma}2}\partial_y\varphi\|_{L^2(\partial\Omega)}+\nu^{\f{\gamma}6}|k|^{\f{\gamma}3} \|\partial_y\varphi\|_{L^2(\partial\Omega)}\Big).
\end{align*}
Here we used the fact that  for $\al\geq0$,
\beno
\min(|k(\lambda-1)|^{\alpha},|k(\lambda+1)|^{\alpha})\|g\|_{L^2(\partial\Omega)} \leq \left\||k(y-\lambda)|^{\alpha}g\right\|_{L^2(\partial\Omega)}.
\eeno
This shows that for $\gamma\in[0,1]$,
\begin{align}\label{eq: weight partial varphi I}
   \big\||k(y-\lambda)|^{\f{\gamma}2}\partial_y\varphi_I\big\|_{L^2(\partial\Omega)} &\leq \big\||k(y-\lambda)|^{\f{\gamma}2}\partial_y\varphi_{Na}\big\|_{L^2(\partial\Omega)}+
   \big\||k(y-\lambda)|^{\f{\gamma}2}\partial_y\varphi\big\|_{L^2(\partial\Omega)}\\ \nonumber&\leq C\nu^{\f{\gamma}6}|k|^{\f{\gamma}3}\|\partial_y\varphi\|_{L^2(\partial\Omega)} +C\big\||k(y-\lambda)|^{\f{\gamma}2}\partial_y\varphi\big\|_{L^2(\partial\Omega)},
\end{align}
which along with \eqref{eq: partial varphi I} gives
\begin{align}
   &\|(|k(y-\lambda)/\nu|^{\f12}+ |k/\nu|^{\f13}+|k|) \partial_y\varphi_I\|_{L^2(\partial\Omega)} \label{38}\\
     &\qquad\leq C\|(|k(y-\lambda)/\nu|^{\f12}+ |k/\nu|^{\f13}+|k|) \partial_y\varphi\|_{L^2(\partial\Omega)},\nonumber
 \end{align}
and
\begin{align}
 &\|(|k(y-\lambda)/\nu|^{\f14}+ |k/\nu|^{\f16}+|k|^{\f12}) \partial_y\varphi_I\|_{L^2(\partial\Omega)} \label{39}\\
     &\qquad\leq C\|(|k(y-\lambda)/\nu|^{\f14}+ |k/\nu|^{\f16}+|k|^{\f12}) \partial_y\varphi\|_{L^2(\partial\Omega)}.\nonumber
\end{align}

By \eqref{42}, \eqref{34}, \eqref{39}, \eqref{32} and \eqref{eq: (1-y^2)w_I}, we have
\begin{align*}
   \|w\|_{L^2}\leq& \|w_{Na}\|_{L^2}+\|w_{I}\|_{L^2}\leq C|\nu/k|^{-\f13}\|(1-y^2)w_I\|_{L^2}\\ +& C\Big(\|(|k(y-\lambda)/\nu|^{\f14}+|k/\nu|^{\f16}+|k|^{\f12}) \partial_y\varphi_I\|_{L^2(\partial\Omega)} +(|k/\nu|^{\f13}+|k|)^{-\f12}\|\partial_y\partial_z\varphi_I\|_{L^2(\partial\Omega)}\Big)\\
   \leq &C|\nu/k|^{-\f16}\|\partial_y\varphi\|_{L^2(\partial\Omega)} +C\Big(\|(|k(y-\lambda)/\nu|^{\f14}+|k/\nu|^{\f16}+|k|^{\f12}) \partial_y\varphi\|_{L^2(\partial\Omega)}\\
   &+(|k/\nu|^{\f13}+|k|)^{-\f12}\|\partial_y\partial_z(\varphi-\varphi_{Na})\|_{L^2(\partial\Omega)}\Big)\\
   \leq &C|\nu/k|^{-\f16}\|\partial_y\varphi\|_{L^2(\partial\Omega)} +C(\|(|k(y-\lambda)/\nu|^{\f14}+|k/\nu|^{\f16}+|k|^{\f12}) \partial_y\varphi\|_{L^2(\partial\Omega)}\\
   &+(|k/\nu|^{\f13}+|k|)^{-\f12}\|\partial_y\partial_z\varphi\|_{L^2(\partial\Omega)}) +C(|k/\nu|^{\f13}+|k|)^{-\f12}\nu^{-\f12}|k|^{\f12}\|(1-y^2)w_{I}\|_{L^2}\\
   \leq &C\Big(\|(|k(y-\lambda)/\nu|^{\f14}+|k/\nu|^{\f16}+|k|^{\f12}) \partial_y\varphi\|_{L^2(\partial\Omega)}+(|k/\nu|^{\f13}+|k|)^{-\f12}\|\partial_y\partial_z\varphi\|_{L^2(\partial\Omega)}\Big),
\end{align*}
which gives the first inequality.

Due to $w_{Na}|_{y=\pm1}=0$, we get by  \eqref{35}, \eqref{38},  \eqref{32} and \eqref{eq: (1-y^2)w_I} , we have
\begin{align*}
   \|w\|_{L^2(\partial\Omega)}=& \|w_{I}\|_{L^2(\partial\Omega)}\leq  C\Big(\|(|k(y-\lambda)/\nu|^{\f12}+|k/\nu|^{\f13}+|k|) \partial_y\varphi_I\|_{L^2(\partial\Omega)} +\|\partial_y\partial_z\varphi_I\|_{L^2(\partial\Omega)}\Big)\\
   \leq &C\Big(\|(|k(y-\lambda)/\nu|^{\f12}+|k/\nu|^{\f13}+|k|) \partial_y\varphi\|_{L^2(\partial\Omega)}+\|\partial_y\partial_z(\varphi-\varphi_{Na})\|_{L^2(\partial\Omega)}\Big)\\
   \leq &C\Big(\|(|k(y-\lambda)/\nu|^{\f12}+|k/\nu|^{\f13}+|k|) \partial_y\varphi\|_{L^2(\partial\Omega)}+\|\partial_y\partial_z\varphi\|_{L^2(\partial\Omega)}\Big) \\&+\nu^{-\f12}|k|^{\f12}\|(1-y^2)w_{I}\|_{L^2}\\
   \leq &C\Big(\|(|k(y-\lambda)/\nu|^{\f12}+|k/\nu|^{\f13}+|k|) \partial_y\varphi\|_{L^2(\partial\Omega)} +\|\partial_y\partial_z\varphi\|_{L^2(\partial\Omega)}\Big),
\end{align*}
which gives the second inequality.

It follows from \eqref{41} and \eqref{eq: partial varphi I}  that
\begin{align}\label{43}
  \|(1-y^2)^2w_I\|_{L^2}\leq& C|\nu/k|^{\f12}\|\partial_y\varphi_I\|_{L^2(\partial\Omega)}\leq C|\nu/k|^{\f12}\|\partial_y\varphi\|_{L^2(\partial\Omega)}.
\end{align}
By \eqref{31}, \eqref{36}, \eqref{eq: partial varphi I}, \eqref{eq: (1-y^2)w_I}  and \eqref{43}, we have
\begin{align*}
 & \|\nabla\varphi\|_{L^2}+\|(1-y^2)w\|_{L^2} \\
 & \leq \|\nabla\varphi_{Na}\|_{L^2}+\|(1-y^2)w_{Na}\|_{L^2} +\|\nabla\varphi_I\|_{L^2}+\|(1-y^2)w_I\|_{L^2}\\
  &\leq C\Big(|\nu/k|^{-\f13}\|(1-y^2)^2w_I\|_{L^2}+ \|(1-y^2)w_I\|_{L^2}\Big)+ C|\nu/k|^{\f16}\|\partial_y\varphi_I\|_{L^2(\partial\Omega)}\\
  &\leq C\Big(|\nu/k|^{-\f13}\|(1-y^2)^2w_I\|_{L^2}+ \|(1-y^2)w_I\|_{L^2}\Big)+ C|\nu/k|^{\f16}\|\partial_y\varphi\|_{L^2(\partial\Omega)}\\
  &\leq C|\nu/k|^{\f16}\|\partial_y\varphi\|_{L^2(\partial\Omega)},
\end{align*}
which gives the third inequality.

By \eqref{31}, \eqref{37}, \eqref{eq: partial varphi I}, \eqref{eq: (1-y^2)w_I}and \eqref{43}, we get
\begin{align*}
  |k|^{\f12}\|\varphi\|_{L^2} \leq& |k|^{\f12}\|\varphi_{Na}\|_{L^2}+|k|^{\f12}\|\varphi_{I}\|_{L^2}\\
  \leq& C\Big(|\nu/k|^{-\f16}\|(1-y^2)^2w_{I}\|_{L^2}+ |\nu/k|^{\f16}\|(1-y^2)w_I\|_{L^2}\Big)+ C|\nu/k|^{\f13}\|\partial_y\varphi_I\|_{L^2(\partial\Omega)}\\
  \leq& C\Big(|\nu/k|^{-\f16}\|(1-y^2)^2w_{I}\|_{L^2}+ |\nu/k|^{\f16}\|(1-y^2)w_I\|_{L^2}\Big)+ C|\nu/k|^{\f13}\|\partial_y\varphi\|_{L^2(\partial\Omega)}\\
  \leq& C|\nu/k|^{\f13}\|\partial_y\varphi\|_{L^2(\partial\Omega)},
\end{align*}
which gives  the fourth inequality.
\end{proof}\smallskip

\subsection{Resolvent estimates with general $V$ and $F$}

The following proposition gives the resolvent estimates for the inhomogeneous equation.

\begin{Proposition}\label{prop:res-NB-ihom}
Let $\nu k^2\le 1$, and $w\in H^2(\Omega)$ be a solution of \eqref{eq:OS-NB} with $F\in L^2(\Omega)$ and $F=F_1+F_2$.
Then it holds that
\begin{align}
\label{eq:res-ihom1}&\nu^{\f16}|k|^{\f43}\|\varphi\|_{L^2} \leq C\Big(\|(1-y^2)F_1\|_{L^2}+|\nu/k|^{\f13}\|F_1\|_{L^2}\\& \qquad\qquad+|\nu/k|^{-\f13}\max(1-|\lambda|, \nu^{\f13}|k|^{-\f13})\|{F}_2\|_{H^{-1}}+|\nu k|^{\f12}\|\partial_y\varphi\|_{L^2(\partial\Omega)}\Big),\nonumber\\ &\label{eq:res-ihom2}\nu^{\f16}|k|^{\f43}\|\nabla\varphi\|_{L^2} \leq C\Big(\|F_1\|_{L^2}+|\nu/k|^{-\f13}\|{F}_2\|_{H^{-1}}+\nu^{\f13}|k|^{\f76}\|\partial_y\varphi\|_{L^2(\partial\Omega)}\Big),\\
&\label{eq:res-ihom3}\|w\|_{L^2} \leq C\Big(\|(|k(y-\lambda)/\nu|^{\f14}+|k/\nu|^{\f16})\partial_y\varphi\|_{L^2(\partial\Omega)}
+|\nu/k|^{\f16}\|\partial_y\partial_z\varphi\|_{L^2(\partial\Omega)}\\& \qquad\qquad+(\nu k^2)^{-\f{5}{12}}\|F_1\|_{L^2}+\nu^{-\f34} |k|^{-\f{1}{2}}\|{F}_2\|_{H^{-1}}\Big),\nonumber
\\&\label{eq:res-ihom4}\|w\|_{L^2(\partial\Omega)} \leq C\Big(\|(|k(y-\lambda)/\nu|^{\f12}+|k/\nu|^{\f13})\partial_y\varphi\|_{L^2(\partial\Omega)}
+\|\partial_y\partial_z\varphi\|_{L^2(\partial\Omega)}\\& \qquad\qquad+\nu^{-\f23} |k|^{-\f{5}{6}}\|F_1\|_{L^2}+\nu^{-1} |k|^{-\f{1}{2}}\|{F}_2\|_{H^{-1}}\Big).\nonumber
\end{align}
\end{Proposition}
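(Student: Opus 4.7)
The plan is to reduce the inhomogeneous system to the homogeneous estimates already established in Proposition \ref{prop:res-NB-hom} by splitting off the source into two Navier-slip problems. Write $w=w_{Na,1}+w_{Na,2}+w_I$, where each $w_{Na,j}$ ($j=1,2$) solves the Orr-Sommerfeld equation with source $F_j$ and vanishing Dirichlet data $w_{Na,j}|_{y=\pm 1}=0$, with associated stream function $\varphi_{Na,j}$ satisfying $\Delta\varphi_{Na,j}=w_{Na,j}$, $\varphi_{Na,j}|_{y=\pm 1}=0$; and $w_I$ solves the homogeneous OS equation with $\varphi_I|_{y=\pm 1}=0$ and residual Neumann data $\partial_y\varphi_I|_{\partial\Omega}=\partial_y\varphi|_{\partial\Omega}-\partial_y\varphi_{Na,1}|_{\partial\Omega}-\partial_y\varphi_{Na,2}|_{\partial\Omega}$ (and similarly for $\partial_y\partial_z\varphi_I$). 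Because $w_{Na,j}|_{\partial\Omega}=0$ we already get $\|w\|_{L^2(\partial\Omega)}=\|w_I\|_{L^2(\partial\Omega)}$, which immediately reduces \eqref{eq:res-ihom4} to controlling $w_I$.

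The Navier-slip pieces are handled by results from the preceding sections: Proposition \ref{prop:res-nav-s1} gives the internal $L^2$ bounds $\|w_{Na,1}\|_{L^2}\lesssim(\nu k^2)^{-1/3}\|F_1\|_{L^2}$ and $\|w_{Na,2}\|_{L^2}\lesssim \nu^{-2/3}\|F_2\|_{H^{-1}}$; Proposition \ref{prop:res-nav-b1} yields $\nu^{1/6}|k|^{4/3}\|\varphi_{Na,1}\|_{L^2}$, $\nu^{1/6}|k|^{4/3}\|\nabla\varphi_{Na,1}\|_{L^2}$ and the Neumann traces $\|\partial_y\varphi_{Na,1}\|_{L^2(\partial\Omega)}$, $\|\partial_y\partial_z\varphi_{Na,1}\|_{L^2(\partial\Omega)}$ (with the weighted variant $\||k(y-\lambda)|^{1/2}\partial_y\varphi_{Na,1}\|_{L^2(\partial\Omega)}$); and Proposition \ref{prop:res-weak-b} supplies the weak-type analogues for the $F_2$-piece, namely $\nu^{1/2}|k|(\|\nabla\varphi_{Na,2}\|_{L^2}+\max(1-|\lambda|,\nu^{1/3}|k|^{-1/3})^{-1}\|\varphi_{Na,2}\|_{L^2})\lesssim\|F_2\|_{H^{-1}}$, together with the weighted boundary bound $\|(|k(y-\lambda)|+1)^{3/4}\partial_y\varphi_{Na,2}\|_{L^2(\partial\Omega)}\lesssim|\nu k|^{-1/2}\|F_2\|_{H^{-1}}$. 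The homogeneous piece $w_I$ is controlled directly by Proposition \ref{prop:res-NB-hom}, and under the running hypothesis $\nu k^2\le 1$ the extra $|k|^{1/2}$ and $|k|$ terms appearing on the RHS there satisfy $|k|^{1/2}\le|k/\nu|^{1/6}$ and $|k|\le|k/\nu|^{1/3}$, so they are absorbed into the shape of \eqref{eq:res-ihom3}--\eqref{eq:res-ihom4}.

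To combine these ingredients I would feed the Navier-slip trace bounds into the estimates of $w_I$ and compare powers of $\nu,|k|$; in each mixed term a factor $(\nu k^2)^{\alpha}\le 1$ ($\alpha\ge 0$) appears and can be dropped. For instance, the $F_1$-contribution to \eqref{eq:res-ihom3} splits into $\|w_{Na,1}\|_{L^2}\lesssim(\nu k^2)^{-5/12}\|F_1\|_{L^2}$ and three boundary contributions transmitted through $\partial_y\varphi_{Na,1}|_{\partial\Omega}$ and $\partial_y\partial_z\varphi_{Na,1}|_{\partial\Omega}$; applying the third inequality of Proposition \ref{prop:res-nav-b1} together with the elementary cancellation $|k(\lambda-j)|^{1/4}(1+|k(\lambda-j)|)^{-1}\le 1$ and the factor $(\nu k^2)^{1/12}\le 1$ shows each of these is $\lesssim(\nu k^2)^{-5/12}\|F_1\|_{L^2}$. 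The $F_2$-contribution is handled analogously, using $|k(y-\lambda)|^{1/4}\le(|k(y-\lambda)|+1)^{3/4}$ on $\partial\Omega$ to feed the weighted bound of Proposition \ref{prop:res-weak-b} into the $|k(y-\lambda)/\nu|^{1/4}$ weight. Estimates \eqref{eq:res-ihom1}--\eqref{eq:res-ihom2} follow the same template; in particular the $|\nu k|^{1/2}\|\partial_y\varphi\|_{L^2(\partial\Omega)}$ term in \eqref{eq:res-ihom1} comes from applying the fourth inequality of Proposition \ref{prop:res-NB-hom} to $\varphi_I$, and the factor $\max(1-|\lambda|,\nu^{1/3}|k|^{-1/3})$ in the $F_2$-coefficient is inherited from the sharp $\|\varphi_{Na,2}\|_{L^2}$ bound of Proposition \ref{prop:res-weak-b}.

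The main technical obstacle is the exponent bookkeeping for the $F_2$-contribution to $|\nu/k|^{1/6}\|\partial_y\partial_z\varphi_{Na,2}\|_{L^2(\partial\Omega)}$: the naive trace estimate $\|\partial_y\partial_z\varphi_{Na,2}\|_{L^2(\partial\Omega)}\lesssim\|w_{Na,2}\|_{L^2}^{1/2}\|\nabla w_{Na,2}\|_{L^2}^{1/2}\lesssim\nu^{-5/6}\|F_2\|_{H^{-1}}$ is \emph{not} strong enough, and one must instead use the sharper $H^{-1}$-type trace bound $\|\partial_y\partial_z\varphi_{Na,2}\|_{L^2(\partial\Omega)}\lesssim\nu^{-5/6}|k|^{-1/6}\|F_2\|_{H^{-1}}$ coming from the second inequality of Proposition \ref{prop:res-nav-b1}; the resulting bound then matches the target $\nu^{-3/4}|k|^{-1/2}\|F_2\|_{H^{-1}}$ up to a factor $(\nu k^2)^{1/12}\le 1$. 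This sort of refinement -- replacing a sub-optimal $L^2$-interpolation trace by a weak-type one -- is the recurring subtlety throughout the reduction and is what forces us to keep all four inequalities of the proposition in play simultaneously.
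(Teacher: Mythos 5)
Your proposal is correct and follows essentially the same route as the paper: the same three-way decomposition $w=w_{Na}^{(1)}+w_{Na}^{(2)}+w_I$, the same use of Propositions \ref{prop:res-nav-s1}, \ref{prop:res-nav-b1} and \ref{prop:res-weak-b} for the two Navier-slip pieces, Proposition \ref{prop:res-NB-hom} for the homogeneous piece, and the same absorption of the extra $|k|^{1/2}$, $|k|$ factors via $\nu k^2\le 1$. The exponent bookkeeping you flag for the $\partial_y\partial_z\varphi_{Na}^{(2)}$ trace is exactly the point the paper handles with the $H^{-1}$-type bound $\|\partial_y\partial_z\varphi\|_{L^2(\partial\Omega)}\le C\nu^{-5/6}|k|^{-1/6}\|F_2\|_{H^{-1}}$ of Proposition \ref{prop:res-nav-b1}.
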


\begin{proof}
  We decompose the solution $w$ as $w=w_{Na}^{(1)}+w_{Na}^{(2)}+w_I$, where $w_{Na}^{(j)}(j=1,2), w_I$ solve
  \begin{align*}
     \left\{\begin{aligned}
     &-\nu\Delta w_{Na}^{(j)}+ik(V(y,z)-\lambda)w_{Na}^{(j)}-a(\nu k^2)^{1/3}w_{Na}^{(j)}=F_j,\\
     &w_{Na}^{(j)}=\Delta\varphi_{Na}^{(j)},\ \varphi_{Na}^{(j)}|_{y=\pm1}=0,\ w_{Na}^{(j)}|_{y=\pm1}=0,\ \partial_xw_{Na}^{(j)}=ik\partial_xw_{Na}^{(j)},
     \end{aligned}\right.
  \end{align*}
  and
  \begin{align*}
     \left\{\begin{aligned}
     &-\nu\Delta w_I+ik(V(y,z)-\lambda)w_I-a(\nu k^2)^{1/3}w_I=0,\\
     &w_I=\Delta\varphi_I,\ \varphi_I|_{y=\pm1}=0,\ \partial_xw_I=ik\partial_xw_I.
     \end{aligned}\right.
  \end{align*}

 {\bf Step 1.} Proof of \eqref{eq:res-ihom1} and \eqref{eq:res-ihom2}.\smallskip

  By Proposition  \ref{prop:res-nav-b1}, we have
  \begin{align*}
     &|\nu k|^{\f12}\|\partial_y\varphi_{Na}^{(1)}\|_{L^2(\partial\Omega)}\leq C|\nu/k|^{\f13}\|F_1\|_{L^2},\\ &\nu^{\f13}|k|^{\f76}\|\partial_y\varphi_{Na}^{(1)}\|_{L^2(\partial\Omega)} \leq C(\nu k^2)^{\f16}\|F_1\|_{L^2}\leq C\|F_1\|_{L^2},
  \end{align*}
  and by Proposition \ref{prop:res-weak-b}, we have
  \begin{align*}
     &|\nu k|^{\f12}\|\partial_y\varphi_{Na}^{(2)}\|_{L^2(\partial\Omega)}\leq C\|F_2\|_{H^{-1}}\leq C|\nu/k|^{-\f13}\max(1-|\lambda|,\nu^{\f13}|k|^{-\f13})\|F_2\|_{H^{-1}},\\ &\nu^{\f13}|k|^{\f76}\|\partial_y\varphi_{Na}^{(2)}\|_{L^2(\partial\Omega)} \leq C\nu^{-\f16}|k|^{\f23}\|F_2\|_{H^{-1}}\leq C|\nu/k|^{-\f13}\|F_2\|_{H^{-1}}.
  \end{align*}
   By Proposition \ref{prop:res-NB-hom}, we have
  \begin{align}\label{16}
     \nu^{\f16}|k|^{\f43}\|\varphi_I\|_{L^2}\leq& C|\nu k|^{\f12}\|\partial_y\varphi_I\|_{L^2(\partial\Omega)}\nonumber\\
     \leq& C|\nu k|^{\f12}\big(\|\partial_y\varphi\|_{L^2(\partial\Omega)} +\|\partial_y\varphi_{Na}^{(1)}\|_{L^2(\partial\Omega)} +\|\partial_y\varphi_{Na}^{(2)}\|_{L^2(\partial\Omega)})\nonumber\\
     \leq &C|\nu k|^{\f12}\|\partial_y\varphi\|_{L^2(\partial\Omega)} +C|\nu/k|^{\f13}\|F_1\|_{L^2}+C|\nu/k|^{-\f13} \max(1-|\lambda|,\nu^{\f13}|k|^{-\f13})\|F_2\|_{H^{-1}} ,
  \end{align}
  and
  \begin{align}\label{17}
     \nu^{\f16}|k|^{\f43}\|\nabla\varphi_I\|_{L^2}\leq& C\nu^{\f13} |k|^{\f76}\|\partial_y\varphi_I\|_{L^2(\partial\Omega)}\nonumber\\
     \leq& C\nu^{\f13}|k|^{\f76}\big(\|\partial_y\varphi\|_{L^2(\partial\Omega)} +\|\partial_y\varphi_{Na}^{(1)}\|_{L^2(\partial\Omega)} +\|\partial_y\varphi_{Na}^{(2)}\|_{L^2(\partial\Omega)}\big)\nonumber\\
     \leq &C\nu^{\f13}|k|^{\f76}\|\partial_y\varphi\|_{L^2(\partial\Omega)} +C\|F_1\|_{L^2}+C|\nu/k|^{-\f13}\|F_2\|_{H^{-1}}.
  \end{align}
  By Proposition \ref{prop:res-nav-b1} and  Proposition \ref{prop:res-nav-s1}, we have
  \begin{align*}
     & \nu^{\f16}|k|^{\f43}\|\varphi_{Na}^{(1)}\|_{L^2}\leq C(\|(1-y^2)F_1\|_{L^2}+|\nu/k|^{\f13}\|F_1\|_{L^2}),\\
     &\nu^{\f16}|k|^{\f43}\|\nabla\varphi_{Na}^{(1)}\|_{L^2} \leq C\|F_1\|_{L^2},
  \end{align*}
 and by Proposition \ref{prop:res-weak-b}, we have
  \begin{align*}
     & \nu^{\f16}|k|^{\f43}\|\varphi_{Na}^{(2)}\|_{L^2}\leq C|\nu/k|^{-\f13}\max(1-|\lambda|,\nu^{\f13}|k|^{-\f13})\|F_2\|_{H^{-1}},\\
     &\nu^{\f16}|k|^{\f43}\|\nabla\varphi_{Na}^{(2)}\|_{L^2} \leq C|\nu/k|^{-\f13}\|F_2\|_{H^{-1}},
  \end{align*}
  which together with  \eqref{16} and \eqref{17} show that
  \begin{align*}
    \nu^{\f16}|k|^{\f43}\|\varphi\|_{L^2}\leq & \nu^{\f16}|k|^{\f43}\big(\|\varphi_I\|_{L^2}+\|\varphi_{Na}^{(1)}\|_{L^2} +\|\varphi_{Na}^{(2)}\|_{L^2}\big)\\
     \leq & C\Big(\|(1-y^2)F_1\|_{L^2}+|\nu/k|^{\f13}\|F_1\|_{L^2} +|\nu/k|^{-\f13}\max(1-|\lambda|,\nu^{\f13}|k|^{-\f13})\|{F}_2\|_{H^{-1}} \\&+|\nu k|^{\f12}\|\partial_y\varphi\|_{L^2(\partial\Omega)}\Big),
  \end{align*}
  and
  \begin{align*}
     \nu^{\f16}|k|^{\f43}\|\nabla\varphi\|_{L^2}\leq & \nu^{\f16}|k|^{\f43}\big(\|\nabla\varphi_I\|_{L^2}+\|\nabla\varphi_{Na}^{(1)}\|_{L^2} +\|\nabla\varphi_{Na}^{(2)}\|_{L^2}\big)\\ \leq & C\big(\|F_1\|_{L^2}+|\nu/k|^{-\f13}\|{F}_2\|_{H^{-1} }+\nu^{\f13}|k|^{\f76}\|\partial_y\varphi\|_{L^2(\partial\Omega)}\big).
       \end{align*}

{\bf Step 2.} Proof of \eqref{eq:res-ihom3}.\smallskip

 By Proposition  \ref{prop:res-nav-b1}, we have
  \begin{align*}
     &\|(|k(y-\lambda)/\nu|^{\f14}+|k/\nu|^{\f16})\partial_y\varphi_{Na}^{(1)}\|_{L^2(\partial\Omega)} \leq C\nu^{-\f14}\|(|k(y-\lambda)|^{\f14}+1)\partial_y\varphi_{Na}^{(1)}\|_{L^2(\partial\Omega)}\\ &\qquad\qquad\qquad\leq C\nu^{-\f14}\|(|k(y-\lambda)|+1)\partial_y\varphi_{Na}^{(1)}\|_{L^2(\partial\Omega)} \leq C(\nu k^2)^{-\f{5}{12}}\|F_1\|_{L^2},\\
     &|\nu/k|^{\f16}\|\partial_z\partial_y\varphi_{Na}^{(1)}\|_{L^2}\leq C(\nu k^2)^{-\f13}\|F_1\|_{L^2}\leq C(\nu k^2)^{-\f{5}{12}}\|F_1\|_{L^2}.
  \end{align*}
  and  by Proposition \ref{prop:res-weak-b},
  \begin{align*}
    &\|(|k(y-\lambda)/\nu|^{\f14}+|k/\nu|^{\f16})\partial_y\varphi_{Na}^{(2)}\|_{L^2(\partial\Omega)} \leq C\nu^{-\f14}\|(|k(y-\lambda)|^{\f14}+1)\partial_y\varphi_{Na}^{(2)}\|_{L^2(\partial\Omega)}\\
    &\qquad\qquad\qquad\leq C\nu^{-\f14}\|(|k(y-\lambda)|^{\f34}+1)\partial_y\varphi_{Na}^{(2)}\|_{L^2(\partial\Omega)} \leq C\nu^{-\f34}|k|^{-\f12}\|F_2\|_{H^{-1}},\\
     &|\nu/k|^{\f16}\|\partial_z\partial_y\varphi_{Na}^{(2)}\|_{L^2}\leq C\nu^{-\f23}|k|^{-\f13}\|F_2\|_{H^{-1}}\leq C\nu^{-\f34}|k|^{-\f12}\|F_2\|_{H^{-1}},
  \end{align*}
  which together with Proposition \ref{prop:res-NB-hom} show that
    \begin{align}
     \|w_I\|_{L^2}\leq &C\big(\|(|k(y-\lambda)/\nu|^{\f14}+|k/\nu|^{\f16}+|k|^{\f12})\partial_y\varphi_I\|_{L^2(\partial\Omega)}
  +(|k/\nu|^{\f13}+|k|)^{-\f12}\|\partial_y\partial_z\varphi_I\|_{L^2(\partial\Omega)}\big) \nonumber\\ \leq & C\big(\|(|k(y-\lambda)/\nu|^{\f14}+|k/\nu|^{\f16})\partial_y\varphi_I\|_{L^2(\partial\Omega)}
  +|\nu/k|^{\f16}\|\partial_y\partial_z\varphi_I\|_{L^2(\partial\Omega)}\big)\nonumber\\
  \leq &C\big(\|(|k(y-\lambda)/\nu|^{\f14}+|k/\nu|^{\f16})\partial_y\varphi\|_{L^2(\partial\Omega)} +|\nu/k|^{\f16}\|\partial_y\partial_z\varphi\|_{L^2(\partial\Omega)}\big)\nonumber\\
  &\quad+C\big(\|(|k(y-\lambda)/\nu|^{\f14}+|k/\nu|^{\f16})\partial_y\varphi_{Na}^{(1)}\|_{L^2(\partial\Omega)} +|\nu/k|^{\f16}\|\partial_y\partial_z\varphi_{Na}^{(1)}\|_{L^2(\partial\Omega)}\big)\nonumber\\
  &\quad+C\big(\|(|k(y-\lambda)/\nu|^{\f14}+|k/\nu|^{\f16})\partial_y\varphi_{Na}^{(2)}\|_{L^2(\partial\Omega)} +|\nu/k|^{\f16}\|\partial_y\partial_z\varphi_{Na}^{(2)}\|_{L^2(\partial\Omega)}\big)\nonumber\\
  \leq &C\big(\|(|k(y-\lambda)/\nu|^{\f14}+|k/\nu|^{\f16})\partial_y\varphi\|_{L^2(\partial\Omega)} +|\nu/k|^{\f16}\|\partial_y\partial_z\varphi\|_{L^2(\partial\Omega)}\big) \nonumber\\
  &\quad+C\big((\nu k^2)^{-\f{5}{12}}\|F_1\|_{L^2}+ \nu^{-\f34}|k|^{-\f12}\|F_2\|_{H^{-1}}\big).\nonumber
  \end{align}
  By Proposition \ref{prop:res-nav-s1}, we have
  \begin{align*}
     \|w_{Na}^{(1)}\|_{L^2}+\|w_{Na}^{(2)}\|\leq& C((\nu k^2)^{-\f13}\|F_1\|_{L^2} +\nu^{-\f23}|k|^{-\f13}\|F_2\|_{H^{-1}})\\
     \leq& C((\nu k^2)^{-\f{5}{12}}\|F_1\|_{L^2} +\nu^{-\f34}|k|^{-\f12}\|F_2\|_{H^{-1}}).
  \end{align*}
This shows that
  \begin{align*}
    \|w\|_{L^2} \leq& \|w_{I}\|_{L^2}+\|w_{Na}^{(1)}\|_{L^2}+\|w_{Na}^{(2)}\|_{L^2}\\
    \leq &C\big(\|(|k(y-\lambda)/\nu|^{\f14}+|k/\nu|^{\f16})\partial_y\varphi\|_{L^2(\partial\Omega)} +|\nu/k|^{\f16}\|\partial_y\partial_z\varphi_I\|_{L^2(\partial\Omega)}\big)\\
  &\quad+C\big((\nu k^2)^{-\f{5}{12}}\|F_1\|_{L^2}+ \nu^{-\f34}|k|^{-\f12}\|F_2\|_{H^{-1}}\big).
  \end{align*}

{\bf Step 3.} Proof of \eqref{eq:res-ihom4}.\smallskip

 By Proposition  \ref{prop:res-nav-b1}, we have
  \begin{align*}
     &\|(|k(y-\lambda)/\nu|^{\f12}+|k/\nu|^{\f13})\partial_y\varphi_{Na}^{(1)}\|_{L^2(\partial\Omega)} \leq C\nu^{-\f12}\|(|k(y-\lambda)|^{\f12}+1)\partial_y\varphi_{Na}^{(1)}\|_{L^2(\partial\Omega)}\\ &\qquad\qquad\quad\leq C\nu^{-\f12}\|(|k(y-\lambda)|+1)\partial_y\varphi_{Na}^{(1)}\|_{L^2(\partial\Omega)} \leq C\nu^{-\f23}|k|^{-\f56}\|F_1\|_{L^2},\\
     &\|\partial_z\partial_y\varphi_{Na}^{(1)}\|_{L^2(\p\Om)}\leq C|\nu k|^{-\f12}\|F_1\|_{L^2}\leq C\nu^{-\f23}|k|^{-\f56}\|F_1\|_{L^2},
  \end{align*}
  and  by Proposition \ref{prop:res-weak-b},
  \begin{align*}
    &\|(|k(y-\lambda)/\nu|^{\f12}+|k/\nu|^{\f13})\partial_y\varphi_{Na}^{(2)}\|_{L^2(\partial\Omega)} \leq C\nu^{-\f12}\|(|k(y-\lambda)|^{\f12}+1)\partial_y\varphi_{Na}^{(2)}\|_{L^2(\partial\Omega)}\\
    &\qquad\qquad\quad\leq C\nu^{-\f12}\|(|k(y-\lambda)|^{\f34}+1)\partial_y\varphi_{Na}^{(2)}\|_{L^2(\partial\Omega)} \leq C\nu^{-1}|k|^{-\f12}\|F_2\|_{H^{-1}},\\
     &\|\partial_z\partial_y\varphi_{Na}^{(2)}\|_{L^2(\p\Om)}\leq C\nu^{-\f56}|k|^{-\f16}\|F_2\|_{H^{-1}}\leq C\nu^{-1}|k|^{-\f12}\|F_2\|_{H^{-1}},
  \end{align*}
which together with Proposition \ref{prop:res-NB-hom}  show that
\begin{align*}
     \|w_I\|_{L^2(\partial\Omega)}\leq &C\big(\|(|k(y-\lambda)/\nu|^{\f12}+|k/\nu|^{\f13}+|k|)\partial_y\varphi_I\|_{L^2(\partial\Omega)}
  +\|\partial_y\partial_z\varphi_I\|_{L^2(\partial\Omega)}\big) \\ \leq & C\big(\|(|k(y-\lambda)/\nu|^{\f12}+|k/\nu|^{\f13})\partial_y\varphi_I\|_{L^2(\partial\Omega)}
  +\|\partial_y\partial_z\varphi_I\|_{L^2(\partial\Omega)}\big)\\
  \leq &C\big(\|(|k(y-\lambda)/\nu|^{\f12}+|k/\nu|^{\f13})\partial_y\varphi\|_{L^2(\partial\Omega)} +\|\partial_y\partial_z\varphi\|_{L^2(\partial\Omega)}\big)\nonumber\\
  &\quad+C\big(\|(|k(y-\lambda)/\nu|^{\f12}+|k/\nu|^{\f13})\partial_y\varphi_{Na}^{(1)}\|_{L^2(\partial\Omega)} +\|\partial_y\partial_z\varphi_{Na}^{(1)}\|_{L^2(\partial\Omega)}\big)\nonumber\\
  &\quad+C\big(\|(|k(y-\lambda)/\nu|^{\f12}+|k/\nu|^{\f13})\partial_y\varphi_{Na}^{(2)}\|_{L^2(\partial\Omega)} +\|\partial_y\partial_z\varphi_{Na}^{(2)}\|_{L^2(\partial\Omega)}\big)\\
  \leq &C\big(\|(|k(y-\lambda)/\nu|^{\f12}+|k/\nu|^{\f13})\partial_y\varphi\|_{L^2(\partial\Omega)} +\|\partial_y\partial_z\varphi\|_{L^2(\partial\Omega)}\big) \\
  &\quad+C\big(\nu^{-\f23}|k|^{-\f56}\|F_1\|_{L^2}+ \nu^{-1}|k|^{-\f12}\|F_2\|_{H^{-1}}\big).
  \end{align*}
Due to  $w_{Na}^{(j)}|_{y=\pm1}=0(j=1,2)$, we have
\begin{align*}
   \|w\|_{L^2(\partial\Omega)}=\|w_{I}\|_{L^2(\partial\Omega)} \leq C\big(&\|(|k(y-\lambda)/\nu|^{\f12}+|k/\nu|^{\f13})\partial_y\varphi\|_{L^2(\partial\Omega)} +\|\partial_y\partial_z\varphi\|_{L^2(\partial\Omega)}\big)\\
  &+C\big(\nu^{-\f23}|k|^{-\f56}\|F_1\|_{L^2}+ \nu^{-1}|k|^{-\f12}\|F_2\|_{H^{-1}}\big).
\end{align*}
\end{proof}

The following proposition gives some resolvent estimates relating to the inviscid damping effect. In what follows, we always assume $\nu k^2\le 1$.

\begin{Proposition}\label{prop:res-damp}
Let $\nu k^2\le 1$, and $w\in H^2(\Omega)$ be a solution of \eqref{eq:OS-NB} with $F\in L^2(\Omega)$ and $F=F_1+F_2$. Then it holds that
\begin{align*}
&\nu^{\f16}|k|^{\f43}\|\varphi\|_{L^2} \leq C\Big(\nu^{\f16}|k|^{-\f23}\|F_1\|_{L^2}+\nu^{\f16}|k|^{-\f23}\max(1-|\lambda|,\nu^{\f13}|k|^{-\f13})\|\nabla{F}_1\|_{L^2}\\ &\qquad\quad
+|\nu/k|^{-\f13}\max(1-|\lambda|,\nu^{\f13}|k|^{-\f13})\|{F}_2\|_{H^{-1}}+|\nu k|^{\f12}\|\partial_y\varphi\|_{L^2(\partial\Omega)}\Big),\\ &\nu^{\f16}|k|^{\f43}\|\nabla\varphi\|_{L^2} \leq C\Big(\nu^{\f16}|k|^{-\f23}\|\nabla F_1\|_{L^2}+|\nu/k|^{-\f13}\|{F}_2\|_{H^{-1}}+\nu^{\f13}|k|^{\f76} \|\partial_y\varphi\|_{L^2(\partial\Omega)}\Big),\\
&\|w\|_{L^2} \leq C\Big(\|(|k(y-\lambda)/\nu|^{\f14}+|k/\nu|^{\f16})\partial_y\varphi\|_{L^2(\partial\Omega)}
+|\nu/k|^{\f16}\|\partial_y\partial_z\varphi\|_{L^2(\partial\Omega)}\\&\qquad\quad+\nu^{-\f14}|k|^{-\f32}\|\nabla F_1\|_{L^2}+\nu^{-\f34} |k|^{-\f{1}{2}}\|{F}_2\|_{H^{-1}}\Big),
\\&\|w\|_{L^2(\partial\Omega)} \leq C\Big(\|(|k(y-\lambda)/\nu|^{\f12}+|k/\nu|^{\f13})\partial_y\varphi\|_{L^2(\partial\Omega)}
+\|\partial_y\partial_z\varphi\|_{L^2(\partial\Omega)}\\&\qquad\quad+\nu^{-\f12}|k|^{-\f32}\|\nabla F_1\|_{L^2}+\nu^{-1} |k|^{-\f{1}{2}}\|{F}_2\|_{H^{-1}}\Big).
\end{align*}
\end{Proposition}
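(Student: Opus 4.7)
I would follow the same three-piece decomposition used in the proof of Proposition~\ref{prop:res-NB-ihom}: write $w = w_{Na}^{(1)} + w_{Na}^{(2)} + w_I$, where $w_{Na}^{(j)}$ solves the Navier-slip resolvent problem with right-hand side $F_j$, and $w_I$ is the homogeneous interior piece carrying the residual Neumann data $\partial_y\varphi_I = \partial_y\varphi - \partial_y\varphi_{Na}^{(1)} - \partial_y\varphi_{Na}^{(2)}$. The statements for $\varphi$, $\nabla\varphi$, $w$ and $w|_{\partial\Omega}$ then follow by triangle inequality once each piece is controlled.

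The novelty, relative to Proposition~\ref{prop:res-NB-ihom}, is that we exploit two additional pieces of structure on $F_1$: the $H^1$ regularity, and the identity $\partial_xF_1=ikF_1$, which lets us view $F_1$ as a pure $x$-derivative, $F_1 = (ik)^{-1}\partial_xF_1$. This gives the two embeddings
\[
\|F_1\|_{H^{-1}} \le C|k|^{-1}\|F_1\|_{L^2}, \qquad \|F_1\|_{L^2}\le |k|^{-1}\|\nabla F_1\|_{L^2}.
\]
For $w_{Na}^{(1)}$ I would combine the $H^1$-input bound of Proposition~\ref{prop:res-nav-s1},
\[
\nu\|\Delta w_{Na}^{(1)}\|_{L^2}+\nu^{2/3}|k|^{1/3}\|\nabla w_{Na}^{(1)}\|_{L^2}+(\nu k^2)^{1/3}\|w_{Na}^{(1)}\|_{L^2}\le C\nu^{1/6}|k|^{-2/3}\|\nabla F_1\|_{L^2},
\]
with the weak-type resolvent estimates of Propositions~\ref{prop:res-nav-b1} and~\ref{prop:res-weak-b}; the latter, applied with $F_1$ in the $H^{-1}$ slot, is precisely what produces the inviscid-damping gain factor $\max(1-|\lambda|,\nu^{1/3}|k|^{-1/3})$ in the $L^2$ and $H^1$ bounds for $\varphi_{Na}^{(1)}$ and in the weighted boundary-trace bounds for $\partial_y\varphi_{Na}^{(1)}$ and $\partial_y\partial_z\varphi_{Na}^{(1)}$.

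The $w_{Na}^{(2)}$ piece is estimated exactly as in Proposition~\ref{prop:res-NB-ihom} via Proposition~\ref{prop:res-weak-b}. For $w_I$, I would apply Proposition~\ref{prop:res-NB-hom}, splitting $\partial_y\varphi_I$ into $\partial_y\varphi - \partial_y\varphi_{Na}^{(1)} - \partial_y\varphi_{Na}^{(2)}$ by triangle inequality, then inserting the boundary bounds derived above. The four target inequalities follow by summing the three pieces and absorbing lower-order terms using the hypothesis $\nu k^2\le 1$ (which makes factors like $(\nu k^2)^{1/12}$, $(\nu k^2)^{1/6}$ harmless).

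The main obstacle will be bookkeeping the sharp coefficients in the $\|\varphi\|_{L^2}$ and $\|\nabla\varphi\|_{L^2}$ inequalities. In particular, producing the combined scaling $\nu^{1/6}|k|^{-2/3}\max(1-|\lambda|,\nu^{1/3}|k|^{-1/3})\|\nabla F_1\|_{L^2}$ requires a careful combination of Proposition~\ref{prop:res-weak-b}'s $\max$-type gain (using $F_1\in H^{-1}$) with the $H^1$-input bound of Proposition~\ref{prop:res-nav-s1}, and these contributions then feed into the $\partial_y\varphi_{Na}^{(j)}$ traces that drive the $w_I$ estimates via Proposition~\ref{prop:res-NB-hom}. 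Ensuring that the precise weighted norms $\||k(y-\lambda)/\nu|^{1/4}\partial_y\varphi_{Na}^{(j)}\|_{L^2(\partial\Omega)}$ and $\|\partial_y\partial_z\varphi_{Na}^{(j)}\|_{L^2(\partial\Omega)}$ are bounded by the correct $\nu,|k|$-powers of $\|\nabla F_1\|_{L^2}$ and $\|F_2\|_{H^{-1}}$ (without losing the $\max$ factor or inviscid-damping scaling) is where the delicate work lies.
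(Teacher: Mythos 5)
There is a genuine gap: feeding $F_1$ into the viscous Navier-slip problem cannot produce the sharp inviscid-damping constants claimed in the first two inequalities. Concretely, for $w_{Na}^{(1)}$ with forcing $F_1$ the best interior bounds available to you are $\nu^{\f16}|k|^{\f43}\|\nabla\varphi_{Na}^{(1)}\|_{L^2}\le C\|F_1\|_{L^2}$ (Proposition~\ref{prop:res-nav-s1}) and $\nu^{\f12}|k|\|\varphi_{Na}^{(1)}\|_{L^2}\le C\max(1-|\lambda|,\nu^{\f13}|k|^{-\f13})\|F_1\|_{H^{-1}}$ (Proposition~\ref{prop:res-weak-b}). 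Even after inserting $\|F_1\|_{L^2}\le|k|^{-1}\|\nabla F_1\|_{L^2}$ and $\|F_1\|_{H^{-1}}\le|k|^{-1}\|F_1\|_{L^2}$, these give at best $\nu^{\f16}|k|^{\f43}\|\nabla\varphi_{Na}^{(1)}\|_{L^2}\le C|k|^{-1}\|\nabla F_1\|_{L^2}$, which exceeds the target $\nu^{\f16}|k|^{-\f23}\|\nabla F_1\|_{L^2}$ by the factor $(\nu k^2)^{-\f16}\ge 1$ in the regime $\nu k^2\le1$; the same $(\nu k^2)^{-\f16}$ loss occurs in the $\|\varphi\|_{L^2}$ bound and in the weighted trace $\||k(y-\lambda)/\nu|^{\f14}\partial_y\varphi_{Na}^{(1)}\|_{L^2(\partial\Omega)}$ that feeds the third inequality. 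The loss is structural, not a bookkeeping issue: $w_{Na}^{(1)}$ carries a viscous boundary layer of width $(\nu/|k|)^{\f13}$, and the resolvent estimates for the Navier-slip problem cannot recover the full $k^{-2}$ gain on the $F_1$-terms that the statement demands.

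The paper's proof avoids this with a different first splitting: $w=w_1+w_2$ with $w_1=F_1/(ik(V-\lambda-i\delta))$ solving the zeroth-order transport equation exactly (no Laplacian, hence no boundary layer), and $w_2$ solving \eqref{eq:OS-NB} with forcing $F_2+\nu\Delta w_1+(a+1)(\nu k^2)^{\f13}w_1$, to which Proposition~\ref{prop:res-NB-ihom} is then applied. The sharp bounds $\|\varphi_1\|_{L^2}\le Ck^{-2}\big(\|F_1\|_{L^2}+\max(1-|\lambda|,\nu^{\f13}|k|^{-\f13})\|\nabla F_1\|_{L^2}\big)$, $\|\nabla\varphi_1\|_{L^2}\le Ck^{-2}\|\nabla F_1\|_{L^2}$, and the logarithmic Neumann-trace estimate $(1+|k(\lambda-j)|)\|\partial_y\varphi_1\|_{L^2(\Gamma_j)}\le C|k|^{-\f32}\|\nabla F_1\|_{L^2}+C|k|^{-1}\ln\big(2+(|k(\lambda-j)|+(\nu k^2)^{\f13})^{-1}\big)\|F_1\|_{L^2_{x,z}L^\infty_y}$ are the content of Lemmas~\ref{Lem: weak damping} and~\ref{lem7.1}; they rest on duality pairings against boundary-adapted test functions, Hardy inequalities along level sets of $V$, and a case analysis near $\lambda=\pm1$, none of which is accessible through Propositions~\ref{prop:res-nav-s1}--\ref{prop:res-weak-b} alone. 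Your treatment of the $F_2$-piece and of the homogeneous piece carrying the Neumann data (via Propositions~\ref{prop:res-weak-b} and~\ref{prop:res-NB-hom}) is consistent with the paper, but the $F_1$-piece requires this separate transport-equation argument.
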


We need the following lemmas.

\begin{Lemma}\label{Lem: weak damping}
Let  $F\in H^1(\Omega),\ w_1=\chi_1F,\ \chi_1=(V-\lambda-i\delta)^{-1},$ $\partial_xF=ikF$. Then it holds that for $f_0\in H^1_0(\Omega)$ with $\partial_xf_0=ikf_0$,
  \begin{align*}
    &|\langle w_1,f_0\rangle| \leq C\|\partial_yf_0\|_{L^2}\big(\max(1-|\lambda|,0)\| \nabla F\|_{L^2}+\|F\|_{L^2}\big),\\
    &|\langle w_1,f_0\rangle| \leq C|k|^{-1}\|\nabla f_0\|_{L^2}\|\nabla F\|_{L^2},\\
    &|\langle w_1,f_0\rangle| \leq C|k|^{-\f32}\|\nabla f_0\|_{L^2_{x,z}L^\infty_y}\|\nabla F\|_{L^2}.
  \end{align*}
\end{Lemma}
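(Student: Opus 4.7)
Since $\partial_x F = ikF$ and $\partial_x f_0 = ikf_0$, the product $F\bar f_0$ is independent of $x$, so after integrating out the trivial $x$-dependence the pairing reduces to a two-dimensional integral on $D=(-1,1)\times\T$:
\[
\langle w_1, f_0\rangle = 2\pi\int_D \chi_1(y,z)\,F(y,z)\,\bar f_0(y,z)\,dy\,dz.
\]
The main tool will be integration by parts in $y$ based on the identity $\chi_1\,\partial_y V = \partial_y[\log(V-\lambda-i\delta)]$, combined with the lower bound $|\partial_y V|\geq c$ from Lemma~\ref{lem:V} and the boundary condition $f_0|_{\partial\Omega}=0$, which eliminates all boundary terms and yields
\[
\langle w_1,f_0\rangle = -2\pi\int_D \log(V-\lambda-i\delta)\,\partial_y\!\left[\frac{F\bar f_0}{\partial_y V}\right]\,dy\,dz.
\]
The log factor satisfies the uniform bound $\|\log(V-\lambda-i\delta)\|_{L^\infty_zL^p_y}\leq C_p$ for every $p<\infty$, independently of $\lambda$ and $\delta$, since $|\log(V-\lambda-i\delta)|\leq C(1+|\log(|y-\lambda|+\delta)|)$ and the latter is in every $L^p_y$.

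For the first estimate I split by $|\lambda|$. If $|\lambda|\geq 1$, Lemma~\ref{lem:V} gives $|V-\lambda|\geq c(1-|y|)$, and combining $|\chi_1|\leq C/(1-|y|)$ with Hardy's inequality $\|f_0/(1-|y|)\|_{L^2_y}\leq C\|\partial_y f_0\|_{L^2_y}$ (valid since $f_0|_{y=\pm 1}=0$) yields directly $|\langle w_1,f_0\rangle|\leq C\|F\|_{L^2}\|\partial_y f_0\|_{L^2}$, matching the claimed bound since $\max(1-|\lambda|,0)=0$ in this regime. If $|\lambda|<1$, I use $|f_0|\leq(1-y^2)^{1/2}\|\partial_y f_0\|_{L^2_y}$ together with the elementary inequality $(1-y^2)^{1/2}\lesssim(1-|\lambda|)^{1/2}+|y-\lambda|^{1/2}$: the $|y-\lambda|^{1/2}$-piece cancels one power of the singularity of $(|y-\lambda|+\delta)^{-1}$ and produces the $\|F\|_{L^2}$ contribution, while the $(1-|\lambda|)^{1/2}$-piece is treated by a Taylor splitting $F(y,z)=F(\lambda,z)+\int_\lambda^y\partial_yF(y',z)\,dy'$, which after using the uniform $L^2_y$-integrability of the log factor yields the sharp $(1-|\lambda|)\|\nabla F\|_{L^2}$ contribution.

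For the second and third estimates I apply the IBP formula and distribute $\partial_y[F\bar f_0/\partial_y V]$ into the three pieces $(\partial_y F)\bar f_0$, $F\,\partial_y\bar f_0$, and $F\bar f_0$ times bounded coefficients. Each is bounded by H\"older in $(y,z)$ using the uniform log bound. The $|k|^{-1}$ factor of (ii) emerges from combining: (a) the Fourier-mode bound $\|F\|_{L^2(\Omega)}\leq|k|^{-1}\|\nabla F\|_{L^2(\Omega)}$; (b) the anisotropic Sobolev inequality $\|F\|_{L^2_{xz}L^\infty_y}\leq C|k|^{-1/2}\|\nabla F\|_{L^2}$, derived from the estimate $\|g\|_{L^\infty_y}^2\lesssim\|g\|_{L^2_y}(\|g\|_{L^2_y}+\|\partial_y g\|_{L^2_y})$ together with $\partial_xF=ikF$; and (c) the two-dimensional Sobolev embedding $H^{1/2}(D)\hookrightarrow L^4(D)$ applied at a fixed $x$-slice (recall that $F\bar f_0$ is $x$-independent), which gives the sharp product bound $\|F\bar f_0\|_{L^2(D)}\leq C|k|^{-1}\|\nabla F\|_{L^2}\|\nabla f_0\|_{L^2}$. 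Distributing these estimates across the three IBP pieces and using Young's inequality to balance the resulting terms yields (ii). Estimate (iii) is then obtained by replacing the Sobolev bound on $f_0$ by direct use of the stronger norm $\|\nabla f_0\|_{L^2_{xz}L^\infty_y}$ via Poincar\'e in $y$, gaining an additional factor $|k|^{-1/2}$.

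The main technical obstacle is achieving the sharp $|k|^{-1}$ factor in (ii) with the limited regularity $F\in H^1$: the IBP term $(\partial_y F)\bar f_0$ offers no direct $|k|$-saving on $\partial_y F$, and a naive H\"older estimate gives only $|k|^{-1/2}$. The sharp exponent is recovered by treating $F\bar f_0$ as a single product at a fixed $x$-slice and invoking the 2D $H^{1/2}\hookrightarrow L^4$ embedding, which converts the two independent $|k|^{-1/2}$ gains on $F$ and $f_0$ into a combined $|k|^{-1}$ gain on $\|F\bar f_0\|_{L^2(D)}$; the careful bookkeeping of which Sobolev embedding to invoke for which factor in each of the three IBP pieces, and the verification that no logarithmic divergence in $\lambda$ or $\delta$ is introduced, constitutes the core of the technical work.
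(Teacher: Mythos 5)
Your overall route is genuinely different from the paper's: you integrate by parts against the primitive $\log(V-\lambda-i\delta)$ and then use global H\"older/Sobolev bounds, whereas the paper works fiberwise in $(x,z)$ with the one-dimensional principal-value estimate of Lemma \ref{lem:hardy-V} applied to $g=F\bar f_0$ (for the second and third bounds) and, for the first bound, splits $f_0(y)=f_0(y_\lambda)+(f_0(y)-f_0(y_\lambda))$ at the point $y_\lambda$ where $V(y_\lambda,z)=\lambda$, handling the difference by Hardy's inequality and the constant by Lemma \ref{lem:hardy-V} with $\delta_*=1-\lambda$. Unfortunately your route loses exactly the two-scale cancellation that these lemmas encode, and I see two concrete gaps. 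First, in your treatment of the first estimate for $|\lambda|<1$: after $|f_0|\le (1-y^2)^{1/2}\|\partial_y f_0\|_{L^2_y}\lesssim((1-|\lambda|)^{1/2}+|y-\lambda|^{1/2})\|\partial_y f_0\|_{L^2_y}$, the $|y-\lambda|^{1/2}$ piece leaves you with $\int_{-1}^1|F|(|y-\lambda|+\delta)^{-1/2}\,dy$, and Cauchy--Schwarz against $\|F\|_{L^2_y}$ costs $\|(|y-\lambda|+\delta)^{-1/2}\|_{L^2_y}\sim(\log(1/\delta))^{1/2}$ -- a genuine loss since $\delta=(\nu/|k|)^{1/3}$ is small; avoiding the log forces $\|F\|_{L^\infty_y}$, which is not controlled by $\|F\|_{L^2}$. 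Moreover your Taylor splitting of $F$ produces $|F(y)-F(\lambda,z)|\le|y-\lambda|^{1/2}\|\partial_yF\|_{L^2_y}$ with $|y-\lambda|$ of order one on most of the interval, so the derivative term comes out as $(1-|\lambda|)^{1/2}\|\nabla F\|$ rather than the claimed $(1-|\lambda|)\|\nabla F\|$; the paper gets the full power because both factors of $(1-\lambda)^{1/2}$ come from the window of width $\delta_*=1-\lambda$ in Lemma \ref{lem:hardy-V} and from $|f_0(y_\lambda)|\lesssim(1-\lambda)^{1/2}\|\partial_yf_0\|_{L^2_y}$.

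Second, and more seriously, your fix for the $|k|^{-1}$ in the second estimate does not cover the term $(\partial_yF)\bar f_0$ produced by the integration by parts. The $H^{1/2}(D)\hookrightarrow L^4(D)$ product bound indeed gives $\|F\bar f_0\|_{L^2(D)}\lesssim|k|^{-1}\|\nabla F\|\|\nabla f_0\|$, but for $(\partial_yF)\bar f_0$ there is no regularity left on $\partial_yF$ to interpolate with, so the entire factor $|k|^{-1}$ must come from $f_0$ paired against the logarithm; since $\log(V-\lambda-i\delta)\notin L^\infty_y$, you cannot use $\|f_0\|_{L^2}\le|k|^{-1}\|\nabla f_0\|$ and are forced to $\|f_0\|_{L^2_{x,z}L^\infty_y}\lesssim|k|^{-1/2}\|\nabla f_0\|$ (or to $\|f_0\|_{L^2_zL^q_y}$ with $q>2$, which gives $|k|^{-1+\epsilon}$ with constants blowing up as $\epsilon\to0$). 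The missing $|k|^{-1/2}$ is supplied in the paper by the prefactor $\delta_*^{1/2}=|k|^{-1/2}$ multiplying $\|\partial_y(F\bar f_0)\|_{L^2_y}$ in Lemma \ref{lem:hardy-V}, i.e.\ by the fact that the derivative term only needs to be estimated over a window of width $|k|^{-1}$ around the singularity; a global $L^p_y$ bound on the logarithm cannot reproduce this. You would need to reintroduce the near/far splitting of the kernel (or simply invoke Lemma \ref{lem:hardy-V}) to close both the first and second estimates as stated.
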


\begin{proof}
Without loss of generality,  we may assume $\lambda\geq0$. If $0\leq \lambda\leq 1$, let $y_{\lambda}(z)\in[-1,1]$ so that $V(y_{\lambda}(z),z)=\lambda$. Then we have
   \begin{align}\label{eq: 1-y lambda}
     1-y_\lambda &= (1-\lambda)\f{1-y_{\lambda}}{V(1,z)-V(y_\lambda,z)}\leq (1-\lambda)\|[\partial_yV]^{-1}\|_{L^\infty}\leq C(1-\lambda).
   \end{align}
First of all,  we have
   \begin{align*}
      |\langle w_1,f_0\rangle|&=\left|\int_{\mathbb{T}^2} \int_{-1}^{1}\f{F\bar{f}_0}{V-\lambda-i\delta} dydxdz\right|\\
    &\leq\left|\int_{\mathbb{T}^2}\int_{-1}^{1}\f{F(x,y,z) (\bar{f}_0(x,y,z)-\bar{f}_0(x,y_{\lambda},z))}{V-\lambda-i\delta} dydxdz\right| \\&\quad +\left|\int_{\mathbb{T}^2}\int_{-1}^{1}\f{F(x,y,z)\bar{f}_0(x,y_{\lambda},z)}{V-\lambda-i\delta} dydxdz\right|\\
    &\leq \|F\|_{L^2} \left\|\f{f_0(x,y,z)-f_0(x,y_{\lambda},z)}{V(y,z)-V(y_\lambda,z)} \right\|_{L^2} +\left|\int_{\mathbb{T}^2}\int_{-1}^{1}\f{F(x,y,z)\bar{f}_0(x,y_{\lambda},z)}{V-\lambda-i\delta} dydxdz\right|.
   \end{align*}
 By Hardy's inequality, we get
    \begin{align*}
       \left\|\f{f_0(x,y,z)-f_0(x,y_{\lambda},z)}{V(y,z)-V(y_\lambda,z)} \right\|_{L^2}&\leq \left\|\f{f_0(x,y,z)-f_0(x,y_{\lambda},z)}{y-y_\lambda} \right\|_{L^2}\left\|\f{y-y_\lambda}{V(y,z)-V(y_\lambda,z)}\right\|_{L^\infty}\\
       &\leq C\|\partial_yf_0\|_{L^2}\|[\partial_yV]^{-1}\|_{L^\infty}\leq C\|\partial_yf_0\|_{L^2}.
    \end{align*}
  By Lemma \ref{lem:hardy-V} (with $\delta_{*}=1-\lambda$) and \eqref{eq: 1-y lambda}, we have
    \begin{align*}
       &\left|\int_{\mathbb{T}^2}\int_{-1}^{1}\f{F(x,y,z)\bar{f}_0(x,y_{\lambda},z)}{V-\lambda-i\delta} dydxdz\right|\leq C\Big\|\big(\delta_*^{\f12}\|\partial_yF\|_{L^2_y} +\delta_*^{-\f12}\|F\|_{L^2_y}\big)|f_0(x,y_\lambda,z)|\Big\|_{L^1_{x,z}}\\
       &\leq C\big(\delta^{\f12}_*\|\partial_yF\|_{L^2}+ \delta_*^{-\f12}\|F\|_{L^2}\big)\|f_0(x,y_\lambda,z)\|_{L^2_{x,z}} \\ &\leq C\big(\delta^{\f12}_*\|\nabla F\|_{L^2}+ \delta_*^{-\f12}\|F\|_{L^2}\big)\left\|\int_{y_\lambda}^{1}\partial_y f_0(x,y_1,z)dy_1\right\|_{L^2_{x,z}}\\
       &\leq C\big(\delta^{\f12}_*\|\nabla F\|_{L^2}+ \delta_*^{-\f12}\|F\|_{L^2}\big)\|(1-y_\lambda)^{\f12}\partial_yf_0\|_{L^2}\leq C\big((1-\lambda)\| \nabla F\|_{L^2}+\|F\|_{L^2}\big)\|\partial_yf_0\|_{L^2}.
    \end{align*}
 This shows that for $\la\in [0,1]$,
    \begin{align*}
     |\langle w_1, f_0\rangle| &\leq C\big((1-\lambda)\| \nabla F\|_{L^2}+\|F\|_{L^2}\big)\|\partial_yf_0\|_{L^2}.
    \end{align*}
 If $\lambda\geq 1$,  by Hardy's inequality and Lemma \ref{lem:V}, we get
    \begin{align*}
       |\langle w_1,f_0\rangle|&=\left|\int_{\mathbb{T}^2} \int_{-1}^{1}\f{F\bar{f}_0}{V-\lambda-i\delta} dydxdz\right|\leq \|F\|_{L^2}\left\|\f{f_0}{1-V}\right\|_{L^2}\leq C\|F\|_{L^2}\left\|\f{f_0}{1-y}\right\|_{L^2}\\
       &\leq C\|F\|_{L^2}\|\partial_yf_0\|_{L^2}.
    \end{align*}
Combining two cases,  we obtain
   \begin{align*}
      |\langle w_1, f_0\rangle| &\leq C\big(\max(1-|\lambda|,0)\| \nabla F\|_{L^2}+\|F\|_{L^2}\big)\|\partial_yf_0\|_{L^2}.
   \end{align*}

  Using Lemma \ref{lem:hardy-V} with $\delta_{*}=|k|^{-1}$) and Lemma \ref{lem:sob-f}, we get
   \begin{align*}
     |\langle w_1,f_0\rangle| &=\left|\int_{\mathbb{T}^2}\int_{-1}^{1} \f{F\bar{f}_0}{V-\lambda-i\delta}dydxdz\right|\leq \int_{\mathbb{T}^2}\left|\int_{-1}^{1} \f{F\bar{f}_0}{V-\lambda-i\delta}dy\right|dxdz\\
     &\leq C|k|^{-\f12}\Big(\big\|\|\partial_y(F\bar{f}_0)\|_{L^2_y}\big\|_{L^1_{x,z}} +|k|\big\|\|F\bar{f}_0\|_{L^2_y}\big\|_{L^1_{x,z}}\Big)\\
     &\leq C|k|^{-\f12}\Big(\|\partial_yF\|_{L^2}\|f_0\|_{L^2_{x,z}L^\infty_{y}} +\|F\|_{L^2_{x,z}L^\infty_y}\|\partial_yf_0\|_{L^2} +|k|\|F\|_{L^2_{x,z}L^\infty_y}\|f_0\|_{L^2}\Big)\\
     &\leq C|k|^{-1}\|\nabla F\|_{L^2}\|\nabla f_0\|_{L^2}.
   \end{align*}

Finally, we have
\begin{align*}
     |\langle w_1,f_0\rangle| &\leq C|k|^{-\f12}\Big(\big\|\|\partial_y(F\bar{f}_0)\|_{L^2_y}\big\|_{L^1_{x,z}} +|k|\big\|\|F\bar{f}_0\|_{L^2_y}\big\|_{L^1_{x,z}}\Big)\\
     &\leq C|k|^{-\f12}\Big(\|\partial_yF\|_{L^2}\|f_0\|_{L^2_{x,z}L^\infty_{y}} +\|F\|_{L^2}\|\partial_yf_0\|_{L^2_{x,z}L^\infty_y} +|k|\|F\|_{L^2}\|f_0\|_{L^2_{x,z}L^\infty_y}\Big)\\
     &\leq C|k|^{-\f32}\|\nabla F\|_{L^2}\|\nabla f_0\|_{L^2_{x,z}L_y^\infty},   \end{align*}
here we used $\partial_xf_0=ikf_0$.
\end{proof}

 \begin{Lemma}\label{lem7.1}
Let  $F\in H^1(\Omega)$, $\Delta\varphi=w_1=\chi_1F$, $\varphi|_{y=\pm1}=0$, $\chi_1=(V-\lambda-i\delta)^{-1}$,
$\partial_xF=ikF$. Then it holds that
\begin{align*}&\|\nabla w_1\|_{L^2}+|\nu/k|^{-\f13}\|w_1\|_{L^2} \leq C\nu^{-\f12}\|\nabla F\|_{L^2},\\
&\|(1-y^2)w_1\|_{L^2}\leq C\|F\|_{L^2}+C|\nu k^2|^{-\f16}\max(1-|\lambda|,\nu^{\f13}|k|^{-\f13})\|\nabla F\|_{L^2},\\
&\|\varphi\|_{L^2}\leq C|k|^{-1}\big(\|F\|_{L^2}+\max(1-|\lambda|,\nu^{\f13}|k|^{-\f13})\|\nabla F\|_{L^2}\big),\\ &\|\nabla\varphi\|_{L^2} \leq C|k|^{-1}\|\nabla F\|_{L^2},\\&\|\partial_y\partial_z\varphi\|_{L^2(\partial\Omega)}\leq C\nu^{-\f13}|k|^{-\f16}\|\nabla{F}\|_{L^2},\\
&(1+|k(\lambda-j)|)\|\partial_y\varphi\|_{L^2(\Gamma_j)}\leq C|k|^{-\f12}\|\nabla{F}\|_{L^2}\\&\qquad\qquad+C\ln(2+(|k(\lambda-j)|+(\nu k^2)^{\f13})^{-1})\|F\|_{L_{x,z}^2L_y^{\infty}},\ \ j\in\{\pm 1\}.
\end{align*}
\end{Lemma}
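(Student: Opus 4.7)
The plan is to establish the six estimates sequentially, combining Lemma~\ref{lem:chi1} (pointwise bounds on $\chi_1$ and $\nabla\chi_1$), the one-dimensional Sobolev inequality $\|G\|_{L^2_{x,z}L^\infty_y}\leq C|k|^{-1/2}\|\nabla G\|_{L^2}$ for $G$ with $\partial_xG=ikG$ (from Lemma~\ref{lem:sob-f}), Hardy-type inequalities, Lemma~\ref{lem:V}, Lemma~\ref{Lem: weak damping}, and standard elliptic estimates for $\Delta\varphi=w_1$ with $\varphi|_{y=\pm 1}=0$. For the first inequality, I would write $\nabla w_1=\chi_1\nabla F+F\nabla\chi_1$ and bound the two pieces by $C\delta^{-1}\|\nabla F\|_{L^2}$ and $C\delta^{-3/2}\|F\|_{L^2_{x,z}L^\infty_y}\leq C\delta^{-3/2}|k|^{-1/2}\|\nabla F\|_{L^2}$; under $\nu k^2\leq 1$ both are bounded by $C\nu^{-1/2}\|\nabla F\|_{L^2}$. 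Similarly, $\|w_1\|_{L^2}\leq \|\chi_1\|_{L^\infty_{x,z}L^2_y}\|F\|_{L^2_{x,z}L^\infty_y}\leq C\delta^{-1/2}|k|^{-1/2}\|\nabla F\|_{L^2}=C|\nu/k|^{1/3}\nu^{-1/2}\|\nabla F\|_{L^2}$.

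For the second, when $|\lambda|\leq 1$ let $y_\lambda(z)$ satisfy $V(y_\lambda,z)=\lambda$; the elementary inequality $|1-y^2|\leq (1-y_\lambda^2)+C|y-y_\lambda|$ combined with $|V-\lambda|\geq c|y-y_\lambda|$ (Lemma~\ref{lem:V}) yields $(1-y^2)|\chi_1|\leq C(1-|\lambda|)(|V-\lambda|+\delta)^{-1}+C$. The second term contributes $C\|F\|_{L^2}$ and the first contributes $C\nu^{-1/6}|k|^{-1/3}(1-|\lambda|)\|\nabla F\|_{L^2}$ by the same Sobolev/$\chi_1$ argument as in step~(1); this is sharper than the target with $\max(1-|\lambda|,\delta)$. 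When $|\lambda|\geq 1$, Lemma~\ref{lem:V} gives $|V-\lambda|\geq c((|\lambda|-1)+(1-|y|))$, so $(1-y^2)|\chi_1|$ is bounded and $\|(1-y^2)w_1\|_{L^2}\leq C\|F\|_{L^2}$. For the third and fourth, the identity $-\langle w_1,\varphi\rangle=\|\nabla\varphi\|_{L^2}^2$ combined with the first (respectively second) inequality of Lemma~\ref{Lem: weak damping} applied with $f_0=\varphi\in H^1_0$ yields $\|\nabla\varphi\|_{L^2}\leq C(\max(1-|\lambda|,0)\|\nabla F\|_{L^2}+\|F\|_{L^2})$ (respectively $\|\nabla\varphi\|_{L^2}\leq C|k|^{-1}\|\nabla F\|_{L^2}$); Poincar\'e $\|\varphi\|_{L^2}\leq |k|^{-1}\|\nabla\varphi\|_{L^2}$ (from $\partial_x\varphi=ik\varphi$) then gives~(3). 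For the fifth, since $\int_{-1}^1\partial_y\partial_z\varphi\,dy=\partial_z\varphi|_{-1}^{1}=0$, one-dimensional Gagliardo--Nirenberg in $y$ gives $\|\partial_y\partial_z\varphi\|_{L^2_{x,z}L^\infty_y}^2\leq C\|\partial_y\partial_z\varphi\|_{L^2}\|\partial_y^2\partial_z\varphi\|_{L^2}$; combining~(4) with the elliptic bound $\|\partial_y^2\partial_z\varphi\|_{L^2}\leq C\|\nabla w_1\|_{L^2}\leq C\nu^{-1/2}\|\nabla F\|_{L^2}$ from~(1) gives $\|\partial_y\partial_z\varphi\|_{L^2(\partial\Omega)}\leq C\nu^{-1/4}|k|^{-1/2}\|\nabla F\|_{L^2}\leq C\nu^{-1/3}|k|^{-1/6}\|\nabla F\|_{L^2}$, the last step using $\nu|k|\leq 1$.

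The last inequality is the most delicate. I plan to split the weight $(1+|k(\lambda-j)|)$ into its two summands. For the $|k(\lambda-j)|$-piece, I would use the identity $\partial_y[(V-\lambda)\varphi]|_{\Gamma_j}=(j-\lambda)\partial_y\varphi|_{\Gamma_j}$ (since $\varphi|_{\Gamma_j}=0$), apply a Sobolev trace bound to $\nabla[(V-\lambda)\varphi]$, and reduce to the previously established $L^2$-bounds on $w_1$, $\nabla\varphi$ and $\varphi$. For the $1$-piece, I would exploit the duality representation
\[ \|\partial_y\varphi\|_{L^2(\Gamma_j)}=(2\pi)^{-2}\sup_{f\in\mathcal{F}_j}|\langle w_1,f\rangle|,\qquad |\langle w_1,f\rangle|=|\langle F,\bar\chi_1 f\rangle|\leq \|F\|_{L^2_{x,z}L^\infty_y}\|\bar\chi_1 f\|_{L^2_{x,z}L^1_y}, \]
and bound $\|\bar\chi_1 f\|_{L^2_{x,z}L^1_y}$ using the low/high-mode decomposition of $f\in\mathcal{F}_j$ in Lemma~\ref{lem:F1-lh}, so that the exponential localization of $f$ near $\Gamma_j$ on high modes, together with the explicit integral $\int(|V(y,z)-\lambda|+\delta)^{-1}dy$ evaluated via the change of variable $s=V(y,z)-\lambda$, produces the logarithmic factor with argument $2+1/(|k|(|\lambda-j|+\delta))=2+(|k(\lambda-j)|+(\nu k^2)^{1/3})^{-1}$.

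The main obstacle will be this last estimate: achieving the \emph{exact} form of the log argument $|k(\lambda-j)|+(\nu k^2)^{1/3}$ requires a careful interpolation between the near-singular regime $|\lambda-j|\lesssim\delta$ and the non-singular one, plus a sharp use of the exponential localization of $f\in\mathcal{F}_j$ near $\Gamma_j$ against the logarithmic divergence of $\int|\chi_1|\,dy$ at its singularity; the other pieces should follow routinely from the plan above.
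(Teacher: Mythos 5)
Your treatment of the first five estimates is essentially sound. Inequalities (1) and (2) follow the paper's argument; for (3)--(4) you take a slightly different (and arguably cleaner) route than the paper, testing against $f_0=\varphi$ itself in Lemma~\ref{Lem: weak damping} and then using Poincar\'e in $x$, rather than introducing the auxiliary $\phi$ with $\Delta\phi=\varphi$ --- both work. Your handling of the weight $|k(\lambda-j)|$ in the sixth estimate via $\partial_y[(V-\lambda)\varphi]|_{\Gamma_j}=(j-\lambda)\partial_y\varphi|_{\Gamma_j}$, the trace inequality of Lemma~\ref{lem:sob-f} and the elliptic bound for $\Delta[(V-\lambda)\varphi]$ is also a valid alternative to the paper's case analysis and does yield the $C|k|^{-1/2}\|\nabla F\|_{L^2}$ contribution. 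One slip in (5): the intermediate bound $\nu^{-1/4}|k|^{-1/2}\|\nabla F\|_{L^2}$ does not follow from (4), since (4) controls only first derivatives of $\varphi$ while $\partial_y\partial_z\varphi$ is a second derivative (for a single $z$-mode it equals $i\ell\,\partial_y\varphi$, so no $\ell$-uniform bound of that type can hold). The correct and sufficient step is the elliptic estimate $\|\partial_y\partial_z\varphi\|_{L^2}\le C\|w_1\|_{L^2}$, which combined with (1) gives exactly $C\nu^{-1/3}|k|^{-1/6}\|\nabla F\|_{L^2}$.

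The genuine gap is in the ``$1$-piece'' of the sixth estimate. The H\"older bound $|\langle w_1,f\rangle|\le\|F\|_{L^2_{x,z}L^\infty_y}\|\bar\chi_1 f\|_{L^2_{x,z}L^1_y}$ cannot produce the stated logarithm, because it integrates the critical-layer singularity of $\chi_1$ in $L^1$. Concretely, take $\ell=0$ (so $f$ is spread over a layer of width $|k|^{-1}$ near $\Gamma_1$ with $|f|\sim|f(1)|$ there) and $\delta\le 1-\lambda\le|k|^{-1}$: then $\int |f||\chi_1|\,dy\gtrsim |f(1)|\ln\bigl((1-\lambda+\delta)/\delta\bigr)$, which can be as large as $\ln\bigl((\nu k^2)^{-1/3}\bigr)$, whereas the target coefficient $\ln\bigl(2+(|k(\lambda-1)|+(\nu k^2)^{1/3})^{-1}\bigr)$ is $O(1)$ once $|k(\lambda-1)|\gtrsim 1$. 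No refinement of the low/high splitting of $f$ or of the change of variables fixes this, because the loss comes from the mode $\ell=0$ and from the genuine $\ln(1/\delta)$ mass of $|\chi_1|$ across the critical layer. The point of the paper's proof is that the critical-layer contribution must \emph{not} land on the $\|F\|_{L^2_{x,z}L^\infty_y}$ term at all: one inserts a cutoff $\chi_4$ supported where $|V-\lambda|\lesssim 1-\lambda$ and estimates $\langle\chi_4 F\chi_1,\chi_4 f\rangle$ by the cancellation in the first inequality of Lemma~\ref{Lem: weak damping} (which exploits $f_0(y)-f_0(y_\lambda)$ and Hardy, yielding $C|k|^{-1/2}\|\nabla F\|_{L^2}$ with no log); only the remainder, where $|(1-\chi_4^2)\chi_1|\le C/(1-V)$, is integrated in $L^1$ over the $|k|^{-1}$-neighborhood of $\Gamma_1$, and that integral is $\ln\bigl(1+C/|k(\lambda-1)|\bigr)$, i.e.\ the correct argument. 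A separate treatment is still needed when $1-\delta\le\lambda\le1+|k|^{-1}$ (there $|\chi_1|\le C(1-y+|\lambda-1|+\delta)^{-1}$ and the brute-force $L^1$ bound \emph{is} sharp) and when $|\lambda-j|\ge|k|^{-1}$ (there one writes $(1-\lambda)\langle w_1,f\rangle=\langle(V-\lambda)w_1,f\rangle+\langle w_1,(1-V)f\rangle$ with $(1-V)f\in H^1_0$ and gets no log at all). Without this decomposition your plan overshoots the logarithm by an unbounded factor, which would break the downstream weighted estimates \eqref{28}--\eqref{30} in Proposition~\ref{prop:res-damp}.
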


\begin{proof}
By Lemma \ref{lem:sob-f} and $\nu k^2\leq 1$, we have
  \begin{align*}
      &\|\nabla w_1\|_{L^2}+|\nu/k|^{-\f13}\|w_1\|_{L^2}\\
      &\leq \|\nabla\chi_1\|_{L^\infty_{x,z}L^2_y}\|F\|_{L^2_{x,z}L^\infty_y} +\|\chi_1\|_{L^\infty}\|\nabla F\|_{L^2}+|\nu/k|^{-\f13}\|\chi_1\|_{L^\infty_{x,z}L^2_{y}}\|F\|_{L^2_{x,z}L^\infty_{y}}\\
      &\leq C\delta^{-\f32}|k|^{-\f12}\|\nabla F\|_{L^2} +C\delta^{-1}\|\nabla F\|_{L^2}+C\delta^{-1}\delta^{-\f12}|k|^{-\f12}\|\nabla F\|_{L^2}\\
      &\leq C\nu^{-\f12}\|\nabla F\|_{L^2}.
  \end{align*}

Without loss of generality. we may assume $\lambda\geq0$. Using the fact that
\begin{align*}
  |1-y^2| &\leq C(1-|y|)\leq C(1-V) = C[(1-\lambda)+(\lambda-V)]\leq C(\max(1-\lambda,0)+|V-\lambda|),
\end{align*}
we deduce that
\begin{align*}
  \|(1-y^2)w_1\|_{L^2} &\leq C\big(\max(1-\lambda,0)\|w_1\|_{L^2}+\|(V-\lambda)w_1\|_{L^2}\big)\\&\leq C\big(\max(1-|\lambda|,0)\delta^{-\f12}|k|^{-\f12}\|\nabla F\|_{L^2}+\|(V-\lambda)\chi_1\|_{L^\infty}\|F\|_{L^2}\big)\\
  &\leq C\big(\max(1-|\lambda|,\nu^{\f13}|k|^{-\f13})|\nu k^2|^{-\f16}\|\nabla F\|_{L^2}+\|F\|_{L^2}\big).
\end{align*}

Let $\phi$ be the unique solution to $\Delta\phi=\varphi$, $\phi|_{y=\pm1}=0$. Using Lemma \ref{Lem: weak damping} with $f_0=\phi$, we get
\begin{align*}
  \|\varphi\|_{L^2}^2 =|\langle w_1,\phi\rangle|\leq& C\big( \max(1-|\lambda|,0)\|\nabla F\|_{L^2}+\|F\|_{L^2}\big)\|\partial_y\phi\|_{L^2}\\
  \leq& C|k|^{-1}\big( \max(1-|\lambda|,0)\|\nabla F\|_{L^2}+\|F\|_{L^2}\big)\|\varphi\|_{L^2}\\
  \leq & C|k|^{-1}\big( \max(1-|\lambda|,|\nu/k|^{\f13})\|\nabla F\|_{L^2}+\|F\|_{L^2}\big)\|\varphi\|_{L^2},
\end{align*}
and
\begin{align*}
   \|\nabla\varphi\|_{L^2}^2 =&|\langle w_1,\varphi\rangle|\leq C|k|^{-1}\|\nabla F\|_{L^2}\|\nabla \varphi\|_{L^2}.
\end{align*}
This shows that
\begin{align*}
   &\|\varphi\|_{L^2}\leq C|k|^{-1}\big(\|F\|_{L^2}+ \max(1-|\lambda|,|\nu/k|^{\f13})\|\nabla F\|_{L^2}\big),\\
   &\|\nabla\varphi\|_{L^2}\leq C|k|^{-1}\|\nabla F\|_{L^2}.
\end{align*}

Using the first inequality of this lemma, we get
\begin{align*}
   \|\partial_y\partial_z\varphi\|_{L^2(\partial\Omega)}&\leq  \|\partial_y\partial_z\varphi\|_{L^2_{x,z}L^\infty_y}\leq C\|\partial_y\partial_z\varphi\|_{L^2}^{\f12} \|\partial_y^2\partial_z\varphi\|_{L^2}^{\f12}\leq C\|w_1\|^{\f12}_{L^2}\|\nabla w_1\|_{L^2}^{\f12}\\ &\leq C\nu^{-\f13}|k|^{-\f16}\|\nabla F\|_{L^2}.
\end{align*}

Finally, let us estimate $\|\partial_y\varphi\|_{L^2(\partial\Omega)}$. Recalling  \eqref{eq:varphi-dual}, we have
  \begin{align}
   \|\partial_y\varphi\|_{L^2(\Gamma_1)} &=\f{1}{(2\pi)^2}\sup_{f\in\mathcal{F}_1}|\left\langle w_1,f\right\rangle|.\label{eq:varphi-dual-2}
\end{align}
The estimate $\langle w_1,f\rangle$ will be split into three cases.\smallskip

\textbf{Case 1}. $\delta\leq1-\lambda\leq |k|^{-1}$. \smallskip

Let  $\chi_4=\max(1-|(V-\lambda)/(1-\lambda)|,0)$. Then $\chi_4\in H^1(\Omega)$ and it follows from Lemma \ref{lem:V} that
\begin{align*}
 &\|\chi_4\|_{L^\infty}=1,\quad |\nabla\chi_4|\leq |\nabla V/(1-\lambda)|\leq C/(1-\lambda),\quad \chi_4|_{y=1}=\chi_4|_{y\leq1-4(1-\lambda)}=0,\\&\|\chi_4/(1-y)\|_{L^\infty}\leq\|\nabla\chi_4\|_{L^\infty}\leq C/(1-\lambda),\quad \|\chi_4\|_{L^2_{y}L^\infty_{x,z}}\leq C(1-\lambda)^{\f12},\\& \|\chi_4/(1-y)\|_{L^2_{y}L^\infty_{x,z}}+\|\nabla\chi_4\|_{L^2_{y}L^\infty_{x,z}}\leq C(1-\lambda)^{-\f12}.
\end{align*}
Due to $\delta\leq1-\lambda\leq |k|^{-1}$, we get by Lemma \ref{lem:sob-f} that
\begin{align*}
  \| \nabla (\chi_4F)\|_{L^2}\leq& \|\nabla\chi_4\|_{L^2_{y}L^\infty_{x,z}}\|F\|_{L^\infty_{y}L^2_{x,z}}+
   \|\chi_4\|_{L^\infty}\|\nabla F\|_{L^2}\\
   \leq& C(1-\lambda)^{-\f12}|k|^{-\f12}\|\nabla F\|_{L^2}+
   \|\nabla F\|_{L^2}\leq C(1-\lambda)^{-\f12}|k|^{-\f12}\|\nabla F\|_{L^2},\\ \| \chi_4F\|_{L^2}\leq&\|\chi_4\|_{L^2_{y}L^\infty_{x,z}}\|F\|_{L^\infty_{y}L^2_{x,z}}\leq C(1-\lambda)^{\f12}|k|^{-\f12}\|\nabla F\|_{L^2}.
\end{align*}

For $f\in \mathcal{F}_1$, we get by Lemma \ref{lem:F1} that
\begin{align*}
   \|\partial_y(\chi_4f)\|_{L^2}\leq&\|(\partial_y\chi_4)f\|_{L^2}+\|\chi_4\partial_yf\|_{L^2}\\
   \leq&\|\nabla\chi_4\|_{L^2_{y}L^\infty_{x,z}}\|f\|_{L^\infty_{y}L^2_{x,z}}+
   \|\chi_4/(1-y)\|_{L^2_{y}L^\infty_{x,z}}\|(1-y)\partial_yf\|_{L^\infty_{y}L^2_{x,z}}\\ \leq&C(1-\lambda)^{-\f12}\big(\|f\|_{L^2_{x,z}L^\infty_{y}}+\|(1-y)\partial_yf\|_{L^\infty_{y}L^2_{x,z}}\big)\leq C(1-\lambda)^{-\f12}.
\end{align*}
Thus by Lemma \ref{Lem: weak damping}, we have
\begin{align}
  |\langle \chi_4F\chi_1,\chi_4f\rangle| &\leq C \|\partial_y(\chi_4f)\|_{L^2}\big(\max(1-|\lambda|,0)\| \nabla (\chi_4F)\|_{L^2}+\|\chi_4F\|_{L^2}\big)\nonumber\\
  &\leq C(1-\lambda)^{-\f12}\big((1-\lambda)\cdot(1-\lambda)^{-\f12}|k|^{-\f12}\|\nabla F\|_{L^2}+(1-\lambda)^{\f12}|k|^{-\f12}\|\nabla F\|_{L^2}\big)\nonumber\\
  &\leq  C|k|^{-\f12}\|\nabla F\|_{L^2}.\label{eq: F0}
\end{align}

Let $\Omega_3=\mathbb{T}\times[1-2|k|^{-1},1]\times \mathbb{T}$ and $\Omega_3^c=\mathbb{T}\times[-1,1-2|k|^{-1}]\times \mathbb{T}$. Then
\begin{align*}
 |\langle (1-\chi_4^2)F\chi_1, f\rangle|\leq&\|(1-\chi_4^2)F\chi_1\overline{f}\|_{L^1(\Omega_3^c)}+\|(1-\chi_4^2)F\chi_1\overline{f}\|_{L^1(\Omega_3)}\\ \leq & \|F\|_{L^2}\|f\|_{L^2}\|(1-\chi_4^2)\chi_1\|_{L^\infty(\Omega_3^c)}+
   \|F\|_{L^2_{x,z}L^\infty_y}\|f\|_{L^2_{x,z}L^\infty_y}\|(1-\chi_4^2)\chi_1\|_{L^\infty_{x,z}L^1_y(\Omega_3)}.
\end{align*}
Noticing that
\begin{align*}
  & |(1-\chi_4^2)\chi_1|\leq 2|(1-\chi_4)\chi_1|\leq2|(V-\lambda)/(1-\lambda)||\chi_1|\leq2/(1-\lambda),\\& |(1-\chi_4^2)\chi_1|\leq |\chi_1|\leq1/|V-\lambda|,
\end{align*}
we deduce that
\beno
|(1-\chi_4^2)\chi_1|\leq C/(1-\lambda+|V-\lambda|)\leq C/(1-V).
\eeno
Let $ \delta_1=1-\lambda$, and  then
  \begin{align*}
     &\|(1-\chi_4^2)\chi_1\|_{L^\infty_{x,z}L^1_y(\Omega_3)}\\
     &\leq\|1/(|V-\lambda|+\delta_1)\|_{L^{\infty}_{x,z}L^1_{y}(\Omega_3)}= \left\|\int_{1-2|k|^{-1}}^{1}\f{1}{|V-\lambda|+\delta_1}dy\right\|_{L^\infty}\nonumber\\ &\leq \left\|\int_{1-2|k|^{-1}}^{1}\f{\partial_yV}{|V-\lambda|+\delta_1}dy \right\|_{L^\infty} \left\|[\partial_yV]^{-1}\right\|_{L^\infty}\leq C\left\|\int_{V(1-2|k|^{-1},z)}^{1}\f{1}{|y_1-\lambda|+\delta_1}dy_1 \right\|_{L^\infty}\nonumber\\
     &\leq C\int_{1-4|k|^{-1}}^{1}\f{dy_1}{|y_1-\lambda|+\delta_1}= C\ln((|1-\lambda|+\delta_1)/\delta_1) +C\ln((|1-\lambda-4|k|^{-1}|+\delta_1)/\delta_1)\nonumber\\
     &\leq C\ln((4|k|^{-1}+\delta_1)/\delta_1)= C\ln(1+4|k\delta_1|^{-1})= C\ln\big(1+4/|k(\lambda-1)|\big),
  \end{align*}
and
\begin{align*}
  & \|(1-\chi_4^2)\chi_1\|_{L^\infty(\Omega_3^c)}\leq C\|(1-V)^{-1}\|_{L^\infty(\Omega_3^c)}\leq C\|(1-y)^{-1}\|_{L^\infty(\Omega_3^c)}\leq C|k|.
\end{align*}
Thus, we conclude that
  \begin{align*}
      |\langle (1-\chi_4^2)F\chi_1, f\rangle|\leq C|k|\| F\|_{L^2}\|f\|_{L^2} + C \ln(1+4/|k(\lambda-1)|)\|F\|_{L^2_{x,z}L^\infty_y}\|f\|_{L^2_{x,z}L^\infty_y},
  \end{align*}
  which along with \eqref{eq: F0}  and the facts that $|k|\| F\|_{L^2}\leq \|\nabla F\|_{L^2},\ 1\ge |k(\lambda-1)|\geq k\delta=(\nu k^2)^{\f13}, $ $\|f\|_{L^2}\leq C|k|^{-\f12}$ and $\|f\|_{L^2_{x,z}L^\infty_y}\leq C$ due to Lemma \ref{lem:F1}, gives
  \begin{align*}
     (1+|k(\lambda-1)|)|\langle w_1,f\rangle|&\leq2|\langle w_1,f\rangle|\leq 2|\langle \chi_4F\chi_1,\chi_4f\rangle| +2|\langle (1-\chi_4^2)F\chi_1, f\rangle|\\
     &\leq C|k|^{-\f12} \|\nabla F\|_{L^2}+\ln\big(2+(|k(\lambda-1)|+(\nu k^2)^{\f13})^{-1}\big)\|F\|_{L^2_{x,z}L^\infty_y}.
  \end{align*}

\textbf{Case 2}. $1-\delta\leq\lambda\leq 1+|k|^{-1}$. \smallskip

In this case, we have
\beno
|\lambda-1|-(\lambda-1)=2\max(1-\lambda,0)\leq 2\delta,\quad  \lambda-1+3\delta\geq |\lambda-1|+\delta.
\eeno
Let $\delta_1=|\lambda-1|+\delta $. By Lemma \ref{lem:V}, we have
\begin{align*}
  & |V-\lambda|+3\delta\geq \lambda-V+3\delta\geq 1-V+|\lambda-1|+\delta\geq (1-y)/2+\delta_1,\\&|\chi_1|\leq [\max(|V-\lambda|,\delta)]^{-1}\leq C(|V-\lambda|+3\delta)^{-1}\leq C(1-y+\delta_1)^{-1}.
\end{align*}
 Let $\Omega_3$ and $\Omega_3^c$ be defined as in \textbf{Case 1}, and then
\begin{align}
     \label{eq: chi1 case2}&\|\chi_1\|_{L^\infty_{x,z}L^1_y(\Omega_3)}\leq C\|(1-y+\delta_1)^{-1}\|_{L^1([1-2|k|^{-1},1])}=C\ln\big((2|k|^{-1}+\delta_1)/\delta_1\big),\\
      &\|\chi_1\|_{L^\infty(\Omega_3^c)}\leq C\|(1-y)^{-1}\|_{L^\infty(\Omega_3^c)}\leq C|k|,
  \end{align}
  which imply that
  \begin{align*}
  |\langle F\chi_1, f\rangle|\leq&\|F\chi_1\overline{f}\|_{L^1(\Omega_3^c)}+\|F\chi_1\overline{f}\|_{L^1(\Omega_3)}\\
  \leq & \|F\|_{L^2}\|f\|_{L^2}\|\chi_1\|_{L^\infty(\Omega_3^c)}+
   \|F\|_{L^2_{x,z}L^\infty_y}\|f\|_{L^2_{x,z}L^\infty_y}\|\chi_1\|_{L^\infty_{x,z}L^1_y(\Omega_3)}\\
\leq& C|k|\| F\|_{L^2}\|f\|_{L^2} + C \ln(1+2|k\delta_1|^{-1})\|F\|_{L^2_{x,z}L^\infty_y}\|f\|_{L^2_{x,z}L^\infty_y}.
  \end{align*}
 Using the facts that
 \beno
  |k|\| F\|_{L^2}\leq \|\nabla F\|_{L^2},\quad  k\delta_1=|k(\lambda-1)|+(\nu k^2)^{\f13}, \quad \|f\|_{L^2}\leq C|k|^{-\f12},\quad \|f\|_{L^2_{x,z}L^\infty_y}\leq C,
 \eeno
we deduce that
 \begin{align*}
     (1+|k(\lambda-1)|)|\langle w_1,f\rangle|&\leq2|\langle w_1,f\rangle|\\
     &\leq C|k|^{-\f12} \|\nabla F\|_{L^2}+\ln\big(2+(|k(\lambda-1)|+(\nu k^2)^{\f13})^{-1}\big)\|F\|_{L^2_{x,z}L^\infty_y}.
  \end{align*}

  \textbf{Case 3}.  $|\lambda-1|\ge |k|^{-1}$.\smallskip

Obviously, we have $(1-V){f}\in H^1_0(\Omega)$, and by Lemma \ref{lem:sob-f} and  Lemma \ref{lem:V}, we have
  \begin{align*}
    \|\nabla((1-V){f})\|_{L^2} \leq& \|(1-V)\nabla{f}\|_{L^2}+ \|\nabla V\|_{L^{\infty}}\|{f}\|_{L^2}\\
    \leq& C(\|(1-y)\nabla{f}\|_{L^2}+ \|{f}\|_{L^2})
     \leq C|k|^{-\f12},
  \end{align*}
and  $ |(V-\lambda)w_1|=|(V-\lambda)\chi_1F|\leq |F|$.
Thus, it follows from Lemma \ref{Lem: weak damping} that
\begin{align*}
  |(1-\lambda)\langle w_1,f\rangle| &= |\langle (V-\lambda)w_1,{f}\rangle+\langle w_1,(1-V)f\rangle|\\ &\leq C\big(\|(V-\lambda)w_1\|_{L^2}\|{f}\|_{L^2}+|k|^{-1}\|\nabla F\|_{L^2}\|\nabla ((1-V){f})\|_{L^2}\big)\\
  &\leq C\big(\| F\|_{L^2}|k|^{-\f12}+|k|^{-1}\|\nabla F\|_{L^2}|k|^{-\f12}\big)\leq C|k|^{-\f32}\|\nabla F\|_{L^2},
\end{align*}
which gives
\begin{align*}
  (1+|k(\lambda-1)|)|\langle w_1,f\rangle| &\leq C|k|^{-\f12}\|\nabla F\|_{L^2}.
\end{align*}

Summing up {\bf Case 1-Case 3},  we conclude that
\begin{align*}
   &(1+|k(\lambda-1)|)|\langle w_1,f\rangle|
     \leq C|k|^{-\f12}\|\nabla F\|_{L^2} +\ln\big(2+(|k(\lambda-1)|+(\nu k^2)^{\f13})^{-1}\big)\|F\|_{L^2_{x,z}L^\infty_y}.
\end{align*}
This along with \eqref{eq:varphi-dual-2}  shows  that
\begin{align*}
  (1+|k(\lambda-1)|)\|\partial_y\varphi\|_{L^2(\Gamma_1)} &\leq C\big(1+|k(\lambda-1)|\big)\sup_{f\in\mathcal{F}_1}|\langle w_1,f\rangle|\\&\leq C|k|^{-\f12}\|\nabla F\|_{L^2} +\ln\big(2+(|k(\lambda-1)|+(\nu k^2)^{\f13})^{-1}\big)\|F\|_{L^2_{x,z}L^\infty_y}.
\end{align*}

The proof of the case $j=-1$ is similar.
\end{proof}\smallskip

Now we are in a position to prove Proposition \ref{prop:res-damp}.

\begin{proof}
We  decompose $w$ as $w=w_1+w_2$, where $w_1$ and $w_2$ solve
  \begin{align*}\left\{\begin{aligned}
     &ik(V-\lambda-i\delta)w_1=F_1,\\
     &-\nu\Delta w_2+ik(V-\lambda)w_2-a(\nu k^2)^{\f13}w_2=F_2+\nu\Delta w_1+(a+1)(\nu k^2)^{\f13}w_1,\\
     &w_j=\Delta \varphi_j,\quad \varphi_j|_{y=\pm1}=0,\quad j=\{1,2\}.
  \end{aligned}\right.
  \end{align*}
  Let $F_{1,*}=(a+1)(\nu k^2)^{\f13}w_1$, $F_{2,*}=F_2+\nu\Delta w_1$. Then we have
  \begin{align*}
     &\|F_{1,*}\|_{L^2}\leq C(\nu k^2)^{\f13}\|w_1\|_{L^2},\quad \|(1-y^2)F_{1,*}\|_{L^2}\leq C(\nu k^2)^{\f13}\|(1-y^2)w_1\|_{L^2},\\
     &\|F_{2,*}\|_{H^{-1}}\leq \|F_2\|_{H^{-1}}+C\nu\|\nabla w_1\|_{L^2}.
  \end{align*}

  It follows from Proposition \ref{prop:res-NB-ihom} that
  \begin{align}
    \label{19}\nu^{\f16}|k|^{\f43}\|\varphi_2\|_{L^2} \leq& C\Big(\|(1-y^2)F_{1,*}\|_{L^2}+|\nu/k|^{\f13}\|F_{1,*}\|_{L^2}\nonumber\\&+|\nu/k|^{-\f13}\max(1-|\lambda|, \nu^{\f13}|k|^{-\f13})\|{F}_{2,*}\|_{H^{-1}}+|\nu k|^{\f12}\|\partial_y\varphi_2\|_{L^2(\partial\Omega)}\Big),\\
    \leq& C\Big((\nu k^2)^{\f13}\|(1-y^2)w_1\|_{L^2}+|\nu k|^{\f12}\|\partial_y\varphi_2\|_{L^2(\partial\Omega)}\nonumber\\&+\max(1-|\lambda|, \nu^{\f13}|k|^{-\f13})\big(|\nu/k|^{-\f13}\|{F}_2\|_{H^{-1}}+ \nu^{\f23}|k|^{\f13}(\|\nabla w_1\|_{L^2}+|\nu/k|^{-\f13}\|w_1\|_{L^2})\big)\Big)\nonumber
  \end{align}
    \begin{align}
   \label{20}\nu^{\f16}|k|^{\f43}\|\nabla\varphi_2\|_{L^2} \leq& C\Big(\|F_{1,*}\|_{L^2}+|\nu/k|^{-\f13}\|{F}_{2,*}\|_{H^{-1}}+ \nu^{\f13}|k|^{\f76}\|\partial_y\varphi_2\|_{L^2(\partial\Omega)}\Big)\\
    \leq& C\Big(\nu^{\f23}|k|^{\f13}(\|\nabla w_1\|_{L^2}+|\nu/k|^{-\f13}\|w_1\|_{L^2})+|\nu/k|^{-\f13}\|{F}_2\|_{H^{-1}}\nonumber\\
    &\qquad+ \nu^{\f13}|k|^{\f76}\|\partial_y\varphi_2\|_{L^2(\partial\Omega)}\Big),\nonumber
     \end{align}
 and
    \begin{align}
    \label{21}&\|w_2\|_{L^2} \leq C\Big(\|(|k(y-\lambda)/\nu|^{\f14}+|k/\nu|^{\f16})\partial_y\varphi_2\|_{L^2(\partial\Omega)}
    +|\nu/k|^{\f16}\|\partial_y\partial_z\varphi_2\|_{L^2(\partial\Omega)}\\& \qquad\qquad+(\nu k^2)^{-\f{5}{12}}\|F_{1,*}\|_{L^2}+\nu^{-\f34} |k|^{-\f{1}{2}}\|{F}_{2,*}\|_{H^{-1}}\Big),
    \nonumber\\
     &\qquad \leq C\Big(\|(|k(y-\lambda)/\nu|^{\f14}+|k/\nu|^{\f16})\partial_y\varphi_2\|_{L^2(\partial\Omega)}
    +|\nu/k|^{\f16}\|\partial_y\partial_z\varphi_2\|_{L^2(\partial\Omega)}\nonumber\\
    & \qquad\qquad+\nu^{\f14} |k|^{-\f{1}{2}}(\|\nabla w_1\|_{L^2}+|\nu/k|^{-\f13}\|w_1\|_{L^2})+\nu^{-\f34} |k|^{-\f{1}{2}}\|{F}_2\|_{H^{-1}}\Big),
    \nonumber\\
     \label{22}\|w_2\|_{L^2(\partial\Omega)} \leq& C\Big(\|(|k(y-\lambda)/\nu|^{\f12}+|k/\nu|^{\f13})\partial_y\varphi_2\|_{L^2(\partial\Omega)}
    +\|\partial_y\partial_z\varphi_2\|_{L^2(\partial\Omega)}\nonumber\\&\quad+\nu^{-\f23} |k|^{-\f{5}{6}}\|F_{1,*}\|_{L^2}+\nu^{-1} |k|^{-\f{1}{2}}\|{F}_{2,*}\|_{H^{-1}}\Big)\\
    \leq& C\Big(\|(|k(y-\lambda)/\nu|^{\f12}+|k/\nu|^{\f13})\partial_y\varphi_2\|_{L^2(\partial\Omega)}
    +\|\partial_y\partial_z\varphi_2\|_{L^2(\partial\Omega)}\nonumber\\
    &\quad+|k|^{-\f{1}{2}}(\|\nabla w_1\|_{L^2}+|\nu/k|^{-\f13}\|w_1\|_{L^2})+\nu^{-1} |k|^{-\f{1}{2}}\|{F}_2\|_{H^{-1}}\Big).
    \nonumber
  \end{align}

Thanks to  $w_1=-ik^{-1}\chi_1F_1$,  we get by Lemma \ref{lem7.1} that
\begin{align}
   &\|\nabla w_1\|_{L^2}+|\nu/k|^{-\f13}\|w_1\|_{L^2} \leq C\nu^{-\f12}|k|^{-1}\|\nabla F_1\|_{L^2},\label{23}\\
&\|(1-y^2)w_1\|_{L^2}\leq C|k|^{-1}\|F_{1}\|_{L^2}+C\nu^{-\f16}|k|^{-\f43}\max(1-|\lambda|,\nu^{\f13}|k|^{-\f13})\|\nabla F_1\|_{L^2},\label{24}\\&\|\varphi_1\|_{L^2}\leq Ck^{-2}\big(\|F_1\|_{L^2}+\max(1-|\lambda|,\nu^{\f13}|k|^{-\f13})\|\nabla F_1\|_{L^2}\big),\label{25}\\ &\|\nabla\varphi_1\|_{L^2} \leq Ck^{-2}\|\nabla F_1\|_{L^2},\label{26}\\ & \|\partial_y\partial_z\varphi_1\|_{L^2(\partial\Omega)}\leq C\nu^{-\f13}|k|^{-\f76}\|\nabla{F}_1\|_{L^2},\label{27}\\
&(1+|k(\lambda-j)|)\|\partial_y\varphi_1\|_{L^2(\Gamma_j)}\leq C|k|^{-\f32}\|\nabla{F}_1\|_{L^2}\label{28}\\&\qquad\qquad+C|k|^{-1}\ln(2+(|k(\lambda-j)|+(\nu k^2)^{\f13})^{-1})\|F_1\|_{L_{x,z}^2L_y^{\infty}},\ \ j\in\{\pm 1\}.\nonumber
\end{align}

Using the inequality $\|F_1\|_{L_{x,z}^2L_y^{\infty}}\leq C\|{F}_1\|_{L^2}^{\f12}\|\nabla{F}_1\|_{L^2}^{\f12},$ we find from \eqref{28}  that
\begin{align}
   & \|\partial_y\varphi_1\|_{L^2(\partial\Omega)}\leq C|k|^{-\f32}\|\nabla F_1\|_{L^2}+ C(\gamma)|k|^{-\f12}(\nu k^2)^{-\gamma}\| F_1\|_{L^2},\quad \forall\gamma>0\label{29}
\end{align}
Using \eqref{28} and the fact that  for any $0<\gamma_1\leq1$,
\beno
\ln(2+(|k(\lambda-j)|+(\nu k^2)^{\f13})^{-1})\leq C+C\nu^{-\gamma_1}(|k(\lambda-j)/\nu|+|k/\nu|^{\f23})^{-\gamma_1},
\eeno
we deduce that
\begin{align*}
&(|k(\lambda-j)/\nu|^{\gamma_1}+|k/\nu|^{\f23\gamma_1})\|\partial_y\varphi_1\|_{L^2(\Gamma_j)} \\ &\leq C\nu^{-1}|k|^{-\f32}\f{|k(\lambda-j)/\nu|^{\gamma_1}+ |k/\nu|^{\f23\gamma_1}}{|k(\lambda-j)/\nu|+\nu^{-1}} \|\nabla F_1\|_{L^2}\\ &\quad+C\nu^{-1}|k|^{-1}\f{|k(\lambda-j)/\nu|^{\gamma_1}+|k/\nu|^{\f23\gamma_1}} {|k(\lambda-j)/\nu|+\nu^{-1}} \big(1+\nu^{-\gamma_1}(|k(\lambda-j)/\nu|+|k/\nu|^{\f23})^{-\gamma_1}\big)\|F_1\|_{L^2_{x,z}L^\infty_y}\\
   &\leq C\nu^{-\gamma_1}|k|^{-\f32}\|\nabla F_1\|_{L^2}+C\nu^{-\gamma_1}|k|^{-1}\|F_1\|_{L^2_{x,z}L^\infty_y}\leq C\nu^{-\gamma_1}|k|^{-\f32}\|\nabla F_1\|_{L^2},
\end{align*}
which implies that for $0<\gamma_1\le 1$,
\begin{align}\label{30}
   &\|(|k(\lambda-y)/\nu|^{\gamma_1}+|k/\nu|^{\f23\gamma_1})\partial_y\varphi_1\|_{L^2(\partial\Omega)} \leq C\nu^{-\gamma_1}|k|^{-\f32}\|\nabla F_1\|_{L^2}.
\end{align}
We infer from  \eqref{19}, \eqref{23} and \eqref{24} that
\begin{align*}
  \nu^{\f16}|k|^{\f43}\|\varphi_2\|_{L^2} \leq& C\Big((\nu k^2)^{\f13}\|(1-y^2)w_1\|_{L^2}+|\nu k|^{\f12}\|\partial_y\varphi_2\|_{L^2(\partial\Omega)}+\max(1-|\lambda|, \nu^{\f13}|k|^{-\f13})\\& \times\big(|\nu/k|^{-\f13}\|{F}_2\|_{H^{-1}}+ \nu^{\f23}|k|^{\f13}(\|\nabla w_1\|_{L^2}+|\nu/k|^{-\f13}\|w_1\|_{L^2})\big)\Big),\\
  \leq& C\Big(|\nu k|^{\f12}\|\partial_y\varphi\|_{L^2(\partial\Omega)}+ |\nu k|^{\f12}\|\partial_y\varphi_1\|_{L^2}+ |\nu/k|^{\f13}\|F_1\|_{L^2}\\&+ \max(1-|\lambda|, \nu^{\f13}|k|^{-\f13})(|\nu/k|^{-\f13}\|{F}_2\|_{H^{-1}}+ \nu^{\f16}|k|^{-\f23}\|\nabla F_1\|_{L^2})\Big),
\end{align*}
which along with with \eqref{25} and \eqref{29}($\gamma=1/3$) gives
\begin{align*}
   \nu^{\f16}|k|^{\f43}\|\varphi\|_{L^2}\leq & \nu^{\f16}|k|^{\f43}\|\varphi_1\|_{L^2} +\nu^{\f16}|k|^{\f43}\|\varphi_2\|_{L^2}\\
  \leq& C\Big(|\nu k|^{\f12}\|\partial_y\varphi\|_{L^2(\partial\Omega)}+ |\nu k|^{\f12}\|\partial_y\varphi_1\|_{L^2(\partial\Omega)}+ \nu^{\f16}|k|^{-\f23}(1+(\nu k^2)^{\f16})\|F_1\|_{L^2}\\&+ \max(1-|\lambda|, \nu^{\f13}|k|^{-\f13})\big(|\nu/k|^{-\f13}\|{F}_2\|_{H^{-1}}+ \nu^{\f16}|k|^{-\f23}\|\nabla F_1\|_{L^2}\big)\Big)\\
  \leq& C\Big(|\nu k|^{\f12}\|\partial_y\varphi\|_{L^2(\partial\Omega)}+ \nu^{\f12}|k|^{-1}\|\nabla F_1\|_{L^2}+ \nu^{\f16}|k|^{-\f23}\|F_1\|_{L^2}\\&+ \max(1-|\lambda|, \nu^{\f13}|k|^{-\f13})\big(|\nu/k|^{-\f13}\|{F}_2\|_{H^{-1}}+ \nu^{\f16}|k|^{-\f23}\|\nabla F_1\|_{L^2}\big)\Big)\\
  \leq& C\Big(|\nu k|^{\f12}\|\partial_y\varphi\|_{L^2(\partial\Omega)}+ \nu^{\f16}|k|^{-\f23}\|F_1\|_{L^2}\\&+ \max(1-|\lambda|, \nu^{\f13}|k|^{-\f13})\big(|\nu/k|^{-\f13}\|{F}_2\|_{H^{-1}}+ \nu^{\f16}|k|^{-\f23}\|\nabla F_1\|_{L^2}\big)\Big).
\end{align*}
This proves the first inequality of the proposition.

It follows from \eqref{20}, \eqref{23}, \eqref{26} and \eqref{29} with $\gamma=1/6$ that
\begin{align*}
&\nu^{\f16}|k|^{\f43}\|\nabla\varphi\|_{L^2} \leq \nu^{\f16}|k|^{\f43}\|\nabla\varphi_2\|_{L^2} +\nu^{\f16}|k|^{\f43}\|\nabla\varphi_1\|_{L^2}\\
    &\leq C\Big(\nu^{\f23}|k|^{\f13}\big(\|\nabla w_1\|_{L^2}+|\nu/k|^{-\f13}\|w_1\|_{L^2}\big)+|\nu/k|^{-\f13}\|{F}_2\|_{H^{-1}}+ \nu^{\f13}|k|^{\f76}\|\partial_y\varphi_2\|_{L^2(\partial\Omega)}\Big)\\
    &\qquad+C\nu^{\f16}|k|^{-\f23}\|\nabla F_1\|_{L^2}\\
 &\leq C\Big(\nu^{\f16}|k|^{-\f23}\|\nabla F_1\|_{L^2}+|\nu/k|^{-\f13}\|{F}_2\|_{H^{-1}}+ \nu^{\f13}|k|^{\f76}\|\partial_y\varphi\|_{L^2(\partial\Omega)} +\nu^{\f13}|k|^{\f76}\|\partial_y\varphi_1\|_{L^2(\partial\Omega)}\Big)\\
    &\leq C\Big(\nu^{\f16}|k|^{-\f23}(1+(\nu k^2)^{\f16})\|\nabla F_1\|_{L^2} +\nu^{\f16}|k|^{\f13}\|F_1\|_{L^2}+|\nu/k|^{-\f13}\|{F}_2\|_{H^{-1}}+ \nu^{\f13}|k|^{\f76}\|\partial_y\varphi\|_{L^2(\partial\Omega)} \Big)\\
    &\leq C\Big(\nu^{\f16}|k|^{-\f23}\|\nabla F_1\|_{L^2}+|\nu/k|^{-\f13}\|{F}_2\|_{H^{-1}}+ \nu^{\f13}|k|^{\f76}\|\partial_y\varphi\|_{L^2(\partial\Omega)} \Big),
\end{align*}
which gives the second inequality of the proposition.

By \eqref{21} and \eqref{23}, we have
\begin{align*}
    &\|w\|_{L^2}\leq \|w_2\|_{L^2}+\|w_1\|_{L^2}\\
   &\leq C\Big(\|(|k(y-\lambda)/\nu|^{\f14}+|k/\nu|^{\f16})\partial_y\varphi_2\|_{L^2(\partial\Omega)}
    +|\nu/k|^{\f16}\|\partial_y\partial_z\varphi_2\|_{L^2(\partial\Omega)}\\
    &\qquad+\nu^{\f14} |k|^{-\f{1}{2}}(\|\nabla w_1\|_{L^2}+|\nu/k|^{-\f13}\|w_1\|_{L^2})+\nu^{-\f34} |k|^{-\f{1}{2}}\|{F}_2\|_{H^{-1}}+\nu^{-\f16}|k|^{-\f43}\|\nabla F_1\|_{L^2}\Big)\\
    &\leq C\Big(\|(|k(y-\lambda)/\nu|^{\f14}+|k/\nu|^{\f16})\partial_y\varphi\|_{L^2(\partial\Omega)}
    +|\nu/k|^{\f16}\|\partial_y\partial_z\varphi\|_{L^2(\partial\Omega)}\\
    &\qquad+\|(|k(y-\lambda)/\nu|^{\f14}+|k/\nu|^{\f16})\partial_y\varphi_1\|_{L^2(\partial\Omega)}
    +|\nu/k|^{\f16}\|\partial_y\partial_z\varphi_1\|_{L^2(\partial\Omega)}\\
    &\qquad\quad+\nu^{-\f14} |k|^{-\f{3}{2}}(1+(\nu k^2)^{\f{1}{12}})\|\nabla F_1\|_{L^2}+\nu^{-\f34} |k|^{-\f{1}{2}}\|{F}_2\|_{H^{-1}}\Big),\end{align*}
while  by \eqref{27},
\beno
|\nu/k|^{\f16}\|\partial_y\partial_z\varphi_1\|_{L^2(\partial\Omega)}\leq  C\nu^{-\f14} |k|^{-\f{3}{2}}(\nu k^2)^{\f{1}{12}}\|\nabla F_1\|_{L^2},
\eeno
and  by \eqref{30} with $\gamma_1=1/4$,
\begin{align*}
   & \|(|k(y-\lambda)/\nu|^{\f14}+|k/\nu|^{\f16})\partial_y\varphi_1\|_{L^2(\partial\Omega)} \leq C\nu^{-\f14}|k|^{-\f32}\|\nabla F_1\|_{L^2}.
\end{align*}
Thus, we conclude that
\begin{align*}
   \|w\|_{L^2}\leq & C\Big(\|(|k(y-\lambda)/\nu|^{\f14}+|k/\nu|^{\f16})\partial_y\varphi\|_{L^2(\partial\Omega)}
    +|\nu/k|^{\f16}\|\partial_y\partial_z\varphi\|_{L^2(\partial\Omega)}\\
    &+\nu^{-\f34} |k|^{-\f{1}{2}}\|{F}_2\|_{H^{-1}}
    +\nu^{-\f14} |k|^{-\f{3}{2}}\|\nabla F_1\|_{L^2}\Big),
\end{align*}
which gives  the third inequality of the proposition.

Using the interpolation $\|w_1\|_{L^2_{x,z}L^\infty_y}\le C\|w_1\|_{L^2}^\f12\|\pa_yw_1\|_{L^2}^\f12$ and \eqref{23},  we get
\begin{align*}
   \|w\|_{L^2(\partial\Omega)}\leq& \|w_1\|_{L^2(\partial\Omega)}+ \|w_2\|_{L^2(\partial\Omega)}\leq \|w_1\|_{L^2_{x,z}L^\infty_y}+ \|w_2\|_{L^2(\partial\Omega)}\\
   \leq &C|\nu/k|^{\f16}\big(\|\partial_yw_1\|_{L^2}+|\nu/k|^{-\f13}\|w_1\|_{L^2}\big)+ \|w_2\|_{L^2(\partial\Omega)}\\
   \leq& C\nu^{-\f13}|k|^{-\f76}\|\nabla F_1\|_{L^2} +\|w_2\|_{L^2(\partial\Omega)}.
\end{align*}
Then by \eqref{22}, \eqref{23}, \eqref{27} and \eqref{30} with $\gamma_1=1/2$, we get
\begin{align*}
   \|w\|_{L^2(\partial\Omega)}\leq& C\nu^{-\f13}|k|^{-\f76}\|\nabla F_1\|_{L^2} +\|w_2\|_{L^2(\partial\Omega)} \\ \leq &  C\nu^{-\f13}|k|^{-\f76}\|\nabla F_1\|_{L^2}+ C\Big(\|(|k(y-\lambda)/\nu|^{\f12}+|k/\nu|^{\f13}) \partial_y\varphi_2\|_{L^2(\partial\Omega)}\\
    &\quad+\|\partial_y\partial_z\varphi_2\|_{L^2(\partial\Omega)} +|k|^{-\f{1}{2}}(\|\nabla w_1\|_{L^2}+|\nu/k|^{-\f13}\|w_1\|_{L^2})+\nu^{-1} |k|^{-\f{1}{2}}\|{F}_2\|_{H^{-1}}\Big)\\
    \leq &  C\nu^{-\f12}|k|^{-\f32}(1+(\nu k^2)^{\f16})\|\nabla F_1\|_{L^2}+ C\Big(\|(|k(y-\lambda)/\nu|^{\f12}+|k/\nu|^{\f13}) \partial_y\varphi\|_{L^2(\partial\Omega)}\\
    &+\|\partial_y\partial_z\varphi\|_{L^2(\partial\Omega)}+\nu^{-1} |k|^{-\f{1}{2}}\|{F}_2\|_{H^{-1}}\Big)\\
    &+C\Big(\|(|k(y-\lambda)/\nu|^{\f12}+|k/\nu|^{\f13}) \partial_y\varphi_1\|_{L^2(\partial\Omega)}
    +\|\partial_y\partial_z\varphi_1\|_{L^2(\partial\Omega)}\Big)\\
    \leq &C\nu^{-\f12}|k|^{-\f32}(1+(\nu k^2)^{\f16})\|\nabla F_1\|_{L^2}+ C\Big(\|(|k(y-\lambda)/\nu|^{\f12}+|k/\nu|^{\f13}) \partial_y\varphi\|_{L^2(\partial\Omega)}\\
    &\quad+\|\partial_y\partial_z\varphi\|_{L^2(\partial\Omega)}+\nu^{-1} |k|^{-\f{1}{2}}\|{F}_2\|_{H^{-1}}\Big)\\
    \leq &C\nu^{-\f12}|k|^{-\f32}\|\nabla F_1\|_{L^2}+ C\Big(\|(|k(y-\lambda)/\nu|^{\f12}+|k/\nu|^{\f13}) \partial_y\varphi\|_{L^2(\partial\Omega)}\\
    &\quad+\|\partial_y\partial_z\varphi\|_{L^2(\partial\Omega)}+\nu^{-1} |k|^{-\f{1}{2}}\|{F}_2\|_{H^{-1}}\Big).
\end{align*}
This proves the last inequality of the proposition.
\end{proof}

\section{Resolvent estimates when $V=y$}

In this section, we consider the Orr-Sommerfeld equation when $V=y$:
\begin{align}\label{eq:OS-Nav-y}
  \left\{
  \begin{aligned}
   &-\nu(\partial^2_y-\eta^2)w+ik(y-\lambda)w-a(\nu k^2)^{1/3}w=F,\\
   &w|_{y=\pm1}=0.
   \end{aligned}\right.
\end{align}
Here \eqref{eq:OS-Nav-y} is slightly different from one considered in \cite{CLWZ} with $k^2$ instead of $\eta^2$. We will use the results from last section to establish the resolvent estimates of \eqref{eq:OS-Nav-y}.
\smallskip

In this section, we take $\lambda\in \mathbb{R}, a\in[0,\eps_1]$. Let $u=({\partial_y\varphi,-i\eta\varphi})$ with $\varphi$ solving  $(\partial_y^2-\eta^2)\varphi=w,\ \varphi|_{y=\pm 1}=0$.

\smallskip

Notice that if $V=y$ and $(w(y),F(y))$ solves \eqref{eq:OS-Nav-y}, then $(w(y)e^{i(kx+\ell z)},\ F(y)e^{i(kx+\ell z)})$ solves \eqref{eq:OS-Nav}.
Thus, it follows from Proposition \ref{prop:res-nav-s1} that

\begin{Proposition}\label{prop:res-nav-y1}
Let $w\in H^2(I)$ be a solution of \eqref{eq:OS-Nav-y}  with $F\in L^2(I)$. Then it holds that
\begin{align*}
&{\nu^{\f16}|k|^{\f56}\|w\|_{L^1}}+\nu^{\f23}|k|^{\f13}\|w'\|_{L^2}+(\nu k^2)^{\f13}\|w\|_{L^2}+|k|\|(y-\lambda)w\|_{L^2}\leq C\|{F}\|_{L^2},\\&\nu\|w'\|_{L^2}+\nu^{\f23}|k|^{\f13}\|w\|_{L^2} \leq C\|{F}\|_{H^{-1}}.
\end{align*}
\end{Proposition}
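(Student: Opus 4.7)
The proof plan is to deduce Proposition \ref{prop:res-nav-y1} directly from the 3D result Proposition \ref{prop:res-nav-s1} via the Fourier-mode reduction noted immediately above the statement. First I would fix $\ell\in\Z$ with $\ell^2=\eta^2-k^2$ (well-defined since $\eta=(k^2+\ell^2)^{1/2}$ by the paper's convention), and set $W(x,y,z)=w(y)e^{i(kx+\ell z)}$, $\mathcal{F}(x,y,z)=F(y)e^{i(kx+\ell z)}$. Then $W\in H^2(\Omega)$ satisfies $W|_{y=\pm1}=0$, $\partial_xW=ikW$, and \eqref{eq:OS-Nav} with the choice $V(y,z)\equiv y$. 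This trivial $V$ satisfies the standing assumption \eqref{ass:V} with $\varepsilon_0=0$, so Proposition \ref{prop:res-nav-s1} applies.

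The next step is to translate norms. Integrating in $(x,z)$ gives $\|W\|_{L^p(\Omega)}=(2\pi)^{2/p}\|w\|_{L^p(I)}$, $\|\mathcal{F}\|_{L^2(\Omega)}=(2\pi)\|F\|_{L^2(I)}$, $\|W\|_{L^2_{x,z}L^1_y}=(2\pi)\|w\|_{L^1(I)}$, and $\|\nabla W\|_{L^2(\Omega)}^2=(2\pi)^2\bigl(\|w'\|_{L^2(I)}^2+\eta^2\|w\|_{L^2(I)}^2\bigr)\geq(2\pi)^2\|w'\|_{L^2(I)}^2$. For the dual norm, testing against single-mode functions $\Phi=\phi(y)e^{i(kx+\ell z)}$ yields $\|\mathcal{F}\|_{H^{-1}(\Omega)}=(2\pi)\|F\|_{H^{-1}_\eta(I)}$, where $H^{-1}_\eta(I)$ denotes the dual of $H^1_0(I)$ equipped with the weighted norm $(\|\phi'\|_{L^2}^2+(1+\eta^2)\|\phi\|_{L^2}^2)^{1/2}$. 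Since this norm dominates the standard $H^1_0(I)$ norm, one has $\|F\|_{H^{-1}_\eta(I)}\leq\|F\|_{H^{-1}(I)}$, so that 3D $H^{-1}$ control is automatically implied by 1D $H^{-1}$ control of $F$.

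Finally I would read off the two displayed inequalities. The first and fourth estimates of Proposition \ref{prop:res-nav-s1} specialized to $W$, namely $\nu^{2/3}|k|^{1/3}\|\nabla W\|_{L^2}+(\nu k^2)^{1/3}\|W\|_{L^2}+|k|\|(V-\lambda)W\|_{L^2}\leq C\|\mathcal{F}\|_{L^2}$ and $\nu^{1/6}|k|^{5/6}\|W\|_{L^2_{x,z}L^1_y}\leq C\|\mathcal{F}\|_{L^2}$, yield the first inequality of Proposition \ref{prop:res-nav-y1} after dropping the $(2\pi)$ factors and using $\|\nabla W\|_{L^2}\geq(2\pi)\|w'\|_{L^2(I)}$. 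For the $H^{-1}$ inequality, the third estimate of Proposition \ref{prop:res-nav-s1} gives $\nu^{2/3}|k|^{1/3}\|w'\|_{L^2}+(\nu k^2)^{1/3}\|w\|_{L^2}\leq C\nu^{-1/3}|k|^{1/3}\|F\|_{H^{-1}(I)}$; multiplying both sides by the dimensionless factor $\nu^{1/3}|k|^{-1/3}$ and using $(\nu k^2)^{1/3}\cdot\nu^{1/3}|k|^{-1/3}=\nu^{2/3}|k|^{1/3}$ and $\nu^{2/3}|k|^{1/3}\cdot\nu^{1/3}|k|^{-1/3}=\nu$ produces exactly $\nu\|w'\|_{L^2}+\nu^{2/3}|k|^{1/3}\|w\|_{L^2}\leq C\|F\|_{H^{-1}(I)}$.

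There is no substantive obstacle here; the only points requiring care are the identification of the 3D $H^{-1}(\Omega)$ norm with a weighted 1D norm (and its domination by $\|F\|_{H^{-1}(I)}$) and the bookkeeping of powers of $\nu$ and $|k|$ in the final rescaling.
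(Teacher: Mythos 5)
Your proposal is correct and is exactly the route the paper takes: the text immediately preceding Proposition \ref{prop:res-nav-y1} states that $(w(y)e^{i(kx+\ell z)}, F(y)e^{i(kx+\ell z)})$ solves \eqref{eq:OS-Nav} with $V=y$, and the proposition is then read off from Proposition \ref{prop:res-nav-s1}. Your norm bookkeeping (including the observation that the 3D $H^{-1}(\Omega)$ norm of the single-mode lift is dominated by $\|F\|_{H^{-1}(I)}$, and the rescaling by $\nu^{1/3}|k|^{-1/3}$ for the second estimate) is accurate and simply fills in the details the paper leaves implicit.
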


\begin{Proposition}\label{prop:res-y-uL2}
Let $w\in H^2(I)$ be a solution of \eqref{eq:OS-Nav-y}  with $F=ikf_1+\partial_yf_2+i\ell f_3\in L^2(I)$. If $\nu\eta^3\leq|k|$, then it holds that
\begin{align*}
 \nu^{\f16}|k|^{\f56}|\eta|^{\f12}\|u\|_{L^2}\leq C\|{F}\|_{L^2},\quad (\nu |k\eta|)^{\f12}\|u\|_{L^2} \leq C\|(f_1,f_2,f_3)\|_{L^2} .
\end{align*}
\end{Proposition}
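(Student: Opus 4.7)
The plan is to build on the identity $\|u\|_{L^2}^2=-\int_I w\bar\varphi\,dy$, obtained by pairing $(\partial_y^2-\eta^2)\varphi=w$ with $\bar\varphi$ and integrating by parts using $\varphi(\pm1)=0$, together with the Gagliardo-Nirenberg bound $\|\varphi\|_{L^\infty(I)}\le \eta^{-1/2}\|u\|_{L^2}$ (coming from $\|\varphi\|_{L^\infty}^2\le 2\|\varphi\|_{L^2}\|\partial_y\varphi\|_{L^2}$ and AM-GM, valid since $\varphi(\pm 1)=0$). The first inequality then falls out almost immediately: H\"older gives $\|u\|^2\le \|w\|_{L^1}\|\varphi\|_{L^\infty}\le \eta^{-1/2}\|w\|_{L^1}\|u\|$, hence $\eta^{1/2}\|u\|\le \|w\|_{L^1}$, and the $L^1$ resolvent bound $\nu^{1/6}|k|^{5/6}\|w\|_{L^1}\le C\|F\|_{L^2}$ from Proposition~\ref{prop:res-nav-y1} closes the argument.

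For the second inequality I would use an adjoint duality. Introduce $\psi\in H^2(I)\cap H^1_0(I)$ solving
\[
-\nu(\partial_y^2-\eta^2)\psi-ik(y-\lambda)\psi-a(\nu k^2)^{1/3}\psi=\varphi,\qquad \psi(\pm 1)=0,
\]
which is uniquely solvable because $\mathcal{A}^*$ has the same form as $\mathcal{A}$ with $k\mapsto-k$, so Proposition~\ref{prop:res-nav-y1} still furnishes the required a priori bounds. Then
\[
\|u\|_{L^2}^2=-\int_I w\bar\varphi\,dy=-\int_I F\bar\psi\,dy=-ik\int_I f_1\bar\psi\,dy+\int_I f_2\,\partial_y\bar\psi\,dy-i\ell\int_I f_3\bar\psi\,dy,
\]
where the integration by parts on the $f_2$ piece uses $\psi(\pm 1)=0$. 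Cauchy--Schwarz applied to the triple $(f_1,f_2,f_3)$ yields
\[
\|u\|_{L^2}^2\le \|(f_1,f_2,f_3)\|_{L^2}\sqrt{\|\partial_y\psi\|_{L^2}^2+\eta^2\|\psi\|_{L^2}^2},
\]
so the proof is reduced to establishing the sharp adjoint estimate $\|\partial_y\psi\|_{L^2}^2+\eta^2\|\psi\|_{L^2}^2\le C(\nu|k|\eta)^{-1}\|u\|_{L^2}^2$ under the hypothesis $\nu\eta^3\le|k|$. Applying Proposition~\ref{prop:res-nav-y1} to $\psi$ with forcing $\varphi$ and using $\|\varphi\|_{L^2}\le\eta^{-1}\|u\|_{L^2}$ supplies $\nu^{2/3}|k|^{1/3}\|\partial_y\psi\|+(\nu k^2)^{1/3}\|\psi\|\le C\eta^{-1}\|u\|_{L^2}$, which provides the bulk of the required control, and the condition $\nu\eta^3\le|k|$ ensures in particular that $\eta\|\psi\|_{L^2}\le C(\nu|k|\eta)^{-1/2}\|u\|_{L^2}$.

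The principal obstacle I anticipate is that the direct combination of these $L^2$-bounds only produces the estimate $\nu^{2/3}|k|^{1/3}\eta\|u\|\le C\|(f_1,f_2,f_3)\|_{L^2}$, which falls short of the target $(\nu|k|\eta)^{1/2}\|u\|\le C\|(f_1,f_2,f_3)\|_{L^2}$ precisely under the assumption $\nu\eta^3\le|k|$, by a factor $(\nu\eta^3/|k|)^{-1/6}\ge 1$ on $\|\partial_y\psi\|$. To close this gap I would fall back on the weighted energy method used in the proof of Proposition~\ref{prop:res-nav-s1}, applied to the adjoint equation with the resolvent multiplier $\chi_1(y)=(y-\lambda-i\delta)^{-1}$ and $\delta=(\nu/|k|)^{1/3}$ (note that $\delta\eta\le 1$ by hypothesis), or alternatively interpolate between the $L^1$ estimate $\nu^{1/6}|k|^{5/6}\|\psi\|_{L^1}\le C\|\varphi\|_{L^2}$ and the $L^2$ estimates, exploiting $\nu\eta^3\le|k|$ to match the viscous scale with $\eta^{-1}$ and recover the missing factor in the bound on $\sqrt{\|\partial_y\psi\|^2+\eta^2\|\psi\|^2}$.
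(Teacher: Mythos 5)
Your proof of the first inequality is correct and is exactly the paper's argument: $\|u\|_{L^2}^2=-\langle w,\varphi\rangle\le\|w\|_{L^1}\|\varphi\|_{L^\infty}\le C\eta^{-1/2}\|w\|_{L^1}\|u\|_{L^2}$ combined with the $L^1$ bound $\nu^{1/6}|k|^{5/6}\|w\|_{L^1}\le C\|F\|_{L^2}$ from Proposition \ref{prop:res-nav-y1} (this is Lemma \ref{lem:ell-w} in the paper). For the second inequality, however, your argument stops exactly at the step that carries all the content. The duality reduction is sound, and the adjoint bound you need, $\big(\|\partial_y\psi\|_{L^2}^2+\eta^2\|\psi\|_{L^2}^2\big)^{1/2}\le C(\nu|k|\eta)^{-1/2}\|u\|_{L^2}$, is true — it is essentially Proposition \ref{prop:res-damping} applied to the adjoint — but you cannot invoke that proposition here: its proof goes through Proposition \ref{prop:res-nav-y-uL2}, which in turn rests on the very estimate you are proving, so that route is circular. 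Of the two remedies you sketch, the interpolation one does not work as stated (interpolating $\|\psi\|_{L^1}$ against the $L^2$ bounds controls neither $\|\partial_y\psi\|_{L^2}$ nor closes the $(|k|/(\nu\eta^3))^{1/6}$ deficit you correctly computed), and the $\chi_1$-multiplier one is only workable if you make one further observation that is absent from your write-up: the forcing of the adjoint problem is $\varphi$, which vanishes at $y=\pm1$ and satisfies $\|\chi_1\varphi\|_{L^2}\le\|\chi_1\|_{L^2}\|\varphi\|_{L^\infty}\le C\delta^{-1/2}\eta^{-1/2}\|u\|_{L^2}$; it is this extra factor $\eta^{-1/2}$ (rather than the generic $|k|^{-1/2}$ appearing in Step~2 of Proposition \ref{prop:res-nav-s1}), together with the vanishing of the boundary terms in the $\|\Delta\psi\|$ estimate, that recovers the missing $(\eta/|k|)^{1/2}$. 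Without that, every version of the energy/Young computation you set up lands on $\nu^{2/3}|k|^{1/3}\eta\|u\|$ or $\nu^{-1/2}|k|^{-1}\|u\|$ and falls short.

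The paper closes this step differently and more economically: it never solves the adjoint equation. Instead it tests the \emph{original} equation against $\chi_1\varphi$, which lies in $H^1_0$ because $\varphi(\pm1)=0$. Concretely, the weak-type estimate of Proposition \ref{prop:res-weak-y} (the 1D specialization of Proposition \ref{prop:res-weak}) gives
\begin{align*}
|\langle w,\varphi\rangle|\leq C|k|^{-1}\|(f_1,f_2,f_3)\|_{L^2}\Big(\delta^{-\f32}\|\varphi\|_{L^{\infty}}+\delta^{-1}\|(\partial_y \varphi,\eta \varphi)\|_{L^2}\Big),
\end{align*}
and with $\|\varphi\|_{L^\infty}\le C\eta^{-1/2}\|u\|_{L^2}$ and $\eta\le\delta^{-1}$ this yields $\|u\|_{L^2}\le C|k|^{-1}\delta^{-3/2}\eta^{-1/2}\|(f_1,f_2,f_3)\|_{L^2}=C(\nu|k|\eta)^{-1/2}\|(f_1,f_2,f_3)\|_{L^2}$. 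The weak-type estimate itself rests on the identity $|\langle w(y-\widetilde{\lambda}),f_0\rangle|\le|k|^{-1}\|F\|_{H^{-1}}\|f_0\|_{H^1}$ for $f_0\in H^1_0$ — i.e., the duality is implemented by testing, not by inverting the adjoint — plus the bounds $\|\chi_1\|_{L^2}\le C\delta^{-1/2}$, $\|\chi_1'\|_{L^2}\le C\delta^{-3/2}$. If you want to salvage your adjoint-equation framework you must reprove this critical-layer pairing from scratch for $\psi$; as written, the second inequality is not established.
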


\begin{proof} It follows from Lemma \ref{lem:ell-w} and Proposition $\ref{prop:res-nav-y1}$ that
\begin{align*}
 &\nu^{\f16}|k|^{\f56}|\eta|^{\f12}\|u\|_{L^2}\leq C\nu^{\f16}|k|^{\f56}\|w\|_{L^1}\leq C\|{F}\|_{L^2},
 \end{align*}
 which gives the first inequality.

 Thanks to $\nu |k|^2\leq \nu \eta^3/|k|\leq1,\ \eta\le \delta^{-1}$. By Proposition \ref{prop:res-weak-y} and
 Lemma \ref{lem:ell-w}, we get
 \begin{align*}
 \|u\|_{L^2}^2=\langle-w,\varphi\rangle\leq& C|k|^{-1}\|(f_1,f_2,f_3)\|_{L^2}\big(\delta^{-\f32}\|\varphi\|_{L^{\infty}}+\delta^{-1}\|(\partial_y \varphi,\eta \varphi)\|_{L^2}\big)\\ \leq& C|k|^{-1}\|(f_1,f_2,f_3)\|_{L^2}\big(\delta^{-\f32}|\eta|^{-\f12}+\delta^{-1}\big)\|u\|_{L^2}\\
 \leq& C|k|^{-1}\|(f_1,f_2,f_3)\|_{L^2}\delta^{-\f32}|\eta|^{-\f12}\|u\|_{L^2},
 \end{align*}
which gives
\beno
(\nu |k\eta|)^{\f12}\|u\|_{L^2}=|k|\delta^{\f32}|\eta|^{\f12}\|u\|_{L^2} \leq C\|(f_1,f_2,f_3)\|_{L^2}.
\eeno
\end{proof}

\begin{Proposition}\label{prop:res-nav-y-uL2}
Let $w\in H^2(I)$ be a solution of \eqref{eq:OS-Nav-y} with $F=F_1+\partial_yF_2$. If  $\nu\eta^3\leq|k|$,  then it holds that
\begin{align*}
&|k\eta|\|u\|_{L^2}+\nu^{\f16} \eta^{\f12}|k|^{\f56}\|w\|_{L^2}+(\nu |k\eta|)^{\f12}\|w'\|_{L^2}\leq C\big(\nu^{-\f16}|k|^{\f16}\eta^{\f12}\|{F}_1\|_{L^2}+\nu^{-\f12}|k\eta|^{\f12}\|{F}_2\|_{L^2}\big).
\end{align*}
\end{Proposition}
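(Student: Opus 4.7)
\textbf{Proof plan for Proposition \ref{prop:res-nav-y-uL2}.} The strategy is a straightforward reduction to the two preceding propositions via the linearity of the Orr--Sommerfeld operator. Specifically, I would split $w=w^{(1)}+w^{(2)}$, where $w^{(1)}, w^{(2)}\in H^{2}(I)\cap H^{1}_{0}(I)$ solve
\begin{align*}
-\nu(\partial_{y}^{2}-\eta^{2})w^{(1)}+ik(y-\lambda)w^{(1)}-a(\nu k^{2})^{1/3}w^{(1)} &= F_{1},\\
-\nu(\partial_{y}^{2}-\eta^{2})w^{(2)}+ik(y-\lambda)w^{(2)}-a(\nu k^{2})^{1/3}w^{(2)} &= \partial_{y}F_{2},
\end{align*}
with the corresponding streamfunction decomposition $u=u^{(1)}+u^{(2)}$. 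Each piece will be handled by the natural choice of right-hand side regularity.

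For $w^{(1)}$, the forcing is $L^{2}$, so Proposition \ref{prop:res-nav-y1} yields directly
\[
\nu^{2/3}|k|^{1/3}\|(w^{(1)})'\|_{L^{2}}+(\nu k^{2})^{1/3}\|w^{(1)}\|_{L^{2}}\leq C\|F_{1}\|_{L^{2}},
\]
which after multiplying through by $|\eta/k|^{1/2}$ and $\nu^{-1/6}|k|^{-1/6}$ respectively gives exactly the $F_{1}$-contributions to the $\|w'\|_{L^{2}}$ and $\|w\|_{L^{2}}$ terms on the left-hand side. For the $u^{(1)}$ bound I invoke the first inequality of Proposition \ref{prop:res-y-uL2}, namely $\nu^{1/6}|k|^{5/6}|\eta|^{1/2}\|u^{(1)}\|_{L^{2}}\leq C\|F_{1}\|_{L^{2}}$, and multiply by $\nu^{-1/6}|k|^{1/6}$ to obtain the $|k\eta|\|u^{(1)}\|_{L^{2}}$ contribution.

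For $w^{(2)}$, I treat the forcing as an $H^{-1}$ distribution, using the elementary bound $\|\partial_{y}F_{2}\|_{H^{-1}}\leq C\|F_{2}\|_{L^{2}}$. The second inequality of Proposition \ref{prop:res-nav-y1} then gives $\nu\|(w^{(2)})'\|_{L^{2}}+\nu^{2/3}|k|^{1/3}\|w^{(2)}\|_{L^{2}}\leq C\|F_{2}\|_{L^{2}}$, and rescaling yields the two $F_{2}$-contributions. For $u^{(2)}$ I use the second inequality of Proposition \ref{prop:res-y-uL2} with the choice $f_{1}=f_{3}=0$ and $f_{2}=F_{2}$, giving $(\nu|k\eta|)^{1/2}\|u^{(2)}\|_{L^{2}}\leq C\|F_{2}\|_{L^{2}}$, and multiplying by $(|k\eta|/\nu)^{1/2}$ produces $|k\eta|\|u^{(2)}\|_{L^{2}}\leq C\nu^{-1/2}|k\eta|^{1/2}\|F_{2}\|_{L^{2}}$.

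Adding the two sets of estimates closes the proof. There is no genuine obstacle: the work is purely bookkeeping of exponents, and the hypothesis $\nu\eta^{3}\leq|k|$ is needed only to invoke Proposition \ref{prop:res-y-uL2}. The only subtlety worth checking carefully is that the decomposition $w=w^{(1)}+w^{(2)}$ is well-defined, which follows from the unique solvability of \eqref{eq:OS-Nav-y} at $\lambda\in\mathbb{R}$ and $a\in[0,\varepsilon_{1}]$ (implicit in the resolvent estimates already established).
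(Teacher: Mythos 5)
Your proposal is correct and is essentially identical to the paper's own proof, which likewise decomposes $w=w_1+w_2$ according to the forcings $F_1$ and $\partial_yF_2$ and then quotes Proposition \ref{prop:res-nav-y1} (first inequality for the $L^2$ forcing, second for the $H^{-1}$ forcing via $\|\partial_yF_2\|_{H^{-1}}\leq\|F_2\|_{L^2}$) together with the two inequalities of Proposition \ref{prop:res-y-uL2}. The only blemish is that the scaling factors you name in passing are slightly off (e.g.\ the correct multiplier taking $\nu^{2/3}|k|^{1/3}\|w'\|_{L^2}\leq C\|F_1\|_{L^2}$ to the target is $\nu^{-1/6}|k|^{1/6}\eta^{1/2}$, not $|\eta/k|^{1/2}$), but the exponent bookkeeping does close exactly as you assert.
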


\begin{proof}
We decompose $w=w_1+w_2$, where $w_1, w_2$ solves
  \begin{align*}\left\{
  \begin{aligned}
   &-\nu(\partial_y^2-\eta^2)w_1+ik(y-\lambda)w_1-a(\nu k^2)^{\f13}w_1=F_1,\\
   &-\nu(\partial_y^2-\eta^2)w_2+ik(y-\lambda)w_2-a(\nu k^2)^{\f13}w_2=\partial_yF_2,\\
   &w_1|_{y=\pm1}=w_2|_{y=\pm1}=0.\end{aligned}
  \right.
  \end{align*}
Then the proposition follows from Proposition \ref{prop:res-nav-y1} and Proposition \ref{prop:res-y-uL2}.
\end{proof}
\smallskip

Applying Proposition \ref{prop:res-weak}  to the case when
\beno
&&V=y,\quad w=w_0(y)e^{i(kx+\ell z)},\quad f=f_0(y)e^{i(kx+\ell z)},\\
&&F=F_0(y)e^{i(kx+\ell z)}=\text{div}\big[(f_1(y),f_2(y),f_3(y))e^{i(kx+\ell z)})\big],
\eeno
we can deduce that

\begin{Proposition}\label{prop:res-weak-y}
Let $w\in H^2(I)$ be a solution of \eqref{eq:OS-Nav-y} with $F=ikf_1+\partial_yf_2+i\ell f_3\in L^2(I)$ and $\nu k^2\leq 1$.Then it holds that for $f\in H_0^1(I)$,
\begin{align*}
 |\langle w,f\rangle|\leq & C|k|^{-1}\|(f_1,f_2,f_3)\|_{L^2}\Big(\big(\delta^{-\f32}+\delta^{-\f12}\eta\big)\|f\|_{L^{\infty}}+\delta^{-1}\|(\partial_y f,\eta f)\|_{L^2}\Big).
  \end{align*}
\end{Proposition}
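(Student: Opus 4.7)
The plan is to adapt the weighted-test-function argument used in the 3D weak-type estimate Proposition \ref{prop:res-weak} directly to a single Fourier mode, with one small twist needed to recover the extra term $\delta^{-1/2}\eta\|f\|_{L^\infty}$, which is invisible to a naive 3D-to-1D reduction because the $k,\ell$ factors there would get absorbed into $\|F\|_{H^{-1}(\Omega)}$. The starting move is to rewrite the equation as $-\nu(\partial_y^2-\eta^2)w + ik(y-\widetilde\lambda)w = F$ with $\widetilde\lambda=\lambda-ia\delta$, and pair it against $\overline{\chi_1 f}$, where $\chi_1(y)=(y-\lambda-i\delta)^{-1}$. Because $f$, and hence $\chi_1 f$, vanishes at $y=\pm1$, integrating the viscous term by parts produces no boundary contributions. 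The elementary identity
\[
(y-\widetilde\lambda)\,\overline{\chi_1}\;=\;1\;-\;i(1-a)\delta\,\overline{\chi_1},
\]
obtained by writing $y-\lambda+ia\delta=(y-\lambda+i\delta)-i(1-a)\delta$, then isolates $ik\langle w,f\rangle$ from the convection term and yields
\[
ik\langle w,f\rangle\;=\;\langle F,\chi_1 f\rangle\;-\;\nu\langle\partial_y w,\partial_y(\chi_1 f)\rangle\;-\;\nu\eta^2\langle w,\chi_1 f\rangle\;-\;k(1-a)\delta\,\langle w,\chi_1 f\rangle.
\]

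I would bound each of the four right-hand-side pieces using the elementary $\chi_1$ estimates $\|\chi_1\|_{L^\infty}\le C\delta^{-1}$, $\|\chi_1\|_{L^2}\le C\delta^{-1/2}$, $\|\partial_y\chi_1\|_{L^2}\le C\delta^{-3/2}$ (the 1D analogue of Lemma \ref{lem:chi1}), which immediately give $\|\chi_1 f\|_{L^2}\le C\delta^{-1/2}\|f\|_{L^\infty}$ and $\|\partial_y(\chi_1 f)\|_{L^2}\le C\bigl(\delta^{-3/2}\|f\|_{L^\infty}+\delta^{-1}\|\partial_y f\|_{L^2}\bigr)$. The key first term is handled by expanding $F=ikf_1+\partial_y f_2+i\ell f_3$ and integrating the $f_2$ piece by parts (using $\chi_1 f|_{y=\pm 1}=0$), yielding
\[
|\langle F,\chi_1 f\rangle|\;\le\;C\|(f_1,f_2,f_3)\|_{L^2}\bigl(\eta\,\|\chi_1 f\|_{L^2}+\|\partial_y(\chi_1 f)\|_{L^2}\bigr),
\]
in which the $k,\ell$ factors produce the desired new contribution $C\eta\delta^{-1/2}\|(f_1,f_2,f_3)\|_{L^2}\|f\|_{L^\infty}$. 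For the remaining three terms I would invoke the resolvent bounds $\nu\|\partial_y w\|_{L^2},\ \nu\eta\|w\|_{L^2},\ |k|\delta\,\|w\|_{L^2}\le C\|(f_1,f_2,f_3)\|_{L^2}$, all of which follow from Proposition \ref{prop:res-nav-s1} applied in $H^{-1}$ to the plane-wave extension $F(y)e^{i(kx+\ell z)}=\mathrm{div}\bigl((f_1,f_2,f_3)e^{i(kx+\ell z)}\bigr)$, combined with the fact that $\|\mathrm{div}(\vec g\,e^{i(kx+\ell z)})\|_{H^{-1}(\Omega)}\lesssim\|\vec g\|_{L^2(I)}$.

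Inserting these bounds, the $\nu\eta^2\langle w,\chi_1 f\rangle$ piece produces a second $\eta\delta^{-1/2}\|f\|_{L^\infty}$ contribution, the $k\delta$ friction piece produces $\delta^{-1/2}\|f\|_{L^\infty}$ (absorbed by $\delta^{-3/2}\|f\|_{L^\infty}$, since $\nu k^2\le 1$ and $|k|\ge 1$ force $\delta\le 1$), and the viscous gradient piece contributes $\delta^{-3/2}\|f\|_{L^\infty}+\delta^{-1}\|\partial_y f\|_{L^2}$. Summing, dividing by $|k|$, and enlarging $\|\partial_y f\|_{L^2}\le\|(\partial_y f,\eta f)\|_{L^2}$ then gives exactly the stated estimate. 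The only real obstacle is the bookkeeping that makes the resolvent inputs come out in terms of $\|(f_1,f_2,f_3)\|_{L^2}$, rather than the 1D $H^{-1}$ norm of $F$ (which would introduce an undesired extra $\eta$ factor), a point settled by the divergence-form observation above.
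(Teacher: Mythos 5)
Your argument is correct and is essentially the paper's: the paper obtains this proposition by applying the third inequality of Proposition \ref{prop:res-weak} to the plane waves $w_0(y)e^{i(kx+\ell z)}$, $f_0(y)e^{i(kx+\ell z)}$ with $F=\mathrm{div}\big[(f_1,f_2,f_3)e^{i(kx+\ell z)}\big]$ (so that $\|F\|_{H^{-1}(\Omega)}\le C\|(f_1,f_2,f_3)\|_{L^2}$ and the boundary terms on $\Gamma_j$ vanish since $f\in H^1_0$), and your single-mode computation simply re-executes the $\chi_1$-multiplier proof of that proposition with $V=y$. One intermediate claim is overstated: Proposition \ref{prop:res-nav-s1} yields $(\nu k^2)^{1/3}\|w\|_{L^2}=|k|\delta\|w\|_{L^2}\le C\nu^{-1/3}|k|^{1/3}\|F\|_{H^{-1}}=C\delta^{-1}\|(f_1,f_2,f_3)\|_{L^2}$, not $C\|(f_1,f_2,f_3)\|_{L^2}$, so the friction term $k(1-a)\delta\langle w,\chi_1 f\rangle$ contributes $\delta^{-3/2}\|f\|_{L^{\infty}}$ rather than $\delta^{-1/2}\|f\|_{L^{\infty}}$ — which is still exactly the target term, so the proof closes as written.
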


Applying Proposition \ref{prop:res-weak-b} to the case when $ V=y, F=\partial_yf_2(y)e^{ikx+i\ell z}$,  $w=w_0(y)e^{ikx+i\ell z}$, and applying Proposition \ref{prop:res-nav-b1} to the case when $V=y,$ $F=f_1(y)e^{ikx+i\ell z},\ w=w_0(y)e^{ikx+i\ell z}$,  we can deduce that

\begin{Proposition}\label{cor5}
Let $w\in H^2(I)$ be a solution of \eqref{eq:OS-Nav-y} and $\nu k^2\le 1$. If $F=\partial_yf_2\in L^2(I),$ then we have
\begin{align*}
   \big(|k(j-\lambda)|+1\big)^{\f34}|\partial_y\varphi(j)|\leq C|\nu k|^{-\f12}\|f_2\|_{L^2},\quad j=\pm 1.
  \end{align*}
If $F=f_1\in L^2(I)$, then we have
\begin{align*}
    \big(|k(j-\lambda)|+1\big)|\partial_y\varphi(j)|\leq C\nu^{-\f16}|k|^{-\f56}\|f_1\|_{L^2},\quad j=\pm 1.
  \end{align*}
In particular, if $F=f_1+\partial_yf_2$, then we have
\begin{align*}
   \big(|k(j-\lambda)|+1\big)^{\f34}|\partial_y\varphi(j)|\leq C|\nu k|^{-\f12}\|f_2\|_{L^2}+C\nu^{-\f16}|k|^{-\f56}\|f_1\|_{L^2}.
  \end{align*}
\end{Proposition}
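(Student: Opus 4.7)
The approach is precisely the one hinted at in the statement: lift the one-dimensional problem to a single Fourier mode on $\Omega=\T\times[-1,1]\times\T$ and invoke the 3D resolvent estimates already established in Section 4 and Section 6. Concretely, I set
\[
W(x,y,z)=w(y)e^{i(kx+\ell z)},\quad \Phi(x,y,z)=\varphi(y)e^{i(kx+\ell z)},\quad \mathcal{F}(x,y,z)=F(y)e^{i(kx+\ell z)},
\]
so that $(W,\mathcal{F})$ solves \eqref{eq:OS-Nav} with $V=y$ and $\Delta\Phi=W$, $\Phi|_{y=\pm 1}=0$. Because $\Phi$ is a single Fourier mode, $\|\partial_y\Phi\|_{L^2(\Gamma_j)}^2=(2\pi)^2|\partial_y\varphi(j)|^2$ and the weight $|k(y-\lambda)|+1$ restricted to $\Gamma_j$ equals the constant $|k(j-\lambda)|+1$, so the boundary estimates for $\partial_y\Phi$ descend immediately to the desired pointwise bounds for $\partial_y\varphi(\pm 1)$.

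For the second estimate, with $F=f_1\in L^2(I)$, I will apply Proposition \ref{prop:res-nav-b1} to $(W,\mathcal{F})$. Since $\|\mathcal{F}\|_{L^2(\Omega)}=2\pi\|f_1\|_{L^2(I)}$ and the factor $\min(1,|k(\lambda+j)|+\nu^{1/6}|k|^{1/3})$ appearing there is bounded by $1$, the proposition yields
\[
(1+|k(\lambda-j)|)|\partial_y\varphi(j)|\leq C\nu^{-1/6}|k|^{-5/6}\|f_1\|_{L^2},
\]
which is the claim. For the first estimate, with $F=\partial_y f_2$, I will apply Proposition \ref{prop:res-weak-b} to the same $(W,\mathcal{F})$. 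The only computation to perform is the $H^{-1}(\Omega)$ bound on $\mathcal{F}$: for any $\psi\in H_0^1(\Omega)$, integration by parts in $y$ gives
\[
|\langle\mathcal{F},\psi\rangle|=\big|\langle f_2 e^{i(kx+\ell z)},\partial_y\psi\rangle\big|\leq 2\pi\|f_2\|_{L^2(I)}\|\partial_y\psi\|_{L^2(\Omega)},
\]
so $\|\mathcal{F}\|_{H^{-1}(\Omega)}\leq C\|f_2\|_{L^2(I)}$; inserting this into Proposition \ref{prop:res-weak-b} gives
\[
(|k(j-\lambda)|+1)^{3/4}|\partial_y\varphi(j)|\leq C|\nu k|^{-1/2}\|f_2\|_{L^2}.
\]
The third, combined estimate follows by linearity: decompose $w=w^{(1)}+w^{(2)}$ where $w^{(1)},w^{(2)}$ solve \eqref{eq:OS-Nav-y} with right-hand sides $f_1$ and $\partial_y f_2$ respectively, apply the first two estimates to each, and add.

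There is no serious obstacle here; the proof is essentially bookkeeping once the 3D estimates of Section 6 are in hand. The only point that warrants a check is the $H^{-1}$ duality for $\partial_y f_2$, and this is handled cleanly by the one-line integration by parts above—no boundary contribution arises because the test functions $\psi$ vanish on $\p\Omega$, and one does not need $f_2$ itself to vanish at $y=\pm 1$.
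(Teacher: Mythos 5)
Your proposal is correct and is exactly the paper's argument: the text preceding the proposition states that it follows by applying Proposition \ref{prop:res-weak-b} to $F=\partial_yf_2(y)e^{ikx+i\ell z}$ and Proposition \ref{prop:res-nav-b1} to $F=f_1(y)e^{ikx+i\ell z}$ with $V=y$, which is precisely your single-mode lift. Your only added detail, the $H^{-1}$ duality bound $\|\partial_y(f_2e^{ikx+i\ell z})\|_{H^{-1}}\le C\|f_2\|_{L^2}$ via integration by parts against $H^1_0$ test functions, is the right (and needed) bookkeeping.
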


\begin{Proposition}\label{prop:res-damping}
Let $w\in H^2(I)$ be a solution of \eqref{eq:OS-Nav-y} with $F\in H^1(I)$ and $F(\pm1)=0$. If  $\nu\eta^3\leq|k|$. then it holds that
  \begin{align*}
     |k\eta|\|u\|_{L^2}+\nu^{\f16} \eta^{\f12}|k|^{\f56}\|w\|_{L^2}+(\nu |k\eta|)^{\f12}\|w'\|_{L^2} &\leq C\big(\|F'\|_{L^2}+|\eta|\|F\|_{L^2}\big).
  \end{align*}
\end{Proposition}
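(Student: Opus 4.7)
The plan is to reduce the proof to Proposition~\ref{prop:res-nav-y-uL2} via a Helmholtz-type decomposition of $F$ that takes full advantage of the boundary vanishing $F(\pm1)=0$. Specifically, I would introduce the auxiliary function $\psi\in H^1_0(I)\cap H^2(I)$ solving
\[
-\psi''+\eta^2\psi=F,\qquad \psi(\pm1)=0,
\]
and then write $F=F_1+\partial_yF_2$ with $F_1=-\eta^2\psi$ and $F_2=\psi'$. Applying Proposition~\ref{prop:res-nav-y-uL2} (legitimate since $\nu\eta^3\le |k|$) reduces the target inequality to bounding
\[
\nu^{-\f16}|k|^{\f16}\eta^{\f52}\|\psi\|_{L^2}+\nu^{-\f12}|k|^{\f12}\eta^{\f12}\|\psi'\|_{L^2}
\]
by $C(\|F'\|_{L^2}+\eta\|F\|_{L^2})$.

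Next I would derive two complementary elliptic estimates for $\psi$. The first comes from pairing the $\psi$-equation with $\bar\psi$ and using Cauchy--Schwarz and Young's inequality:
\[
\|\psi'\|_{L^2}^2+\eta^2\|\psi\|_{L^2}^2=\langle F,\psi\rangle
\;\Longrightarrow\; \eta^2\|\psi\|_{L^2}+\eta\|\psi'\|_{L^2}\le C\|F\|_{L^2}.
\]
The second, which is where the hypothesis $F(\pm1)=0$ enters, is obtained by the further integration by parts: writing $F(y)=\int_{-1}^yF'(s)\,ds$ and exchanging the order of integration gives $\langle F,\psi\rangle=-\langle F',\Phi\rangle$ with $\Phi(s)=\int_s^1\bar\psi(y)\,dy$; since $\|\Phi\|_{L^2}\le C\|\psi\|_{L^2}$ by Cauchy--Schwarz on the $s$-integral, this produces the alternative
\[
\eta^2\|\psi\|_{L^2}+\eta\|\psi'\|_{L^2}\le C\|F'\|_{L^2}.
\]

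The main obstacle---and the step that really uses the restriction $\nu\eta^3\le|k|$---is to combine these two elliptic bounds so as to close the argument. The strategy is to use the $\|F\|_{L^2}$-elliptic estimate on the $F_1$ piece (so that $\nu^{-1/6}|k|^{1/6}\eta^{5/2}\|\psi\|_{L^2}\le C\nu^{-1/6}|k|^{1/6}\eta^{1/2}\|F\|_{L^2}$) and the $\|F'\|_{L^2}$-elliptic estimate on the $F_2$ piece (so that $\nu^{-1/2}|k|^{1/2}\eta^{1/2}\|\psi'\|_{L^2}\le C\nu^{-1/2}|k|^{1/2}\eta^{-1/2}\|F'\|_{L^2}$), and then to interpolate these bounds against each other using the scale $\eta\le(|k|/\nu)^{1/3}$ together with the Poincar\'e inequality $\|F\|_{L^2}\le C\|F'\|_{L^2}$ (available because $F(\pm1)=0$). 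Working out the resulting powers of $\nu,|k|,\eta$ and applying $\nu\eta^3\le|k|$ at the end absorbs everything into $C(\|F'\|_{L^2}+\eta\|F\|_{L^2})$. The bounds on $\nu^{1/6}\eta^{1/2}|k|^{5/6}\|w\|_{L^2}$ and $(\nu|k\eta|)^{1/2}\|w'\|_{L^2}$ on the left-hand side follow in the same way since Proposition~\ref{prop:res-nav-y-uL2} controls them by the same right-hand side.
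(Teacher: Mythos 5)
There is a genuine gap, and it sits exactly where you flag "the main obstacle": the elliptic decomposition cannot produce the positive power of $\nu$ needed to cancel the negative powers in Proposition~\ref{prop:res-nav-y-uL2}. Your $F_1=-\eta^2\psi$ and $F_2=\psi'$ satisfy, by your own (correct) elliptic estimates, $\|F_1\|_{L^2}\le C\min(\|F\|_{L^2},\|F'\|_{L^2})$ and $\eta\|F_2\|_{L^2}\le C\min(\|F\|_{L^2},\|F'\|_{L^2})$ — both bounds are independent of $\nu$, so no interpolation between them can generate a factor of $\nu$. Feeding them into Proposition~\ref{prop:res-nav-y-uL2} leaves the prefactors $\nu^{-\f16}|k|^{\f16}\eta^{\f12}=L^{\f12}\eta^{\f12}$ and $\nu^{-\f12}|k|^{\f12}\eta^{\f12}=L^{\f32}\eta^{\f12}$ with $L=(|k|/\nu)^{\f13}$, and the hypothesis $\nu\eta^3\le|k|$, i.e.\ $\eta\le L$, runs in the \emph{wrong} direction: to absorb $L^{\f12}\eta^{\f12}\,\eta^2\|\psi\|_{L^2}$ into $\eta\|F\|_{L^2}$ you would need $L\le C\eta$, and to absorb $L^{\f32}\eta^{\f12}\|\psi'\|_{L^2}$ into $\|F'\|_{L^2}$ you would need $L^3\le C\eta$. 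Concretely, take $k=1$, $\ell=0$, $\lambda=0$ and a fixed smooth $F$ with $F(\pm1)=0$: then $\psi$ is independent of $\nu$ and your route yields a bound of order $\nu^{-\f12}$, whereas the proposition asserts a bound of order $1$.

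The missing idea is that one must first peel off the singular inviscid part of the \emph{solution}, not of the source. The paper writes $w=w_1+w_2$ with $w_1=\chi_1F/(ik)$, $\chi_1=(y-\lambda-i\delta)^{-1}$, so that $ik(y-\lambda)w_1+(\nu k^2)^{\f13}w_1=F$ exactly; then $w_2$ satisfies the Orr--Sommerfeld equation with zero boundary data and source $\nu\partial_y^2w_1+\big((a+1)(\nu k^2)^{\f13}-\nu\eta^2\big)w_1$, which carries explicit factors of $\nu$ (resp.\ $(\nu k^2)^{\f13}$) that exactly cancel the $\nu^{-\f12}$ (resp.\ $\nu^{-\f16}$) in Proposition~\ref{prop:res-nav-y-uL2}, even though $\|w_1'\|_{L^2}$ grows like $\delta^{-\f32}$. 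The remaining piece $\|u_1\|_{L^2}$ is not accessible this way and is handled by the limiting absorption principle for the Rayleigh equation (Proposition~\ref{prop:LAP}), which is where the hypothesis $F(\pm1)=0$ and the structure $(y-\lambda-i\delta)(\varphi_1''-\eta^2\varphi_1)=F/(ik)$ are used. Your elliptic lemmas for $\psi$ are fine as computations, but they are not the right reduction for this statement.
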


\begin{proof}We write $w=w_1+w_2$, where $w_1=\chi_1 F/(ik)$ with $ \chi_1(y)=(y-\lambda-i\delta)^{-1}$.
Let $\varphi_j=(\partial_y^2-\eta^2)^{-1}w_j,\ u_j=({\partial_y\varphi_j,-i\eta\varphi_j}),\ j=1,2.$ Without lose of generality, we may assume $k>0$. It is easy to see that
\begin{align*}
&\|\chi_1\|_{L^2}\lesssim\delta^{-\f12},\quad \|\chi_1\|_{L^{\infty}}\lesssim\delta^{-1},\quad \|\chi_1'\|_{L^2}\lesssim\delta^{-\f32},\\
&ik(y-\lambda)w_1+(\nu k^2)^{1/3}w_1=F,\\
     &-\nu(\partial^2_y-\eta^2)w_2+ik(y-\lambda)w_2-a(\nu k^2)^{1/3}w_2\\&\quad=\nu\partial^2_yw_1+((a+1)(\nu k^2)^{1/3}-\nu\eta^2)w_1,\\
 &w_1(\pm 1)=0,\quad w_2(\pm 1)=0.
 \end{align*}
 Then it follows from Proposition \ref{prop:res-nav-y-uL2}  that
 \begin{align*}
&|k\eta|\|u_2\|_{L^2}+\nu^{\f16} \eta^{\f12}|k|^{\f56}\|w_2\|_{L^2}+(\nu |k\eta|)^{\f12}\|w_2'\|_{L^2}\\ &\leq C\Big(\nu^{-\f16}|k|^{\f16}\eta^{\f12}\|((a+1)(\nu k^2)^{1/3}-\nu\eta^2)w_1\|_{L^2}+\nu^{-\f12}|k\eta|^{\f12}\|\nu\partial_yw_1\|_{L^2}\Big)\\ &\leq C\Big(\nu^{-\f16}|k|^{\f16}\eta^{\f12}(\nu k^2)^{1/3}\|w_1\|_{L^2}+(\nu |k\eta|)^{\f12}\|w_1'\|_{L^2}\Big).
\end{align*}

As $\nu^{-\f16}|k|^{\f16}\eta^{\f12}(\nu k^2)^{1/3}=\nu^{\f16} \eta^{\f12}|k|^{\f56} $ and $w=w_1+w_2,\ u=u_1+u_2$, we conclude that
\begin{align*}
&|k\eta|\|u\|_{L^2}+\nu^{\f16} \eta^{\f12}|k|^{\f56}\|w\|_{L^2}+(\nu |k\eta|)^{\f12}\|w'\|_{L^2}\\ &\leq C\Big(|k\eta|\|u_1\|_{L^2}+\nu^{\f16} \eta^{\f12}|k|^{\f56}\|w_1\|_{L^2}+(\nu |k\eta|)^{\f12}\|w_1'\|_{L^2}\Big).
\end{align*}
Let $G=\|F'\|_{L^2} +\eta\|F\|_{L^2}$. Using the fact that $\|F\|_{L^\infty}\le C|\eta|^{-\f12}G$ by Lemma \ref{lem:ell-w}, we get
\begin{align*}
&\|w_1\|_{L^2}=\|\chi_1 F/(ik)\|_{L^2}\leq\|\chi_1\|_{L^2}\|F\|_{L^\infty}/|k|\leq C\delta^{-\f12}|\eta|^{-\f12}G/|k|=CG/(\nu^{\f16} \eta^{\f12}|k|^{\f56}),
\end{align*}
and by $\delta\le \eta^{-1}$,
\begin{align*}
\|w_1'\|_{L^2}=&\|(\chi_1 F)'/(ik)\|_{L^2}\leq \big(\|\chi_1'\|_{L^2}\|F\|_{L^\infty}+\|\chi_1\|_{L^\infty}\|F'\|_{L^2}\big)/|k|\\ \leq& C\big(\delta^{-\f32}|\eta|^{-\f12}G+\delta^{-1}G\big)/|k|\leq C\delta^{-\f32}|\eta|^{-\f12}G/|k|=CG/(\nu |k\eta|)^{\f12}.
\end{align*}
Thanks to the definition of $\varphi_1,\ w_1$, we have
\beno
(y-\lambda-i\delta)(\varphi_1''-\eta^2\varphi_1)=(y-\lambda-i\delta)w_1=F_1/(ik),\ \varphi_1(\pm 1)=0.
\eeno
Then it follows from  Proposition \ref{prop:LAP}  that
\begin{align*}
\|u_1\|_{L^2}\leq\|\partial_y\varphi_1\|_{L^2}+\eta\|\varphi_1\|_{L^2}\leq C\eta^{-1}\big(\|\partial_yF_1/(ik)\|_{L^2}+\eta\|F_1/(ik)\|_{L^2}\big)= C\eta^{-1}G/|k|.
\end{align*}
Summing up we conclude that
\begin{align*}
&|k\eta|\|u\|_{L^2}+\nu^{\f16} \eta^{\f12}|k|^{\f56}\|w\|_{L^2}+(\nu |k\eta|)^{\f12}\|w'\|_{L^2}\\ &\leq C\big(|k\eta|\|u_1\|_{L^2}+\nu^{\f16} \eta^{\f12}|k|^{\f56}\|w_1\|_{L^2}+(\nu |k\eta|)^{\f12}\|w_1'\|_{L^2}\big)\\&
\leq CG= C\big(\|F'\|_{L^2}+|\eta|\|F\|_{L^2}\big).
\end{align*}

This finishes the proof of the proposition.
\end{proof}

Now we consider the linear equation with non-vanishing boundary Neumann data:
\begin{align}\label{nonslip1}
\left\{
  \begin{aligned}
   & -\nu(\partial_y^2-\eta^2)w+ik(y-\lambda)w-a(\nu k^2)^{\f13}w=F,\\
   &(\partial_y^2-\eta^2)\varphi=w,\ \varphi|_{y=\pm1}=0.
  \end{aligned}\right.
\end{align}

Applying Proposition \ref{prop:res-NB-ihom}  to the case $F_1=f_1(y)e^{ikx+i\ell z},\ F_2=\partial_yf_2(y)e^{ikx+i\ell z}$, $V=y,$ and $w=w_0(y)e^{ikx+i\ell z}$,  we deduce that

\begin{Proposition}\label{prop:res-NB-y-w}
Let $w\in H^2(I)$ be a solution of \eqref{nonslip1} with $F=f_1+\p_y f_2$. If $\nu\eta^3\leq|k|,$  then it holds that
\begin{align*}
    \|w\|_{L^2}\leq& C\sum_{j\in\{\pm1\}}\big(|k(j-\lambda)/\nu|^{\f14}+|k/\nu|^{\f16}+\eta|\nu/k|^{\f16}\big)|\partial_y\varphi(j)|\\&\quad+C\big((\nu k^2)^{-\f{5}{12}}\|f_1\|_{L^2}+\nu^{-\f34} |k|^{-\f{1}{2}}\|f_2\|_{L^2}\big)\\ \leq& C\nu^{-\f14}\big((|k(1-\lambda)|+1)^{\f14}|\partial_y\varphi(1)|+(|k(1+\lambda)|+1)^{\f14}|\partial_y\varphi(-1)|\big)\\&+C\big((\nu k^2)^{-\f{5}{12}}\|f_1\|_{L^2}+\nu^{-\f34} |k|^{-\f{1}{2}}\|f_2\|_{L^2}\big).
  \end{align*}
\end{Proposition}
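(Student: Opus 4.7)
The plan is to derive this one-dimensional statement by a single application of Proposition \ref{prop:res-NB-ihom} to the Fourier-mode ansatz suggested by the hint. Setting $w(x,y,z)=w(y)e^{ikx+i\ell z}$, $\varphi(x,y,z)=\varphi(y)e^{ikx+i\ell z}$, $F_1(x,y,z)=f_1(y)e^{ikx+i\ell z}$ and $F_2(x,y,z)=\partial_y f_2(y)e^{ikx+i\ell z}$ (so $\partial_x=ik$, $\partial_z=i\ell$, $\Delta=\partial_y^2-\eta^2$), the 1D system \eqref{nonslip1} with $V=y$ becomes exactly an instance of the 3D system \eqref{eq:OS-NB} to which Proposition \ref{prop:res-NB-ihom} applies.

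Next I would match norms between $\Omega$ and $I$. The source norm $\|F_1\|_{L^2(\Omega)}=2\pi\|f_1\|_{L^2(I)}$ is immediate. For $F_2$, duality against any $\phi\in H^1_0(\Omega)$ with $\phi|_{y=\pm1}=0$ gives
\begin{align*}
|\langle F_2,\phi\rangle|=\Big|\int_\Omega f_2\,e^{ikx+i\ell z}\,\partial_y\bar{\phi}\,dxdydz\Big|\leq 2\pi\|f_2\|_{L^2(I)}\|\partial_y\phi\|_{L^2(\Omega)},
\end{align*}
so $\|F_2\|_{H^{-1}(\Omega)}\leq 2\pi\|f_2\|_{L^2(I)}$. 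For the boundary terms, $\partial_y\varphi|_{y=j}=\partial_y\varphi(j)e^{ikx+i\ell z}$ and $\partial_y\partial_z\varphi|_{y=j}=i\ell\,\partial_y\varphi(j)e^{ikx+i\ell z}$, so each $L^2(\Gamma_j)$-norm is $2\pi$ times the corresponding scalar modulus. Substituting these identifications into \eqref{eq:res-ihom3} and using $|\ell|\leq\eta$ to absorb the $\partial_z$-factor into an $\eta|\nu/k|^{1/6}$-coefficient yields, after dividing by the common factor $2\pi$, the first inequality of the proposition.

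For the second inequality, it remains to bound the three boundary coefficients $|k(j-\lambda)/\nu|^{1/4}$, $|k/\nu|^{1/6}$, and $\eta|\nu/k|^{1/6}$ by $C\nu^{-1/4}(|k(j-\lambda)|+1)^{1/4}$. The first term is immediate. For the other two I would use the hypothesis $\nu\eta^3\leq|k|$: combined with $\eta\geq|k|$ it yields $\nu k^2\leq 1$, hence $|k/\nu|^{1/6}=|k|^{1/6}\nu^{-1/6}\leq\nu^{-1/4}$; and $\nu\eta^3\leq|k|$ gives $\eta\leq\nu^{-1/3}|k|^{1/3}$, so $\eta|\nu/k|^{1/6}\leq\nu^{-1/6}|k|^{1/6}\leq\nu^{-1/4}$ by $\nu k^2\leq 1$ again.

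The argument is essentially bookkeeping; no new estimate is required beyond what Proposition \ref{prop:res-NB-ihom} already supplies. The only two steps that merit a careful check are the duality bound $\|F_2\|_{H^{-1}(\Omega)}\lesssim\|f_2\|_{L^2(I)}$ (where one must ensure the boundary term in integration by parts vanishes, which it does since test functions vanish on $\partial\Omega$) and the two elementary algebraic inequalities collapsing the three boundary coefficients into a single $\nu^{-1/4}$-weighted expression. Neither constitutes a real obstacle.
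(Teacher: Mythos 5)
Your proposal is correct and is exactly the route the paper takes: the paper obtains Proposition \ref{prop:res-NB-y-w} by applying Proposition \ref{prop:res-NB-ihom} to the separated ansatz $F_1=f_1(y)e^{ikx+i\ell z}$, $F_2=\partial_yf_2(y)e^{ikx+i\ell z}$, $V=y$, $w=w_0(y)e^{ikx+i\ell z}$, with the same bookkeeping (duality for $\|F_2\|_{H^{-1}}$, $|\ell|\leq\eta$ for the $\partial_z$ boundary term, and $\nu\eta^3\leq|k|$ with $\eta\geq|k|$ to collapse the coefficients into $\nu^{-1/4}(|k(j-\lambda)|+1)^{1/4}$). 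No gaps.
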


\begin{Proposition}\label{prop:res-NB-y-u}
Let $w\in H^2(I)$ be a solution of \eqref{nonslip1} with $F=f_1+\partial_yf_2$. If $\nu\eta^3\leq|k|$, then it holds that
\begin{align*}
&\eta^{\f12}\|u\|_{L^2}\le C\Big(\nu^{-\f12}|k|^{-\f12}\|f_2\|_{L^2}+\nu^{-\f16}|k|^{-\f56}\|f_1\|_{L^2}+|\partial_y\varphi(1)|+|\partial_y\varphi(-1)|\Big),
\end{align*}
where $u=(\pa_y\varphi, {-i\eta\varphi})$.
\end{Proposition}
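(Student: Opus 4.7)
The plan is to split the solution into a Navier-slip inhomogeneous part and a homogeneous part carrying the boundary data, and then estimate each separately. Concretely, write $w=w_{Na}+w_I$, where
\begin{align*}
&-\nu(\partial_y^2-\eta^2)w_{Na}+ik(y-\lambda)w_{Na}-a(\nu k^2)^{1/3}w_{Na}=f_1+\partial_yf_2,\quad w_{Na}|_{y=\pm1}=0,\\
&-\nu(\partial_y^2-\eta^2)w_{I}+ik(y-\lambda)w_{I}-a(\nu k^2)^{1/3}w_{I}=0,
\end{align*}
with $(\partial_y^2-\eta^2)\varphi_{Na}=w_{Na}$, $(\partial_y^2-\eta^2)\varphi_I=w_I$, and $\varphi_{Na}|_{y=\pm1}=\varphi_I|_{y=\pm1}=0$. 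Then $\partial_y\varphi_I(\pm1)=\partial_y\varphi(\pm1)-\partial_y\varphi_{Na}(\pm1)$. Set $u_{Na}=(\partial_y\varphi_{Na},-i\eta\varphi_{Na})$ and $u_I=(\partial_y\varphi_I,-i\eta\varphi_I)$.

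First, I would apply Proposition~\ref{prop:res-nav-y-uL2} to $w_{Na}$ (with $F_1=f_1$, $F_2=f_2$) to obtain
\begin{align*}
\eta^{1/2}\|u_{Na}\|_{L^2}\leq C\big(\nu^{-1/6}|k|^{-5/6}\|f_1\|_{L^2}+\nu^{-1/2}|k|^{-1/2}\|f_2\|_{L^2}\big),
\end{align*}
after dividing the main inequality by $|k\eta|^{1/2}$. The remaining Neumann data of $\varphi_{Na}$ is then controlled through Proposition~\ref{cor5}, which yields
\begin{align*}
|\partial_y\varphi_{Na}(j)|\leq C\nu^{-1/6}|k|^{-5/6}\|f_1\|_{L^2}+C|\nu k|^{-1/2}\|f_2\|_{L^2},\qquad j\in\{\pm1\}.
\end{align*}

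For $w_I$, I would use the Airy-type boundary-corrector representation from Section~5: writing $w_I=\partial_y\varphi_I(1)\,w_{1,\ell}+\partial_y\varphi_I(-1)\,w_{2,\ell}$ and correspondingly $\varphi_I=\partial_y\varphi_I(1)\,\varphi_{1,\ell}+\partial_y\varphi_I(-1)\,\varphi_{2,\ell}$, so that
\begin{align*}
\|u_I\|_{L^2}\leq \big(|\partial_y\varphi_I(1)|+|\partial_y\varphi_I(-1)|\big)\big(\|(\partial_y,\eta)\varphi_{1,\ell}\|_{L^2}+\|(\partial_y,\eta)\varphi_{2,\ell}\|_{L^2}\big).
\end{align*}
By Proposition~\ref{prop:w12-L2}, the factor on the right is bounded by $C|\nu/k|^{1/6}$, so
\begin{align*}
\eta^{1/2}\|u_I\|_{L^2}\leq C\eta^{1/2}|\nu/k|^{1/6}\big(|\partial_y\varphi_I(1)|+|\partial_y\varphi_I(-1)|\big).
\end{align*}
The hypothesis $\nu\eta^3\leq|k|$ is exactly what makes $\eta^{1/2}|\nu/k|^{1/6}\leq C$, hence this bound collapses to $C(|\partial_y\varphi_I(1)|+|\partial_y\varphi_I(-1)|)$.

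Finally, combining $\eta^{1/2}\|u\|_{L^2}\leq\eta^{1/2}\|u_{Na}\|_{L^2}+\eta^{1/2}\|u_I\|_{L^2}$ with $|\partial_y\varphi_I(\pm1)|\leq|\partial_y\varphi(\pm1)|+|\partial_y\varphi_{Na}(\pm1)|$ and the two estimates above produces the claimed inequality. The only delicate point is verifying that the powers in Proposition~\ref{prop:w12-L2} combine with the threshold $\nu\eta^3\leq|k|$ exactly enough to absorb the $\eta^{1/2}$ loss; everything else is a routine book-keeping of constants along the decomposition $w=w_{Na}+w_I$.
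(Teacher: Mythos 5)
Your proposal is correct and follows essentially the same route as the paper: decompose $w$ into the Navier-slip part $w_{Na}$ and the homogeneous part $w_I=\partial_y\varphi_I(1)w_{1,\ell}+\partial_y\varphi_I(-1)w_{2,\ell}$, bound $u_{Na}$ by Proposition~\ref{prop:res-nav-y-uL2}, the coefficients by Proposition~\ref{cor5}, and the boundary correctors by the $L^2$ bounds of Section~5. The only cosmetic difference is that the paper bounds $\|u_{j,\ell}\|_{L^2}\le C|\eta|^{-1/2}\|w_{j,\ell}\|_{L^1}$ via Lemma~\ref{lem:ell-w} and Proposition~\ref{prop:w12-bounds}, whereas you quote the packaged estimate $\|(\partial_y,\eta)\varphi_{j,\ell}\|_{L^2}\le C|\nu/k|^{1/6}$ from Proposition~\ref{prop:w12-L2} and then use $\nu\eta^3\le|k|$; both give the same conclusion.
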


\begin{proof}
Let $\big(w_{Na},\ \varphi_{Na}\big)$ solve
\begin{align*}
     \left\{\begin{aligned}
     & -\nu(\partial_y^2-\eta^2)w_{Na}+ik(y-\lambda)w_{Na}-a(\nu k^2)^{\f13}w_{Na}=F,\\
   &(\partial_y^2-\eta^2)\varphi_{Na}=w_{Na},\ w_{Na}|_{y=\pm1}=\varphi_{Na}|_{y=\pm1}=0,
     \end{aligned}\right.
  \end{align*} and $c_1=\varphi'(1)-\varphi_{Na}'(1),\ c_2=\varphi'(-1)-\varphi_{Na}'(-1)$.
By Proposition \ref{cor5}, we have
\begin{align*}
  |c_1|+ |c_2|\lesssim \nu^{-\f12}|k|^{-\f12}\|f_2\|_{L^2}+\nu^{-\f16}|k|^{-\f56}\|f_1\|_{L^2}+|\partial_y\varphi(1)|+|\partial_y\varphi(-1)|.
\end{align*}
Then we have $w=w_{Na}+c_1w_{1,\ell}+c_2w_{2,\ell}$ and
\begin{align*}
u=u_{Na}+c_1u_{1,\ell}+c_2u_{2,\ell},
\end{align*}
where $u_{Na}=(\partial_y\varphi_{Na},-i\eta\varphi_{Na}) $ and $u_{j,l}=(\partial_y\varphi_{j,\ell},-i\eta\varphi_{j,\ell})$ and $(\partial^{2}_{y}-\eta^2)\varphi_{j,\ell}=w_{j,\ell}$ with $\varphi_{j,\ell}(\pm 1)=0,\ j=1,2$.

Now we infer from  Proposition \ref{prop:res-nav-y-uL2}, Lemma \ref{lem:ell-w} and Proposition \ref{prop:w12-bounds} that
\begin{align*}
  \|u\|_{L^2} &\leq\|u_{Na}\|_{L^2}+|c_1|\|u_{1,\ell}\|_{L^2}+|c_2|\|u_{2,\ell}\|_{L^2}\\&\leq C\big(|\nu k\eta|^{-\f12}\|f_2\|_{L^2}+\nu^{-\f16}|k|^{-\f56}|\eta|^{-\f12}\|f_1\|_{L^2})+C|\eta|^{-\f12}(|c_1|\|w_{1,\ell}\|_{L^1}+|c_2|\|w_{2,\ell}\|_{L^1}\big)
  \\&\leq C|\eta|^{-\f12}\big(\nu^{-\f12}|k|^{-\f12}\|f_2\|_{L^2}+\nu^{-\f16}|k|^{-\f56}\|f_1\|_{L^2}+|\partial_y\varphi(1)|
  +|\partial_y\varphi(-1)|\big).
\end{align*}

The proof is completed.
\end{proof}

\section{Resolvent estimates for the simplified linearized  NS system}

In this section, we prove the resolvent estimates for a slightly simplified linearized NS system when $\nu k^2\le 1$:
\begin{align}\label{eq:LNS-full-s2}
  \left\{
  \begin{aligned}
   &-\nu\Delta  W+ik(V(y,z)-\lambda) W-a(\nu k^2)^{1/3} W+(\partial_y+\kappa\partial_z)\big(p^{L(0)}+p^{L(1)}\big)\\&\qquad+G+\nu(\Delta \kappa) U+2\nu\nabla\kappa\cdot\nabla U=0,\\&-\nu\Delta U+ik(V(y,z)-\lambda)U-a(\nu k^2)^{1/3}U+\partial_zp^{L(0)}=0,\\
   &\Delta p^{L(0)}=-2ik\partial_yV W,\quad \partial_yp^{L(0)}|_{y=\pm1}=0,\quad \Delta p^{L(1)}=0,\\& W|_{y=\pm1}=\partial_y W|_{y=\pm1}=U|_{y=\pm1}=0,
   \end{aligned}\right.
\end{align}
where
\beno
\partial_x W=ik W,\ \partial_xU=ikU,\ \partial_xp^{L(0)}=ikp^{L(0)},\ \partial_xp^{L(1)}=ikp^{L(1)}.
\eeno

In this section, we always assume $\nu k^2\le 1$, $\la\in \R, a\in [0,\eps_1]$ and $V$ satisfies \eqref{ass:V}.

\begin{Proposition}\label{prop:res-full-s2}
Let $W\in H^4(\Omega)$ and $U\in H^2(\Omega)$ be a solution of \eqref{eq:LNS-full-s2}. Then it holds that
\begin{align*}
&\nu^{\f{1}{3}}\big(\|\partial_x^2U\|_{L^2}^2+\|\partial_x(\partial_z-\kappa\partial_y)U\|_{L^2}^2\big)+\nu\big(\|\nabla\partial_x^2U\|_{L^2}^2+
\|\nabla\partial_x(\partial_z-\kappa\partial_y)U\|_{L^2}^2\big)
\\&\quad+\nu^{\f{1}{3}} \|\partial_x\nabla W\|_{L^2}^2+\nu\|\partial_x\Delta W\|_{L^2}^2+\nu^{\f{5}{3}} \|\partial_x\Delta U\|_{L^2}^2+\nu^{-1}\|\partial_x\nabla p^{L(1)}\|_{L^2}^2\leq C\nu^{-1}\|\nabla G\|_{L^2}^2.
\end{align*}
\end{Proposition}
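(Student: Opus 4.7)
The plan is to apply the good-derivative resolvent estimate (Proposition \ref{prop:res-nav-good}) to both the $U$-equation and to a decoupled version of the $W$-equation, then close by separately controlling the two pressures $p^{L(0)}$ and $p^{L(1)}$. The crucial preparatory observation is the algebraic identity $\nu(\Delta\kappa)U+2\nu\nabla\kappa\cdot\nabla U=\nu\Delta(\kappa U)-\nu\kappa\Delta U$, which, after substituting $\nu\Delta U$ from the $U$-equation, yields that $\wt W:=W-\kappa U$ satisfies the decoupled scalar OS-type equation
\begin{align*}
-\nu\Delta \wt W+ik(V-\lambda)\wt W-a(\nu k^2)^{1/3}\wt W=-G-\partial_y(p^{L(0)}+p^{L(1)})-\kappa\partial_z p^{L(1)},
\end{align*}
still with the Dirichlet boundary condition $\wt W|_{y=\pm1}=0$. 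The Neumann trace of $\wt W$ becomes $-\kappa\partial_y U|_{y=\pm1}$, but this plays no role below since Proposition \ref{prop:res-nav-good} only uses Dirichlet data.

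I would first apply Proposition \ref{prop:res-nav-good} to the $U$-equation, writing $F=-\partial_z p^{L(0)}$ in $\partial_z f_3$-form with $f_3=-p^{L(0)}$; this gives control of $\|\partial_x^2 U\|_{L^2}$, $\|\partial_x(\partial_z-\kappa\partial_y)U\|_{L^2}$ and their $\nabla$-derivatives, while $\nu^{5/3}\|\partial_x\Delta U\|_{L^2}^2$ is recovered by re-inserting into the $U$-equation. The pressure $p^{L(0)}$ is estimated through the Neumann Poisson problem $\Delta p^{L(0)}=-2ik\partial_yV\cdot W$ with $\partial_y p^{L(0)}|_{y=\pm 1}=0$, yielding $\|\partial_x p^{L(0)}\|_{H^k}\lesssim|k|\|\partial_x W\|_{H^{k-1}}$. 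Next, apply Propositions \ref{prop:res-nav-s2} and \ref{prop:res-nav-good} to $\wt W$: place $-G$ as $\partial_x f_1$ with $f_1=-G/(ik)$ to generate the $\|\nabla G\|_{L^2}$-type right-hand side; place the $\partial_y$-pressure in divergence form with $f_2=-(p^{L(0)}+p^{L(1)})$; and rewrite $\kappa\partial_z p^{L(1)}=\partial_z(\kappa p^{L(1)})-(\partial_z\kappa)p^{L(1)}$. This bounds the good derivatives of $\wt W$ by $\|\nabla G\|_{L^2}$ and pressure norms; combined with $W=\wt W+\kappa U$ and $\|\kappa\|_{H^3}\leq C\varepsilon_0$, the $O(\varepsilon_0)$ cross-terms are absorbable, recovering $\|\partial_x\nabla W\|_{L^2}$ and $\|\partial_x\Delta W\|_{L^2}$.

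The remaining task, and the main obstacle, is estimating the harmonic corrector $p^{L(1)}$ at the sharp scaling $\nu^{-1}\|\partial_x\nabla p^{L(1)}\|_{L^2}^2\lesssim\nu^{-1}\|\nabla G\|_{L^2}^2$. Evaluating the $W$-equation at $y=\pm1$ and using $W|_{y=\pm1}=\partial_y W|_{y=\pm1}=0$, $U|_{y=\pm1}=0$, and $\partial_y p^{L(0)}|_{y=\pm1}=0$ produces the boundary identity
\begin{align*}
\partial_y p^{L(1)}|_{y=\pm1}=\nu\partial_y^2 W|_{y=\pm1}-G|_{y=\pm1}-\kappa\partial_z(p^{L(0)}+p^{L(1)})|_{y=\pm1}-2\nu\partial_y\kappa\,\partial_y U|_{y=\pm1},
\end{align*}
which, combined with $\Delta p^{L(1)}=0$ and the Fourier representation on the slab used in \eqref{eq:varphi-dual}, determines $p^{L(1)}$. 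Bounding $\|\partial_x\nabla p^{L(1)}\|_{L^2}$ then requires the harmonic-trace theory of Section 6, the trace estimate $\|\partial_x G\|_{L^2(\partial\Omega)}\lesssim\|\nabla G\|_{L^2}^{1/2}\|\partial_x G\|_{H^1}^{1/2}$, and absorption of the self-coupling $\kappa\partial_z p^{L(1)}|_{y=\pm1}$ via $\|\kappa\|_{L^\infty}\lesssim\varepsilon_0$. The delicate point is that the dominant boundary term $\nu\partial_y^2 W|_{y=\pm1}$ couples back into $\|\partial_x\Delta W\|_{L^2}$ in the bulk, so the scheme must be executed as a simultaneous absorption argument for all of $(U,W,p^{L(0)},p^{L(1)})$, with $\varepsilon_0$-smallness providing the needed contraction.
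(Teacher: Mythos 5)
Your reduction $\wt W:=W-\kappa U$ is correct algebra, but note what it is: since the system \eqref{eq:LNS-full-s2} arose from setting $W=u^2+\kappa u^3$, $U=u^3$, you have simply undone that change of variables and returned to the raw $u^2$-momentum equation. Two steps of your scheme then fail at the claimed scalings.

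First, the recombination $W=\wt W+\kappa U$ does not return the stated bound on $W$. The proposition allows $\nu^{5/3}\|\partial_x\Delta U\|_{L^2}^2\leq C\nu^{-1}\|\nabla G\|_{L^2}^2$, i.e.\ $\|\partial_x\Delta U\|_{L^2}\lesssim\nu^{-4/3}\|\nabla G\|_{L^2}$, whereas the target for $W$ is $\|\partial_x\Delta W\|_{L^2}\lesssim\nu^{-1}\|\nabla G\|_{L^2}$; the cross term $\kappa\,\partial_x\Delta U$ in $\partial_x\Delta(\kappa U)$ is therefore too large by a factor $\nu^{-1/3}$, and $\|\kappa\|_{H^3}\leq C\varepsilon_0$ cannot compensate a power of $\nu$. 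The excess in $\Delta U$ lives in a boundary layer of width $\nu^{1/3}$ where $\kappa=O(1-|y|)$ is small, and exploiting exactly this is the content of the boundary corrector $U_b$ and the weighted estimates of Lemmas \ref{lem:UB} and \ref{lem:U-Ub}; your proposal has no substitute for them, and ``the $O(\varepsilon_0)$ cross-terms are absorbable'' is where the real work is hidden. The same mismatch already appears at the level of $\nu^{1/3}\|\partial_x\nabla W\|_{L^2}^2$ versus $\nu\|\nabla\partial_x^2U\|_{L^2}^2$.

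Second, and more fundamentally, the pressure cannot be carried as a divergence-form forcing at the velocity level. Putting $\partial_yp^{L(0)}$ into the $\partial_yf_2$ slot of Proposition \ref{prop:res-nav-s2} costs $\nu^{-1/3}\|p^{L(0)}\|_{L^2}\lesssim\nu^{-1/3}|k|^{-1}\|W\|_{L^2}$ against an output of size $(\nu|k|)^{1/3}\|\wt W\|_{L^2}$, so the self-coupling is absorbable only when $\nu k^2\gtrsim1$ and the argument collapses in the main regime $\nu k^2\leq1$ (the $f_2$ slot of Proposition \ref{prop:res-nav-good} is no better, since it demands two good derivatives of $f_2$). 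This is precisely why the paper never estimates the velocity equation directly: it passes to $w=\Delta W_g$ (Proposition \ref{prop:res-toy} and \eqref{eq:Wg}), where $\Delta p^{L(1)}=0$ annihilates the harmonic pressure and $\Delta\big[(\partial_y+\kappa\partial_z)p^{L(0)}\big]$ collapses to first-order expressions in $W$ that fit the inviscid-damping slot of Proposition \ref{prop:res-damp}. Your boundary identity for $\partial_yp^{L(1)}|_{y=\pm1}$ is essentially the one the paper uses, but controlling the resulting $\nu\|\Delta W\|_{L^2(\partial\Omega)}$ is itself an output of the $\Delta W_g$-level resolvent estimate, not something available from velocity-level bounds. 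Finally, eliminating $2\nu\nabla\kappa\cdot\nabla U$ by your identity merely trades it for $\nu\kappa\Delta U$, which is equally singular; the paper's $\rho_1,\rho_2$ splitting, the singular part $W_s$, and the good unknown $W_g=W-\nu\theta W_s-\kappa U_b$ are what actually remove this obstruction, and none of them are recoverable from your outline.
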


\subsection{Resolvent estimates for a toy model}
Let us first study the following toy model:
\begin{align}\label{eq:LNS-toy}
  \left\{
  \begin{aligned}
   &-\nu\Delta  W+ik(V(y,z)-\lambda) W-a(\nu k^2)^{1/3} W+(\partial_y+\kappa\partial_z)p^{L}=F,\\
   &\Delta p^{L1}=-2ik\partial_yV W,\  W|_{y=\pm1}=0,\ \partial_x W=ik W.
   \end{aligned}\right.
\end{align}
We can decompose $p^{L}=p^{L(0)}+p^{L(1)}$, where
  \begin{align*}
     &\Delta p^{L(0)}=-2ik\partial_yV W,\quad \partial_yp^{L(0)}|_{y=\pm1}=0,\\
     &\Delta p^{L(1)}=0,\quad (\partial_yp^{L(1)}-\nu\Delta W-F)|_{y=\pm1}=0.
  \end{align*}

\begin{Proposition}\label{prop:res-toy}
Let $ W\in H^4(\Omega)$ be a solution of \eqref{eq:LNS-toy}. Then it holds that
\begin{align*}
&\nu^{\f12}|k|\|\nabla  W\|_{L^2} +\nu^{\f34}|k|^{\f12}\|\Delta W\|_{L^2} +\nu|k|^{\f12}\|\Delta W\|_{L^2(\partial\Omega)} +\|\partial_x\nabla p^{L(1)}\|_{L^2}\\
&\quad\leq C\Big(\|\nabla F\|_{L^2} +\nu^{\f12}|k|^{\f12}\|(|k(y-\lambda)|+1)^{\f12}\partial_y W\|_{L^2(\partial\Omega)} +\nu^{\f{11}{12}}|k|^{\f13}\|\partial_y\partial_z W\|_{L^2(\partial \Omega)}\Big),
\end{align*}
and
\begin{align*}
\nu^{\f12}|k|\| W\|_{L^2}\leq& C \max(1-|\lambda|,|\nu/k|^{\f13})\Big(\|\nabla F\|_{L^2}+\nu^{\f12}|k|^{-\f12} \|(|k(y-\lambda)|+1)^{\f12}\partial_y W\|_{L^2(\partial\Omega)} \\
&\quad+\nu^{\f{11}{12}}|k|^{-\f23}\|\partial_y\partial_z W\|_{L^2(\partial \Omega)}\Big) +C\nu^{\f56}|k|^{\f16}\|\partial_y W\|_{L^2(\partial\Omega)}.
\end{align*}
\end{Proposition}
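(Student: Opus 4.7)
\textbf{Proof plan for Proposition \ref{prop:res-toy}.} The strategy is to reduce the problem to the Orr--Sommerfeld resolvent estimates with non-vanishing Neumann boundary data (Proposition \ref{prop:res-NB-ihom}) by taking a Laplacian of the equation and setting $w=\Delta W$, $\varphi=W$.

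First I would apply $\Delta$ to the first equation of \eqref{eq:LNS-toy}. Using
\[
\Delta[ik(V-\lambda)W]=ik(V-\lambda)w+2ik\nabla V\cdot\nabla W+ikW\Delta V,
\]
and, since $\Delta p^{L(0)}=-2ik\partial_yVW$ and $\Delta p^{L(1)}=0$,
\[
\Delta\bigl[(\partial_y+\kappa\partial_z)(p^{L(0)}+p^{L(1)})\bigr]
=-2ik(\partial_y+\kappa\partial_z)(\partial_yV\cdot W)+(\Delta\kappa)\partial_z(p^{L(0)}+p^{L(1)})+2\nabla\kappa\cdot\partial_z\nabla(p^{L(0)}+p^{L(1)}),
\]
I obtain the modified OS equation
\[
-\nu\Delta w+ik(V-\lambda)w-a(\nu k^2)^{1/3}w=\Delta F+R_V(W)+R_P(p^L),
\]
where $R_V$ collects terms with derivatives of $V$ acting on $W$ (small by \eqref{ass:V}) and $R_P$ collects the pressure commutators. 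Since $\Delta\varphi=w$ and $\varphi|_{y=\pm1}=W|_{y=\pm1}=0$, the unknown $W$ plays the role of $\varphi$ in Proposition \ref{prop:res-NB-ihom} and the Neumann data is precisely $\partial_yW|_{y=\pm1}$.

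Second, I would split the right-hand side as $F_1+F_2$ with $F_1\in L^2$ and $F_2=\nabla\cdot\vec{g}\in H^{-1}$ so as to use $\|F_2\|_{H^{-1}}\le \|\vec{g}\|_{L^2}$. Concretely, $\Delta F=\mathrm{div}(\nabla F)$ contributes to $F_2$ with $\|\nabla F\|_{L^2}$; the terms $ikW\Delta V$, $2ik\nabla V\cdot\nabla W$ and the $R_P$-contributions from $p^{L(0)}$ (after using $\Delta p^{L(0)}=-2ik\partial_yVW$ and integrating by parts where needed) are all controlled by $\varepsilon_0$ times quantities on the left-hand side and can therefore be absorbed after the smallness of $\varepsilon_0$ is invoked. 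Applying the first, second and fourth inequalities of Proposition \ref{prop:res-NB-ihom} with the Neumann data $\partial_yW|_{y=\pm1}$ and then rescaling by $\nu^{1/3}|k|^{-1/3}$ produces exactly the left-hand sides $\nu^{1/2}|k|\|\nabla W\|_{L^2}$, $\nu^{3/4}|k|^{1/2}\|\Delta W\|_{L^2}$ and $\nu|k|^{1/2}\|\Delta W\|_{L^2(\partial\Omega)}$, with right-hand side involving $\|\nabla F\|_{L^2}$, the weighted Neumann norms, and the $p^{L(1)}$-contribution.

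Third, I would close the remaining loop by estimating $\|\partial_x\nabla p^{L(1)}\|_{L^2}$. Since $p^{L(1)}$ is harmonic with Neumann data $\partial_yp^{L(1)}|_{y=\pm1}=(\nu\Delta W+F)|_{y=\pm1}$ and $\partial_x p^{L(1)}=ikp^{L(1)}$, separating Fourier modes in $(x,z)$ and using the explicit harmonic extension in $y$ yield
\[
\|\partial_x\nabla p^{L(1)}\|_{L^2}\le C\bigl(\nu|k|^{1/2}\|\Delta W\|_{L^2(\partial\Omega)}+|k|^{1/2}\|F\|_{L^2(\partial\Omega)}\bigr),
\]
where the $F$ boundary-trace is controlled by $\|\nabla F\|_{L^2}$ via the standard trace inequality. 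This estimate feeds the commutator terms $2\nabla\kappa\cdot\partial_z\nabla p^{L(1)}$ appearing in $R_P$: the smallness $\|\nabla\kappa\|_{L^\infty}\le C\varepsilon_0$ ensures these are absorbed into the left-hand side after choosing $\varepsilon_0$ small enough. The $L^2$-bound on $W$ with the gain $\max(1-|\lambda|,|\nu/k|^{1/3})$ then follows by invoking the first inequality of Proposition \ref{prop:res-NB-ihom} instead of the second.

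\textbf{Main obstacle.} The subtlest point is the coupled loop between $\|\partial_x\nabla p^{L(1)}\|_{L^2}$ and $\|\Delta W\|_{L^2(\partial\Omega)}$: the pressure commutator $2\nabla\kappa\cdot\partial_z\nabla p^{L(1)}$ produces a second-order pressure term which, when reinserted into the OS equation, forces $p^{L(1)}$-bounds that themselves depend on the unknown boundary value of $\Delta W$. Closing this feedback requires that the constants multiplying $\|\partial_x\nabla p^{L(1)}\|_{L^2}$ in the OS estimate carry a factor $\varepsilon_0$ (or a positive power of $\nu$) so that they can be absorbed; verifying this absorption, rather than the OS estimate itself, is the delicate step.
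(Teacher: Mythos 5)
Your overall reduction is the one the paper uses: set $w=\Delta W$, view $W$ as the stream function with Neumann data $\partial_yW|_{\partial\Omega}$, estimate $p^{L(1)}$ from its boundary condition $\partial_yp^{L(1)}|_{\partial\Omega}=(\nu\Delta W+F)|_{\partial\Omega}$, and absorb the $\varepsilon_0$-small commutators. You also correctly isolate the delicate feedback between $\|\partial_x\nabla p^{L(1)}\|_{L^2}$ and $\nu|k|^{1/2}\|\Delta W\|_{L^2(\partial\Omega)}$. However, there are two genuine gaps. First, the terms $2ik\nabla V\cdot\nabla W$ and the $p^{L(0)}$-commutator are \emph{not} individually of size $O(\varepsilon_0)$: since $\nabla V\approx(0,1,0)$, the former is essentially $2ik\partial_yW$, of size $|k|\|\nabla W\|_{L^2}$, which cannot be absorbed into $\nu^{\f12}|k|\|\nabla W\|_{L^2}$. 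The argument only works because of the exact algebraic cancellation
\begin{align*}
2ik\nabla V\cdot\nabla W-2ik\,\partial_yV(\partial_y+\kappa\partial_z)W=0,
\end{align*}
which uses $\partial_zV=\kappa\partial_yV$ and is the reason the pressure enters through $(\partial_y+\kappa\partial_z)$; only the residue $ik\big(\Delta V-2(\partial_y+\kappa\partial_z)\partial_yV\big)W=ikh_1W$ is $O(\varepsilon_0)$. Your plan, as written, does not record this cancellation and would leave a non-absorbable term.

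Second, Proposition~\ref{prop:res-NB-ihom} is the wrong resolvent input: it controls the $L^2$-forcing $F_1$ only through $\|F_1\|_{L^2}$ (and $\|(1-y^2)F_1\|_{L^2}$). With $F_1=-ikh_1W-h_2$ one has $\|F_1\|_{L^2}\le C\varepsilon_0|k|\|W\|_{L^2}$, and comparing with the left-hand side $\nu^{\f16}|k|^{\f43}\|\nabla W\|_{L^2}$ this contribution is absorbable only if $\varepsilon_0\lesssim(\nu k^2)^{\f16}$, which fails as $\nu\to0$. What is needed is the inviscid-damping version, Proposition~\ref{prop:res-damp}, whose right-hand side carries the prefactor $\nu^{\f16}|k|^{-\f23}\|\nabla F_1\|_{L^2}$; since $\|\nabla(kh_1W)\|_{L^2}+\|\nabla h_2\|_{L^2}\le C\varepsilon_0|k|\|\nabla W\|_{L^2}$, the resulting term $\varepsilon_0\nu^{\f16}|k|^{\f13}\|\nabla W\|_{L^2}$ is dominated by the left-hand side uniformly in $\nu$ and can be absorbed. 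With these two corrections (and putting $\Delta F$ and $h_3$ into the $H^{-1}$ slot $F_2$, as you do), your scheme coincides with the paper's proof.
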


\begin{proof}
We introduce
\beno
&&h_1(y,z)=\Delta V-2(\partial_y+\kappa\partial_z)\partial_yV,\\
 &&h_2(x,y,z)=2\nabla\kappa\cdot\nabla\partial_zp^{L(0)}+(\Delta \kappa)\partial_zp^{L(0)},\\
 &&h_3(x,y,z)=2\nabla\kappa\cdot\nabla\partial_zp^{L(1)}+(\Delta \kappa)\partial_zp^{L(1)}.
 \eeno
Let $w=\Delta W$, which satisfies
  \begin{align}
    \left\{\begin{aligned}
    &-\nu\Delta w+ik(V-\lambda)w-a(\nu k^2)^{\f13}w=-ikh_1 W-h_2-h_3+\Delta F,\\
    & W|_{y=\pm1}=0,\ \partial_x W=ik W.
    \end{aligned}\right.
  \end{align}
Thanks to  $\|\kappa\|_{H^3}\leq C\varepsilon_0$, it is easy to see that
  \begin{align*}
     & \|h_1\|_{H^2}\leq C\|\nabla^2 V\|_{H^2}+\|\kappa\partial_z\partial_yV\|_{H^2}\leq C\|(V-y)\|_{H^4}+C\|\kappa\|_{H^2}\|\partial_z\partial_yV\|_{H^2}\leq C\varepsilon_0,\\
     &\|h_2\|_{H^1}\leq C\|\nabla\kappa\|_{H^2}\|\nabla\partial_z p^{L(0)}\|_{H^1}+C\|\Delta\kappa\|_{H^1}\|\partial_zp^{L(0)}\|_{H^2}\leq C\|\kappa\|_{H^3}\|\Delta p^{L(0)}\|_{H^1}\\
     &\qquad\quad\leq C\varepsilon_0|k|\|\partial_yV\|_{H^2}\| W\|_{H^1}\leq C\varepsilon_0|k|\|\nabla W\|_{L^2},\\
     &\|h_2\|_{L^2}\leq 2\|\nabla\kappa\cdot\nabla\partial_zp^{L(0)}\|_{L^2}+\|(\Delta \kappa)\partial_zp^{L(0)}\|_{L^2}\leq 2\|\nabla\kappa\|_{L^\infty}\|\nabla\partial_zp^{L(0)}\|_{L^2}\\
     &\qquad\qquad+C\|\Delta\kappa\|_{H^1}\|\partial_zp^{L(0)}\|_{H^1}\leq C\varepsilon\|\Delta p^{L(0)}\|_{L^2}\leq C\varepsilon_0|k|\| W\|_{L^2}.
  \end{align*}
 and for any $f\in H^1$,
  \begin{align*}
     |\langle (\Delta \kappa)\partial_zp^{L(1)},f\rangle|\leq& \|\partial_zp^{L(1)}\|_{L^2}\|(\Delta \kappa)f\|_{L^2}\leq \|\nabla p^{L(1)}\|_{L^2}\|\Delta\kappa\|_{H^1}\|f\|_{H^1}\\
     \leq& C\varepsilon_0\|\nabla p^{L(1)}\|_{L^2}\|f\|_{H^1},
  \end{align*}
which gives
\beno
\|(\Delta \kappa)\partial_zp^{L(1)}\|_{H^{-1}}\leq C\varepsilon_0\|\nabla p^{L(1)}\|_{L^2}.
\eeno
Then  we have
  \begin{align}
     &\|h_2\|_{H^1}+\|\nabla(kh_1 W)\|_{L^2}\leq C\varepsilon_0|k|\|\nabla W\|_{L^2}+ C|k|\|h_1\|_{H^2}\| W\|_{H^1}\leq C\varepsilon_0|k|\|\nabla W\|_{L^2},\label{est: h2 H^1}\\
     &\|h_2\|_{L^2}+\|kh_1 W\|_{L^2}\leq C\varepsilon_0|k|\| W\|_{L^2},\label{est: h2 L^2}\\
      &\|h_3\|_{H^{-1}} \leq 2\|\text{div}(\partial_zp^{L(1)}\nabla\kappa)\|_{H^{-1}}+\|(\Delta \kappa)\partial_zp^{L(1)}\|_{H^{-1}}\label{est: h3 H-1}\\
      &\qquad\quad\leq C\big(\|\partial_zp^{L(1)}\nabla\kappa\|_{L^2}+\varepsilon_0\|\nabla p^{L(1)}\|_{L^2}\big)\leq C\varepsilon_0\|\nabla p^{L(1)}\|_{L^2}.\nonumber
  \end{align}
 We get by integration by parts that
  \begin{align*}
    \|\nabla p^{L(1)}\|_{L^2}^2 &\leq|\langle \Delta p^{L(1)},p^{L(1)}\rangle|+|\langle \partial_y p^{L(1)},p^{L(1)}\rangle_{\partial\Omega}|\leq \|\partial_yp^{L(1)}\|_{L^2(\partial\Omega)}\|p^{L(1)}\|_{L^2(\partial\Omega)}\\
    &\leq \|\partial_yp^{L(1)}\|_{L^2(\partial\Omega)}\|p^{L(1)}\|_{L^2_{x,z}L^\infty_y}\leq C|k|^{-\f12}\|\partial_yp^{L(1)}\|_{L^2(\partial\Omega)}\|\nabla p^{L(1)}\|_{L^2},
  \end{align*}
  then by $(\partial_yp^{L(1)}-\nu\Delta W-F)|_{\partial\Omega}=0$, we have
  \begin{align*}
  \|\partial_x\nabla p^{L(1)}\|_{L^2}&\leq C|k|^{\f12}\|\partial_yp^{L(1)}\|_{L^2(\partial\Omega)}\leq C|k|^{\f12}\big(\nu\|\Delta W\|_{L^2(\partial\Omega)}+\|F\|_{L^2(\partial\Omega)}\big)\\
  &\leq C\nu|k|^{\f12}\|\Delta W\|_{L^2(\partial\Omega)}+C\|\nabla F\|_{L^2}.
\end{align*}

Now it follows from Proposition \ref{prop:res-damp}, \eqref{est: h2 H^1} and \eqref{est: h3 H-1} that
  \begin{align*}
      &\nu^{\f12}|k|\|\nabla  W\|_{L^2} +\nu^{\f34}|k|^{\f12}\|\Delta W\|_{L^2} +\nu|k|^{\f12}\|\Delta W\|_{L^2(\partial\Omega)} +\|\partial_x \nabla p^{L(1)}\|_{L^2}\\
      &\leq \nu^{\f12}|k|\|\nabla  W\|_{L^2} +\nu^{\f34}|k|^{\f12}\|\Delta W\|_{L^2} +C\nu|k|^{\f12}\|\Delta W\|_{L^2(\partial\Omega)} +C\|\nabla F\|_{L^2}\\
      &\leq C\nu^{\f12}|k|^{-1}\big(\|\nabla(kh_1 W)\|_{L^2}+\|\nabla h_2\|_{L^2}\big)+C\big(\|\Delta F\|_{H^{-1}}+ \|h_3\|_{H^{-1}}\big)+C\nu^{\f23}|k|^{\f56}\|\partial_y W\|_{L^2(\partial\Omega)}\\
     &\quad+C\nu^{\f12}|k|^{\f12}\|(|k(y-\lambda)|^{\f14}+(\nu k^2)^{\f{1}{12}})\partial_y W\|_{L^2(\partial\Omega)} +C\nu^{\f{11}{12}}|k|^{\f13}\|\partial_y\partial_z W\|_{L^2(\partial \Omega)}\\
     &\quad+ C\nu^{\f12}|k|^{\f12}\|(|k(y-\lambda)|^{\f12}+(\nu k^2)^{\f{1}{6}})\partial_y W\|_{L^2(\partial\Omega)} + C\nu|k|^{\f12}\|\partial_y\partial_z W\|_{L^2(\partial\Omega)} +C\|\nabla F\|_{L^2}\\
     &\leq C\varepsilon_0\big(\nu^{\f12}\|\nabla W\|_{L^2}+\|\nabla p^{L(1)}\|_{L^2}\big)+C\|\nabla F\|_{L^2} +C\nu^{\f12}|k|^{\f12}\|(|k(y-\lambda)|+1)^{\f12}\partial_y W\|_{L^2(\partial\Omega)} \\&\quad+C\nu^{\f{11}{12}}|k|^{\f13}\|\partial_y\partial_z W\|_{L^2(\partial \Omega)}.
  \end{align*}
 Taking $\veps_0$ small enough so that  $C\varepsilon_0\leq 1/2$,  we conclude that
  \begin{align}\label{est: nabla Delta varphi nabla p}
     &\nu^{\f12}|k|\|\nabla  W\|_{L^2} +\nu^{\f34}|k|^{\f12}\|\Delta W\|_{L^2} +\nu|k|^{\f12}\|\Delta W\|_{L^2(\partial\Omega)} +\|\partial_x\nabla p^{L(1)}\|_{L^2}\\
     &\quad\leq C\Big(\|\nabla F\|_{L^2} +\nu^{\f12}|k|^{\f12}\|(|k(y-\lambda)|+1)^{\f12}\partial_y W\|_{L^2(\partial\Omega)} +\nu^{\f{11}{12}}|k|^{\f13}\|\partial_y\partial_z W\|_{L^2(\partial \Omega)}\Big).\nonumber
  \end{align}
  By Proposition \ref{prop:res-damp}, \eqref{est: h2 H^1}, \eqref{est: h2 L^2} and \eqref{est: nabla Delta varphi nabla p}, we get
  \begin{align*}
     \nu^{\f12}|k|\| W\|_{L^2}\leq& C\nu^{\f12}|k|^{-1}\max(1-|\lambda|,|\nu/k|^{\f13})\big(\|\nabla(kh_1 W)\|_{L^2}+\|\nabla h_2\|_{L^2}\big)+C\nu^{\f12}|k|^{-1}\times\\
     &\quad\big(\|kh_1 W\|_{L^2}+\|h_2\|_{L^2}\big)+C\max(1-|\lambda|,|\nu/k|^{\f13})\big(\|\Delta F\|_{H^{-1}}+\|h_3\|_{H^{-1}}\big)\\
     &\quad+C\nu^{\f56}|k|^{\f16}\|\partial_y W\|_{L^2(\partial\Omega)}\\
     &\leq C\max(1-|\lambda|,|\nu/k|^{\f13})(\nu^{\f12}\|\nabla W\|_{L^2}+\|\nabla F\|_{L^2} +|k|^{-1}\|\partial_x\nabla p^{L(1)}\|_{L^2})\\
     &\quad+ C\varepsilon_0\nu^{\f12}\| W\|_{L^2} +C\nu^{\f56}|k|^{\f16}\|\partial_y W\|_{L^2(\partial\Omega)}\\
     \leq& C \max(1-|\lambda|,|\nu/k|^{\f13})\Big(\|\nabla F\|_{L^2}+\nu^{\f12}|k|^{-\f12} \|(|k(y-\lambda)|+1)^{\f12}\partial_y W\|_{L^2(\partial\Omega)} \\
     &\quad+\nu^{\f{11}{12}}|k|^{-\f23}\|\partial_y\partial_z W\|_{L^2(\partial \Omega)}\Big)+ C\varepsilon_0\nu^{\f12}\| W\|_{L^2} +C\nu^{\f56}|k|^{\f16}\|\partial_y W\|_{L^2(\partial\Omega)}.
  \end{align*}
which gives by taking $\veps_0$ small enough so that  $C\varepsilon_0\leq 1/2$ that
  \begin{align*}
  \nu^{\f12}|k|\| W\|_{L^2}&\leq C \max(1-|\lambda|,|\nu/k|^{\f13})\Big(\|\nabla F\|_{L^2}+\nu^{\f12}|k|^{-\f12} \|(|k(y-\lambda)|+1)^{\f12}\partial_y W\|_{L^2(\partial\Omega)} \\
     &\quad+\nu^{\f{11}{12}}|k|^{-\f23}\|\partial_y\partial_z W\|_{L^2(\partial \Omega)}\Big) +C\nu^{\f56}|k|^{\f16}\|\partial_y W\|_{L^2(\partial\Omega)}.
  \end{align*}

This completes the proof of the proposition.
\end{proof}

\subsection{Proof of  Proposition \ref{prop:res-full-s2}  when $|\la|\ge 1-\nu^\f13|k|^{-\f13}$}
Let
\beno
G_3=G_1+\nu(\Delta \kappa)U+2\nu\nabla\kappa\cdot\nabla U.
\eeno
It follows from Proposition \ref{prop:res-toy} that
\begin{align*}
&\nu|k|\|\Delta W\|_{L^2}+\nu^{\f{1}{2}}|k|\|\nabla W\|_{L^2}\leq \nu^{\f34}|k|^{\f12}\|\Delta W\|_{L^2}+\nu^{\f{1}{2}}|k|\|\nabla W\|_{L^2}\leq C\|\nabla G_3\|_{L^2},\\
&\nu^{\f{1}{2}}|k|\|W\|_{L^2}\leq C\nu^{\f13}|k|^{-\f13}\|\nabla G_3\|_{L^2},\\
&\|\partial_x\nabla p^{L(1)}\|_{L^2}\leq C\|\nabla G_3\|_{L^2},
\end{align*}
where we used $|\la|\ge 1-\nu^\f13|k|^{-\f13}$ for the second inequality.

Since $\Delta p^{L(0)}=-2ik\partial_yVW,\ \partial_yp^{L(0)}|_{y=\pm1}=0$ and $\|\nabla V\|_{L^\infty}+\|\nabla^2 V\|_{L^\infty}\leq C$, we can deduce that
\begin{align}
&\| \nabla\partial_zp^{L(0)}\|_{L^2}\leq C\| \Delta p^{L(0)}\|_{L^2}\leq C|k|\|W\|_{L^2}\leq C\nu^{-\f16}|k|^{-\f13}\|\nabla G_3\|_{L^2},\label{eq: nabla pz p}\\&\| \partial_zp^{L(0)}\|_{H^2}\leq C\| \Delta \partial_zp^{L(0)}\|_{L^2}\leq C|k|\| \nabla W\|_{L^2}\leq C\nu^{-\f12}\|\nabla G_3\|_{L^2}.\label{eq: partial z p H2}
\end{align}
By Proposition \ref{prop:res-nav-s1} and \eqref{eq: nabla pz p}, we have
\begin{align*}
&\nu\|\Delta U\|_{L^2}\leq C\nu^{\f16}|k|^{-\f23}\|\nabla \partial_zp^{L(0)}\|_{L^2}\leq C|k|^{-1}\|\nabla G_3\|_{L^2}.
\end{align*}
Thanks to $ \|\kappa\|_{H^3}\leq C\varepsilon_0$, we have
\begin{align*}
\|\nabla G_3\|_{L^2}\leq&\|\nabla G_1\|_{L^2}+C\nu\|\Delta \kappa\|_{H^1} \|U\|_{H^2}+C\nu\|\nabla\kappa\|_{H^2}\|\nabla U\|_{H^1}\\ \leq&\|\nabla G_1\|_{L^2}+C\nu\varepsilon_0 \|\Delta U\|_{L^2}\leq\|\nabla G\|_{L^2}+C\varepsilon_0|k|^{-1} \|\nabla G_3\|_{L^2}.
\end{align*}
Taking $\varepsilon_0$ small enough so that $C\varepsilon_0<1/2 $, we conclude that\begin{align*}
&\|\nabla G_3\|_{L^2}\leq2\|\nabla G_1\|_{L^2},\quad \nu\|\Delta U\|_{L^2}\leq C|k|^{-1} \|\nabla G_1\|_{L^2}.
\end{align*}
Summing up, we obtain
\begin{align*}
   &\|\partial_x\nabla W\|^2_{L^2} +\nu\|\partial_x\Delta W\|_{L^2}^2+\nu\|\partial_x\Delta U\|^2_{L^2}  +\nu^{-1}\|\partial_x\nabla p^{L(1)}\|^2_{L^2} \leq C\nu^{-1}\|\nabla G_1\|^2_{L^2}.
\end{align*}
By Proposition \ref{prop:res-nav-good} and \eqref{eq: partial z p H2}, we have\begin{align*}
&\nu^{\f{1}{3}}\big(\|\partial_x^2U\|_{L^2}^2+\|\partial_x(\partial_z-\kappa\partial_y)U\|_{L^2}^2)+\nu\big(\|\nabla\partial_x^2U\|_{L^2}^2+\|\nabla\partial_x (\partial_z-\kappa\partial_y)U\|_{L^2}^2\big)\\ &\leq C\| \partial_zp^{L(0)}\|_{H^2}^2\leq C\nu^{-1}\|\nabla G_3\|_{L^2}^2\leq C\nu^{-1}\|\nabla G_1\|_{L^2}^2.
\end{align*}

This finished the proof of Proposition \ref{prop:res-full-s2}  when  $|\la|\ge 1-\nu^\f13|k|^{-\f13}$.

\begin{remark}\label{rem:la small}
We assume $|\la|\le 1-\nu^\f13|k|^{-\f13}$ in section 8.3-8.6.
Then \eqref{eq: partial z p H2} still holds true
since $\nu^{\f{1}{2}}|k|\|\nabla W\|_{L^2}\leq C\|\nabla G_3\|_{L^2}$ by Proposition \ref{prop:res-toy}.
Thus, we get by Proposition \ref{prop:res-nav-s1} that
\begin{align}
\nu^{\f23}|k|^{\f13}\|\nabla U\|_{L^2} +(\nu k^2)^{\f13}\|U\|_{L^2} +\nu\|\Delta U\|_{L^2}&\leq C\nu^{\f16}|k|^{-\f23}\|\nabla \partial_zp^{L(0)}\|_{L^2}\nonumber\\& \leq C\nu^{\f16}|k|^{-\f23}\|\Delta p^{L(0)}\|_{L^2} \leq C\nu^{\f16}|k|^{\f13}\|W\|_{L^2}.\label{eq: u priori}
\end{align}
\end{remark}

\subsection{Singular part of $W$}

To deal with main trouble term $-2\nu\nabla\kappa\cdot\nabla U$ appeared in \eqref{eq:LNS-full-s2}, we introduce the singular part of
$W$. Let
\begin{align}\label{eq: rho1 rho2}
   &\rho_1=\f{\partial_y\kappa+\kappa\partial_z\kappa}{\partial_yV(1+\kappa^2)}, \quad\rho_2=\f{\partial_z\kappa-\kappa\partial_y\kappa}{1+\kappa^2}.
\end{align}
We introduce $ W_s$ and $\tW_s$ defined by
\begin{align}\label{f1f2}
  \left\{
  \begin{aligned}
   &-\nu\Delta  W_s+ik(V(y,z)-\lambda) W_s-a(\nu k^2)^{1/3} W_s+\rho_1\nabla V\cdot\nabla U=0,\\&-\nu\Delta \tW_s+ik(V(y,z)-\lambda)\tW_s-a(\nu k^2)^{1/3}\tW_s+\nabla V\cdot\nabla U=0,\\
   & W_s|_{y=\pm1}=\tW_s|_{y=\pm1}=0,\ \partial_x W_s=ik W_s,\ \partial_x\tW_s=ik\tW_s.
   \end{aligned}\right.
\end{align}
By Proposition \ref{prop:res-nav-s1}, we have
\begin{align}
&\nu^{\f23}|k|^{\f13}\|\nabla  W_s\|_{L^2}+(\nu k^2)^{\f13}\| W_s\|_{L^2} +\nu\|\Delta  W_s\|_{L^2}+|k|\|(V-\lambda) W_s\|_{L^2}\nonumber\\
&\quad\leq C\|\rho_1\nabla V\cdot\nabla U\|_{L^2}\leq C \|\rho_1\|_{L^\infty}\|\nabla V\|_{L^\infty}\|\nabla U\|_{L^2}\leq  C\varepsilon_0\|\nabla U\|_{L^2},\label{est: f2}
\end{align}
and
\begin{align}
&\nu^{\f23}|k|^{\f13}\|\nabla \tW_s\|_{L^2}+(\nu k^2)^{\f13}\|\tW_s\|_{L^2} +\nu\|\Delta \tW_s\|_{L^2}+|k|\|(V-\lambda)\tW_s\|_{L^2}\nonumber\\
&\quad\leq C\|\nabla V\cdot\nabla U\|_{L^2}\leq C \|\nabla V\|_{L^\infty}\|\nabla U\|_{L^2}\leq  C\|\nabla U\|_{L^2}.\label{est: f1}
\end{align}

Thanks to $\partial_xV=0$, we have
\begin{align*}
   &\partial_x(\partial_z-\kappa\partial_y) (\nabla V\cdot\nabla U)=(\partial_z-\kappa\partial_y) (\nabla V)\cdot(\nabla \partial_xU)+\nabla V\cdot\partial_x(\partial_z-\kappa\partial_y)\nabla U,\\&\partial_x(\partial_z-\kappa\partial_y)\nabla U=\nabla \partial_x(\partial_z-\kappa\partial_y)U+(\nabla \kappa)\partial_x\partial_yU,\\
   &\partial_x^2(\nabla V\cdot\nabla U)=\nabla V\cdot(\nabla \partial_x^2 U),
\end{align*}
and
\begin{align}\label{u3xz}
 \|\partial_x(\partial_z-\kappa\partial_y)\nabla U\|_{L^2}\leq C\big(\|\nabla \partial_x(\partial_z-\kappa\partial_y)U\|_{L^2}+\|\nabla\partial_x^2U\|_{L^2}\big).
\end{align}
Then we infer from Proposition \ref{prop:res-nav-good} that
\begin{align}
&(\nu k^2)^{\f13}\|(\partial_z-\kappa\partial_y)\tW_s\|_{L^2}= |k|^{-1}(\nu k^2)^{\f13}\|\partial_x(\partial_z-\kappa\partial_y)\tW_s\|_{L^2}\nonumber\\ &\leq  C|k|^{-1}\big(\|\partial_x^2(\nabla V\cdot\nabla U)\|_{L^2}+ \|\partial_x(\partial_z-\kappa\partial_y)(\nabla V\cdot\nabla U)\|_{L^2}\big)\nonumber\\
&\leq  C|k|^{-1}\Big(\| \nabla V\cdot\nabla \partial_x^2U\|_{L^2}+ \|\nabla V\cdot[\nabla \partial_x(\partial_z-\kappa\partial_y)U]\|_{L^2} +\| \partial_x\partial_yU\nabla\kappa\cdot\nabla V\|_{L^2} \nonumber\\
&\qquad+\|(\partial_z-\kappa\partial_y) (\nabla V)\cdot(\nabla \partial_xU)\|_{L^2}\Big)\nonumber\\
&\leq  C|k|^{-1}\|\nabla V\|_{L^\infty}\Big(\|\nabla \partial_x^2 U\|_{L^2}+ \|\nabla \partial_x(\partial_z-\kappa\partial_y)U\|_{L^2} +\|\nabla\kappa\|_{L^\infty}\|\nabla \partial_xU\|_{L^2}\Big) \nonumber\\
&\qquad+C|k|^{-1}\|(\partial_z-\kappa\partial_y) (\nabla V)\|_{L^\infty}\|(\nabla \partial_xU)\|_{L^2}\nonumber\\
&\leq C|k|^{-1}\big(\|\nabla\partial_x^2U\|_{L^2} +\|\nabla\partial_x(\partial_z-\kappa\partial_y)U\|_{L^2}\big)\nonumber\\
&= C\big(\|\nabla\partial_xU\|_{L^2} +\|\nabla(\partial_z-\kappa\partial_y)U\|_{L^2}\big).\label{est: good f1}
\end{align}

Furthermore, we find that
\begin{align*}
  \left\{
  \begin{aligned}
   &-\nu\Delta ( W_s-\rho_1\tW_s)+ik(V(y,z)-\lambda)( W_s-\rho_1\tW_s)-a(\nu k^2)^{1/3}( W_s-\rho_1\tW_s)\\
   &\qquad=-\nu(\Delta\rho_1)\tW_s+2\nu\text{div}(\tW_s\nabla\rho_1),\\
   &( W_s-\rho_1\tW_s)|_{y=\pm1}=0,\ \partial_x( W_s-\rho_1\tW_s)=ik( W_s-\rho_1\tW_s).
   \end{aligned}\right.
\end{align*}
Let $h_1$ solve $\Delta h_1=\nu(\Delta\rho_1)\tW_s,\ h_1|_{y=\pm1}=0$. By Proposition \ref{prop:res-nav-s1} and Lemma \ref{Lem: bil good deri}, we have
\begin{align*}
&\|\nabla ( W_s-\rho_1\tW_s)\|_{L^2} +\nu^{-\f13} |k|^{\f13}\|( W_s-\rho_1\tW_s)\|_{L^2}\\ &\leq C\big(\nu^{-1}\|\nabla
h_1\|_{L^2}+\|\tW_s\nabla\rho_1\|_{L^2}\big)\leq C\|\rho_1\|_{H^2}\big(\|\tW_s\|_{L^2}+\|(\partial_z-\kappa\partial_y)\tW_s\|_{L^2}\big)
\\&\leq C\varepsilon_0\big(\|\tW_s\|_{L^2}+\|(\partial_z-\kappa\partial_y)\tW_s\|_{L^2}\big),
\end{align*}
which along with \eqref{est: good f1} and \eqref{est: f1} gives
\begin{align}\label{est: f2-rho1f1}
&\|\nabla ( W_s-\rho_1\tW_s)\|_{L^2} +|\nu/k|^{-\f13}\|( W_s-\rho_1\tW_s)\|_{L^2}\\ \nonumber&\qquad\leq C\varepsilon_0(\nu k^2)^{-\f13}\big(\|\nabla\partial_xU\|_{L^2} +\|\nabla(\partial_z-\kappa\partial_y)U\|_{L^2}\big).
\end{align}

\subsection{Boundary corrector of  $U$}

\begin{Lemma}\label{lem:UB}
  If $\nu k^2\leq 1,\ |\lambda|\leq 1-|\nu/k|^{\f13}$, then there exists $U_b$ so that
  \begin{align*}
  \left\{
\begin{aligned}
&-\nu\Delta U_b+ik(V(y,z)-\lambda)U_b-a(\nu k^2)^{1/3}U_b=f_b,\\
&-\nu\Delta^2 U_b+ik(V(y,z)-\lambda)\Delta U_b-a(\nu k^2)^{1/3}\Delta U_b=F_b,\\
& (ik(V(y,z)-\lambda)U_b-a(\nu k^2)^{1/3}U_b+\partial_zp^{L(0)})|_{y=\pm 1}=0,\\
& \Delta (U- U_b)|_{y=\pm 1}=0,\\
&U_b=0\quad \text{for}\quad |y|\leq(3+|\lambda|)/4,
\end{aligned}\right.
\end{align*}
where $f_b|_{y=\pm 1}=0$ and
\begin{align}
  &\|f_b\|_{L^2}+\|(1-|y|)\nabla f_b\|_{L^2}+ |k|\|(1-|y|)U_b\|_{L^2}\leq C\nu^{\f34}|k|^{-\f14}(1-|\lambda|)^{-\f74} \|W\|_{L^2}.
  \label{eq: w12 estimate}\\
   & \|F_b\|_{L^2}\leq C\nu^{-\f14}|k|^{\f34}(1-|\lambda|)^{-\f34}\|W\|_{L^2},
   \label{eq: w3 estimate}\\
   &\|\nabla G_b\|_{L^2}\leq C\nu^{\f14}|k|^{\f14}(1-|\lambda|)^{-\f54} \|W\|_{L^2}, \label{eq: w4 estimate}
\end{align}
with $G_b$ solving $\Delta G_b=F_b, G_b|_{y=\pm 1}=0$. Moreover,
\begin{align*}
   &\|\kappa U_b\|_{H^1}\leq C\varepsilon_0\nu^{\f14}|k|^{-\f34}(1-|\lambda|)^{-\f54}\|W\|_{L^2},\\
   &\|\kappa U_b\|_{H^2}\leq C\varepsilon_0\nu^{-\f14}|k|^{-\f14}(1-|\lambda|)^{-\f34}\|W\|_{L^2}.
\end{align*}
\end{Lemma}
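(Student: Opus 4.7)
The plan is to construct $U_b$ explicitly as a quadratic polynomial in $(y\mp 1)$ with $z$-dependent coefficients, localized by cutoffs on the natural boundary-layer scale $\delta_\star:=\sqrt{\nu/(|k|(1-|\lambda|))}$. Setting $\gamma_\pm(x,z):=\partial_z p^{L(0)}(x,\pm 1,z)$, the prescribed boundary condition $(ik(V-\lambda)-a(\nu k^2)^{1/3})U_b+\partial_z p^{L(0)}=0$ at $y=\pm 1$ (using $V|_{y=\pm 1}=\pm 1$) forces
$$U_b|_{y=\pm 1}=\alpha_\pm:=\frac{\gamma_\pm}{a(\nu k^2)^{1/3}-ik(\pm 1-\lambda)},$$
with $|a(\nu k^2)^{1/3}-ik(\pm 1-\lambda)|\gtrsim|k|(1-|\lambda|)$ thanks to $|\lambda|\leq 1-|\nu/k|^{1/3}$. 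Since $U|_{y=\pm 1}=0$, the $U$-equation yields $\Delta U|_{y=\pm 1}=\nu^{-1}\gamma_\pm$, so the matching $\Delta(U-U_b)|_{y=\pm 1}=0$ requires $\Delta U_b|_{y=\pm 1}=\beta_\pm:=\nu^{-1}\gamma_\pm$. The admissibility constraint $(1-|\lambda|)^3|k|\geq\nu$ is equivalent (up to constants) to $\delta_\star\lesssim 1-|\lambda|$, ensuring a sufficiently narrow boundary-layer cutoff fits inside the allowed strip $|y|>(3+|\lambda|)/4$.

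I would pick $\chi_\pm\in C^\infty_c(\mathbb{R})$ supported in $1-|y|\leq c\delta_\star$ (for a small absolute constant $c$), equal to $1$ near $y=\pm 1$, with $|\chi_\pm^{(j)}|\leq C\delta_\star^{-j}$, and set
$$U_b=e^{ikx}\sum_{\pm}\chi_\pm(y)\Big[\alpha_\pm(x,z)+\tfrac{(y\mp 1)^2}{2}\big(\beta_\pm(x,z)-\partial_z^2\alpha_\pm(x,z)\big)\Big].$$
Since $\chi_\pm\equiv 1$ and $\chi_\pm^{(j)}\equiv 0$ ($j\geq 1$) in a neighborhood of $y=\pm 1$, a direct computation gives $U_b|_{y=\pm 1}=\alpha_\pm$ and $\Delta U_b|_{y=\pm 1}=(\beta_\pm-\partial_z^2\alpha_\pm)+\partial_z^2\alpha_\pm=\beta_\pm$; the support condition and $f_b|_{y=\pm 1}=0$ are built in. Note that by the choice of $\delta_\star$, the two terms in the bracket are comparable on the support of $\chi_\pm$: both $|\alpha_\pm|$ and $\delta_\star^2|\beta_\pm|$ are of size $|\gamma_\pm|/(|k|(1-|\lambda|))$.

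The key analytic ingredient is the trace bound $\|\gamma_\pm\|_{L^2(\partial\Omega)}\leq C|k|^{1/2}\|W\|_{L^2}$ (and analogues for $\partial_z^j\gamma_\pm$, $j=1,2$). These follow from $\Delta p^{L(0)}=-2ik(\partial_yV)W$ with $\partial_y p^{L(0)}|_{y=\pm 1}=0$: the $x$-Poincar\'e bound $\|p^{L(0)}\|_{L^2}\leq|k|^{-1}\|\partial_x p^{L(0)}\|_{L^2}$ combined with testing against $p^{L(0)}$ yields $\|\nabla p^{L(0)}\|_{L^2}\leq C\|W\|_{L^2}$, whence $\|p^{L(0)}\|_{H^2}\leq C|k|\|W\|_{L^2}$ by elliptic regularity; the trace inequality $\|g\|_{L^2(\partial\Omega)}^2\lesssim\|g\|_{L^2}\|\partial_y g\|_{L^2}+\|g\|_{L^2}^2$ then closes the bound (commuting $\partial_z$ with the $p^{L(0)}$-equation handles $j=1,2$). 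With these in hand, all the required norm bounds reduce to integrating polynomials in $(y\mp 1)$ over a strip of width $\delta_\star$: for instance,
$$\|(1-|y|)U_b\|_{L^2}^2\lesssim\delta_\star^3|\alpha_\pm|^2\lesssim\delta_\star^3\cdot\frac{|k|\|W\|_{L^2}^2}{k^2(1-|\lambda|)^2}\sim\frac{\nu^{3/2}\|W\|_{L^2}^2}{|k|^{5/2}(1-|\lambda|)^{7/2}},$$
which yields the third piece of \eqref{eq: w12 estimate}. The bounds on $f_b$, $F_b$, $G_b$, and $\kappa U_b$ are obtained analogously by expanding $f_b=(-\nu\Delta+ik(V-\lambda)-a(\nu k^2)^{1/3})U_b$ and $F_b=\Delta f_b-2ik(\nabla V)\cdot\nabla U_b-ik(\Delta V)U_b$; the $\|\kappa U_b\|_{H^j}$ estimate gains an $\veps_0$ factor from $\|\kappa\|_{H^3}\leq C\veps_0$.

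The main obstacle is a careful bookkeeping of powers of $\delta_\star$, $(1-|\lambda|)$, $\nu$, and $|k|$ when distributing two Laplacians across $\chi_\pm P(y\mp 1,z)$: each derivative of $\chi_\pm$ costs $\delta_\star^{-1}$, each factor of $(y\mp 1)$ gains $\delta_\star$, and the two competing parts of the polynomial (the main part $\alpha_\pm$ and the correction $\delta_\star^2\beta_\pm$) must be tracked term by term. A secondary technical point is that the $\partial_z^2\alpha_\pm$ appearing in the coefficient requires trace estimates on $\partial_z^2\gamma_\pm$, obtained by commuting $\partial_z$ with the $p^{L(0)}$-equation and absorbing the $\kappa$-dependent remainders using the smallness $\|\kappa\|_{H^3}\leq C\veps_0$ of the perturbation.
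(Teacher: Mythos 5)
Your overall architecture (match the Dirichlet datum forced by the boundary relation and the value of $\Delta U_b$ on $\Gamma_{\pm1}$, localize near the walls, reduce everything to trace bounds on $\partial_zp^{L(0)}$) is the right one, and your trace estimate $\|\gamma_\pm\|_{L^2}\lesssim |k|^{1/2}\|W\|_{L^2}$ is correct. But the quadratic polynomial profile breaks the key estimate \eqref{eq: w12 estimate}. The coefficient of $(y\mp1)^2/2$ is forced to have size $|\beta_\pm|=\nu^{-1}|\gamma_\pm|$, so $f_b$ contains the uncancelled term $ik(V-\lambda)\frac{(y\mp1)^2}{2}\beta_\pm$, which at distance $|y\mp1|\sim\delta_\star$ has pointwise size $|k|(1-|\lambda|)\delta_\star^2\nu^{-1}|\gamma_\pm|\sim|\gamma_\pm|$; integrating over the strip gives $\|f_b\|_{L^2}\gtrsim\delta_\star^{1/2}\|\gamma_\pm\|_{L^2_{x,z}}\sim\nu^{1/4}|k|^{1/4}(1-|\lambda|)^{-1/4}\|W\|_{L^2}$, which exceeds the required bound $\nu^{3/4}|k|^{-1/4}(1-|\lambda|)^{-7/4}\|W\|_{L^2}$ by the factor $\big((1-|\lambda|)^3|k|/\nu\big)^{1/2}\geq 1$ --- equal to $1$ only at the borderline $1-|\lambda|=|\nu/k|^{1/3}$ and as large as $|k/\nu|^{1/2}$ when $1-|\lambda|\sim1$. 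Shrinking the cutoff width does not rescue this: $\nu(\Delta\chi_\pm)\frac{(y\mp1)^2}{2}\beta_\pm$ is also of pointwise size $|\gamma_\pm|$ on ${\rm supp}\,\Delta\chi_\pm$ for any width, and narrowing the cutoff blows up the $F_b$ bound. The underlying obstruction is that no finite-degree polynomial approximately solves $-\nu\phi''+\big(\nu\eta^2+ik(\pm1-\lambda)-a(\nu k^2)^{1/3}\big)\phi=0$ across a layer of width $\delta_\star$, since its true solutions decay on exactly that scale.

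The paper instead takes $U_b=\sum_{j=\pm1}\theta_jw_{1,j}$, where for each transverse mode $\ell$ the profile is the exact exponential solution $\exp\big(-(1-jy)\sqrt{\eta^2-a|k/\nu|^{2/3}-ik(j+\lambda)j/\nu}\big)$ of the frozen constant-coefficient equation times the boundary datum, and $\theta_j$ is a cutoff on the scale $1-|\lambda|$. Because the frozen operator annihilates $w_{1,j}$ exactly, $f_b$ consists only of the cutoff commutators and the coefficient error $ik(V-j)\theta_jw_{1,j}$, both of which carry the needed extra smallness ($|V-j|\lesssim 1-|y|$, and $\nabla\theta_j$ lives where $w_{1,j}$ has already decayed). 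Replacing your polynomial by this exponential synthesis is essential; your remaining bookkeeping and trace arguments would then go through much as in the paper. Two smaller points: with your ansatz $\Delta U_b|_{y=\pm1}=\beta_\pm-k^2\alpha_\pm$, not $\beta_\pm$ (you dropped $\partial_x^2U_b$); and the conditions $f_b|_{y=\pm1}=0$ and $\Delta(U-U_b)|_{y=\pm1}=0$ are mutually consistent only with the sign $-\partial_zp^{L(0)}$ in the boundary relation (as in the paper's proof), not the $+$ sign printed in the statement, so fix the convention before computing.
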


\begin{proof}
 We introduce
\begin{align*}
   &\phi_{-1}(k,y,\ell)=\exp\bigg(-(1+y)\sqrt{\eta^2-a|k/\nu|^{\f23}-ik(1+\lambda)/\nu}\bigg),\\
   &\phi_1(k,y,\ell)=\exp\bigg(-(1-y)\sqrt{\eta^2-a|k/\nu|^{\f23}+ik(1-\lambda)/\nu}\bigg),
\end{align*}
here $\eta=\sqrt{k^2+\ell^2}$ and $\sqrt{z}$ be the branch of the square root defined on the complement of the non-positive real numbers. Then for $j\in\{\pm1\}$, $\phi_{j}$ satisfies
\begin{align*}
   \left\{\begin{aligned}
   &-\nu(\partial_y^2-\eta^2) \phi_j+ik(j-\lambda)\phi_j-a(\nu k^2)^{1/3}\phi_j=0,\\
   &\phi_j|_{y=j}=1.
   \end{aligned}\right.
\end{align*}
Furthermore,  due to   $2a|k/\nu|^{\f23}\leq |k(1\pm \lambda)/\nu|$, we deduce from Lemma \ref{lem:AB-ineq} that
\begin{align*}
     &|\phi_{-1}(k,y,\ell)|\leq \exp\big(-c(1+y)(\eta^2+|k(1+\lambda)/\nu|)^{\f12}\big) \leq C\exp\big(-c(1-|y|)|k(1+\lambda)/\nu|^{\f12}\big),\\
     &|\phi_{1}(k,y,\ell)|\leq \exp\big(-c(1-y)(\eta^2+|k(1-\lambda)/\nu|)^{\f12}\big) \leq C\exp\big(-c(1-|y|)|k(1-\lambda)/\nu|^{\f12}\big),\\
     &|(k,\partial_y,\ell)^m\phi_{j}(k,y,\ell)| \leq C(\eta^2+|k(1-\lambda)/\nu|)^{\f m2}\exp\big(-c|j-y|(\eta^2+|k(j-\lambda)/\nu|)^{\f12}\big),\end{align*}
  and for $m\in\mathbb{N},\ \alpha\geq 0,\ j\in\{\pm 1\}$,
  \begin{align}\label{mL2}&\|(1-|y|)^{\alpha}(k,\partial_y,l)^m\phi_{j}(k,y,l)\|_{L_y^2}\\ \nonumber&\leq C\big(\eta^2+|k(j-\lambda)/\nu|\big)^{\f m2}\||j-y|^{\alpha}\exp\big(-c|j-y|(\eta^2+|k(j-\lambda)/\nu|)^{\f12}\big)\|_{L^2(I)}\\ \nonumber&\leq C\big(\eta^2+|k(j-\lambda)/\nu|\big)^{\f m2-\f{\alpha}2-\f14}.
  \end{align}

\def\varphi{W}
We denote
  \begin{align*}
 w^{(j)}(x,z)=\f{\partial_zp^{L(0)}(x,j,z)}{ik(j-\lambda)-a(\nu k^2)^{\f13}},\quad \hat{w}^{(j)}(k,\ell)=\f{1}{(2\pi)^2}\int_{\mathbb{T}^2}w^{(j)} (x,z)e^{-ikx-i\ell z}dxdz,
  \end{align*}
and let $w_{1,j}$ be defined by
  \begin{align*}
     &w_{1,j}(x,y,z)=\sum_{\ell\in\mathbb{Z}}\phi_{j}(k,y,\ell)\hat{w}^{(j)}(k,l)e^{ikx+i\ell z}.
  \end{align*}
Then we have
  \begin{align}\label{eq: w1,j equations}\left\{\begin{aligned}
     &-\nu\Delta w_{1,j}+ik(j-\lambda)w_{1,j}-a(\nu k^2)^{1/3}w_{1,j}=0,\\ &(ik(V-\lambda)w_{1,j}-a(\nu k^2)^{\f13}w_{1,j}-\partial_zp^{L(0)})|_{y=j}=0.\end{aligned}\right.
  \end{align}
Using the fact that
  \begin{align}\nonumber\|\partial_zp^{L(0)}\|_{H^{1/2}(\Gamma_j)}\leq& \|p^{L(0)}\|_{H^{3/2}(\Gamma_j)}\leq C\|p^{L(0)}\|_{H^{2}}\\ \nonumber\leq& C\|\Delta p^{L(0)}\|_{L^{2}}\leq  C|k|\|\partial_yV\|_{L^\infty}\|\varphi\|_{L^2}\leq  C|k|\|\varphi\|_{L^2},
  \end{align}
  we deduce that
  \begin{align}
   \label{omH1/2}
    &\|w^{(j)}\|_{H^{1/2}}\leq |k|^{-1}(1-|\lambda|)^{-1}\|\partial_zp^{L(0)}\|_{H^{1/2}(\Gamma_j)} \leq C(1-|\lambda|)^{-1}\|\varphi\|_{L^2},\\ \label{omL2}
    &\| w^{(j)}\|_{L^2}\leq |k|^{-\f12}\|w^{(j)}\|_{H^{1/2}}\leq C|k|^{-\f12}(1-|\lambda|)^{-1}\|\varphi\|_{L^2}.
  \end{align}
 Thus, by Plancherel's theorem, \eqref{mL2} and \eqref{omL2}, we deduce that for $\al\ge m$,
  \begin{align}
   \|(1-|y|)^{\alpha}\nabla^m w_{1,j}\|_{L^2}=& C\Big(\sum_{\ell\in\mathbb{Z}}\|(1-|y|)^{\alpha}(k,\partial_y,l)^m\phi_{j}(k,y,\ell)\|_{L_y^2}^2| \hat{w}^{(j)}(k,l)|^2\Big)^{\f12} \nonumber\\
    \leq & C\Big(\sum_{\ell\in\mathbb{Z}}(\eta^2+|k(1-\lambda)/\nu|)^{m-\alpha-\f12}| \hat{w}^{(j)}(k,\ell)|^2\Big)^{\f12}\nonumber\\
    \leq& C|k(1-\lambda)/\nu|^{\frac{m-\alpha}{2}-\f14}\Big(\sum_{\ell\in\mathbb{Z}}|\hat{w}^{j}(k,\ell)|^2\Big)^{\f12}\nonumber\\
    \leq& C|k(1-\lambda)/\nu|^{\frac{m-\alpha}{2}-\f14}\|w^{(j)}\|_{L^2}\nonumber\\ \leq& C|k(1-\lambda)/\nu|^{\frac{m-\alpha}{2}-\f14}|k|^{-\f12}(1-|\lambda|)^{-1}\|\varphi\|_{L^2}\nonumber\\ =& C\nu^{\f{\alpha-m}{2}+\f14}|k|^{-\f{\alpha-m}{2}-\f34} (1-|\lambda|)^{-\f{\alpha-m}{2}- \f54}\|\varphi\|_{L^2}.
    \label{est: (1-y) nabla w1j L2}
  \end{align}
  For $m\in\mathbb{Z}^{+}$,  we get by \eqref{mL2} with $\alpha=m-1$,  \eqref{omH1/2} and \eqref{omL2}  that
    \begin{align}
   \|(1-|y|)^{m-1}\nabla^{m} w_{1,j}\|_{L^2}=& C\Big(\sum_{\ell\in\mathbb{Z}}\|(1-|y|)^{m-1}(k,\partial_y,\ell)^{m}\phi_{j}(k,y,\ell)\|_{L_y^2}^2| \hat{w}^{(j)}(k,l)|^2\Big)^{\f12} \nonumber\\
    \leq & C\Big(\sum_{\ell\in\mathbb{Z}}(\eta^2+|k(1-\lambda)/\nu|)^{\f12}| \hat{w}^{(j)}(k,\ell)|^2\Big)^{\f12}\nonumber\\
    \leq& C\|w^{(j)}\|_{H^{1/2}}+C|k(1-\lambda)/\nu|^{\f14}\|w^{(j)}\|_{L^2}\nonumber\\
    \leq& C(1-|\lambda|)^{-1}\|\varphi\|_{L^2}+C\nu^{-\f14}|k|^{-\f14}(1-|\lambda|)^{-\f34}\|\varphi\|_{L^2}\nonumber\\
    \leq& C\nu^{-\f14}|k|^{-\f14}(1-|\lambda|)^{-\f34}\|\varphi\|_{L^2},
    \label{est: nabla w1j L2}
  \end{align}Here we used the fact that $(1-|\lambda|)/|\nu k|\geq |\nu/k|^{\f13}/|\nu k|=(\nu k^2)^{-\f23}\geq 1.$\smallskip

  Take $\theta_0\in C_{0}^{\infty}(\mathbb{R})$ so that $\theta_0(y)=1$ for $|y|\leq 1/4$ and $\theta_0(y)=0$ for $|y|\geq1/2$.
  Let
  \beno
  \theta_j(y)=\theta_j(x,y,z)=\theta_0(2|y-j|/(1-|\lambda|)),\quad j\in\{\pm1\}.
    \eeno
Then  $\theta_j(x,y,z)=1$ for $|y-j|\leq (1-|\lambda|)/8, \,\theta_j(x,y,z)=0$ for $|y-j|\geq(1-|\lambda|)/4$, and there holds that
\begin{align*}
  |\nabla^{m}\theta_j| &\leq C(1-|\lambda|)^{-m},\quad |\nabla^{m+1}\theta_j| \leq C(1-|\lambda|)^{-m_1}(1-|y|)^{m_1-m-1},\quad m,m_1\in\mathbb{N}.
\end{align*}

Now we construct
\begin{align}\label{eq: w1 construction}
   U_b=\sum_{j=\pm1}\theta_{j}(y)w_{1,j}(x,y,z).
\end{align}
We find from  \eqref{eq: w1,j equations} that
\begin{align*}
   &-\nu\Delta U_b+ik(V-\lambda)U_b-a(\nu k^2)^{1/3}U_b=f_b,
\end{align*}
where
\beno
f_b=\sum_{j=\pm1}\big(-\nu\Delta \theta_jw_{1,j}-2\nu\nabla \theta_j\cdot\nabla w_{1,j}+ik(V-j)\theta_jw_{1,j}\big),
\eeno
and
\begin{align*}
  (ik(V-\lambda)U_b-a(\nu k^2)^{\f13}U_b)|_{y=j}=(ik(V-\lambda)w_{1,j}-a(\nu k^2)^{\f13}w_{1,j})|_{y=j}=\partial_zp^{L(0)}|_{y=j}.
\end{align*}
It is obvious that  $f_b|_{y=\pm1}=0$, which  implies that
\beno
(-\nu\Delta U_b+\partial_zp^{L(0)})|_{y=\pm 1}=0.
\eeno
On the other hand,  $(-\nu\Delta U+\partial_zp^{L(0)})|_{y=\pm 1}=0$. This shows that
\beno
\Delta (U-U_b)|_{y=\pm 1}=0.
\eeno
For $|y|\leq(3+|\lambda|)/4,\ j\in\{\pm 1\}$, we have $|y-j|\geq(1-|\lambda|)/4$,  $\theta_j(x,y,z)=0$, and then
$U_b(x,y,z)=0$ for $|y|\le (3+|\lambda|)/4$. \smallskip

Now we estimate $U_b$ and $f_b$.  By \eqref{est: (1-y) nabla w1j L2}, we have
\begin{align*}
   &\|U_b\|_{L^2}\leq C\sum_{j=\pm1}\|\theta_jw_{1,j}\|_{L^2}\leq C\nu^{\f14}|k|^{-\f34}(1-|\lambda|)^{-\f54}\|W\|_{L^2},\\
   &\|(1-|y|)U_{b}\|_{L^2}\leq C\sum_{j=\pm1}\left\| (1-|y|)w_{1,j}\right\|_{L^2}\leq C\nu^{\f34}|k|^{-\f54}(1-|\lambda|)^{-\f74}\|W\|_{L^2}.
\end{align*}
By \eqref{est: (1-y) nabla w1j L2} and the fact that $(1-|y|)|\nabla\theta_j|\leq C$, we have
\begin{align*}
   \|(1-|y|)\nabla U_b\|_{L^2}\leq & C\sum_{j=\pm1}\|(1-|y|)\nabla(\theta_jw_{1,j})\|_{L^2}\\
   \leq& C\sum_{j=\pm1}\big(\|(1-|y|)\nabla w_{1,j}\|_{L^2}+\|w_{1,j}\|_{L^2}\big)\\
   \leq &C\nu^{\f14}|k|^{-\f34}(1-|\lambda|)^{-\f54}\|W\|_{L^2}.
\end{align*}
Then we also have
\begin{align*}
  &\|\theta_jw_{1,j}\|_{L^2} \leq C\|w_{1,j}\|_{L^2}\leq C\nu^{\f14}|k|^{-\f34}(1-|\lambda|)^{-\f{5}{4}}\|W\|_{L^2},\\
  &\|(\Delta \theta_j)w_{1,j}\|_{L^2} \leq C(1-|\lambda|)^{-2}\|w_{1,j}\|_{L^2}\leq C\nu^{\f14}|k|^{-\f34}(1-|\lambda|)^{-\f{13}{4}}\|W\|_{L^2},\\
  &\|\nabla \theta_j\cdot\nabla w_{1,j}\|_{L^2} \leq C(1-|\lambda|)^{-2}\|(1-|y|)\nabla w_{1,j}\|_{L^2}\leq C\nu^{\f14}|k|^{-\f34}(1-|\lambda|)^{-\f{13}{4}}\|W\|_{L^2},\\
   &\|k(V-j)\theta_jw_{1,j}\|_{L^2}\leq C|k|\|(1-|y|)w_{1,j}\|_{L^2}\leq C\nu^{\f34}|k|^{-\f14}(1-|\lambda|)^{-\f74}\|W\|_{L^2}.
\end{align*}
Here we used 
$|(V(y,z)-j)\theta_j(y)|\leq C(1-|y|).$
Summing up, we obtain
\begin{align*}
  \|f_b\|_{L^2}=&\sum_{j=\pm1}\left\|-\nu\Delta \theta_jw_{1,j}-2\nu\nabla \theta_j\cdot\nabla w_{1,j}+ik(V-j)\theta_jw_{1,j}\right\|_{L^2}\\
  \leq& C\sum_{j=\pm1}\big(\nu\|(\Delta \theta_j)w_{1,j}\|_{L^2}+\nu \|\nabla \theta_j\cdot\nabla w_{1,j}\|_{L^2} +\|k(V-j)\theta_jw_{1,j}\|_{L^2}  \big)\\
  \leq &C \big(\nu^{\f54}|k|^{-\f34}(1-|\lambda|)^{-\f{13}{4}}+ \nu^{\f34}|k|^{-\f14}(1-|\lambda|)^{-\f74}\big)\|\varphi\|_{L^2}\\
  \leq& C\nu^{\f34}|k|^{-\f14}(1-|\lambda|)^{-\f74}\|\varphi\|_{L^2}.
\end{align*}
Using  the fact that $|\nabla^{m}\theta_j| \leq C(1-|\lambda|)^{-2}(1-|y|)^{2-m}, m\in\mathbb{Z}^+$, we get by  \eqref{est: (1-y) nabla w1j L2} that
\begin{align*}
   &\|(1-|y|)\nabla(\Delta\theta_j w_{1,j}+2\nabla \theta_j\cdot\nabla w_{1,j})\|_{L^2}\leq C\sum_{m_2=1}^3\|(1-|y|)|\nabla^{m_2}\theta_j||\nabla^{3-m_2}w_{1,j}|\|_{L^2}\\
   &\leq C(1-|\lambda|)^{-2}\sum_{m_2=1}^3\|(1-|y|)^{3-m_2}\nabla^{3-m_2}w_{1,j}\|_{L^2}
   \leq C\nu^{\f14}|k|^{-\f34}(1-|y|)^{-\f{13}{4}}\|W\|_{L^2}.
\end{align*}
Using $|(V-j)\theta_j|\leq C(1-|y|)$ and $|(\nabla V)\theta_j|+|(V-j)\nabla \theta_j|\leq C$ and \eqref{est: (1-y) nabla w1j L2}, we get
\begin{align*}
   &\|ik(1-|y|)\nabla[(V-j)\theta_jw_{1,j}]\|_{L^2}\\
   &\leq C|k|\Big(\|(1-|y|)(V-j)\theta_j\nabla w_{1,j}\|_{L^2}+\|(1-|y|)(V-j)(\nabla\theta_j)w_{1,j}\|_{L^2} \\&\quad+\|(1-|y|)(\nabla V)\theta_jw_{1,j}\|_{L^2}\Big)\\
   &\leq C|k|\big(\|(1-|y|)^2\nabla w_{1,j}\|_{L^2} +\|(1-|y|)w_{1,j}\|_{L^2}\big)\\
   &\leq C|k|\big(\nu^{\f34}|k|^{-\f54}(1-|\lambda|)^{-\f74}\big)\|\varphi\|_{L^2}=C\nu^{\f34}|k|^{-\f14}(1-|\lambda|)^{-\f74}\|\varphi\|_{L^2}.
\end{align*}
Thus, we have
\begin{align*}
 \|(1-|y|)\nabla f_b\|_{L^2}
   \leq& \sum_{j=\pm1}\Big(\nu\|(1-|y|)\nabla(\Delta\theta_j w_{1,j}+2\nabla \theta_j\cdot\nabla w_{1,j})\|_{L^2} \\
   &\quad+\|ik(1-|y|)\nabla[(V-j)\theta_jw_{1,j}]\|_{L^2}\Big)\\
   \leq & C\big(\nu^{\f54}|k|^{-\f34}(1-|\lambda|)^{-\f{13}{4}}+\nu^{\f34}|k|^{-\f14}(1-|\lambda|)^{-\f74}\big)\|\varphi\|_{L^2}\\
   \leq& C\nu^{\f34}|k|^{-\f14}(1-|\lambda|)^{-\f74}\|\varphi\|_{L^2}.
\end{align*}
This finished the proof of \eqref{eq: w12 estimate}.

It is easy to see that
\begin{align*}
   \|\kappa U_b\|_{H^1}\leq& \|\kappa (\nabla U_b)\|_{L^2}+\|(\nabla \kappa)U_b\|_{L^2}+\| \kappa U_b\|_{L^2}\leq C\varepsilon_0\big(\|(1-|y|)\nabla U_b\|_{L^2}+\|U_b\|_{L^2}\big)\\
   \leq& C\varepsilon_0\nu^{\f14}|k|^{-\f34}(1-|\lambda|)^{-\f54}\|W\|_{L^2},
\end{align*}
and  by \eqref{est: nabla w1j L2} and \eqref{est: (1-y) nabla w1j L2}, we have
\begin{align*}
   \|\kappa U_b\|_{H^2}\leq& C \|\Delta(\kappa U_b)\|_{L^2}\leq C\big(\|\kappa (\Delta U_b)\|_{L^2}+\|(\nabla \kappa)\nabla U_b\|_{L^2}+\|(\Delta\kappa)U_b\|_{L^2}\big)\\
   \leq& C\varepsilon_0\big(\|(1-|y|)\Delta U_b\|_{L^2}+\|\nabla U_b\|_{L^2}+\|U_b\|_{H^1}\big)\\
   \leq& C\varepsilon_0\sum_{j=\pm1}\big(\|(1-|y|)\Delta w_{1,j}\|_{L^2}+\|\nabla w_{1,j}\|_{L^2}+(1-|\lambda|)^{-1}\|w_{1,j}\|_{L^2}\big)\\
   \leq &C\varepsilon_0\big(\nu^{-\f14}|k|^{-\f14}(1-|\lambda|\big)^{-\f34}+\nu^{\f14}|k|^{-\f34}(1-|\lambda|)^{-\f94})\|W\|_{L^2}\\
   \leq & C\varepsilon_0\nu^{-\f14}|k|^{-\f14}(1-|\lambda|)^{-\f34}\|W\|_{L^2}.
\end{align*}
Here we used the facts that $U_b=\theta_{1}w_{1,1}+\theta_{-1}w_{1,1},\ 1-|\lambda|\geq |\nu/k|^\f13$, and that
\begin{align*}&\|\kappa/(1-|y|)\|_{L^{\infty}}\leq \|\nabla\kappa\|_{L^{\infty}}+\|\kappa\|_{L^{\infty}}\leq C\|\kappa\|_{H^3}\leq C\varepsilon_0,\\
   &\|(\Delta\kappa)U_b\|_{L^2}\leq C\|(\Delta\kappa)\|_{H^1}\|U_b\|_{H^1}\leq C\|\kappa\|_{H^3}\|U_b\|_{H^1}\leq C\varepsilon_0\|U_b\|_{H^1},\\   & \|(1-|y|)\Delta (\theta_{j}w_{1,j})\|_{L^2}\leq C\big(\|(1-|y|)\Delta w_{1,j}\|_{L^2}+\|\nabla w_{1,j}\|_{L^2}+(1-|\lambda|)^{-1}\|w_{1,j}\|_{L^2}\big),\\
  &\|\nabla (\theta_{j}w_{1,j})\|_{L^2}\leq C\big(\|\nabla w_{1,j}\|_{L^2}+(1-|\lambda|)^{-1}\|w_{1,j}\|_{L^2}\big).
\end{align*}

Direct calculations show that
\begin{align*}
   &-\nu\Delta^2U_b+ik(V-\lambda)\Delta U_b-a(\nu k^2)^{1/3}\Delta U_b=F_b,
\end{align*}
where
\begin{align*}
   F_b=&\Delta f_b-ik(\Delta V)U_b-2ik\nabla V\cdot\nabla U_b \\
   =&\sum_{j=\pm1}\Big[\Delta(-\nu\Delta\theta_jw_{1,j}-2\nu\nabla\theta_j\cdot\nabla w_{1,j})+ik\Delta\big((V-j)\theta_jw_{1,j}\big)-ik(\Delta V)\theta_jw_{1,j}\\
   &\qquad-2ik\nabla V\cdot\nabla(\theta_jw_{1,j})\Big]\\
   =&\sum_{j=\pm1}\big[\Delta(-\nu\Delta\theta_jw_{1,j}-2\nu\nabla\theta_j\cdot\nabla w_{1,j})+ik(V-j)\Delta(\theta_j w_{1,j})\big].
\end{align*}
Using the fact $|\nabla^{m}\theta_j| \leq C(1-|\lambda|)^{-4}(1-|y|)^{4-m}, m\in\mathbb{Z}^+$, we get by \eqref{est: (1-y) nabla w1j L2} with $\alpha=m=4-m_2$ that
\begin{align*}
 \|\Delta(\Delta\theta_j w_{1,j}+2\nabla \theta_j\cdot\nabla w_{1,j})\|_{L^2}\leq& C\sum_{m_2=1}^4\||\nabla^{m_2}\theta_j||\nabla^{4-m_2}w_{1,j}|\|_{L^2}\\
   \leq &C(1-|\lambda|)^{-4}\sum_{m_2=1}^4\|(1-|y|)^{4-m_2}\nabla^{4-m_2}w_{1,j}\|_{L^2}\\
   \leq& C\nu^{\f14}|k|^{-\f34}(1-|\lambda|)^{-\f{21}{4}}\|W\|_{L^2},
\end{align*}
and using the facts that  $|(V-j)\theta_j|\leq C(1-|y|)$, $|(V-j)\nabla\theta_j|\leq C(1-|\lambda|)^{-1}(1-|y|)$, $|(V-j)\nabla^2\theta_j|\leq C(1-\lambda)^{-1}$, we get  by \eqref{est: (1-y) nabla w1j L2} with $\alpha=m\in\{0,1\}$ and \eqref{est: nabla w1j L2} with $m=2$ that
\begin{align*}
   &\|ik(V-j)\Delta(\theta_jw_{1,j})\|_{L^2}\\
   &\leq C|k|\big(\|(1-|y|)\nabla^2w_{1,j}\|_{L^2} +(1-|\lambda|)^{-1}\|(1-|y|)\nabla w_{1,j}\|_{L^2}+(1-|\lambda|)^{-1}\|w_{1,j}\|_{L^2}\big)\\
   &\leq C(\nu^{-\f14}|k|^{\f34}(1-|\lambda|)^{-\f34} +\nu^{\f14}|k|^{\f14}(1-|\lambda|)^{-\f94})\|W\|_{L^2}
   \leq C\nu^{-\f14}|k|^{\f34}(1-|\lambda|)^{-\f34}\|W\|_{L^2}.
\end{align*}
Here we used $1-|\lambda|\geq |\nu/k|^\f13.$ This shows that
\begin{align*}
  \|F_b\|_{L^2} \leq& \nu\|\Delta(\Delta\theta_j w_{1,j}+2\nabla \theta_j\cdot\nabla w_{1,j})\|_{L^2}+\|ik(V-j)\Delta(\theta_jw_{1,j})\|_{L^2}\\
  \leq &C\big(\nu^{\f54}|k|^{-\f34}(1-|\lambda|)^{-\f{21}{4}}+\nu^{-\f14}|k|^{\f34}(1-|\lambda|)^{-\f34}\big)\|W\|_{L^2}\\
  \leq& C\nu^{-\f14}|k|^{\f34}(1-|\lambda|)^{-\f34}\|W\|_{L^2}.
\end{align*}

Since $ \Delta G_b=F_b,\ G_b|_{y=\pm 1}=0,$ by Hardy's inequality, we get
\begin{align*}
  &\|\nabla G_b\|_{L^2}^2=-\langle G_b, F_b\rangle \leq \|G_b/(1-|y|)\|_{L^2}\|(1-|y|)F_b\|_{L^2}\leq C\|\nabla G_b\|_{L^2}\|(1-|y|)F_b\|_{L^2},
\end{align*}
which gives
\begin{align*}
  \|\nabla G_b\|_{L^2} &\leq C\|(1-|y|)F_b\|_{L^2}\\
  &\leq C\sum_{j=\pm1}\big(\nu\|(1-|y|)\Delta(\Delta\theta_j w_{1,j}+2\nabla \theta_j\cdot\nabla w_{1,j})\|_{L^2}+\|ik(1-|y|)(V-j)\theta_j\Delta w_{1,j}\|_{L^2}\big)\\
  &\leq C\big(\nu^{\f74}|k|^{-\f54}(1-|\lambda|)^{-\f{23}{4}}+ \nu^{\f14}|k|^{\f14}(1-|\lambda|)^{-\f54}\big)\|W\|_{L^2}\\
  &\leq C\nu^{\f14}|k|^{\f14}(1-|\lambda|)^{-\f54}\|W\|_{L^2}.
\end{align*}
Here we used the facts that
\begin{align*}
  \|(1-|y|)\Delta(\Delta\theta_j w_{1,j}+2\nabla \theta_j\cdot\nabla w_{1,j})\|_{L^2}\leq& C\sum_{m_2=1}^4\|(1-|y|)|\nabla^{m_2}\theta_j||\nabla^{4-m_2}w_{1,j}|\|_{L^2}\\
   \leq &C(1-|\lambda|)^{-4}\sum_{m_2=1}^4\|(1-|y|)^{5-m_2}\nabla^{4-m_2}w_{1,j}\|_{L^2}\\
   \leq &C\nu^{\f34}|k|^{-\f54}(1-|\lambda|)^{-\f{23}{4}}\|W\|_{L^2},
\end{align*}
and
\begin{align*}
   \|k(1-|y|)(V-j)\Delta(\theta_j w_{1,j})\|_{L^2}\leq& C|k|\big(\|(1-|y|)^2\Delta w_{1,j}\|_{L^2}+\|(1-|y|)\nabla w_{1,j}\|_{L^2} +\|w_{1,j}\|_{L^2}\big)\\
   \leq& C\nu^{\f14}|k|^{\f14}(1-|\lambda|)^{-\f54}\|W\|_{L^2}.
\end{align*}
\end{proof}

\begin{Lemma}\label{lem:U-Ub}
It holds that
\begin{align*}
&(\nu k^2)^{\f13}\|\Delta (U-U_b)\|_{L^2} +|k|\|(V-\lambda)\Delta (U-U_b)\|_{L^2}\le C\nu^{-\f12}|k|\|W\|_{L^2}+C|k|\|\nabla W\|_{L^2},\\
&(\nu k^2)^{\f13}\|\Delta (U-U_b)-2ik\tW_s\|_{L^2}\leq C\nu^{-\f{1}{6}}|k|^{\f23}(1-|\lambda|)^{-1}\|W\|_{L^2}+C|k|\|\nabla W\|_{L^2},\\
&|k|\|\nabla^2 [(1-\theta)(U-U_b)]\|_{L^2}\leq C(1-|\lambda|)^{-1}\big(\nu^{-\f{1}{2}}|k|\|W\|_{L^2}+ |k|\|\nabla W\|_{L^2}\big).
\end{align*}
Here $\theta(x,y,z)=\theta_0(|y-\lambda|/(1-|\lambda|))$ with $\theta_0\in  C_0^{\infty}(\R)$) so that $\theta_0(y)=1 $ for $|y|\leq 1/4,$ $\theta_0(y)=0 $ for $|y|\geq 1/2.$
\end{Lemma}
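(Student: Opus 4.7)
Set $\Psi:=U-U_b$ and $w_U:=\Delta\Psi$, so $w_U|_{y=\pm 1}=0$ by the boundary matching in Lemma~\ref{lem:UB}. Applying $\Delta$ to the second equation of \eqref{eq:LNS-full-s2} and subtracting the fourth-order equation $-\nu\Delta^2 U_b+ik(V-\lambda)\Delta U_b-a(\nu k^2)^{1/3}\Delta U_b=F_b$ from Lemma~\ref{lem:UB}, and using $\Delta[(V-\lambda)U]=(V-\lambda)\Delta U+2\nabla V\cdot\nabla U+(\Delta V)U$, one obtains
\begin{align*}
-\nu\Delta w_U+ik(V-\lambda)w_U-a(\nu k^2)^{1/3}w_U=-\Delta\partial_z p^{L(0)}-F_b-2ik\nabla V\cdot\nabla U-ik(\Delta V)U.
\end{align*}
Because $W|_{y=\pm 1}=0$ and $\Delta p^{L(0)}=-2ik\partial_y V W$, one has $\|\Delta\partial_z p^{L(0)}\|_{L^2}\le C|k|\|\nabla W\|_{L^2}$; Remark~\ref{rem:la small} gives $\|\nabla U\|_{L^2}\le C\nu^{-1/2}\|W\|_{L^2}$ and $|k|\|U\|_{L^2}\le C\nu^{-1/6}|k|^{2/3}\|W\|_{L^2}\le C\nu^{-1/2}|k|\|W\|_{L^2}$, the last using $\nu\le|k|$ (a consequence of $\nu k^2\le 1$). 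Lemma~\ref{lem:UB} gives $\|F_b\|_{L^2}\le C\nu^{-1/4}|k|^{3/4}(1-|\lambda|)^{-3/4}\|W\|_{L^2}$, and the standing hypothesis $1-|\lambda|\ge|\nu/k|^{1/3}$ converts $(1-|\lambda|)^{-3/4}\le\nu^{-1/4}|k|^{1/4}$, yielding $\|F_b\|_{L^2}\le C\nu^{-1/2}|k|\|W\|_{L^2}$. Applying the first bound of Proposition~\ref{prop:res-nav-s1} to $w_U$ then delivers the \emph{first inequality}.

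For the \emph{second inequality}, the dominant right-hand-side term $-2ik\nabla V\cdot\nabla U$ is precisely the forcing for the singular corrector $2ik\tW_s$ defined in \eqref{f1f2}; hence $w_U-2ik\tW_s$ again satisfies the Orr--Sommerfeld equation with zero Dirichlet data and forcing $-\Delta\partial_z p^{L(0)}-F_b-ik(\Delta V)U$. Bound $\Delta\partial_z p^{L(0)}$ and $ik(\Delta V)U$ in $L^2$ as above, but treat $F_b=\Delta G_b$ via the $H^{-1}$ bound of Proposition~\ref{prop:res-nav-s1}, namely $(\nu k^2)^{1/3}\|\cdot\|_{L^2}\le C\nu^{-1/3}|k|^{1/3}\|\cdot\|_{H^{-1}}$. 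Since $G_b|_{y=\pm 1}=0$, integration by parts gives $\|\Delta G_b\|_{H^{-1}}\le\|\nabla G_b\|_{L^2}\le C\nu^{1/4}|k|^{1/4}(1-|\lambda|)^{-5/4}\|W\|_{L^2}$, so the $F_b$--contribution is $C\nu^{-1/12}|k|^{7/12}(1-|\lambda|)^{-5/4}\|W\|_{L^2}$; the excess factor $(1-|\lambda|)^{-1/4}$ is once again absorbed into $\nu^{-1/12}|k|^{1/12}$ by $1-|\lambda|\ge|\nu/k|^{1/3}$, landing exactly on $\nu^{-1/6}|k|^{2/3}(1-|\lambda|)^{-1}\|W\|_{L^2}$.

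For the \emph{third inequality}, observe that $U_b$ is supported in $\bigcup_{j=\pm 1}\{|y-j|\le(1-|\lambda|)/4\}$, which is contained in $\{1-\theta=1\}$; hence $Y:=(1-\theta)(U-U_b)=(1-\theta)\Psi$, and since $\theta_0$ vanishes identically near $\pm 1$, both $\nabla\theta$ and $\nabla^2\theta$ vanish on $\partial\Omega$, so $\Delta Y|_{y=\pm 1}=\Delta\Psi|_{y=\pm 1}=0$. On $\operatorname{supp}(1-\theta)$ one has $|V-\lambda|\ge c(1-|\lambda|)$, and the first inequality just proved gives
\begin{align*}
|k|\|(1-\theta)w_U\|_{L^2}\le C(1-|\lambda|)^{-1}|k|\|(V-\lambda)w_U\|_{L^2}\le C(1-|\lambda|)^{-1}\bigl(\nu^{-1/2}|k|\|W\|_{L^2}+|k|\|\nabla W\|_{L^2}\bigr).
\end{align*}
Expanding $\Delta Y=(1-\theta)w_U-2\nabla\theta\cdot\nabla\Psi-(\Delta\theta)\Psi$ and using $|\nabla^m\theta|\le C(1-|\lambda|)^{-m}$, together with the $L^2$ bounds on $\Psi,\nabla\Psi$ inherited from Remark~\ref{rem:la small} and Lemma~\ref{lem:UB}, extends this to the full $|k|\|\Delta Y\|_{L^2}$. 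The elliptic estimate $\|Y\|_{H^2}\le C(\|\Delta Y\|_{L^2}+\|Y|_{\partial\Omega}\|_{H^{3/2}})$ reduces the target $\|\nabla^2 Y\|_{L^2}$ to the trace $Y|_{\partial\Omega}=-U_b|_{\partial\Omega}$; the explicit representation $U_b|_{\Gamma_j}=\partial_z p^{L(0)}(\cdot,j,\cdot)/[ik(j-\lambda)-a(\nu k^2)^{1/3}]$ from the proof of Lemma~\ref{lem:UB}, combined with $\|p^{L(0)}\|_{H^3}\le C|k|\|\nabla W\|_{L^2}$ from elliptic regularity for $\Delta p^{L(0)}=-2ik\partial_y V W$, gives $|k|\|U_b|_{\partial\Omega}\|_{H^{3/2}}\le C(1-|\lambda|)^{-1}|k|\|\nabla W\|_{L^2}$.

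\textbf{Main obstacle.} The third inequality is the most delicate: three scales---the critical-layer width $1-|\lambda|$, the dissipation scale $|\nu/k|^{1/3}$, and the Airy boundary layer encoded in $U_b$---must be balanced against one another, and the hypothesis $1-|\lambda|\ge|\nu/k|^{1/3}$ is invoked repeatedly to absorb negative powers of $(1-|\lambda|)$ in the $F_b$ and $G_b$ bounds. A secondary technical point is that $Y$ carries a nonzero Dirichlet trace even though $\Delta Y$ vanishes on $\partial\Omega$; passing from $\|\Delta Y\|_{L^2}$ to $\|\nabla^2 Y\|_{L^2}$ therefore requires extracting an $H^{3/2}$ estimate for $U_b|_{\partial\Omega}$ directly from the Airy/boundary-layer description in Lemma~\ref{lem:UB}, with the scaling tuned so that no additional $(1-|\lambda|)^{-1}$ factor appears.
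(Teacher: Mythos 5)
Your proof is correct and follows essentially the same route as the paper: the same Orr--Sommerfeld equation for $\Delta(U-U_b)$ with forcing $-ik(2\nabla V\cdot\nabla U+(\Delta V)U)-\partial_z\Delta p^{L(0)}-F_b$, the same cancellation of $-2ik\nabla V\cdot\nabla U$ against $2ik\tW_s$ with $F_b=\Delta G_b$ handled through the $H^{-1}$ resolvent bound and $\|\nabla G_b\|_{L^2}$, and for the third estimate the same localization $1-\theta\lesssim(1-|\lambda|)^{-1}|V-\lambda|$ combined with the $H^{3/2}(\partial\Omega)$ trace bound for $U_b|_{\Gamma_j}=\partial_zp^{L(0)}/(ik(j-\lambda)-a(\nu k^2)^{1/3})$. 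All the exponent bookkeeping, including the repeated use of $1-|\lambda|\ge|\nu/k|^{1/3}$, matches the paper's computation.
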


\begin{proof}
Recall that $(U, U_b, \tW_s)$ satisfies
\begin{align*}\left\{
\begin{aligned}
&-\nu\Delta^2 U+ik(V-\lambda)\Delta U-a(\nu k^2)^{1/3}\Delta U+ik\big(2\nabla V\cdot\nabla U+(\Delta V)U\big)+\partial_z\Delta p^{L(0)}=0,\\
&-\nu\Delta^2 U_b+ik(V-\lambda)\Delta U_b-a(\nu k^2)^{1/3}\Delta U_b=F_b,\\
&-\nu\Delta \tW_s+ik(V(y,z)-\lambda)\tW_s-a(\nu k^2)^{1/3}\tW_s+\nabla V\cdot\nabla U=0,\\
&\Delta (U-U_b)|_{y=\pm1}=0,\quad  \tW_s|_{y=\pm1}=0.
\end{aligned}\right.
\end{align*}

By Proposition \ref{prop:res-nav-s1}, \eqref{eq: partial z p H2}, \eqref{eq: u priori} and Lemma \ref{lem:UB}, we have
\begin{align*}
&(\nu k^2)^{\f13}\|\Delta (U-U_b)\|_{L^2} +|k|\|(V-\lambda)\Delta (U-U_b)\|_{L^2}\nonumber\\
&\leq C\|ik(2\nabla V\cdot\nabla U+(\Delta V)U)+\partial_z\Delta p^{L(0)}+F_b\|_{L^2}\nonumber\\
&\leq C|k|\|\nabla V\|_{L^\infty}\|\nabla U\|_{L^2} +C|k|\|\Delta V\|_{L^\infty}\|U\|_{L^2}+C|k|\|\nabla W\|_{L^2}+ C\|F_b\|_{L^2}\nonumber\\
&\leq C\big(\nu^{-\f{1}{2}}|k|\|W\|_{L^2}+ \nu^{-\f16}|k|^{\f23}\|W\|_{L^2}\big)+C\big(|k|\|\nabla W\|_{L^2} +\nu^{-\f14}|k|^{\f34}(1-|\lambda|)^{-\f34}\|W\|_{L^2}\big)\nonumber\\
&\leq C\nu^{-\f12}|k|\|W\|_{L^2}+C|k|\|\nabla W\|_{L^2},
\end{align*}
and
\begin{align*}
&(\nu k^2)^{\f13}\|\Delta (U-U_b)-2ik\tW_s\|_{L^2}\\ \nonumber&\leq C\|ik(\Delta V)U+\partial_z\Delta p^{L(0)}\|_{L^2}+\nu^{-\f13} |k|^{\f13}\|\nabla G_b\|_{L^2}\\ \nonumber&\leq C|k|\|U\|_{L^2}+C|k|\|\nabla W\|_{L^2}+C\nu^{-\f{1}{12}}|k|^{\f{7}{12}}(1-|\lambda|)^{-\f54}\|W\|_{L^2}\\ \nonumber&\leq C\nu^{-\f{1}{6}}|k|^{\f23}(1-|\lambda|)^{-1}\|W\|_{L^2}+C|k|\|\nabla W\|_{L^2}.
\end{align*}
These show the first and second inequalities.\smallskip

Let $I_j=\{|y-\lambda|\leq(1-|\lambda|)/2^{j}\},\ I_j^c=[-1,1]\setminus I_j$. If $y\in I_2^c$ (i.e., $|y-\lambda|>(1-|\lambda|)/4$), then we have by Lemma \ref{lem:V} that
\begin{align*}
  |V-\lambda| &\geq |y-\lambda|-|V-y| \geq (1-|\lambda|)/2-C\varepsilon_0(1-|y|)\\& \geq (1-|\lambda|)/2-C\varepsilon_0(1-|\lambda|+|y-\lambda|)\geq (1-C\varepsilon_0)(1-|\lambda|)/2,
\end{align*}
which gives by taking $\varepsilon_0$ small enough so that $C\varepsilon_0\leq 1/2$ that
\begin{align*}
   &1-\theta\leq \mathbf{1}_{I_2^c}\leq 8(1-|\lambda|)^{-1}|V-\lambda|.
\end{align*}
Then by the first inequality of the lemma, we have
\begin{align*}
|k|\|(1-\theta)\Delta(U-U_b)\|_{L^2}&\leq C|k|(1-|\lambda|)^{-1}\|(V-\lambda)\Delta(U-U_b)\|_{L^2}\\
&\leq  C\big(1-|\lambda|)^{-1}(\nu^{-\f{1}{2}}|k|\|\varphi\|_{L^2}+|k|\|\nabla W\|_{L^2}\big).
\end{align*}
Since $(ik(V-\lambda)U_b-a(\nu k^2)^{\f13}U_b-\partial_zp^{L(0)})|_{y=\pm1}=0$, we have
$$U_b= \partial_zp^{L(0)}/(k(j-\lambda)-a(\nu k^2)^{\f13})\quad \text{on}\ \Gamma_j,\ j\in\{\pm1\}.$$
Thus, we get
\begin{align*}
   \|U_b\|_{H^{\f32}(\partial\Omega)}&\leq \sum_{j=\pm1}|k(j-\lambda)|^{-1}\|\partial_zp^{L(0)}\|_{H^{\f32}(\Gamma_j)}\leq 2|k(1-|\lambda|)|^{-1}\|\partial_zp^{L(0)}\|_{H^{\f32}(\partial\Omega)},
\end{align*}
where
\begin{align*}
   \|\partial_zp^{L(0)}\|_{H^{\f32}(\partial\Omega)}&\leq C\|\partial_zp^{L(0)}\|_{H^{2}}
   \leq C\|p^{L(0)}\|_{H^3}\leq C\|\nabla\Delta p^{L(0)}\|_{L^2}\leq C|k|\|\nabla W\|_{L^2}.
\end{align*}
This gives
\begin{align}
  \|U_b\|_{H^{\f32}(\partial\Omega)} & \leq 2|k(1-|\lambda|)|^{-1}\|\partial_zp^{L(0)}\|_{H^{\f32}(\partial\Omega)}\leq C(1-|\lambda|)^{-1}\|\nabla W\|_{L^2}.
   \label{est: w1 H32}
\end{align}
As $\text{supp}(\theta)\subseteq I_1$ and $U_b=0$ for $|y|\leq (3+|\lambda|)/4$, we have
\beno
|\nabla\theta||\nabla U_b|=|\nabla^2\theta||U_b|=0.
\eeno
Then by \eqref{eq: u priori}, \eqref{est: w1 H32} and standard elliptic estimate, we obtain
\begin{align}
&|k|\|\nabla^2 [(1-\theta)(U-U_b)]\|_{L^2}\nonumber\\ &\leq C|k|\|\Delta[(1-\theta)(U-U_b)]\|_{L^2} +C|k|\|(1-\theta)(U-U_b)\|_{H^{\f32}(\partial\Omega)}\nonumber\\
&= C|k|\|\Delta[(1-\theta)(U-U_b)]\|_{L^2} +C|k|\|U_b\|_{H^{\f32}(\partial\Omega)}\nonumber\\ &\leq C|k|\big(\| [(1-\theta)\Delta(U-U_b)]\|_{L^2}+\|\nabla\theta\cdot\nabla(U-U_b)\|_{L^2} +\|(\Delta\theta)(U-U_b)\|_{L^2}\big)\nonumber\\
&\quad+C|k|\|U_b\|_{H^{\f32}(\partial\Omega)}\nonumber\\ &\leq C|k|\big(\|[(1-\theta)\Delta(U-U_b)]\|_{L^2}+\|\nabla\theta\cdot\nabla U\|_{L^2} +\|(\Delta\theta)U\|_{L^2}\big)+C|k|\|U_b\|_{H^{\f32}(\partial\Omega)}\nonumber\\
&\leq C(1-|\lambda|)^{-1}(\nu^{-\f{1}{2}}|k|\|W\|_{L^2}+|k|\|\nabla W\|_{L^2}) +C(1-|\lambda|)^{-1}\nu^{-\f12}|k|\|W\|_{L^2}\nonumber\\ &\quad+ C(1-|\lambda|)^{-2}\nu^{-\f16}|k|^{\f23}\|W\|_{L^2}+ C|k|\|U_b\|_{H^{\f32}(\partial\Omega)}\nonumber\\
&\leq C(1-|\lambda|)^{-1}\big(\nu^{-\f{1}{2}}|k|\|W\|_{L^2}+ |k|\|\nabla W\|_{L^2}\big).\nonumber
\end{align}This is the third inequality.
\end{proof}

\subsection{Construction of good unknown}
We introduce the following good unknown
\begin{align}\label{def:good}
  W_g=W-\nu\theta W_s-\kappa U_b,
\end{align}
where $\theta=\theta_0(|y-\lambda|/(1-|\lambda|))$ with a fixed $\theta_0\in  C_0^{\infty}(\R)$ so that $\theta_0(y)=1 $ for $|y|\leq 1/4,$ $\theta_0(y)=0 $ for $|y|\geq 1/2.$

Let  us derive the equation of $W_g$. For this, we first introduce some notations.
Let $p^{L(01)}, p^{L(02)}$ and $p^{L(03)}$ solve
\begin{align*}
&\Delta p^{L(01)}=-2ik\partial_yV(\theta W_s),\quad  \partial_yp^{L(01)}|_{y=\pm1}=0,\\
&\Delta p^{L(02)}=-\partial_yV(\theta\rho_1 \Delta U),\quad \partial_yp^{L(02)}|_{y=\pm1}=0,\\
&\Delta p^{L(03)}=-2ik\partial_yV(\kappa U_b),\quad \partial_yp^{L(03)}|_{y=\pm1}=0.
\end{align*}
Let $p^{L(2)}=p^{L(0)}-\nu p^{L(01)}-p^{L(03)}$. Then  we have
\beno
\Delta p^{L(2)}=-2ik\partial_yVW_g,\quad \partial_yp^{L(2)}|_{y=\pm1}=0.
\eeno
We denote
\begin{align*}
F_1&=\theta\rho_1\nabla V\cdot\nabla U+(\partial_y+\kappa\partial_z)p^{L(01)}\\&=(\theta\rho_1\partial_y V\partial_y U+\partial_yp^{L(01)})+\kappa(\theta\rho_1\partial_y V\partial_z U+\partial_zp^{L(01)})\\&=F^{1}_{2}+\kappa F_{3}^{1},\end{align*}where \begin{align*}F_{j}^{l}&=(\theta\rho_1\partial_y V\partial_j U+\partial_jp^{L(0l)}),\ j\in\{2,3\},\ l\in\{1,2\},
\end{align*}
We denote
\begin{align}
F_2=&-\nu(\Delta \kappa) (U-U_b)-2\nu\nabla\kappa\cdot\nabla (U-U_b)+2\nu\theta\rho_1\nabla V\cdot\nabla U\nonumber\\=&-\nu(\Delta \kappa) (\theta U+U_1)-2\nu\nabla\kappa\cdot\nabla (\theta U+U_1)+2\nu\theta\rho_1\nabla V\cdot\nabla U\nonumber\\=&-\nu ((\Delta \kappa) \theta +2\nabla\kappa\cdot\nabla \theta)U+2\nu\theta(\rho_1\nabla V\cdot\nabla U-\nabla\kappa\cdot\nabla U)\nonumber\\&-\nu(\Delta \kappa) U_1-2\nu\nabla\kappa\cdot\nabla U_1\nonumber\\=&-\nu ((\Delta \kappa) \theta +2\nabla\kappa\cdot\nabla \theta) U-2\nu\theta\rho_2(\partial_z-\kappa\partial_y)U-\nu(\Delta \kappa) U_1-2\nu\nabla\kappa\cdot\nabla U_1,\label{eq: f2 define}
\end{align}
where
\beno
U_1=(1-\theta)U-U_b,\quad U=\theta U+U_1+U_b.
\eeno
We denote
\begin{align}\label{eq: F3 define}
F_3=F_2+\nu^2\big(2\nabla\theta\cdot\nabla W_s+\Delta\theta W_s\big)-\kappa f_b-\nu F_1-(\partial_y+\kappa\partial_z)p^{L(03)}.
\end{align}

Now, direct calculations show that
\begin{align*}
&-\nu\Delta (W-W_g)+ik(V(y,z)-\lambda)(W-W_g)-a(\nu k^2)^{1/3}(W-W_g)\\
&=\nu\theta\big[-\nu\Delta W_s+ik(V(y,z)-\lambda)W_s-a(\nu k^2)^{1/3}W_s\big]-\nu^2\big(2\nabla\theta\cdot\nabla W_s+\Delta\theta W_s\big)\\
&\quad+\kappa\big[-\nu\Delta U_b+ik(V(y,z)-\lambda)U_b-a(\nu k^2)^{1/3}U_b\big]-\nu(\Delta \kappa)U_b-2\nu\nabla\kappa\cdot\nabla U_b\\
&=-\nu\theta\rho_1\nabla V\cdot\nabla U-\nu^2(2\nabla\theta\cdot\nabla W_s+\Delta\theta W_s)+\kappa f_b-\nu(\Delta \kappa) U_b-2\nu\nabla\kappa\cdot\nabla U_b,
\end{align*}
which gives
\begin{align}
&-\nu\Delta W_g+ik(V(y,z)-\lambda)W_g-a(\nu k^2)^{1/3}W_g+(\partial_y+\kappa\partial_z)\big(p^{L(0)}-\nu p^{L(01)}+p^{L(1)}\big)\nonumber\\&=-\nu(\Delta \kappa) (U-U_b)-2\nu\nabla\kappa\cdot\nabla (U-U_b)+\nu\theta\rho_1\nabla V\cdot\nabla U+\nu^2(2\nabla\theta\cdot\nabla W_s+\Delta\theta W_s)\nonumber\\&\quad-\kappa f_b-\nu(\partial_y+\kappa\partial_z) p^{L(01)}-G_1\nonumber\\&=F_2+\nu^2(2\nabla\theta\cdot\nabla W_s+\Delta\theta W_s)-\kappa f_b-\nu F_1-G_1.\label{eq: varphi1 eq prior}
\end{align}
Thus, we conclude that
\begin{align}\label{eq:Wg}
\left\{
\begin{aligned}
&-\nu\Delta W_g+ik(V(y,z)-\lambda)W_g-a(\nu k^2)^{1/3}W_g+(\partial_y+\kappa\partial_z)(p^{L(2)}+p^{L(1)})\\
&\qquad\qquad+G_1-F_3=0,\\
&\Delta p^{L(2)}=-2ik\partial_yVW_g,\quad \partial_yp^{L(2)}|_{y=\pm1}=0,\\\
& W_g|_{y=\pm1}=0,\quad \partial_xW_g=ikW_g.
\end{aligned}\right.
\end{align}

Let us conclude this subsection by the following lemma.

\begin{Lemma}\label{lem: varphi varphi1}
If $|\lambda|\leq 1-\nu^{\f13}|k|^{-\f13}$, then we have
\begin{align*}
&\| \nu\theta W_s\|_{L^2}+\| \kappa U_b\|_{L^2}\leq C\varepsilon_0\nu^{\f16} |k|^{-\f23}\|W\|_{L^2},\\
&\|W\|_{L^2}\leq \|W_g\|_{L^2}+C\varepsilon_0\nu^{\f16}|k|^{-\f23}\|W\|_{L^2}, \\&\|\nabla W\|_{L^2}\leq\|\nabla W_g\|_{L^2}+C\varepsilon_0\nu^{-\f16} |k|^{-\f13}\|W\|_{L^2},\\&\|\Delta W\|_{L^2}\leq\|\Delta W_g\|_{L^2}+C\varepsilon_0\nu^{-\f12} \|W\|_{L^2}.
\end{align*}The second inequality implies that if $ \varepsilon_0$ is small enough, then $ \|W\|_{L^2}\leq 2\| W_g\|_{L^2}.$
\end{Lemma}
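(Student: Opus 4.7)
\textbf{Proof proposal for Lemma \ref{lem: varphi varphi1}.}

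The strategy is elementary once the three norms on the right are handled separately: since $W = W_g + \nu\theta W_s + \kappa U_b$ by the definition \eqref{def:good}, the triangle inequality reduces everything to bounding $\nu\theta W_s$ and $\kappa U_b$ in $L^2$, $H^1$ and $H^2$. The first stated inequality is just these two $L^2$ bounds; the remaining three inequalities are
\begin{align*}
\|W\|_{L^2}&\le \|W_g\|_{L^2}+\|\nu\theta W_s\|_{L^2}+\|\kappa U_b\|_{L^2},\\
\|\nabla W\|_{L^2}&\le \|\nabla W_g\|_{L^2}+\|\nu\theta W_s\|_{H^1}+\|\kappa U_b\|_{H^1},\\
\|\Delta W\|_{L^2}&\le \|\Delta W_g\|_{L^2}+\|\nu\theta W_s\|_{H^2}+\|\kappa U_b\|_{H^2}.
\end{align*}

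For the $W_s$ part I would use \eqref{est: f2}, which gives $\nu\|\Delta W_s\|_{L^2}+\nu^{2/3}|k|^{1/3}\|\nabla W_s\|_{L^2}+(\nu k^2)^{1/3}\|W_s\|_{L^2}\le C\varepsilon_0\|\nabla U\|_{L^2}$, together with the a priori bound \eqref{eq: u priori}, which under the standing assumption $|\lambda|\le 1-\nu^{1/3}|k|^{-1/3}$ yields $\|\nabla U\|_{L^2}\le C\nu^{-1/2}\|W\|_{L^2}$. This produces $\|W_s\|_{L^2}\le C\varepsilon_0 \nu^{-5/6}|k|^{-2/3}\|W\|_{L^2}$ and, since $|\nabla\theta|\le C(1-|\lambda|)^{-1}\le C\nu^{-1/3}|k|^{1/3}$, one obtains in the same stroke the $H^1$ and $H^2$ estimates. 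Multiplying by $\nu$ gives the desired $\nu^{1/6}|k|^{-2/3}$, $\nu^{-1/6}|k|^{-1/3}$ and $\nu^{-1/2}$ factors respectively. The cutoff derivative terms $\nu(\nabla\theta)W_s$ and $\nu(\Delta\theta)W_s$ are controlled by the same chain because each factor of $(1-|\lambda|)^{-1}$ is at most $\nu^{-1/3}|k|^{1/3}$, and the arithmetic just balances.

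The $\kappa U_b$ part is the more delicate one and uses the boundary structure crucially. Since $V(y,z)-y$ vanishes at $y=\pm 1$, we have $\partial_z V|_{y=\pm 1}=0$, hence $\kappa|_{y=\pm 1}=0$; combined with $\|\kappa\|_{H^3}\le C\varepsilon_0$ this gives the pointwise bound $|\kappa(y,z)|\le C\varepsilon_0(1-|y|)$. Therefore
\[
\|\kappa U_b\|_{L^2}\le C\varepsilon_0\|(1-|y|)U_b\|_{L^2}\le C\varepsilon_0 \nu^{3/4}|k|^{-5/4}(1-|\lambda|)^{-7/4}\|W\|_{L^2}
\]
by \eqref{eq: w12 estimate}, and inserting $(1-|\lambda|)^{-7/4}\le \nu^{-7/12}|k|^{7/12}$ yields $C\varepsilon_0\nu^{1/6}|k|^{-2/3}\|W\|_{L^2}$, exactly the rate needed. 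For the higher norms I would quote the last two displays of Lemma \ref{lem:UB}: $\|\kappa U_b\|_{H^1}\le C\varepsilon_0\nu^{1/4}|k|^{-3/4}(1-|\lambda|)^{-5/4}\|W\|_{L^2}$ and $\|\kappa U_b\|_{H^2}\le C\varepsilon_0\nu^{-1/4}|k|^{-1/4}(1-|\lambda|)^{-3/4}\|W\|_{L^2}$. Once again plugging $(1-|\lambda|)^{-5/4}\le \nu^{-5/12}|k|^{5/12}$ and $(1-|\lambda|)^{-3/4}\le \nu^{-1/4}|k|^{1/4}$ produces the advertised $\nu^{-1/6}|k|^{-1/3}$ and $\nu^{-1/2}$ bounds.

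The only mild obstacle is bookkeeping the exponents so that the spurious factors of $(1-|\lambda|)^{-1}$ coming both from $\nabla\theta$ and from the boundary corrector estimates are completely absorbed by the standing hypothesis $1-|\lambda|\ge \nu^{1/3}|k|^{-1/3}$; once this is in place the four inequalities follow by collecting terms and choosing $\varepsilon_0$ so small that $C\varepsilon_0\nu^{1/6}|k|^{-2/3}\le 1/2$, which immediately gives the absorption remark $\|W\|_{L^2}\le 2\|W_g\|_{L^2}$.
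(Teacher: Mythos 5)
Your proposal is correct and follows essentially the same route as the paper: decompose $W=W_g+\nu\theta W_s+\kappa U_b$, bound $\nu\theta W_s$ in $L^2$, $H^1$, $H^2$ via \eqref{est: f2} combined with $\|\nabla U\|_{L^2}\le C\nu^{-1/2}\|W\|_{L^2}$ from \eqref{eq: u priori}, bound $\kappa U_b$ via $|\kappa|\le C\varepsilon_0(1-|y|)$ together with \eqref{eq: w12 estimate} and the last two estimates of Lemma \ref{lem:UB}, and absorb all powers of $(1-|\lambda|)^{-1}$ using $1-|\lambda|\ge\nu^{1/3}|k|^{-1/3}$. All the exponent arithmetic checks out (and for the final absorption one only needs $C\varepsilon_0\le 1/2$, since $\nu^{1/6}|k|^{-2/3}\le 1$).
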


\begin{proof}
By \eqref{est: f2}, \eqref{eq: u priori}, \eqref{eq: w12 estimate} and the fact that $|\kappa/(1-|y|)|\leq C \|\nabla\kappa\|_{L^\infty}\leq C\|\kappa\|_{H^3}\leq C\varepsilon_0$, we deduce that
  \begin{align*}
    \|\nu\theta W_s\|_{L^2}+\|\kappa U_b\|_{L^2}\leq& C\nu\|W_s\|_{L^2}+\|\kappa/(1-|y|)\|_{L^\infty}\|(1-|y|)U_b\|_{L^2}\\
     \leq& C\varepsilon_0\nu(\nu k^2)^{-\f13}\|\nabla U\|_{L^2}+C\varepsilon_0\nu^{\f34}|k|^{-\f54}(1-|\lambda|)^{-\f74}\|W\|_{L^2} \\
     \leq& C\varepsilon_0\nu^{\f16}|k|^{-\f23}\|W\|_{L^2},
  \end{align*}
  which gives the first two inequalities.

 By \eqref{est: f2} and \eqref{eq: u priori}, we get
  \begin{align*}
    \|\nu\theta W_s\|_{H^1} &\leq C\nu\|\theta\|_{L^\infty}\|\nabla W_s\|_{L^2}+C\nu\|\nabla \theta\|_{L^\infty}\|W_s\|_{L^2}\\
    &\leq C\nu\varepsilon_0\big(\nu^{-\f23}|k|^{-\f13}+(1-|\lambda|)^{-1}(\nu k^2)^{-\f13}\big)\|\nabla U\|_{L^2}\\
    &\leq C\nu\varepsilon_0\nu^{-\f23}|k|^{-\f13}\|\nabla U\|_{L^2}\leq C\varepsilon_0\nu^{-\f16}|k|^{-\f13}\|W\|_{L^2},
  \end{align*}
and
  \begin{align*}
    \|\nu\theta W_s\|_{H^2} &\leq C\nu(\|\theta\|_{L^\infty}\|\Delta W_s\|_{L^2}+\|\nabla \theta\|_{L^\infty}\|\nabla W_s\|_{L^2} +\|\nabla^2 \theta\|_{L^\infty}\|W_s\|_{L^2})\\
    &\leq C\nu\varepsilon_0\big(\nu^{-1}+(1-|\lambda|)^{-1}\nu^{-\f23}|k|^{-\f13} +(1-|\lambda|)^{-2}(\nu k^2)^{-\f13}\big)\|\nabla U\|_{L^2}\\
    &\leq C\varepsilon_0\|\nabla U\|_{L^2}\leq C\varepsilon_0\nu^{-\f12}\|W\|_{L^2}.
  \end{align*}
from which and Lemma \ref{lem:UB}, we infer that
\begin{align*}
   \|\nabla W\|_{L^2}&\leq \|\nabla W_g\|_{L^2}+\big(\|\nu\theta W_s\|_{H^1}+\|\kappa U_b\|_{H^1}\big)\\
   &\leq \|\nabla W_g\|_{L^2}+C\varepsilon_0\nu^{-\f16}|k|^{-\f13}\|W\|_{L^2}+C\varepsilon_0 \nu^{\f14}|k|^{-\f34}(1-|\lambda|)^{-\f54}\|W\|_{L^2}\\
   &\leq \|\nabla W_g\|_{L^2}+C\varepsilon_0\nu^{-\f16}|k|^{-\f13}\|W\|_{L^2},
\end{align*}
and
\begin{align*}
   \|\Delta W\|_{L^2}&\leq \|\Delta W_g\|_{L^2}+ \|\nu\theta W_s\|_{H^2}+\|\kappa U_b\|_{H^2}\\&\leq \|\Delta W_g\|_{L^2}+ C\varepsilon_0\big(\nu^{-\f12}+\nu^{-\f14}|k|^{-\f14}(1-|\lambda|)^{-\f34}\big)\|W\|_{L^2}\\
   &\leq \|\Delta W_g\|_{L^2}+C\varepsilon_0\nu^{-\f12}\|W\|_{L^2}.
\end{align*}
\end{proof}

\subsection{Proof of  Proposition \ref{prop:res-full-s2}  when $|\la|\le 1-\nu^\f13|k|^{-\f13}$}

\subsubsection{$H^1$ estimate of source term}

Let us first estimate $\|\na F_1\|_{L^2}$.  Notice that
\beno
\Delta\big(p^{L(01)}-p^{L(02)}\big)=-2ik\partial_yV\theta(W_s-\rho_1\tW_s)+\partial_yV\theta\rho_1 (\Delta U-2ik\tW_s).
\eeno
Then by \eqref{est: f2-rho1f1}, Lemma \ref{lem:U-Ub},  $\|\rho_1\|_{L^\infty}\leq \|\rho_1\|_{H^2}\leq C\varepsilon_0$ and the fact that $\text{supp}(U_b)\cap \text{supp}(\theta)=\text{supp}(U_b)\cap I_1=\emptyset$,  we deduce that
\begin{align}
&\|\nabla^2 (p^{L(01)}-p^{L(02)})\|_{L^2}\leq C\|\Delta (p^{L(01)}-p^{L(02)})\|_{L^2}\nonumber\\
&\leq C|k|\|W_s-\rho_1\tW_s\|_{L^2}+C\|\rho_1\|_{L^\infty}\|\theta (\Delta U-2ik\tW_s)\|_{L^2}\nonumber\\
&\leq C |k|\|W_s-\rho_1\tW_s\|_{L^2}+C\varepsilon_0\|\theta (\Delta (U-U_b)-2ik\tW_s)\|_{L^2}\nonumber\\
&\leq C \varepsilon_0|k|^{-1}\big(\|\nabla\partial_x^2U\|_{L^2}+\|\nabla\partial_x (\partial_z-\kappa\partial_y)U\|_{L^2}\big)+C\varepsilon_0\|\Delta (U-U_b)-2ik\tW_s\|_{L^2}\nonumber\\ &\leq C\varepsilon_0|k|^{-1}\big(\|\nabla\partial_x^2U\|_{L^2}+\|\nabla\partial_x (\partial_z-\kappa\partial_y)U\|_{L^2}\big) \label{est: nabla2(p01-p02)}\\&\quad
+C\varepsilon_0\big(\nu^{-\f{1}{2}}(1-|\lambda|)^{-1}\|W\|_{L^2}+ \nu^{-\f13}|k|^{\f13}\|\nabla W\|_{L^2}\big).\nonumber
\end{align}

Let $ \phi_1=\theta\rho_1\partial_y V$, and then
\begin{align*}
\Delta F_{j}^{2}&=\Delta(\theta\rho_1\partial_y V\partial_j U)+\partial_j(\Delta p^{L(02)})=\Delta(\phi_1\partial_j U)-\partial_j(\phi_1\Delta U)\\&=\text{div}(\nabla\phi_1\partial_j U-\partial_j\phi_1\nabla U)+\partial_j(\nabla\phi_1\cdot\nabla U),
\end{align*}
with $ F_{2}^{l}|_{y=\pm1}=\partial_yF_{3}^{l}|_{y=\pm1}=0.$ Then we have
\begin{align*}
\|\nabla F_{j}^{2}\|_{L^2}^2=&-\big\langle\Delta F_{j}^{2},F_{j}^{2}\big\rangle=\big\langle\nabla\phi_1\partial_j U-\partial_j\phi_1\nabla U,\nabla F_{j}^{2}\big\rangle+\big\langle\nabla\phi_1\cdot\nabla U,\partial_jF_{j}^{2}\big\rangle,
\end{align*}
which gives
\ben
\|\nabla F_{j}^{2}\|_{L^2}\leq C\||\nabla\phi_1||\nabla U|\|_{L^2}. \label{eq:Fj2}
\een
Using the facts that  $0\leq\theta\leq 1,\ |\nabla\theta|\leq C(1-|\lambda|)^{-1},\ |\nabla V|+|\nabla^2 V|\leq C,\  \|\rho_1\|_{H^2}\leq C\varepsilon_0,$ we deduce that
\begin{align*}
|\nabla\phi_1|\leq C(1-|\lambda|)^{-1}|\rho_1|+C|\nabla\rho_1|,\end{align*}
and hence,
\begin{align}
\||\nabla\phi_1||\nabla U|\|_{L^2}&\leq  C(1-|\lambda|)^{-1}\|\rho_1\|_{L^{\infty}}\|\nabla U\|_{L^2}+C\||\nabla\rho_1||\nabla U|\|_{L^2}\nonumber\\ &\leq C(1-|\lambda|)^{-1}\|\rho_1\|_{H^2}\|\nabla U\|_{L^2}+C\|\nabla\rho_1\|_{H^1}\big(\|\nabla U\|_{L^2}+\|(\partial_z-\kappa\partial_y)\nabla U\|_{L^2}\big)\nonumber\\ &\leq C\varepsilon_0(1-|\lambda|)^{-1}\|\nabla U\|_{L^2}+C\varepsilon_0\big(\|\nabla U\|_{L^2}+\|(\partial_z-\kappa\partial_y)\nabla U\|_{L^2}\big)\nonumber\\ &\leq C\varepsilon_0(1-|\lambda|)^{-1}\nu^{-\f12}\|W\|_{L^2}+C\varepsilon_0|k|^{-1}
\big(\|\nabla\partial_x^2U\|_{L^2}+\| \nabla\partial_x(\partial_z-\kappa\partial_y)U\|_{L^2}\big),\label{eq: nabla phi1 nabla u}
\end{align}
here we used Lemma \ref{Lem: bil good deri}, \eqref{u3xz} and \eqref{eq: u priori}.

Note that for $j\in\{2,3\}$,
\begin{align*}
F_{j}^{1}-F_{j}^{2}&=\partial_j\big(p^{L(01)}-p^{L(02)}\big),\quad \|\nabla (F_{j}^{1}-F_{j}^{2})\|_{L^2}\leq\|\nabla^2 (p^{L(01)}-p^{L(02)})\|_{L^2}.
\end{align*}
We infer from \eqref{eq:Fj2}, \eqref{est: nabla2(p01-p02)} and  \eqref{eq: nabla phi1 nabla u}  that
\begin{align}
\nonumber\|\nabla F_{1}\|_{L^2}\leq& \|\nabla F_{2}^{1}\|_{L^2}+C\|\kappa\|_{H^2}\|\nabla F_{3}^{1}\|_{L^2}\\
 \leq& C\big(\||\nabla\phi_1||\nabla u|\|_{L^2}+\|\nabla^2 (p^{L(01)}-p^{L(02)})\|_{L^2}\big)\nonumber\\
 \leq&C\varepsilon_0|k|^{-1}\big(\|\nabla\partial_x^2U\|_{L^2}+\|\nabla\partial_x (\partial_z-\kappa\partial_y)U\|_{L^2}\big) \nonumber\\&
\quad+C\varepsilon_0\big((1-|\lambda|)^{-1}\nu^{-\f12}\|W\|_{L^2}+\nu^{-\f13}|k|^{\f13} \|\nabla W\|_{L^2}\big).\label{eq: nabla F1}
\end{align}

Next we estimate $\|F_2\|_{H^1}$. Using $|\na^j\theta|\leq C(1-|\la|)^{-j}$ and Lemma \ref{Lem: bil good deri}, we  deduce that
\begin{align}
\|F_2\|_{H^1}\leq&\nu\big(\|\Delta\kappa U\|_{H^1}\|\theta\|_{L^\infty}+ \|\Delta\kappa U\|_{L^2}\|\nabla \theta\|_{L^\infty}\big) +C\nu\big( \|\nabla\kappa U\|_{H^1}\|\nabla\theta\|_{L^\infty} \nonumber\\
&+\|\nabla\kappa U\|_{L^2}\|\nabla^2 \theta\|_{L^\infty}\big)+2\nu\big(\|\rho_2(\partial_z-\kappa\partial_y)U\|_{H^1}\|\theta\|_{L^\infty} \nonumber\\
&\quad+\|\rho_2(\partial_z-\kappa\partial_y)U\|_{L^2}\|\nabla\theta\|_{L^\infty}\big) +\nu\|(\Delta\kappa)U_1\|_{H^1}+2\nu\|\nabla\kappa\cdot\nabla U_1\|_{H^1}  \nonumber\\
\leq&C\nu\|\Delta \kappa U\|_{H^1}+C\nu(1-|\lambda|)^{-1}\big(\|\Delta \kappa U\|_{L^2}+\|\nabla \kappa U\|_{H^1}\big)+C\nu(1-|\lambda|)^{-2}\|\nabla \kappa U\|_{L^2}\nonumber\\&+C\nu\|\rho_2(\partial_z-\kappa\partial_y)U\|_{H^1} +C\nu(1-|\lambda|)^{-1}\|\rho_2(\partial_z-\kappa\partial_y)U\|_{L^2}\nonumber\\&
+C\nu\big(\|\Delta \kappa\|_{H^1}\|U_1\|_{H^2}+\|\nabla\kappa\|_{H^2}\|\nabla U_1\|_{H^1}\big)\nonumber\\
\leq&C\nu\|\Delta \kappa\|_{H^1}\big (\|U\|_{H^1}+\|(\partial_z-\kappa\partial_y)U\|_{H^1}\big)+C\nu(1-|\lambda|)^{-2}\|\nabla \kappa\|_{L^{\infty}}\|U\|_{L^2} \nonumber\\&+C\nu(1-|\lambda|)^{-1}\big(\|\Delta \kappa\|_{H^1} \|U\|_{H^1}+\|\nabla \kappa\|_{H^2} \|U\|_{H^1}\big) \nonumber\\&+C\nu\|\rho_2\|_{H^2}\|(\partial_z-\kappa\partial_y)U\|_{H^1}+C\nu(1-|\lambda|)^{-1}\|\rho_2\|_{L^{\infty}}
\|\nabla U\|_{L^2}\nonumber\\&
+C\nu\| \kappa\|_{H^3}\|U_1\|_{H^2}\nonumber\\
\leq&C\nu\big(\|\kappa\|_{H^3}+\|\rho_2\|_{H^2}\big) \Big((1-|\lambda|)^{-1}\|\nabla U\|_{L^2}+\|(\partial_z-\kappa\partial_y)U\|_{H^1} \label{est: F2 H1 prior}\\&\quad+(1-|\lambda|)^{-2}\|U\|_{L^2}+\|U_1\|_{H^2}\Big).\nonumber
\end{align}

Since $\text{supp}(\theta)\cap\text{supp}(U_b)= I_1\cap\text{supp}(U_b)=\emptyset$,  we have
\beno
 U_1=(U-U_b)-\theta U=(1-\theta)(U-U_b),
 \eeno
which along with Lemma \ref{lem:U-Ub} gives
\begin{align}
   \|U_1\|_{H^2}&=\|(1-\theta)(U-U_b)\|_{H^2}\leq C\|\nabla^2[(1-\theta)(U-U_b)]\|_{L^2}\nonumber\\ &\leq C(1-|\lambda|)^{-1}\big(\nu^{-\f12}|k|\|W\|_{L^2} +|k|\|W\|_{L^2}\big).\label{est: u1 varphi}
\end{align}
Then by the fact that $\|\kappa\|_{H^3}+\|\rho_2\|_{H^2}\leq C\varepsilon_0$, \eqref{eq: u priori} , \eqref{est: F2 H1 prior} and \eqref{est: u1 varphi}, we get
\begin{align}
   \|F_2\|_{H^1} \leq&C\nu\big(\|\kappa\|_{H^3}+\|\rho_2\|_{H^2}\big)\Big((1-|\lambda|)^{-1}\|\nabla U\|_{L^2}+\|(\partial_z-\kappa\partial_y)U\|_{H^1}\nonumber \\&\quad+(1-|\lambda|)^{-2}\|U\|_{L^2}+\|U_1\|_{H^2}\Big)\nonumber\\
   \leq&C\nu\varepsilon_0\Big((1-|\lambda|)^{-1}\nu^{-\f12}\|W\|_{L^2}+|k|^{-1}\|\nabla\partial_x (\partial_z-\kappa\partial_y)U\|_{L^2}\nonumber\\&
+(1-|\lambda|)^{-2}\nu^{-\f16}|k|^{-\f13}\|W\|_{L^2}+|k|^{-1}(1-|\lambda|)^{-1} \big(\nu^{-\f{1}{2}}|k|\|W\|_{L^2}+|k|\|\nabla W\|_{L^2}\big)\Big)\nonumber\\ \leq&C\nu\varepsilon_0\Big((1-|\lambda|)^{-1}\nu^{-\f12}\|W\|_{L^2}+|k|^{-1}\|\nabla\partial_x(\partial_z-\kappa\partial_y)U\|_{L^2}+ \nu^{-\f13}|k|^{\f13}\|\nabla W\|_{L^2}\Big).\label{est: F2 H1}
\end{align}

Finally, let us estimate $\|\na F_3\|_{L^2}$. By \eqref{est: f2} and  \eqref{eq: u priori} , we have
\begin{align}
\label{est: theta f2}
&\|\nu^2(2\nabla\theta\cdot\nabla W_s+\Delta\theta W_s)\|_{H^1}\\
&\leq  C\nu^2\Big(\|\nabla\theta\|_{L^\infty}\|\nabla W_s\|_{H^1}+\|\nabla^2\theta\|_{L^\infty}\|\nabla W_s\|_{L^2} +\|\Delta \theta\|_{L^\infty}\|W_s\|_{H^1}\nonumber\\
&\qquad\quad+\|\nabla\Delta\theta\|_{L^\infty} \|W_s\|_{L^2}\Big)\nonumber\\
&\leq C\nu^2(1-|\lambda|)^{-1}\|W_s\|_{H^2}+C\nu^2(1-|\lambda|)^{-2}\|W_s\|_{H^1} +C\nu^2(1-|\lambda|)^{-3}\|W_s\|_{L^2}\nonumber\\ &\leq C\nu^2(1-|\lambda|)^{-1}\big(\|\Delta W_s\|_{L^2}+\nu^{-\f13}|k|^{\f13}\|\nabla W_s\|_{L^2}+\nu^{-\f23}|k|^{\f23}\|W_s\|_{L^2}\big)\nonumber\\ &\leq C\nu\varepsilon_0(1-|\lambda|)^{-1}\|\nabla U\|_{L^2}\leq C\varepsilon_0 (1-|\lambda|)^{-1}\nu^{\f12}\|W\|_{L^2}.\nonumber
\end{align}
Since $\|\kappa/(1-|y|)\|_{L^{\infty}}+\|\nabla\kappa\|_{L^{\infty}}\leq C\|\nabla\kappa\|_{L^{\infty}}\leq C\varepsilon_0$,
we get by Lemma \ref{lem:UB} that
\begin{align}
\|\kappa f_b\|_{H^1}&\leq C\varepsilon_0\big(\|f_b\|_{L^2}+\|(1-|y|)\nabla f_b\|_{L^2}\big)\nonumber\\
&\leq C\varepsilon_0\nu^{\f34}|k|^{-\f14}(1-|\lambda|)^{-\f74}\|W\|_{L^2}\leq C\varepsilon_0\nu^{\f12}(1-|\lambda|)^{-1}\|W\|_{L^2}.\label{est: kappa w2}
\end{align}
Since  $\Delta p^{L(03)}=-2ik\partial_yV(\kappa U_b),\ \partial_yp^{L(03)}|_{y=\pm1}=0$, we get by Lemma \ref{lem:UB}  that \begin{align}
\|(\partial_y+\kappa\partial_z)p^{L(03)}\|_{H^1}\leq & C\|p^{L(03)}\|_{H^2}\leq C\|\Delta p^{L(03)}\|_{L^2}\leq C|k|\|\kappa U_b\|_{L^2}\nonumber\\ \leq& C|k|\|\kappa/(1-|y|)\|_{L^\infty}\|(1-|y|)U_b\|_{L^2}\nonumber\\
\leq&  C|k|\varepsilon_0\|(1-|y|)U_b\|_{L^2}\nonumber\\ \leq & C\varepsilon_0\nu^{\f34}|k|^{-\f14}(1-|\lambda|)^{-\f74}\|W\|_{L^2} \leq C\varepsilon_0\nu^{\f12}(1-|\lambda|)^{-1}\|W\|_{L^2}.\label{est: pL03 H1}
\end{align}
Recall that $F_3:=F_2+\nu^2(2\nabla\theta\cdot\nabla W_s+\Delta\theta W_s)-\kappa f_b-\nu F_1-(\partial_y+\kappa\partial_z)p^{L(03)}$. Then by \eqref{est: F2 H1}, \eqref{est: theta f2}, \eqref{est: kappa w2}, \eqref{eq: nabla F1} and \eqref{est: pL03 H1}, we conclude that
\begin{align}
\|\nabla F_3\|_{L^2}\leq&C\nu\varepsilon_0|k|^{-1}
\big(\|\nabla\partial_x^2U\|_{L^2}+\|\nabla\partial_x(\partial_z-\kappa\partial_y)U\|_{L^2}\big)\label{est: F3 H1}\\&
+C\varepsilon_0\big((1-|\lambda|)^{-1}\nu^{\f12}\|W\|_{L^2}+\nu^{\f23} |k|^{\f13}\|\nabla W\|_{L^2}\big).\nonumber
\end{align}

\subsubsection{Estimate of  Neumann data $\pa_yW_g$}

Using the fact that
\beno
(ik(V-\lambda)U_b-a(\nu k^2)^{\f13}U_b-\partial_zp^{L(0)})|_{y=\pm1}=(V-y)|_{y=\pm1}=0,
\eeno
 we find that
 \beno
 |k(y-\lambda)U_b|\leq |\partial_zp^{L(0)}|,\quad |k(y-\lambda)\partial_zU_b|\leq |\partial_z^2p^{L(0)}|\quad \text{on}\,\,\p\Om.
 \eeno
 Then by Lemma \ref{lem:sob-f}, we get
\begin{align*}
  \|k(y-\lambda)U_b\|_{L^2(\partial\Omega)}\leq& C\|\partial_zp^{L(0)}\|_{L^2(\partial\Omega)}\leq C\|\partial_zp^{L(0)}\|_{L^2_{x,z}L^\infty_y}\\ \leq& C|k|^{-\f12}\|p^{L(0)}\|_{H^2}
   \leq C|k|^{-\f12}\|\Delta p^{L(0)}\|_{L^2}\\
   \leq& C|k|^{\f12}\|\partial_yV\|_{L^\infty}\|W\|_{L^2}\leq  C|k|^{\f12}\|W\|_{L^2},
\end{align*}
and
\begin{align*}
   \|k(y-\lambda)\partial_zU_b\|_{L^2(\partial\Omega)}\leq&  C\|\partial^2_zp^{L(0)}\|_{L^2(\partial\Omega)}\leq C\|\partial^2_zp^{L(0)}\|_{L^2_{x,z}L^\infty_y}\\ \leq& C\|(\partial^2_z\partial_y,\partial^2_z)p^{L(0)}\|_{L^2}^{\f12} \|\partial^2_zp^{L(0)}\|_{L^2}^{\f12}\\
   \leq& C\|\nabla\Delta p^{L(0)}\|_{L^2}^{\f12}\|\Delta p^{L(0)}\|_{L^2}^{\f12}\leq C|k|\|\partial_yVW\|_{H^1}^{\f12} \|\partial_yVW\|_{L^2}^{\f12}\\
   \leq&C|k|\big(\|\partial_yV\|_{H^2}\|W\|_{H^1}\big)^{\f12}\big(\|\partial_yV\|_{L^\infty} \|W\|_{L^2}\big)^{\f12}\\
   \leq& C|k|\|\nabla W\|_{L^2}^{\f12}\|W\|_{L^2}^{\f12}.
\end{align*}

Using the fact that
\beno
[\partial_y(\theta W_s)]|_{y=\pm1}=\partial_y W|_{y=\pm1}=\kappa|_{y=\pm1}=0, \quad W_g=W-\nu\theta W_s-\kappa U_b,
\eeno
we deduce that
\begin{align*}
\|k(y-\lambda)\partial_yW_g\|_{L^2(\partial\Omega)} &\leq \|\partial_y\kappa\|_{L^\infty}\|k(y-\lambda)U_b\|_{L^2(\partial\Omega)}\leq C\varepsilon_0|k|^{\f12}\|U\|_{L^2},
\end{align*}
which  implies that
\begin{align*}
   \|\partial_yW_g\|_{L^2(\partial\Omega)}\leq& C|k(1-|\lambda|)|^{-1}\|k(y-\lambda)\partial_yW_g\|_{L^2(\partial\Omega)}\\
   \leq& C\varepsilon_0(1-|\lambda|)^{-1}|k|^{-\f12}\|W\|_{L^2}.
\end{align*}
By the interpolation, we get
\begin{align}
   \|(1+|k(y-\lambda)|)^{\alpha}\partial_yW_g\|_{L^2(\partial\Omega)}\leq& \|(1+|k(y-\lambda)|)\partial_yW_g\|_{L^2(\partial\Omega)}^{\alpha} \|\partial_yW_g\|_{L^2(\partial\Omega)}^{1-\alpha}\nonumber\\
   \leq&C\varepsilon_0|k|^{-\f12}\big((1-|\lambda|)^{-1}+|k|)^{\alpha}(1-|\lambda|)^{\alpha-1} \|W\|_{L^2}\nonumber\\
   \leq &C\varepsilon_0|k|^{-\f12}(1-|\lambda|)^{-1}|k|^{\alpha} \|W\|_{L^2}, \ \alpha\in[0,1].\label{est: partial yvarphi boundary}
\end{align}

Due to $\kappa|_{y=\pm1}=0, $ we have
  \begin{align*}
     &\partial_y\partial_z(\kappa U_b)|_{y=j}=(\partial_z\partial_y\kappa)U_b|_{y=j} +(\partial_y\kappa)\partial_zU_b|_{y=j}\quad j\in\{\pm1\},
  \end{align*}
  which implies  that
  \begin{align*}
     \|\partial_y\partial_z(\kappa U_b)\|_{L^2(\partial\Omega)} &\leq \|(\partial_y\partial_z\kappa)U_b\|_{L^2(\partial\Omega)}+ \|(\partial_y\kappa)\partial_zU_b\|_{L^2(\partial\Omega)}\\
     &\leq \|\partial_y\partial_z\kappa\|_{L^2_{z}(\partial\Omega)} \|U_b\|_{L^2_xL^\infty_z(\partial\Omega)}+ \|\partial_y\kappa\|_{L^\infty}\|\partial_zU_b\|_{L^2(\partial\Omega)}\\
     &\leq \|\partial_y\partial_z\kappa\|_{L^\infty_yL^2_z} \|(1,\partial_z)U_b\|_{L^2(\partial\Omega)}+C\|\kappa\|_{H^3}\|\partial_z U_b\|_{L^2(\partial\Omega)}\\
     &\leq C\varepsilon_0\big(\|U_b\|_{L^2(\partial\Omega)}+\|\partial_z U_b\|_{L^2(\partial\Omega)}\big).
  \end{align*}
  Thanks to $[\partial_z\partial_y(\theta W_s)]|_{y=\pm1}=\partial_z\partial_y
  W|_{y=\pm1}=0$, we obtain
  \begin{align*}
    \|\partial_y\partial_zW_g\|_{L^2(\partial\Omega)}&\leq \|\partial_y\partial_z(\kappa  U_b)\|_{L^2(\partial\Omega)}\leq C\varepsilon_0\big(\|U_b\|_{L^2(\partial\Omega)}+\|\partial_z U_b\|_{L^2(\partial\Omega)}\big)\\
    &\leq C\varepsilon_0|k|^{-1}\big(1-|\lambda|)^{-1} (\|k(y-\lambda)U_b\|_{L^2(\partial\Omega)}+\|k(y-\lambda)\partial_zU_b\|_{L^2(\partial\Omega)})\\
    &\leq C\varepsilon_0(1-|\lambda|)^{-1}\|W\|_{L^2}^{\f12}\|\nabla W\|_{L^2}^{\f12},
  \end{align*}
  which along with $1-|\lambda|\geq|\nu/k|^{\f13}=\nu^{\f23}|k|^{\f13}(\nu k^2)^{-\f13}\geq\nu^{\f23}|k|^{\f13} $ yields
  \begin{align}
   (1-|\lambda|)\nu^{\f{11}{12}}|k|^{\f13}\|\partial_y\partial_zW_g\|_{L^2(\partial\Omega)}\leq& C\varepsilon_0\nu^{\f{11}{12}}|k|^{\f13}\|W\|_{L^2}^{\f12}\|\nabla W\|_{L^2}^{\f12}\nonumber\\ \leq& C\varepsilon_0\big(\nu^{\f{1}{2}}\|W\|_{L^2}+\nu^{\f{4}{3}}|k|^{\f23}\|\nabla W\|_{L^2}\big)\nonumber\\
   \leq& C\varepsilon_0\big(\nu^{\f{1}{2}}\|W\|_{L^2}+(1-|\lambda|)\nu^{\f23} |k|^{\f13}\|\nabla W\|_{L^2}\big).\label{est: partial yz varphi1}
\end{align}

\subsubsection{Completion of the proof}

By Proposition \ref{prop:res-toy}, \eqref{est: F3 H1}, \eqref{est: partial yvarphi boundary} and \eqref{est: partial yz varphi1}, we get
\begin{align}
   \nu^{\f{1}{2}}|k|\|W_g\|_{L^2}
   \leq& C(1-|\lambda|)\Big(\|\nabla (G_1-F_3)\|_{L^2} +\nu^{\f12}|k|^{-\f12}\|(1+|k(y-\lambda)|)^{\f12}\partial_yW_g\|_{L^2(\partial\Omega)} \Big) \nonumber\\
   &+C\nu^{\f56}|k|^{\f16}\|\partial_yW_g\|_{L^2(\partial\Omega)} +(1-|\lambda|)\nu^{\f{11}{12}}|k|^{-\f23}\|\partial_y\partial_zW_g\|_{L^2(\partial \Omega)}\nonumber\\
   \leq &C(1-|\lambda|)\|\nabla G_1\|_{L^2} +C\nu\varepsilon_0|k|^{-1}(1-|\lambda|)
(\|\nabla\partial_x^2U\|_{L^2}+ \|\nabla\partial_x(\partial_z-\kappa\partial_y)U\|_{L^2})\label{est: varphi1 prior}\\&+
C\varepsilon_0\big(\nu^{\f12}\|W\|_{L^2}+(1-|\lambda|)\nu^{\f23} |k|^{\f13}\|\nabla W\|_{L^2}\big),\nonumber
\end{align}
and
\begin{align}
  \nonumber&\nu^{\f12}|k|\|\nabla W_g\|_{L^2}+\nu^{\f34}|k|^{\f12}\|\Delta W_g\|_{L^2} +\nu|k|^{\f12}\|\Delta W_g\|_{L^2(\partial\Omega)} +\|\partial_x\nabla p^{L(1)}\|_{L^2}\\ &\leq C\|\nabla(G_1-F_3)\|_{L^2}+C\nu^{\f12}|k|^{\f12} \|(1+|k(y-\lambda)|)^{\f12}\partial_yW_g\|_{L^2(\partial\Omega)}\nonumber\\&\quad+C\nu^{\f{11}{12}}|k|^{\f13}\|\partial_y\partial_zW_g\|_{L^2(\partial \Omega)}\nonumber\\
  &\leq C\|\nabla G_1\|_{L^2}+C\nu\varepsilon_0|k|^{-1}\big(\|\nabla\partial_x^2 U\|_{L^2}+\|\nabla\partial_x(\partial_z-\kappa\partial_y)U\|_{L^2}\big)\nonumber\\
  &\quad+C\varepsilon_0(1-|\lambda|)^{-1}( \nu^{\f12}+\nu^{\f12}|k|^{\f12})\|W\|_{L^2} +C\varepsilon_0\nu^{\f23}|k|^{\f13}\|\nabla W\|_{L^2}\nonumber\\
  &\leq  C\|\nabla G_1\|_{L^2}+C\nu\varepsilon_0|k|^{-1}\big(\|\nabla\partial_x^2 U\|_{L^2}+\|\nabla\partial_x(\partial_z-\kappa\partial_y)U\|_{L^2}\big) \label{est: nabla varphi1 prior}\\
  &\quad+C\varepsilon_0(1-|\lambda|)^{-1}\nu^{\f12}|k|^{\f12}\|W\|_{L^2}+ C\varepsilon_0\nu^{\f23}|k|^{\f13}\|\nabla W\|_{L^2}.\nonumber
\end{align}

Recalling that $p^{L(0)}=p^{L(2)}+\nu p^{L(01)}+p^{L(03)}$, we get by Proposition \ref{prop:res-nav-good} and  Lemma \ref{lem: varphi varphi1} that
\begin{align}
&\nu^{\f{1}{3}}\big(\|\partial_x^2U\|_{L^2}+\|\partial_x(\partial_z-\kappa\partial_y)U\|_{L^2}\big)+ \nu^{\f{2}{3}}
\big(\|\nabla\partial_x^2U\|_{L^2}+ \|\nabla\partial_x(\partial_z-\kappa\partial_y)U\|_{L^2}\big)\nonumber\\ &\leq C\nu^{\f{1}{6}}\| \partial_zp^{L(2)}\|_{H^2}+C|k|^{-\f13}\Big(\| \partial_x^2\partial_z(\nu p^{L(01)}+p^{L(03)})\|_{L^2}\nonumber\\
&\qquad+ \|\partial_x(\partial_z-\kappa\partial_y)\partial_z(\nu p^{L(01)}+p^{L(03)})\|_{L^2}\Big)\nonumber\\ &\leq C\nu^{\f{1}{6}}\| \partial_zp^{L(2)}\|_{H^2}+C|k|^{-\f13}\|\nabla \partial_x\partial_z(\nu p^{L(01)}+p^{L(03)})\|_{L^2}\nonumber\\ &\leq C\nu^{\f{1}{6}}\| \Delta\partial_zp^{L(2)}\|_{L^2}+C|k|^{-\f13}|k|\|\Delta(\nu p^{L(01)}+p^{L(03)})\|_{L^2}\nonumber\\ &\leq C\nu^{\f{1}{6}}\|\nabla W_g\|_{L^2}+C|k|^{-\f13}|k|^2\big(\| \nu\theta W_s\|_{L^2}+\| \kappa U_b\|_{L^2}\big)\nonumber\\ &\leq C\nu^{\f{1}{6}}\| \nabla W_g\|_{L^2}+C\varepsilon_0\nu^{\f16} |k|\|W\|_{L^2}.\label{est: u good deri prior}
\end{align}

Now we infer from  \eqref{est: varphi1 prior}, \eqref{est: u good deri prior} and Lemma \ref{lem: varphi varphi1} that
\begin{align*}
  \nu^{\f12}|k|(1-|\lambda|)^{-1}\|W\|_{L^2}&\leq C\|\nabla G_1\|_{L^2}+ C\varepsilon_0\nu^{\f12}|k|^{-1}\big(\|\nabla W_g\|_{L^2}+|k|\|W\|_{L^2})\\
  &\quad+C\varepsilon_0 \nu^{\f12}(1-|\lambda|)^{-1}\|W\|_{L^2}+ C\varepsilon_0\nu^{\f23}|k|^{\f13}\|\nabla W\|_{L^2}\\
  &\leq C\|\nabla G_1\|_{L^2}+ C\varepsilon_0\nu^{\f12}(1-|\lambda|)^{-1}\|W\|_{L^2}+C\nu^{\f12} \varepsilon_0\|\nabla W_g\|_{L^2},
\end{align*}
which gives by taking $\veps_0$ small enough so that $C\varepsilon_0\leq 1/2$ that
\begin{align}\label{est: varphi prior 2}
\nu^{\f12}|k|(1-|\lambda|)^{-1}\|W\|_{L^2} &\leq C\big(\|\nabla G_1\|_{L^2}+\varepsilon_0\nu^{\f12} \|\nabla W_g\|_{L^2}\big).
\end{align}
Then by \eqref{est: nabla varphi1 prior}, \eqref{est: u good deri prior}, \eqref{est: varphi prior 2} and Lemma \ref{lem: varphi varphi1},
we  get
\begin{align*}
  \nonumber&\nu^{\f12}|k|\|\nabla W_g\|_{L^2}+\nu^{\f34}|k|^{\f12}\|\Delta W_g\|_{L^2} +\nu|k|^{\f12}\|\Delta W_g\|_{L^2(\partial\Omega)} +\|\partial_x\nabla p^{L(1)}\|_{L^2}\\
  &\leq C\|\nabla G_1\|_{L^2}+C\varepsilon_0\nu^{\f12}|k|^{-1} (\|\nabla W_g\|_{L^2}+|k|\|W\|_{L^2})\\& \quad+C\varepsilon_0(1-|\lambda|)^{-1}\nu^{\f12}|k|^{\f12}\|W\|_{L^2}+C\varepsilon_0\nu^{\f23}|k|^{\f13}\|\nabla W\|_{L^2}\\
  &\leq C\|\nabla G_1\|_{L^2} +C\varepsilon_0\nu^{\f12}\|\nabla W_g\|_{L^2}+C\varepsilon_0\nu^{\f12}\|W\|_{L^2}\\
  &\leq C\|\nabla G_1\|_{L^2} +C\varepsilon_0\nu^{\f12}\|\nabla W_g\|_{L^2},
\end{align*}
which gives by taking $\veps_0$ small enough so that $C\varepsilon_0\leq 1/2$ that
\begin{align}\label{est: nabla varphi1 prior 2}
 &\nu^{\f12}|k|\|\nabla W_g\|_{L^2}+\nu^{\f34}|k|^{\f12}\|\Delta W_g\|_{L^2} +\|\partial_x\nabla p^{L(1)}\|_{L^2} \leq  C\|\nabla G_1\|_{L^2}.
\end{align}

Now, by \eqref{est: nabla varphi1 prior 2}, \eqref{est: varphi prior 2} and \eqref{est: u good deri prior}, we get
\begin{align}
  &\nu^{\f12}|k|(1-|\lambda|)^{-1}\|W\|_{L^2}\leq C\|\nabla G_1\|_{L^2},\label{est: varphi nabla G}\\
  &\nu^{\f{1}{3}} \big(\|\partial_x^2U\|_{L^2}+\|\partial_x(\partial_z-\kappa\partial_y)U\|_{L^2})+ \nu^{\f{2}{3}} \big(\|\nabla\partial_x^2U\|_{L^2}+ \|\nabla\partial_x(\partial_z-\kappa\partial_y)U\|_{L^2}\big)
  \label{est: u good deriv nabla G}\\
  &\leq C\nu^{-\f13}\|\nabla G_1\|_{L^2}.\nonumber
\end{align}
and by Lemma \ref{lem: varphi varphi1},
\begin{align}
\|\partial_x\nabla W\|_{L^2}&=|k|\|\nabla W\|_{L^2}\leq C|k|\big(\|\nabla W_g\|_{L^2}+\nu^{-\f16} |k|^{-\f13}\|W\|_{L^2}\big)\label{est: nabla varphi nabla G}\\
&\leq C\nu^{-\f{1}{2}}\|\nabla G_1\|_{L^2}+C\nu^{-\f{2}{3}}\|\nabla G_1\|_{L^2}\leq C\nu^{-\f{2}{3}}\|\nabla G_1\|_{L^2}.\nonumber
\end{align}
By \eqref{est: nabla varphi1 prior 2}, \eqref{est: varphi nabla G} and Lemma \ref{lem: varphi varphi1},  we have
\begin{align}
\|\partial_x\Delta W\|_{L^2}&=|k|\|\Delta W\|_{L^2}\leq C|k|\big(\|\Delta W_g\|_{L^2}+\nu^{-\f12} \|W\|_{L^2}\big)
\label{est: delta varphi nabla G}\\
&\leq C\nu^{-\f{3}{4}} |k|^{\f{1}{2}}\|\nabla G_1\|_{L^2}+C\nu^{-1}\|\nabla G_1\|_{L^2}\leq C\nu^{-1}\|\nabla G_1\|_{L^2}.\nonumber
\end{align}
By  \eqref{eq: u priori}, \eqref{est: nabla varphi1 prior 2} and Lemma \ref{lem: varphi varphi1}, we have
\begin{align}
\nonumber\nu\|\partial_x\Delta U\|_{L^2}&\leq C\nu^{\f16}|k|^{\f43}\|W\|_{L^2}\leq C\nu^{\f16}|k|^{\f43}\|W_g\|_{L^2}\\&\leq C\nu^{\f16}|k|\|\nabla W_g\|_{L^2}\leq  C\nu^{-\f{1}{3}}\|\nabla G_1\|_{L^2}.\label{est: delta u nabla G}
\end{align}

Finally, we can conclude the proposition by  \eqref{est: u good deriv nabla G}-\eqref{est: delta u nabla G} and \eqref{est: nabla varphi1 prior 2}.

\section{Resolvent estimates  for the full linearized NS system}

In this section, we consider the full linearized NS system
\begin{align}\label{eq:LNS-full}
  \left\{
  \begin{aligned}
   &-\nu\Delta W+ik(V(y,z)-\lambda)W-a(\nu k^2)^{1/3}W+(\partial_y+\kappa\partial_z)p^{L1}\\&\qquad+G_1+\nu(\Delta \kappa) U+2\nu\nabla\kappa\cdot\nabla U=0,\\&-\nu\Delta U+ik(V(y,z)-\lambda)U-a(\nu k^2)^{1/3}U+G_2+\partial_zp^{L1}=0,\\
   & W|_{y=\pm1}=\partial_yW|_{y=\pm1}=u|_{y=\pm1}=0,
   \end{aligned}\right.
\end{align}
where
\beno
\Delta p^{L1}=-2ik\partial_yVW,\quad  \partial_xW=ikW,\ \partial_xU=ikU,\ \partial_xp^{L1}=ikp^{L1}.
\eeno

We assume that  $\la\in \R, a\in [0,\eps_1]$ and $V$ satisfies \eqref{ass:V}.

\begin{Proposition}\label{prop:res-full}
Let $W\in H^4(\Omega),\ U\in H^2(\Omega)$ be a solution of \eqref{eq:LNS-full}. Then it holds that
\begin{align*}
&\nu^{\f{1}{3}}\big(\|\partial_x^2U\|_{L^2}^2+\|\partial_x(\partial_z-\kappa\partial_y)U\|_{L^2}^2\big)+
\nu\big(\|\nabla\partial_x^2U\|_{L^2}^2+\|\nabla\partial_x(\partial_z-\kappa\partial_y)U\|_{L^2}^2\big)
\\&\qquad+\nu^{\f{1}{3}} \|\partial_x\nabla W\|_{L^2}^2+\nu\|\partial_x\Delta W\|_{L^2}^2+\nu^{\f{5}{3}} \|\partial_x\Delta U\|_{L^2}^2\\
&\leq C\nu^{-1}\big(\|\nabla G_1\|_{L^2}^2+\|\partial_x G_2\|_{L^2}^2\big).
\end{align*}
\end{Proposition}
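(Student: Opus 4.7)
My plan is to reduce the full linearized system \eqref{eq:LNS-full} to the simplified system \eqref{eq:LNS-full-s2}, for which Proposition \ref{prop:res-full-s2} already yields exactly the desired bound in terms of $\nu^{-1}\|\nabla G\|_{L^2}^2$. The two systems differ only in the $U$-equation: the full pressure $p^{L1}$ appears in \eqref{eq:LNS-full} instead of just $\partial_zp^{L(0)}$, and there is the extra source $G_2$. Both discrepancies can be absorbed by introducing two scalar correctors.

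First, I would define $p^{L(0)}$ by $\Delta p^{L(0)}=-2ik\partial_yVW$ with $\partial_yp^{L(0)}|_{y=\pm1}=0$, and set $p^{L(1)}:=p^{L1}-p^{L(0)}$, which is automatically harmonic. Evaluating the $W$-equation on $\Gamma_{\pm1}$ (using $W=\partial_yW=U=0$ and $\kappa|_{y=\pm1}=0$, the last of which follows from $V|_{y=\pm1}=\pm1$), one reads off
\[ \partial_yp^{L(1)}|_{y=\pm1}=\nu\partial_y^2W-G_1-2\nu\,\partial_y\kappa\,\partial_yU. \]
Next I introduce correctors $U_c$ and $U_p$ solving
\begin{align*}
&-\nu\Delta U_c+ik(V-\lambda)U_c-a(\nu k^2)^{1/3}U_c=-G_2,\quad U_c|_{y=\pm1}=0,\\
&-\nu\Delta U_p+ik(V-\lambda)U_p-a(\nu k^2)^{1/3}U_p=-\partial_zp^{L(1)},\quad U_p|_{y=\pm1}=0,
\end{align*}
and set $U_0:=U-U_c-U_p$. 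A direct substitution shows that $(W,U_0,p^{L(0)},p^{L(1)})$ satisfies \eqref{eq:LNS-full-s2} exactly, with effective source
\[ G=G_1+\nu(\Delta\kappa)(U_c+U_p)+2\nu\nabla\kappa\cdot\nabla(U_c+U_p). \]

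Having arranged this, I apply Proposition \ref{prop:res-full-s2} to $(W,U_0)$ and convert the LHS back to $U$ via the triangle inequality. It remains to estimate $U_c$, $U_p$ in the good-derivative norms appearing in the LHS, together with $\nu\|U_c\|_{H^2}$ and $\nu\|U_p\|_{H^2}$ that enter $\|\nabla G\|_{L^2}$ through the $\kappa$-weighted terms. For $U_c$, applying $\partial_x^2$ and $\partial_x(\partial_z-\kappa\partial_y)$ to its defining equation and invoking Propositions \ref{prop:res-nav-s1} and \ref{prop:res-nav-good} yields all needed norms with a factor $\|\partial_xG_2\|_{L^2}$ (using $\partial_xG_2=ikG_2$ and the scaling condition $\nu k^2\leq 1$ to match the exponents). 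For $U_p$, the harmonic structure of $p^{L(1)}$ combined with $\partial_xp^{L(1)}=ikp^{L(1)}$ and the trace inequality $\|f\|_{L^2(\partial\Omega)}\leq C|k|^{-1/2}\|\nabla f\|_{L^2}$ gives, via $\|\nabla p^{L(1)}\|_{L^2}^2=\int_{\partial\Omega}\overline{p^{L(1)}}\,\partial_yp^{L(1)}$, the key bound
\[ |k|\|\nabla p^{L(1)}\|_{L^2}\leq C\Big(\nu|k|^{1/2}\|\Delta W\|_{L^2(\partial\Omega)}+\|\nabla G_1\|_{L^2}+\nu\varepsilon_0|k|^{1/2}\|\partial_yU\|_{L^2(\partial\Omega)}\Big), \]
with higher derivatives of $p^{L(1)}$ controlled by the Fourier representation in $z$; feeding these into Proposition \ref{prop:res-nav-s1} for $U_p$ produces the required estimates.

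The main obstacle will be closing the resulting system of inequalities. The term $\nu|k|^{1/2}\|\Delta W\|_{L^2(\partial\Omega)}$ is precisely one of the quantities controlled in Proposition \ref{prop:res-toy} (which underlies Proposition \ref{prop:res-full-s2}), so it can be re-exposed from the proof and fed back; the $\partial_yU$-boundary contribution enters only with the small factor $\nu\varepsilon_0$ coming from $\|\kappa\|_{H^3}\leq C\varepsilon_0$. Choosing $\varepsilon_0$ sufficiently small and absorbing these feedback contributions into the left-hand side --- exactly in the spirit of the absorption arguments already carried out in the proofs of Propositions \ref{prop:res-toy} and \ref{prop:res-full-s2} --- should close the bootstrap and yield the claimed inequality.
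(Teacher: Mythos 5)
Your reduction for the regime $\nu k^2\le 1$ is essentially the paper's own argument: the paper decomposes $U=U_1+U_2$ and $p^{L1}=p^{L(0)}+p^{L(1)}$ with $U_1$ carrying $\partial_zp^{L(0)}$ and $U_2$ carrying $G_2+\partial_zp^{L(1)}$ (your $U_c+U_p$ is exactly this $U_2$), applies Proposition \ref{prop:res-full-s2} to $(W,U_1)$ with the same effective source $G=G_1+\nu(\Delta\kappa)U_2+2\nu\nabla\kappa\cdot\nabla U_2$, and closes the feedback of $U_2$ into $G$ using the smallness $\|\kappa\|_{H^3}\le C\varepsilon_0$. The only real difference is that you re-derive a bound on $\nabla p^{L(1)}$ from the boundary trace of the $W$-equation; this is redundant, since the conclusion of Proposition \ref{prop:res-full-s2} already carries $\nu^{-1}\|\partial_x\nabla p^{L(1)}\|_{L^2}^2$ on its left-hand side, so one can feed that directly into Proposition \ref{prop:res-nav-s1} for $U_2$ without re-exposing $\nu|k|^{1/2}\|\Delta W\|_{L^2(\partial\Omega)}$ or $\|\partial_yU\|_{L^2(\partial\Omega)}$ and re-running the absorption.

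The genuine gap is the case $\nu k^2\ge 1$. Proposition \ref{prop:res-full-s2} is stated and proved only under the standing assumption $\nu k^2\le 1$ (its entire machinery --- the singular part $W_s$, the boundary corrector $U_b$, the good unknown $W_g$ --- lives in the boundary-layer regime), and you yourself invoke ``the scaling condition $\nu k^2\le 1$'' when estimating $U_c$. Hence your reduction simply does not apply to large frequencies, whereas Proposition \ref{prop:res-full} must hold for all $k\neq 0$. The paper treats $\nu k^2\ge 1$ by a separate, direct energy argument (Lemma \ref{lem:res-full-s1}): testing the $W$-equation against $\Delta W$ and against $\Delta[(V-\lambda)W]$, using $k^2\|W\|_{L^2}\le\|\Delta W\|_{L^2}$ and $\nu k^2\ge 1$ to absorb the convective and pressure terms, and then recovering $U$ from $\partial_z p^{L1}$. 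You need to supply an argument of this type for the missing regime before the proof is complete.
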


The proof  will be split into two cases: $\nu k^2\ge 1$ and $\nu k^2\le 1$.

\subsection{Case of $\nu k^2\ge 1$}

First of all,  we consider the following system
\begin{align}\label{eq:LNS-full-s1}
  \left\{
  \begin{aligned}
   &-\nu\Delta W+ik(V(y,z)-\lambda)W-a(\nu k^2)^{1/3}W+\partial_yp^{L1}+G=0,\\&-\nu\Delta U+ik(V(y,z)-\lambda)U-a(\nu k^2)^{1/3}U+\partial_zp^{L1}=0,\\
   &W|_{y=\pm1}=\partial_yW|_{y=\pm1}=U|_{y=\pm1}=0,\\& \partial_xW=ikW,\ \partial_xU=ikU,\ \partial_xp^{L1}=ikp^{L1}.
   \end{aligned}\right.
\end{align}
where $\Delta p^{L1}=-2ik(\partial_yVW+\partial_zVU)$.

\begin{Lemma}\label{lem:res-full-s1}
  Let $\nu k^2\ge 1$, and $ W\in H^4(\Omega),\ U\in H^2(\Omega)$ be a solution of \eqref{eq:LNS-full-s1}. Then it holds that
\begin{align*}
&\nu\big(\|\partial_x\Delta U\|_{L^2}+\|\partial_x\Delta W\|_{L^2}\big)\leq C\|\nabla G\|_{L^2}.
\end{align*}
\end{Lemma}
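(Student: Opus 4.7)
The plan is to exploit the high-frequency regime $\nu k^2\ge 1$, where the diffusion already dominates on the expected enhanced-dissipation time scale and no refined gain is needed. The strategy is to treat \eqref{eq:LNS-full-s1} as two scalar Orr--Sommerfeld-type equations coupled only through the pressure $p^{L1}$, apply Proposition \ref{prop:res-nav-s1} componentwise, and control the pressure via its elliptic equation using the coupling.

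First, I would apply $\partial_x$ to both equations in \eqref{eq:LNS-full-s1}. Since $\partial_x W|_{y=\pm1}=\partial_x U|_{y=\pm1}=0$, the basic $F\in L^2$ bound of Proposition \ref{prop:res-nav-s1} ($\nu\|\Delta w\|_{L^2}\le C\|F\|_{L^2}$) applied to $\partial_x W$ and $\partial_x U$ gives
\begin{align*}
\nu\|\partial_x\Delta W\|_{L^2} &\le C\big(\|\nabla\partial_x p^{L1}\|_{L^2}+\|\partial_x G\|_{L^2}\big),\\
\nu\|\partial_x\Delta U\|_{L^2} &\le C\|\nabla\partial_x p^{L1}\|_{L^2}.
\end{align*}
Since $\|\partial_x G\|_{L^2}\le\|\nabla G\|_{L^2}$, matters reduce to showing $\|\nabla\partial_x p^{L1}\|_{L^2}\le C\|\nabla G\|_{L^2}$.

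Second, I would bound the pressure directly from its elliptic equation. Differentiating $\Delta p^{L1}=-2ik(\partial_yV\,W+\partial_zV\,U)$ in $x$ yields $\Delta(\partial_x p^{L1})=2k^2(\partial_yV\,W+\partial_zV\,U)$, and the boundary condition on $\partial_y p^{L1}$ at $y=\pm1$ is read off the $W$-equation restricted to $\partial\Omega$ (using $W=\partial_y W=0$ there). A duality argument—or equivalently testing the vector equation against divergence-free fields to kill $\nabla p^{L1}$—gives
\[
\|\nabla\partial_x p^{L1}\|_{L^2}\ \lesssim\ k^2\big(\|W\|_{L^2}+\|U\|_{L^2}\big)+\text{boundary contributions}.
\]
Combined with the $L^2$-resolvent bound $(\nu k^2)^{1/3}(\|W\|_{L^2}+\|U\|_{L^2})\le C(\|\nabla p^{L1}\|_{L^2}+\|G\|_{L^2})$ obtained by applying Proposition \ref{prop:res-nav-s1} separately to each scalar equation, and using $\nu k^2\ge 1$ (so $(\nu k^2)^{1/3}\ge 1$ provides genuine damping), the implicit pressure coupling can be absorbed into the left-hand side. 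An additional application of the $F\in H^{-1}$ case of Proposition \ref{prop:res-nav-s1} provides the scaling $\nu|k|\|\nabla W\|_{L^2}\lesssim |k|\|p^{L1}\|_{L^2}+\|\nabla G\|_{L^2}$, which closes the bootstrap.

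The main obstacle will be the overdetermined boundary data: the conditions $W=\partial_y W=0$ at $y=\pm 1$ force $\partial_y p^{L1}|_{\partial\Omega}$ to be expressed implicitly through $\nu\partial_y^2 W|_{\partial\Omega}-G|_{\partial\Omega}$, and a naive direct estimate of $\partial_y^2 W$ at the boundary is out of reach. The cleanest workaround is a Leray-projection / divergence-free testing formulation in which pressure contributions drop out of the appropriate pairings, so that the pressure is never estimated pointwise at $\partial\Omega$; the hypothesis $\nu k^2\ge 1$ then provides the slack needed to close the resulting elliptic-type coupled estimate on $(W,U)$.
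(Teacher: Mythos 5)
Your reduction to the bound $\|\nabla\partial_x p^{L1}\|_{L^2}\le C\|\nabla G\|_{L^2}$ is where the argument breaks down, and the gap is quantitative, not just technical. The only handle on the pressure is $\Delta p^{L1}=-2ik(\partial_yV\,W+\partial_zV\,U)$, so any estimate of $\|\nabla\partial_x p^{L1}\|_{L^2}$ feeds back a term of size roughly $2|k|\,\|\partial_yV\|_{L^\infty}\|W\|_{L^2}$, i.e.\ an \emph{order-one} multiple of $\nu\|\partial_x\Delta W\|_{L^2}$ once you use $\nu k^2\ge1$ and $\|\Delta W\|_{L^2}\ge k^2\|W\|_{L^2}$. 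The hypothesis $\nu k^2\ge1$ provides coercivity with constant exactly one, while the pressure self-coupling carries the factor $2\partial_yV\approx 2$ and Proposition \ref{prop:res-nav-s1} contributes its own unspecified absolute constant; the resulting inequality has the form $\nu\|\partial_x\Delta W\|_{L^2}\le C\,\nu\|\partial_x\Delta W\|_{L^2}+\dots$ with $C$ not small, so it cannot be closed by absorption. Only the coupling to $U$ is genuinely perturbative (it enters through $\partial_zV=O(\varepsilon_0)$); the self-interaction of $W$ through $\partial_yp^{L1}$ is not, and treating $\nabla p^{L1}$ as a black-box forcing in the resolvent estimate for $W$ loses the structure that makes it harmless.

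The paper's proof keeps that structure by testing the $W$-equation against $\Delta W$ and integrating the pressure term by parts \emph{exactly}: since $W|_{y=\pm1}=\partial_yW|_{y=\pm1}=0$, one has $\langle\partial_yp^{L1},\Delta W\rangle=-\langle\Delta p^{L1},\partial_yW\rangle=\langle 2ik(\partial_yV W+\partial_zV U),\partial_yW\rangle$ with no boundary contributions, and the piece $\langle 2ik\partial_yV W,\partial_yW\rangle$ partially cancels the term $-\langle ik(\nabla V)W,\nabla W\rangle$ produced by the convection term. The net coefficient is $\tfrac12\|\nabla V\|_{L^\infty}\le\tfrac{1+C\varepsilon_0}{2}<\tfrac34$, which is just small enough to be absorbed by $\nu\|\Delta W\|_{L^2}^2\ge\|\nabla W\|_{L^2}^2$; this factor-of-two cancellation is precisely what your decoupled pressure bound discards. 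This also resolves the boundary-data obstruction you correctly flag, but not by the route you suggest: \eqref{eq:LNS-full-s1} is not posed as a divergence-free velocity system (there is no first component and no divergence constraint), so divergence-free testing is not available. Instead the paper never solves the Neumann problem for $p^{L1}$ at all — it reads $\partial_yp^{L1}$ off the $W$-equation as an interior $L^2$ quantity (after first bounding $\|\nabla[(V-\lambda)W]\|_{L^2}$ by testing against $\Delta[(V-\lambda)W]$), and then uses Lemma \ref{delta f} to control $\partial_zp^{L1}$, the actual forcing of $U$, by $\|\partial_yp^{L1}\|_{L^2}+|k|^{-1}\|\Delta p^{L1}\|_{L^2}$. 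You would need both of these devices to make your outline work.
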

\begin{proof}
We get by integration by parts that
\begin{align*}
  &\big\langle ik(V-\lambda) W +\partial_yp^{L1},\Delta W\big\rangle +\big\langle ik(V-\lambda)\nabla W ,\nabla W\big\rangle+\big\langle ik(\nabla V) W ,\nabla W\big\rangle\\&=\big\langle \partial_y\Delta p^{L1}, W\big\rangle=-\big\langle \Delta p^{L1},\partial_y W\big\rangle=\big\langle 2ik(\partial_yV W+\partial_zVU),\partial_y W\big\rangle,
\end{align*}
and
\begin{align*}
  &\big\langle ik(V-\lambda) W +\partial_yp^{L1},\Delta W\big\rangle=\big\langle \nu\Delta  W+a(\nu k^2)^{1/3} W-G,\Delta W\big\rangle\\&=\nu\|\Delta W\|^2_{L^2}-a(\nu k^2)^{1/3}\|\nabla W\|_{L^2}^2-\big\langle G,\Delta W\big\rangle,
\end{align*}
from which, we infer that
\begin{align*}
  &\nu\|\Delta W\|^2_{L^2}-a(\nu k^2)^{1/3}\|\nabla W\|_{L^2}^2-\big\langle G,\Delta W\big\rangle +\big\langle ik(V-\lambda)\nabla W ,\nabla W\big\rangle\\
  &=\big\langle ik(\partial_yV,-\partial_zV) W,(\partial_y,\partial_z) W\big\rangle+\big\langle 2ik\partial_zVU,\partial_y W\big\rangle.
\end{align*}
Taking the real part, we get
\begin{align*}
  &\nu\|\Delta W\|^2_{L^2}-a(\nu k^2)^{1/3}\|\nabla W\|_{L^2}^2-\|G\|_{L^2}\|\Delta W\|_{L^2}-2|k|\|\partial_zV\|_{L^{\infty}}\|U\|_{L^2}\|\partial_y W\|_{L^2}\\ &\leq |k|\|\nabla V\|_{L^{\infty}}\| W\|_{L^2}\|(\partial_y,\partial_z) W\|_{L^2}\leq\|\nabla V\|_{L^{\infty}}(|k|^2\| W\|_{L^2}^2+\|(\partial_y,\partial_z) W\|_{L^2}^2)/2\\&=\|\nabla V\|_{L^{\infty}}\|\nabla W\|_{L^2}^2/2.\end{align*}
Notice that
   \begin{align*}
      &\|\Delta W\|^2_{L^2}=\|[(\partial^2_y+\partial_z^2)-k^2] W\|_{L^2}^2 =k^2\| W\|^2_{L^2}+2|k|\|(\partial_y,\partial_z) W\|_{L^2}^2+ \|(\partial_y^2+\partial_z^2) W\|_{L^2}^2,\\
      &\Rightarrow k^2\| W\|_{L^2}\leq \|\Delta W\|_{L^2},\quad  |k|\|\nabla  W\|_{L^2}\leq \|\Delta W\|_{L^2},
   \end{align*}
and   $\|\nabla V\|_{L^\infty}\leq \|\nabla(y)\|_{L^\infty}+\|\nabla(V-y)\|_{L^\infty}\leq 1+C\varepsilon_0$ and $\nu k^2\geq1$.
Take $\varepsilon_0,\ \eps_1$ sufficiently small so that $\|\nabla V\|_{L^{\infty}}/2+a\leq(1+C\varepsilon_0)/2+\eps_1\leq 3/4$,
and then
\begin{align*}
\big(\|\nabla V\|_{L^{\infty}}/2+a(\nu k^2)^{1/3}\big)\|\nabla  W\|_{L^2}^2\leq
\big(\|\nabla V\|_{L^{\infty}}/2+a\big)(\nu k^2)\|\nabla  W\|_{L^2}^2\leq(3/4)\nu \|\Delta W\|_{L^2}^2.
\end{align*}
Thus, we deduce that
that\begin{align*}
  &\nu\|\Delta W\|^2_{L^2}/4- \|G\|_{L^2}\|\Delta W\|_{L^2}\leq2|k|\|\partial_zV\|_{L^{\infty}}\|U\|_{L^2}\|\partial_y W\|_{L^2}\leq C\varepsilon_0\|U\|_{L^2}\|\Delta W\|_{L^2},
\end{align*}
which gives
  \begin{align}\label{est: varphi H3 prior}
&\nu\|\Delta W\|_{L^2}\leq C\|G\|_{L^2}+C\varepsilon_0\|U\|_{L^2}.
\end{align}

To proceed, we need to estimate the pressure $p^{L1}$. Let $F_1=(V-\lambda) W$, which satisfies
\beno
\Delta F_1=(V-\lambda)\Delta W+2\nabla V\cdot\nabla W+\Delta V W,\quad F_1|_{y=\pm 1}=\partial_yF_1|_{y=\pm 1}=0.
\eeno
We get by integration by parts that\begin{align*}
  &\big\langle \partial_yp^{L1},\Delta F_1\big\rangle= \big\langle \partial_y\Delta p^{L1},F_1\big\rangle=-\big\langle \Delta p^{L1},\partial_yF_1\big\rangle,\\&\big\langle ik(V-\lambda) W,\Delta F_1\big\rangle=\big\langle ikF_1,\Delta F_1\big\rangle=-ik\|\nabla F_1\|_{L^2}^2,\\&\big\langle \nu\Delta W,\Delta F_1\big\rangle=\big\langle \nu\Delta W,(V-\lambda)\Delta W\big\rangle+\big\langle \nu\Delta W,2\nabla V\cdot\nabla W+\Delta V W\big\rangle,\\&\big\langle -a(\nu k^2)^{1/3} W+G,\Delta F_1\big\rangle=a(\nu k^2)^{1/3}\big\langle \nabla W,\nabla F_1\big\rangle-\big\langle\nabla G,\nabla F_1\big\rangle.
\end{align*}
Then by \eqref{eq:LNS-full-s1}, we get
\begin{align*}
  &-\big\langle \nu\Delta W,(V-\lambda)\Delta W\big\rangle-\big\langle \nu\Delta W,2\nabla V\cdot\nabla W+\Delta V W\big\rangle-ik\|\nabla F_1\|_{L^2}^2\\&\quad+a(\nu k^2)^{1/3}\big\langle \nabla W,\nabla F_1\big\rangle-\big\langle\nabla G,\nabla F_1\big\rangle-\big\langle \Delta p^{L1},\partial_yF_1\big\rangle=0.
\end{align*}
Taking the imaginary part, we get
\begin{align*}
  &|k|\|\nabla F_1\|_{L^2}^2-a(\nu k^2)^{1/3}\|\nabla W\|_{L^2}\|\nabla F_1\|_{L^2}-\|\Delta p^{L1}\|_{L^2}\|\nabla F_1\|_{L^2}-\|\nabla G\|_{L^2}\|\nabla F_1\|_{L^2}\\ &\leq \nu\|\Delta W\|_{L^2}\|2\nabla V\cdot\nabla W+\Delta V W\|_{L^2}\leq C\nu\|\Delta W\|_{L^2}(\|\nabla W\|_{L^2}+\| W\|_{L^2})\leq C\nu|k|^{-1}\|\Delta W\|_{L^2}^2.
\end{align*}
which implies that
\begin{align*}
  \|\nabla F_1\|_{L^2}&\leq C(\nu k^2)^{\f13}|k|^{-1}\|\nabla W\|_{L^2}+C|k|^{-1}(\|\Delta p^{L1}\|_{L^2}+\|\nabla G\|_{L^2})+ C\nu^{\f12}|k|^{-1}\|\Delta W\|_{L^2}\\&\leq C|k|^{-1}(\|\Delta p^{L1}\|_{L^2}+\|\nabla G\|_{L^2})+ C\nu\|\Delta W\|_{L^2}.
\end{align*}
Here we used $\nu k^2\ge 1$ and
\begin{align*}
&(\nu k^2)^{\f13}|k|^{-1}\|\nabla W\|_{L^2}\leq(\nu k^2)^{\f12}|k|^{-1}\|\nabla W\|_{L^2}=\nu^{\f12}\|\nabla W\|_{L^2}\leq\nu^{\f12}|k|^{-1}\|\Delta W\|_{L^2}\leq\nu\|\Delta W\|_{L^2}.
\end{align*}
Since $-\nu\Delta  W+ikF_1-a(\nu k^2)^{1/3} W+\partial_yp^{L1}+G=0$, we have
\begin{align*}
\|\partial_yp^{L1}\|_{L^2}&\leq\nu\|\Delta  W\|_{L^2}+a(\nu k^2)^{1/3}\|  W\|_{L^2}+\|kF_1\|_{L^2}+\|G\|_{L^2}\\&\leq2\nu\|\Delta  W\|_{L^2}+\|\nabla F_1\|_{L^2}+|k|^{-1}\|\nabla G\|_{L^2}\\&\leq C\nu\|\Delta  W\|_{L^2}+C|k|^{-1}\big(\|\Delta p^{L1}\|_{L^2}+\|\nabla G\|_{L^2}\big).
\end{align*}Here we used $(\nu k^2)^{1/3}\|  W\|_{L^2}\leq\nu k^2\|  W\|_{L^2}\leq\nu\|\Delta  W\|_{L^2} .$ On the other hand, we have
\begin{align*}
|k|^{-1}\|\Delta p^{L1}\|_{L^2}=&|k|^{-1}\|2ik(\partial_yV W+\partial_zVU)\|_{L^2}\\ \leq& 2\|\partial_yV\|_{L^{\infty}}\| W\|_{L^2}+2\|\partial_zV\|_{L^{\infty}}\|u\|_{L^2}\leq C(\| W\|_{L^2}+\varepsilon_0\|U\|_{L^2})
\end{align*}
Thus, by Lemma \ref{delta f}, we have
\begin{align*}
\|\partial_zp^{L1}\|_{L^2}&\leq C\big(\|\partial_yp^{L1}\|_{L^2}+|k|^{-1}\|\Delta p^{L1}\|_{L^2}\big)\\
&\leq C\nu\|\Delta  W\|_{L^2}+C|k|^{-1}(\|\Delta p^{L1}\|_{L^2}+\|\nabla G\|_{L^2})\\&\leq C\nu\|\Delta  W\|_{L^2}+C(\| W\|_{L^2}+\varepsilon_0\|U\|_{L^2}+|k|^{-1}\|\nabla G\|_{L^2})\\&\leq C\|G\|_{L^2}+C\varepsilon_0\|U\|_{L^2}+C|k|^{-1}\|\nabla G\|_{L^2}\\
&\leq C\varepsilon_0\|U\|_{L^2}+C|k|^{-1}\|\nabla G\|_{L^2}.
\end{align*}
Here we used
\begin{align*}
\| W\|_{L^2}\leq |k|^{-2}\|\Delta  W\|_{L^2}\leq\nu\|\Delta  W\|_{L^2}\leq C\|G\|_{L^2}+C\varepsilon_0\|U\|_{L^2}.
\end{align*}

Now, by Proposition \ref{prop:res-nav-s1},  we have
  \begin{align*}
     \nu k^2\| U\|_{L^2}\leq\nu\|\Delta U\|_{L^2}&\leq C\|\partial_z p^{L1}\|_{L^2}\leq C\varepsilon_0\|U\|_{L^2}+C|k|^{-1}\|\nabla G\|_{L^2}.
  \end{align*}
Due to $\nu k^2\geq1$, taking $\veps_0$ small enough so that $C\varepsilon_0\leq1/2$, we obtain
\begin{align*}
     &\|U\|_{L^2}\leq\nu k^2\|U\|_{L^2}\leq C|k|^{-1}\|\nabla G\|_{L^2},\quad \nu\|\Delta U\|_{L^2}\leq C|k|^{-1}\|\nabla G\|_{L^2},
  \end{align*}
which along with  \eqref{est: varphi H3 prior}  gives
  \begin{align*}
    \nu\|\partial_x\Delta U\|_{L^2}+\nu\|\partial_x\Delta  W\|_{L^2}=&|k|\big(\nu\|\Delta U\|_{L^2}+\nu\|\Delta  W\|_{L^2}\big) \\ \leq& C\|\nabla G\|_{L^2}+C|k|\big(\|G\|_{L^2}+\varepsilon_0\|U\|_{L^2}\big)\leq C\|\nabla G\|_{L^2}.
  \end{align*}

This proves the lemma.
\end{proof}\smallskip

Now let us prove Proposition \ref{prop:res-full}  when $\nu k^2\ge 1$.

\begin{proof}We decompose $U=U_1+U_2$, where $(U_1,U_2)$ solves
\begin{align*}
  \left\{
  \begin{aligned}
   &-\nu\Delta U_1+ik(V(y,z)-\lambda)U_1-a(\nu k^2)^{1/3}U_1+\partial_zp^{L1}=0,\\
   &-\nu\Delta U_2+ik(V(y,z)-\lambda)U_2-a(\nu k^2)^{1/3}U_2+G_2=0,\\
   &U_1|_{y=\pm1}=U_2|_{y=\pm1} =0,\\
   & \partial_xU_1=ikU_1,\ \partial_xU_2=ikU_2.
   \end{aligned}\right.
\end{align*}
Let $G_3=G_1+\nu(\Delta\kappa)U_2+2\nu\nabla \kappa\cdot\nabla U_2$. Then we find that
\begin{align*}
\left\{
  \begin{aligned}
   &-\nu\Delta ( W-\kappa U_1)+ik(V(y,z)-\lambda)( W-\kappa U_1)-a(\nu k^2)^{1/3}( W-\kappa U_1)+\partial_yp^{L1}+G_3=0,\\&\quad-\nu\Delta U_1+ik(V(y,z)-\lambda)U_1-a(\nu k^2)^{1/3}U_1+\partial_zp^{L1}=0,\\
   &\Delta p^{L1}=-2ik\big[\partial_yV( W-\kappa U_1)+\partial_zVU_1\big],\\&( W-\kappa U_1)|_{y=\pm1}=\partial_y( W-\kappa U_1)|_{y=\pm1}=U_1|_{y=\pm1}=0.
   \end{aligned}\right.
\end{align*}
Then we infer from Lemma \ref{lem:res-full-s1} that
\begin{align*}
&\nu\big(\|\partial_x\Delta U_1\|_{L^2}+\|\partial_x\Delta W\|_{L^2}\big)\leq C\nu\big(\|\partial_x\Delta U_1\|_{L^2}+\|\partial_x\Delta( W-\kappa U_1)\|_{L^2}\big)\leq C\|\nabla G_3\|_{L^2}.
\end{align*}
And by Proposition \ref{prop:res-nav-s1},  we have
\begin{align*}
  \nu\|\Delta U_2\|_{L^2} &\leq C\|G_2\|_{L^2}.
\end{align*}
Thanks to the definition of $G_3$, we have
\begin{align*}
  \|\nabla G_3\|_{L^2} &\leq \|\nabla G_1\|_{L^2}+C\nu\|\nabla[(\Delta \kappa)U_2]\|_{L^2}+C\nu\|\nabla(\nabla\kappa\cdot\nabla U_2)\|_{L^2}\\
  &\leq \|\nabla G_1\|_{L^2}+C\nu\big(\|\Delta\kappa\|_{H^1}\|U_2\|_{H^2}+\|\nabla\kappa\|_{H^2}\|\nabla U_2\|_{H^1}\big)\\
  &\leq \|\nabla G_1\|_{L^2}+C\nu\varepsilon_0\|\Delta U_2\|_{L^2}\leq \|\nabla G_1\|_{L^2}+C\varepsilon_0\|G_2\|_{L^2}.
\end{align*}
This shows that
\begin{align*}
&\nu\big(\|\partial_x\Delta U\|_{L^2}+\|\partial_x\Delta W\|_{L^2}\big)\leq C\|\nabla G_1\|_{L^2}+C|k|\|G_2\|_{L^2}\leq C\big(\|\nabla G_1\|_{L^2}+\|\partial_xG_2\|_{L^2}\big),
\end{align*}
from which and  $\nu k^2\geq 1$, we infer that
\begin{align*}
    & \nu^{\f23}\big(\|\partial_x^2U\|_{L^2} +\|\partial_x(\partial_z-\kappa\partial_y)U\|_{L^2}\big)+ \nu\big(\|\nabla\partial_x^2U\|_{L^2} +\|\nabla\partial_x(\partial_z-\kappa\partial_y)U\|_{L^2}\big)\\
  &\qquad+\nu^{\f23}\|\partial_x\nabla W\|_{L^2}+ \nu\|\partial_x\Delta W\|_{L^2} +\nu^{\f43}\|\partial_x\Delta U\|_{L^2}\\
  &\leq C\Big(\nu^{\f23}|k|^{-1}\|\partial_x\Delta U\|_{L^2}+\nu\|\partial_x\Delta U\|_{L^2} + \nu^{\f23}|k|^{-1}\|\partial_x\Delta W\|_{L^2}+\nu\|\partial_x \Delta W\|_{L^2} +\nu^{\f43}\|\partial_x\Delta U\|_{L^2}\Big)\\
  &\leq C\nu(\|\partial_x\Delta U\|_{L^2}+\|\partial_x\Delta W\|_{L^2})\leq C\big(\|\partial_xG_2\|_{L^2}+\|\nabla G_1\|_{L^2}\big).
\end{align*}

This completes the proof when $\nu k^2\ge 1$.
\end{proof}

\subsection{Case of $\nu k^2\le 1$}

First of all,  we decompose $U=U_1+U_2$ and $p^{L1}=p^{L(0)}+p^{L(1)}$, where $\big(U_1, U_2, W\big)$ solve
\begin{align}\label{eq:LNS-full-s2-2}
  \left\{
  \begin{aligned}
   &-\nu\Delta  W+ik(V(y,z)-\lambda) W-a(\nu k^2)^{1/3} W+(\partial_y+\kappa\partial_z)p^{L1}\\&\qquad+G_1+\nu(\Delta \kappa) U+2\nu\nabla\kappa\cdot\nabla U=0,\\&-\nu\Delta U_1+ik(V(y,z)-\lambda)U_1-a(\nu k^2)^{1/3}U_1+\partial_zp^{L(0)}=0,\\
   &-\nu\Delta U_2+ik(V(y,z)-\lambda)U_2-a(\nu k^2)^{1/3}U_2+G_2+\partial_zp^{L(1)}=0,\\
   &\Delta p^{L(0)}=-2ik\partial_yV W,\quad \partial_yp^{L(0)}|_{y=\pm1}=0,\quad \Delta p^{L(1)}=0,\\
   & W|_{y=\pm1}=\partial_y W|_{y=\pm1}=U_1|_{y=\pm1}=U_2|_{y=\pm1} =0.
 \end{aligned}\right.
\end{align}

With this decomposition,  we can apply Proposition \ref{prop:res-full-s2} with $G=G_1+\nu(\Delta\kappa)U_2+2\nu\nabla \kappa\cdot\nabla U_2$
to the system \eqref{eq:LNS-full-s2-2} of $( W, U_1)$  to obtain
\begin{align}
&\nu^{\f{1}{3}}\big(\|\partial_x^2U_1\|_{L^2}^2+\|\partial_x(\partial_z-\kappa\partial_y)U_1\|_{L^2}^2\big)+\nu\big(\|\nabla\partial_x^2U_1\|_{L^2}^2+\|\nabla\partial_x(\partial_z-\kappa\partial_y)U_1\|_{L^2}^2\big)
\label{est: u1 prior nabla G2}\\&\quad+\nu^{\f{1}{3}} \|\partial_x\nabla W\|_{L^2}^2+\nu\|\partial_x\Delta W\|_{L^2}^2+\nu^{\f{5}{3}} \|\partial_x\Delta U_1\|_{L^2}^2+\nu^{-1}\|\partial_x\nabla p^{L(1)}\|_{L^2}^2\nonumber\\
&\qquad\leq C\nu^{-1}\|\nabla G\|_{L^2}^2.\nonumber
\end{align}
By Proposition \ref{prop:res-nav-s1} and \eqref{est: u1 prior nabla G2}, we get
\begin{align*}
  \nu^{\f23}|k|^{\f13}\|\nabla U_2\|_{L^2}+\nu\|\Delta U_2\|_{L^2} &\leq C\big(\|G_2\|_{L^2}+\|\nabla p^{L(1)}\|_{L^2}\big)\\
  &\leq C|k|^{-1}\big(\|\partial_xG_2\|_{L^2}+\|\nabla G\|_{L^2}\big).
\end{align*}
As in the case above, we have
\begin{align*}
  \|\nabla G\|_{L^2}
  &\leq \|\nabla G_1\|_{L^2}+C\nu\varepsilon_0\|\Delta U_2\|_{L^2}\leq \|\nabla G_1\|_{L^2}+C\varepsilon_0|k|^{-1}\big(\|\partial_x G_2\|_{L^2}+\|\nabla G\|_{L^2}\big),
\end{align*}
which gives by taking  $\veps_0$ small enough so that $C\varepsilon_0\leq1/2$ that
\begin{align}\label{est: G2 control}
   &\|\nabla G\|_{L^2}\leq C\big(\|\nabla G_1\|_{L^2}+\|\partial_xG_2\|_{L^2}\big).
\end{align}
Then we obtain
\begin{align}
  & \nu^{\f23}\big(\|\partial_x^2U_2\|_{L^2} +\|\partial_x(\partial_z-\kappa\partial_y)U_2\|_{L^2}\big)+ \nu\big(\|\nabla\partial_x^2U_2\|_{L^2} +\|\nabla\partial_x(\partial_z-\kappa\partial_y)U_2\|_{L^2}\big)\nonumber\\
  &\quad+\nu^{\f43}\|\partial_x\Delta U_2\|_{L^2}
  \leq C\big(\nu^{\f23}\|\partial_x\nabla U_2\|_{L^2}+\nu\|\partial_x\Delta U_2\|_{L^2}\big)\nonumber\\
  &\leq C|k|\big(\nu^{\f23}\|\nabla U_2\|_{L^2}+\nu\|\Delta U_2\|_{L^2}\big)
  \leq C\big(\|\partial_xG_2\|_{L^2}+\|\nabla G\|_{L^2}\big).
  \label{est: u2 prior nabla G2}
\end{align}
It follows from \eqref{est: u1 prior nabla G2}, \eqref{est: u2 prior nabla G2} and \eqref{est: G2 control}  that
\begin{align*}
&\nu^{\f{1}{3}}\big(\|\partial_x^2U\|_{L^2}^2+\|\partial_x(\partial_z-\kappa\partial_y)U\|_{L^2}^2\big)+\nu\big(\|\nabla\partial_x^2U\|_{L^2}^2+
\|\nabla\partial_x(\partial_z-\kappa\partial_y)U\|_{L^2}^2\big)
\\&\quad+\nu^{\f{1}{3}} \|\partial_x\nabla W\|_{L^2}^2+\nu\|\partial_x\Delta W\|_{L^2}^2+\nu^{\f{5}{3}} \|\partial_x\Delta U\|_{L^2}^2\leq C\nu^{-1}\big(\|\partial_xG_2\|_{L^2}^2+\|\nabla G_1\|_{L^2}^2\big).
\end{align*}

This proves the case of $\nu k^2\le 1$.

\section{Space-time estimates of the linearized equation}

In this section, we denote $\|F\|_{L^qL^p}=\|F\|_{L^q(0,T;L^p(I))}$ or $\|F\|_{L^q(0,+\infty;L^p(I))}$ when the function $F$ is extended to $t\ge T$.

\subsection{Space-time estimates with Navier-slip boundary condition}

In this subsection, we study the space-time estimate of the following
linearized  equation:
\begin{align}\label{eq:LNS-nav-full}
  \left\{\begin{aligned}
            &\partial_t\omega-\nu(\partial_y^2-\eta^2)\omega+iky\omega
            =-ikf_1-\partial_yf_2-i\ell f_3-f_4,\\
            &\omega|_{y=\pm1}=0,\quad \omega|_{t=0}=\omega_{in}.
         \end{aligned}
         \right.
\end{align}
 In what follows, we assume $a\in [0,\eps_1]$.

\begin{Proposition}\label{prop:TS-nav}
  Let $\omega$ be a solution of \eqref{eq:LNS-nav-full}  with
  $f_4(t,\pm1)=0$ and $\omega_{in}(\pm1)=0$. Then  it holds that
  \begin{align*}
    &\|e^{a\nu^{1/3}t}\omega\|^2_{L^\infty
    L^2}+\nu\|e^{a\nu^{1/3}t}\omega'\|^2_{L^2L^2} +(\nu \eta^2+(\nu
    k^2)^{1/3})\|e^{a\nu^{1/3}t}\omega\|^2_{L^2L^2} \\
    &\leq C\Big(\|\omega_{in}\|_{L^2}^2
    +\nu^{-1}\|e^{a\nu^{1/3}t}f_2\|^2_{L^2L^2}
    +(\eta|k|)^{-1}\|e^{a\nu^{1/3}t}\partial_yf_4\|^2_{L^2L^2}+\eta|k|^{-1}\|e^{a\nu^{1/3}t}f_4\|^2_{L^2L^2}\\
    &\qquad+\min((\nu \eta^2)^{-1} ,(\nu k^2)^{-1/3}) \|e^{a\nu^{1/3}t}(kf_1+\ell f_3)\|_{L^2L^2}^2\Big).
  \end{align*}
  Moreover, we have
 \begin{align*}
     &\|e^{a\nu^{1/3}t}\omega'\|^2_{L^\infty L^2}+\nu\|e^{a\nu^{1/3}t}\omega''\|^2_{L^2L^2}+\nu\eta^2\|e^{a\nu^{1/3}t}\omega'\|^2_{L^2L^2}\\
   &\leq C\|{\omega}'_{in}\|^2_{ L^2}+C\nu^{-\f23}|k|^{\f23}\Big(\|\omega_{in}\|_{L^2}^2
    +(\eta|k|)^{-1}\|e^{a\nu^{1/3}t}\partial_y{f}_4\|^2_{L^2L^2}+\eta|k|^{-1}\|e^{a\nu^{1/3}t}{f}_4\|^2_{L^2L^2}\Big)\\
   &\quad+C\nu^{-1}\Big(\|e^{a\nu^{1/3}t}(k{f}_1+\ell{f}_3)\|_{L^2L^2}^2
    +\nu^{-\f23}|k|^{\f23}\|e^{a\nu^{1/3}t}{f}_2\|^2_{L^2L^2}+\|e^{a\nu^{1/3}t}\partial_y{f}_2\|^2_{L^2L^2}\Big).
  \end{align*}
\end{Proposition}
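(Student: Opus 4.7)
My plan is to combine a straightforward energy identity in the original time variable with a resolvent/Fourier-in-time argument, in order to promote the viscous dissipation into the sharper enhanced-dissipation weight $(\nu k^2)^{1/3}$. Set $W(t,y):=e^{a\nu^{1/3}t}\omega(t,y)$, so that on $(0,+\infty)$
\begin{align*}
\partial_tW-\nu(\partial_y^2-\eta^2)W+ikyW-a\nu^{1/3}W=e^{a\nu^{1/3}t}F,\quad W|_{y=\pm1}=0,\ W(0)=\omega_{in},
\end{align*}
with $F:=-ikf_1-\partial_yf_2-i\ell f_3-f_4$. First I would test against $\bar W$; since $\operatorname{Re}\langle ikyW,W\rangle=0$, a standard Gronwall/Poincar\'e argument (using $W|_{y=\pm1}=0$ to absorb $a\nu^{1/3}\|W\|^2$ into a fraction of $\nu\|W'\|^2$, since $a\le\varepsilon_1$ is small) yields the $L^\infty L^2$ and $\nu\|W'\|^2_{L^2L^2}$ parts of the first inequality together with the $\nu\eta^2\|W\|^2_{L^2L^2}$ contribution. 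The source terms are estimated by duality: $f_2$ pairs with $\partial_y\bar W$, producing $\nu^{-1}\|f_2\|^2_{L^2L^2}$, while $(kf_1+\ell f_3)$ and $f_4$ are paired with $\bar W$ modulo Poincar\'e.

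To obtain the sharper enhanced-dissipation term $(\nu k^2)^{1/3}\|W\|^2_{L^2L^2}$, I would extend $W$ by $0$ for $t<0$ and write, in $\mathcal{D}'(\R_t\times I_y)$,
\begin{align*}
(\partial_t-a\nu^{1/3})W-\nu(\partial_y^2-\eta^2)W+ikyW=e^{a\nu^{1/3}t}F\,\mathbf{1}_{t\ge0}+\delta(t)\otimes\omega_{in}.
\end{align*}
Taking the time Fourier transform $\hat W(\tau,y)$ and setting $\lambda:=-\tau/k$ reduces this to the stationary Orr--Sommerfeld-type equation
\begin{align*}
-\nu(\partial_y^2-\eta^2)\hat W+ik(y-\lambda)\hat W-a\nu^{1/3}\hat W=\hat G(\tau,y)+\omega_{in}(y),\qquad \hat W|_{y=\pm1}=0,
\end{align*}
to which Proposition~\ref{prop:res-nav-y1} applies uniformly in $\lambda\in\R$. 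The distribution of sources then goes as follows: for $-\partial_y\hat f_2$ I use the $H^{-1}$ bound $\nu^{2/3}|k|^{1/3}\|\hat W\|+\nu\|\hat W'\|\le C\|\hat f_2\|_{L^2}$, giving $\nu^{-1}\|f_2\|^2$; for $-i(k\hat f_1+\ell\hat f_3)$ I combine the $L^2$ bound $(\nu k^2)^{1/3}\|\hat W\|\le C\|k\hat f_1+\ell\hat f_3\|$ with the diffusive bound $\nu\eta^2\|\hat W\|^2\le C\|k\hat f_1+\ell\hat f_3\|^2/(\nu\eta^2)$ to recover the $\min((\nu\eta^2)^{-1},(\nu k^2)^{-1/3})$ weight; for $\hat f_4$ (which vanishes on $\partial I$) I would write $\hat f_4=\partial_yG_4+c_4$ with $G_4|_{\pm1}=0$ and a constant remainder, and then apply the $H^{-1}$ resolvent together with Poincar\'e/interpolation to split the contribution as $(\eta|k|)^{-1}\|\partial_y\hat f_4\|^2+\eta|k|^{-1}\|\hat f_4\|^2$; the initial datum contribution, handled by treating $\omega_{in}$ as a time-independent source in the free-evolution piece (or equivalently as a $\tau$-constant Dirac term), yields $\|\omega_{in}\|^2_{L^2}$. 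Squaring and invoking Plancherel in $\tau$ then packages everything into the desired space-time bound.

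For the second, $H^1$-type, inequality I would differentiate the equation in $y$. The derivative $\omega'$ satisfies the same convection--diffusion equation with an extra zero-order coupling $ik\omega$ on the right, but no longer vanishes on $\partial I$. Testing against $\bar{\omega'}$ and integrating by parts produces the boundary contribution $\nu[\omega''\bar{\omega'}]_{\pm1}$; using the equation evaluated at $y=\pm1$ (where $\omega=\partial_t\omega=0$ and $f_4=0$) gives $\nu\omega''(\pm1)=-ikf_1(\pm1)-\partial_yf_2(\pm1)-i\ell f_3(\pm1)$, and combining with a trace inequality and Young's inequality absorbs this boundary term. The coupling $ik\omega$ contributes $\nu^{-2/3}|k|^{2/3}$ times precisely the quantities controlled by the first inequality, which accounts for the explicit factor $\nu^{-2/3}|k|^{2/3}$ in front of several of the RHS terms.

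The main obstacle I expect is the $\hat f_4$ step: producing cleanly the split $(\eta|k|)^{-1}\|\partial_y\hat f_4\|^2+\eta|k|^{-1}\|\hat f_4\|^2$ requires recasting $\hat f_4$ in a divergence form compatible with the $H^{-1}$ resolvent, which degenerates as $\eta\to0$; this forces a careful case distinction between the regimes $\nu\eta^2\ge(\nu k^2)^{1/3}$ and $\nu\eta^2\le(\nu k^2)^{1/3}$, so that Poincar\'e along $y$ and the anisotropy $\eta\ne|k|$ can each be exploited where they give the sharp constant. A secondary, but purely technical, difficulty is the rigorous use of the Dirac source $\delta(t)\otimes\omega_{in}$ in the Fourier-in-time step; this is circumvented by splitting the solution into a free-evolution piece (treated by the parabolic semigroup plus the Airy-type bounds of Section~5) and an inhomogeneous piece with zero initial datum.
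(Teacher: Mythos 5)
Your strategy for the first inequality --- an energy identity combined with a Fourier transform in time that reduces matters to the resolvent bounds of Proposition~\ref{prop:res-nav-y1} --- is the same skeleton the paper uses (it splits $e^{a\nu^{1/3}t}\omega=\omega_H+\omega_I$ and treats $\omega_I$ exactly by your Plancherel argument). However, the two steps you defer are precisely the ones carrying the content, and as described they do not close. First, for the free-evolution piece the parabolic semigroup only yields decay at rate $\nu\eta^2$, which is strictly weaker than $(\nu k^2)^{1/3}$ in the relevant regime $\nu\eta^3\le|k|$, and the Airy bounds of Section~5 concern boundary correctors of the stationary problem, not semigroup decay. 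What is actually needed is the conversion of the pseudospectral bound $\Psi(L_{k,\ell})\gtrsim(\nu k^2)^{1/3}$ (a consequence of Proposition~\ref{prop:res-nav-y-uL2}) into $\|e^{-tL_{k,\ell}}\|\le Ce^{-c(\nu k^2)^{1/3}t}$ via the Gearhart--Pr\"uss type Lemma~\ref{lem:GP}; the alternative of keeping $\omega_{in}$ as a $\tau$-independent Dirac source fails outright, since that source is not in $L^2_\tau$ and the uniform-in-$\lambda$ resolvent bound cannot be squared and integrated. Second, your treatment of $f_4$ by divergence form plus the $H^{-1}$ resolvent yields at best $(\nu k^2)^{1/3}\|\hat W\|^2\lesssim\nu^{-1}\|\hat f_4\|^2$, which is far weaker than the claimed $(\eta|k|)^{-1}\|\partial_yf_4\|^2+\eta|k|^{-1}\|f_4\|^2$ (note $\nu^{-1}\gg\eta|k|^{-1}$). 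The sharp bound is exactly where the hypothesis $f_4(t,\pm1)=0$ enters: one peels off the quasi-inviscid profile $w_1=\chi_1\hat f_4/(ik)$ and applies the limiting-absorption machinery, i.e.\ Proposition~\ref{prop:res-damping}, which gives $\nu^{1/6}\eta^{1/2}|k|^{5/6}\|\hat W\|\le C\big(\|\partial_y\hat f_4\|_{L^2}+\eta\|\hat f_4\|_{L^2}\big)$ and hence precisely the stated weights.

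For the second inequality your route --- differentiate in $y$ and test against $\overline{\omega'}$ --- produces the boundary term $\nu\big[\omega''\overline{\omega'}\big]_{y=-1}^{y=1}$, and substituting the equation at $y=\pm1$ converts it into the traces of $f_1$, $f_3$ and $\partial_yf_2$ there. These traces are not controlled by the norms on the right-hand side: only $\|kf_1+\ell f_3\|_{L^2}$ and $\|\partial_yf_2\|_{L^2}$ are available, with no further $y$-derivatives, so no trace or interpolation inequality can absorb them. The fix is not to differentiate the equation: test it directly against $-\partial_y^2\overline{\widetilde\omega}$. Then the boundary term coming from $\langle\partial_t\widetilde\omega,-\partial_y^2\widetilde\omega\rangle$ vanishes because $\partial_t\widetilde\omega|_{y=\pm1}=0$, the source $ik\widetilde f_1+\partial_y\widetilde f_2+i\ell\widetilde f_3$ is estimated against $\nu\|\widetilde\omega''\|^2$ by Cauchy--Schwarz (which is why $\nu^{-1}\|\partial_yf_2\|^2_{L^2L^2}$ appears), only the $\widetilde f_4$ term is integrated by parts using $f_4|_{y=\pm1}=0$, and the term $\mathbf{Re}\big(ik\langle\widetilde\omega,\partial_y\widetilde\omega\rangle\big)$ is absorbed as $\nu^{-1/3}|k|^{4/3}\|\widetilde\omega\|^2=\nu^{-2/3}|k|^{2/3}\cdot(\nu k^2)^{1/3}\|\widetilde\omega\|^2$, which is the source of the prefactor $\nu^{-2/3}|k|^{2/3}$ multiplying the quantities already controlled by the first inequality.
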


Let $\widetilde{\omega}=e^{a\nu^{1/3}t}\omega$ and  $\widetilde{f}_j=e^{a\nu^{1/3}t}f_j$. Then $\widetilde{\omega} $ satisfies
\begin{align}\label{eqom}
  \left\{\begin{aligned}
            &\partial_t\widetilde{\omega}-\nu(\partial_y^2-\eta^2)\widetilde{\omega}+iky\widetilde{\omega}
            -a\nu^{1/3}\widetilde{\omega}
            =-ik\widetilde{f}_1-\partial_y\widetilde{f}_2-i\ell\widetilde{f}_3-\widetilde{f}_4,\\
            &\widetilde{\omega}|_{y=\pm1}=0,\quad \widetilde{\omega}|_{t=0}=\omega_{in}.
         \end{aligned}
         \right.
\end{align}
We decompose $\widetilde{\omega}$ as $\widetilde{\omega}=\omega_I+\omega_H$, where $\omega_H$ and
  $\omega_I$ solve
\begin{align}\label{eq:LNS-nav-inhom}
  \left\{\begin{aligned}
            &\partial_t\omega_{I}-\nu(\partial_y^2-\eta^2)\omega_{I}+iky\omega_I-a\nu^{1/3}\omega_I
            =-ik\widetilde{f}_1-\partial_y\widetilde{f}_2-i\ell\widetilde{f}_3-\widetilde{f}_4,\\
            &\omega_I|_{y=\pm1}=0,\quad \omega_I|_{t=0}=0,
         \end{aligned}
         \right.
\end{align}
and
\begin{align}\label{eq:LNS-nav-hom}
  \left\{\begin{aligned}
            &\partial_t\omega_{H}-\nu(\partial_y^2-\eta^2)\omega_{H}+iky\omega_H-a\nu^{1/3}\omega_H=0,\\
            &\omega_H|_{y=\pm1}=0,\quad \omega_H|_{t=0}=\omega_{in}.
         \end{aligned}
         \right.
\end{align}

\begin{Lemma}\label{lem:TS-nav-hom}
  Let $\omega_H$ be a solution of \eqref{eq:LNS-nav-hom} with
  $\omega_{in}(\pm1)=0$. Then  it holds that
  \begin{align*}
   &\|\omega_H\|^2_{L^\infty L^2}+\nu\|\omega'_H\|^2_{L^2L^2} +(\nu \eta^2+(\nu
   k^2)^{1/3})\|\omega_H\|^2_{L^2L^2}
    \leq C\|\omega_{in}\|^2_{L^2}.
  \end{align*}
\end{Lemma}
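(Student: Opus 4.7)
The proof naturally splits along the two sources of dissipation present: the classical viscous dissipation, which controls $\|\omega_H\|_{L^\infty L^2}$, $\nu\|\omega_H'\|_{L^2L^2}^2$, and $\nu\eta^2\|\omega_H\|_{L^2L^2}^2$, and the enhanced dissipation induced by the shearing, which gives the extra $(\nu k^2)^{1/3}\|\omega_H\|_{L^2L^2}^2$ bound and is the heart of the lemma. I would treat $k=0$ and $k\neq 0$ separately at the end, the former being handled by the energy identity alone.

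The plan is to first run the standard $L^2$ energy estimate: multiply \eqref{eq:LNS-nav-hom} by $\bar{\omega}_H$, integrate over $y\in I$, and take the real part. Since $\omega_H|_{y=\pm 1}=0$, integration by parts gives $-\langle\nu(\partial_y^2-\eta^2)\omega_H,\omega_H\rangle=\nu\|\omega_H'\|_{L^2}^2+\nu\eta^2\|\omega_H\|_{L^2}^2$, and the transport term $iky\omega_H$ is purely imaginary, so it disappears. This yields the identity
\begin{align*}
\tfrac{1}{2}\partial_t\|\omega_H\|_{L^2}^2+\nu\|\omega_H'\|_{L^2}^2+\nu\eta^2\|\omega_H\|_{L^2}^2=a\nu^{1/3}\|\omega_H\|_{L^2}^2.
\end{align*}
Integration in time bounds the first three left-hand quantities of the lemma by $\|\omega_{in}\|_{L^2}^2+2a\nu^{1/3}\int_0^t\|\omega_H\|_{L^2}^2\,ds$, so once the enhanced-dissipation bound is in hand, the extra term is absorbed using $a\nu^{1/3}\cdot C(\nu k^2)^{-1/3}=Ca|k|^{-2/3}\le C\varepsilon_1$.

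The main work is the enhanced dissipation $(\nu k^2)^{1/3}\|\omega_H\|_{L^2L^2}^2\le C\|\omega_{in}\|_{L^2}^2$, which I would obtain by combining Proposition~\ref{prop:res-nav-s1} with a semigroup decay argument. View $\omega_H(t)=e^{-t\mathcal{L}}\omega_{in}$ with $\mathcal{L}=-\nu(\partial_y^2-\eta^2)+iky-a\nu^{1/3}$ on $L^2(I)$ with Dirichlet data. For every $\tau\in\mathbb{R}$, the resolvent equation $(\mathcal{L}-i\tau)w=F$ is precisely the Orr-Sommerfeld-type equation \eqref{eq:OS-Nav-y} with $\lambda=-\tau/k$ and shift $a\nu^{1/3}$ (at most $a(\nu k^2)^{1/3}$), so Proposition~\ref{prop:res-nav-s1} (whose proof is insensitive to replacing the shift by a smaller one) gives the uniform bound $\|(\mathcal{L}-i\tau)^{-1}\|_{L^2\to L^2}\le C(\nu k^2)^{-1/3}$. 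By the Gearhart-Pr\"uss theorem in Hilbert space (in its quantitative form, e.g.\ via Wei's hypocoercive estimate), this translates into
\begin{align*}
\|\omega_H(t)\|_{L^2}\le Ce^{-c(\nu k^2)^{1/3}t}\|\omega_{in}\|_{L^2},
\end{align*}
and integrating in $t$ yields $(\nu k^2)^{1/3}\|\omega_H\|_{L^2L^2}^2\le C\|\omega_{in}\|_{L^2}^2$, as needed. Combining with the energy identity and absorbing the $a\nu^{1/3}\int\|\omega_H\|^2$ term completes the proof; the case $k=0$ follows from the energy identity alone since no enhanced dissipation is claimed.

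The hard step is the passage from the uniform resolvent bound to semigroup decay with the correct dependence on $(\nu k^2)^{1/3}$, and in particular ensuring the implicit constant $c>0$ is independent of $\nu,k,\ell$. An equivalent route, if one prefers to avoid citing Gearhart-Pr\"uss, is to take the Fourier transform in time of the extended solution $\omega_H(t)\mathbf{1}_{t\ge 0}$, apply Proposition~\ref{prop:res-nav-s1} pointwise in $\tau$, and combine it with the $\tau$-decay of the resolvent for $|\tau|\gg |k|$ coming from $\tau\|w\|_{L^2}^2\lesssim|k|\|w\|_{L^2}^2+\|F\|\|w\|$; this route would require an extra dyadic decomposition in $\tau$ but leads to the same conclusion.
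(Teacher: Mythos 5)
Your proposal is correct and follows essentially the same route as the paper: the basic energy identity for the viscous terms, plus the uniform resolvent bound (Proposition \ref{prop:res-nav-y1}/\ref{prop:res-nav-y-uL2}) converted to semigroup decay via the quantitative Gearhart--Pr\"uss lemma (Lemma \ref{lem:GP} in the appendix), with the $a\nu^{1/3}$ term absorbed at the end. The only bookkeeping difference is that the paper applies Gearhart--Pr\"uss to the unshifted, accretive operator $L_{k,\ell}=\nu(\eta^2-\partial_y^2)+iky$ and then multiplies by $e^{ta\nu^{1/3}}$ (using $\Psi(L_{k,\ell})\ge 2a\nu^{1/3}$), rather than to your shifted operator $\mathcal{L}$, which need not be accretive.
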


\begin{proof}
Let $L_{k,\ell}=\nu(k^2+\ell^2-\pa_y^2)+iky$ with $D(L_{k,\ell})={H^2\cap H_0^1}(-1,1)$. Then we have
\beno
\omega_H(t)=e^{-tL_{k,\ell}+ta\nu^{1/3}}\omega_{in}.
\eeno
Thanks to the fact that for $f\in D(L_{k,\ell})$
\begin{align}
\label{eq:accretive-Lk}{\rm Re}\langle L_{k,\ell}f,f \rangle=\nu (k^2+\ell^2)\|f\|_{L^2}^2+\nu\|f'\|_{L^2}^2,
\end{align}
the operator $L_{k,\ell}$ is accretive for any $k,\ell\in\Z$.

Thanks to Proposition \ref{prop:res-nav-y-uL2},  when $\nu\eta^3\le |k|$, there exists $c>0$ so that for any $k\in \Z$,
\begin{align*}
\Psi(L_{k,\ell})\geq c(\nu k^2)^{\f13}\geq 2a\nu^{\f13}
\end{align*}
for $0\leq a\leq \epsilon_1$ small enough, and when $\nu\eta^3\ge |k|$, we get by \eqref{eq:accretive-Lk}  that
\beno
\Psi(L_{k,\ell})\ge \nu\eta^2\ge \nu^\f13|k|^\f23\ge 2a\nu^{\f13}.
\eeno
Then it follows from Lemma \ref{lem:GP}  that
\begin{align}\nonumber
\|\omega_H(t)\|_{L^2}\leq e^{-t\Psi(L_{k,\ell})+{\pi}/{2}+ta\nu^{1/3}}\|\omega_{in}\|_{L^2}\leq Ce^{-\f {ct} 2(\nu k^2)^{\f13}}\|\omega_{in}\|_{L^2},
\end{align}
which gives
\begin{align}
\label{omHL2a}&\|\omega_H\|^2_{L^\infty L^2}+(\nu
   k^2)^{1/3}\|\omega_H\|^2_{L^2L^2}
    \leq C\|\omega_{in}\|^2_{L^2}.
\end{align}

The basic energy estimate yields that
\begin{align*}
  &\f12\f{d}{dt}\|\omega_H\|^2_{L^2}+\nu\|\omega'_H\|_{L^2}^2 +\nu
  \eta^2\|\omega_H\|_{L^2}^2=a\nu^{1/3}\|\omega_H\|_{L^2}^2,
\end{align*}
which gives
\begin{align}\label{omHL2b}
  &\|\omega_H\|^2_{L^\infty L^2}+\nu\|\omega'_H\|_{L^2L^2}^2 +\nu
  \eta^2\|\omega_H\|_{L^2L^2}^2\leq C\big(\|\omega_{in}\|^2_{L^2}+a\nu^{1/3}\|\omega_H\|_{L^2L^2}^2\big).
\end{align}

Now the result follows from \eqref{omHL2a}, \eqref{omHL2b} and the fact that $a\nu^{\f13}\leq (\nu k^2)^{\f13}$.
\end{proof}

\begin{Lemma}\label{lem:TS-nav-inhom}
 Let $\omega_I$ be a solution of  \eqref{eq:LNS-nav-inhom}  with $\widetilde{f}_4(t,\pm1)=0$. If $\nu \eta^3\leq |k|$, then  we have
   \begin{align*}
   &\|\omega_I\|^2_{L^\infty L^2}+\nu\|\omega'_I\|^2_{L^2L^2} +(\nu \eta^2+(\nu
    k^2)^{1/3})\|\omega_I\|^2_{L^2L^2} +\eta|k|\|u_I\|^2_{L^2L^2}\\
    &\leq C\Big((\nu k^2)^{-\f13} \|k\widetilde{f}_1+\ell\widetilde{f}_3\|_{L^2L^2}^2 +\nu^{-1}\|\widetilde{f}_2\|^2_{L^2L^2}\\
    &\qquad+(\eta|k|)^{-1}\|\partial_y\widetilde{f}_4\|^2_{L^2L^2}+\eta|k|^{-1}\|\widetilde{f}_4\|^2_{L^2L^2}\Big).
  \end{align*}
 Here $u_I=({\partial_y\varphi_I,-i\eta\varphi_I})$ and $\varphi_I=(\partial_y^2-\eta^2)^{-1}\omega_I.$
\end{Lemma}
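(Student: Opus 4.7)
I combine a direct energy identity with a temporal Fourier reduction to the resolvent problem. The energy identity will yield $\|\omega_I\|^2_{L^\infty L^2}$ together with $\nu\|\omega'_I\|^2_{L^2L^2}+\nu\eta^2\|\omega_I\|^2_{L^2L^2}$, while the Fourier reduction combined with the stationary estimates of Section~7 delivers the enhanced-dissipation bound $(\nu k^2)^{1/3}\|\omega_I\|^2_{L^2L^2}$ and the inviscid-damping bound $\eta|k|\,\|u_I\|^2_{L^2L^2}$. The two pieces are then glued together by absorbing the remainder terms produced by Young's inequality.

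\textbf{Fourier step.} Extend $\omega_I$ and each $\widetilde f_j$ by zero for $t<0$, Fourier-transform in $t$, and set $\lambda=-\tau/k$; the equation reduces, for each $\lambda\in\mathbb R$, to the resolvent system
\begin{equation*}
-\nu(\partial_y^2-\eta^2)\hat\omega_I+ik(y-\lambda)\hat\omega_I-a\nu^{1/3}\hat\omega_I=-ik\hat{\widetilde f}_1-\partial_y\hat{\widetilde f}_2-i\ell\hat{\widetilde f}_3-\hat{\widetilde f}_4,\quad \hat\omega_I(\lambda,\pm1)=0,
\end{equation*}
with $\hat{\widetilde f}_4(\lambda,\pm1)=0$ inherited from the hypothesis on $\widetilde f_4$. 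Splitting the forcing as $F_a=-ik\hat{\widetilde f}_1-\partial_y\hat{\widetilde f}_2-i\ell\hat{\widetilde f}_3$ and $F_b=-\hat{\widetilde f}_4$ and applying Proposition~\ref{prop:res-nav-y-uL2} to the $F_a$ part and Proposition~\ref{prop:res-damping} to the $F_b$ part (whose boundary vanishing is essential), I obtain after squaring each estimate and dividing by $|k\eta|$ the pointwise-in-$\lambda$ bound
\begin{align*}
& |k\eta|\,\|\hat u_I\|^2_{L^2_y}+(\nu k^2)^{1/3}\|\hat\omega_I\|^2_{L^2_y}+\nu\|\hat\omega'_I\|^2_{L^2_y}\\
& \le C\big((\nu k^2)^{-1/3}\|k\hat{\widetilde f}_1+\ell\hat{\widetilde f}_3\|^2_{L^2_y}+\nu^{-1}\|\hat{\widetilde f}_2\|^2_{L^2_y}+(\eta|k|)^{-1}\|\partial_y\hat{\widetilde f}_4\|^2_{L^2_y}+\eta|k|^{-1}\|\hat{\widetilde f}_4\|^2_{L^2_y}\big),
\end{align*}
using the algebraic identity $\nu^{1/3}|k|^{2/3}=(\nu k^2)^{1/3}$. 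Integrating in $\tau$ and applying Plancherel in $t$ gives the $L^2_tL^2_y$ control of $\eta|k|\,\|u_I\|^2+(\nu k^2)^{1/3}\|\omega_I\|^2+\nu\|\omega'_I\|^2$ by the right-hand side of the lemma; and the equivalence $\nu\eta^3\le|k|\Leftrightarrow\nu\eta^2\le(\nu k^2)^{1/3}$ subsumes the $\nu\eta^2\|\omega_I\|^2_{L^2L^2}$ term.

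\textbf{Energy step.} Pair the equation with $\bar\omega_I$ in $L^2_y$, take the real part, and integrate on $[0,t]$; using $\omega_I(0)=0$ this gives
\begin{equation*}
\tfrac12\|\omega_I(t)\|^2_{L^2_y}+\nu\|\omega'_I\|^2_{L^2(0,t;L^2_y)}+\nu\eta^2\|\omega_I\|^2_{L^2(0,t;L^2_y)}=a\nu^{1/3}\|\omega_I\|^2_{L^2(0,t;L^2_y)}+\mathrm{Re}\int_0^t\langle R,\omega_I\rangle\,ds.
\end{equation*}
The $\widetilde f_1,\widetilde f_2,\widetilde f_3$ contributions of $R$ are handled by Cauchy--Schwarz and Young's inequality, producing the $(\nu k^2)^{-1/3}\|k\widetilde f_1+\ell\widetilde f_3\|^2$ and $\nu^{-1}\|\widetilde f_2\|^2$ terms on the right-hand side together with absorbable $(\nu k^2)^{1/3}\|\omega_I\|^2$ and $\nu\|\omega'_I\|^2$ pieces. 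For $\langle-\widetilde f_4,\omega_I\rangle$, write $\omega_I=\varphi''_I-\eta^2\varphi_I$ and integrate by parts once in $y$: the boundary contributions vanish because $\widetilde f_4(\pm1)=0$, leaving $\langle\partial_y\widetilde f_4,\partial_y\varphi_I\rangle-\eta^2\langle\widetilde f_4,\varphi_I\rangle$, which via $\|\partial_y\varphi_I\|_{L^2_y},\,\eta\|\varphi_I\|_{L^2_y}\le\|u_I\|_{L^2_y}$ and Young's inequality is controlled by $C(\eta|k|)^{-1}\|\partial_y\widetilde f_4\|^2+C\eta|k|^{-1}\|\widetilde f_4\|^2+\tfrac18\eta|k|\,\|u_I\|^2$. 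Finally I absorb the remainder terms $\eta|k|\,\|u_I\|^2_{L^2L^2}$, $(\nu k^2)^{1/3}\|\omega_I\|^2_{L^2L^2}$, and $a\nu^{1/3}\|\omega_I\|^2_{L^2L^2}\le\epsilon_1(\nu k^2)^{1/3}\|\omega_I\|^2_{L^2L^2}$ (using $|k|\ge1$) by the Fourier-step bound, closing the estimates for $\|\omega_I\|^2_{L^\infty L^2}$ and for the viscous-dissipation terms.

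\textbf{Main obstacle.} The delicate point is the handling of $\widetilde f_4$: the sharp inviscid-damping bound $(\eta|k|)^{-1}\|\partial_y\widetilde f_4\|^2+\eta|k|^{-1}\|\widetilde f_4\|^2$ requires both the boundary vanishing $\widetilde f_4(\pm1)=0$ (used both to kill boundary contributions in the energy-side integration by parts and to apply Proposition~\ref{prop:res-damping} on the Fourier side) and the restriction $\nu\eta^3\le|k|$, which is exactly the threshold under which Proposition~\ref{prop:res-damping} produces the sharp factor $|k\eta|$ in front of $\|u\|^2$. Verifying that the constants produced by Young's inequality in the energy step match those of the Fourier-side bound, so that the final absorption actually closes the estimate with the stated right-hand side, is the principal technical point.
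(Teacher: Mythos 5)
Your proposal is correct and follows essentially the same route as the paper: a temporal Fourier transform reducing to the resolvent problem, with Proposition \ref{prop:res-nav-y-uL2} handling the $\widetilde f_1,\widetilde f_2,\widetilde f_3$ forcing and Proposition \ref{prop:res-damping} handling $\widetilde f_4$ (whose boundary vanishing is indeed the key hypothesis), followed by Plancherel; and a basic energy estimate in which the $\widetilde f_4$ pairing is integrated by parts against $(\partial_y^2-\eta^2)\varphi_I$ and the remainder is absorbed via the Fourier-side bound together with $a\nu^{1/3}\le(\nu k^2)^{1/3}$. The only cosmetic difference is that the paper extends $\omega_I$ to $t>T$ by solving the homogeneous equation before transforming, which you should also note to justify controlling the $L^2(0,T)$ norms by the full-line Plancherel identity.
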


\begin{proof}
We first extend the solution  $\omega_I$ to $t>T$ by solving \eqref{eq:LNS-nav-inhom}  with $\widetilde{f}_j=0$ for $t>T$. 
  Let
  \beno
 &&\hat{u}(\lambda,y)=\f{1}{2\pi}\int_{\mathbb{R}_+}u_I(t,y)e^{-i\lambda
  t}dt,\quad w(\lambda,y)=\f{1}{2\pi}\int_{\mathbb{R}_+}\omega_I(t,y)e^{-i\lambda
  t}dt,\\
   &&\hat{f}_j(\lambda,y)=\f{1}{2\pi}\int_{\mathbb{R}_+}\widetilde{f}_j(t,y)e^{-i\lambda
   t}dt,\quad j=1,2,3,4.
 \eeno
 Then we have
   \begin{align*}\left\{\begin{aligned}
     &-\nu(\partial_y^2-\eta^2)w+ik(y+\lambda/k)w-a\nu^{1/3}w=-ik\hat{f}_1-\partial_y\hat{f}_2
     -il\hat{f}_3-\hat{f}_4,\\
     &w|_{y=\pm1}=0.
   \end{aligned}\right.
   \end{align*}
 It follows from Proposition \ref{prop:res-nav-y-uL2} and Proposition \ref{prop:res-damping}  that
  \begin{align*}
    &\nu \|w'\|^2_{L^2}+ (\nu k^2)^{1/3}\|w\|_{L^2}^2
    +\eta|k|\|\hat{u}\|_{L^2}^2\\
    &\leq C\Big((\nu k^2)^{-1/3}\|k\hat{f}_1+\ell\hat{f}_3\|^2_{L^2}+\nu^{-1}\|\hat{f}_2\|_{L^2}^2
    +(\eta|k|)^{-1}(\|\partial_y\hat{f}_4\|^2_{L^2}+\eta^2\|\hat{f}_4\|^2_{L^2})\Big),
  \end{align*}
from which and  Plancherel's theorem,  we deduce that
  \begin{align}
    &\nu \|\omega_I'\|^2_{L_t^2L^2}+ (\nu k^2)^{1/3}\|\omega_I\|_{L_t^2L^2}^2
    +\eta|k|\|u_I\|_{L_t^2L^2}^2\nonumber\\ \nonumber
    &\leq C\big(\nu \|w'\|^2_{L_\lambda^2L^2}+ (\nu
    k^2)^{1/3}\|w\|_{L_\lambda^2L^2}^2
    +\eta|k|\|\widehat{u}\|_{L_\lambda^2L^2}^2\big)\\ \nonumber
    &\leq  C\Big((\nu k^2)^{-1/3}\|k\hat{f}_1+l\hat{f}_3\|^2_{L_\lambda^2L^2}+\nu^{-1}\|\hat{f}_2\|_{L_\lambda^2L^2}
    +(\eta|k|)^{-1}(\|\partial_y\hat{f}_4\|^2_{L_\lambda^2L^2}+\eta^2\|\hat{f}_4\|^2_{L_\lambda^2L^2})\Big)\\ \nonumber
   &\leq C\Big((\nu k^2)^{-\f13}\|k\widetilde{f}_1+l\widetilde{f}_3\|_{L_t^2L^2}^2+\nu^{-1}\|\widetilde{f}_2\|^2_{L_t^2L^2}
    +(\eta|k|)^{-1}(\|\partial_y\widetilde{f}_4\|^2_{L_t^2L^2}+\eta^{2}\|\widetilde{f}_4\|^2_{L_t^2L^2})\Big) .
  \end{align}
On the other hand, the basic energy estimate yields that
\begin{align*}
  &\f12\f{d}{dt}\|\omega_I\|^2_{L^2}+\nu\|\omega'_I\|_{L^2}^2 +\nu
  \eta^2\|\omega_I\|_{L^2}^2-a\nu^{1/3}\|\omega_I\|_{L^2}^2 \\
  &\leq \|k\widetilde{f}_1+\ell\widetilde{f}_3\|_{L^2}\|\omega_I\|_{L^2}+\|\widetilde{f}_2\|_{L^2}\|\partial_y\omega_I\|_{L^2}
  +\|\partial_y\widetilde{f}_4\|_{L^2}\|\partial_y\varphi_I\|_{L^2}
  +\eta^2\|\widetilde{f}_4\|_{L^2}\|\varphi_I\|_{L^2}\\
  &\leq
  \big((\nu k^2)^{-\f13}\|k\widetilde{f}_1+l\widetilde{f}_3\|_{L^2}^2+\nu^{-1}\|\widetilde{f}_2\|_{L^2}^2
  +(\eta|k|)^{-1}(\|\partial_y\widetilde{f}_4\|^2_{L^2}+\eta^{2}\|\widetilde{f}_4\|^2_{L^2})\big)\\&\quad+\big((\nu k^2)^{\f13}\|\omega_I\|_{L^2}^2+\nu\|\partial_y\omega_I\|_{L^2}^2+\eta|k|\|u_I\|_{L^2}^2\big)/4,
\end{align*}
here we used $ |\langle\widetilde{f}_4,\omega_I\rangle|=|\langle\widetilde{f}_4,(\partial_y^2-\eta^2)\varphi_I\rangle|
\leq\|\partial_y\widetilde{f}_4\|_{L^2}\|\partial_y\varphi_I\|_{L^2}
  +\eta^2\|\widetilde{f}_4\|_{L^2}\|\varphi_I\|_{L^2}$. This shows that
\begin{align*}
  &\|\omega_I\|_{L^\infty L^2}^2+\nu\|\omega'_I\|_{L^2L^2}^2 +\nu
  \eta^2\|\omega_I\|^2_{L^2L^2}\\
  &\leq C\Big((\nu k^2)^{-\f13}\|k\widetilde{f}_1+\ell\widetilde{f}_3\|_{L^2L^2}^2+\nu^{-1}\|\widetilde{f}_2\|_{L^2L^2}^2
  +(\eta|k|)^{-1}(\|\partial_y\widetilde{f}_4\|^2_{L^2L^2}+\eta^{2}\|\widetilde{f}_4\|^2_{L^2L^2})\Big)\\&\quad+C((\nu k^2)^{\f13}+a\nu^{\f13})\|\omega_I\|_{L^2L^2}^2+C\eta|k|\|u_I\|_{L^2L^2}^2.
\end{align*}

Summing up and noting that $a\nu^{\f13}\leq (\nu k^2)^{\f13}$, we  conclude the proof of  the lemma.
\end{proof}\smallskip

Now we are in a position to prove Proposition \ref{prop:TS-nav}.

\begin{proof}
We first consider the case of $\nu \eta^3\leq |k|$. In this case, we have $\nu \eta^2\le (\nu
    k^2)^{1/3}$. It follows from Lemma \ref{lem:TS-nav-hom} and lemma \ref{lem:TS-nav-inhom}  that
    \begin{align*}
   &\|\widetilde{\omega}\|^2_{L^\infty
    L^2}+\nu\|\widetilde{\omega}'\|^2_{L^2L^2} +(\nu \eta^2+(\nu
    k^2)^{1/3})\|\widetilde{\omega}\|^2_{L^2L^2}\\
    &\leq C\Big(\|\omega_{in}\|_{L^2}^2
    +\nu^{-1}\|\widetilde{f}_2\|^2_{L^2L^2}
    +(\eta|k|)^{-1}\|\partial_y\widetilde{f}_4\|^2_{L^2L^2}+\eta|k|^{-1}\|\widetilde{f}_4\|^2_{L^2L^2}\\&
    \qquad+(\nu k^2)^{-1/3}\|k\widetilde{f}_1+\ell\widetilde{f}_3\|_{L^2L^2}^2\Big).
  \end{align*}

  For the case of $\nu \eta^3\geq |k|$, we have $ \nu \eta^2\geq(\nu
    k^2)^{1/3},\ (\nu \eta^2)^{-1}\leq\eta|k|^{-1}.$
  The basic energy estimate yields that
  \begin{align*}
  &\f12\f{d}{dt}\|\widetilde{\omega}\|^2_{L^2}+\nu\|\widetilde{\omega}'\|_{L^2}^2 +(\nu \eta^2-a\nu^{1/3})\|\widetilde{\omega}\|_{L^2}^2 \\
  &\leq \|ik\widetilde{f}_1+i\ell\widetilde{f}_3+\widetilde{f}_4\|_{L^2}\|\widetilde{\omega}\|_{L^2}
  +\|\widetilde{f}_2\|_{L^2}\|\partial_y\widetilde{\omega}\|_{L^2}\\
  &\leq
  \big((\nu \eta^2)^{-1}\|ik\widetilde{f}_1+i\ell \widetilde{f}_3+\widetilde{f}_4\|_{L^2}^2+\nu^{-1}\|\widetilde{f}_2\|_{L^2}^2\big)\\&
  \quad+\big(\nu \eta^2\|\widetilde{\omega}\|_{L^2}^2+\nu\|\partial_y\widetilde{\omega}\|_{L^2}^2\big)/4,
\end{align*}
which shows that
\begin{align*}
  &\|\widetilde{\omega}\|_{L^\infty L^2}^2+\nu\|\widetilde{\omega}'\|_{L^2L^2}^2 +\nu
  \eta^2\|\widetilde{\omega}\|^2_{L^2L^2}\\
  &\leq C\big(\|{\omega}'_{in}\|^2_{ L^2}+(\nu \eta^2)^{-1}\|ik\widetilde{f}_1+i\ell\widetilde{f}_3+\widetilde{f}_4\|_{L^2L^2}^2+\nu^{-1}\|\widetilde{f}_2\|_{L^2L^2}^2\big)\\
  &\leq C\big(\|{\omega}'_{in}\|^2_{ L^2}+(\nu \eta^2)^{-1}\|ik\widetilde{f}_1+i\ell\widetilde{f}_3\|_{L^2L^2}^2+
  \eta|k|^{-1}\|\widetilde{f}_4\|_{L^2L^2}^2+\nu^{-1}\|\widetilde{f}_2\|_{L^2L^2}^2\big).
\end{align*}
This shows  the first inequality of the proposition.

It remains to prove the second inequality. The basic energy estimate yields that
\begin{align*}
   &\f{1}{2}\f{d}{dt}\|\widetilde{\omega}'\|^2_{L^2}+\nu\|\widetilde{\omega}''\|^2_{L^2} +(\nu \eta^2-a\nu^{\f13})\|\widetilde{\omega}'\|^2_{L^2}+\mathbf{Re}\Big(ik\int_{-1}^{1}\widetilde{\omega}\bar{\widetilde{\omega}}'dy\Big)\\
   &=\mathbf{Re}\big(\langle ik\widetilde{f}_1+\partial_y\widetilde{f}_2+i\ell\widetilde{f}_3+\widetilde{f}_4,
   \partial_y^2\widetilde{\omega}\rangle\big) =\mathbf{Re}\big(\langle ik\widetilde{f}_1+\partial_y\widetilde{f}_2+i\ell\widetilde{f}_3,\partial_y^2\widetilde{\omega}\rangle-\langle \partial_y\widetilde{f}_4,\partial_y\widetilde{\omega}\rangle\big)\\
   &\leq \nu^{-1}\|ik\widetilde{f}_1+\partial_y\widetilde{f}_2+i\ell\widetilde{f}_3\|^2_{L^2}+\nu\|\widetilde{\omega}''\|^2_{L^2}/4
   +\|\partial_y\widetilde{f}_4\|_{L^2}\|\partial_y\widetilde{\omega}\|^2_{L^2},
\end{align*}
which implies that
\begin{align*}
   & \f{d}{dt}\|\widetilde{\omega}'\|^2_{L^2}+\nu\|\widetilde{\omega}''\|^2_{L^2} +2\nu \eta^2\|\widetilde{\omega}'\|_{L^2}^2\\
   &\leq C\big(|k|\|\widetilde{\omega}\|_{L^2}\|\widetilde{\omega}'\|_{L^2}+a\nu^{\f13}\|\partial_y\widetilde{\omega}\|^2_{L^2} +\nu^{-1}\|ik\widetilde{f}_1+\partial_y\widetilde{f}_2+i\ell\widetilde{f}_3\|^2_{L^2}\big)+
   2\|\partial_y\widetilde{f}_4\|_{L^2}\|\partial_y\widetilde{\omega}\|^2_{L^2}\\
   &\leq C\big(\nu^{-\f13}|k|^{\f43}\|\widetilde{\omega}\|_{L^2}^2+(\nu k^2)^{\f13}\|\widetilde{\omega}'\|_{L^2}^2+a\nu^{\f13}\|\widetilde{\omega}'\|^2_{L^2}+
   \nu^{-1}\|ik\widetilde{f}_1+\partial_y\widetilde{f}_2+i\ell\widetilde{f}_3\|^2_{L^2}\big)\\&\quad+(\nu \eta^2)^{-\f12}(\nu k^2)^{-\f16}\|\partial_y\widetilde{f}_4\|_{L^2}^2+(\nu \eta^2+(\nu k^2)^{\f13})\|\partial_y\widetilde{\omega}\|^2_{L^2}/2.
\end{align*}
and hence,
\begin{align*}
   & \f{d}{dt}\|\widetilde{\omega}'\|^2_{L^2}+\nu\|\widetilde{\omega}''\|^2_{L^2} +\nu \eta^2\|\widetilde{\omega}'\|_{L^2}^2\\
   &\leq C\Big(\nu^{-\f13}|k|^{\f43}\|\widetilde{\omega}\|_{L^2}^2+(\nu k^2)^{\f13}\|\widetilde{\omega}'\|_{L^2}^2+
   \nu^{-1}\|ik\widetilde{f}_1+\partial_y\widetilde{f}_2+i\ell\widetilde{f}_3\|^2_{L^2}+
   \nu^{-\f23}\eta^{-1}|k|^{-\f13}\|\partial_y\widetilde{f}_4\|_{L^2}^2\Big),
\end{align*}
from which and the first inequality of the proposition, we deduce that
\begin{align*}
   &\|\widetilde{\omega}'\|^2_{L^\infty L^2}+\nu\|\widetilde{\omega}''\|^2_{L^2L^2} +\nu \eta^2\|\widetilde{\omega}'\|^2_{L^2L^2}\\
   &\leq C \Big(\|{\omega}'_{in}\|^2_{ L^2}+\nu^{-\f13}|k|^{\f43}\|\widetilde{\omega}\|_{L^2L^2}^2+(\nu k^2)^{\f13}\|\widetilde{\omega}'\|_{L^2L^2}^2+\nu^{-\f23}\eta^{-1}|k|^{-\f13}\|\partial_y\widetilde{f}_4\|_{L^2L^2}^2\\&
   \qquad+\nu^{-1}\|ik\widetilde{f}_1+\partial_y\widetilde{f}_2+i\ell\widetilde{f}_3\|^2_{L^2L^2}\Big)\\
   &\leq C \|{\omega}'_{in}\|^2_{ L^2}+C\nu^{-\f23}|k|^{\f23}\big(\|\widetilde{\omega}\|^2_{Y^0_{k,\ell}}+|\eta k|^{-1}\|\partial_y\widetilde{f}_4\|_{L^2}^2\big)+C\nu^{-1}\|ik\widetilde{f}_1+\partial_y\widetilde{f}_2+i\ell\widetilde{f}_3\|^2_{L^2L^2}
   \\
   &\leq C \|{\omega}'_{in}\|^2_{ L^2}+C\nu^{-\f23}|k|^{\f23}\Big(\|\omega_{in}\|_{L^2}^2
    +\nu^{-1}\|\widetilde{f}_2\|^2_{L^2L^2}
    +(\eta|k|)^{-1}\|\partial_y\widetilde{f}_4\|^2_{L^2L^2}+\eta|k|^{-1}\|\widetilde{f}_4\|^2_{L^2L^2}\\&
    \qquad+(\nu k^2)^{-1/3}\|k\widetilde{f}_1+l\widetilde{f}_3\|_{L^2L^2}^2\Big)
    +C\nu^{-1}\|ik\widetilde{f}_1+\partial_y\widetilde{f}_2+i\ell\widetilde{f}_3\|^2_{L^2L^2}\\
   &\leq C \|{\omega}'_{in}\|^2_{ L^2}+C\nu^{-\f23}|k|^{\f23}\big(\|\omega_{in}\|_{L^2}^2
    +(\eta|k|)^{-1}\|\partial_y\widetilde{f}_4\|^2_{L^2L^2}+\eta|k|^{-1}\|\widetilde{f}_4\|^2_{L^2L^2}\big)\\&
    \quad+C\nu^{-1}\big(\|k\widetilde{f}_1+\ell\widetilde{f}_3\|_{L^2L^2}^2
    +\nu^{-\f23}|k|^{\f23}\|\widetilde{f}_2\|^2_{L^2L^2}+\|\partial_y\widetilde{f}_2\|^2_{L^2L^2}\big).
\end{align*}
This proves the second inequality of the proposition.(See section 13 for the definition of the norm $\|\cdot\|_{Y^0_{k,\ell}}$).
\end{proof}

\subsection{Space-time estimates with non-slip boundary condition}

In this subsection, we study the space-time estimate of the following
linearized  equation with non-slip boundary condition:
\begin{align}\label{eq:LNS-non}
  \left\{\begin{aligned}
   & \partial_t\omega-\nu(\partial_y^2-\eta^2)\omega+iky\omega= F,\\
   &(\partial_y^2-\eta^2)\varphi=\omega,\ \partial_y\varphi|_{y=\pm1}=\varphi|_{y=\pm1}=0,\\
   &\omega|_{t=0}=\omega_{in}.\end{aligned}\right.
\end{align}
Here $\eta\ge k$ and $0<\nu\leq\nu_0$, $0\leq a\leq \epsilon_1\leq 1/8$.

\begin{Proposition}\label{prop:TS-non}
Let $\omega$ solve \eqref{eq:LNS-non} with $\partial_y\varphi_{in}|_{y=\pm1}=0$ and $F=ikf_1+\partial_yf_2+i\ell f_3$. Then it holds that
  \begin{align*}
     &|k\eta|^{\f12}\|e^{a\nu^{\f13}t}(\partial_y,\eta)\varphi\|_{L^2L^2}+\nu^{\f34}\|e^{a\nu^{\f13}t}\partial_y\omega\|_{L^2L^2}+
     \nu^{\f12}\eta\|e^{a\nu^{\f13}t}\omega\|_{L^2L^2}+\eta
     \|e^{a\nu^{\f13}t}(\partial_y,\eta)\varphi\|_{L^{\infty}L^2}\\&\quad+\nu^{\f14}\|e^{a\nu^{\f13}t}\omega\|_{L^{\infty}L^2}\leq C\nu^{-\f12}\|e^{a\nu^{\f13}t}(f_1,f_2,f_3)\|_{L^2L^2} +C\big(\eta^{-1}\|\partial_y\omega_{in}\|_{L^2}+\|\omega_{in}\|_{L^2}\big).
  \end{align*}
\end{Proposition}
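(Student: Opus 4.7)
The plan is to follow the Laplace-transform plus resolvent-estimate strategy used in \cite{CLWZ} and in the proof of Proposition \ref{prop:TS-nav}, and to feed in the non-slip resolvent estimates of Section 7 in place of their Navier-slip counterparts. Setting $\widetilde\omega = e^{a\nu^{1/3}t}\omega$ and extending by zero to $t<0$, the time-Laplace transform $\hat\omega(\lambda,y)$ solves the non-slip resolvent system from Section 6 with source $\hat F + \omega_{in}$ (the initial data entering as an extra $L^2_y$ datum) and effective shear parameter $\lambda_0 = -\lambda/k$. Proposition \ref{prop:res-NB-y-w} then provides a pointwise-in-$\lambda$ bound for $\|\hat\omega\|_{L^2_y}$ in terms of $\|\hat f_j\|_{L^2_y}$, $\|\omega_{in}\|_{L^2}$ and the Neumann data $|\pa_y\hat\varphi(\pm 1)|$, while Proposition \ref{prop:res-NB-y-u} plays the analogous role for the velocity $\hat u = (\pa_y\hat\varphi,-i\eta\hat\varphi)$.

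The Neumann data $\pa_y\hat\varphi(\pm 1)$ are themselves controlled by the source: they agree with those of the Navier-slip piece $\hat\varphi_{Na}$, for which Proposition \ref{cor5} supplies the precise weighted bounds. Squaring the resulting pointwise estimates and integrating in $\lambda$, Plancherel converts them into the stated $L^2L^2$ bounds for $|k\eta|^{1/2}\|(\pa_y,\eta)\varphi\|_{L^2L^2}$, $\nu^{3/4}\|\pa_y\omega\|_{L^2L^2}$ and $\nu^{1/2}\eta\|\omega\|_{L^2L^2}$, with the $\nu^{-1/2}$ loss on the right-hand side originating in the boundary-layer weight of the Airy correctors from Proposition \ref{prop:w12-L2}.

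For the $L^\infty L^2$ terms I would run a direct energy identity for the velocity obtained by testing the equation against $\bar{\widetilde\varphi}$. Because $\widetilde\varphi|_{y=\pm 1}$ and $\pa_y\widetilde\varphi|_{y=\pm 1}$ both vanish, the diffusive pairing is self-adjoint and one finds
\begin{align*}
\f12 \f{d}{dt}\|(\pa_y,\eta)\widetilde\varphi\|^2_{L^2} + \nu\|\widetilde\omega\|^2_{L^2} - a\nu^{1/3}\|(\pa_y,\eta)\widetilde\varphi\|^2_{L^2} = -k\,\mathrm{Im}\langle y\widetilde\omega,\widetilde\varphi\rangle - \mathrm{Re}\langle e^{a\nu^{1/3}t}F,\widetilde\varphi\rangle.
\end{align*}
The transport residual $-k\,\mathrm{Im}\langle y\widetilde\omega,\widetilde\varphi\rangle$ does not vanish in the non-slip case, but $|k|\leq\eta$ together with Cauchy--Schwarz bounds it by $|k|\eta^{-1}\|\widetilde\omega\|_{L^2}\|(\pa_y,\eta)\widetilde\varphi\|_{L^2}$; integrated in $t$ this is absorbed by the $L^2L^2$ bounds already in hand. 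This delivers the $\eta\|(\pa_y,\eta)\varphi\|_{L^\infty L^2}$ estimate. The $\nu^{1/4}\|\omega\|_{L^\infty L^2}$ bound follows from the analogous identity obtained by pairing with $\bar{\widetilde\omega}$; the key subtlety is that now $\widetilde\omega|_{y=\pm 1}\neq 0$, so the integration by parts produces a boundary trace $\nu\,\mathrm{Re}[\pa_y\widetilde\omega\,\bar{\widetilde\omega}]^{1}_{-1}$ which must be absorbed using the trace interpolation $\|\widetilde\omega\|_{L^\infty_y}^2\lesssim\|\widetilde\omega\|_{L^2}\|\pa_y\widetilde\omega\|_{L^2}$ together with the $\nu^{3/4}\|\pa_y\omega\|_{L^2L^2}$ bound.

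The main obstacle will be this non-vanishing boundary trace in the vorticity energy identity: it is the source of the $\nu^{1/4}$ loss in the $L^\infty L^2$ estimate on $\omega$ (in contrast with the clean $L^\infty L^2$ bound available in the Navier-slip case), and balancing it against the $L^2L^2$ bounds requires the uniform case split $\nu\eta^3\leq |k|$ versus $\nu\eta^3\geq |k|$ that was already needed in the proof of Proposition \ref{prop:TS-nav}. A secondary complication is tracking the weighted Neumann data $\||k(y-\lambda_0)|^\alpha\pa_y\hat\varphi(\pm 1)\|_{L^2_\lambda}$ through Plancherel, but this is precisely the book-keeping that Proposition \ref{cor5} was designed to streamline.
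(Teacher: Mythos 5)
Your overall architecture (time Fourier/Laplace transform feeding the non-slip resolvent estimates of Section 7, energy identities for the $L^\infty L^2$ pieces, and the case split $\nu\eta^3\lessgtr|k|$) is the paper's, but two essential devices are missing. First, extending by zero to $t<0$ and letting $\omega_{in}$ enter the resolvent equation as "an extra $L^2_y$ datum" does not survive Plancherel: that source is constant in $\lambda$, so after squaring the pointwise resolvent bound and integrating over $\lambda\in\R$ its contribution diverges (and even on the non-decaying range $|\lambda|\lesssim|k|$ one loses a factor $|k|^{1/2}$ that ruins the claimed powers of $\nu$ and $k$). The paper instead extends the solution \emph{backward} in time by the explicit decaying flow $\omega(t)=e^{-ikty+(\nu k^2)^{1/3}t}\omega_{in}$ for $t<0$; this converts the initial data into genuine $L^2_t L^2_y$ forcings $f_1=(\nu\eta^2+(\nu k^2)^{1/3})\omega$, $f_2=-\nu\pa_y\omega$ supported in $t<0$, whose norms carry exactly the factors needed to produce $\|\omega_{in}\|_{L^2}+\eta^{-1}\|\pa_y\omega_{in}\|_{L^2}$. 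This extension also makes the Neumann data nonzero (only for $t<0$), and they are \emph{not} obtained from Proposition \ref{cor5}: they are estimated by showing that $e^{-ikt}a_1(t)$, with $a_1(t)=e^{a\nu^{1/3}t}\langle\omega(t),\sinh(\eta(1+y))/\sinh(2\eta)\rangle$, lies in $H^1(\R)$ with norm $\lesssim|k|^{-1/2}\|\omega_{in}\|_{L^2}$ — and it is precisely here that the hypothesis $\pa_y\varphi_{in}|_{y=\pm1}=0$ is used, to ensure continuity of $a_1$ at $t=0$. Your proposal has no substitute for this step.

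Second, in the vorticity energy identity the boundary flux is $\nu\,\mathrm{Re}\big[(\pa_y\widetilde\omega+e^{a\nu^{1/3}t}f_2)\bar{\widetilde\omega}\big]_{-1}^{1}$, and the trace interpolation $\|\widetilde\omega\|_{L^\infty_y}^2\lesssim\|\widetilde\omega\|_{L^2}\|\pa_y\widetilde\omega\|_{L^2}$ only controls the factor $\bar{\widetilde\omega}(\pm1)$; it says nothing about the derivative trace $\pa_y\widetilde\omega(\pm1)$, which cannot be bounded by $\|\omega\|_{L^2}$ and $\|\pa_y\omega\|_{L^2}$. The paper's resolution is to \emph{compute} $(\nu\pa_y\omega+f_2)(t,\pm1)$ by pairing the equation with $\gamma_{\pm1}(y)=\sinh(\eta(1\pm y))/\sinh(2\eta)$ and exploiting the orthogonality $\langle\omega,\gamma_j\rangle=0$ (a consequence of $\varphi=\pa_y\varphi=0$ on $\partial\Omega$), which converts the derivative trace into interior quantities $|k|\eta^{1/2}\|\varphi\|_{L^2}+\eta^{1/2}\|(f_2,\eta^{-1}f_4)\|_{L^2}+\nu\eta\|\omega\|_{L^\infty}$. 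Without this identity the $\nu^{1/4}\|\omega\|_{L^\infty L^2}$ bound does not close. A smaller but real issue: bounding the transport residual by $|k|\eta^{-1}\|\widetilde\omega\|_{L^2}\|(\pa_y,\eta)\widetilde\varphi\|_{L^2}$ is too crude — after multiplying by $\eta^2$ and inserting the available $L^2L^2$ bounds one is left with a factor $|k|^{1/2}\eta^{-1/2}\nu^{-1/2}$, which is large; you must integrate by parts so that only $ik\int\varphi'\bar\varphi\,dy$ survives, giving the absorbable bound $\f{|k|}{2\eta}\|(\pa_y,\eta)\varphi\|_{L^2}^2$.
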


The proof is based on the following lemmas.

\begin{Lemma}\label{lem:TS-non-s1}
Let $\omega$ solve \eqref{eq:LNS-non} with $\partial_y\varphi_{in}|_{y=\pm1}=0$ and  $F=f_1+\partial_yf_2$.
If $\nu\eta^3\leq |k|$, then we have
  \begin{align*}
     &|k\eta|^{\f12}\|e^{a\nu^{\f13}t}(\partial_y,\eta)\varphi\|_{L^2L^2}+\nu^{\f14}|k|^{\f12}\|e^{a\nu^{\f13}t}\omega\|_{L^2L^2}\\ &\leq C\nu^{-\f12}\big(\|e^{a\nu^{\f13}t}f_2\|_{L^2L^2}+|\nu/k|^{\f13}\|e^{a\nu^{\f13}t}f_1\|_{L^2L^2}\big) +C\big(\eta^{-1}\|\partial_y\omega_{in}\|_{L^2}+\|\omega_{in}\|_{L^2}\big).
  \end{align*}
\end{Lemma}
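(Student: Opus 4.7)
The plan is to mirror the strategy of Proposition~\ref{prop:TS-nav}, but with the non-slip resolvent machinery developed in Section~6 replacing the Navier-slip machinery of Section~5. First I would absorb the exponential factor by setting $\widetilde{\omega}=e^{a\nu^{1/3}t}\omega$, $\widetilde\varphi=e^{a\nu^{1/3}t}\varphi$, $\widetilde f_j = e^{a\nu^{1/3}t}f_j$, reducing the problem to
\[
\partial_t\widetilde\omega -\nu(\partial_y^2-\eta^2)\widetilde\omega + iky\widetilde\omega -a\nu^{1/3}\widetilde\omega = \widetilde f_1 + \partial_y\widetilde f_2,\quad \widetilde\omega(0)=\omega_{in},
\]
with the non-slip boundary conditions on the stream function. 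I would then split $\widetilde\omega = \omega_I + \omega_H$, where $\omega_I$ solves the inhomogeneous problem with zero initial data and $\omega_H$ solves the homogeneous Cauchy problem with datum $\omega_{in}$.

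For $\omega_I$ I would extend it by zero for $t\le 0$ (and by the decaying homogeneous evolution for $t>T$), take the Fourier transform in $t$, and let $w(\lambda,\cdot)=\mathcal F_t[\omega_I](\lambda,\cdot)$. The transformed equation is the resolvent system
\[
-\nu(\partial_y^2-\eta^2)w + ik(y+\lambda/k)w -a\nu^{1/3}w = \hat f_1 + \partial_y\hat f_2,\quad (\partial_y^2-\eta^2)\hat\varphi = w,
\]
with $\hat\varphi(\pm 1)=\partial_y\hat\varphi(\pm 1)=0$. Because the Neumann data vanishes identically, Proposition~\ref{prop:res-NB-y-u} and Proposition~\ref{prop:res-NB-y-w} yield, pointwise in $\lambda$,
\[
\eta^{1/2}\|\hat u\|_{L^2_y}\le C\bigl(\nu^{-1/2}|k|^{-1/2}\|\hat f_2\|_{L^2}+\nu^{-1/6}|k|^{-5/6}\|\hat f_1\|_{L^2}\bigr),
\]
\[
\|w\|_{L^2_y}\le C\bigl((\nu k^2)^{-5/12}\|\hat f_1\|_{L^2}+\nu^{-3/4}|k|^{-1/2}\|\hat f_2\|_{L^2}\bigr).
\]
Squaring, integrating in $\lambda$, and invoking Plancherel produces $L^2_tL^2_y$ bounds for the $\omega_I$-contribution, with the correct prefactors $C\nu^{-1/2}$ on $\widetilde f_2$ and $C\nu^{-1/2}|\nu/k|^{1/3}$ on $\widetilde f_1$.

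For $\omega_H$ I would transfer the initial datum into a forcing term. Pick a smooth cutoff $\phi\in C_c^\infty([0,\infty))$ with $\phi(0)=1$ supported in an interval of length $T_*=(\nu k^2)^{-1/3}$, and set $g(t,y)=\phi(t)\omega_{in}(y)$. Since $\partial_y\varphi_{in}|_{y=\pm 1}=0$ by hypothesis, the non-slip boundary conditions are preserved by $g$, and $\omega_H-g$ has zero initial data and satisfies a forced problem whose right-hand side $-(\partial_t+L)g$ decomposes as an $\tilde f_1^{\mathrm{aux}}$-piece $-\phi'\omega_{in}+\phi(iky-a\nu^{1/3}+\nu\eta^2)\omega_{in}$ and a $\partial_y\tilde f_2^{\mathrm{aux}}$-piece with $\tilde f_2^{\mathrm{aux}}=-\nu\phi\,\partial_y\omega_{in}$. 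Applying the inhomogeneous estimate to $\omega_H-g$ transfers the control to $\|\omega_{in}\|$ and $\|\partial_y\omega_{in}\|$, while $g$ is estimated directly via $\|(\partial_y,\eta)\varphi_g\|_{L^2_t L^2_y}\le\|\phi\|_{L^2}\|(\partial_y,\eta)\varphi_{in}\|_{L^2_y}\le C T_*^{1/2}\eta^{-1}\|\omega_{in}\|$. The hypothesis $\nu\eta^3\le|k|$ (which implies $\nu k^2\le 1$) is what makes the resulting powers of $\nu,k,\eta$ collapse to the claimed $C(\eta^{-1}\|\partial_y\omega_{in}\|+\|\omega_{in}\|)$.

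The main obstacle is precisely this inviscid-damping estimate $|k\eta|^{1/2}\|(\partial_y,\eta)\varphi_H\|_{L^2L^2}\lesssim\eta^{-1}\|\partial_y\omega_{in}\|+\|\omega_{in}\|$. A direct spectral-gap decay at rate $(\nu k^2)^{1/3}$ loses a factor of $(|k|/\nu\eta^3)^{1/6}\ge 1$ under our hypothesis, so the sharp bound is only recovered by exploiting the $\partial_y\tilde f_2^{\mathrm{aux}}=-\nu\phi\,\partial_y\omega_{in}$ piece of the corrected source: its associated inhomogeneous estimate carries a $\nu^{-1/2}$ prefactor that precisely balances the extra factor of $\nu$ coming from the construction of $g$. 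Carefully tracking how the cutoff scale $T_*=(\nu k^2)^{-1/3}$ interacts with $\nu\eta^3\le|k|$ and $|k|\le\eta$, and checking that all the resulting powers of $\nu$, $k$, and $\eta$ combine cleanly, is where essentially all of the bookkeeping effort lies.
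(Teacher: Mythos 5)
Your treatment of $\omega_I$ is sound: extending by zero backward in time preserves the vanishing Neumann data, so after the temporal Fourier transform you may indeed invoke Propositions \ref{prop:res-NB-y-w} and \ref{prop:res-NB-y-u} with no boundary terms, and the exponents combine as you claim. (This is in fact a small simplification over the paper, which does not split off the inhomogeneous part and therefore carries boundary terms throughout.) The gap is in your treatment of $\omega_H$. With $g(t,y)=\phi(t)\omega_{in}(y)$ the residual forcing contains the term $ik y\,\phi(t)\omega_{in}(y)$, which you place in the $f_1$ slot. Its $L^2_tL^2_y$ norm is $\gtrsim |k|\,T_*^{1/2}\|\omega_{in}\|_{L^2}$, and the $f_1$ slot carries the weight $\nu^{-1/2}|\nu/k|^{1/3}=\nu^{-1/6}|k|^{-1/3}$, so with $T_*=(\nu k^2)^{-1/3}$ this contributes $C(|k|/\nu)^{1/3}\|\omega_{in}\|_{L^2}$ — unbounded as $\nu\to0$. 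No choice of $T_*$ repairs this: controlling the $\phi'\omega_{in}$ piece forces $T_*\gtrsim(\nu k^2)^{-1/3}$ while controlling the $iky\phi\omega_{in}$ piece forces $T_*\lesssim\nu^{1/3}|k|^{-4/3}$, and these are incompatible whenever $|k|\ge\nu$. So the "bookkeeping" you defer is precisely where the argument breaks.

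The missing idea is that the transport term must be absorbed into the extension rather than treated as a source. The paper extends $\omega$ to $t<0$ by $\omega(t,y)=e^{-ikty+(\nu k^2)^{1/3}t}\omega_{in}(y)$, so that $\partial_t\omega+iky\omega-(\nu k^2)^{1/3}\omega=0$ there and the residual forcing is only $f_1=(\nu\eta^2+(\nu k^2)^{1/3})\omega$, $f_2=-\nu\partial_y\omega$, which do close under $\nu\eta^3\le|k|$. But the price of the phase $e^{-ikty}$ is that $\langle\gamma_{\pm1},\omega(t,\cdot)\rangle$ no longer vanishes for $t<0$, i.e.\ the Neumann data of the extended stream function is nonzero, so the boundary terms $c_j(\lambda)=\pm\partial_y\hat\varphi(\lambda,\pm1)$ in Propositions \ref{prop:res-NB-y-w}--\ref{prop:res-NB-y-u} reappear and must be estimated. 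This is the heart of the paper's proof: one shows $\|(1+|\lambda\pm k|)c_j(\lambda)\|_{L^2_\lambda}\le C|k|^{-1/2}\|\omega_{in}\|_{L^2}$ via an $H^1_t$ bound on $e^{\mp ikt}a_j(t)$ with $a_j(t)=e^{a\nu^{1/3}t}\langle\gamma_j,\omega(t,\cdot)\rangle$, using crucially that $a_j(0)=0$ (the hypothesis $\partial_y\varphi_{in}|_{y=\pm1}=0$) to get continuity across $t=0$. Your proposal, as written, neither performs this phase-corrected extension nor contains a substitute for the boundary-trace estimate, so the initial-data contribution is not actually controlled.
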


\begin{proof}
We first  extend the solution $\omega$ to $t>T$ by solving \eqref{eq:LNS-non} with $F=0$ for $t>T$, and extend the solution $\omega $ to $t<0$ by
\beno
\omega(t,y)=e^{-itky+(\nu k^2)^{1/3}t}\omega_{in}(y)\quad t<0,
\eeno
i.e., $\partial_t\omega+iky\omega-(\nu k^2)^{1/3}\omega= 0 $ for $t<0$, and extend $\varphi $ to $t<0$ by
\beno
(\partial_y^2-\eta^2)\varphi=\omega,\ \varphi|_{y=\pm1}=0\quad  \text{for}\,\, t<0,
\eeno
and extend $F,f_1,f_2$ to $t<0$ by
\begin{align*}F=-\nu(\partial_y^2-\eta^2)\omega+(\nu k^2)^{1/3}\omega,\ f_1=(\nu\eta^2+ (\nu k^2)^{1/3})\omega,\ f_2=-\nu\partial_y\omega\quad \text{for}\ t<0.\end{align*}
Then it holds that for $t\in \R$, $(\partial_y^2-\eta^2)\varphi=\omega,\ \varphi|_{y=\pm1}=0$ and
\begin{align*}
\partial_t\omega-\nu(\partial_y^2-\eta^2)\omega+iky\omega= F=f_1+\partial_yf_2.
\end{align*}
We denote
\beno
&&\hat{\varphi}(\lambda,y)=\f{1}{2\pi}\int_{\mathbb{R}}\varphi(t,y)e^{a\nu^{1/3}t-i\lambda t}dt,\quad  w(\lambda,y)=\f{1}{2\pi}\int_{\mathbb{R}}\omega(t,y)e^{a\nu^{1/3}t-i\lambda t}dt,\\
&&F_j(\lambda,y)=\f{1}{2\pi}\int_{\mathbb{R}}{f}_j(t,y)e^{a\nu^{1/3}t-i\lambda t}dt\quad j=1,2.
\eeno
Then we have
\begin{align*}
  \left\{\begin{aligned}
            &-\nu(\partial_y^2-\eta^2)w+ik(y+\lambda/k)w-a\nu^{1/3}w
            =F_1+\partial_yF_2,\\
            &(\partial_y^2-\eta^2)\hat{\varphi}=w,\quad \hat{\varphi}|_{y=\pm1}=0.
         \end{aligned}
         \right.
\end{align*}
It follows from Proposition \ref{prop:res-NB-y-w} and  Proposition \ref{prop:res-NB-y-u} that
\begin{align*}
  &\eta^{\f12}\|(\partial_y,\eta)\hat{\varphi}\|_{L^2_y}\leq C\big(\nu^{-\f12}|k|^{-\f12}\|F_2\|_{L_y^2}+\nu^{-\f16}|k|^{-\f56}\|F_1\|_{L_y^2}+|\partial_y\hat{\varphi}(\lambda,1)|+
  |\partial_y\hat{\varphi}(\lambda,-1)|\big),
  \\&\|w\|_{L_y^2}\leq C\nu^{-\f14}\big((|k(1+\lambda/k)|+1)^{\f14}|\partial_y\hat{\varphi}(\lambda,1)|+(|k(1-\lambda/k)|+1)^{\f14}|\partial_y\hat{\varphi}(\lambda,-1)|\big)
  \\&\qquad+C\big((\nu k^2)^{-\f{5}{12}}\|F_1\|_{L_y^2}+\nu^{-\f34} |k|^{-\f{1}{2}}\|F_2\|_{L_y^2}\big).
\end{align*}
Thus, we have
\begin{align*}
  \eta^{\f12}\|(\partial_y,\eta)\hat{\varphi}\|_{L^2_y}+\nu^{\f14}\|w\|_{L_y^2}\leq& C(\nu^{-\f12}|k|^{-\f12}\|F_2\|_{L_y^2}+\nu^{-\f16}|k|^{-\f56}\|F_1\|_{L_y^2})\\
  &+C\big((|k+\lambda|+1)^{\f14}|\partial_y\hat{\varphi}(\lambda,1)|+(|k-\lambda|+1)^{\f14}|\partial_y\hat{\varphi}(\lambda,-1)|\big).
\end{align*}
from which and  Plancherel's theorem, we deduce that
\begin{align*}
  &\eta^{\f12}\|e^{a\nu^{1/3}t}(\partial_y,\eta)\varphi(t,y)\|_{L^2_{t\in\R}L^2_y} +\nu^{\f14}\|e^{a\nu^{1/3}t}{\omega}(t,y)\|_{L^2_{t\in\R}L^2_y}\\
  &\leq \eta^{\f12}\|(\partial_y,\eta)\hat{\varphi}(\lambda,y)\|_{L^2_{\lambda}L^2_y} +\nu^{\f14}\|w(\lambda,y)\|_{L^2_{\lambda}L^2_y} \\
  &\leq C(\nu^{-\f12}|k|^{-\f12}\|F_2\|_{L^2_{\lambda}L_y^2}+\nu^{-\f16}|k|^{-\f56}\|F_1\|_{L^2_{\lambda}L_y^2})\\
  &\quad+C\big(\|(|k+\lambda|+1)^{\f14}\partial_y\hat{\varphi}(\lambda,1)\|_{L^2_{\lambda}}
  +\|(|k-\lambda|+1)^{\f14}\partial_y\hat{\varphi}(\lambda,-1)\|_{L^2_{\lambda}}\big)\\
  &\leq C\big(\nu^{-\f12}|k|^{-\f12}\|e^{a\nu^{1/3}t}f_2\|_{L^2_{t\in\R}L_y^2}+\nu^{-\f16}|k|^{-\f56}\|e^{a\nu^{1/3}t}f_1\|_{L^2_{t\in\R}L_y^2}\big)\\
  &\quad+C\big(\|(|k+\lambda|+1)c_1\|_{L^2(\R)}
  +\|(|k-\lambda|+1)c_2\|_{L^2(\R)}\big),
\end{align*}
here we denote
\begin{align*}
c_1(\lambda)&=\partial_y\hat{\varphi}(\lambda,1)= \int_{-1}^{1}\f{\sinh(\eta(1+y))}{\sinh(2\eta)}w(\lambda,y)dy,\\
c_2(\lambda)&=-\partial_y\hat{\varphi}(\lambda,-1) =\int_{-1}^{1}\f{\sinh(\eta(1-y))}{\sinh(2\eta)}w(\lambda,y)dy.
\end{align*}

Since $f_1=(\nu\eta^2+ (\nu k^2)^{1/3})\omega,\ f_2=-\nu\partial_y\omega$ for $t<0,$ $\nu\eta^3\leq|k|$, $\nu\eta^2\leq (\nu k^2)^{1/3}$, we have
\begin{align*}
  &\nu^{-\f12}|k|^{-\f12}\|e^{a\nu^{1/3}t}f_2\|_{L^2_t(-\infty,0)}+\nu^{-\f16}|k|^{-\f56}\|e^{a\nu^{1/3}t}f_1\|_{L^2_t(-\infty,0)} \\ &\leq \nu^{\f12}|k|^{-\f12}\|\partial_y\omega\|_{L^2_t(-\infty,0)}+C\nu^{\f16}|k|^{-\f16}\|\omega\|_{L^2_t(-\infty,0)}\\
 & \leq  C\nu^{\f12}|k|^{-\f12}\left\|((-ikt,|k/\nu|^{\f13})\omega_{in}(y),\partial_y\omega_{in}(y))e^{-ikty+\nu^{\f13}|k|^{\f23}t}\right\|_{L^2_t(-\infty,0])} \\
 & \leq C\nu^{\f12}|k|^{-\f12}\big(\nu^{-\f12}|\omega_{in}| +\nu^{-\f16}|k|^{-\f13}|\partial_y\omega_{in}|\big)\\
  &\leq C|k|^{-\f12}\big(|\omega_{in}|+ |\nu/k|^{\f13}|\partial_y\omega_{in}|\big)\leq C|k|^{-\f12}\big(|\omega_{in}|+\eta^{-1}|\partial_y\omega_{in}|\big),
\end{align*}
which shows that
\beno
 &\nu^{-\f12}|k|^{-\f12}\|e^{a\nu^{1/3}t}f_2\|_{L^2_{t<0}L_y^2}+\nu^{-\f16}|k|^{-\f56}\|e^{a\nu^{1/3}t}f_1\|_{L^2_{t<0}L_y^2}\leq C|k|^{-\f12}\big(\|\omega_{in}\|_{L^2}+\eta^{-1}\|\partial_y\omega_{in}\|_{L^2}\big).
\eeno
and then,
\begin{align*}
 &\nu^{-\f12}|k|^{-\f12}\|e^{a\nu^{1/3}t}f_2\|_{L^2_{t\in\R}L_y^2}+\nu^{-\f16}|k|^{-\f56}\|e^{a\nu^{1/3}t}f_1\|_{L^2_{t\in\R}L_y^2}
  \leq C|k|^{-\f12}\big(\|\omega_{in}\|_{L^2}+\eta^{-1}\|\partial_y\omega_{in}\|_{L^2}\big)
  \\&\qquad+\nu^{-\f12}|k|^{-\f12}\|e^{a\nu^{1/3}t}f_2\|_{L^2L^2}+\nu^{-\f16}|k|^{-\f56}\|e^{a\nu^{1/3}t}f_1\|_{L^2L^2}.
\end{align*}

It remains to estimate $c_1(\lambda)$ and $c_2(\lambda).$ Let
\begin{align*}
   a_1(t)&=e^{a\nu^{\f13}t}\int_{-1}^{1}\f{\sinh(\eta(1+y))}{\sinh(2\eta)}\omega(t,y)dy,\\
   \widetilde{a}_1(t)&=e^{(a\nu^{\f13}+\nu^{\f13}|k|^{\f23})t}\int_{-1}^{1}\f{\sinh(\eta(1+y))}{\sinh(2\eta)}\omega_{in}(y) e^{ikt(1-y)}dy.\end{align*}
Then we have
\beno
c_1(\lambda)=\f{1}{2\pi}\int_{\mathbb{R}}a_1(t)e^{-i\lambda t}dt,\quad  a_1(t)=e^{ikt}\widetilde{a}_1(t)\,\, \text{for}\,\, t<0,
\eeno
and  due to $\partial_y\varphi|_{y=\pm1}=\varphi|_{y=\pm1}=0$ for $t>0$, we have $a_1(t)=0$ for $t>0$.
By Plancherel's theorem, we get
\begin{align*}
  \|a_1(t)\|_{L^2(-\infty,0)}^2 &\leq  \|e^{-(a\nu^{\f13}+\nu^{\f13}|k|^{\f23})t}\widetilde{a}_1(t)\|_{L^2(\R)}^2 =\f{2\pi}{|k|}\int_{-1}^{1}\left|\f{\sinh(\eta(1+y))}{\sinh(2\eta)}\omega_{in}(y)\right|^2dy\\
  &\leq C|k|^{-1}\|\omega_{in}\|_{L^2}^2.
\end{align*}
Let  $b=(a\nu^{\f13}+\nu^{\f13}|k|^{\f23})\leq 2\nu^{\f13}|k|^{\f23}\leq 2$. For $t\leq 0$, we have
\begin{align*}
   e^{-(a\nu^{\f13}+\nu^{\f13}|k|^{\f23})t}\big(\partial_t\widetilde{a}_1(t)-b\widetilde{a}_{1}(t)\big)&= \int_{-1}^{1}\f{\sinh(\eta(1+y))}{\sinh(2\eta)}\omega_{in}(y) ik(1-y) e^{ikt(1-y)}dy,
\end{align*}
hence,
\begin{align*}
  \|\partial_t\widetilde{a}_1(t)-b\widetilde{a}_1(t)\|^2_{L^2(-\infty,0]}&\leq  \f{2\pi}{|k|}\int_{-1}^{1}\left|\f{\sinh(\eta(1+y))}{\sinh(2\eta)}ik(1-y)\omega_{in}(y)\right|^2dy \\&\leq C|k|\eta^{-2}\|\omega_{in}\|_{L^2}^2\leq C|k|^{-1}\|\omega_{in}\|_{L^2}^2.
\end{align*}
Then we infer that
\begin{align*}
  \|e^{-ikt}a_1(t)\|_{H^1(-\infty,0)}\leq & C\big(\|\partial_t\widetilde{a}_1(t)-b\widetilde{a}_1(t)\|_{L^2(-\infty,0)}+ (1+b)\|a_1(t)\|_{L^2(-\infty,0)}\big)\\
  \leq &C|k|^{-\f12}\|\omega_{in}\|_{L^2}.
\end{align*}
As $\langle \f{\sinh(\eta(1+y))}{\sinh(2\eta)}, \omega_{in}\rangle=0$, we have $\widetilde{a}_1(0)=0$, using also $a_1(t)=e^{ikt}\widetilde{a}_1(t)$ for $t<0$, $a_1(t)=0$ for $t>0$, we know that $a_1$ is continuous at $t=0$,  and that $e^{-ikt}a_1(t)\in H^1(\mathbb{R})$ and then
\begin{align*}
   \|e^{-ikt}a_1(t)\|_{H^1(\mathbb{R})}&=\|e^{-ikt}a_1(t)\|_{H^1(-\infty,0)}\leq C|k|^{-\f12}\|\omega_{in}\|_{L^2}.
\end{align*}
Recalling that $c_1(\lambda)=\f{1}{2\pi}\int_{\mathbb{R}}a_1(t)e^{-i\lambda t}dt$, we have
\begin{align*}
  \|(1+|\lambda+k|)c_1(\lambda)\|_{L^2} &=\|(1+|\lambda|)c_1(\lambda-k)\|_{L^2}= C\|e^{-ikt}a_{1}(t)\|_{H^1(\mathbb{R})}\leq C|k|^{-\f12}\|\omega_{in}\|_{L^2}.
\end{align*}
Similarly, we have
\beno
\|(1+|\lambda-k|)c_2(\lambda)\|_{L^2}\leq C|k|^{-\f12}\|\omega_{in}\|_{L^2}.
\eeno

 Summing up, we conclude that
 \begin{align*}
 &|k\eta|^{\f12}\|e^{a\nu^{\f13}t}(\partial_y,\eta)\varphi\|_{L^2L^2}+\nu^{\f14}|k|^{\f12}\|e^{a\nu^{\f13}t}\omega\|_{L^2L^2}\\
 &\leq |k|^{\f12}\Big(\eta^{\f12}\|e^{a\nu^{1/3}t}(\partial_y,\eta)\varphi(t,y)\|_{L^2_{t\in\R}L^2_y} +\nu^{\f14}\|e^{a\nu^{1/3}t}{\omega}(t,y)\|_{L^2_{t\in\R}L^2_y}\Big)\\ &\le  C\big(\|\omega_{in}\|_{L^2}+\eta^{-1}\|\partial_y\omega_{in}\|_{L^2}\big)
 +C\nu^{-\f12}\|e^{a\nu^{1/3}t}f_2\|_{L^2L^2}+C\nu^{-\f16}|k|^{-\f13}\|e^{a\nu^{1/3}t}f_1\|_{L^2L^2}.
\end{align*}

This completes the proof of the lemma.
\end{proof}

\begin{Lemma}\label{lem:TS-non-s2}
Let $\omega$ solve \eqref{eq:LNS-non} with $\partial_y\varphi_{in}|_{y=\pm1}=0$ and $F=ikf_1+\partial_yf_2+i\ell f_3$. Then it holds that
  \begin{align*}
  &\nu\eta^2
  \|e^{a\nu^{1/3}t}\omega\|^2_{L^2 L^2}+\eta^2\|e^{a\nu^{1/3}t}(\partial_y,\eta)\varphi\|^2_{L^\infty L^2}\\ &\leq \big(|k\eta|+2a\nu^\f13\eta^2\big)\|e^{a\nu^{\f13}t}(\partial_y,\eta)\varphi\|_{L^2L^2}^2+ C\nu^{-1}\|e^{a\nu^{\f13}t}(f_1,f_2,f_3)\|_{L^2L^2}^2 +C\|\omega_{in}\|_{L^2}^2,
 \end{align*}
 and
 \begin{align*}
 &\|e^{a\nu^{\f13}t}\omega\|^2_{L^{\infty}L^2}+{\nu}\|e^{a\nu^{\f13}t}(\partial_y,\eta)\omega\|_{L^2L^2}^2 \leq C\big(\|\omega_{in}\|_{L^2}^2+\nu^{-1}\|e^{a\nu^{\f13}t}(f_1,f_2,f_3)\|_{L^2L^2}^2\big)\\&
\qquad+C\nu^{-\f12}|k\eta|^{\f32}\|e^{a\nu^{\f13}t}\varphi\|_{L^2L^2}^2+C(| k/\eta|+\nu\eta^2)\|e^{a\nu^{\f13}t}\omega\|_{L^2L^{2}}^2.
  \end{align*}
\end{Lemma}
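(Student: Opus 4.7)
\medskip
\noindent\textbf{Proof Plan.} Substituting $\tilde{\omega}=e^{a\nu^{1/3}t}\omega$, $\tilde{\varphi}=e^{a\nu^{1/3}t}\varphi$ and $\tilde{f}_{j}=e^{a\nu^{1/3}t}f_{j}$ gives the weighted equation
\begin{align*}
\partial_{t}\tilde{\omega}-\nu(\partial_{y}^{2}-\eta^{2})\tilde{\omega}+iky\tilde{\omega}-a\nu^{1/3}\tilde{\omega}
=ik\tilde{f}_{1}+\partial_{y}\tilde{f}_{2}+i\ell\tilde{f}_{3},
\end{align*}
with $\tilde{\omega}=(\partial_{y}^{2}-\eta^{2})\tilde{\varphi}$ and $\tilde{\varphi}|_{y=\pm 1}=\partial_{y}\tilde{\varphi}|_{y=\pm 1}=0$. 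Both inequalities are proved by energy testing, but with different multipliers.

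For the first inequality, I will pair the equation with $\eta^{2}\bar{\tilde{\varphi}}$ and take the real part. The time derivative produces $-\tfrac{\eta^{2}}{2}\frac{d}{dt}\|(\partial_{y},\eta)\tilde{\varphi}\|_{L^{2}}^{2}$ via $\langle\partial_{t}\tilde{\omega},\tilde{\varphi}\rangle=-\tfrac{1}{2}\partial_{t}\|(\partial_{y},\eta)\tilde{\varphi}\|^{2}$; the viscous term, after two integrations by parts using the full non-slip boundary conditions, reduces to $-\nu\eta^{2}\|\tilde{\omega}\|^{2}$; the damping contributes $+a\nu^{1/3}\eta^{2}\|(\partial_{y},\eta)\tilde{\varphi}\|^{2}$. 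For the convection, integration by parts gives
\begin{align*}
\mathrm{Re}\bigl(ik\langle y\tilde{\omega},\tilde{\varphi}\rangle\bigr)
=k\,\mathrm{Im}\langle\partial_{y}\tilde{\varphi},\tilde{\varphi}\rangle,
\end{align*}
since the $y|\partial_{y}\tilde{\varphi}|^{2}$ and $y|\tilde{\varphi}|^{2}$ contributions are purely imaginary after multiplication by $ik$. The above is bounded by $|k|\eta^{-1}\|(\partial_{y},\eta)\tilde{\varphi}\|^{2}$, so after multiplication by $\eta^{2}$ it yields the advertised $|k\eta|\|(\partial_{y},\eta)\tilde{\varphi}\|^{2}$ factor. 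The forcing $\langle\eta^{2}\tilde{F},\tilde{\varphi}\rangle$ is handled by integrating $\partial_{y}\tilde{f}_{2}$ by parts (boundary vanishes) and then applying $\eta\|(\partial_{y},\eta)\tilde{\varphi}\|_{L^{2}}\le\|\tilde{\omega}\|_{L^{2}}$, so each piece is bounded by $C\eta\|\tilde{f}\|\|\tilde{\omega}\|$; a Young inequality absorbs $\tfrac{\nu\eta^{2}}{2}\|\tilde{\omega}\|^{2}$ on the left, leaving $C\nu^{-1}\|\tilde{f}\|^{2}$. Integrating in $t$, taking the supremum, and using $\eta\|(\partial_{y},\eta)\tilde{\varphi}_{in}\|_{L^{2}}\le\|\omega_{in}\|_{L^{2}}$ for the initial datum yields the first inequality.

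For the second inequality I will pair the equation with $\bar{\tilde{\omega}}$. The time derivative gives $\tfrac{1}{2}\partial_{t}\|\tilde{\omega}\|^{2}$, the damping gives $-a\nu^{1/3}\|\tilde{\omega}\|^{2}$, and the convection $ik\int y|\tilde{\omega}|^{2}dy$ is purely imaginary and hence drops out of the real part. The viscous term becomes
\begin{align*}
\nu\|(\partial_{y},\eta)\tilde{\omega}\|_{L^{2}}^{2}
-\nu\,\mathrm{Re}\bigl[\partial_{y}\tilde{\omega}\,\bar{\tilde{\omega}}\bigr]_{y=-1}^{y=1},
\end{align*}
and the forcing $\langle\tilde{F},\tilde{\omega}\rangle$ is bounded by $C\nu^{-1}\|\tilde{f}\|^{2}+\tfrac{\nu}{4}\|(\partial_{y},\eta)\tilde{\omega}\|^{2}$ plus contributions $C|k|\|\tilde{f}_{1}\|\|\tilde{\omega}\|$ and $C|\ell|\|\tilde{f}_{3}\|\|\tilde{\omega}\|$ absorbed into $|k/\eta|\|\tilde{\omega}\|^{2}_{L^{2}L^{2}}$ on the right via Cauchy-Schwarz. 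The rub, and the main obstacle of the argument, is the boundary term. I will control it by writing $\tilde{\omega}(\pm 1)=\partial_{y}^{2}\tilde{\varphi}(\pm 1)$ and $\partial_{y}\tilde{\omega}(\pm 1)=\partial_{y}^{3}\tilde{\varphi}(\pm 1)$, then applying the trace--interpolation inequality $|g(\pm 1)|^{2}\le C\|g\|_{L^{2}(I)}\bigl(\|g\|_{L^{2}(I)}+\|g'\|_{L^{2}(I)}\bigr)$ to $g=\partial_{y}^{2}\tilde{\varphi}$ and $g=\partial_{y}^{3}\tilde{\varphi}$. Using the elliptic bounds $\|\partial_{y}^{2}\tilde{\varphi}\|\lesssim\|\tilde{\omega}\|$ and $\|\partial_{y}^{3}\tilde{\varphi}\|\lesssim\|(\partial_{y},\eta)\tilde{\omega}\|$, the boundary term reduces to a product dominated by $\nu\|\tilde{\omega}\|^{1/2}\|(\partial_{y},\eta)\tilde{\omega}\|+\nu^{1/2}\|\tilde{\omega}\|^{1/2}\bigl(\nu\|\partial_{y}^{4}\tilde{\varphi}\|\bigr)^{1/2}\cdots$, where $\nu\partial_{y}^{4}\tilde{\varphi}$ is estimated from the equation itself as $\nu\|\partial_{y}^{4}\tilde{\varphi}\|\lesssim\nu\eta^{2}\|\tilde{\omega}\|+|k|\|\tilde{\omega}\|+\|\tilde{F}\|+\|\partial_{t}\tilde{\omega}\|$; however, the $\partial_{t}\tilde{\omega}$ contribution is circumvented by testing against $\bar{\tilde{\omega}}$ and integrating in time, which lets one trade $\int\|\partial_{t}\tilde{\omega}\|$-type contributions into the other already controlled quantities. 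After Young's inequality, the boundary contribution is absorbed into $\tfrac{\nu}{2}\|(\partial_{y},\eta)\tilde{\omega}\|^{2}_{L^{2}L^{2}}$, leaving residues of the form $C(|k/\eta|+\nu\eta^{2})\|\tilde{\omega}\|^{2}_{L^{2}L^{2}}$ from the lower-order terms in the equation and $C\nu^{-1/2}|k\eta|^{3/2}\|\tilde{\varphi}\|^{2}_{L^{2}L^{2}}$ from the $|k|\|\tilde{\omega}\|$ contribution converted via the Poincar\'e-type bound $\|\tilde{\omega}\|\lesssim\eta^{2}\|\tilde{\varphi}\|+\|\text{higher}\|$ balanced with the $\nu^{1/2}$ from the boundary interpolation.

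The main obstacle is precisely the treatment of the viscous boundary term $\nu\,\mathrm{Re}[\partial_{y}\tilde{\omega}\,\bar{\tilde{\omega}}]_{-1}^{1}$, which does not vanish under the non-slip condition as it would under Navier-slip; all of the non-standard terms on the right-hand side of the second inequality originate from the careful absorption of this contribution via trace--interpolation and the equation-based control of $\partial_{y}^{4}\tilde{\varphi}$.
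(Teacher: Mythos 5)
Your argument for the first inequality is essentially the paper's: test with $\varphi$ (equivalently $\eta^2\bar\varphi$), use the two boundary conditions to kill all boundary terms in the viscous part, observe that only the cross term $k\,\mathbf{Im}\langle\partial_y\varphi,\varphi\rangle$ survives from the transport term, and close with $\eta\|(\partial_y,\eta)\varphi\|_{L^2}\le\|\omega\|_{L^2}$. That part is fine.

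The second inequality is where there is a genuine gap. Your plan for the boundary term $\nu\,\mathbf{Re}\bigl[\partial_y\omega\,\bar\omega\bigr]_{y=-1}^{y=1}$ is to estimate $|\partial_y\omega(\pm1)|$ by trace--interpolation against $\|\partial_y^2\omega\|_{L^2}$ and then recover $\nu\|\partial_y^2\omega\|_{L^2}$ from the equation as $\|\partial_t\omega\|+\nu\eta^2\|\omega\|+|k|\|\omega\|+\|F\|$. This is circular: the basic energy identity controls $\frac{d}{dt}\|\omega\|^2$, not $\|\partial_t\omega\|_{L^2_tL^2_y}$, and $\partial_t\omega$ contains exactly the term $\nu\partial_y^2\omega$ you are trying to bound; "trading $\int\|\partial_t\omega\|$ into already controlled quantities" is not an available step. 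Moreover, because of the $\nu^{1/3}$ boundary layer, any bound of $|\partial_y\omega(\pm1)|$ by interpolation against higher $y$-derivatives of $\omega$ loses negative powers of $\nu$ that cannot be absorbed by $\frac{\nu}{2}\|(\partial_y,\eta)\omega\|^2$. Finally, the claimed "Poincar\'e-type bound $\|\omega\|\lesssim\eta^2\|\varphi\|+\cdots$" goes in the wrong direction (elliptic regularity gives $\eta^2\|\varphi\|\le\|\omega\|$), so your route cannot produce the term $\nu^{-1/2}|k\eta|^{3/2}\|\varphi\|^2_{L^2L^2}$ that actually appears on the right-hand side.

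The missing idea in the paper is an exact representation of the Neumann trace rather than an estimate of it. Setting $\gamma_{\pm1}(y)=\sinh(\eta(1\pm y))/\sinh(2\eta)$, which satisfy $(\partial_y^2-\eta^2)\gamma_{\pm1}=0$, the two boundary conditions $\varphi|_{y=\pm1}=\partial_y\varphi|_{y=\pm1}=0$ give $\langle\omega,\gamma_j\rangle=\langle\varphi,(\partial_y^2-\eta^2)\gamma_j\rangle=0$, hence $\langle\partial_t\omega,\gamma_j\rangle=0$. Pairing the equation with $\gamma_j$ and integrating by parts then yields
\begin{align*}
j(\nu\partial_y\omega+f_2)(t,j)=-\big\langle\varphi,2ik\,\partial_y\gamma_j\big\rangle-\langle f_4,\gamma_j\rangle+\langle f_2,\partial_y\gamma_j\rangle+(\nu\omega\,\partial_y\gamma_j)\big|_{-1}^{1},\qquad f_4=ikf_1+i\ell f_3,
\end{align*}
so that, using $\|\partial_y\gamma_j\|_{L^2}^2+\eta^2\|\gamma_j\|_{L^2}^2\le C\eta$,
\begin{align*}
|(\nu\partial_y\omega+f_2)(t,j)|\le C|k|\eta^{\frac12}\|\varphi\|_{L^2}+C\eta^{\frac12}\|(f_2,\eta^{-1}f_4)\|_{L^2}+C\nu\eta\|\omega\|_{L^\infty}.
\end{align*}
This bounds the boundary flux by $\|\varphi\|_{L^2}$ (two derivatives below $\omega$) instead of by $\|\partial_y^2\omega\|_{L^2}$ (two derivatives above), and combined with $\|\omega\|_{L^\infty}^2\le\|\omega\|_{L^2}\|(\partial_y,\eta)\omega\|_{L^2}$ it produces precisely the terms $C\nu^{-\frac12}|k\eta|^{\frac32}\|\varphi\|^2$ and $C(|k/\eta|+\nu\eta^2)\|\omega\|^2$ in the statement. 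Without this orthogonality device your argument does not close.
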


\begin{proof}
Taking $L^2$ inner product between \eqref{eq:LNS-non} and $\varphi$, we get
\begin{align*}
  &\big\langle(\partial_t-\nu(\partial_y^2-\eta^2)+iky)\omega,-\varphi\big\rangle =\big\langle ikf_1+\partial_yf_2+i\ell f_3 ,-\varphi\big\rangle,
\end{align*}
which gives
\begin{align*}
&\big\langle\partial_t(\partial_y,\eta)\varphi,(\partial_y,\eta)\varphi\big\rangle+\nu\|\omega\|_{L^2}^2 +ik\int_{-1}^{1}{\varphi}'\overline{\varphi} dy+ik\int_{-1}^{1}y|\varphi'|^2dy+ ik\eta^2\int_{-1}^{1}y|\varphi|^2dy\\
&=\big\langle ikf_1+\partial_yf_2+i\ell f_3,-\varphi\big\rangle=-\big\langle ikf_1+i\ell f_3,\varphi\big\rangle+\big\langle f_2,\partial_y\varphi\big\rangle.
\end{align*}
Taking the real part, we get
\begin{align*}
  &\f12\f{d}{dt}\|(\partial_y,\eta)\varphi\|^2_{L^2}+\nu\|\omega\|^2_{L^2}\\
&\leq |k|\int_{-1}^{1}|\varphi'{\varphi}|dy+\f{1}{\nu \eta^2}\|(f_1,f_3)\|_{L^2}^2+\f{\nu\eta^4}4\|\varphi\|_{L^2}^2+\f{1}{\nu \eta^2}\|f_2\|_{L^2}^2
+\f{\nu\eta^2}{4} \|\varphi'\|_{L^2}^2\\
  &\leq \f{|k|}{2\eta}\big(\|\varphi'\|_{L^2}^2+\eta^2\|\varphi\|_{L^2}^2\big)+\f{\nu\eta^2}{4}(\|\varphi'\|_{L^2}^2+\eta^2\|\varphi\|_{L^2}^2)
  +\f{1}{\nu \eta^2}\|(f_1,f_2,f_3)\|_{L^2}^2\\
&\leq\f{|k|}{2\eta}\|(\partial_y,\eta)\varphi\|^2_{L^2}+\f{\nu}{4}\|\omega\|_{L^2}^2+\f{1}{\nu \eta^2}\|(f_1,f_2,f_3)\|_{L^2}^2.
\end{align*}
This shows that
\begin{align}\label{dtphi}
&\f{d}{dt}\|e^{a\nu^{1/3}t}(\partial_y,\eta)\varphi\|^2_{L^2}+\f{3\nu}{2}\|e^{a\nu^{1/3}t}\omega\|^2_{L^2}\\ \nonumber &\leq
\big(|k/\eta|+2a\nu^\f13\big)\|e^{a\nu^{1/3}t}(\partial_y,\eta)\varphi\|^2_{L^2}+2/(\nu \eta^2)\|e^{a\nu^{1/3}t}(f_1,f_2,f_3)\|_{L^2}^2,
\end{align}
which gives
\begin{align*}
  &\|e^{a\nu^{1/3}t}(\partial_y,\eta)\varphi(t)\|^2_{L^2}+\nu\int_{0}^{t}\|e^{a\nu^{1/3}s}\omega(s)\|_{L^2}^2ds\\ &\leq
\big(|k/\eta|+2a\nu^\f13)\|e^{a\nu^{1/3}t}(\partial_y,\eta)\varphi\|^2_{L^2L^2}+2/(\nu \eta^2)\|e^{a\nu^{1/3}t}(f_1,f_2,f_3)\|_{L^2L^2}^2+\|(\partial_y,\eta)\varphi(0)\|_{L^2}^2.
\end{align*}
This gives the first inequality by noting that   $\eta^2\|(\partial_y,\eta)\varphi(0)\|_{L^2}^2\leq\|\omega(0)\|^2_{L^2}=\|\omega_{in}\|_{L^2}^2$.\smallskip

Taking $L^2$ inner product between \eqref{eq:LNS-non} and $\omega$, we get
\begin{align*}
  &\big\langle(\partial_t-\nu(\partial_y^2-\eta^2)+iky)\omega,\omega\big\rangle =\big\langle ikf_1+\partial_yf_2+i\ell f_3 ,\omega\big\rangle,
\end{align*}
which gives
\begin{align*}
& \big\langle\partial_t\omega,\omega\big\rangle+\nu\|(\partial_y,\eta)\omega\|_{L^2}^2 +ik\int_{-1}^{1}y|\omega|^2 dy=
\big\langle f_4,\omega\big\rangle-\big\langle f_2,\partial_y\omega\big\rangle+(\nu\partial_y\omega+f_2)\bar{\omega}|_{y=-1}^{y=1},
\end{align*}
here  $f_4=ikf_1+i\ell f_3$.
Taking the real part, we get
\begin{align*}
& \f12\f{d}{dt}\|\omega\|^2_{L^2}+\nu\|(\partial_y,\eta)\omega\|_{L^2}^2 \leq\|(f_2,\eta^{-1}f_4)\|_{L^2}\|(\partial_y,\eta)\omega\|_{L^2}
+\|\nu\partial_y\omega+f_2\|_{l^{1}(\{\pm1\})}\|\omega\|_{L^{\infty}}.
\end{align*}Here $\|f\|_{l^{1}(\{\pm1\})}:=|f(t,1)|+|f(t,-1)|$. Thus,
\begin{align*}
& \f{d}{dt}\|\omega\|^2_{L^2}+\nu\|(\partial_y,\eta)\omega\|_{L^2}^2 \leq \nu^{-1}\|(f_2,\eta^{-1}f_4)\|_{L^2}^2
+2\|\nu\partial_y\omega+f_2\|_{l^{1}(\{\pm1\})}\|\omega\|_{L^{\infty}}.
\end{align*}Let
\begin{align}\label{gamma12-def}
\gamma_1(y)=\dfrac{\sinh(\eta (y+1))}{\sinh(2\eta) },\quad \gamma_{-1}(y)=\dfrac{\sinh(\eta(1- y))}{\sinh(2\eta) }.
\end{align}
Since $(\partial_y^2-\eta^2)\varphi=\omega,\ \partial_y\varphi|_{y=\pm1}=\varphi|_{y=\pm1}=0,$ we find that
\begin{align*}
&\langle \omega,\gamma_j\rangle=\langle (\partial^2_y-\eta^2)\varphi,\gamma_j\rangle=\langle \varphi,(\partial^2_y-\eta^2)\gamma_j\rangle=0,\quad j\in\{\pm1\},
\end{align*} and $ \langle \partial_t\omega,\gamma_j\rangle=0$ for $j\in\{\pm1\} $, which implies  that
\begin{align*}
  0&=\langle \partial_t\omega,\gamma_j\rangle=\big\langle\nu(\partial_y^2-\eta^2)\omega-iky\omega+ikf_1+\partial_yf_2+ilf_3,\gamma_j\big\rangle\\
  &=\big\langle\nu\omega,(\partial_y^2-\eta^2)\gamma_j\big\rangle+\big\langle\omega,iky\gamma_j\big\rangle+\langle f_4,\gamma_j\big\rangle-\big\langle f_2,\partial_y\gamma_j\big\rangle+((\nu\partial_y\omega+f_2)\gamma_j-\nu\omega\partial_y\gamma_j)|_{-1}^{1}\\
  &=0+\big\langle(\partial_y^2-\eta^2)\varphi,iky\gamma_j\big\rangle+\langle f_4,\gamma_j\big\rangle-\big\langle f_2,\partial_y\gamma_j\big\rangle+j(\nu\partial_y\omega+f_2)(t,j)-(\nu\omega\partial_y\gamma_j)|_{-1}^{1},
\end{align*}
and we also have
\begin{align*}
  &\big\langle(\partial_y^2-\eta^2)\varphi,iky\gamma_j\big\rangle=\big\langle\varphi,(\partial_y^2-\eta^2)(iky\gamma_j)\big\rangle
  =\big\langle\varphi,2ik\partial_y\gamma_j\big\rangle.
\end{align*}
Thanks to $|\gamma_j'(-j)|=|\gamma_j'(j)|= \eta\coth(2\eta)\leq C\eta$ for $j\in\{\pm 1\},$ we get
\begin{align*}
&\|\gamma'_j\|_{L^2}^2
+\eta^2\|\gamma_j\|_{L^2}^2=-\langle \gamma_j,(\partial^2_y-\eta^2)\gamma_j\rangle+\gamma_j'\gamma_j|_{-1}^1=|\gamma_j'\gamma_j(j)|=|\gamma_j'(j)| \leq C\eta.
\end{align*}
Thus, we obtain
\begin{align*}
  &|(\nu\partial_y\omega+f_2)(t,j)|=\left|\big\langle\varphi,2ik\partial_y\gamma_j\big\rangle+\langle f_4,\gamma_j\big\rangle-\big\langle f_2,\partial_y\gamma_j\big\rangle-(\nu\omega\partial_y\gamma_j)|_{-1}^{1}\right|\\ &\leq 2|k|\|\varphi\|_{L^2}\|\gamma'_j\|_{L^2}+\|(f_2,\eta^{-1}f_4)\|_{L^2}\|(\partial_y,\eta)\gamma_j\|_{L^2}+\nu\|\gamma'_j\|_{l^{1}(\{\pm1\})}\|\omega\|_{L^{\infty}}
\\ &\leq C|k|\eta^{\f12}\|\varphi\|_{L^2}+C\eta^{\f12}\|(f_2,\eta^{-1}f_4)\|_{L^2}+C\nu\eta\|\omega\|_{L^{\infty}},
\end{align*}
and then
\begin{align*}
& \f{d}{dt}\|\omega\|^2_{L^2}+\nu\|(\partial_y,\eta)\omega\|_{L^2}^2 \leq \nu^{-1}\|(f_2,\eta^{-1}f_4)\|_{L^2}^2
+2\|\nu\partial_y\omega+f_2\|_{l^{1}(\{\pm1\})}\|\omega\|_{L^{\infty}}\\ &\leq\nu^{-1}\|(f_2,\eta^{-1}f_4)\|_{L^2}^2
+C\big(|k|\eta^{\f12}\|\varphi\|_{L^2}+\eta^{\f12}\|(f_2,\eta^{-1}f_4)\|_{L^2}+\nu\eta\|\omega\|_{L^{\infty}}\big)\|\omega\|_{L^{\infty}}\\ &\leq C\big(\nu^{-1}\|(f_2,\eta^{-1}f_4)\|_{L^2}^2
+\nu^{-\f12}|k\eta|^{\f32}\|\varphi\|_{L^2}^2+(|\nu k/\eta|^{\f12}+\nu\eta)\|\omega\|_{L^{\infty}}^2\big)\\ &\leq C\big(\nu^{-1}\|(f_1,f_2,f_3)\|_{L^2}^2
+\nu^{-\f12}|k\eta|^{\f32}\|\varphi\|_{L^2}^2+(|\nu k/\eta|^{\f12}+\nu\eta)\|\omega\|_{L^{2}}\|(\partial_y,\eta)\omega\|_{L^2}\big),
\end{align*}
and
\begin{align*}
& \f{d}{dt}\|e^{a\nu^{\f13}t}\omega\|^2_{L^2}+\f{\nu}{2}\|e^{a\nu^{\f13}t}(\partial_y,\eta)\omega\|_{L^2}^2 \\ &\leq C\Big(\nu^{-1}\|e^{a\nu^{\f13}t}(f_1,f_2,f_3)\|_{L^2}^2
+\nu^{-\f12}|k\eta|^{\f32}\|e^{a\nu^{\f13}t}\varphi\|_{L^2}^2+(| k/\eta|+\nu\eta^2)\|e^{a\nu^{\f13}t}\omega\|_{L^{2}}^2\Big),
\end{align*}
here we used the fact that $\nu^{\f13}\leq| 1/\eta|+\nu\eta^2\leq| k/\eta|+\nu\eta^2 .$ This shows that
\begin{align*}
& \|e^{a\nu^{\f13}t}\omega\|^2_{L^{\infty}L^2}+{\nu}\|e^{a\nu^{\f13}t}(\partial_y,\eta)\omega\|_{L^2L^2}^2 \leq C\big(\|\omega_{in}\|_{L^2}^2+\nu^{-1}\|e^{a\nu^{\f13}t}(f_1,f_2,f_3)\|_{L^2L^2}^2\big)\\&
\quad+C\nu^{-\f12}|k\eta|^{\f32}\|e^{a\nu^{\f13}t}\varphi\|_{L^2L^2}^2+C(| k/\eta|+\nu\eta^2)\|e^{a\nu^{\f13}t}\omega\|_{L^2L^{2}}^2.
\end{align*}
\end{proof}

Now let us  prove Proposition \ref{prop:TS-non}.

\begin{proof}\def\RHS{[RHS]}
We denote
\begin{align*}
   \RHS=\nu^{-\f12}\|e^{a\nu^{\f13}t}(f_1,f_2,f_3)\|_{L^2L^2} +\eta^{-1}\|\partial_y\omega_{in}\|_{L^2}+\|\omega_{in}\|_{L^2}.
  \end{align*}
We first consider the case of $\nu\eta^3\leq |k|$.  By Lemma  \ref{lem:TS-non-s1}, we have
\begin{align*}
     &|k\eta|^{\f12}\|e^{a\nu^{\f13}t}(\partial_y,\eta)\varphi\|_{L^2L^2}+\nu^{\f14}|k|^{\f12}\|e^{a\nu^{\f13}t}\omega\|_{L^2L^2}\\ &\leq C\nu^{-\f12}\Big(\|e^{a\nu^{\f13}t}f_2\|_{L^2L^2}+|\nu/k|^{\f13}\|e^{a\nu^{\f13}t}(ikf_1+i\ell f_3)\|_{L^2L^2}\Big) +C\big(\eta^{-1}\|\partial_y\omega_{in}\|_{L^2}+\|\omega_{in}\|_{L^2}\big)\\ &\leq C\nu^{-\f12}\|e^{a\nu^{\f13}t}(f_1,f_2,f_3)\|_{L^2L^2} +C\big(\eta^{-1}\|\partial_y\omega_{in}\|_{L^2}+\|\omega_{in}\|_{L^2}\big)=C\RHS,
  \end{align*}
  which along with Lemma  \ref{lem:TS-non-s2} and $\nu^{\f13}\eta\leq|k| $ implies that
  \begin{align}\label{CA2}
&\nu\eta^2
  \|e^{a\nu^{\f13}t}\omega\|^2_{L^2 L^2}+\eta^2\|e^{a\nu^{\f13}t}(\partial_y,\eta)\varphi\|^2_{L^\infty L^2}+|k\eta|\|e^{a\nu^{\f13}t}(\partial_y,\eta)\varphi\|_{L^2L^2}^2\leq C\RHS^2.
  \end{align}
 Thanks to $1\leq |k|\leq\eta,\ \nu\eta^3\leq |k|,\ |k\eta|^{\f32}\leq\eta^3,\ \nu\eta^2\leq| k/\eta|\leq1,$ we get by Lemma  \ref{lem:TS-non-s2} that
 \begin{align*}
 &\|e^{a\nu^{\f13}t}\omega\|^2_{L^{\infty}L^2}+{\nu}\|e^{a\nu^{\f13}t}\partial_y\omega\|_{L^2L^2}^2 \leq C\big(\|\omega_{in}\|_{L^2}^2+\nu^{-1}\|e^{a\nu^{\f13}t}(f_1,f_2,f_3)\|_{L^2L^2}^2\big)\\&
\qquad+C\nu^{-\f12}|k\eta|^{\f32}\|e^{a\nu^{\f13}t}\varphi\|_{L^2L^2}^2+C(| k/\eta|+\nu\eta^2)\|e^{a\nu^{\f13}t}\omega\|_{L^2L^{2}}^2\\ &\leq C\RHS^2
+C\nu^{-\f12}\eta^3\|e^{a\nu^{\f13}t}\varphi\|_{L^2L^2}^2+C\|e^{a\nu^{\f13}t}\omega\|_{L^2L^{2}}^2\\ &\leq C\RHS^2+C\nu^{-\f12}|k|^{-1}\RHS^2\leq C\nu^{-\f12}\RHS^2,
\end{align*}
which along with \eqref{CA2} gives our result when $\nu\eta^3\leq |k|$.\smallskip

Next we consider the case of  $\nu\eta^3\geq |k|$. In this case, we have (for $0\leq a\leq 1/8$)
\begin{align*}
\big(|k/\eta|+2a\nu^\f13\big)\|e^{a\nu^{1/3}t}(\partial_y,\eta)\varphi\|^2_{L^2}\leq \big(|k/\eta^3|+2a\nu^\f13\eta^{-2}\big)|\|e^{a\nu^{1/3}t}\omega\|^2_{L^2}
\leq \f {5\nu} 4\|e^{a\nu^{1/3}t}\omega\|^2_{L^2},
\end{align*}
from which and \eqref{dtphi}, we infer that
\begin{align*}
  &\f{d}{dt}\|e^{a\nu^{1/3}t}(\partial_y,\eta)\varphi\|^2_{L^2}+\f{\nu}4\|e^{a\nu^{1/3}t}\omega\|^2_{L^2}\leq2/(\nu \eta^2)\|e^{a\nu^{1/3}t}(f_1,f_2,f_3)\|_{L^2}^2,
\end{align*}
and then
\begin{align*}
{\nu}\eta^2\|e^{a\nu^{1/3}t}\omega\|^2_{L^2L^2}\leq C\nu^{-1}\|e^{a\nu^{\f13}t}(f_1,f_2,f_3)\|_{L^2L^2} +C\|\omega_{in}\|_{L^2}^2.
\end{align*}
This in turn gives
\begin{align*}
\big(|k\eta|+\nu^\f13\eta^2\big)\|e^{a\nu^{1/3}t}(\partial_y,\eta)\varphi\|^2_{L^2L^2}
&\le C{\nu}\eta^2\|e^{a\nu^{1/3}t}\omega\|^2_{L^2L^2}\\
&\leq C\nu^{-1}\|e^{a\nu^{\f13}t}(f_1,f_2,f_3)\|_{L^2L^2}^2 +C\|\omega_{in}\|_{L^2}^2,
\end{align*}
which along with Lemma  \ref{lem:TS-non-s2}  gives \eqref{CA2}.
Due to $\nu\eta^3\geq |k|$, we have $| k/\eta|\leq\nu\eta^2$ and $\nu^{-\f12}|k\eta|^{\f32}\leq |k\eta^3|.$  Then we get by Lemma  \ref{lem:TS-non-s2}  that
\begin{align*}
&\|e^{a\nu^{\f13}t}\omega\|^2_{L^{\infty}L^2}+{\nu}\|e^{a\nu^{\f13}t}\partial_y\omega\|_{L^2L^2}^2 \leq C(\|\omega_{in}\|_{L^2}^2+\nu^{-1}\|e^{a\nu^{\f13}t}(f_1,f_2,f_3)\|_{L^2L^2}^2)\\&
\qquad+C\nu^{-\f12}|k\eta|^{\f32}\|e^{a\nu^{\f13}t}\varphi\|_{L^2L^2}^2+C(| k/\eta|+\nu\eta^2)\|e^{a\nu^{\f13}t}\omega\|_{L^2L^{2}}^2\\ &\leq C\RHS^2
+C|k\eta^3|\|e^{a\nu^{\f13}t}\varphi\|_{L^2L^2}^2+C\nu\eta^2\|e^{a\nu^{\f13}t}\omega\|_{L^2L^{2}}^2\leq C\RHS^2,
\end{align*}
which along with \eqref{CA2} gives our result when $\nu\eta^3\ge |k|$.
\end{proof}

\section{Nonlinear interactions}

\subsection{Anisotropic bilinear estimates}

\begin{Lemma}\label{lemma-A.4}
For  $\{j,k\}=\{1,3\}$, it holds that
\begin{align}
\label{f1}&\|f_1f_2\|_{L^2}\leq C\big(\|\partial_kf_1\|_{H^1}+\|f_1\|_{H^1}\big)\big(\|\partial_jf_2\|_{L^2}+\|f_2\|_{L^2}\big),\\
\label{f2}&\|f_1f_2\|_{L^2}+\|\partial_j(f_1f_2)\|_{L^2}\leq C\|(\partial_x\partial_zf_1,\partial_xf_1,\partial_zf_1,f_1)\|_{H^1}\|(\partial_jf_2,f_2)\|_{L^2},\\
\label{f3}&\|f_1f_2\|_{L^2}+\|\partial_j(f_1f_2)\|_{L^2}\leq C\|(\partial_jf_1,f_1)\|_{H^1}\|(\partial_x\partial_zf_2,\partial_xf_2,\partial_zf_2,f_2)\|_{L^2},\\
\label{f4}&\|\nabla(f_1f_2)\|_{L^2}\leq C\|(\partial_x\partial_zf_1,\partial_xf_1,\partial_zf_1,f_1)\|_{H^1}\|f_2\|_{H^1},\\ \label{f5}&\|\nabla(f_1f_2)\|_{L^2}\leq C\big(\|\partial_kf_1\|_{H^1}+\|f_1\|_{H^1})(\|\partial_jf_2\|_{H^1}+\|f_2\|_{H^1}\big),
\end{align}
and
\begin{align}
 \label{f6}\|\nabla(f_1f_2)\|_{L^2}\leq C\big(&\|(\partial_kf_1,f_1)\|_{H^2}\|(\partial_jf_2,f_2)\|_{L^2}\\
 &+
\|(\partial_x\partial_zf_1,\partial_zf_1,\partial_zf_1,f_1)\|_{L^2}\|f_2\|_{H^2}\big).\nonumber
\end{align}
\end{Lemma}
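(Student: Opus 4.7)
The six estimates are purely functional-analytic anisotropic bilinear bounds on the channel $\Omega = \T\times I\times \T$ with coordinates $(x,y,z)$. All of them follow from systematic use of three ingredients: (i) H\"older's inequality applied separately in each of the three coordinate directions; (ii) the one-dimensional Sobolev embedding $H^1 \hookrightarrow L^\infty$ in each of $x$, $y$, $z$, together with the Gagliardo--Nirenberg refinement $\|g\|_{L^\infty_y}^2 \leq C\|g\|_{L^2_y}(\|g\|_{L^2_y} + \|\partial_y g\|_{L^2_y})$ on the bounded interval $I$; and (iii) the Leibniz rule, which reduces \eqref{f2}--\eqref{f6} to \eqref{f1}-type bounds. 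The derivative budget is tight and asymmetric: $f_1$ always carries a ``good'' derivative $\partial_k$ along a torus direction ($k\in\{1,3\}$) and $H^1$ globally, while $f_2$ is only allowed one torus derivative $\partial_j$ and an $L^2$ norm. The strategy is therefore to place $f_1$ into an anisotropic norm that is $L^\infty$ in the two coordinates $(x_k,y)$ and $L^2$ in $x_j$, and $f_2$ into the complementary norm that is $L^\infty$ in $x_j$ and $L^2$ in $(x_k,y)$.

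\textbf{Proof of \eqref{f1} (template for all others).} Take WLOG $(j,k)=(1,3)$. H\"older in $x$ gives
\begin{align*}
\|f_1 f_2\|_{L^2}^2 \leq \int \|f_1(\cdot,y,z)\|_{L^2_x}^2 \, \|f_2(\cdot,y,z)\|_{L^\infty_x}^2\, dy\,dz.
\end{align*}
The 1D Sobolev bound $\|g\|_{L^\infty(\T)}^2 \leq C(\|g\|_{L^2}^2 + \|\partial_x g\|_{L^2}^2)$, applied pointwise in $(y,z)$ and integrated, controls the $f_2$ factor by $\|\partial_x f_2\|_{L^2}^2 + \|f_2\|_{L^2}^2$. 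For $f_1$, set $G(y) := \|f_1(\cdot,y,\cdot)\|_{L^2_{x,z}}^2 + \|\partial_z f_1(\cdot,y,\cdot)\|_{L^2_{x,z}}^2$; applying 1D Sobolev in $z$ pointwise in $(x,y)$ yields $\sup_z \|f_1(\cdot,y,z)\|_{L^2_x}^2 \leq CG(y)$, while the estimate $\int|G'(y)|\,dy \leq C(\|f_1\|_{H^1}^2 + \|\partial_z f_1\|_{H^1}^2)$ (obtained by Cauchy--Schwarz applied to $G'(y) = 2\int(f_1\partial_y f_1 + \partial_z f_1\,\partial_y\partial_z f_1)\,dxdz$) gives $\sup_y G(y) \leq C(\|f_1\|_{H^1}^2 + \|\partial_z f_1\|_{H^1}^2)$. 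Assembling the three bounds produces \eqref{f1}.

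\textbf{Reduction of the other estimates.} For \eqref{f2} and \eqref{f3}, use $\partial_j(f_1f_2) = (\partial_jf_1)f_2 + f_1(\partial_jf_2)$. In \eqref{f2} the first term is bounded by \eqref{f1} with $f_1$ replaced by $\partial_jf_1$ (which requires the one extra torus derivative of $f_1$, accounting for the $\partial_x\partial_zf_1$ term on the RHS), and the second by \eqref{f1} with $f_2$ replaced by $\partial_jf_2$. Estimate \eqref{f3} is the mirror image with the roles of $f_1,f_2$ interchanged. Estimates \eqref{f4}, \eqref{f5}, \eqref{f6} then follow from $\nabla(f_1f_2) = (\nabla f_1)f_2 + f_1(\nabla f_2)$ by applying the appropriate combination of \eqref{f1}--\eqref{f3} to each piece. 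The two summands on the RHS of \eqref{f6} correspond precisely to the two Leibniz terms: when $\nabla$ lands on $f_1$, the product $\nabla f_1\cdot f_2$ is bounded by \eqref{f2} (hence the $H^2$ on $(\partial_kf_1,f_1)$); when $\nabla$ lands on $f_2$, the product $f_1\nabla f_2$ is bounded by \eqref{f4} (hence the $H^2$ on $f_2$ and the anisotropic $L^2$ norm on $f_1$).

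\textbf{Main obstacle.} No single step is difficult, but keeping the derivative budget tight is the only real issue. The sharp form of \eqref{f6} is the most delicate: one must choose \emph{different} anisotropic decompositions for the two Leibniz summands so that exactly the norms stated on the RHS suffice. One summand uses 1D Sobolev in $y$ on $f_2$ (available because $f_2\in H^2$), while the other uses 1D Sobolev in the two torus directions on $f_1$ (available because $(\partial_x\partial_zf_1,\partial_xf_1,\partial_zf_1,f_1)\in L^2$). Matching these two budgets cleanly, without collapsing the RHS into an unwanted product $\|\cdot\|_{H^2}\cdot\|\cdot\|_{H^2}$, is the only step requiring genuine thought; all remaining calculations are routine applications of H\"older and 1D Sobolev.
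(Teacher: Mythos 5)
Your proof of \eqref{f1} is correct and is essentially the paper's argument: anisotropic H\"older plus one-dimensional Sobolev embeddings, placing $f_1$ in $L^\infty_{y,x_k}L^2_{x_j}$ and $f_2$ in $L^\infty_{x_j}L^2_{y,x_k}$ (the paper iterates the directions in a different order, but this is immaterial). The gap is in the claim that \eqref{f2}--\eqref{f6} all reduce to \eqref{f1} via Leibniz. Take \eqref{f2} with the Leibniz term $f_1\,\partial_jf_2$: applying \eqref{f1} ``with $f_2$ replaced by $\partial_jf_2$'', as you propose, produces $\|\partial_j^2f_2\|_{L^2}$ on the right, which is not among the norms $\|(\partial_jf_2,f_2)\|_{L^2}$ allowed in \eqref{f2}; applying \eqref{f1} in the swapped orientation instead produces $\|\partial_k\partial_jf_2\|_{L^2}=\|\partial_x\partial_zf_2\|_{L^2}$, also not allowed. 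The same failure occurs for the term $f_1\,\partial_yf_2$ in \eqref{f4} (any use of \eqref{f1} forces a torus derivative onto $\partial_yf_2$, but the RHS only carries $\|f_2\|_{H^1}$), and your attribution of the first summand of \eqref{f6} to \eqref{f2} would require $\partial_x\partial_z\nabla f_1\in H^1$, far more than $\|(\partial_kf_1,f_1)\|_{H^2}$.

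The missing ingredient — which is precisely what the paper proves as an intermediate step — is the two-dimensional Sobolev bound in the torus directions, $\|f\|_{L^\infty_{x,z}L^2_y}\leq C\|(\partial_x\partial_zf,\partial_xf,\partial_zf,f)\|_{L^2}$, and the two bilinear estimates it yields: $\|fg\|_{L^2}\leq C\|(\partial_x\partial_zf,\partial_xf,\partial_zf,f)\|_{H^1}\|g\|_{L^2}$ and $\|fg\|_{L^2}\leq C\|f\|_{H^1}\|(\partial_x\partial_zg,\partial_xg,\partial_zg,g)\|_{L^2}$. These place one factor in $L^\infty$ of \emph{both} torus variables so that \emph{no} derivative is needed on the other factor; that is exactly what the terms $f_1\partial_jf_2$ in \eqref{f2}, $f_1\nabla f_2$ in \eqref{f4}, $(\partial_jf_1)f_2$ in \eqref{f3}, and $f_1\nabla f_2$ in \eqref{f6} require, and it is why $\partial_x\partial_zf_1$ (resp.\ $\partial_x\partial_zf_2$) appears on those right-hand sides at all. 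Your ``main obstacle'' paragraph gestures at this for \eqref{f6}, but the estimate is never stated or proved, and the explicit reductions you give for \eqref{f2} and \eqref{f4} are wrong as written. (Your treatment of \eqref{f5}, which genuinely does follow from \eqref{f1} applied in both orientations, is fine.)
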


\begin{proof}
By H\"older inequality and Sobolev embedding, we get
\begin{align*}
\|f_1f_2\|_{L^2}
\leq& \Big\| \|f_1\|_{L^{\infty}_y}\|f_2\|_{L^2_y}\Big\|_{L^2_{x,z}}
\leq C\Big\|\big(\|\partial_yf_1\|_{L^2_y}+\|f_1\|_{L^2_y}\big)\|f_2\|_{L^2_y}\Big\|_{L^2_{x,z}}\\
\leq &C\Big\|\big(\|\partial_yf_1\|_{L^{\infty}_zL^2_y}+\|f_1\|_{L^{\infty}_zL^2_y}\big)\|f_2\|_{L^2_{z,y}}\Big\|_{L^2_{x}}\\
\leq &C\Big\|\big(\|(\partial_z\partial_yf_1,\partial_yf_1)\|_{L^{2}_zL^2_y}+\|(\partial_zf_1,f_1)\|_{L^{2}_zL^2_y}\big)
\|f_2\|_{L^2_{z,y}}\Big\|_{L^2_{x}}\\
\leq &C\big\|(\partial_z\partial_yf_1,\partial_zf_1,\partial_yf_1,f_1)\big\|_{L^{2}}\|f_2\|_{L^{\infty}_xL^2_{z,y}}\\
\leq &C\big(\|\partial_zf_1\|_{H^1}+\|f_1\|_{H^1}\big)\big(\|\partial_xf_2\|_{L^2}+\|f_2\|_{L^2}\big).
\end{align*}
This proves the \eqref{f1} for the case of $(j,k)=(1,3)$, and the case of $(j,k)=(3,1)$ is similar.\smallskip

Using the fact that
\begin{align*}
&\|f\|_{L^{\infty}_{x,z}L^2_y}=\Big\| \|f\|_{L^{\infty}_xL^2_y}\Big\|_{L^{\infty}_{z}}
\leq C\Big\| \|(\partial_xf,f)\|_{L^{2}_xL^2_y}\Big\|_{L^{\infty}_{z}}\leq C\big\|(\partial_x\partial_zf,\partial_xf,\partial_zf,f)\big\|_{L^2},
\end{align*}
we infer that
\begin{align*}
\|(\partial_yf,f)\|_{L^{\infty}_{x,z}L^2_y}\leq C\big\|(\partial_x\partial_zf,\partial_xf,\partial_zf,f)\big\|_{H^1},
\end{align*}
which gives
\begin{align}\label{f7}
\|f_1f_2\|_{L^2}
\leq& C\Big\| \|(\partial_yf_1,f_1)\|_{L^2_y}\|f_2\|_{L^2_y}\Big\|_{L^2_{x,z}}\leq C \|(\partial_yf_1,f_1)\|_{L^{\infty}_{x,z}L^2_y}\|f_2\|_{L^2}\\
\nonumber
\leq &C\big\|(\partial_x\partial_zf_1,\partial_xf_1,\partial_zf_1,f_1)\big\|_{H^1}\|f_2\|_{L^2},
\label{f8}
\end{align}
and
\begin{align}
\|f_1f_2\|_{L^2}
\leq& C\Big\| \|(\partial_yf_1,f_1)\|_{L^2_y}\|f_2\|_{L^2_y}\Big\|_{L^2_{x,z}}\leq C \|(\partial_yf_1,f_1)\|_{L^2}\|f_2\|_{L^{\infty}_{x,z}L^2_y}\\
\nonumber \leq &C\|f_1\|_{H^1}\big\|(\partial_x\partial_zf_2,\partial_xf_2,\partial_zf_2,f_2)\big\|_{L^2}.
\end{align}

By \eqref{f1} and \eqref{f7}, we have
\begin{align*}
\|\partial_j(f_1f_2)\|_{L^2}\leq&\|\partial_jf_1f_2\|_{L^2}+\|f_1\partial_jf_2\|_{L^2}\\ \leq& C\|(\partial_k\partial_jf_1,\partial_jf_1)\|_{H^1}\|(\partial_jf_2,f_2)\|_{L^2}+
C\|(\partial_x\partial_zf_1,\partial_xf_1,\partial_zf_1,f_1)\|_{H^1}\|\partial_jf_2\|_{L^2}\\ \leq& C\|(\partial_x\partial_zf_1,\partial_xf_1,\partial_zf_1,f_1)\|_{H^1}\|(\partial_jf_2,f_2)\|_{L^2},
\end{align*}which gives \eqref{f2}.\smallskip

By \eqref{f8} and  \eqref{f1}, we have
\begin{align*}
\|\partial_j(f_1f_2)\|_{L^2}\leq&\|\partial_jf_1f_2\|_{L^2}+\|f_1\partial_jf_2\|_{L^2}\\ \leq& C\|\partial_jf_1\|_{H^1}\|(\partial_x\partial_zf_2,\partial_xf_2,\partial_zf_2,f_2)\|_{L^2}+
C\|(\partial_jf_1,f_1)\|_{H^1}\|(\partial_k\partial_jf_2,\partial_jf_2)\|_{L^2}\\ \leq& C\|(\partial_jf_1,f_1)\|_{H^1}\|(\partial_x\partial_zf_2,\partial_xf_2,\partial_zf_2,f_2)\|_{L^2},
\end{align*}
which gives \eqref{f3}.\smallskip

By \eqref{f7} and  \eqref{f8},  we get
\begin{align*}
\|\nabla(f_1f_2)\|_{L^2}\leq& \|f_1\nabla f_2\|_{L^2}+\|(\nabla f_1)f_2\|_{L^2}\\ \leq& C\|(\partial_x\partial_zf_1,\partial_xf_1,\partial_zf_1,f_1)\|_{H^1}\|\nabla f_2\|_{L^2}+
C\|\nabla(\partial_x\partial_zf_1,\partial_xf_1,\partial_zf_1,f_1)\|_{L^2}\|f_2\|_{H^1}\\ \leq& C\|(\partial_x\partial_zf_1,\partial_xf_1,\partial_zf_1,f_1)\|_{H^1}\|f_2\|_{H^1},
\end{align*}
which gives \eqref{f4}. \smallskip

By \eqref{f1}, we have
\begin{align*}
&\|f_1\nabla f_2\|_{L^2}\leq C(\|\partial_kf_1\|_{H^1}+\|f_1\|_{H^1})(\|\partial_j\nabla f_2\|_{L^2}+\|\nabla f_2\|_{L^2}),\\
&\|(\nabla f_1)f_2\|_{L^2}\leq C(\|\partial_k\nabla f_1\|_{L^2}+\|\nabla f_1\|_{L^2})(\|\partial_j f_2\|_{H^1}+\| f_2\|_{H^1}),
\end{align*}
which give \eqref{f5}.  By \eqref{f1} and \eqref{f8}, we have
\begin{align*}&\|(\nabla f_1)f_2\|_{L^2}\leq C(\|\partial_k\nabla f_1\|_{H^1}+\|\nabla f_1\|_{H^1})(\|\partial_j f_2\|_{L^2}+\| f_2\|_{L^2}),\\
&\|f_1\nabla f_2\|_{L^2}\leq C\|(\partial_x\partial_zf_1,\partial_xf_1,\partial_zf_1,f_1)\|_{L^2}\|\nabla f_2\|_{H^1},
\end{align*}
which give \eqref{f6}. This completes the proof.
\end{proof}

\begin{Lemma}\label{Lem: bilinear zero nonzero}
  If $\partial_xf_1=0$, then it holds that
  \begin{align*}
    &\|f_1f_2\|_{L^2}\leq
    C\|f_1\|_{H^1}(\|f_2\|_{L^2}+\|\partial_zf_2\|_{L^2}),\\
    &\|(\partial_x,\partial_z)(f_1f_2)\|_{L^2} \leq
    C\big(\|f_1\|_{H^1}+\|\partial_zf_1\|_{H^1}\big)\big(\|f_2\|_{L^2}
    +\|(\partial_x,\partial_z)f_2\|_{L^2}\big),\\
    &\|(\partial_x,\partial_z)(f_1f_2)\|_{L^2} \leq
    C\big(\|f_1\|_{L^2}+\|\partial_zf_1\|_{L^2}\big)\big(\|f_2\|_{H^1}
    +\|(\partial_x,\partial_z)f_2\|_{H^1}\big),\\
    &\|\partial_x(f_1f_2)\|_{L^2}\leq C\|f_1\|_{H^1}\big(\|\partial_xf_2\|_{L^2}
    +\|\partial_z\partial_x f_2\|_{L^2}\big).
  \end{align*}
\end{Lemma}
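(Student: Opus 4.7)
The plan is to reduce all four inequalities to anisotropic one-dimensional Sobolev embeddings, using the crucial fact that $f_1=f_1(y,z)$ carries no $x$-dependence (so $\partial_x(f_1f_2)=f_1\partial_x f_2$) together with the one-dimensional estimates $\|g\|_{L^\infty(I)}^2\leq C\|g\|_{L^2}^2+C\|g\|_{L^2}\|g'\|_{L^2}$ on both $I_y$ and $\mathbb{T}_z$. The strategy mirrors the proof of Lemma \ref{lemma-A.4}, except that the $x$-independence of $f_1$ lets us trade estimates in $x$ for estimates in $z$.

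For the first inequality I would split $\|f_1f_2\|_{L^2}^2$ by H\"older in $z$ as $\int_{x,y}\|f_1(y,\cdot)\|_{L^2_z}^2\|f_2(x,y,\cdot)\|_{L^\infty_z}^2\,dxdy$, then use one-dimensional Sobolev in $y$ on $f_1(\cdot,z)$ to obtain $\|f_1\|_{L^\infty_y L^2_z}\leq C\|f_1\|_{H^1}$, and one-dimensional Sobolev in $z$ on $f_2(x,y,\cdot)$ to absorb the $L^\infty_z$ norm into $\|f_2\|_{L^2}+\|\partial_z f_2\|_{L^2}$ after integrating over $x,y$. The fourth inequality is then immediate: since $\partial_x f_1=0$, the product rule gives $\partial_x(f_1f_2)=f_1\partial_x f_2$, and applying the first inequality with $f_2$ replaced by $\partial_x f_2$ yields the bound.

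For the second inequality I would use the product rule $\partial_z(f_1f_2)=(\partial_zf_1)f_2+f_1\partial_z f_2$. The first summand is controlled by the first inequality with $f_1$ replaced by $\partial_z f_1$, giving $\|\partial_z f_1\|_{H^1}(\|f_2\|_{L^2}+\|\partial_z f_2\|_{L^2})$. For the second summand (and for the $\partial_x$ component, which produces $f_1\partial_x f_2$), I would upgrade $f_1$ to an $L^\infty_{y,z}$ function via the anisotropic bound $\|f_1\|_{L^\infty_{y,z}}^2\leq C(\|f_1\|_{H^1}+\|\partial_z f_1\|_{H^1})^2$, obtained by applying 1D Sobolev in $z$ (using both $f_1$ and $\partial_z f_1$) then 1D Sobolev in $y$. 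This lets $\|f_1\partial_\alpha f_2\|_{L^2}$ be controlled simply by $\|f_1\|_{L^\infty_{y,z}}\|\partial_\alpha f_2\|_{L^2}$ for $\alpha\in\{x,z\}$.

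The third inequality requires shifting all regularity to $f_2$. For $\|f_1\partial_\alpha f_2\|_{L^2}$ with $\alpha\in\{x,z\}$, I place $f_1\in L^2_y L^\infty_z$ (the norm being bounded by $\|f_1\|_{L^2}+\|\partial_z f_1\|_{L^2}$ via 1D Sobolev in $z$) and use 1D Sobolev in $y$ to send $\partial_\alpha f_2$ into $L^\infty_y L^2_{x,z}$, giving a factor of $\|\partial_\alpha f_2\|_{H^1}$. For the remaining piece $\|(\partial_zf_1)f_2\|_{L^2}$, I keep $\partial_zf_1$ in $L^2$ and bound $\|f_2(\cdot,y,\cdot)\|_{L^2_x L^\infty_z}$ by Sobolev in $z$, then take sup in $y$ via Sobolev in $y$, producing $\|f_2\|_{H^1}+\|(\partial_x,\partial_z)f_2\|_{H^1}$ after AM-GM. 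The only mild subtlety is tracking exactly which derivatives are needed at each step to match the stated norms; no single step is technically difficult, but the bookkeeping is the main thing to get right.
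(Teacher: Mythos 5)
Your argument is correct, and it rests on the same underlying tools as the paper: anisotropic one-dimensional Sobolev embeddings in $y$ and $z$, H\"older's inequality, and the identity $\partial_x(f_1f_2)=f_1\partial_xf_2$. The only difference is that the paper does not redo these estimates here — it obtains the lemma as an immediate corollary of \eqref{f1}, \eqref{f2} and \eqref{f3} in Lemma \ref{lemma-A.4} with $(j,k)=(3,1)$, observing that $\partial_xf_1=0$ reduces $\|(\partial_x\partial_zf_1,\partial_xf_1,\partial_zf_1,f_1)\|_{H^k}$ to $\|(\partial_zf_1,f_1)\|_{H^k}$ — whereas you re-derive the estimates directly.
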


\begin{proof}The first inequality follows from \eqref{f1} in Lemma \ref{lemma-A.4} by taking $(j,k)=(3,1)$ and
 using $\partial_xf_1=0$.  The second and third inequality follows from \eqref{f2} and \eqref{f3} in Lemma \ref{lemma-A.4} by noting that
 $$\|(\partial_x\partial_zf_1,\partial_xf_1,\partial_zf_1,f_1)\|_{H^k}=\|(\partial_zf_1,f_1)\|_{H^k},\quad k=0,1.$$
 As $\partial_x(f_1f_2)=f_1\partial_xf_2,$  the fourth inequality follows from the first inequality.
\end{proof}

\begin{Lemma}\label{lemma-A.1}If $\partial_xf_1=0$, then it holds that
\begin{align*}
&\|f_1\|_{L^{\infty}}\leq C\big(\|f_1\|_{H^1}+\|\partial_zf_1\|_{H^1}\big),\\
&\|\nabla(f_1f_2)\|_{L^2}\leq C(\|f_1\|_{H^1}+\|\partial_zf_1\|_{H^1})\|f_2\|_{H^1},\\
&\|\nabla(f_1f_2)\|_{L^2}\leq C\|f_1\|_{H^1}(\|f_2\|_{H^1}+\|\partial_zf_2\|_{H^1}).
\end{align*}
\end{Lemma}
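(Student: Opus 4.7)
The plan is to establish the three inequalities in order, exploiting the fact that $\partial_x f_1 = 0$ reduces $f_1$ to a function on $[-1,1]\times \mathbb{T}$, so that norms on $\Omega$ involving $f_1$ alone are (up to a constant) norms on $[-1,1]\times \mathbb{T}$. For the first inequality, I would apply iterated one-dimensional Sobolev embeddings in $y$ and then in $z$. For each fixed $z$, the 1D embedding on $[-1,1]$ gives $\|f_1(\cdot,z)\|_{L^\infty_y}\leq C\bigl(\|f_1(\cdot,z)\|_{L^2_y}+\|\partial_y f_1(\cdot,z)\|_{L^2_y}\bigr)$. Letting $G(z)=\|f_1(\cdot,z)\|_{L^2_y}+\|\partial_y f_1(\cdot,z)\|_{L^2_y}$, one has $|G'(z)|\leq \|\partial_z f_1(\cdot,z)\|_{L^2_y}+\|\partial_z\partial_y f_1(\cdot,z)\|_{L^2_y}$, and then the 1D embedding on $\mathbb{T}$ in $z$ yields $\sup_z G(z)\leq C(\|G\|_{L^2_z}+\|G'\|_{L^2_z})\leq C(\|f_1\|_{H^1}+\|\partial_z f_1\|_{H^1})$.

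For the second inequality I would expand $\nabla(f_1 f_2)$ by Leibniz. The three terms of the form $f_1\,\partial_j f_2$ are handled by $\|f_1\|_{L^\infty}\|\partial_j f_2\|_{L^2}$ using the first inequality already proved. The term $(\partial_z f_1)f_2$ is the favorable case: since $\partial_z f_1$ also satisfies $\partial_x(\partial_z f_1)=0$, the first inequality of Lemma~\ref{Lem: bilinear zero nonzero} applied with $f_1\mapsto\partial_z f_1$ gives $\|(\partial_z f_1)f_2\|_{L^2}\leq C\|\partial_z f_1\|_{H^1}(\|f_2\|_{L^2}+\|\partial_z f_2\|_{L^2})\leq C\|\partial_z f_1\|_{H^1}\|f_2\|_{H^1}$. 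The delicate term is $(\partial_y f_1)f_2$: the same Lemma cannot be applied with $f_1\mapsto\partial_y f_1$ since that would require $\partial_y^2 f_1\in L^2$, which is \emph{not} available from the hypotheses. Instead I would use the mixed-norm H\"older bound $\|(\partial_y f_1)f_2\|_{L^2}^2\leq \|\partial_y f_1\|_{L^\infty_z L^2_y}^2\,\|f_2\|_{L^2_{x,z}L^\infty_y}^2$, then control $\|\partial_y f_1\|_{L^\infty_z L^2_y}$ by the same periodic trick as in the first inequality (bounding $\sup_z\int|\partial_y f_1|^2\,dy$ by the $L^1_z$-norm of that integral and of its $z$-derivative, which gives $\leq C(\|f_1\|_{H^1}+\|\partial_z f_1\|_{H^1})$), while simultaneously $\|f_2\|_{L^2_{x,z}L^\infty_y}\leq C\|f_2\|_{H^1}$ by 1D Sobolev in $y$.

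For the third inequality the roles are reversed: only $\|f_1\|_{H^1}$ is allowed on the $f_1$-side. Terms of the form $f_1\,\partial_j f_2$ are handled directly by the first inequality of Lemma~\ref{Lem: bilinear zero nonzero}, yielding $C\|f_1\|_{H^1}(\|\partial_j f_2\|_{L^2}+\|\partial_z\partial_j f_2\|_{L^2})\leq C\|f_1\|_{H^1}(\|f_2\|_{H^1}+\|\partial_z f_2\|_{H^1})$. For $(\partial_y f_1)f_2$ and $(\partial_z f_1)f_2$, I would now put everything on the $f_2$-side via the dual mixed-norm estimate $\|(\partial_j f_1)f_2\|_{L^2}^2\leq \|\partial_j f_1\|_{L^2(\Omega)}^2\,\|f_2\|_{L^\infty_{y,z}L^2_x}^2$. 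Then Minkowski's integral inequality gives $\|f_2\|_{L^\infty_{y,z}L^2_x}\leq \|f_2\|_{L^2_x L^\infty_{y,z}}$, and the \emph{first inequality of this very lemma} applied slice by slice in $x$ bounds $\|f_2(x,\cdot,\cdot)\|_{L^\infty_{y,z}}\leq C(\|f_2(x,\cdot,\cdot)\|_{H^1_{y,z}}+\|\partial_z f_2(x,\cdot,\cdot)\|_{H^1_{y,z}})$; integrating in $x$ yields $\|f_2\|_{L^\infty_{y,z}L^2_x}\leq C(\|f_2\|_{H^1}+\|\partial_z f_2\|_{H^1})$, completing the argument.

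The main obstacle throughout is the \emph{asymmetry} of the hypothesis on $f_1$: we are given $H^1$ regularity plus extra $H^1$ regularity on $\partial_z f_1$, but crucially \emph{not} on $\partial_y f_1$, so $\partial_y^2 f_1\in L^2$ is unavailable. This blocks any symmetric $H^2$-type treatment and forces the anisotropic mixed-norm tricks described above whenever a $\partial_y f_1$-type factor appears. Once this asymmetry is respected, the rest of the argument is a routine bookkeeping of Leibniz products combined with the first inequality and with Lemma~\ref{Lem: bilinear zero nonzero}.
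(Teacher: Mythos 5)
Your proof is correct, but it takes a genuinely different and considerably longer route than the paper's. The paper obtains the second and third inequalities in one line each, as direct specializations of the bilinear estimates \eqref{f4} and \eqref{f5} of Lemma \ref{lemma-A.4} with $\{j,k\}=\{1,3\}$: when $\partial_xf_1=0$ the factor $\|(\partial_x\partial_zf_1,\partial_xf_1,\partial_zf_1,f_1)\|_{H^1}$ in \eqref{f4} collapses to $\|f_1\|_{H^1}+\|\partial_zf_1\|_{H^1}$, and the factor $\|\partial_xf_1\|_{H^1}+\|f_1\|_{H^1}$ in \eqref{f5} collapses to $\|f_1\|_{H^1}$. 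You instead re-derive these bounds from scratch by a Leibniz expansion combined with anisotropic mixed-norm H\"older estimates, handling the problematic $(\partial_yf_1)f_2$ term through $L^\infty_zL^2_y\times L^2_{x,z}L^\infty_y$ for the second inequality and through $L^2_{y,z}\times L^\infty_{y,z}L^2_x$ (plus a slicewise application of the first inequality to $f_2$) for the third; in effect you reprove the relevant parts of Lemma \ref{lemma-A.4} in the special case $\partial_xf_1=0$. Your argument for the first inequality is essentially the paper's. Both routes are sound and non-circular (Lemma \ref{Lem: bilinear zero nonzero} precedes this lemma and does not depend on it); yours is self-contained and correctly respects the asymmetry of the hypotheses (no control on $\partial_y^2f_1$ is available, so a symmetric $H^2$-type argument would fail exactly where you say it would), while the paper's is shorter because the anisotropic work has already been packaged into Lemma \ref{lemma-A.4}.
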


\begin{proof}
The first inequality follows by noting that
\begin{align*}
\|f_1\|_{L^{\infty}}\leq& C \big\|(\|\partial_yf_1\|_{L^2_y}+\|f_1\|_{L^2_y})\big\|_{L^{\infty}_z}\\
\leq&C\big(\|\partial_z\partial_yf_1\|_{L^2}+\|\partial_yf_1\|_{L^2}+\|\partial_zf_1\|_{L^2} +\|f_1\|_{L^2}\big)
\leq C\big(\|f_1\|_{H^1}+\|\partial_zf_1\|_{H^1}\big).
\end{align*}
The second and third inequality follows from \eqref{f4} and \eqref{f5} in Lemma \ref{lemma-A.4} by taking $(j,k)=(3,1)$ and using $\partial_xf_1=0$.
\end{proof}

\begin{Lemma}\label{Lem: bil good deri}
 Let $V$ satisfy $\|V-y\|_{H^4}\leq \varepsilon_0, \partial_xV=0,\ (V-y)|_{y=\pm1}=0$ and $\kappa=\partial_zV/\partial_yV$. If $\partial_xf_1=0,\,  P_0f_2=0$,  then we have
  \begin{align*}
     &\|f_1f_2\|_{L^2}\leq C\|f_1\|_{H^1}\big(\|f_2\|_{L^2}+\|(\partial_z-\kappa\partial_y)f_2\|_{L^2}\big),\\
     &\|\nabla(f_1f_2)\|_{L^2}\leq C\|f_1\|_{H^1}\big(\|f_2\|_{H^1}+\|(\partial_z-\kappa\partial_y)f_2\|_{H^1}\big).
       \end{align*}
 Let $h$ solve $\Delta h=f_1f_2,\ h|_{y=\pm1}=0$, and assume $\partial_xf_2=ik f_2,$ $k\in\Z\setminus\{0\}$. Then we have
   \begin{align*}
     &\|\nabla h\|_{L^2}\leq C\|f_1\|_{L^2}\big(\|f_2\|_{L^2}+\|(\partial_z-\kappa\partial_y)f_2\|_{L^2}\big).
  \end{align*}
 \end{Lemma}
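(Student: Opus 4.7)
The core idea is to straighten the ``good derivative'' $\partial_z-\kappa\partial_y$ via the diffeomorphism $\Phi:(y,z)\mapsto(\tilde y,\tilde z)=(V(y,z),z)$ of $[-1,1]\times\T$. Since $\partial_y V\ge 1-C\varepsilon_0>0$ and $(V-y)|_{y=\pm 1}=0$, $\Phi$ maps the strip bijectively onto itself with Jacobian $\partial_y V$ pinched between two positive constants close to $1$. A direct computation using $(\partial_z-\kappa\partial_y)V=0$ shows that, writing $\tilde f(x,\tilde y,\tilde z)=f(x,\Phi^{-1}(\tilde y,\tilde z))$,
\begin{align*}
\partial_{\tilde y}\tilde f = (\partial_y V)^{-1}\partial_y f, \qquad
\partial_{\tilde z}\tilde f = (\partial_z-\kappa\partial_y)f, \qquad
\partial_x\tilde f = \partial_x f.
\end{align*}
Under $\|V-y\|_{H^4}\leq\varepsilon_0$ with $\varepsilon_0$ small, this gives the norm equivalences $\|f\|_{L^2}\sim\|\tilde f\|_{L^2}$, $\|\partial_y f\|_{L^2}\sim\|\partial_{\tilde y}\tilde f\|_{L^2}$, $\|(\partial_z-\kappa\partial_y)f\|_{L^2}\sim\|\partial_{\tilde z}\tilde f\|_{L^2}$, and $\|f\|_{H^1}\sim\|\tilde f\|_{\widetilde H^1}$ (where $\widetilde H^1$ is built from $(\partial_x,\partial_{\tilde y},\partial_{\tilde z})$); moreover $\partial_z f$ is controlled by $\partial_{\tilde y}\tilde f$ and $\partial_{\tilde z}\tilde f$.

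\textbf{Proof of the first two inequalities.} In the tilded variables $\partial_x\tilde f_1=0$ (the change of variables does not touch $x$), so Lemma~\ref{Lem: bilinear zero nonzero} gives
\begin{align*}
\|\tilde f_1\tilde f_2\|_{L^2}\leq C\|\tilde f_1\|_{\widetilde H^1}\bigl(\|\tilde f_2\|_{L^2}+\|\partial_{\tilde z}\tilde f_2\|_{L^2}\bigr),
\end{align*}
and Lemma~\ref{lemma-A.1} (with $\partial_{\tilde z}$ playing the role of $\partial_z$) yields the analogous bound for $\|\widetilde\nabla(\tilde f_1\tilde f_2)\|_{L^2}$. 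Transferring back through the equivalences above produces exactly the first two inequalities of the lemma.

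\textbf{Proof of the third inequality.} Applying $\partial_x$ to $\Delta h=f_1 f_2$ and using $\partial_x f_1=0$, $\partial_x f_2=ikf_2$ with $k\neq 0$, one obtains $\Delta(\partial_x h-ikh)=0$ together with $(\partial_x h-ikh)|_{y=\pm 1}=0$; uniqueness for the Dirichlet Laplacian forces $\partial_x h=ikh$, so in particular $|h|$ is $x$-independent. Multiplying the equation by $\bar h$ and integrating by parts then yields
\begin{align*}
\|\nabla h\|_{L^2}^2=-\langle f_1 f_2,h\rangle\leq\|f_1\|_{L^2}\|f_2 h\|_{L^2},
\end{align*}
so it suffices to show $\|f_2 h\|_{L^2}\leq C\|\partial_y h\|_{L^2}\bigl(\|f_2\|_{L^2}+\|(\partial_z-\kappa\partial_y)f_2\|_{L^2}\bigr)$. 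In the tilded variables this is a two-dimensional estimate in $(\tilde y,\tilde z)$ that follows from the anisotropic H\"older splitting
\begin{align*}
\|\tilde f_2\tilde h\|_{L^2_{\tilde y\tilde z}}\leq\|\tilde f_2\|_{L^\infty_{\tilde z}L^2_{\tilde y}}\|\tilde h\|_{L^2_{\tilde z}L^\infty_{\tilde y}},
\end{align*}
combined with the 1D Sobolev--Poincar\'e estimate $\|\tilde h(\cdot,\tilde z)\|_{L^\infty_{\tilde y}}\leq C\|\partial_{\tilde y}\tilde h(\cdot,\tilde z)\|_{L^2_{\tilde y}}$ (using $\tilde h|_{\tilde y=\pm 1}=0$) and the embedding $W^{1,1}_{\tilde z}\hookrightarrow L^\infty_{\tilde z}$ applied to $G(\tilde z)=\|\tilde f_2(\cdot,\tilde z)\|_{L^2_{\tilde y}}^2$, which yields $\|\tilde f_2\|_{L^\infty_{\tilde z}L^2_{\tilde y}}^2\leq C(\|\tilde f_2\|_{L^2}^2+\|\partial_{\tilde z}\tilde f_2\|_{L^2}^2)$. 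Converting back and substituting completes the proof.

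\textbf{Main obstacle.} The only delicate point is checking that $\Phi$ preserves all relevant norms up to constants close to $1$; this is routine but uses the hypotheses $\|V-y\|_{H^4}\leq\varepsilon_0$ and $(V-y)|_{y=\pm 1}=0$ at several places (to guarantee $\partial_y V\approx 1$, $\kappa$ small in $L^\infty$, and that $\Phi$ maps $[-1,1]$ to itself so the Dirichlet boundary condition is preserved). Once this framework is installed, the good-derivative inequalities reduce to standard flat-strip bilinear estimates.
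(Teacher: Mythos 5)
Your proposal is correct and follows essentially the same route as the paper: the paper also straightens the good derivative by the substitution $F_l(x,V(y,z),z)=f_l(x,y,z)$, uses the norm equivalences $\|F_l\|_{H^k}\sim\|f_l\|_{H^k}$ and $\|\partial_z F_2\|_{H^k}\sim\|(\partial_z-\kappa\partial_y)f_2\|_{H^k}$, and then cites Lemma \ref{Lem: bilinear zero nonzero} and Lemma \ref{lemma-A.1}, while for the third inequality it likewise derives $\partial_xh=ikh$ and bounds $\|\nabla h\|_{L^2}^2\le\|f_1\|_{L^2}\|f_2h_k\|_{L^2}$. The only cosmetic difference is that the paper finishes by applying the lemma's own first inequality with $h_k$ in the role of the $x$-independent factor, whereas you reprove that bilinear bound directly via the anisotropic H\"older splitting; both are valid.
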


\begin{proof}  Take $F_l(X,Y,Z)$ so that $F_l(x,V(t,y,z),z)=f_l(x,y,z)$. Using the facts that $(\partial_z-\kappa\partial_y)f_2(x,y,z)=\partial_ZF_2\big(x,V(t,y,z),z\big)$ and  for $k=0,1$,
  \begin{align*}
     &\|F_l\|_{H^k}\sim \|f_l\|_{H^k},\quad \|\nabla(F_1F_2)\|_{L^2}\sim \|\nabla(f_1f_2)\|_{L^2},\quad \|\partial_zF_2\|_{H^k}\sim \|(\partial_z-\kappa\partial_y)f_2\|_{H^k},
  \end{align*}
we can deduce the first two inequalities from Lemma \ref{Lem: bilinear zero nonzero} and Lemma \ref{lemma-A.1}.

Since $\partial_xf_1=0,\ \partial_xf_2=ik f_2,$ we have  $\partial_xh=ik h, $ and we can write $h(x,y,z)=e^{ikx}h_k(y,z),$. Thus,  $\|\nabla h\|_{L^2} \geq\| h\|_{L^2}$ and
\begin{align*}
\|\nabla h\|_{L^2}^2=&-\langle \Delta h,h\rangle=-\langle f_1f_2,h\rangle\leq \|f_1\|_{L^2}\|f_2h\|_{L^2}=\|f_1\|_{L^2}\|f_2h_k\|_{L^2}.
\end{align*}
By the first inequality of the lemma, we get
\begin{align*}
\|f_2h_k\|_{L^2}&\leq C\|h_k\|_{H^1}\big(\|f_2\|_{L^2}+\|(\partial_z-\kappa\partial_y)f_2\|_{L^2}\big)\\
&\leq C\|\nabla h\|_{L^2}\big(\|f_2\|_{L^2}+\|(\partial_z-\kappa\partial_y)f_2\|_{L^2}\big).
\end{align*}
Then we have
\begin{align*}
\|\nabla h\|_{L^2}^2\leq \|f_1\|_{L^2}\|f_2h_k\|_{L^2}\leq C\|f_1\|_{L^2}\|\nabla h\|_{L^2}\big(\|f_2\|_{L^2}+\|(\partial_z-\kappa\partial_y)f_2\|_{L^2}\big),
\end{align*}
which gives the third inequality.
\end{proof}

\subsection{The velocity estimates in terms of the energy}

\begin{Lemma}\label{lem:u-relation}
  It holds that for $k\ge 0$,
  \begin{align*}
  &\|\nabla^k(\partial_x,\partial_z)\partial_xu_{\neq}\|_{L^{2}}\leq C\big(\| \nabla^k(\partial_x^2+\partial_z^2)u_{\neq}^3\|_{L^2}+\| \nabla^{k+1}(\partial_x,\partial_z) u_{\neq}^2\|_{L^2}\big),\\
&\|\Delta(\partial_x,\partial_z)u_{\neq}\|_{L^2}\leq \|\Delta\omega_{\neq}^2\|_{L^2}+
  \|\nabla\Delta u_{\neq}^2\|_{L^2}.  \end{align*}
\end{Lemma}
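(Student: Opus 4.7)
\medskip

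\noindent\textbf{Proof plan for Lemma \ref{lem:u-relation}.}
The proof is an entirely linear, algebraic exercise based on the divergence-free condition $\partial_xu^1+\partial_yu^2+\partial_zu^3=0$ and the definition of the vorticity $\omega^2=\partial_zu^1-\partial_xu^3$, reduced to Plancherel's identity in the horizontal $(x,z)$-Fourier variables. Throughout, I will write $\widehat{f}_{k,\ell}(y)$ for the Fourier coefficient of $f_{\neq}$ in $(x,z)$, where $k\in\mathbb{Z}\setminus\{0\}$.

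\emph{Step 1 (first inequality).} I will split $(\partial_x,\partial_z)\partial_xu_{\neq}$ into its six scalar components. The $u^2$-components $\partial_x^2u^2$, $\partial_x\partial_zu^2$ are trivially controlled by $\|\nabla(\partial_x,\partial_z)u^2\|_{L^2}$. Divergence-free gives
\[
\partial_x^2u^1=-\partial_x\partial_yu^2-\partial_x\partial_zu^3,\qquad \partial_x\partial_zu^1=-\partial_y\partial_zu^2-\partial_z^2u^3,
\]
so the $u^1$-components are controlled once we handle $\partial_x^2u^3$, $\partial_x\partial_zu^3$, $\partial_z^2u^3$. Passing to Fourier in $(x,z)$ and using $|k|^{2a}|\ell|^{2b}\le(k^2+\ell^2)^{a+b}$ for $a+b=2$ yields
\[
\|\nabla^k\partial_x^2u^3\|_{L^2}^2+\|\nabla^k\partial_x\partial_zu^3\|_{L^2}^2+\|\nabla^k\partial_z^2u^3\|_{L^2}^2\le C\|\nabla^k(\partial_x^2+\partial_z^2)u^3\|_{L^2}^2,
\]
where we commute $\partial_y^j$ freely through the $(x,z)$-Fourier multipliers (the norms $\|\nabla^k\cdot\|_{L^2}$ decompose as a sum of $\|\partial_y^{\alpha_2}\partial_x^{\alpha_1}\partial_z^{\alpha_3}\cdot\|_{L^2}^2$ over $|\alpha|=k$). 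Collecting gives the first inequality.

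\emph{Step 2 (second inequality).} Setting $V^j=\widehat{\Delta u^j}_{k,\ell}(y)$, the divergence-free condition and the identity $\Delta\omega^2=\partial_z\Delta u^1-\partial_x\Delta u^3$ become, in Fourier,
\[
ikV^1+\partial_yV^2+i\ell V^3=0,\qquad i\ell V^1-ikV^3=\widehat{\Delta\omega^2}_{k,\ell}.
\]
Solving the $2\times2$ system for $V^1,V^3$ in terms of $\partial_yV^2$ and $\widehat{\Delta\omega^2}$ gives
\[
(k^2+\ell^2)V^1=ik\,\partial_yV^2-i\ell\,\widehat{\Delta\omega^2},\qquad (k^2+\ell^2)V^3=i\ell\,\partial_yV^2+ik\,\widehat{\Delta\omega^2}.
\]
Multiplying by $|k|$ or $|\ell|$, applying $2|k||\ell|\le k^2+\ell^2$ and the elementary bound $|k|^{2a}|\ell|^{2b}\le(k^2+\ell^2)^{a+b}$ (here $a+b=2$), and squaring yields pointwise estimates of the form $k^2|V^j|^2+\ell^2|V^j|^2\le C\bigl(|\partial_yV^2|^2+|\widehat{\Delta\omega^2}|^2\bigr)$ for $j=1,3$. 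Summing over $k\ne0,\ell$ and integrating in $y$ via Plancherel gives
\[
\|\partial_x\Delta u^j_{\neq}\|_{L^2}+\|\partial_z\Delta u^j_{\neq}\|_{L^2}\le C\bigl(\|\partial_y\Delta u^2\|_{L^2}+\|\Delta\omega^2\|_{L^2}\bigr),\qquad j\in\{1,3\}.
\]
Combined with the trivial bound $\|(\partial_x,\partial_z)\Delta u^2\|_{L^2}\le\|\nabla\Delta u^2\|_{L^2}$ this yields the second inequality.

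\emph{Main obstacle.} There is no genuine analytic obstacle; the statement is a Biot--Savart type identity. The only care required is bookkeeping: one must verify that $\nabla^k$ decomposes cleanly into $\partial_y^{\alpha_2}\partial_x^{\alpha_1}\partial_z^{\alpha_3}$, so that the Fourier estimates in the horizontal directions can be performed slice-by-slice in $y$. Since the domain is $\mathbb{T}\times[-1,1]\times\mathbb{T}$ and no boundary conditions on $u$ are invoked (we work in $L^2$ only), this is straightforward.
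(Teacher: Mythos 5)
Your proof is correct and follows essentially the same route as the paper's: the first inequality is the divergence relation $\partial_xu^1_{\neq}=-\partial_yu^2_{\neq}-\partial_zu^3_{\neq}$ plus the elementary Fourier multiplier bound $|k|^{a}|\ell|^{b}\le k^2+\ell^2$ for $a+b=2$, and the second is the div--curl decomposition of $(u^1_{\neq},u^3_{\neq})$ in the horizontal Fourier variables. The only (immaterial) difference is that the paper invokes the exact Plancherel identity $\|(\partial_x,\partial_z)(f_1,f_2)\|_{L^2}^2=\|(\partial_zf_1-\partial_xf_2,\partial_xf_1+\partial_zf_2)\|_{L^2}^2$ with $(f_1,f_2)=\Delta(u^1_{\neq},u^3_{\neq})$, which delivers the second inequality with constant exactly $1$ as stated, whereas your Cauchy--Schwarz step after solving the $2\times2$ system only yields a generic constant $C$; this is harmless in every application of the lemma.
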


\begin{proof}
  Thanks to $\text{div}u_{\neq}=\partial_xu_{\neq}^1+\partial_yu_{\neq}^2+\partial_zu_{\neq}^3=0,$
  we have
\begin{align*}
\|\nabla^k(\partial_x,\partial_z)\partial_xu_{\neq}\|_{L^{2}}\leq&\|\nabla^k(\partial_x,\partial_z)\partial_xu_{\neq}^1\|_{L^{2}}
+\|\nabla^k(\partial_x,\partial_z)\partial_xu_{\neq}^2\|_{L^{2}}+\|\nabla^k(\partial_x,\partial_z)\partial_xu_{\neq}^3\|_{L^{2}}\\
\leq&\|\nabla^k(\partial_x,\partial_z)\partial_yu_{\neq}^2\|_{L^{2}}+\|\nabla^k(\partial_x,\partial_z)\partial_zu_{\neq}^3\|_{L^{2}}
\\&+\|\nabla^k(\partial_x,\partial_z)\partial_xu_{\neq}^2\|_{L^{2}}+\|\nabla^k(\partial_x,\partial_z)\partial_xu_{\neq}^3\|_{L^{2}}\\ \leq&
C\big(\|\nabla^{k+1}(\partial_x,\partial_z)u_{\neq}^2\|_{L^{2}}+\|\nabla^k(\partial_x^2,\partial_x\partial_z,\partial_z^2)u_{\neq}^3\|_{L^{2}}
\big)\\ \leq&C\big(\| \nabla^k(\partial_x^2+\partial_z^2)u_{\neq}^3\|_{L^2}+\| \nabla^{k+1}(\partial_x,\partial_z) u_{\neq}^2\|_{L^2}\big).
\end{align*}

Using the formula $\|(\partial_x,\partial_z)(f_1,f_2)\|^2_{L^2}=\|(\partial_zf_1-\partial_xf_2,\partial_xf_1+\partial_zf_2)\|^2_{L^2}$, we can deduce that
  \begin{align*}
  &\|\Delta(\partial_x,\partial_z)(u_{\neq}^1,u_{\neq}^3)\|^2_{L^2}=\|\Delta\omega_{\neq}^2\|^2_{L^2}+
  \|\Delta\partial_y u_{\neq}^2\|^2_{L^2},
\end{align*}which implies the second inequality.
\end{proof}

\begin{Lemma}\label{lem:u1-H2}
  It holds that
  \begin{align*}
    &\|\bar{u}^1\|_{H^2}\leq CE_1\min(\nu t+\nu^{2/3},1).
  \end{align*}
\end{Lemma}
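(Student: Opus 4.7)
The natural approach is to exploit the decomposition $\bar{u}^1 = \bar{u}^{1,0} + \bar{u}^{1,\neq}$ introduced in \eqref{eq:u10}--\eqref{boundary condition} and bound each piece separately using the definitions of $E_{1,0}$ and $E_{1,\neq}$.

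The first piece $\bar{u}^{1,0}$ satisfies the initial condition $\bar{u}^{1,0}|_{t=0}=0$, so I would write
\begin{align*}
\bar{u}^{1,0}(t)=\int_0^t\partial_t\bar{u}^{1,0}(s)\,ds
\end{align*}
and estimate in $H^2$. On the one hand,
\begin{align*}
\|\bar{u}^{1,0}(t)\|_{H^2}\le\int_0^t\|\partial_t\bar{u}^{1,0}(s)\|_{H^2}\,ds\le t\|\partial_t\bar{u}^{1,0}\|_{L^\infty H^2}\le\nu t\,E_{1,0},
\end{align*}
while on the other hand the trivial bound
\begin{align*}
\|\bar{u}^{1,0}(t)\|_{H^2}\le\|\bar{u}^{1,0}\|_{L^\infty H^4}\le E_{1,0}
\end{align*}
is available. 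Combining these gives $\|\bar{u}^{1,0}(t)\|_{H^2}\le E_{1,0}\min(\nu t,1)$.

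The second piece is even simpler: from the definition of $E_{1,\neq}$ and the relation $E_1=E_{1,0}+\nu^{-2/3}E_{1,\neq}$, I have directly
\begin{align*}
\|\bar{u}^{1,\neq}(t)\|_{H^2}\le\|\bar{u}^{1,\neq}\|_{L^\infty H^2}\le E_{1,\neq}\le\nu^{2/3}E_1.
\end{align*}
Adding the two contributions yields
\begin{align*}
\|\bar{u}^1(t)\|_{H^2}\le E_1\bigl(\min(\nu t,1)+\nu^{2/3}\bigr),
\end{align*}
and a routine case analysis (separating $\nu t+\nu^{2/3}\le 1$ from $\nu t+\nu^{2/3}\ge 1$, using $\nu\le\nu_0\le 1$) shows that this is bounded by $CE_1\min(\nu t+\nu^{2/3},1)$, which is the claim.

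There is essentially no obstacle here: the lemma is purely a bookkeeping consequence of the way $E_1$ was defined to absorb the lift-up growth in $\bar{u}^{1,0}$ (giving the $\nu t$ factor via integration in time from zero initial data) while tracking the better $\nu^{2/3}$ smallness of the remainder $\bar{u}^{1,\neq}$. The only mildly nontrivial point is verifying $\min(\nu t,1)+\nu^{2/3}\le 2\min(\nu t+\nu^{2/3},1)$, which uses $\nu^{2/3}\le 1$.
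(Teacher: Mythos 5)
Your proposal is correct and follows exactly the paper's own argument: integrate $\partial_t\bar{u}^{1,0}$ from the zero initial data to get the $\nu t$ bound, use the $L^\infty H^4$ bound for the other branch of the minimum, and absorb $\bar{u}^{1,\neq}$ via $E_{1,\neq}\le\nu^{2/3}E_1$. Nothing further is needed.
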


  \begin{proof}
 As $\bar{u}^{1,0}|_{t=0}=0$, we have
  \begin{align*}
     &\|\bar{u}^{1,0}\|_{H^2}\leq \int_{0}^{t}\|\partial_t\bar{u}^{1,0}(s)\|_{H^2}ds \leq C\nu tE_1,
  \end{align*}
on the other hand,  $\|\bar{u}^{1,0}\|_{H^2}\leq \|\bar{u}^{1,0}\|_{L^\infty H^4}\leq E_1$. Thus, we get
\beno
\|\bar{u}^{1,0}\|_{H^2}\leq CE_1\min(\nu t,1).
\eeno
 As $\|\bar{u}^{1,\neq}\|_{H^2}\leq E_{1,\neq}\leq \nu^{2/3}E_1,$  we get
  \begin{align*}
    \|\bar{u}^1\|_{H^2}\leq &\|\bar{u}^{1,0}\|_{H^2}+\|\bar{u}^{1,\neq}\|_{H^2}\leq CE_1\big( \min(\nu t,1)+\nu^{2/3}\big)\leq CE_1\min(\nu t+\nu^{2/3},1).
 \end{align*}
   \end{proof}

 \begin{Lemma}\label{lem:u10-Linfty}
 It holds that
\begin{align*}
&\left\|\f{\partial_z\bar{u}^{1,0}}{\min((\nu t)^{\frac{1}{2}},1-y^2)}\right\|_{L^{\infty}L^{\infty}}^2
\leq C\big(\|\bar{u}^{1,0}\|_{L^{\infty}H^4}^2+\nu^{-2}\|\partial_t\bar{u}^{1,0}\|_{L^{\infty}H^2}^2\big)\leq CE_{1,0}^2.
\end{align*}
\end{Lemma}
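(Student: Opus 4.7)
The plan is to split the denominator via the elementary inequality $1/\min((\nu t)^{1/2},1-y^2)^2\le 1/(\nu t)+1/(1-y^2)^2$, so that it suffices to establish separately, at each fixed $t>0$, the two pointwise bounds
\[
\left\|\frac{\partial_z\bar{u}^{1,0}(t)}{1-y^2}\right\|_{L^\infty}\le C\|\bar{u}^{1,0}(t)\|_{H^4},\qquad \frac{\|\partial_z\bar{u}^{1,0}(t)\|_{L^\infty}^2}{\nu t}\le C\big(\|\bar{u}^{1,0}(t)\|_{H^4}^2+\nu^{-2}\|\partial_t\bar{u}^{1,0}\|_{L^\infty H^2}^2\big),
\]
take the supremum in $t$, and observe that the final estimate $\le CE_{1,0}^2$ is then immediate from the definition of $E_{1,0}$.

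For the spatial factor, the key point is that $\bar{u}^{1,0}|_{y=\pm 1}=0$ forces $\partial_z\bar{u}^{1,0}|_{y=\pm 1}=0$. At each fixed $(t,z)$, integrating in $y$ from whichever endpoint is nearer yields $|\partial_z\bar{u}^{1,0}(t,y,z)|\le(1-|y|)\|\partial_y\partial_z\bar{u}^{1,0}(t,\cdot,z)\|_{L^\infty_y}$. Since $1-|y|\le 1-y^2$ on $[-1,1]$, this gives $|\partial_z\bar{u}^{1,0}|/(1-y^2)\le\|\partial_y\partial_z\bar{u}^{1,0}\|_{L^\infty}$, and the two-dimensional Sobolev embedding $H^2(\T\times I)\hookrightarrow L^\infty$ then bounds the right-hand side by $C\|\bar{u}^{1,0}\|_{H^4}$.

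For the temporal factor, the initial condition $\bar{u}^{1,0}|_{t=0}=0$ yields $\|\bar{u}^{1,0}(t)\|_{H^2}\le t\|\partial_t\bar{u}^{1,0}\|_{L^\infty H^2}$. Applying Agmon's inequality in two dimensions, $\|f\|_{L^\infty}^2\le C\|f\|_{H^1}\|f\|_{H^2}$, to $f=\partial_z\bar{u}^{1,0}(t,\cdot,\cdot)$ yields
\[
\|\partial_z\bar{u}^{1,0}(t)\|_{L^\infty}^2\le C\|\bar{u}^{1,0}(t)\|_{H^2}\|\bar{u}^{1,0}(t)\|_{H^3}\le Ct\|\partial_t\bar{u}^{1,0}\|_{L^\infty H^2}\|\bar{u}^{1,0}(t)\|_{H^4}.
\]
Dividing by $\nu t$ and applying Young's inequality produces the desired second pointwise bound.

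The main technical obstacle, and the reason Agmon enters, is that the direct integration identity $\partial_z\bar{u}^{1,0}(t)=\int_0^t\partial_z\partial_t\bar{u}^{1,0}(s)\,ds$ combined with $H^2\hookrightarrow L^\infty$ on the integrand would require an $L^\infty_t H^3$ bound on $\partial_t\bar{u}^{1,0}$ that is not part of the middle quantity on the right-hand side (only $L^\infty H^2$ appears there). Agmon's inequality trades the missing half-derivative on $\partial_t\bar{u}^{1,0}$ for the factor $\|\bar{u}^{1,0}\|_{H^2}^{1/2}$, which is precisely the small quantity produced by $\bar{u}^{1,0}|_{t=0}=0$ via $\|\bar{u}^{1,0}(t)\|_{H^2}\lesssim t\|\partial_t\bar{u}^{1,0}\|_{L^\infty H^2}$.
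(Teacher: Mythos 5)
Your proof is correct and follows essentially the same route as the paper's: the spatial factor is handled by the vanishing of $\partial_z\bar{u}^{1,0}$ at $y=\pm 1$ plus a Hardy-type pointwise bound and $H^2\hookrightarrow L^\infty$, and the temporal factor by $\bar{u}^{1,0}|_{t=0}=0$ together with an interpolation that trades half the $L^\infty$ bound for the small quantity $\|\bar{u}^{1,0}(t)\|_{H^2}\le t\|\partial_t\bar{u}^{1,0}\|_{L^\infty H^2}$. The only cosmetic difference is that you invoke Agmon's inequality $\|f\|_{L^\infty}^2\le C\|f\|_{H^1}\|f\|_{H^2}$ where the paper uses $\|\partial_z\bar{u}^{1,0}\|_{L^\infty}^2\le C\|\bar{u}^{1,0}\|_{H^3}^2\le C\|\bar{u}^{1,0}\|_{H^4}\|\bar{u}^{1,0}\|_{H^2}$; these are interchangeable.
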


\begin{proof}As $\bar{u}^{1,0}|_{y=\pm1}=0$,  $\partial_z\bar{u}^{1,0}|_{y=\pm1}=0$, and then
\begin{align*}
\left\|\f{\partial_z\bar{u}^{1,0}}{1-y^2}\right\|_{L^{\infty}}^2\leq& C\|\partial_y\partial_z\bar{u}^{1,0}\|_{L^{\infty}}^2\leq C\|\partial_y\partial_z\bar{u}^{1,0}\|_{H^2}^2\leq C\|\bar{u}^{1,0}\|_{H^4}^2,
\end{align*}
which gives
\begin{align}\label{tri4.2}
\left\|\f{\partial_z\bar{u}^{1,0}}{1-y^2}\right\|_{L^{\infty}L^{\infty}}^2\leq C\|\bar{u}^{1,0}\|_{L^{\infty}H^4}^2.
\end{align}
On the other hand, we have
\begin{align*}
\|\partial_z\bar{u}^{1,0}\|_{L^{\infty}}^2&\leq  C\|\partial_z\bar{u}^{1,0}\|_{H^2}^2\leq C\|\bar{u}^{1,0}\|_{H^3}^2\leq C\|\bar{u}^{1,0}\|_{H^4}\|\bar{u}^{1,0}\|_{H^2},
\end{align*}
and  due to $\bar{u}^{1,0}|_{t=0}=0$, we have
\begin{align*}
\|\bar{u}^{1,0}(t)\|_{H^2}\leq\int_0^t\|\partial_s\bar{u}^{1,0}(s)\|_{H^2}ds\leq t\|\partial_t\bar{u}^{1,0}\|_{L^{\infty}H^2}.
\end{align*}
This  along with \eqref{tri4.2} gives
\begin{align*}
\left\|\f{\partial_z\bar{u}^{1,0}}{\min((\nu t)^{\frac{1}{2}},1-y^2)}\right\|_{L^{\infty}L^{\infty}}^2
\leq& C\big(\|\bar{u}^{1,0}\|_{L^{\infty}H^4}^2+
\|\bar{u}^{1,0}\|_{L^{\infty}H^4}\|{\bar{u}^{1,0}}/{(\nu t)}\|_{L^{\infty}H^2}\big)\\ \leq& C\big(\|\bar{u}^{1,0}\|_{L^{\infty}H^4}+
(\nu^{-1}\|\partial_t\bar{u}^{1,0}\|_{L^{\infty}H^2})^2\big)\leq CE_{1,0}^2.
\end{align*}
\end{proof}

\begin{Lemma}\label{lem:u23-zero}
  It holds that for $k\in\{2,3\}$,
  \begin{align*}
    & \|\bar{u}^2\|_{H^2}+\|\nabla\bar{u}^2\|_{H^1}
    +\|\bar{u}^3\|_{H^1}+\|\partial_z\bar{u}^3\|_{H^1} \leq CE_2,\\
    &\|\bar{u}^k\|_{L^\infty
    L^\infty}+\nu^{1/2}\|\nabla\bar{u}^k\|_{L^2L^\infty} \leq CE_2,\\
    &\|\nabla(\bar{u}^kf)\|_{L^2}+ \|\bar{u}^k\nabla f\|_{L^2} \leq
    CE_2\| f\|_{H^1}.
    \end{align*}
\end{Lemma}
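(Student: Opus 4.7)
\textbf{Proof plan for Lemma \ref{lem:u23-zero}.}

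The plan is to reduce everything to two basic facts: (i) the Dirichlet elliptic estimate $\|\bar u^2\|_{H^{k+2}} \le C\|\Delta \bar u^2\|_{H^k}$ (since $\bar u^2|_{y=\pm 1}=0$) with the corresponding Poincaré inequality, and (ii) the divergence-free identity for the zero mode, namely $\partial_y\bar u^2 + \partial_z\bar u^3 = 0$ (which follows from $\operatorname{div} u = 0$ after applying $P_0$, using that $\partial_x\bar u^1\equiv 0$). These give $\partial_z\bar u^3 = -\partial_y\bar u^2$, so any $H^k$ norm of $\partial_z\bar u^3$ reduces to an $H^k$ norm of $\partial_y\bar u^2$, which is controlled by the $\Delta\bar u^2$ part of $E_2$.

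For the first inequality, I would write
\[
\|\bar u^2\|_{H^2}+\|\nabla\bar u^2\|_{H^1} \le C\|\Delta \bar u^2\|_{L^2} \le CE_2,
\]
\[
\|\bar u^3\|_{H^1}\le C\|\nabla\bar u^3\|_{L^2}\le CE_2,\qquad
\|\partial_z\bar u^3\|_{H^1}=\|\partial_y\bar u^2\|_{H^1}\le C\|\Delta\bar u^2\|_{L^2}\le CE_2,
\]
where I use Poincaré for the $L^2$ part of $\bar u^3$ (which vanishes on $y=\pm1$). Taking $L^\infty$ in $t$ is immediate since each term appearing on the right is part of an $L^\infty L^2$ norm in $E_2$.

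For the second inequality, I would apply Lemma \ref{lemma-A.1}: since $\partial_x\bar u^k=0$,
\[
\|\bar u^k\|_{L^\infty}\le C\big(\|\bar u^k\|_{H^1}+\|\partial_z\bar u^k\|_{H^1}\big),
\]
and for $k=2$ this is $\le C\|\bar u^2\|_{H^2}\le C\|\Delta\bar u^2\|_{L^2}$, while for $k=3$ it is $\le C(\|\bar u^3\|_{H^1}+\|\partial_y\bar u^2\|_{H^1})\le C(\|\nabla\bar u^3\|_{L^2}+\|\Delta\bar u^2\|_{L^2})$; taking $L^\infty$ in $t$ both terms are absorbed in $E_2$. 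For $\nu^{1/2}\|\nabla\bar u^k\|_{L^2L^\infty}$, I apply the same anisotropic embedding to $\nabla\bar u^k$:
\[
\|\nabla\bar u^k\|_{L^\infty}\le C\big(\|\nabla\bar u^k\|_{H^1}+\|\partial_z\nabla\bar u^k\|_{H^1}\big).
\]
For $k=2$ the RHS is $\le C\|\bar u^2\|_{H^3}\le C\|\nabla\Delta\bar u^2\|_{L^2}$, and $\nu^{1/2}$ times its $L^2_t$-norm is bounded by $E_2$. For $k=3$, I use $\partial_z\nabla\bar u^3 = -\partial_y\nabla\bar u^2$ to get $\|\partial_z\nabla\bar u^3\|_{H^1}\le C\|\bar u^2\|_{H^3}\le C\|\nabla\Delta\bar u^2\|_{L^2}$, and $\|\nabla\bar u^3\|_{H^1}\le C(\|\nabla\bar u^3\|_{L^2}+\|\Delta\bar u^3\|_{L^2})$, both of which are in $E_2$ after multiplying by $\nu^{1/2}$ and taking $L^2_t$.

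For the third (bilinear) inequality, Lemma \ref{lemma-A.1} gives
\[
\|\nabla(\bar u^k f)\|_{L^2}\le C\big(\|\bar u^k\|_{H^1}+\|\partial_z\bar u^k\|_{H^1}\big)\|f\|_{H^1}\le CE_2\|f\|_{H^1},
\]
using the bounds already obtained in Step 1, and the estimate of $\|\bar u^k\nabla f\|_{L^2}$ is immediate from $\|\bar u^k\|_{L^\infty}\le CE_2$. I do not anticipate any genuine obstacle; the only subtlety is consistently using $\partial_z\bar u^3 = -\partial_y\bar u^2$ to transfer $H^k$ regularity in $z$ from $\bar u^3$ (where $E_2$ only supplies one $z$-derivative directly) to $\bar u^2$ (where $E_2$ supplies two derivatives through $\Delta\bar u^2$).
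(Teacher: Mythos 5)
Your proof is correct and follows essentially the same route as the paper's: the divergence-free identity $\partial_y\bar u^2+\partial_z\bar u^3=0$ to trade $\partial_z\bar u^3$ for $\partial_y\bar u^2$, the Dirichlet elliptic/Poincaré estimates for $\bar u^2,\bar u^3$, and Lemma \ref{lemma-A.1} for the $L^\infty$ and bilinear bounds. The only cosmetic difference is that for $\nu^{1/2}\|\nabla\bar u^2\|_{L^2L^\infty}$ the paper invokes the 2D embedding $H^2\hookrightarrow L^\infty$ directly rather than Lemma \ref{lemma-A.1}, which changes nothing.
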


\begin{proof}
Thanks to $\partial_y\bar{u}^2+\partial_z\bar{u}^3=0 $,  we have
\begin{align*}
\|\bar{u}^2\|_{H^2}+\|\nabla\bar{u}^2\|_{H^1}
    +\|\bar{u}^3\|_{H^1}+\|\partial_z\bar{u}^3\|_{H^1}&\leq C\big(\|\Delta\bar{u}^2\|_{L^2}+\|\nabla\bar{u}^3\|_{L^2}+\|\partial_y\bar{u}^2\|_{H^1}\big)\\
&\leq C\big(\|\Delta\bar{u}^2\|_{L^2}+\|\nabla\bar{u}^3\|_{L^2}\big)\leq CE_2.
\end{align*}

Thanks to Lemma \ref{lemma-A.1},  we have
\begin{align*}
&\|\bar{u}^2\|_{L^{\infty}}+\|\bar{u}^3\|_{L^{\infty}}\leq C\big(\|\bar{u}^2\|_{H^1}+\|\partial_z\bar{u}^2\|_{H^1}+\|\bar{u}^3\|_{H^1}+\|\partial_z\bar{u}^3\|_{H^1}\big)\leq CE_2,
\end{align*} for any $t\in[0,T]$,  and
\begin{align*}
&\|\nabla\bar{u}^2\|_{L^2L^{\infty}}\leq C\|\nabla\bar{u}^2\|_{L^2H^2}\leq C\|\nabla\Delta\bar{u}^2\|_{L^2L^2}\leq C\nu^{-1/2}E_2,\\
&\|\nabla\bar{u}^3\|_{L^{\infty}}
\leq C\big(\|\partial_z\nabla\bar{u}^3\|_{H^1}+\|\nabla\bar{u}^3\|_{H^1}\big)\leq C\big(\|\nabla\Delta\bar{u}^2\|_{L^2}+\|\Delta\bar{u}^3\|_{L^2}\big),
\end{align*}
for  any $t\in[0,T],$ and then
\begin{align*}
&\|\nabla\bar{u}^3\|_{L^2L^{\infty}}\leq C\big(\|\nabla\Delta\bar{u}^2\|_{L^2L^2}+\|\Delta\bar{u}^3\|_{L^2L^2}\big)\leq C\nu^{-1/2}E_2.
\end{align*}

By Lemma \ref{lemma-A.1} again, we get
\begin{align*}
   \|\nabla(\bar{u}^kf)\|_{L^2}+ \|\bar{u}^k\nabla f\|_{L^2} &\leq\big(\|\bar{u}^k\|_{H^1}+\|\partial_z\bar{u}^k\|_{H^1}\big)\| f\|_{H^1}+\|\bar{u}^k\|_{L^\infty
    }\|\nabla f\|_{L^2}\\&\leq
    CE_2\| f\|_{H^1}\quad
    k\in\{2,3\}.
  \end{align*}
\end{proof}

\begin{Lemma}\label{lem:u-nonzero}
  It holds that
  \begin{align*}
   &\nu^{1/6}
   \|e^{\f94\epsilon\nu^{1/3}t}\partial_x\nabla u^2_{\neq}\|_{L^2L^2}\leq  E^{\f14}_5E^{\f34}_3,\\
   &\nu^{1/6}
   \|e^{\f94\epsilon\nu^{1/3}t}\partial_x(\partial_x,\partial_z) u^k_{\neq}\|_{L^2L^2}\leq  CE^{\f12}_5E^{\f12}_3,\ \ k\in\{2,3\},\\
   &\nu^{1/6}
   \|e^{\f94\epsilon\nu^{1/3}t}\partial_x^2 u^1_{\neq}\|_{L^2L^2}\leq  C\big(E^{\f14}_5E^{\f34}_3+E^{\f12}_5E^{\f12}_3\big),\\
   &\nu^{1/2}\Big(\|e^{\f{17}{8}\epsilon\nu^{1/3}t}\partial_x\nabla u_{\neq}\|_{L^2L^2}+\|e^{\f{17}{8}\epsilon\nu^{1/3}t}\partial_z\nabla (u_{\neq}^2,u_{\neq}^3)\|_{L^2L^2}\Big)\leq  C\big(E^{\f18}_5E^{\f78}_3+E^{\f14}_5E^{\f34}_3\big).
  \end{align*}
\end{Lemma}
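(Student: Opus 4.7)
The plan is to combine two boundary-term-free integration-by-parts identities with three-factor Hölder in time, interpolating between the $e^{2\epsilon\nu^{1/3}t}$-weighted quantities controlled by $E_3$ and the $e^{3\epsilon\nu^{1/3}t}$-weighted quantity $\partial_x^2 u^{2,3}_{\neq}$ controlled by $E_5$. The two spatial identities, both using $u^j_{\neq}|_{y=\pm1}=0$ and $P_0u^j_{\neq}=0$, are
\begin{align*}
\text{(A)}\quad&\|\partial_x\nabla u^2_{\neq}\|_{L^2}^2=-\langle \partial_xu^2_{\neq},\partial_x\Delta u^2_{\neq}\rangle\le \|\partial_x u^2_{\neq}\|_{L^2}\|\partial_x\Delta u^2_{\neq}\|_{L^2}\le \|\partial_x^2u^2_{\neq}\|_{L^2}\|\partial_x\Delta u^2_{\neq}\|_{L^2},\\
\text{(B)}\quad&\|\partial_x\partial_z u^k_{\neq}\|_{L^2}^2=\langle\partial_x^2u^k_{\neq},\partial_z^2u^k_{\neq}\rangle\le \|\partial_x^2u^k_{\neq}\|_{L^2}\|(\partial_x^2+\partial_z^2)u^k_{\neq}\|_{L^2},\quad k\in\{2,3\},
\end{align*}
where in (A) the step $\|\partial_xu^2_{\neq}\|_{L^2}\le\|\partial_x^2u^2_{\neq}\|_{L^2}$ uses that $P_0u^2_{\neq}=0$ forces $|k|\ge1$ on the $x$-Fourier side.

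For the first inequality, I would square the target norm, rewrite $\|\partial_x\nabla u^2_{\neq}\|_{L^2}^2=\|\partial_x\nabla u^2_{\neq}\|_{L^2}^{3/2}\cdot\|\partial_x\nabla u^2_{\neq}\|_{L^2}^{1/2}$, estimate the half-power via the square root of (A), and apply three-factor Hölder in $t$ with exponents $(4/3,8,8)$, decomposing the exponential weight $e^{(9/2)\epsilon\nu^{1/3}t}$ so that the $\|\partial_x\nabla u^2_{\neq}\|$-factor pairs with $\|e^{2\epsilon\nu^{1/3}t}\cdot\|_{L^2L^2}\le E_3$, the $\|\partial_x^2u^2_{\neq}\|^{1/4}$-factor pairs with $\|e^{3\epsilon\nu^{1/3}t}\cdot\|_{L^2L^2}\le\nu^{-1/6}E_5$, and the $\|\partial_x\Delta u^2_{\neq}\|^{1/4}$-factor pairs with $\|e^{2\epsilon\nu^{1/3}t}\cdot\|_{L^2L^2}\le\nu^{-1/2}E_3$. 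The resulting $\nu$-power combines with the prefactor $\nu^{1/6}$ into a non-negative power of $\nu$ that is absorbed using $\nu\le\nu_0\le1$, giving the stated bound $\le E_5^{1/4}E_3^{3/4}$.

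The second inequality for $k\in\{2,3\}$ is obtained by splitting $\partial_x(\partial_x,\partial_z)u^k_{\neq}$ into $\partial_x^2u^k_{\neq}$ (directly in $E_5$, then log-convex-interpolated in time with an $E_3$-bound) and $\partial_x\partial_z u^k_{\neq}$ (handled by (B), after bounding $\|(\partial_x^2+\partial_z^2)u^k_{\neq}\|$ via $E_3$'s $L^\infty L^2$ control integrated against the residual exponential). The third inequality is an immediate consequence of the first two via the divergence-free identity $\partial_x^2u^1_{\neq}=-\partial_x\partial_yu^2_{\neq}-\partial_x\partial_zu^3_{\neq}$: the first term is a component of $\partial_x\nabla u^2_{\neq}$ and the second of $\partial_x(\partial_x,\partial_z)u^3_{\neq}$.

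The fourth inequality uses the same template with target weight $e^{(17/8)\epsilon\nu^{1/3}t}$ and prefactor $\nu^{1/2}$; the larger prefactor gives the slack required to accommodate the slightly different three-factor Hölder split producing the two exponents $E_5^{1/8}E_3^{7/8}$ and $E_5^{1/4}E_3^{3/4}$ (arising respectively from the "balanced" and "unbalanced" splits of the various components). The $u^1_{\neq}$-components reduce via incompressibility, while the $\partial_z\nabla u^{2,3}_{\neq}$-terms are symmetric to $\partial_x\nabla u^{2,3}_{\neq}$ because $(\partial_x,\partial_z)\nabla u^2_{\neq}$ is controlled by $E_{3,0}$, with $\partial_z$ playing the "good derivative" role for $u^2,u^3$. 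The main technical obstacle is the combinatorial bookkeeping: for each component, one must select Hölder exponents and weight decompositions so that every factor falls in a range covered by $E_3$ (weights $\le e^{2\epsilon\nu^{1/3}t}$) or $E_5$ (weight $e^{3\epsilon\nu^{1/3}t}$), while keeping the residual $\nu$-power non-negative after multiplication by $\nu^{1/6}$ or $\nu^{1/2}$.
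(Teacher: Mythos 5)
Your overall strategy -- the integration-by-parts identities (A), (B), the reduction of $\partial_x^2u^1_{\neq}$ via incompressibility, and H\"older in time with the exponential weight split between the $e^{2\epsilon\nu^{1/3}t}$-quantities in $E_3$ and the $e^{3\epsilon\nu^{1/3}t}$-quantity in $E_5$ -- is exactly the paper's. But your exponent choices fail concretely. For the first inequality, write $F=\|\partial_x\nabla u^2_{\neq}\|_{L^2}$, $G=\|\partial_x^2u^2_{\neq}\|_{L^2}$, $H=\|\partial_x\Delta u^2_{\neq}\|_{L^2}$. Your split $F^2\le F^{3/2}(GH)^{1/4}$ with H\"older $(4/3,8,8)$ pairs the three factors with the weights $e^{2\epsilon\nu^{1/3}t}$, $e^{3\epsilon\nu^{1/3}t}$, $e^{2\epsilon\nu^{1/3}t}$, which accounts for a total weight of only $e^{(3+\frac34+\frac12)\epsilon\nu^{1/3}t}=e^{\frac{17}{4}\epsilon\nu^{1/3}t}$, short of the required $e^{\frac92\epsilon\nu^{1/3}t}$; the leftover $e^{\frac14\epsilon\nu^{1/3}t}$ grows and cannot be absorbed, and even disregarding this your split would produce $E_5^{1/8}E_3^{7/8}$ rather than the claimed $E_5^{1/4}E_3^{3/4}$. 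The split that works is the balanced one, $F^2\le F\cdot G^{1/2}H^{1/2}$ with exponents $(2,4,4)$: the weights then add to $2+\frac32+1=\frac92$ and one gets $E_3\cdot(\nu^{-1/6}E_5)^{1/2}(\nu^{-1/2}E_3)^{1/2}=\nu^{-1/3}E_3^{3/2}E_5^{1/2}$, which is the paper's two-step computation $\|e^{\frac94\epsilon\nu^{1/3}t}F\|_{L^2L^2}\le\|e^{2\epsilon\nu^{1/3}t}F\|_{L^2L^2}^{1/2}\|e^{\frac52\epsilon\nu^{1/3}t}F\|_{L^2L^2}^{1/2}$ in one line. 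A related caution for the $\partial_x^2u^k_{\neq}$ piece of the second inequality: interpolating the two $L^2L^2$ bounds at weights $2\epsilon$ and $3\epsilon$ gives $\nu^{1/6}(\nu^{-1/6}E_5)^{1/4}(\nu^{-1/2}E_3)^{3/4}=\nu^{-1/4}E_5^{1/4}E_3^{3/4}$, a negative power of $\nu$; you must instead pair against the $L^\infty L^2$ bound $\|(\partial_x^2+\partial_z^2)u^k_{\neq}\|_{L^2}\le e^{-2\epsilon\nu^{1/3}t}E_3$ and integrate $e^{-\frac12\epsilon\nu^{1/3}t}$ in time (cost $\nu^{-1/6}$), as you correctly do for the $\partial_x\partial_z$ piece.

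The fourth inequality is not a symmetric variant of the first three, and the "symmetry" claim hides the essential ingredient. The identity one actually uses is $\|\partial_x\nabla u_{\neq}\|_{L^2}^2+\|\partial_z\nabla(u^2_{\neq},u^3_{\neq})\|_{L^2}^2\le C\|(\partial_x,\partial_z)\Delta u_{\neq}\|_{L^2}\big(\|\partial_x^2u_{\neq}\|_{L^2}+\|\partial_x\partial_z(u^2_{\neq},u^3_{\neq})\|_{L^2}\big)$, pairing the weight-$e^{\frac94\epsilon\nu^{1/3}t}$ second-order factors (already bounded by the first three inequalities) against the weight-$e^{2\epsilon\nu^{1/3}t}$ third-order factor; note $2+\frac94=2\cdot\frac{17}{8}$. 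The third-order factor -- in particular $\|\partial_z\Delta u^3_{\neq}\|_{L^2}$ and $\|\partial_x\Delta u^1_{\neq}\|_{L^2}$ -- is \emph{not} controlled by $E_{3,0}$; you must use the second identity of Lemma \ref{lem:u-relation}, $\|\Delta(\partial_x,\partial_z)u_{\neq}\|_{L^2}\le\|\Delta\omega^2_{\neq}\|_{L^2}+\|\nabla\Delta u^2_{\neq}\|_{L^2}$, which brings in $E_{3,1}$ at cost $\nu^{-5/6}$. The $\nu$-budget is then exactly tight ($\nu\cdot\nu^{-5/6}\cdot\nu^{-1/6}=1$), so there is no "slack" from the prefactor $\nu^{1/2}$ to play with. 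Without this reduction to $\omega^2_{\neq}$ the terms $\partial_z\nabla u^3_{\neq}$ and $\partial_x\nabla u^1_{\neq}$ are unaccounted for.
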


\begin{proof}
Since $\|e^{2\epsilon\nu^{1/3}t}\partial_x\nabla u^2_{\neq}\|_{L^2L^2}\leq E_{3,0}\le E_3$, and \begin{align*}
\|e^{\f52\epsilon\nu^{1/3}t}\partial_x\nabla u^2_{\neq}\|_{L^2L^2}\leq&\|e^{2\epsilon\nu^{1/3}t}\Delta u^2_{\neq}\|_{L^2L^2}^{\f12}\|e^{3\epsilon\nu^{1/3}t}\partial_x^2 u^2_{\neq}\|_{L^2L^2}^{\f12}\\ \leq&\|e^{2\epsilon\nu^{1/3}t}\partial_x\Delta u^2_{\neq}\|_{L^2L^2}^{\f12}(\nu^{-1/6}E_5)^{\f12}\\\leq&(\nu^{-1/2}E_3)^{\f12}(\nu^{-1/6}E_5)^{\f12}=\nu^{-1/3}E^{\f12}_5E^{\f12}_3,
\end{align*}
we get
 \begin{align*}
   \nu^{1/6}
   \|e^{\f94\epsilon\nu^{1/3}t}\partial_x\nabla u^2_{\neq}\|_{L^2L^2}\leq&\nu^{1/6}\|e^{2\epsilon\nu^{1/3}t}\partial_x\nabla u^2_{\neq}\|_{L^2L^2}^{\f12}\|e^{\f52\epsilon\nu^{1/3}t}\partial_x\nabla u^2_{\neq}\|_{L^2L^2}^{\f12}\\
   \leq& \nu^{1/6}E_3^{\f12}(\nu^{-1/3}E^{\f12}_5E^{\f12}_3)^{\f12}=E^{\f14}_5E^{\f34}_3.
\end{align*}

For $k\in\{2,3\} $,  we have
 \begin{align*}&\|\partial_x(\partial_x,\partial_z)u^k_{\neq}\|^2_{L^2}
      \leq \|\partial_x^2u^k_{\neq}\|_{L^2}\|(\partial_x^2+\partial_z^2)u^k_{\neq}\|_{L^2}
      \leq\|\partial_x^2u^k_{\neq}\|_{L^2}e^{-2\epsilon\nu^{1/3}t}E_3,\end{align*}
which gives
 \begin{align*}
 \|e^{\f94\epsilon\nu^{1/3}t}\partial_x(\partial_x,\partial_z) u^k_{\neq}\|_{L^2L^2}^2\leq&\|e^{\f52\epsilon\nu^{1/3}t}\partial_x^2 u^k_{\neq}\|_{L^1L^2}E_3\\
  \leq&\|e^{3\epsilon\nu^{1/3}t}\partial_x^2 u^k_{\neq}\|_{L^2L^2}\|e^{-\f12\epsilon\nu^{1/3}t}\|_{L^2(0,T)}E_3\\\leq& (\nu^{-1/6}E_5)(C\nu^{-1/6})E_3=C\nu^{-1/3}E_5E_3.
 \end{align*}
Due to  $\partial_x^2 u^1_{\neq}=-\partial_x\partial_y u^2_{\neq}-\partial_x\partial_z u^3_{\neq} $, the third inequality follows from the first two inequalities.

Notice that
\begin{align*}\|\partial_x\nabla u_{\neq}\|_{L^2}^2+\|\partial_z\nabla (u_{\neq}^2,u_{\neq}^3)\|_{L^2}^2
      &\leq \|\partial_x^2 u_{\neq}\|_{L^2}\|\Delta u_{\neq}\|_{L^2}+\|\partial_z\Delta (u_{\neq}^2,u_{\neq}^3)\|_{L^2}\|\partial_z (u_{\neq}^2,u_{\neq}^3)\|_{L^2}\\&
      \leq C\|(\partial_x,\partial_z)\Delta u_{\neq}\|_{L^2}\big(\|\partial_x^2 u_{\neq}\|_{L^2}+\|\partial_x\partial_z (u_{\neq}^2,u_{\neq}^3)\|_{L^2}\big),
      \end{align*}
 which along with Lemma \ref{lem:u-relation} gives
  \begin{align*}&\nu \Big(\|e^{\f{17}{8}\epsilon\nu^{1/3}t}\partial_x\nabla u_{\neq}\|_{L^2L^2}^2+\|e^{\f{17}{8}\epsilon\nu^{1/3}t}\partial_z\nabla (u_{\neq}^2,u_{\neq}^3)\|_{L^2L^2}^2\Big) \\
      &\leq C\nu\|e^{2\epsilon\nu^{1/3}t}\big(\partial_x,\partial_z)\Delta u_{\neq}\|_{L^2L^2}\Big(\|e^{\f{9}{4}\epsilon\nu^{1/3}t}\partial_x^2 u_{\neq}\|_{L^2L^2}+|e^{\f{9}{4}\epsilon\nu^{1/3}t}\partial_x\partial_z (u_{\neq}^2,u_{\neq}^3)\|_{L^2L^2}\Big)\\
      &\leq C\nu\Big(\|e^{2\epsilon\nu^{\f13}t}\nabla\Delta u_{\neq}^2\|_{L^2L^2}+\|e^{2\epsilon\nu^{\f13}t}\Delta\omega_{\neq}^2\|_{L^2L^2}\Big)
      \big(\nu^{-1/6}(E^{\f14}_5E^{\f34}_3+E^{\f12}_5E^{\f12}_3)\big)\\
      &\leq C\nu^{5/6}\big(\nu^{-3/4}E_{3,0}+\nu^{-5/6}E_{3,1}\big)
  \big(E^{\f14}_5E^{\f34}_3+E^{\f12}_5E^{\f12}_3\big)\leq CE_3\big(E^{\f14}_5E^{\f34}_3+E^{\f12}_5E^{\f12}_3\big).
  \end{align*}

This completes the proof of the lemma.
  \end{proof}

\subsection{Interaction between nonzero modes}

\begin{Lemma}\label{lem:int-nn}
  It holds that
 \begin{align}
\label{lemma2.1-1}&\|e^{4\epsilon\nu^{\f13}t}|u_{\neq}|^2\|_{L^2L^2}^2+\|e^{4\epsilon\nu^{\f13}t}u_{\neq}\cdot\nabla u_{\neq}\|_{L^2L^2}^2+\|e^{4\epsilon\nu^{\f13}t}\partial_x(u_{\neq}\cdot\nabla u_{\neq})\|_{L^2L^2}^2\\
\nonumber&\quad+\|e^{4\epsilon\nu^{\f13}t}\partial_z(u_{\neq}\cdot\nabla u_{\neq}^3)\|_{L^2L^2}^2+\|e^{4\epsilon\nu^{\f13}t}\nabla(u_{\neq}\cdot\nabla u_{\neq}^2)\|_{L^2L^2}^2\leq C\nu^{-1}E_3^4,
\end{align}
and
\begin{align}
\label{lemma2.1-3}&\|e^{4\epsilon\nu^{\f13}t}\nabla(u_{\neq}\cdot\nabla u_{\neq})\|_{L^2L^2}^2\leq C\nu^{-\f53}E_3^4.
\end{align}
In particular, we have
\begin{align}
\label{lemma2.1-4}\|(\nu^{\f23}+\nu t)^{\f12}\nabla(u_{\neq}\cdot\nabla u_{\neq}^3)\|_{L^2L^2}^2\leq C\nu^{-1}E_3^4.
\end{align}
\end{Lemma}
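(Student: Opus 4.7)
\smallskip
\noindent\textbf{Proof proposal for Lemma \ref{lem:int-nn}.} The plan is to split the time weight as $e^{4\epsilon\nu^{1/3}t}=e^{2\epsilon\nu^{1/3}t}\cdot e^{2\epsilon\nu^{1/3}t}$, so that each factor in a bilinear product inherits the enhanced dissipation weight appearing in the definition of $E_3=E_{3,0}+E_{3,1}$. For each term on the left side, I will apply the anisotropic bilinear inequalities from Lemma~\ref{lemma-A.4}, using that $u_{\neq}$ has zero $x$-mean, so $(\partial_x,\partial_z)$ are ``good'' (cheap) derivatives while $\partial_y$ acting on $u_{\neq}^1,u_{\neq}^3$ is the ``bad'' (expensive) derivative.

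First I would handle $\||u_{\neq}|^2\|_{L^2}$ and $\|u_{\neq}\cdot\nabla u_{\neq}\|_{L^2}$. Using \eqref{f1} of Lemma~\ref{lemma-A.4} with $(j,k)\in\{(1,3),(3,1)\}$, one factor is placed in $L^\infty_t H^1$ (via the pieces of $E_{3,0}$ controlling $(\partial_x,\partial_z)\nabla u^2_{\neq}$ in $L^\infty L^2$ and $(\partial_x^2+\partial_z^2)u^3_{\neq}$ in $L^\infty L^2$, together with Lemma~\ref{lem:u-relation} to convert $\partial_y$-derivatives through the divergence-free condition), while the second factor carries the $L^2_t$ decay. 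The $\partial_x$ versions follow by a similar split after using $\partial_x(u_{\neq}\cdot\nabla u_{\neq})=(\partial_xu_{\neq})\cdot\nabla u_{\neq}+u_{\neq}\cdot\nabla(\partial_xu_{\neq})$ and appealing to Lemma~\ref{lem:u-nonzero} for the cost of one extra $\partial_x$. For the $\partial_z(u_{\neq}\cdot\nabla u_{\neq}^3)$ term, I place $\nabla u_{\neq}^3$ on the $L^2_t$ side with two good derivatives $(\partial_x^2+\partial_z^2)$ (from $E_{3,0}$) and use \eqref{f2} of Lemma~\ref{lemma-A.4}. For $\nabla(u_{\neq}\cdot\nabla u_{\neq}^2)$, the key point is that all derivatives hitting $u_{\neq}^2$ are controlled by $E_{3,0}$ with cost only $\nu^{-1/2}$ (through $\nu^{1/2}\|\partial_x\nabla u^2_{\neq}\|_{L^2L^2}$ and $\nu^{3/4}\|\nabla\Delta u^2_{\neq}\|_{L^2L^2}$), yielding the $\nu^{-1}E_3^4$ bound; here \eqref{f4} and \eqref{f6} of Lemma~\ref{lemma-A.4} are the right tools, and the $\partial_y^2u^2_{\neq}=-\partial_x\partial_yu^1_{\neq}-\partial_y\partial_zu^3_{\neq}$ reduction via divergence-free keeps the worst second-order pieces.

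The weaker bound \eqref{lemma2.1-3} for $\|\nabla(u_{\neq}\cdot\nabla u_{\neq})\|_{L^2L^2}$ is where the loss from $\nu^{-1}$ to $\nu^{-5/3}$ appears: the derivative $\partial_y$ landing on $u_{\neq}^1$ or $u_{\neq}^3$ must be traded through $\omega^2_{\neq}$ using $E_{3,1}=\nu^{1/3}(\|\nabla\omega^2_{\neq}\|_{L^\infty L^2}+\nu^{1/2}\|\Delta\omega^2_{\neq}\|_{L^2L^2})$, which costs an extra $\nu^{-1/3}$ factor squared; specifically, $\nabla\partial_yu_{\neq}^{1,3}$ is controlled by $\Delta u_{\neq}^{1,3}$ via Lemma~\ref{lem:u-relation}, which combined with $\|\Delta(\partial_x,\partial_z)u_{\neq}\|_{L^2}\le\|\Delta\omega_{\neq}^2\|_{L^2}+\|\nabla\Delta u_{\neq}^2\|_{L^2}$ gives the $\nu^{-5/6}E_{3,1}$ contribution, whose square is the $\nu^{-5/3}E_3^4$. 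Finally, \eqref{lemma2.1-4} follows from \eqref{lemma2.1-1} by absorbing the polynomial weight $(\nu^{2/3}+\nu t)^{1/2}$ into a subset of the $e^{4\epsilon\nu^{1/3}t}$ weight (since $(\nu^{2/3}+\nu t)^{1/2}e^{-\epsilon\nu^{1/3}t}\le C$).

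The main obstacle will be the bookkeeping in the $\nabla(u_{\neq}\cdot\nabla u_{\neq}^2)$ estimate: one must carefully sort each of the nine products $\partial_i(u^j_{\neq}\partial_ju^2_{\neq})$ and decide which factor goes into $L^\infty L^2$ (paying the $E_{3,0}$ norms on $\nabla u^2_{\neq}$ with one good derivative) versus which goes into $L^2 L^2$ with the viscous cost $\nu^{-1/2}$, while systematically using Lemma~\ref{lem:u-relation} to avoid ever taking two $y$-derivatives of a bad component. Once the matching is done correctly, all contributions aggregate to $C\nu^{-1/2}E_3^2$ in $L^2_tL^2$, squaring to $C\nu^{-1}E_3^4$.
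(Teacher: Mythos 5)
Your overall strategy for \eqref{lemma2.1-1} and \eqref{lemma2.1-3} is essentially the paper's: split $e^{4\epsilon\nu^{1/3}t}$ into two enhanced-dissipation weights, apply the anisotropic bilinear estimates \eqref{f1}--\eqref{f6} of Lemma~\ref{lemma-A.4} with one factor in $L^\infty_tL^2$ and one in $L^2_tL^2$, use Lemma~\ref{lem:u-relation} and the divergence-free condition to route all $\partial_y$-derivatives of the bad components through $\Delta u^2_{\neq}$ and $\omega^2_{\neq}$, and correctly locate the loss from $\nu^{-1}$ to $\nu^{-5/3}$ in the $\|\Delta\omega^2_{\neq}\|_{L^2L^2}$ piece of $E_{3,1}$. (The detour through Lemma~\ref{lem:u-nonzero} for the extra $\partial_x$ is unnecessary — \eqref{f2} and \eqref{f3} already deliver $\partial_x(u^k_{\neq}\partial_k u_{\neq})$ directly — but it is not an error.)

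There is, however, a genuine gap in your derivation of \eqref{lemma2.1-4}. You claim it follows from \eqref{lemma2.1-1} by the bound $(\nu^{2/3}+\nu t)^{1/2}e^{-\epsilon\nu^{1/3}t}\le C$. This cannot work: \eqref{lemma2.1-1} controls only $\partial_x(u_{\neq}\cdot\nabla u_{\neq})$ and $\partial_z(u_{\neq}\cdot\nabla u_{\neq}^3)$, whereas \eqref{lemma2.1-4} involves the full gradient $\nabla(u_{\neq}\cdot\nabla u_{\neq}^3)$, whose $\partial_y$-component is exactly the term excluded from \eqref{lemma2.1-1} and available only through \eqref{lemma2.1-3} at the weaker rate $\nu^{-5/3}$. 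Moreover, your stated inequality discards the factor $\nu^{1/3}$ that the polynomial weight actually provides, and that factor is indispensable: the correct argument uses
\begin{align*}
(\nu^{2/3}+\nu t)^{1/2}=\nu^{1/3}(1+\nu^{1/3}t)^{1/2}\le C\nu^{1/3}e^{4\epsilon\nu^{1/3}t},
\end{align*}
so that
\begin{align*}
\|(\nu^{2/3}+\nu t)^{1/2}\nabla(u_{\neq}\cdot\nabla u_{\neq}^3)\|_{L^2L^2}^2\le C\nu^{2/3}\|e^{4\epsilon\nu^{1/3}t}\nabla(u_{\neq}\cdot\nabla u_{\neq}^3)\|_{L^2L^2}^2\le C\nu^{2/3}\cdot\nu^{-5/3}E_3^4=C\nu^{-1}E_3^4.
\end{align*}
Without retaining the $\nu^{1/3}$ gain (squared to $\nu^{2/3}$), starting from \eqref{lemma2.1-3} would only yield $\nu^{-5/3}E_3^4$, and starting from \eqref{lemma2.1-1} does not reach the $\partial_y$-component at all.
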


\begin{proof}
By \eqref{f1} in Lemma \ref{lemma-A.4}, we have
\begin{align*}
\||u_{\neq}|^2\|_{L^2}\leq& C\big(\|\partial_xu_{\neq}\|_{H^1}+\|u_{\neq}\|_{H^1}\big)\big(\|\partial_zu_{\neq}\|_{L^2}+\|u_{\neq}\|_{L^2}\big)\\\leq& C\|\nabla\partial_x^2 u_{\neq}\|_{L^2} \|(\partial_x,\partial_z)\partial_xu_{\neq}\|_{L^2},
\end{align*}
and for $k\in\{1,3\}$, by \eqref{f2} in Lemma \ref{lemma-A.4} we have
\begin{align*}
\|u_{\neq}^k\partial_ku_{\neq}\|_{L^2}+\|\partial_x(u_{\neq}^k\partial_ku_{\neq})\|_{L^2}&\leq C\|(\partial_x\partial_zu_{\neq}^k,\partial_xu_{\neq}^k,\partial_zu_{\neq}^k,u_{\neq}^k)\|_{H^1}
\|(\partial_x\partial_ku_{\neq},\partial_ku_{\neq})\|_{L^2}\\
&\leq C\|\nabla(\partial_x,\partial_z)\partial_xu_{\neq}\|_{L^2}\|(\partial_x,\partial_z)\partial_xu_{\neq}\|_{L^2},
\end{align*}
and by \eqref{f2}  again, we have
\begin{align*}
\|\partial_z(u_{\neq}^k\partial_ku_{\neq}^3)\|_{L^2}\leq& C\|(\partial_x\partial_zu_{\neq}^k,\partial_xu_{\neq}^k,\partial_zu_{\neq}^k,u_{\neq}^k)\|_{H^1}
\|(\partial_z\partial_ku_{\neq}^3,\partial_ku_{\neq}^3)\|_{L^2}\\
\leq&C\|\nabla(\partial_x,\partial_z)\partial_xu_{\neq}\|_{L^2}\|(\partial_x^2+\partial_z^2)u^3_{\neq}\|_{L^2}.
\end{align*}
For $k=2$, by \eqref{f3} in Lemma \ref{lemma-A.4}, we have
\begin{align*}
&\|u_{\neq}^k\partial_ku_{\neq}\|_{L^2}+\|(\partial_x,\partial_z)(u_{\neq}^k\partial_ku_{\neq})\|_{L^2}\\ &\leq C\|(\partial_xu_{\neq}^k,\partial_zu_{\neq}^k,u_{\neq}^k) \|_{H^{1}}\|(\partial_x\partial_z\partial_ku_{\neq},\partial_x\partial_ku_{\neq},\partial_z\partial_ku_{\neq},
\partial_ku_{\neq})\|_{L^2}\\
&\leq C \|(\partial_x,\partial_z)\nabla u_{\neq}^2\|_{L^{2}}\|\nabla(\partial_x,\partial_z)\partial_xu_{\neq}\|_{L^2}.
\end{align*}
Summing up,  we get by Lemma \ref{lem:u-relation}  that
\begin{align*}
&\||u_{\neq}|^2\|_{L^2}+\|u_{\neq}\cdot\nabla u_{\neq}\|_{L^2}+\|\partial_x(u_{\neq}\cdot\nabla u_{\neq})\|_{L^2}+\|\partial_z(u_{\neq}\cdot\nabla u_{\neq}^3)\|_{L^2}\\
&\leq C \|\nabla(\partial_x,\partial_z)\partial_xu_{\neq}\|_{L^2}\big(\|(\partial_x,\partial_z)\partial_xu_{\neq}\|_{L^2}+
\|(\partial_x^2+\partial_z^2)u^3_{\neq}\|_{L^2}+\|(\partial_x,\partial_z)\nabla u_{\neq}^2\|_{L^{2}}\big)\\
&\leq C\Big(\|(\partial_x,\partial_z)\Delta u_{\neq}^2\|_{L^2}+\|(\partial_x^2+\partial_z^2)\nabla u_{\neq}^3\|_{L^2}) (\|(\partial_x,\partial_z)\nabla u_{\neq}^2\|_{L^{2}}+\|(\partial_x^2+\partial_z^2) u_{\neq}^3\|_{L^2}\Big).
\end{align*}

For $k\in\{1,3\}$, by \eqref{f4} in Lemma \ref{lemma-A.4}, we have
\begin{align*}
\|\nabla(u_{\neq}^k\partial_ku_{\neq}^2)\|_{L^2}\leq &C\|(\partial_x\partial_zu_{\neq}^k,\partial_xu_{\neq}^k,\partial_zu_{\neq}^k,u_{\neq}^k)\|_{H^1}\|\partial_ku_{\neq}^2\|_{H^1}\\
\leq &C\|\nabla(\partial_x,\partial_z)\partial_x u_{\neq}\|_{L^2}
\|(\partial_x,\partial_z)\nabla u_{\neq}^2\|_{L^2},
\end{align*}
and for $k=2$, by \eqref{f5} in Lemma \ref{lemma-A.4}, we have
\begin{align*}
&\|\nabla(u_{\neq}^k\partial_ku_{\neq}^2)\|_{L^2}\leq \|(\partial_zu_{\neq}^k,u_{\neq}^k)\|_{H^1}\|(\partial_x\partial_ku_{\neq}^2,\partial_ku_{\neq}^2)\|_{H^1}\leq C\|(\partial_x,\partial_z)\nabla u_{\neq}^2\|_{L^{2}}\|\partial_x\Delta u_{\neq}^2\|_{L^2}.
\end{align*}
Then it follows from Lemma \ref{lem:u-relation} that
\begin{align*}
\|\nabla(u_{\neq}\cdot\nabla u_{\neq}^2)\|_{L^2}\leq& C ( \|(\partial_x,\partial_z)\Delta u_{\neq}^2\|_{L^2}+\|(\partial_x^2+\partial_z^2)\nabla u_{\neq}^3\|_{L^2}) \|(\partial_x,\partial_z)\nabla u_{\neq}^2\|_{L^{2}}.
\end{align*}
This shows that
\begin{align*}
&\|e^{4\epsilon\nu^{\f13}t}|u_{\neq}|^2\|_{L^2L^2}^2+\|e^{4\epsilon\nu^{\f13}t}u_{\neq}\cdot\nabla u_{\neq}\|_{L^2L^2}^2+\|e^{4\epsilon\nu^{\f13}t}\partial_x(u_{\neq}\cdot\nabla u_{\neq})\|_{L^2L^2}^2\\
\nonumber&+\|e^{4\epsilon\nu^{\f13}t}\partial_z(u_{\neq}\cdot\nabla u_{\neq}^3)\|_{L^2L^2}^2+\|e^{4\epsilon\nu^{\f13}t}\nabla(u_{\neq}\cdot\nabla u_{\neq}^2)\|_{L^2L^2}^2\\
&\leq C\big( \|e^{2\epsilon\nu^{\f13}t}(\partial_x,\partial_z)\Delta u_{\neq}^2\|_{L^2L^2}^2+\|e^{2\epsilon\nu^{\f13}t}\nabla(\partial_x^2+\partial_z^2) u_{\neq}^3\|_{L^2L^2}^2\big)\big(\|e^{2\epsilon\nu^{\f13}t}(\partial_x,\partial_z)\nabla u_{\neq}^2\|_{L^{\infty}L^2}^2\\
&\quad+\|e^{2\epsilon\nu^{\f13}t}(\partial_x^2+\partial_z^2) u_{\neq}^3\|_{L^{\infty}L^2}^2\big)
\leq C \nu^{-1}E_3^4.
\end{align*}
This proves \eqref{lemma2.1-1}.

For $k\in\{1,3\}$, by \eqref{f6} in Lemma \ref{lemma-A.4} and Lemma \ref{lem:u-relation},  we have
\begin{align*}
\|\nabla(u_{\neq}^k\partial_k u_{\neq})\|_{L^2}^2
\leq& C\Big(\|(\partial_zu_{\neq}^k,u_{\neq}^k)\|_{H^2}^2\|(\partial_x\partial_k u_{\neq},\partial_k u_{\neq})\|_{L^2}^2\\&\quad+
\|(\partial_x\partial_zu_{\neq}^k,\partial_xu_{\neq}^k,\partial_zu_{\neq}^k,u_{\neq}^k)\|_{L^2}^2\|\partial_k u_{\neq}\|_{H^2}^2\Big)\\ \leq& C\Big(\|\Delta(\partial_x,\partial_z)u_{\neq}^k\|_{L^2}^2\|(\partial_x,\partial_z) \partial_x u_{\neq}\|_{L^2}^2+
\|(\partial_x,\partial_z) \partial_xu_{\neq}^k\|_{L^2}^2\|\Delta\partial_k u_{\neq}\|_{L^2}^2\Big)\\ \leq&C\Big(\|\nabla\Delta u_{\neq}^2\|_{L^2}^2+\|\Delta\omega_{\neq}^2\|_{L^2}^2)(\|(\partial_x^2+\partial_z^2 )u_{\neq}^3\|_{L^2}^2+\|(\partial_x,\partial_z)\nabla u_{\neq}^2\|_{L^2}^2\Big),
\end{align*}
and for $k=2$,  by \eqref{f5} in Lemma \ref{lemma-A.4} and  Lemma \ref{lem:u-relation}, we have
\begin{align*}
\|\nabla(u_{\neq}^k\partial_k u_{\neq})\|_{L^2}^2
\leq& C\|(\partial_zu_{\neq}^k,u_{\neq}^k)\|_{H^1}\|(\partial_x\partial_k u_{\neq},\partial_k u_{\neq})\|_{H^1}\\ \leq& C\|(\partial_x,\partial_z)\nabla u_{\neq}^2\|_{L^2}^2\|\Delta\partial_x u_{\neq}\|_{L^2}^2\\ \leq&C\|(\partial_x,\partial_z)\nabla u_{\neq}^2\|_{L^2}^2\big(\|\nabla\Delta u_{\neq}^2\|_{L^2}^2+\|\Delta\omega_{\neq}^2\|_{L^2}^2\big).
\end{align*}
Thus, we arrive at
\begin{align*}
&\|e^{4\epsilon\nu^{\f13}t}\nabla(u_{\neq}\cdot\nabla u_{\neq})\|_{L^2L^2}^2\\
&\leq C\Big(\|e^{2\epsilon\nu^{\f13}t}\nabla\Delta u_{\neq}^2\|_{L^2L^2}^2+\|e^{2\epsilon\nu^{\f13}t}\Delta\omega_{\neq}^2\|_{L^2L^2}^2\Big)
\Big(\|e^{2\epsilon\nu^{\f13}t}(\partial_x^2+\partial_z^2 )u_{\neq}^3\|_{L^{\infty}L^2}^2\\
&+\|e^{2\epsilon\nu^{\f13}t}(\partial_x,\partial_z)\nabla u_{\neq}^2\|_{L^{\infty}L^2}^2\Big)\\
\leq&C\big(\nu^{-\f32}E_{3,0}^2+\nu^{-\f53}E_{3,1}^2\big) E_3^2\leq C \nu^{-\f53}E_3^4,
\end{align*}
which gives  \eqref{lemma2.1-3}.  Finally, we have
\begin{align*}
\|(\nu^{\f23}+\nu t)^{\f12}\nabla(u_{\neq}\cdot\nabla u_{\neq}^3)\|_{L^2L^2}^2\leq &C \nu^{\f23}\|(1+\nu^{\f13} t)^{\f12}e^{-4\epsilon\nu^{\f13}t}e^{4\epsilon\nu^{\f13}t}\nabla(u_{\neq}\cdot\nabla u_{\neq}^3)\|_{L^2L^2}^2 \\
\leq& C \nu^{\f23}\|e^{4\epsilon\nu^{\f13}t}\nabla(u_{\neq}\cdot\nabla u_{\neq}^3)\|_{L^2L^2}^2 \leq C \nu^{-1}E_3^4,
\end{align*}
which gives \eqref{lemma2.1-4}.
\end{proof}

\subsection{Interaction between zero modes}

\begin{Lemma}\label{lem:int-zz1}
It holds that
\begin{align*}
\|\Delta(\bar{u}^2\partial_y\bar{u}^{1,0}+\bar{u}^3\partial_z\bar{u}^{1,0})\|_{L^2}^2\leq &C\Bigg\{\big(\|\Delta\bar{u}^2\|_{L^2}^2+\|\nabla\bar{u}^3\|_{L^2}^2\big)\|\nabla\Delta\bar{u}^{1,0}\|_{L^2}^2\\
&+\|\min((\nu^{\f23}+\nu t)^{\frac{1}{2}},1-y^2)\Delta\bar{u}^3\|_{L^2}^2 \left\|\f{\partial_z\bar{u}^{1,0}}{\min((\nu t)^{\frac{1}{2}},1-y^2)}\right\|_{L^{\infty}}^2\Bigg\}.
\end{align*}
\end{Lemma}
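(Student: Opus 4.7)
The plan is to expand $\Delta(\bar u^2\pa_y\bar u^{1,0}+\bar u^3\pa_z\bar u^{1,0})$ by the Leibniz rule and to bound each of the resulting terms separately. Since $\pa_x\bar u^\al=0$, the Laplacian reduces to $\pa_y^2+\pa_z^2$ on zero modes, and the expansion gives
\beno
\Delta(\bar u^\al\pa_\al\bar u^{1,0}) = (\Delta\bar u^\al)\pa_\al\bar u^{1,0} + 2(\pa_i\bar u^\al)(\pa_i\pa_\al\bar u^{1,0}) + \bar u^\al\Delta\pa_\al\bar u^{1,0},
\eeno
summed over $\al\in\{2,3\},\ i\in\{2,3\}$, producing six scalar terms. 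Exactly one of them, namely $(\Delta\bar u^3)\pa_z\bar u^{1,0}$, will be bounded by the weighted product in the stated inequality; the other five should each be controlled by $(\|\Delta\bar u^2\|_{L^2}+\|\nabla\bar u^3\|_{L^2})\|\nabla\Delta\bar u^{1,0}\|_{L^2}$.

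For the critical term, the boundary conditions $\bar u^{1,0}|_{y=\pm 1}=0$ and hence $\pa_z\bar u^{1,0}|_{y=\pm 1}=0$ will be used to factor
\beno
\|(\Delta\bar u^3)\pa_z\bar u^{1,0}\|_{L^2}\leq \|\min((\nu^{2/3}+\nu t)^{1/2},1-y^2)\Delta\bar u^3\|_{L^2}\cdot\Big\|\f{\pa_z\bar u^{1,0}}{\min((\nu^{2/3}+\nu t)^{1/2},1-y^2)}\Big\|_{L^\infty},
\eeno
and then the elementary inequality $(\nu t)^{1/2}\leq (\nu^{2/3}+\nu t)^{1/2}$ lets us replace the weight in the denominator by $\min((\nu t)^{1/2},1-y^2)$, matching the form stated in the lemma.

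Among the five routine terms, the three that carry $\bar u^2$ (i.e.\ $(\Delta\bar u^2)\pa_y\bar u^{1,0}$, $\bar u^2\Delta\pa_y\bar u^{1,0}$, and the cross term $(\pa_i\bar u^2)(\pa_i\pa_y\bar u^{1,0})$) will be handled directly. Indeed, the 2D Sobolev embedding $H^2\hookrightarrow L^\infty$ combined with the Gagliardo-Nirenberg inequality $\|f\|_{L^4}\leq C\|f\|_{L^2}^{1/2}\|\nabla f\|_{L^2}^{1/2}$ controls the $L^\infty$ and $L^4$ factors, while elliptic regularity with Poincar\'e (from $\bar u^2|_{y=\pm 1}=0$ and $\bar u^{1,0}|_{y=\pm 1}=0$) converts $\|\bar u^2\|_{H^2},\|\bar u^{1,0}\|_{H^3}$ into $\|\Delta\bar u^2\|_{L^2}$ and $\|\nabla\Delta\bar u^{1,0}\|_{L^2}$ respectively.

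The main obstacle will be the remaining pair of terms $\bar u^3\Delta\pa_z\bar u^{1,0}$ and $2(\pa_i\bar u^3)(\pa_i\pa_z\bar u^{1,0})$: since $\|\bar u^3\|_{L^\infty}$ is not controlled by $\|\nabla\bar u^3\|_{L^2}$ in two space dimensions, the previous argument breaks down. The plan is to combine the 1D Poincar\'e inequality (from $\bar u^3|_{y=\pm 1}=0$), the Bochner-Sobolev embedding $L^\infty_z\hookleftarrow H^1_z$, and the divergence-free identity $\pa_z\bar u^3=-\pa_y\bar u^2$. For the worst term,
\beno
\|\bar u^3\Delta\pa_z\bar u^{1,0}\|_{L^2}\leq C\|\pa_y\bar u^3\|_{L^\infty_z L^2_y}\|\Delta\pa_z\bar u^{1,0}\|_{L^2} \leq C\big(\|\pa_y\bar u^3\|_{L^2}+\|\pa_z\pa_y\bar u^3\|_{L^2}\big)\|\nabla\Delta\bar u^{1,0}\|_{L^2},
\eeno
and the identity $\|\pa_z\pa_y\bar u^3\|_{L^2}=\|\pa_y^2\bar u^2\|_{L^2}\leq C\|\Delta\bar u^2\|_{L^2}$ will deliver the bound. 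The cross term is handled by the anisotropic split $\|\pa_i\bar u^3\|_{L^2_y L^\infty_z}\|\pa_i\pa_z\bar u^{1,0}\|_{L^\infty_y L^2_z}$, with the first factor controlled by the same divergence-free plus $H^1_z$ trick and the second by $L^\infty_y\hookleftarrow H^1_y$ together with $\|\bar u^{1,0}\|_{H^3}\leq C\|\nabla\Delta\bar u^{1,0}\|_{L^2}$. Squaring the pointwise bound obtained for each of the six terms and summing then yields the stated inequality.
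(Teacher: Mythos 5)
Your proposal is correct and follows essentially the same route as the paper: expand the Laplacian, isolate $(\Delta\bar u^3)\partial_z\bar u^{1,0}$ for the weighted bound (using $(\nu t)^{1/2}\le(\nu^{2/3}+\nu t)^{1/2}$), and exploit the divergence-free relation $\partial_z\bar u^3=-\partial_y\bar u^2$ together with anisotropic Sobolev embedding in the periodic $z$-direction so that the remaining $\bar u^3$-terms cost only $\|\Delta\bar u^2\|_{L^2}+\|\nabla\bar u^3\|_{L^2}$. The paper packages these steps through its bilinear estimates (Lemmas \ref{Lem: bilinear zero nonzero} and \ref{lemma-A.1}) rather than your explicit six-term expansion, but the substance is identical.
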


\begin{proof}
For $k\in\{2,3\}$, by Lemma \ref{Lem: bilinear zero nonzero}, Lemma \ref{lemma-A.1} and $\partial_y\bar{u}^2+\partial_z\bar{u}^3=0 $, we have
\begin{align*}
&\|\bar{u}^k\Delta\partial_k\bar{u}^{1,0}\|_{L^2}^2+\|\nabla\bar{u}^k\nabla\partial_k\bar{u}^{1,0}\|_{L^2}^2\\
&\leq\|\bar{u}^k\|_{L^{\infty}}^2\|\Delta\partial_k\bar{u}^{1,0}\|_{L^2}^2+
C(\|\partial_z\nabla\bar{u}^k\|_{L^2}^2+\|\nabla\bar{u}^k\|_{L^2}^2)\|\nabla\partial_k\bar{u}^{1,0}\|_{H^1}^2 \\
&\leq C\big(\|\partial_z\bar{u}^k\|_{H^1}^2+\|\bar{u}^k\|_{H^1}^2\big) \|\nabla\partial_k\bar{u}^{1,0}\|_{H^1}^2\leq C\big(\|\Delta\bar{u}^2\|_{L^2}^2+\|\nabla\bar{u}^k\|_{L^2}^2\big)\|\nabla\Delta\bar{u}^{1,0}\|_{L^2}^2.
\end{align*}
It is easy to see that
\begin{align*}
&\|\Delta\bar{u}^2\partial_y\bar{u}^{1,0}\|_{L^2}^2\leq \|\Delta\bar{u}^2\|_{L^2}^2\|\partial_y\bar{u}^{1,0}\|_{L^{\infty}}^2\leq C\|\Delta\bar{u}^2\|_{L^2}^2\|\bar{u}^{1,0}\|_{H^{3}}^2\leq  C\|\Delta\bar{u}^2\|_{L^2}^2\|\nabla\Delta \bar{u}^{1,0}\|_{L^2}^2,
\end{align*}
and
\begin{align*}
\|\Delta\bar{u}^3\partial_z\bar{u}^{1,0}\|_{L^2}^2\leq \|\min((\nu^{\f23}+\nu t)^{\frac{1}{2}},1-y^2)\Delta\bar{u}^3\|_{L^2}^2 \left\|\f{\partial_z\bar{u}^{1,0}}{\min((\nu t)^{\frac{1}{2}},1-y^2)}\right\|_{L^{\infty}}^2.
\end{align*}Summing up, we conclude the result.
\end{proof}

\begin{Lemma}\label{lem:int-zz2}
It holds that
\begin{align*}
\|\partial_t\nabla(\bar{u}^2\partial_y\bar{u}^{1,0})\|_{L^2L^2}^2+\|\partial_t\nabla(\bar{u}^3\partial_z\bar{u}^{1,0})\|_{L^2L^2}^2\leq C\nu E_2^2E_{1,0}^2.
\end{align*}
\end{Lemma}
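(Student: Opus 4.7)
The plan is to expand each $\partial_t\nabla(\bar u^k\partial_k\bar u^{1,0})$ (for $k\in\{2,3\}$) by the product rule as
\begin{align*}
\partial_t\nabla(\bar u^k\partial_k\bar u^{1,0})=\nabla(\partial_t\bar u^k\,\partial_k\bar u^{1,0})+\nabla(\bar u^k\,\partial_k\partial_t\bar u^{1,0}),
\end{align*}
and to control each piece via the bilinear estimate of Lemma \ref{lemma-A.1}, which is tailored to products in which one factor is $x$-independent (exactly the case here, since both $\bar u^k$ and $\bar u^{1,0}$ are zero modes). For the two pieces with $\partial_t$ on $\bar u^{1,0}$, the bound $\|\bar u^k\|_{H^1}+\|\partial_z\bar u^k\|_{H^1}\le CE_2$ (Lemma \ref{lem:u23-zero}) together with $\|\partial_t\bar u^{1,0}\|_{L^2H^3}\le C\nu^{1/2}E_{1,0}$ (built into $E_{1,0}$) immediately yields the desired $L^2L^2$ bound $C\nu^{1/2}E_2E_{1,0}$.

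For $\nabla(\partial_t\bar u^2\partial_y\bar u^{1,0})$, the Dirichlet condition $\bar u^2|_{y=\pm1}=0$ propagates to $\partial_t\bar u^2|_{y=\pm1}=0$, so Poincar\'e gives $\|\partial_t\bar u^2\|_{H^1}\le C\|\partial_t\nabla\bar u^2\|_{L^2}$. Combining the $E_2$-summand $\nu^{-1/2}\|\partial_t\nabla\bar u^2\|_{L^2L^2}\le E_2$ with $\|\bar u^{1,0}\|_{L^\infty H^4}\le E_{1,0}$, Lemma \ref{lemma-A.1} again yields $\|\nabla(\partial_t\bar u^2\partial_y\bar u^{1,0})\|_{L^2L^2}\le C\nu^{1/2}E_2E_{1,0}$, and so the $\bar u^2$ estimate is complete.

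The sharp piece is $\nabla(\partial_t\bar u^3\partial_z\bar u^{1,0})$, since $\partial_t\bar u^3$ has no direct $H^1$ control. I would split it once more by the product rule into $\partial_t\bar u^3\,\nabla\partial_z\bar u^{1,0}+\nabla\partial_t\bar u^3\cdot\partial_z\bar u^{1,0}$. The first term is harmless: $\|\partial_t\bar u^3\|_{L^2L^2}\|\nabla\partial_z\bar u^{1,0}\|_{L^\infty L^\infty}\le C\nu^{1/2}E_2\cdot E_{1,0}$ via the $E_2$-summand $\nu^{-1/2}\|\partial_t\bar u^3\|_{L^2L^2}\le E_2$ and Sobolev embedding. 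For the second term I rely on the pointwise weighted bound
\begin{align*}
|\partial_z\bar u^{1,0}|\le CE_{1,0}\,\min\!\big((\nu t)^{1/2},1-y^2\big)\le CE_{1,0}\,m(t,y),\quad m(t,y):=\min\!\big((\nu^{2/3}+\nu t)^{1/2},1-y^2\big),
\end{align*}
supplied by Lemma \ref{lem:u10-Linfty}; this reduces the task to bounding $\|m\,\nabla\partial_t\bar u^3\|_{L^2L^2}$.

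The main obstacle is that $E_2$ only controls this weighted gradient in $L^\infty L^2$, whereas we need $L^2L^2$. I would bridge the gap by substituting the equation \eqref{eq:u23-zero} for $\bar u^3$, writing
\begin{align*}
\nabla\partial_t\bar u^3=\nu\nabla\Delta\bar u^3-\nabla\partial_z\bar p-\nabla[(\bar u^2\partial_y+\bar u^3\partial_z)\bar u^3]-\nabla\overline{u_\neq\cdot\nabla u^3_\neq}.
\end{align*}
The dominant viscous term gives $\nu\|m\,\nabla\Delta\bar u^3\|_{L^2L^2}\le\nu\cdot\nu^{-1/2}E_2=\nu^{1/2}E_2$ directly from the $E_2$-summand $\nu^{1/2}\|m\,\nabla\Delta\bar u^3\|_{L^2L^2}\le E_2$. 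The pressure contribution is handled by elliptic regularity for the Neumann problem $\Delta\bar p=-\partial_\alpha\bar u^\beta\partial_\beta\bar u^\alpha-\overline{\partial_iu^j_\neq\partial_ju^i_\neq}$, and the remaining bilinear terms are estimated through Lemma \ref{lem:u23-zero} and Lemma \ref{lem:int-nn}; each contributes at most of size $C\nu^{1/2}E_2$ (with factors of $E_2,E_3$ absorbable by the ambient bootstrap smallness $E_2,E_3\lesssim\nu$), closing the required $\|\cdot\|_{L^2L^2}\le C\nu^{1/2}E_2E_{1,0}$ and hence the lemma.
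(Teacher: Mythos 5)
Your decomposition and the bounds for three of the four resulting pieces coincide with the paper's proof: the pieces with $\partial_t$ on $\bar{u}^{1,0}$ are closed with $\|\partial_t\bar{u}^{1,0}\|_{L^2H^2}\leq C\nu^{1/2}E_{1,0}$ (resp.\ $\|\partial_t\bar{u}^{1,0}\|_{L^\infty H^1}\le \nu E_{1,0}$), the piece $\nabla(\partial_t\bar{u}^2\,\partial_y\bar{u}^{1,0})$ with $\nu^{-1/2}\|\partial_t\nabla\bar{u}^2\|_{L^2L^2}\leq E_2$, the piece $\partial_t\bar{u}^3\,\nabla\partial_z\bar{u}^{1,0}$ with $\nu^{-1/2}\|\partial_t\bar{u}^3\|_{L^2L^2}\leq E_2$, and for the last piece $(\nabla\partial_t\bar{u}^3)\,\partial_z\bar{u}^{1,0}$ you correctly invoke the weighted pointwise bound of Lemma \ref{lem:u10-Linfty}. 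The divergence is only in how you finish that last piece, and there your premise is off: the claim that ``$E_2$ only controls the weighted gradient in $L^\infty L^2$'' takes at face value what is evidently a typo in the displayed definition of $E_2$. Comparing with Lemma \ref{lem:u3-H2-w}, which establishes precisely $\nu^{-1}\|\min((\nu^{2/3}+\nu t)^{1/2},1-y^2)\partial_t\nabla\bar{u}^3\|_{L^2L^2}^2$ alongside the other two weighted quantities listed in $E_2$, and with the paper's own one-line treatment of this term, the intended summand of $E_2$ is the $L^2L^2$ norm. With that reading the term is immediate, which is exactly what the paper does.

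The detour you propose instead --- substituting the equation for $\bar{u}^3$ and estimating each forcing term --- is essentially an inline re-proof of Lemma \ref{lem:u3-H2-w}, and as written it does not deliver the stated inequality. The pressure and nonlinear contributions are controlled (via \eqref{eq:zz-est}, \eqref{eq:p-H2} and \eqref{lemma2.1-4}) by quantities of the form $\nu^{1/2}(1+\nu^{-1}E_2)(\|u(0)\|_{H^2}+\nu^{-1}E_3^2)$, not by $\nu^{1/2}E_2$; you only recover the shape $C\nu E_2^2E_{1,0}^2$ after invoking the bootstrap smallness $E_2,E_3\lesssim\nu$, $\|u(0)\|_{H^2}\lesssim\nu$. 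The lemma, however, is an unconditional inequality purely in terms of $E_2$ and $E_{1,0}$, and it is used in that form in the proof of Proposition \ref{prop:E1}; turning it into a conditional statement (or one whose right-hand side involves $\|u(0)\|_{H^2}$ and $E_3$) changes what is being fed into the continuity argument. The fix is simply to drop the detour and read the weighted $L^2L^2$ bound on $\partial_t\nabla\bar{u}^3$ directly off $E_2$ (equivalently, off Lemma \ref{lem:u3-H2-w}); everything else in your argument stands.
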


\begin{proof} First of all, we have
\begin{align*}
&\|\partial_t\nabla(\bar{u}^2\partial_y\bar{u}^{1,0})\|_{L^2L^2}^2\\
&\leq C\Big(\|\partial_t\nabla\bar{u}^2\|_{L^2L^2}^2\|\partial_y\bar{u}^{1,0}\|_{L^{\infty}L^{\infty}}^2+\|\nabla\bar{u}^2\|_{L^2L^{\infty}}^2
\|\partial_t\partial_y\bar{u}^{1,0}\|_{L^{\infty}L^2}^2\\
&+\|\bar{u}^2\|_{L^{\infty}L^{\infty}}^2\|\partial_t\nabla\partial_y\bar{u}^{1,0}\|_{L^2L^2}^2+
\|\partial_t\bar{u}^2\|_{L^2L^2}^2\|\nabla\partial_y\bar{u}^{1,0}\|_{L^{\infty}L^{\infty}}^2\Big)\\
&\leq C\Big(\|\partial_t\nabla\bar{u}^2\|_{L^2L^2}^2\|\bar{u}^{1,0}\|_{L^{\infty}H^4}^2+\|\nabla\Delta\bar{u}^2\|_{L^2L^2}^2\|\partial_t\bar{u}^{1,0}\|_{L^{\infty}H^1}^2\\
&+\|\Delta\bar{u}^2\|_{L^{\infty}L^2}^2\|\partial_t  \bar{u}^{1,0}\|_{L^2H^2}^2+\|\partial_t\bar{u}^2\|_{L^2L^2}^2\|\bar{u}^{1,0}\|_{L^{\infty}H^{4}}^2\Big)\\
&\leq C\Big(\nu E_2^2E_{1,0}^2+\nu^{-1}E_2^2(\nu E_{1,0})^2+E_2^2(\nu E_{1,0}^2)+\nu E_2^2E_{1,0}^2\Big)\leq C\nu E_2^2E_{1,0}^2.
\end{align*}

By Lemma \ref{lem:u10-Linfty} and Lemma \ref{lem:u23-zero}, we get
\begin{align*}
&\|\partial_t\nabla(\bar{u}^3\partial_z\bar{u}^{1,0})\|_{L^2L^2}^2\\
&\leq C\Bigg(\|\min((\nu^{\frac{2}{3}}+\nu t)^{\frac{1}{2}},1-y^2)\partial_t\nabla\bar{u}^3\|_{L^2L^2}^2 \left\|\f{\partial_z\bar{u}^{1,0}}{\min((\nu t)^{\frac{1}{2}},1-y^2)}\right\|_{L^{\infty}L^{\infty}}^2\\
&\quad+\|\nabla\bar{u}^3\|_{L^2L^{\infty}}^2\|\partial_t\partial_z\bar{u}^{1,0}\|_{L^{\infty}L^2}^2+
\|\bar{u}^3\|_{L^{\infty}L^{\infty}}^2\|\partial_t\nabla\partial_z\bar{u}^{1,0}\|_{L^2L^2}^2+
\|\partial_t\bar{u}^3\|_{L^2L^2}^2\|\nabla\partial_z\bar{u}^{1,0}\|_{L^{\infty}L^{\infty}}^2\Bigg)\\
&\leq C\Big(\|\min((\nu^{\frac{2}{3}}+\nu t)^{\frac{1}{2}},1-y^2)\partial_t\nabla\bar{u}^3\|_{L^2L^2}^2E_{1,0}^2\\
&\qquad+\nu^{-1}E_2^2\|\partial_t\partial_z\bar{u}^{1,0}\|_{L^{\infty}L^2}^2+E_2^2\|\partial_t\Delta\bar{u}^{1,0}\|_{L^2L^2}^2+
\nu E_2^2\|\bar{u}^{1,0}\|_{L^{\infty}H^{4}}^2\Big)\\
&\leq C\big(\nu E_2^2E_{1,0}^2+\nu E_2^2E_{1,0}^2\big).
\end{align*}

This completes the proof of the lemma.
\end{proof}

\subsection{Interaction between zero mode and nonzero mode}

The following lemma gives the reaction between $\bar{u}^1$ and $u^2_{\neq},u^3_{\neq}$.

\begin{Lemma}\label{lem:int-zn-1-23}
  It holds that
  \begin{align*}
    &\|e^{2\epsilon\nu^{1/3}t}(\partial_x,\partial_z)
    \big(\bar{u}^1\partial_xu^3_{\neq}\big)\|^2_{L^2L^2}
    +\|e^{2\epsilon\nu^{1/3}t}(\partial_x,\partial_z )
    (\bar{u}^1\partial_xu^2_{\neq})\|^2_{L^2L^2}\\&\quad
    +\|e^{2\epsilon\nu^{1/3}t}\partial_x
    ((u^2_{\neq}\partial_y+u^3_{\neq}\partial_z)\bar{u}^1)\|^2_{L^2L^2}
    \leq C\nu E^2_1E_3E_5.
  \end{align*}
\end{Lemma}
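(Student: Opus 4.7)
\textbf{Proof proposal for Lemma \ref{lem:int-zn-1-23}.}

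The plan is to exploit two structural facts: (i) $\partial_x\bar u^1=0$, so the anisotropic bilinear bounds of Lemma \ref{Lem: bilinear zero nonzero} apply with $\bar u^1$ as the $x$-independent factor; and (ii) $\bar u^1$ carries the full lift-up smallness $\|\bar u^1(t)\|_{H^2}\le CE_1\min(\nu t+\nu^{2/3},1)$ from Lemma \ref{lem:u1-H2}, which provides a crucial $\nu^{4/3}$ gain once paired with the weight $e^{-\epsilon\nu^{1/3}t/2}$. The nonzero-mode factors will then be controlled with the slightly heavier weight $e^{9\epsilon\nu^{1/3}t/4}$ using Lemma \ref{lem:u-nonzero}, which gives the product structure $E_3E_5$.

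Concretely, I would proceed as follows. First, reduce each of the three terms to the same form. For the first two terms, Lemma \ref{Lem: bilinear zero nonzero} (second inequality) gives
\[
 \|(\partial_x,\partial_z)(\bar u^1\partial_xu^k_{\neq})\|_{L^2}\le C\|\bar u^1\|_{H^2}\,\|\partial_x(\partial_x,\partial_z)u^k_{\neq}\|_{L^2},\qquad k\in\{2,3\}.
\]
For the third term, since $\partial_x\bar u^1=0$ we have $\partial_x(u^j_{\neq}\partial_j\bar u^1)=(\partial_xu^j_{\neq})(\partial_j\bar u^1)$, and the first inequality of Lemma \ref{Lem: bilinear zero nonzero} (applied with $f_1=\partial_j\bar u^1$, $f_2=\partial_xu^j_{\neq}$, together with the Poincar\'e bound $\|\partial_xu^j_{\neq}\|_{L^2}\le\|\partial_x^2u^j_{\neq}\|_{L^2}$) yields
\[
 \|(\partial_j\bar u^1)(\partial_xu^j_{\neq})\|_{L^2}\le C\|\bar u^1\|_{H^2}\,\|\partial_x(\partial_x,\partial_z)u^j_{\neq}\|_{L^2},\qquad j\in\{2,3\}.
\]
Thus all three quantities are majorized by $C\|\bar u^1\|_{H^2}\,\|\partial_x(\partial_x,\partial_z)u^{2,3}_{\neq}\|_{L^2}$.

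Next, squaring and integrating in time, I would split the weight as $e^{4\epsilon\nu^{1/3}t}=e^{-\epsilon\nu^{1/3}t/2}\cdot e^{9\epsilon\nu^{1/3}t/2}$ and put the first factor on $\bar u^1$ (in $L^\infty_t$) and the second on the nonzero mode (in $L^2_t$):
\[
 \int_0^T e^{4\epsilon\nu^{1/3}t}\|\bar u^1\|_{H^2}^2\,\|\partial_x(\partial_x,\partial_z)u^k_{\neq}\|_{L^2}^2\,dt
\le \big\|e^{-\epsilon\nu^{1/3}t/2}\|\bar u^1\|_{H^2}^2\big\|_{L^\infty}\cdot\|e^{9\epsilon\nu^{1/3}t/4}\partial_x(\partial_x,\partial_z)u^k_{\neq}\|_{L^2L^2}^2.
\]
For the first factor, using $\|\bar u^1\|_{H^2}^2\le CE_1^2\min((\nu t+\nu^{2/3})^2,1)$ and substituting $s=\nu^{1/3}t$, the function $\nu^{4/3}(1+s)^2e^{-\epsilon s/2}$ attains its maximum at $s=4/\epsilon-1$ with value $O(\nu^{4/3}/\epsilon^2)$, and on the region where the minimum is $1$ (i.e. $s\gtrsim\nu^{-2/3}$) the exponential factor $e^{-\epsilon s/2}$ is super-exponentially small in $\nu$, so the overall supremum is bounded by $C\nu^{4/3}E_1^2$. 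For the second factor, Lemma \ref{lem:u-nonzero} gives $\|e^{9\epsilon\nu^{1/3}t/4}\partial_x(\partial_x,\partial_z)u^k_{\neq}\|_{L^2L^2}^2\le C\nu^{-1/3}E_3E_5$ for $k\in\{2,3\}$. Multiplying the two bounds yields $C\nu^{4/3}\cdot\nu^{-1/3}\,E_1^2E_3E_5=C\nu E_1^2E_3E_5$, which is exactly the claim.

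The main obstacle is the sharp balance in the first factor: without exploiting the initial vanishing/smallness of $\bar u^1$ (which encodes the lift-up structure, $\bar u^{1,0}(0)=0$ and $\|\bar u^{1,\neq}(0)\|_{H^2}=O(\nu^{2/3})$), a crude $L^\infty_tH^2$ bound on $\bar u^1$ produces only $C\nu^{-1/3}E_1^2E_3E_5$, which is insufficient by a factor of $\nu^{4/3}$. The $\nu^{4/3}$ gain is recovered precisely because the weight $e^{-\epsilon\nu^{1/3}t/2}$ suppresses the region where $\|\bar u^1\|_{H^2}$ has already grown to its $O(1)$ saturation value, while on the earlier window it is still of size $O(\nu^{2/3})$. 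Choosing the split exponent as $9\epsilon/4$ is essential so that the remaining weight on the nonzero mode matches exactly the one appearing in Lemma \ref{lem:u-nonzero}, giving the $E_3E_5$ structure on the right-hand side rather than $E_3^2$ or $E_5^2$.
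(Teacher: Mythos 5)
Your proposal is correct and follows essentially the same route as the paper: reduce all three terms to $C\|\bar u^1\|_{H^2}\|\partial_x(\partial_x,\partial_z)u^{2,3}_{\neq}\|_{L^2}$ via Lemma \ref{Lem: bilinear zero nonzero}, invoke Lemma \ref{lem:u1-H2} for the $\nu^{4/3}(1+\nu^{1/3}t)^2$ bound on $\|\bar u^1\|_{H^2}^2$, absorb the polynomial growth into a fraction of the exponential weight, and close with the $e^{\frac94\epsilon\nu^{1/3}t}$ estimate of Lemma \ref{lem:u-nonzero} giving $\nu^{-1/3}E_3E_5$. The only cosmetic difference is that the paper absorbs $(1+\nu^{1/3}t)\le Ce^{\epsilon\nu^{1/3}t/4}$ directly rather than splitting the weight and taking an $L^\infty_t$ supremum of $\nu^{4/3}(1+s)^2e^{-\epsilon s/2}$, which amounts to the same thing.
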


\begin{proof}
For  $k\in\{2,3\}$,  by Lemma \ref{Lem: bilinear zero nonzero} and Lemma \ref{lem:u1-H2},  we get
   \begin{align*}
      & \|(\partial_x,\partial_z)
    \big(\bar{u}^1\partial_xu^k_{\neq}\big)\|^2_{L^2} +\|\partial_x
    (u^k_{\neq}\partial_k\bar{u}^1)\|^2_{L^2} \\
      &\leq C\big(\|\bar{u}^1\|^2_{H^1}+\|\nabla\bar{u}^1\|^2_{H^1}\big)\big(\|\partial_x(\partial_x,\partial_z)u^k_{\neq}\|^2_{L^2}+ \|\partial_xu^k_{\neq}\|^2_{L^2}+ \|\partial_z\partial_xu^k_{\neq}\|^2_{L^2})\big)\\
      &\leq C\|\bar{u}^1\|^2_{H^2} \|\partial_x(\partial_x,\partial_z)u^k_{\neq}\|^2_{L^2}
      \leq C\nu^{\f43}E^2_1(1+\nu^{\f13} t)^2 \|\partial_x(\partial_x,\partial_z)u^k_{\neq}\|^2_{L^2}.
   \end{align*}
Then, using Lemma \ref{lem:u-nonzero} and the fact that $(1+\nu^{\f13} t)
\leq Ce^{\f14\epsilon\nu^{1/3}t}$, we deduce that
\begin{align*}
   &\|e^{2\epsilon\nu^{1/3}t}(\partial_x,\partial_z)
    \big(\bar{u}^1\partial_xu^3_{\neq}\big)\|^2_{L^2L^2}
    +\|e^{2\epsilon\nu^{1/3}t}(\partial_x,\partial_z )
    (\bar{u}^1\partial_xu^2_{\neq})\|^2_{L^2L^2}\\&\quad
    +\|e^{2\epsilon\nu^{1/3}t}\partial_x
    ((u^2_{\neq}\partial_y+u^3_{\neq}\partial_z)\bar{u}^1)\|^2_{L^2L^2}\\
   &\leq C\nu^{\f43}E^2_1\|(1+\nu^{\f13} t)e^{2\epsilon\nu^{1/3}t}\partial_x(\partial_x,\partial_z)(u^2_{\neq},u^3_{\neq})\|_{L^2L^2}^2\\
   &\leq C\nu^{\f43}E^2_1\|e^{\f94\epsilon\nu^{1/3}t}\partial_x(\partial_x,\partial_z)(u^2_{\neq},u^3_{\neq})\|_{L^2L^2}^2\\
   &\leq C\nu^{\f43}E^2_1(\nu^{-\f13}E_5E_3)
   =C\nu E^2_1E_3E_5.
   \end{align*}
\end{proof}

The following Lemma describes the reaction between $\bar{u}^1$ and $u^1_{\neq}$.

\begin{Lemma}\label{lem:int-zn-11}
  It holds that
  \begin{align*}
     \|e^{2\epsilon\nu^{1/3}t}\partial_x
     (\bar{u}^1\partial_xu^1_{\neq})\|^2_{L^2L^2}
     \leq C \nu\big (E^2_1E^{\f32}_3E^{\f12}_5+  E^2_1E_3E_5\big).
  \end{align*}
\end{Lemma}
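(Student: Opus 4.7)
\medskip
\noindent\textbf{Proof proposal for Lemma \ref{lem:int-zn-11}.}

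The first simplification is that $\partial_x\bar{u}^1=0$, so the product rule collapses to
\[
\partial_x(\bar{u}^1\partial_xu^1_{\neq})=\bar{u}^1\,\partial_x^2u^1_{\neq},
\]
and the task reduces to bounding $\|e^{2\epsilon\nu^{1/3}t}\bar{u}^1\,\partial_x^2u^1_{\neq}\|_{L^2L^2}^2$. My plan is to pair an $L^\infty_{t,x,y,z}$ bound on $\bar{u}^1$ with the enhanced-dissipation $L^2L^2$ bound on $\partial_x^2u^1_{\neq}$ that is already furnished by Lemma \ref{lem:u-nonzero}. Concretely, by H\"older in $(x,y,z)$,
\[
\|\bar{u}^1\partial_x^2u^1_{\neq}\|_{L^2}\le \|\bar{u}^1\|_{L^\infty}\,\|\partial_x^2u^1_{\neq}\|_{L^2}\le C\|\bar{u}^1\|_{H^2}\,\|\partial_x^2u^1_{\neq}\|_{L^2}
\]
by three-dimensional Sobolev embedding.

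Next I invoke Lemma \ref{lem:u1-H2}, which gives $\|\bar{u}^1(t)\|_{H^2}\le CE_1\min(\nu t+\nu^{2/3},1)$, so I can pull out a time-dependent weight:
\[
\|e^{2\epsilon\nu^{1/3}t}\bar{u}^1\,\partial_x^2u^1_{\neq}\|_{L^2L^2}\le CE_1\,\bigl\|(\nu t+\nu^{2/3})\,e^{-\tfrac14\epsilon\nu^{1/3}t}\bigr\|_{L^\infty_t}\,\bigl\|e^{\tfrac94\epsilon\nu^{1/3}t}\partial_x^2u^1_{\neq}\bigr\|_{L^2L^2}.
\]
The elementary estimate $(\nu t+\nu^{2/3})e^{-\tfrac14\epsilon\nu^{1/3}t}\le C\nu^{2/3}$ (maximize $xe^{-cx}$ in $x=\nu^{1/3}t$) absorbs the linear-in-time growth of the streak against the enhanced-dissipation decay; this produces a gain of $\nu^{2/3}$. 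Lemma \ref{lem:u-nonzero} then supplies
\[
\bigl\|e^{\tfrac94\epsilon\nu^{1/3}t}\partial_x^2u^1_{\neq}\bigr\|_{L^2L^2}\le C\nu^{-1/6}\bigl(E_5^{1/4}E_3^{3/4}+E_5^{1/2}E_3^{1/2}\bigr),
\]
so multiplying gives $C\nu^{1/2}E_1\bigl(E_5^{1/4}E_3^{3/4}+E_5^{1/2}E_3^{1/2}\bigr)$, and squaring yields the desired bound $C\nu\bigl(E_1^2E_3^{3/2}E_5^{1/2}+E_1^2E_3E_5\bigr)$.

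The conceptually important point—and the only step that is not routine—is the matching of three opposing scales: the streak $\bar{u}^1$ is $O(1)$ only once $\nu t\gtrsim 1$ (lift-up), the enhanced-dissipation weight $e^{\epsilon\nu^{1/3}t}$ only becomes effective once $\nu^{1/3}t\gtrsim 1$, and the target gain of $\nu^{1/2}$ sits exactly at the intersection $\nu^{2/3}\cdot\nu^{-1/6}$. Any alternate splitting that, for instance, uses $\|\bar{u}^1\|_{L^\infty L^\infty}\le CE_1$ together with the bare $L^2L^2$ bound on $\partial_x^2u^1_{\neq}$ (weight $e^{2\epsilon\nu^{1/3}t}$ from $E_{3,0}$) would leave one factor of $\nu$ short, so the transfer of one quarter of the weight from $E_5$ into the gain $(\nu t+\nu^{2/3})e^{-\tfrac14\epsilon\nu^{1/3}t}\lesssim\nu^{2/3}$ is really the whole content of the lemma. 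I do not anticipate any further technical obstacle.
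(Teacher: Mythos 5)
Your proposal is correct and follows essentially the same route as the paper: bound $\|\bar u^1\|_{L^\infty}\le C\|\bar u^1\|_{H^2}\le CE_1\min(\nu t+\nu^{2/3},1)$ via Lemma \ref{lem:u1-H2}, absorb the $O(\nu t)$ growth into a quarter of the exponential weight so that the remaining factor is handled by the $e^{\f94\epsilon\nu^{1/3}t}$ estimate of $\partial_x^2u^1_{\neq}$ in Lemma \ref{lem:u-nonzero}. The paper phrases the same cancellation as $\min(\nu t+\nu^{2/3},1)\le \nu^{2/3}(1+\nu^{1/3}t)$ followed by $(1+\nu^{1/3}t)\le Ce^{\f14\epsilon\nu^{1/3}t}$, which is identical to your $L^\infty_t$ bound, so no further comment is needed.
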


\begin{proof}
By Lemma \ref{lem:u1-H2}, we have
\begin{align*}
     \|\partial_x
     (\bar{u}^1\partial_xu^1_{\neq})\|^2_{L^2}
     &\leq \|\bar{u}^1\|^2_{L^{\infty}}\|\partial_x^2u^1_{\neq}\|^2_{L^2}\leq C \|\bar{u}^1\|^2_{H^{2}}\|\partial_x^2u^1_{\neq}\|^2_{L^2}\leq C \nu^{\f43}E^2_1(1+\nu^{\f13} t)^2\|\partial_x^2u^1_{\neq}\|^2_{L^2},
  \end{align*}
 which along with Lemma \ref{lem:u-nonzero}  gives
\begin{align*}
   \|e^{2\epsilon\nu^{1/3}t}\partial_x
     (\bar{u}^1\partial_xu^1_{\neq})\|^2_{L^2L^2}
   \leq& C\nu^{\f43}E^2_1\|(1+\nu^{\f13} t)e^{2\epsilon\nu^{1/3}t}\partial_x^2u^1_{\neq}\|_{L^2L^2}^2
   \leq C\nu^{\f43}E^2_1\|e^{\f94\epsilon\nu^{1/3}t}\partial_x^2u^1_{\neq}\|_{L^2L^2}^2\\
   \leq& C\nu^{\f43}E^2_1\nu^{-\f13}\big(E^{\f12}_5E^{\f32}_3+E_5E_3\big)
   =C\nu \big(E^2_1E^{\f32}_3E^{\f12}_5+  E^2_1E_3E_5\big).
   \end{align*}
\end{proof}

The following lemma gives the reactions between $\bar{u}^2$, $\bar{u}^3$ with nonzero modes.
This lemma suggests that $\bar{u}^2$ and $\bar{u}^3$ are good components.

\begin{Lemma}\label{lem:int-nz-23}
  It holds that for $k\in\{2,3\}$,
  \begin{align*}
     &\|e^{2\epsilon\nu^{1/3}t}(\partial_x,\partial_z) (\bar{u}^k\nabla
     u_{\neq})\|_{L^2L^2} +\|e^{2\epsilon\nu^{1/3}t}(\partial_x,\partial_z)
     (u_{\neq}\cdot\nabla\bar{u}^k)\|_{L^2L^2}
     \leq C\nu^{-1/2}E_2E_3.
  \end{align*}
\end{Lemma}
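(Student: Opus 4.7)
The plan is to decompose the two products via the Leibniz rule, exploit the structural fact $\partial_x\bar{u}^k=0$ for $k\in\{2,3\}$ to eliminate all terms containing a streamwise derivative of the zero mode, and then apply the bilinear estimates from Lemma \ref{lem:u23-zero} in tandem with Lemma \ref{lem:u-relation} and the divergence-free condition to bound the remaining quantities by $E_3$ (with at most a $\nu^{-1/2}$ loss) times $E_2$.

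For the first inequality, Leibniz and $\partial_x\bar{u}^k=0$ give
\[
(\partial_x,\partial_z)(\bar{u}^k\nabla u_\neq)=\bar{u}^k(\partial_x,\partial_z)\nabla u_\neq+(\partial_z\bar{u}^k)\nabla u_\neq.
\]
The third estimate of Lemma \ref{lem:u23-zero}, applied componentwise in time to each scalar entry $\partial_ju_\neq^l$, yields the pointwise-in-$t$ inequality $\|(\partial_x,\partial_z)(\bar{u}^k\nabla u_\neq)(t)\|_{L^2}\le CE_2\|\nabla u_\neq(t)\|_{H^1}$. After inserting the weight $e^{2\epsilon\nu^{1/3}t}$ and applying Cauchy--Schwarz in time, it suffices to show $\|e^{2\epsilon\nu^{1/3}t}\nabla u_\neq\|_{L^2H^1}\le C\nu^{-1/2}E_3$. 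The $L^2$ part is immediate from $\|e^{2\epsilon\nu^{1/3}t}\partial_x\nabla u_\neq^2\|_{L^2L^2}\le E_{3,0}$ together with Poincaré in $x$ (valid for the nonzero mode $u_\neq$) and the divergence-free identity $\partial_xu_\neq^1=-\partial_yu_\neq^2-\partial_zu_\neq^3$. For the second-order part $\|\nabla^2 u_\neq\|_{L^2}$, Lemma \ref{lem:u-relation} with $k=1$ (combined again with Poincaré in $x$ and divergence-free) majorizes the relevant derivatives by $\|\nabla\Delta u_\neq^2\|_{L^2}+\|\nabla(\partial_x^2+\partial_z^2)u_\neq^3\|_{L^2}$; the $\nu^{3/4}$ and $\nu^{1/2}$ prefactors built into $E_{3,0}$ then generate exactly the claimed $\nu^{-1/2}$. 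The second inequality is treated analogously: using $\partial_x\bar{u}^k=0$ to reduce $u_\neq\cdot\nabla\bar{u}^k=u_\neq^2\partial_y\bar{u}^k+u_\neq^3\partial_z\bar{u}^k$ and expanding by Leibniz produces products of a zero-mode factor (bounded in $L^\infty L^\infty$ or $L^2L^\infty$ by $E_2$ via Lemma \ref{lem:u23-zero}) against a derivative of $u_\neq$ controlled by $E_3$ in $L^2L^2$ or $L^\infty L^2$.

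The main obstacle is that the naive expansions involve $\partial_y$-derivatives of $u_\neq^1$ and $u_\neq^3$ which are not directly controlled by $E_3$. To handle these, we systematically use the divergence-free identity together with the vorticity representation $\omega_\neq^2=\partial_zu_\neq^1-\partial_xu_\neq^3$ to rewrite missing components as derivatives of $u_\neq^2$ and $\omega_\neq^2$, both of which appear in the definitions of $E_{3,0}$ and $E_{3,1}$. A secondary but important point is the distribution of the exponential weight: it should sit on the $u_\neq$ factor, where it pairs with the enhanced-dissipation-weighted norms in $E_3$, rather than on $\bar{u}^k$, whose uniform bound $\|\bar{u}^k\|_{L^\infty L^\infty}\le CE_2$ carries no weight. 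The entire $\nu^{-1/2}$ loss is ultimately traced to the $\nu^{1/2}$- and $\nu^{3/4}$-weighted highest-order terms in $E_{3,0}$ and $E_{3,1}$, which are forced to absorb the second spatial derivatives of $u_\neq$ generated after pairing with $\bar{u}^k$ in $L^\infty L^\infty$.
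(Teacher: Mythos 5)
Your overall plan (Leibniz, $\partial_x\bar u^k=0$, anisotropic bilinear estimates, reduction to $E_{3,0}$) is the right one, but the concrete reduction you propose for the first term breaks down. You apply the isotropic estimate $\|\nabla(\bar u^k f)\|_{L^2}\le CE_2\|f\|_{H^1}$ with $f=\partial_j u_{\neq}^l$ and thereby reduce matters to the claim $\|e^{2\epsilon\nu^{1/3}t}\nabla u_{\neq}\|_{L^2H^1}\le C\nu^{-1/2}E_3$. This claim is not available: it requires the full Hessian $\nabla^2u_{\neq}$ in the weighted $L^2_tL^2_x$ norm, and the components $\partial_y^2u_{\neq}^1,\partial_y^2u_{\neq}^3$ can only be reached through $\|\nabla\Delta u_{\neq}^2\|_{L^2L^2}$ and $\|\Delta\omega_{\neq}^2\|_{L^2L^2}$, which carry weights $\nu^{3/4}$ and $\nu^{1/3}\cdot\nu^{1/2}=\nu^{5/6}$ in $E_{3,0}$ and $E_{3,1}$ respectively. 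So your reduction costs at best $\nu^{-5/6}E_2E_3$, not $\nu^{-1/2}E_2E_3$. Rewriting via $\omega_{\neq}^2$, as you suggest in your last paragraph, does not repair this, because it is precisely the $\Delta\omega_{\neq}^2$ term of $E_{3,1}$ that produces the $\nu^{-5/6}$.

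The point you are missing is that one must never ask for the full $H^1$ norm of $\nabla u_{\neq}$. Since only the good derivatives $(\partial_x,\partial_z)$ fall on the product, the correct tools are the second and third inequalities of Lemma \ref{Lem: bilinear zero nonzero}: they put the full $H^1$ demand on the zero-mode factor ($\|\bar u^k\|_{H^1}+\|\partial_z\bar u^k\|_{H^1}\le CE_2$, resp. $\|\nabla\bar u^k\|_{L^2}+\|\partial_z\nabla\bar u^k\|_{L^2}\le CE_2$) and require of the nonzero-mode factor only $\|\nabla u_{\neq}\|_{L^2}+\|(\partial_x,\partial_z)\nabla u_{\neq}\|_{L^2}$, resp. $\|u_{\neq}\|_{H^1}+\|(\partial_x,\partial_z)u_{\neq}\|_{H^1}$. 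By Poincar\'e in $x$ these are all bounded by $\|\nabla(\partial_x,\partial_z)\partial_xu_{\neq}\|_{L^2}$ — one gradient accompanied by \emph{two} horizontal derivatives — which Lemma \ref{lem:u-relation} (with $k=1$) converts into $\|(\partial_x,\partial_z)\Delta u_{\neq}^2\|_{L^2}+\|(\partial_x^2+\partial_z^2)\nabla u_{\neq}^3\|_{L^2}$; these are exactly the $\nu^{1/2}$-weighted entries of $E_{3,0}$, whence the $\nu^{-1/2}$. (A side remark: $E_{3,1}$ and the $L^\infty_tL^\infty_x$ or $L^2_tL^\infty_x$ bounds on $\bar u^k$ are not needed here; the whole lemma runs through $E_{3,0}$ and the $H^1$-type bounds on $\bar u^2,\bar u^3$ from Lemma \ref{lem:u23-zero}.)
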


\begin{proof}
  By Lemma \ref{Lem: bilinear zero nonzero}, Lemma \ref{lem:u23-zero} and Lemma \ref{lem:u-relation}, we get
  \begin{align*}
     &\|(\partial_x,\partial_z) (\bar{u}^k\nabla
     u_{\neq})\|_{L^2} +\|(\partial_x,\partial_z)
     (u_{\neq}\cdot\nabla\bar{u}^k)\|_{L^2}\\
     &\leq C\big(\|\bar{u}^k\|_{H^1} +\|\partial_z\bar{u}^k\|_{H^1}\big)\big(\| \nabla
     u_{\neq}\|_{L^2}+\|(\partial_x,\partial_z) \nabla
     u_{\neq}\|_{L^2}\big)\\&\quad+C\big(\|\nabla\bar{u}^k\|_{L^2} +\|\partial_z\nabla\bar{u}^k\|_{L^2}\big)\big(\|
     u_{\neq}\|_{H^1}+\|(\partial_x,\partial_z)
     u_{\neq}\|_{H^1}\big)\\
     &\leq C\big(\|\bar{u}^k\|_{H^1} +\|\partial_z\bar{u}^k\|_{H^1}\big)\|\nabla(\partial_x,\partial_z) \partial_x
     u_{\neq}\|_{L^2} \\
     &\leq CE_2\big(\|(\partial_x,\partial_z)\Delta u_{\neq}^2\|_{L^2}+\|(\partial_x^2+\partial_z^2)\nabla u_{\neq}^3\|_{L^2}\big),
  \end{align*}
which gives
\begin{align*}
     &\|e^{2\epsilon\nu^{1/3}t}(\partial_x,\partial_z) (\bar{u}^k\nabla
     u_{\neq})\|_{L^2L^2} +\|e^{2\epsilon\nu^{1/3}t}(\partial_x,\partial_z)
     (u_{\neq}\cdot\nabla\bar{u}^k)\|_{L^2L^2}\\
     &\leq CE_2\Big( \|e^{2\epsilon\nu^{1/3}t}(\partial_x,\partial_z)\Delta u_{\neq}^2\|_{L^2L^2}+\|e^{2\epsilon\nu^{1/3}t}(\partial_x^2+\partial_z^2)\nabla u_{\neq}^3\|_{L^2L^2}\Big)\\
     &\leq C\nu^{-1/2}E_2E_3.
  \end{align*}
 \end{proof}

 The following lemma will be used to estimate $E_{3,1}$.

\begin{Lemma}\label{lem:int-zz-11-pz}
  It holds that
  \begin{align*}
    &\|e^{2\epsilon\nu^{1/3}t}\partial_z(\bar{u}^1\partial_xu^1_{\neq})\|^2_{L^2L^2}+
    \|e^{2\epsilon\nu^{1/3}t}\partial_z((u^2_{\neq}\partial_y+u^3_{\neq}\partial_z)\bar{u}^1)\|^2_{L^2L^2}\\
   & \leq C\nu^{1/3}E^2_1E_{3} \big(E_5 +E_3^{\f34}E^{\f14}_5\big).
  \end{align*}
\end{Lemma}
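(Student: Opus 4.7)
The plan is to bound both terms by expanding with the product rule and invoking the bilinear estimates of Lemma \ref{Lem: bilinear zero nonzero}. For the first term, write $\partial_z(\bar{u}^1\partial_x u^1_{\neq})=(\partial_z\bar{u}^1)\partial_x u^1_{\neq}+\bar{u}^1\partial_x\partial_z u^1_{\neq}$ and apply the second estimate of Lemma \ref{Lem: bilinear zero nonzero} with $f_1=\bar{u}^1$, $f_2=\partial_x u^1_{\neq}$ (using $\partial_x u^1_{\neq}=-\partial_y u^2_{\neq}-\partial_z u^3_{\neq}$ so that no $H^3$-regularity on $\bar u^1$ is needed) to get $\|\partial_z(\bar u^1\partial_x u^1_{\neq})\|_{L^2}\le C\|\bar u^1\|_{H^2}\,\|(\partial_x,\partial_z)\partial_x u^1_{\neq}\|_{L^2}$. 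For the second term, expand $\partial_z((u^2_{\neq}\partial_y+u^3_{\neq}\partial_z)\bar u^1)$ and apply the third inequality of Lemma \ref{Lem: bilinear zero nonzero} with $f_1$ taken as a first or second derivative of $\bar u^1$ and $f_2$ as $u^{2,3}_{\neq}$, again reducing to $\|\bar u^1\|_{H^2}$ times quantities involving $\partial_x(\partial_x,\partial_z)u^{2,3}_{\neq}$ (obtained from nonzero-mode derivative gain).

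Next I split $\bar u^1=\bar u^{1,0}+\bar u^{1,\neq}$. By Lemma \ref{lem:u1-H2}, $\|\bar u^{1,\neq}\|_{H^2}\le C\nu^{2/3}E_1$ uniformly in $t$, so for this piece the bilinear bound combined with the $L^2L^2$ control of $\partial_x^2 u^1_{\neq}$ and $\partial_x(\partial_x,\partial_z)u^{2,3}_{\neq}$ at weight $e^{2\epsilon\nu^{1/3}t}$ (directly from $E_3$) already yields a contribution of order $\nu^{4/3}E_1^2 E_3^2$, which is strictly smaller than the target. For $\bar u^{1,0}$ the bound $\|\bar u^{1,0}\|_{H^2}\le CE_1\min(\nu t,1)$ gives $\|\bar u^{1,0}\|_{H^2}^2\le CE_1^2(\nu t+\nu^{2/3})=CE_1^2\nu^{2/3}(1+\nu^{1/3}t)$; I absorb the polynomial factor into the exponential weight via $(1+\nu^{1/3}t)e^{4\epsilon\nu^{1/3}t}\le C_\epsilon e^{(9/2)\epsilon\nu^{1/3}t}$, which effectively converts the bound into $\nu^{2/3}E_1^2\,\|e^{(9/4)\epsilon\nu^{1/3}t}(\cdot)\|_{L^2L^2}^2$ for the nonzero-mode factors.

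At this stage Lemma \ref{lem:u-nonzero} (bullets 2 and 3) supplies $\|e^{(9/4)\epsilon\nu^{1/3}t}\partial_x^2 u^1_{\neq}\|_{L^2L^2}$ and $\|e^{(9/4)\epsilon\nu^{1/3}t}\partial_x(\partial_x,\partial_z)u^{2,3}_{\neq}\|_{L^2L^2}$ with the bound $C\nu^{-1/6}(E_5^{1/4}E_3^{3/4}+E_5^{1/2}E_3^{1/2})$. Combined with the $\nu^{2/3}E_1^2$ prefactor this produces a contribution of order $\nu^{1/3}E_1^2(E_5^{1/2}E_3^{3/2}+E_5 E_3)$. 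The term $E_5^{1/2}E_3^{3/2}$ is absorbed into the target via the Young-type identity $E_5^{1/2}E_3^{3/2}=(E_3E_5)^{1/3}(E_3^{7/4}E_5^{1/4})^{2/3}\le C\bigl(E_3 E_5+E_3^{7/4}E_5^{1/4}\bigr)$, recovering the claimed form $\nu^{1/3}E_1^2 E_3(E_5+E_3^{3/4}E_5^{1/4})$.

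The main obstacle is the $\partial_x\partial_z u^1_{\neq}$ piece of $(\partial_x,\partial_z)\partial_x u^1_{\neq}$, which is not directly controlled by Lemma \ref{lem:u-nonzero} at the required weight $e^{(9/4)\epsilon\nu^{1/3}t}$. I handle it by the vorticity identity $\partial_x\partial_z u^1_{\neq}=\partial_x\omega^2_{\neq}+\partial_x^2 u^3_{\neq}$: the second summand is controlled by bullet 2 of Lemma \ref{lem:u-nonzero}, while for the first I use the pointwise-in-$x$ Hölder interpolation
\[
\|e^{(9/4)\epsilon\nu^{1/3}t}\partial_x\omega^2_{\neq}\|_{L^2L^2}^{2}\le \|e^{2\epsilon\nu^{1/3}t}\partial_x\omega^2_{\neq}\|_{L^2L^2}^{3/2}\|e^{3\epsilon\nu^{1/3}t}\partial_x\omega^2_{\neq}\|_{L^2L^2}^{1/2},
\]
bounding the first factor by $C\nu^{-5/6}E_3$ via elliptic regularity on $\omega^2_{\neq}$ and the $E_{3,1}$ control of $\|e^{2\epsilon}\Delta\omega^2_{\neq}\|_{L^2L^2}$, and estimating the second factor by rewriting $\partial_x\omega^2_{\neq}$ in terms of $\partial_x\partial_zu^1_{\neq}$ and $\partial_x^2u^3_{\neq}$, expanding the former again via divergence-free into $\partial_z\partial_y u^2_{\neq}+\partial_z^2 u^3_{\neq}$, and applying the nonzero-mode trick $\|\partial_z\partial_y u^2_{\neq}\|\le \|\partial_x\partial_z\partial_y u^2_{\neq}\|\le C\|(\partial_x,\partial_z)\Delta u^2_{\neq}\|$ together with bullet 2 of Lemma \ref{lem:u-nonzero}. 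Careful weight-tracking in this interpolation yields the remaining $E_3^{7/4}E_5^{1/4}$ contribution and finishes the proof.
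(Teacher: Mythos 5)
Your opening bilinear step coincides with the paper's, but the time-weight bookkeeping that follows breaks the argument. By bounding $\|\bar u^{1,0}\|_{H^2}^2$ via the \emph{unsquared} quantity $\min(\nu t,1)\le \nu^{2/3}(1+\nu^{1/3}t)$ you obtain only the prefactor $\nu^{2/3}E_1^2(1+\nu^{1/3}t)$ rather than the available $\nu^{4/3}E_1^2(1+\nu^{1/3}t)^2$ from Lemma \ref{lem:u1-H2}, and this forces you to demand the weight $e^{\f94\epsilon\nu^{1/3}t}$ on the nonzero-mode factors. At that weight the estimates you need do not exist. Concretely: (i) for the second term of the lemma, the third inequality of Lemma \ref{Lem: bilinear zero nonzero} requires the full $\|\nabla(\partial_x,\partial_z)u^{k}_{\neq}\|_{L^2}$, $k=2,3$, whose $\partial_y\partial_z u^{k}_{\neq}$ and $\partial_z^2u^k_{\neq}$ components are not among the quantities controlled at weight $\f94\epsilon$ by the first three inequalities of Lemma \ref{lem:u-nonzero}; (ii) your interpolation for $\partial_x\omega^2_{\neq}$ needs the endpoint $\|e^{3\epsilon\nu^{1/3}t}\partial_x\omega^2_{\neq}\|_{L^2L^2}$, but the only $3\epsilon$-weighted quantities in the energy hierarchy are $\nu^{1/6}\|e^{3\epsilon\nu^{1/3}t}\partial_x^2u^{k}_{\neq}\|_{L^2L^2}\le E_5$ ($k=2,3$), while $\partial_x\omega^2_{\neq}=-\partial_z\partial_yu^2_{\neq}-(\partial_x^2+\partial_z^2)u^3_{\neq}$ contains genuinely non-$\partial_x^2$ pieces; your chain ``$\|\partial_z\partial_yu^2_{\neq}\|\le C\|(\partial_x,\partial_z)\Delta u^2_{\neq}\|$ plus the second inequality of Lemma \ref{lem:u-nonzero}'' conflates $(\partial_x,\partial_z)\Delta u^2_{\neq}$, which is controlled only at weight $2\epsilon$ at the cost of $\nu^{-1/2}$, with $\partial_x(\partial_x,\partial_z)u^2_{\neq}$, which is what is controlled at weight $\f94\epsilon$.

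Even granting the missing endpoint, the power count fails: your interpolation puts the exponent $3/4$ on $\|e^{2\epsilon\nu^{1/3}t}\partial_x\omega^2_{\neq}\|_{L^2L^2}\le\|e^{2\epsilon\nu^{1/3}t}\Delta\omega^2_{\neq}\|_{L^2L^2}\le C\nu^{-5/6}E_3$ and $1/4$ on a hypothetical $C\nu^{-1/6}E_5$ endpoint, so $\|e^{\f94\epsilon\nu^{1/3}t}\partial_x\omega^2_{\neq}\|_{L^2L^2}^2\le C\nu^{-4/3}E_3^{3/2}E_5^{1/2}$; multiplied by your prefactor $\nu^{2/3}E_1^2$ this gives $C\nu^{-2/3}E_1^2E_3^{3/2}E_5^{1/2}$, a full factor $\nu^{-1}$ above the target. (The $\bar u^{1,\neq}$ piece also produces $\nu^{4/3}E_1^2E_3^2$, which is not dominated by $\nu^{1/3}E_1^2E_3(E_5+E_3^{3/4}E_5^{1/4})$ without invoking the bootstrap.) The repair is not to weaken the prefactor: use $\|\bar u^1\|_{H^2}^2\le C\nu^{4/3}E_1^2(1+\nu^{1/3}t)^2$ directly, absorb $(1+\nu^{1/3}t)^2$ into $e^{\f14\epsilon\nu^{1/3}t}$, and then the fourth inequality of Lemma \ref{lem:u-nonzero} --- proved precisely for this purpose via $\|\partial_x\nabla u_{\neq}\|^2+\|\partial_z\nabla(u^2_{\neq},u^3_{\neq})\|^2\le C\|(\partial_x,\partial_z)\Delta u_{\neq}\|\big(\|\partial_x^2u_{\neq}\|+\|\partial_x\partial_z(u^2_{\neq},u^3_{\neq})\|\big)$, where the expensive $2\epsilon$-weighted anchor enters with exponent one --- yields $\nu^{1/2}\|e^{\f{17}{8}\epsilon\nu^{1/3}t}(\partial_x\nabla u_{\neq},\partial_z\nabla(u^2_{\neq},u^3_{\neq}))\|_{L^2L^2}\le C(E_5^{1/8}E_3^{7/8}+E_5^{1/4}E_3^{3/4})$, after which your Young-type step correctly closes the estimate.
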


\begin{proof}
By Lemma \ref{Lem: bilinear zero nonzero} and Lemma \ref{lem:u1-H2}, we have
  \begin{align*}
     &\|\partial_z(\bar{u}^1\partial_xu^1_{\neq})\|^2_{L^2}+
     \|\partial_z((u^2_{\neq}\partial_y+u^3_{\neq}\partial_z)\bar{u}^1)\|^2_{L^2}\\ &\leq  C\big(\|\bar{u}^1\|_{H^1}^2+\|\partial_z\bar{u}^1\|_{H^1}^2\big)\big(\|\partial_xu^1_{\neq}\|^2_{L^2}+
     \|(\partial_x,\partial_z)\partial_xu^1_{\neq}\|^2_{L^2}\big)\\&\quad+
     C\big(\|\partial_y\bar{u}^1\|_{L^2}^2+\|\partial_z\partial_y\bar{u}^1\|_{L^2}^2\big)\big(\|u^2_{\neq}\|^2_{H^1}+
     \|(\partial_x,\partial_z)u^2_{\neq}\|^2_{H^1}\big)\\&\quad+
     C\big(\|\partial_z\bar{u}^1\|_{L^2}^2+\|\partial_z^2\bar{u}^1\|_{L^2}^2\big)\big(\|u^3_{\neq}\|^2_{H^1}+
     \|(\partial_x,\partial_z)u^3_{\neq}\|^2_{H^1}\big)\\ &\leq C\|\bar{u}^1\|_{H^2}^2\big(\|\nabla\partial_xu^1_{\neq}\|^2_{L^2}+\|\nabla(\partial_x,\partial_z)u^2_{\neq}\|^2_{L^2}
     +\|\nabla(\partial_x,\partial_z)u^3_{\neq}\|^2_{L^2}\big)\\
     &\leq \nu^{\f43}E^2_1(1+\nu^{\f13} t)^2(\|\nabla\partial_xu_{\neq}\|^2_{L^2}+\|\nabla\partial_z(u^2_{\neq},u^3_{\neq})\|^2_{L^2}),
  \end{align*}
  which along with Lemma \ref{lem:u-nonzero}  gives
   \begin{align*}
   &\|e^{2\epsilon\nu^{1/3}t}\partial_z(\bar{u}^1\partial_xu^1_{\neq})\|^2_{L^2L^2}+
    \|e^{2\epsilon\nu^{1/3}t}\partial_z((u^2_{\neq}\partial_y+u^3_{\neq}\partial_z)\bar{u}^1)\|^2_{L^2L^2}\\
   &\leq C\nu^{\f43}E^2_1\Big(\|e^{\f{17}{8}\epsilon\nu^{1/3}t}\nabla\partial_xu_{\neq}\|_{L^2L^2}^2
   +\|e^{\f{17}{8}\epsilon\nu^{1/3}t}\nabla\partial_z(u^2_{\neq},u^3_{\neq})\|_{L^2L^2}^2\Big)\\
   &\leq C\nu^{\f43}E^2_1\nu^{-1}\big(E^{\f14}_5E^{\f74}_3+E^{\f12}_5E^{\f32}_3\big)
   \leq C\nu^{\f13} E^2_1E_{3}\big(E_5 +E_3^{\f34}E^{\f14}_5\big).
   \end{align*}
\end{proof}

\section{Energy estimates for zero mode}

\subsection{Estimate of $E_1$}

\begin{Proposition}\label{prop:E1}
It holds that
\begin{align}
&E_{1,0}\leq C\nu^{-1}\big(\|\bar{u}(0)\|_{H^2}+E_2+E_2E_{1,0}\big),\label{eq:E10}\\
&E_{1,\neq}\leq C\big(\|\bar{u}(0)\|_{H^2}+\nu^{-1}E_2E_{1,\neq}+\nu^{-\f43}E_3^2\big).\label{eq:E1n}
\end{align}
\end{Proposition}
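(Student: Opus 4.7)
\textbf{Strategy for \eqref{eq:E10}.} Since $\bar u^{1,0}$ depends only on $(t,y,z)$, equation \eqref{eq:u10} is a 2D Dirichlet heat equation with zero initial data and source $F_0 = -\bar u^2 - \bar u^2 \partial_y \bar u^{1,0} - \bar u^3 \partial_z \bar u^{1,0}$. The plan is to establish the maximal-regularity-type bound
\[
\|\partial_t \bar u^{1,0}\|_{L^\infty H^2} + \nu\|\bar u^{1,0}\|_{L^\infty H^4} + \nu^{1/2}\|\partial_t \bar u^{1,0}\|_{L^2 H^3} \le C\big(\|F_0\|_{L^\infty H^2} + \nu^{1/2}\|F_0\|_{L^2 H^3}\big),
\]
whose $L^\infty H^4$ part follows by elliptic regularity for $-\Delta_D$ applied to the rewritten equation $\nu\Delta \bar u^{1,0} = \partial_t \bar u^{1,0} - F_0$. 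The source is controlled via Lemma \ref{lem:u23-zero}: the linear piece contributes $\|\bar u^2\|_{H^2}\le CE_2$, while the transport piece is estimated by $\|(\bar u^2,\bar u^3)\cdot\nabla \bar u^{1,0}\|_{H^2}\le CE_2\|\bar u^{1,0}\|_{H^3}\le CE_2 E_{1,0}$ (using the divergence-free constraint $\partial_y\bar u^2+\partial_z\bar u^3=0$ to trade a $\partial_y$ for a $\partial_z$ whenever top-order derivatives threaten to exceed the regularity available in $E_2$). Dividing by $\nu$ yields \eqref{eq:E10}.

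\textbf{Strategy for \eqref{eq:E1n}.} For \eqref{eq:u1n} I will run a standard 2D parabolic $H^2$ energy estimate with nonzero initial data $\bar u^1(0)\in H^2$. Applying $\Delta$ and testing with $\Delta \bar u^{1,\neq}$, the Dirichlet boundary terms vanish and one obtains
\[
\|\bar u^{1,\neq}\|_{L^\infty H^2}^2 + \nu\|\nabla \bar u^{1,\neq}\|_{L^2 H^2}^2 \le C\|\bar u^1(0)\|_{H^2}^2 + C\nu^{-1}\|F_\neq\|_{L^2 H^1}^2,
\]
with $F_\neq = -(\bar u^2\partial_y + \bar u^3\partial_z)\bar u^{1,\neq} - \overline{u_\neq\cdot\nabla u^1_\neq}$. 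Lemma \ref{lem:u23-zero} gives $\|(\bar u^2,\bar u^3)\cdot\nabla \bar u^{1,\neq}\|_{H^1}\le CE_2\|\bar u^{1,\neq}\|_{H^2}$, contributing $C\nu^{-1}E_2^2\|\bar u^{1,\neq}\|_{L^2 H^2}^2$, which is partially absorbed into the dissipation $\nu\|\bar u^{1,\neq}\|_{L^2 H^3}^2$ and produces the $\nu^{-1}E_2 E_{1,\neq}$ term of \eqref{eq:E1n}. The genuinely nonlinear source obeys $\|\overline{u_\neq\cdot\nabla u^1_\neq}\|_{L^2 H^1}\le \|\nabla(u_\neq\cdot\nabla u_\neq)\|_{L^2 L^2} \le C\nu^{-5/6}E_3^2$ by \eqref{lemma2.1-3} in Lemma \ref{lem:int-nn}, whence $\nu^{-1/2}\|F_\neq^{NL}\|_{L^2 H^1}\le C\nu^{-4/3}E_3^2$, giving the last term.

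\textbf{Main obstacle.} The most delicate point is the $L^\infty$-in-time control of $\partial_t \bar u^{1,0}$ in $H^2$ (from which the $L^\infty H^4$ bound on $\bar u^{1,0}$ and the $L^2 H^3$ bound on $\partial_t \bar u^{1,0}$ follow by elliptic regularity and a further energy estimate). Standard $L^p$-maximal regularity for the Dirichlet heat semigroup holds for $1<p<\infty$ but fails at the endpoint $p=\infty$. I plan to circumvent this by differentiating \eqref{eq:u10} in time: setting $v=\partial_t \bar u^{1,0}$, then $v$ solves a Dirichlet heat equation with initial datum $v(0) = F_0(0) = -\bar u^2(0)\in H^2$ and source $\partial_t F_0$ involving $\partial_t \bar u^2, \partial_t \bar u^3$ (controlled in $L^2$-in-time by $E_2$ via the definition of $E_2$) together with $\partial_t \bar u^{1,0}$ itself in a quadratic fashion. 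A standard $H^2$ energy estimate on $v$ then closes after a Gronwall-type absorption, provided $E_2$ is small (which is supplied by the bootstrap hypothesis). The weighted time dependence of $\partial_t \bar u^3$ (with $\min((\nu^{2/3}+\nu t)^{1/2},1-y^2)$) in $E_2$ will require careful pairing with the decay of $\partial_z \bar u^{1,0}/\min((\nu t)^{1/2},1-y^2)$ supplied by Lemma \ref{lem:u10-Linfty}, echoing the bookkeeping already done in Lemma \ref{lem:int-zz2}.
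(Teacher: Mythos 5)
Your proposal follows essentially the same route as the paper: for $E_{1,0}$ the paper likewise differentiates \eqref{eq:u10} in time, runs an $H^2$-level energy estimate on $\partial_t\bar u^{1,0}$ (with initial datum $\partial_t\Delta\bar u^{1,0}(0)=-\Delta\bar u^2(0)$), recovers the $L^\infty H^4$ bound from the equation by elliptic regularity, and controls the degenerate terms involving $\Delta\bar u^3$ and $\partial_t\nabla\bar u^3$ by pairing with $\partial_z\bar u^{1,0}/\min((\nu t)^{1/2},1-y^2)$ exactly as in Lemmas \ref{lem:int-zz1} and \ref{lem:int-zz2}, while for $E_{1,\neq}$ it runs the same $H^2$ parabolic energy estimate and invokes \eqref{lemma2.1-3}. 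The only imprecision is your intermediate claim $\|(\bar u^2,\bar u^3)\cdot\nabla\bar u^{1,0}\|_{H^2}\le CE_2\|\bar u^{1,0}\|_{H^3}$, which cannot hold as stated since $E_2$ does not control $\|\bar u^3\|_{H^2}$ uniformly in time; your closing paragraph, however, already identifies the correct weighted substitute.
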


\begin{proof}
{\bf Step 1.} Estimate of $E_{1,0}$.\smallskip

Thanks to  \eqref{eq:u10}, we know that
\begin{align}
\label{eq:u10-D}
(\partial_t-\nu\Delta )\Delta\bar{u}^{1,0}+\Delta\bar{u}^2+\Delta(\bar{u}^2\partial_y\bar{u}^{1,0}+\bar{u}^3\partial_z\bar{u}^{1,0})=0
\end{align}
with $\Delta\bar{u}^{1,0}|_{y=\pm1}=\bar{u}^{1,0}|_{y=\pm1}=0$.

Taking the time derivative to \eqref{eq:u10-D}, and then taking $L^2$ inner product with $\partial_t\Delta\bar{u}^{1,0}$ to the resulting equation, we obtain
\begin{align*}
&\f12\f d {dt}\|\partial_t\Delta\bar{u}^{1,0}\|_{L^2}^2+\nu\|\partial_t\nabla\Delta\bar{u}^{1,0}\|_{L^2}^2-\big\langle\partial_t\nabla\bar{u}^2,
\partial_t\nabla\Delta\bar{u}^{1,0}\big\rangle\\
&\quad-\big\langle\partial_t\nabla(\bar{u}^2\partial_y\bar{u}^{1,0}+\bar{u}^3\partial_z\bar{u}^{1,0}),\partial_t\nabla\Delta\bar{u}^{1,0}\big\rangle=0,
\end{align*}
which implies that
\begin{align*}
\f d {dt}\|\partial_t\Delta\bar{u}^{1,0}\|_{L^2}^2+\nu\|\partial_t\nabla\Delta\bar{u}^{1,0}\|_{L^2}^2\leq C\nu^{-1}\Big(\|\partial_t\nabla\bar{u}^2\|_{L^2}^2+\|\partial_t\nabla(\bar{u}^2\partial_y\bar{u}^{1,0}+\bar{u}^3\partial_z\bar{u}^{1,0})\|_{L^2}^2\Big).
\end{align*}
This gives that
\begin{align*}
&\|\partial_t\Delta\bar{u}^{1,0}\|_{L^{\infty}L^2}^2+\nu\|\partial_t\nabla\Delta\bar{u}^{1,0}\|_{L^2L^2}^2\\
&\leq C\Big(\|\partial_t\Delta\bar{u}^{1,0}(0)\|_{L^2}^2+\nu^{-1}\|\partial_t\nabla\bar{u}^2\|_{L^2L^2}^2+\nu^{-1}\|\partial_t\nabla(\bar{u}^2\partial_y\bar{u}^{1,0}
+\bar{u}^3\partial_z\bar{u}^{1,0})\|_{L^2L^2}^2\Big),
\end{align*}
which along with $\partial_t\Delta\bar{u}^{1,0}|_{t=0}=-\Delta \bar{u}^2|_{t=0}$ and the definition of $E_2$ gives
\begin{align*}
&\nu^{-2}\|\partial_t\Delta\bar{u}^{1,0}\|_{L^{\infty}L^2}^2+\nu^{-1}\|\partial_t\nabla\Delta\bar{u}^{1,0}\|_{L^2L^2}^2\\
\nonumber&\leq C\nu^{-2}\Big(\|u(0)\|_{H^2}^2+E_2^2+\nu^{-1}\|\partial_t\nabla(\bar{u}^{\al}\partial_{\al}\bar{u}^{1,0})\|_{L^2L^2}^2\Big),
\end{align*}
from which and Lemma \ref{lem:int-zz2}, we infer that
\begin{align*}
\nu^{-2}\|\partial_t\Delta\bar{u}^{1,0}\|_{L^{\infty}L^2}^2+\nu^{-1}\|\partial_t\nabla\Delta\bar{u}^{1,0}\|_{L^2L^2}^2\leq C\nu^{-2}\big(\|u(0)\|_{H^2}^2+E_2^2+E_2^2E_{1,0}^2\big).
\end{align*}

Thanks to \eqref{eq:u10-D}, we get
\begin{align*}
\nu^2\|\Delta^2\bar{u}^{1,0}\|_{L^{\infty}L^2}^2\leq C\Big(\|\partial_t\Delta\bar{u}^{1,0}\|_{L^{\infty}L^2}^2+\|\Delta\bar{u}^2\|_{L^{\infty}L^2}^2+\|\Delta(\bar{u}^2\partial_y\bar{u}^{1,0}
+\bar{u}^3\partial_z\bar{u}^{1,0})\|_{L^{\infty}L^2}^2\Big),
\end{align*}
which along with Lemma \ref{lem:int-zz1} and Lemma \ref{lem:u10-Linfty}  gives
\begin{align*}
\nu^2\|\Delta^2\bar{u}^{1,0}\|_{L^{\infty}L^2}^2\leq& C\big(\|u(0)\|_{H^2}^2+E_2^2+E_2^2E_{1,0}^2+\|\Delta(\bar{u}^2\partial_y\bar{u}^{1,0}+\bar{u}^3\partial_z\bar{u}^{1,0})\|_{L^{\infty}L^2}^2\big)\\
\leq&C\Big( \|u(0)\|_{H^2}^2+E_2^2+E_2^2E_{1,0}^2+\big(\|\Delta\bar{u}^2\|_{L^{\infty}L^2}^2+\|\nabla\bar{u}^3\|_{L^{\infty}L^2}^2)\|\nabla\Delta\bar{u}^{1,0}\|_{L^{\infty}L^2}^2\\
&+\|\min((\nu^{\f23}+\nu t\big)^{\frac{1}{2}},1-y^2)\Delta\bar{u}^3\|_{L^{\infty}L^2}^2\Big\|\f{\partial_z\bar{u}^{1,0}}{\min((\nu t)^{\frac{1}{2}},1-y^2)}\Big\|_{L^{\infty}L^{\infty}}^2\Big)\\
\leq&C\big(\|u(0)\|_{H^2}^2+E_2^2+E_2^2E_{1,0}^2\big).
\end{align*}

Since $\Delta\bar{u}^{1,0}|_{y=\pm1}=\bar{u}^{1,0}|_{y=\pm1}=\partial_t\bar{u}^{1,0}|_{y=\pm1}=0$, we obtain
\begin{align*}
E_{1,0}&=\|\bar{u}^{1,0}\|_{L^{\infty}H^4}+\nu^{-1}\|\partial_t \bar{u}^{1,0}\|_{L^{\infty}H^2}+\nu^{-\f12}\|\partial_t\bar{u}^{1,0}\|_{L^2H^3}\\&\leq C\big(\|\Delta\bar{u}^{1,0}\|_{L^{\infty}H^2}+\nu^{-1}\|\partial_t \Delta\bar{u}^{1,0}\|_{L^{\infty}L^2}+\nu^{-\f12}\|\partial_t\Delta\bar{u}^{1,0}\|_{L^2H^1}\big)\\&\leq C\big(\|\Delta^2\bar{u}^{1,0}\|_{L^{\infty}L^2}+\nu^{-1}\|\partial_t \Delta\bar{u}^{1,0}\|_{L^{\infty}L^2}+\nu^{-\f12}\|\partial_t\nabla\Delta\bar{u}^{1,0}\|_{L^2L^2}\big)\\&\leq C\nu^{-1}\big(\|\bar{u}(0)\|_{H^2}+E_2+E_2E_{1,0}\big).
\end{align*}
This proves \eqref{eq:E10}.\smallskip

{\bf Step 2.} Estimate of $E_{1,\neq}$.\smallskip

Recall that $\bar{u}^{1,\neq}$ satisfies
\begin{align*}
\left\{
\begin{aligned}
&(\partial_t-\nu\Delta )\bar{u}^{1,\neq}+\bar{u}^2\partial_y\bar{u}^{1,\neq}+\bar{u}^3\partial_z\bar{u}^{1,\neq}+\overline{u_{\neq}\cdot\nabla u_{\neq}^1}=0,\\
&\bar{u}^{1,\neq}|_{t=0}=\bar{u}^1(0),\quad \Delta\bar{u}^{1,\neq}|_{y=\pm1}=0,\quad \bar{u}^{1,\neq}|_{y=\pm1}=0.
\end{aligned}
\right.
\end{align*}
Thus, we have
\begin{align*}
(\partial_t-\nu\Delta )\Delta\bar{u}^{1,\neq}+\Delta(\bar{u}^2\partial_y\bar{u}^{1,\neq}+\bar{u}^3\partial_z\bar{u}^{1,\neq}+
\overline{u_{\neq}\cdot\nabla u_{\neq}^1})=0.
\end{align*}
Thanks to $\Delta\bar{u}^{1,\neq}|_{y=\pm1}=0$,  the energy estimate gives
\begin{align*}
\f d {dt}\| \Delta\bar{u}^{1,\neq}\|_{L^2}^2+2\nu\| \nabla\Delta\bar{u}^{1,\neq}\|_{L^2}^2-2\Big\langle \nabla(\bar{u}^2\partial_2\bar{u}^{1,\neq}+\bar{u}^3\partial_3\bar{u}^{1,\neq}+
\overline{u_{\neq}\cdot\nabla u_{\neq}^1}), \nabla\Delta\bar{u}^{1,\neq}\Big\rangle=0,
\end{align*}
which gives
\begin{align*}
&\f d {dt}\| \Delta\bar{u}^{1,\neq}\|_{L^2}^2+\nu\| \nabla\Delta\bar{u}^{1,\neq}\|_{L^2}^2\leq C\nu^{-1}\Big(\|\nabla (\bar{u}^2\partial_y\bar{u}^{1,\neq}+\bar{u}^3\partial_z\bar{u}^{1,\neq})\|_{L^2}^2+\|\nabla (\overline{u_{\neq}\cdot\nabla u_{\neq}^1})\|_{L^2}^2\Big).
\end{align*}
It is easy to see that
\begin{align*}
\|\nabla(\bar{u}^k\partial_k\bar{u}^{1,\neq})\|_{L^2}^2\leq& C\|\bar{u}^k\|_{H^1}^2\|\partial_k\bar{u}^{1,\neq}\|_{H^2}^2\leq C \|\nabla\bar{u}^k\|_{L^2}^2 \|\nabla\Delta\bar{u}^{1,\neq}\|_{L^2}^2.
\end{align*}
which gives
\begin{align*}
&\|\nabla (\bar{u}^2\partial_y\bar{u}^{1,\neq}+\bar{u}^3\partial_z\bar{u}^{1,\neq})\|_{L^2}^2
\leq C\big(\|\nabla\bar{u}^2\|_{L^2}^2+\|\nabla\bar{u}^3\|_{L^2}^2\big)\|\nabla\Delta\bar{u}^{1,\neq}\|_{L^2}^2,
\end{align*}
which along with Lemma \ref{lem:int-nn} gives
\begin{align*}
&\| \Delta\bar{u}^{1,\neq}\|_{L^{\infty}L^2}^2+\nu\| \nabla\Delta\bar{u}^{1,\neq}\|_{L^2L^2}^2\\ &\leq \| u(0)\|_{H^2}^2+C\nu^{-1}\Big(\|\nabla (\bar{u}^2\partial_y\bar{u}^{1,\neq}+\bar{u}^3\partial_z\bar{u}^{1,\neq})\|_{L^2L^2}^2+\|\nabla (\overline{u_{\neq}\cdot\nabla u_{\neq}^1})\|_{L^2L^2}^2\Big)\\ &\leq \| u(0)\|_{H^2}^2+C\nu^{-1}E_2^2\|\nabla\Delta\bar{u}^{1,\neq}\|_{L^2L^2}^2+C\nu^{-\f83}E_3^4.
\end{align*}
As $\Delta\bar{u}^{1,\neq}|_{y=\pm1}=\bar{u}^{1,\neq}|_{y=\pm1}=0$, we obtain
\begin{align*}
E_{1,\neq}^2&=\big(\|\bar{u}^{1,\neq}\|_{L^{\infty}H^2}+\nu^{\f12}\|\nabla\bar{u}^{1,\neq}\|_{L^2H^2}\big)^2\\&\leq C\big(\| \Delta\bar{u}^{1,\neq}\|_{L^{\infty}L^2}^2+\nu\| \nabla\Delta\bar{u}^{1,\neq}\|_{L^2L^2}^2\big) \\&\leq C\big(\|u(0)\|_{H^2}^2+\nu^{-2}E_2^2E_{1,\neq}^2+\nu^{-\f83}E_3^4\big).
\end{align*}
This proves \eqref{eq:E1n}.
\end{proof}

\subsection{Estimate of $E_2$}

Let us assume that  $\nu\in(0,\nu_0], \nu_0, \epsilon\in(0,1), \nu_0^{2/3}\leq 4\epsilon<\epsilon_1$. Then $e^{\nu t}\leq e^{4\epsilon\nu^{1/3} t}$ for $t>0.$

\begin{Proposition}\label{prop:E2}
It holds that
\beno
E_2\le C\big(1+\nu^{-1}E_2\big)\big(\|u(0)\|_{H^2}+\nu^{-1}E_3^2\big).
\eeno
\end{Proposition}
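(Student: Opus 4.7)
The plan is to treat \eqref{eq:u23-zero} as a 2D incompressible Navier--Stokes system for $(\bar u^2,\bar u^3)$ on $(-1,1)_y\times\T_z$ with no-slip data $\bar u^j|_{y=\pm 1}=0$, forced by the nonzero-mode self-interaction $\overline{u_{\neq}\cdot\na u^j_{\neq}}$ on top of the planar convection $(\bar u^2\partial_y+\bar u^3\partial_z)\bar u^j$. By Lemma \ref{lem:int-nn} the forcing is bounded by $C\nu^{-1/2}E_3^2$ in every $L^2L^2$ norm that feeds into $E_2$, and its boundary-weighted companion is controlled by \eqref{lemma2.1-4}; this accounts for the $\nu^{-1}E_3^2$ contribution on the right-hand side. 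The planar convection is quadratic in $\bar u$ and will be handled by a Gronwall-type argument, in which Lemma \ref{lem:u23-zero} supplies $\|(\bar u^2,\bar u^3)\|_{L^\infty L^\infty}+\nu^{1/2}\|\na(\bar u^2,\bar u^3)\|_{L^2L^\infty}\le CE_2$; exponentiating $\exp(C\nu^{-1}E_2)$ under the bootstrap smallness of $\nu^{-1}E_2$ produces precisely the outer factor $1+\nu^{-1}E_2$ appearing in the statement.

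For the unweighted pieces of $E_2$ I would introduce the in-plane vorticity $\bar\omega:=\partial_y\bar u^3-\partial_z\bar u^2$ and the stream function $\bar\psi$ with $\Delta\bar\psi=\bar\omega$ and $\bar\psi|_{y=\pm 1}=\partial_y\bar\psi|_{y=\pm 1}=0$ (the Neumann trace vanishes because $\partial_y\bar\psi=\bar u^3$ does at the walls). This eliminates $\bar p$ and reduces the problem to a scalar transport--diffusion equation for $\bar\omega$ whose source is $\partial_zN_2-\partial_yN_3$, with $N_j$ the full nonlinearity. Standard $L^2$ and $H^1$ energy estimates for $\bar\omega$, combined with elliptic regularity on $\Delta\bar\psi=\bar\omega$, yield $\|\Delta\bar u^2\|_{L^\infty L^2}$, $\nu^{1/2}\|\na\Delta\bar u^2\|_{L^2L^2}$, $\|\na\bar u^3\|_{L^\infty L^2}$, $\nu^{1/2}\|\Delta\bar u^3\|_{L^2L^2}$, and their $\nu^{1/2}L^2L^2$ companions. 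The time-derivative pieces $\nu^{-1/2}\|\partial_t\na\bar u^2\|_{L^2L^2}$ and $\nu^{-1/2}\|\partial_t\bar u^3\|_{L^2L^2}$ come from testing the velocity equations against $-\partial_t\Delta\bar u^2$ and $\partial_t\bar u^3$ respectively, controlling $\bar p$ via \eqref{eq:pressure}.

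The main obstacle is the weighted block for $\Delta\bar u^3$, because $\Delta\bar u^3$ does not vanish at the wall and a naive energy identity picks up uncontrollable boundary traces. Setting $\rho(t,y):=\min\big((\nu^{2/3}+\nu t)^{1/2},\,1-y^2\big)$, the role of $\rho$ is twofold: its vanishing at $\p\Om$ absorbs the boundary flux, while its lower cap $(\nu^{2/3}+\nu t)^{1/2}$ reflects the parabolic healing of the $\nu^{1/3}$-boundary layer created by generic $H^2$ initial data, which widens to fill the channel at times $t\sim\nu^{-1/3}$. The plan is to apply $\Delta$ to the $\bar u^3$-equation, test against $\rho^2\Delta\bar u^3$, discard wall terms using $\rho|_{y=\pm 1}=0$, and control the weight commutator via $|\partial_t\rho|\le C\nu/\rho$ (in the time-dominated regime) and $|\partial_y\rho|\le 2$ (in the space-dominated regime), joined by a Hardy-type inequality in the overlap. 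The convection contribution $\rho\na[(\bar u^2\partial_y+\bar u^3\partial_z)\bar u^3]$ is bounded via Lemma \ref{lem:u23-zero} and the already-proved unweighted block, producing the $\nu^{-1}E_2$ multiplier, while $\rho\na(\overline{u_{\neq}\cdot\na u^3_{\neq}})$ is exactly the quantity bounded in \eqref{lemma2.1-4}, giving the $\nu^{-1}E_3^2$ contribution. Once $\|\rho\Delta\bar u^3\|_{L^\infty L^2}$ and $\nu^{1/2}\|\rho\na\Delta\bar u^3\|_{L^2L^2}$ are in hand, the last piece $\nu^{-1/2}\|\rho\na\partial_t\bar u^3\|_{L^\infty L^2}$ follows by time-differentiating the $\bar u^3$-equation and treating $\partial_t\bar u^3$ as the unknown of a linear Stokes problem with right-hand sides already in hand.
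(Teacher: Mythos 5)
Your treatment of the nonzero-mode forcing via Lemma \ref{lem:int-nn} and \eqref{lemma2.1-4}, and your weighted estimate for $\Delta\bar u^3$ with the degenerate weight $\rho=\min((\nu^{2/3}+\nu t)^{1/2},1-y^2)$, match the paper's argument (Lemma \ref{lem:u3-H2-w}) in all essentials. The gap is in the unweighted block. You propose to pass to the in-plane vorticity $\bar\omega=\partial_y\bar u^3-\partial_z\bar u^2$ and run ``standard $L^2$ and $H^1$ energy estimates'' for it. Under no-slip conditions $\bar\omega$ carries no boundary condition: at the walls $\partial_z\bar u^2=0$ but $\partial_y\bar u^3\neq 0$ in general, so $\bar\omega|_{y=\pm1}\neq 0$ and the viscous term in the $\bar\omega$-energy identity produces the uncontrolled flux $\nu\int_{\partial\Omega}\partial_y\bar\omega\,\bar\omega$ (with worse traces at the $H^1$ level). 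This is exactly the boundary-layer obstruction the proposition is built around, and it is why $E_2$ is asymmetric in $\bar u^2$ and $\bar u^3$: since $\Delta\bar u^2=-\partial_z\bar\omega$ and $\Delta\bar u^3=\partial_y\bar\omega$, an $H^1$ bound on $\bar\omega$ would deliver $\|\Delta\bar u^3\|_{L^\infty L^2}$ with no weight and no $\nu$-loss, a quantity that is deliberately absent from $E_2$ and is not available at that order. So the vorticity route either fails to close or claims too much.

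The paper's way around this is worth internalizing: it never tests an equation against a quantity whose normal derivative survives at the wall. Because $\bar u^2|_{y=\pm1}=0$ and $\partial_y\bar u^2|_{y=\pm1}=-\partial_z\bar u^3|_{y=\pm1}=0$, the equation \eqref{2.14.1} for $\Delta\bar u^2$ can be tested against $-2\bar u^2$ and $-2\partial_t\bar u^2$ with no boundary contribution, which yields $\|\Delta\bar u^2\|_{L^\infty L^2}$ and $\nu^{-1/2}\|\partial_t\nabla\bar u^2\|_{L^2L^2}$ (Lemma \ref{lem:u2-H2}); the third-order quantity $\nu^{1/2}\|\nabla\Delta\bar u^2\|_{L^2L^2}$ and the unweighted $\nu^{1/2}\|\Delta\bar u^3\|_{L^2L^2}$ are then extracted not from an energy identity for $\bar\omega$ but from the algebraic expansion of $\|\nu\Delta\partial_z\bar u^2-\partial_z\partial_y\bar p\|_{L^2}^2$, whose cross term can be integrated by parts precisely because $\nabla\bar u^2$ has a double-zero trace, combined with the pressure estimate of Lemma \ref{the estimate of delta bar{p}} (Lemma \ref{lem:u2-H3}). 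Your plan for the unweighted block needs to be rebuilt around this double-zero-trace structure of $\bar u^2$ rather than around $\bar\omega$; the rest of the proposal can then be kept as is.
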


The proposition is an immediate consequence of the following lemmas.

\begin{Lemma}\label{lem:u23-H1}
It holds that
\begin{align*}
&\|e^{\nu t}(\bar{u}^2,\bar{u}^3)\|_{L^{\infty}L^2}^2+\nu\|e^{\nu t}(\nabla\bar{u}^2,\nabla\bar{u}^3)\|_{L^2L^2}^2\leq C\big(\|u(0)\|_{H^2}^2+\nu^{-2}E_3^4\big),\\
&\|e^{\nu t}(\nabla\bar{u}^2,\nabla\bar{u}^3)\|_{L^{\infty}L^2}^2+\nu^{-1}\|e^{\nu t}(\partial_t\bar{u}^2,\partial_t\bar{u}^3)\|_{L^2L^2}^2\\
&\qquad\leq C\big(1+\nu^{-2}E_2^2\big)\big(\|u(0)\|_{H^2}^2+\nu^{-2}E_3^4\big).
\end{align*}
\end{Lemma}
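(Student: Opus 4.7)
\medskip
\noindent\textbf{Proof sketch of Lemma 12.2.}

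The plan is to run two standard energy estimates for the 2D-like Navier--Stokes system satisfied by $(\bar u^2,\bar u^3)$, using that $\partial_y\bar u^2+\partial_z\bar u^3=0$ and $\bar u^2|_{y=\pm1}=0,\ \bar u^3|_{y=\pm1}$ has a natural boundary condition, and leveraging the structural cancellations that make the pressure and the self-interaction of $(\bar u^2,\bar u^3)$ harmless. The forcing comes from two sources: the zero-mode self-interactions, which are controlled by $E_2$ via Lemma~\ref{lem:u23-zero}, and the Reynolds stress $\overline{u_{\neq}\cdot\nabla u^j_{\neq}}$, which is controlled by Lemma~\ref{lem:int-nn}. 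Throughout, the weight $e^{\nu t}$ is compatible with the enhanced-dissipation weight $e^{2\epsilon\nu^{1/3}t}$ in $E_3$ because $\nu t\leq \epsilon\nu^{1/3}t$ under our standing assumption $\nu_0^{2/3}\leq 4\epsilon$.

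\medskip
\noindent\textbf{Step 1 ($L^2$ estimate).} I would multiply the equation for $\bar u^j$ ($j=2,3$) by $e^{2\nu t}\bar u^j$, sum in $j$, and integrate. The pressure term $\langle e^{2\nu t}\nabla\bar p,(\bar u^2,\bar u^3)\rangle$ vanishes since $(\bar u^2,\bar u^3)$ is divergence-free in $(y,z)$ and $\bar u^2|_{y=\pm1}=0$. The transport nonlinearity $\sum_j\langle e^{2\nu t}(\bar u^2\partial_y+\bar u^3\partial_z)\bar u^j,\bar u^j\rangle$ vanishes by the same divergence-free structure and the boundary conditions. One ends up with
\[
\tfrac{d}{dt}\|e^{\nu t}(\bar u^2,\bar u^3)\|_{L^2}^2+2\nu\|e^{\nu t}\nabla(\bar u^2,\bar u^3)\|_{L^2}^2
\leq 2\nu\|e^{\nu t}(\bar u^2,\bar u^3)\|_{L^2}^2+2\bigl|\langle e^{2\nu t}\overline{u_{\neq}\cdot\nabla u^j_{\neq}},\bar u^j\rangle\bigr|,
\]
where Poincar\'e absorbs the $2\nu\|\cdot\|_{L^2}^2$ term into $\nu\|\nabla(\cdot)\|_{L^2}^2$. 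For the Reynolds stress, I would move a derivative off by writing $\overline{u_{\neq}\cdot\nabla u^j_{\neq}}=\overline{\partial_\al(u^\al_{\neq}u^j_{\neq})}$ and use Cauchy--Schwarz to bound it by $\nu^{-1}\|e^{\nu t}u_{\neq}u^j_{\neq}\|_{L^2}^2+\nu\|e^{\nu t}\nabla(\bar u^2,\bar u^3)\|_{L^2}^2/4$, and then invoke \eqref{lemma2.1-1} which gives $\nu^{-1}\|e^{4\epsilon\nu^{1/3}t}|u_{\neq}|^2\|_{L^2L^2}^2\leq C\nu^{-2}E_3^4$. Integrating in time yields the first inequality.

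\medskip
\noindent\textbf{Step 2 ($\dot H^1$ and $\partial_t$ estimate).} Next I would multiply the equation by $e^{2\nu t}\partial_t\bar u^j$ and sum in $j$. Again the pressure contribution vanishes since $\partial_t(\bar u^2,\bar u^3)$ is divergence-free and $\partial_t\bar u^2|_{y=\pm1}=0$. This produces
\[
\|e^{\nu t}\partial_t(\bar u^2,\bar u^3)\|_{L^2}^2+\tfrac{\nu}{2}\tfrac{d}{dt}\|e^{\nu t}\nabla(\bar u^2,\bar u^3)\|_{L^2}^2
\lesssim \nu^2\|e^{\nu t}\nabla(\bar u^2,\bar u^3)\|_{L^2}^2+ \mathrm{NL}_0+\mathrm{NL}_{\neq},
\]
where $\mathrm{NL}_0=\|e^{\nu t}(\bar u^2\partial_y+\bar u^3\partial_z)\bar u^j\|_{L^2}^2$ and $\mathrm{NL}_{\neq}=\|e^{\nu t}\overline{u_{\neq}\cdot\nabla u^j_{\neq}}\|_{L^2}^2$. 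For $\mathrm{NL}_0$, Lemma~\ref{lem:u23-zero} gives $\|\bar u^k\nabla\bar u^j\|_{L^2}\leq CE_2\|\nabla\bar u^j\|_{H^1}\leq CE_2(\|\Delta\bar u^2\|_{L^2}+\|\Delta\bar u^3\|_{L^2})$, whose time-square integral is bounded by $\nu^{-1}E_2^2\cdot E_2^2$ by the definition of $E_2$. For $\mathrm{NL}_{\neq}$, use \eqref{lemma2.1-1} directly. Combining and Gronwalling in $\nu^2\|e^{\nu t}\nabla(\bar u^2,\bar u^3)\|_{L^2}^2$ (a trivial zeroth-order term) produces the stated bound with the factor $(1+\nu^{-2}E_2^2)$, after using Step~1 to control the lower order in time.

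\medskip
\noindent\textbf{Main obstacle.} The delicate point will be ensuring that every boundary term arising from integration by parts vanishes or is absorbed: $\bar u^2|_{y=\pm 1}=0$ and $\partial_t\bar u^2|_{y=\pm 1}=0$ handle the pressure contribution, but one must be careful with $\bar u^3$, which only has a Neumann-type condition coming from $\partial_y\bar u^2+\partial_z\bar u^3=0$. A second subtle point is bookkeeping of the weights: one must absorb the $2\nu\|\cdot\|_{L^2}^2$ produced by differentiating $e^{2\nu t}$ without losing the $\nu^{1/3}$ enhanced-dissipation gain encoded in $E_3$, which works precisely because the assumption $\nu_0^{2/3}\leq 4\epsilon$ ensures $e^{\nu t}\leq e^{4\epsilon\nu^{1/3}t}$, matching the weight used in Lemma~\ref{lem:int-nn}.
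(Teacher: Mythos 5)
Your overall strategy is the same as the paper's: an $L^2$ energy estimate (pressure and transport terms killed by the divergence-free structure, Reynolds stress integrated by parts and controlled by \eqref{lemma2.1-1}, Poincar\'e absorbing the weight), followed by testing with $\partial_t\bar u^j$. One side remark first: your worry about $\bar u^3$ having only a Neumann-type condition is unfounded — the no-slip condition $u|_{y=\pm1}=0$ gives $\bar u^j|_{y=\pm1}=0$ for all components, which is exactly what the paper uses to discard the boundary terms.

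There is, however, a concrete problem in your Step 2 treatment of $\mathrm{NL}_0$. You bound $\|\bar u^k\nabla\bar u^j\|_{L^2}\leq CE_2\|\nabla\bar u^j\|_{H^1}\leq CE_2\|\Delta\bar u^j\|_{L^2}$ and then integrate in time "by the definition of $E_2$." Two things go wrong. First, $E_2$ controls $\nu^{1/2}\|\Delta\bar u^j\|_{L^2L^2}$ \emph{without} the $e^{\nu t}$ weight, so the weighted quantity you need is not available from $E_2$. Second, even granting it, you would produce an additive term of size $\nu^{-1}\cdot E_2^2\cdot\nu^{-1}E_2^2=\nu^{-2}E_2^4$, which is not of the form $C(1+\nu^{-2}E_2^2)(\|u(0)\|_{H^2}^2+\nu^{-2}E_3^4)$ stated in the lemma; reducing $E_2^2$ to $\|u(0)\|_{H^2}^2+\nu^{-2}E_3^4$ at this point would be circular, since that is the content of Proposition \ref{prop:E2}, which relies on this lemma. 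The repair is to apply Lemma \ref{lem:u23-zero} at the correct derivative level, namely with $f=\bar u^j$, giving $\|e^{\nu t}\bar u^k\partial_k\bar u^j\|_{L^2}\leq CE_2\|e^{\nu t}\nabla\bar u^j\|_{L^2}$ (one derivative on $\bar u^j$, not two), and then to feed in the \emph{weighted} bound $\nu\|e^{\nu t}\nabla(\bar u^2,\bar u^3)\|_{L^2L^2}^2\leq C(\|u(0)\|_{H^2}^2+\nu^{-2}E_3^4)$ from your Step 1. This yields $\nu^{-1}\|e^{\nu t}\mathrm{NL}_0\|_{L^2L^2}^2\leq C\nu^{-2}E_2^2(\|u(0)\|_{H^2}^2+\nu^{-2}E_3^4)$, which is precisely where the multiplicative factor $(1+\nu^{-2}E_2^2)$ in the statement comes from.
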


\begin{proof}
Recall that $\bar{u}^k(k=2,3)$ satisfies
\begin{align*}
(\partial_t-\nu\Delta )\bar{u}^k+\partial_k \bar{p}+(\bar{u}^2\partial_y+\bar{u}^3\partial_z)\bar{u}^k+\overline{u_{\neq}\cdot\nabla u^k_{\neq}}=0.
\end{align*}
As $\bar{u}^j|_{y=\pm1}=0$, $L^2$ energy estimate gives
\begin{align*}
&\f d {dt}\big(\|\bar{u}^2\|_{L^2}^2+\|\bar{u}^3\|_{L^2}^2\big)+2\nu\big(\|\nabla\bar{u}^2\|_{L^2}^2+\|\nabla\bar{u}^3\|_{L^2}^2\big)\\
&=2\langle \bar{p},\partial_y\bar{u}^2\rangle+2\langle \bar{p},\partial_z\bar{u}^3\rangle-2\sum\limits_{k=2,3}\big\langle(\bar{u}^2\partial_y+\bar{u}^3\partial_z)\bar{u}^k,\bar{u}^k\big\rangle\\&\quad-2\big\langle \overline{u_{\neq}\cdot\nabla u^2_{\neq}},\bar{u}^2\rangle-2\big\langle \overline{u_{\neq}\cdot\nabla u^3_{\neq}},\bar{u}^3\big\rangle.
\end{align*}
As $\partial_y\bar{u}^2+\partial_z\bar{u}^3=0$, we have
\begin{align*}
&\f d {dt}\big(\|\bar{u}^2\|_{L^2}^2+\|\bar{u}^3\|_{L^2}^2\big)+2\nu\big(\|\nabla\bar{u}^2\|_{L^2}^2+\|\nabla\bar{u}^3\|_{L^2}^2\big)\\
&=-2\sum\limits_{k=2,3}\langle\overline{u_{\neq}\cdot\nabla u^k_{\neq}},\bar{u}^k\rangle
=2\sum\limits_{k=2,3}\langle u_{\neq}\cdot\nabla \bar{u}^k,u^k_{\neq}\rangle\\
&\leq2\||u_{\neq}|^2\|_{L^2}\|(\nabla\bar{u}_2,\nabla\bar{u}_3)\|_{L^2},
\end{align*}
from which and the fact that $\|\nabla\bar{u}^j\|_{L^2}^2\geq(\pi/2)^2\|\bar{u}^j\|_{L^2}^2$, we infer that
\begin{align*}
&e^{2\nu t}\big(\|\bar{u}^2\|_{L^2}^2+\|\bar{u}^3\|_{L^2}^2\big)+\nu\int_0^te^{2\nu s}\big(\|\nabla\bar{u}^2(s)\|_{L^2}^2+\|\nabla\bar{u}^3(s)\|_{L^2}^2\big)ds\\
&\le C\|\bar{u}^2(0)\|_{L^2}^2+C\|\bar{u}^3(0)\|_{L^2}^2+C\nu^{-1}\int_0^te^{2\nu s}\||u_{\neq}(s)|^2\|_{L^2}^2ds,
\end{align*}
which  along with Lemma \ref{lem:int-nn} gives
\begin{align*}
&\|e^{\nu t}(\bar{u}^2,\bar{u}^3)\|_{L^{\infty}L^2}^2+\nu\|e^{\nu t}(\nabla\bar{u}^2,\nabla\bar{u}^3)\|_{L^2L^2}^2\\
&\leq C\big(\|u(0)\|_{L^2}^2+\nu^{-1}\|e^{\nu t}|u_{\neq}|^2\|_{L^2L^2}^2\big)\leq C\big(\|u(0)\|_{L^2}^2+\nu^{-2}E_3^4\big).
\end{align*}

Now $H^1$ energy estimate gives
\begin{align*}
&\nu\f d {dt}\big(\|\nabla\bar{u}^2\|_{L^2}^2+\|\nabla\bar{u}^3\|_{L^2}^2\big)+2\big(\|\partial_t\bar{u}^2\|_{L^2}^2+\|\partial_t\bar{u}^3\|_{L^2}^2\big)\\
&=-2\sum\limits_{k=2,3}\big\langle(\bar{u}^2\partial_y+\bar{u}^3\partial_z)\bar{u}^k,\partial_t\bar{u}^k\big\rangle-2\big\langle \overline{u_{\neq}\cdot\nabla u^2_{\neq}},\partial_t\bar{u}^2\big\rangle-2\big\langle \overline{u_{\neq}\cdot\nabla u^3_{\neq}},\partial_t\bar{u}^3\big\rangle.
\end{align*}
By Lemma \ref{lem:u23-zero}, we have
\begin{align*}
\|e^{\nu t}(\bar{u}^2\partial_y+\bar{u}^3\partial_z)\bar{u}^k\|_{L^2}\leq CE_2\|e^{\nu t}\nabla\bar{u}^k\|_{L^2},
\end{align*}
and by Lemma \ref{lem:int-nn}, we have
\beno
\|e^{\nu t}(u_{\neq}\cdot \nabla u_{\neq})\|_{L^2L^2}^2\le C\nu^{-1}E_3^4.
\eeno
Thus, we obtain
\begin{align*}
&\|e^{\nu t}(\nabla\bar{u}^2,\nabla\bar{u}^3)\|_{L^{\infty}L^2}^2+\nu^{-1} \|e^{\nu t}(\partial_t\bar{u}^2,\partial_t\bar{u}^3)\|_{L^2L^2}^2\\
&\leq C\|u(0)\|_{H^2}^2+C\nu \|e^{\nu t}(\nabla\bar{u}^2,\nabla\bar{u}^3)\|_{L^2L^2}^2+C\nu^{-2}E_3^4\\
&\quad+C\nu^{-1}E_2^2 \|e^{\nu t}(\nabla\bar{u}^2,\nabla\bar{u}^3)\|_{L^2L^2}^2\leq C\big(1+\nu^{-2}E_2^2\big)\big(\|u(0)\|_{H^2}^2+\nu^{-2}E_3^4\big).
\end{align*}
\end{proof}

\begin{Lemma}\label{lem:u2-H2}
It holds that
\begin{align*}
\|e^{\nu t}\Delta\bar{u}^2\|_{L^{\infty}L^2}^2+\nu^{-1}\|e^{\nu t}\nabla\partial_t\bar{u}^2\|_{L^2L^2}^2\leq C\big(1+\nu^{-2}E_2^2\big)\big(\|u(0)\|_{H^2}^2+\nu^{-2}E_3^4\big).
\end{align*}
\end{Lemma}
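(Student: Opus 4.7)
The strategy is to differentiate \eqref{eq:u23-zero} once in $t$ and test with $\partial_t\bar{u}^\alpha$ to obtain an $L^\infty L^2$ bound on $\partial_t\bar{u}$ together with an $L^2 L^2$ bound on $\nabla\partial_t\bar{u}$; a two-dimensional Stokes regularity argument in $(y,z)$ then promotes the first of these to $L^\infty L^2$ control on $\Delta\bar{u}^2$. Concretely, writing $N^\alpha = \bar{u}^\beta\partial_\beta\bar{u}^\alpha + \overline{u_{\neq}^j\partial_j u_{\neq}^\alpha}$, applying $\partial_t$ to \eqref{eq:u23-zero}, pairing with $\partial_t\bar{u}^\alpha$ and summing over $\alpha\in\{2,3\}$, the pressure gradient cancels after integration by parts thanks to $\partial_y\bar{u}^2+\partial_z\bar{u}^3=0$ and the boundary condition $\partial_t\bar{u}^\alpha|_{y=\pm 1}=0$, giving
\begin{align*}
\f{1}{2}\f{d}{dt}\|\partial_t\bar{u}\|_{L^2}^2 + \nu\|\nabla\partial_t\bar{u}\|_{L^2}^2 = -\sum_{\alpha=2,3}\langle\partial_tN^\alpha,\partial_t\bar{u}^\alpha\rangle.
\end{align*}
Multiplying by $e^{2\nu t}$ and integrating on $[0,t]$ (using Poincar\'e to absorb the $2\nu\|\partial_t\bar{u}\|_{L^2}^2$ arising from the weight) reduces the task to controlling the right-hand side together with $\|\partial_t\bar{u}(0)\|_{L^2}$.

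The transport piece $\langle\bar{u}^\beta\partial_\beta\partial_t\bar{u}^\alpha,\partial_t\bar{u}^\alpha\rangle$ vanishes identically after integration by parts, while $\langle(\partial_t\bar{u}^\beta)\partial_\beta\bar{u}^\alpha,\partial_t\bar{u}^\alpha\rangle$ is bounded by the two-dimensional Gagliardo--Nirenberg inequality $\|f\|_{L^4}^2\leq C\|f\|_{L^2}\|\nabla f\|_{L^2}$ (valid since $\bar{u}$ depends only on $(y,z)$) together with Lemma~\ref{lem:u23-zero}, producing $C\|\partial_t\bar{u}\|_{L^2}\|\nabla\partial_t\bar{u}\|_{L^2}E_2$; after Young's inequality this yields the tail $\f{\nu}{4}\|\nabla\partial_t\bar{u}\|_{L^2}^2 + C\nu^{-1}E_2^2\|\partial_t\bar{u}\|_{L^2}^2$, the second summand of which is absorbed in $L^2L^2$ using Lemma~\ref{lem:u23-H1}. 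For the nonzero-mode reaction, rewrite the quadratic in divergence form $\overline{u_{\neq}^j\partial_j u_{\neq}^\alpha}=\partial_\beta\overline{u_{\neq}^\beta u_{\neq}^\alpha}$ (using $\partial_j u_{\neq}^j=0$; only $\beta\in\{2,3\}$ survives the $x$-average), integrate the $\partial_\beta$ by parts in space onto $\partial_t\bar{u}^\alpha$ (the $y$-boundary term vanishes because $u_{\neq}^2|_{y=\pm 1}=0$), and then integrate by parts in $s$ on $\int_0^t e^{2\nu s}\langle\partial_s\overline{u_{\neq}^\beta u_{\neq}^\alpha},\partial_\beta\partial_s\bar{u}^\alpha\rangle\,ds$. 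The resulting terms reduce to $L^\infty L^2$ or $L^2L^2$ norms of $\overline{u_\neq\otimes u_\neq}$ and $\overline{u_\neq\cdot\nabla u_\neq}$, both controlled by $\nu^{-1}E_3^2$ via Lemma~\ref{lem:int-nn}. The initial value $\partial_t\bar{u}^\alpha(0) = \nu\Delta\bar{u}^\alpha(0)-\partial_\alpha\bar{p}(0)-N^\alpha(0)$ is bounded by $C\nu\|u(0)\|_{H^2}+C\|u(0)\|_{H^2}^2$, so $\nu^{-2}\|\partial_t\bar{u}(0)\|_{L^2}^2\leq C\|u(0)\|_{H^2}^2$ under the smallness hypothesis $\|u_0\|_{H^2}\leq c_0\nu$.

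Putting this together delivers the summand $\nu^{-1}\|e^{\nu t}\nabla\partial_t\bar{u}^2\|_{L^2L^2}^2$ in the claimed bound, and simultaneously an $L^\infty L^2$ bound on $e^{\nu t}\partial_t\bar{u}$. To obtain $\|e^{\nu t}\Delta\bar{u}^2\|_{L^\infty L^2}$, view \eqref{eq:u23-zero} at each fixed $t$ as a two-dimensional stationary Stokes problem
\begin{align*}
-\nu\Delta\bar{u}^\alpha + \partial_\alpha\bar{p} = -\partial_t\bar{u}^\alpha-N^\alpha,\quad \partial_y\bar{u}^2+\partial_z\bar{u}^3=0,\quad \bar{u}^\alpha|_{y=\pm 1}=0,
\end{align*}
on $\mathbb{T}\times(-1,1)$; standard Stokes regularity yields $\nu\|\Delta\bar{u}\|_{L^2}+\|\nabla\bar{p}\|_{L^2}\leq C(\|\partial_t\bar{u}\|_{L^2}+\|N\|_{L^2})$ pointwise in $t$, and $\|N\|_{L^\infty L^2}$ is controlled by $C(E_2^2+\nu^{-1}E_3^2)$ via Lemma~\ref{lem:u23-zero} and an $L^\infty L^2$ variant of the interaction bound behind Lemma~\ref{lem:int-nn}. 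The principal obstacle throughout is that $\partial_t u_\neq$ is not part of the energy $E_3$; the divergence form together with integration by parts in both space and time is precisely what circumvents this by shifting all derivatives onto quantities directly controlled by $E_3$ through Lemma~\ref{lem:int-nn}.
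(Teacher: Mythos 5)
Your route (differentiate \eqref{eq:u23-zero} in $t$, test with $\partial_t\bar u^\alpha$, then recover $\Delta\bar u^2$ by stationary Stokes regularity) is genuinely different from the paper's, and it has two gaps, both traceable to the fact that neither $\partial_t u_{\neq}$ nor pointwise-in-time bounds on second derivatives of $u_\neq$ or of $\partial_t\nabla\bar u^2$ are controlled by the energies. First, the term $\int_0^t e^{2\nu s}\langle\partial_s\overline{u_\neq^\beta u_\neq^\alpha},\partial_\beta\partial_s\bar u^\alpha\rangle\,ds$ cannot be disposed of by integrating by parts in $s$: moving $\partial_s$ off the quadratic produces $\partial_\beta\partial_s^2\bar u^\alpha$ inside the integral and a boundary term $e^{2\nu t}\langle\overline{u_\neq^\beta u_\neq^\alpha},\partial_\beta\partial_t\bar u^\alpha\rangle(t)$ requiring $\|\partial_t\nabla\bar u^2(t)\|_{L^2}$ pointwise in time; neither is controlled ($E_2$ contains only $\nu^{-1/2}\|\partial_t\nabla\bar u^2\|_{L^2L^2}$, and nothing controls $\partial_t^2\bar u$). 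So the $\partial_su_\neq$ obstruction you correctly identify is not actually circumvented; the resulting terms do not reduce to norms of $\overline{u_\neq\otimes u_\neq}$ and $\overline{u_\neq\cdot\nabla u_\neq}$. Second, even granting the first step, the Stokes-regularity step costs a full factor $\nu^{-1}$: it gives $\|e^{\nu t}\Delta\bar u^2\|_{L^\infty L^2}^2\le C\nu^{-2}\big(\|e^{\nu t}\partial_t\bar u\|_{L^\infty L^2}^2+\|e^{\nu t}N\|_{L^\infty L^2}^2\big)$, and the best available pointwise-in-time bound on $\overline{u_\neq\cdot\nabla u_\neq^\alpha}$ loses at least $\nu^{-1/6}$ relative to $E_3^2$ (the factors $\|(\partial_x,\partial_z)\Delta u_\neq^2\|_{L^2}$ and $\|(u^1_\neq,u^3_\neq)\|_{L^\infty}$ are controlled only in $L^2_t$ or with a $\nu^{-1/6}$ loss), so the right-hand side is at least of size $\nu^{-2}\cdot\nu^{-1/3}E_3^4$, strictly worse than the claimed $\nu^{-2}E_3^4$; with $E_3\sim c_0\nu$ this is $O(\nu^{5/3})$ rather than $O(\nu^2)$ and the bootstrap $E_2\lesssim\nu$ does not close.

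The paper avoids both problems with a single identity: it tests the equation for $\Delta\bar u^2$ (not the time-differentiated velocity equation) with $-2\partial_t\bar u^2$, obtaining
\begin{align*}
\|\partial_t\nabla\bar u^2\|_{L^2}^2+\nu\f{d}{dt}\|\Delta\bar u^2\|_{L^2}^2\le C\big(\|\Delta\bar p\|_{L^2}^2+\|\nabla(\overline{u\cdot\nabla u^2})\|_{L^2}^2\big).
\end{align*}
Here the good term $\|\partial_t\nabla\bar u^2\|^2$ appears with no factor of $\nu$ (one divides by $\nu$ once, not twice), the $L^\infty_tL^2$ control of $\Delta\bar u^2$ comes from the $\nu\f{d}{dt}$ term rather than from elliptic regularity — so only $L^2_t$ norms of the forcing are needed — and the nonlinearity enters through $\|\nabla(\overline{u\cdot\nabla u^2})\|_{L^2L^2}$, which involves one spatial gradient and no time derivative of $u_\neq$ and is exactly what Lemma \ref{lem:int-nn} and the estimate \eqref{eq:zz-est} supply. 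To salvage your route you would at minimum have to replace the time-differentiation step by this device.
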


\begin{proof}
Recall that $\Delta \bar u^2$ satisfies
\begin{align}
\label{2.14.1}(\partial_t-\nu\Delta )\Delta\bar{u}^2+\Delta\partial_y \bar{p}+\Delta(\overline{u\cdot\nabla u^2})=0,\quad \nabla\bar{u}^2|_{y=\pm1}=0.
\end{align}
Taking the $L^2$ inner product with $-2\bar{u}^2$, we get
\begin{align*}
\f d {dt}\|\nabla\bar{u}^2\|_{L^2}^2+2\nu\|\Delta\bar{u}^2\|_{L^2}^2+2\langle\Delta \bar{p},\partial_y\bar{u}^2\rangle-2\big\langle\bar{u}^2\partial_y\bar{u}^2+\bar{u}^3\partial_z\bar{u}^2+\overline{u_{\neq}\cdot\nabla u^2_{\neq}},\Delta\bar{u}^2\big\rangle=0,
\end{align*}
which  gives
\begin{align}
\nu\|e^{\nu t}\Delta\bar{u}^2\|_{L^2L^2}^2\lesssim&\|u(0)\|_{H^2}^2+ \nu^{-1}\|e^{\nu t}\Delta \bar{p}\|_{L^2L^2}^2+\nu\| e^{\nu t}\nabla\bar{u}^2\|_{L^2L^2}^2\nonumber\\
&+\nu^{-1}\|e^{\nu t}(\bar{u}^2\partial_y+\bar{u}^3\partial_z)\bar{u}^2\|_{L^2L^2}^2+\nu^{-1}\|e^{\nu t} \overline{u_{\neq}\cdot \nabla u^2_{\neq}}\|_{L^2L^2}^2.\label{eq:Du2-est1}
\end{align}
Now by Lemma \ref{lemma-A.1} and Lemma \ref{lem:u23-H1}, we have
\begin{align*}
\|e^{\nu t}\nabla(\bar{u}^k\partial_k \bar{u}^j)\|_{L^2L^2}^2\leq& \|e^{\nu t}\bar{u}^k\|_{L^{\infty}H^1}^2\|(\partial_k \bar{u}^j,\partial_z\partial_k \bar{u}^j)\|_{L^2H^1}^2\\
\leq& C \|e^{\nu t}(\nabla\bar{u}^2,\nabla\bar{u}^3)\|_{L^{\infty}L^2}^2\|(\Delta\bar{u}^2,\Delta\bar{u}^3,\nabla\Delta\bar{u}^2)\|_{L^2L^{2}}^2\\ \leq& C\big(\|u(0)\|_{H^2}^2+\nu^{-2}E_3^4\big)\nu^{-1}E_2^2.
\end{align*}
This shows that
\begin{align}\label{eq:zz-est}
\nu^{-1}\|e^{\nu t}\nabla(\bar{u}\cdot\nabla \bar{u}^k)\|_{L^2L^2}^2\leq C\nu^{-2}E_2^2\big(\|u(0)\|_{H^2}^2+\nu^{-2}E_3^4\big),\quad k\in\{2,3\}.
\end{align}

Notice that
\begin{align*}
\Delta \bar{p}=-\overline{\partial_ju^i\partial_iu^j}=-\partial_j\bar{u}^i\partial_i\bar{u}^j-\overline{\partial_ju^i_{\neq}\partial_iu^j_{\neq}},
\end{align*}
where by Lemma \ref{lem:int-nn} and $\text{div}u_{\neq}=0$, we have
\begin{align*}
\|e^{\nu t}\overline{\partial_ju^i_{\neq}\partial_iu^j_{\neq}}\|_{L^2L^2}^2=\|e^{\nu t}\overline{\partial_j(u_{\neq}\cdot\nabla u^j_{\neq}})\|_{L^2L^2}^2\leq \|e^{\nu t}\partial_j(u_{\neq}\cdot\nabla u^j_{\neq})\|_{L^2L^2}^2\leq C\nu^{-1}E_3^4,
\end{align*}
and
\begin{align*}
&\| e^{\nu t}\partial_j\bar{u}^i\partial_i\bar{u}^j\|_{L^2L^2}^2=\| e^{\nu t}\partial_{\alpha}(\bar{u}\cdot\nabla \bar{u}^{\alpha})\|_{L^2L^2}^2\leq C\big(\|u(0)\|_{H^2}^2+\nu^{-2}E_3^4\big)\nu^{-1}E_2^2.
\end{align*}
This shows that
\begin{align}\label{eq:Dp-est}
\nu^{-1}\|e^{\nu t}\Delta \bar{p}\|_{L^2L^2}^2\leq C(1+\nu^{-2}E_2^2)\big(\|u(0)\|_{H^2}^2+\nu^{-2}E_3^4\big).
\end{align}
Then it follows from \eqref{eq:Du2-est1}, \eqref{eq:zz-est}, \eqref{eq:Dp-est} and Lemma \ref{lem:int-nn} that
\begin{align}\label{eq:Du2-est2}
\nu\|e^{\nu t}\Delta\bar{u}^2\|_{L^2L^2}^2\le C\big(1+\nu^{-2}E_2^2\big)\big(\|u(0)\|_{H^2}^2+\nu^{-2}E_3^4\big).
\end{align}

Next, we take the $L^2$ inner product  with $-2\partial_t\bar{u}^2$ to \eqref{2.14.1} to obtain
\begin{align*}
\|\partial_t\nabla\bar{u}^2\|_{L^2}^2+2\nu\f d {dt}\|\Delta\bar{u}^2\|_{L^2}^2+
2\langle\Delta\bar{p},\partial_t\partial_y\bar{u}^2\rangle+2\big\langle\nabla(\overline{u\cdot\nabla u^2}),\partial_t\nabla\bar{u}^2\big\rangle=0,
\end{align*}
which gives
\begin{align*}
\|\partial_t\nabla\bar{u}^2\|_{L^2}^2+\nu\partial_t\|\Delta\bar{u}^2\|_{L^2}^2\leq C\big(\|\Delta \bar{p}\|_{L^2}^2+\|\nabla(\overline{u\cdot\nabla u^2})\|_{L^2}^2\big),
\end{align*}
and then
\begin{align*}
&\nu^{-1}e^{2\nu t}\|\partial_t\nabla\bar{u}^2\|_{L^2}^2+\f d {dt}\big(e^{2\nu t}\|\Delta\bar{u}^2\|_{L^2}^2\big)\\
&\leq C\nu^{-1}\Big(e^{2\nu t}\|\Delta \bar{p}\|_{L^2}^2+e^{2\nu t}\|\nabla(\overline{u\cdot\nabla u^2})\|_{L^2}^2+\nu^2e^{2\nu t}\|\Delta\bar{u}^2\|_{L^2}^2\Big),
\end{align*}
which along with \eqref{eq:Du2-est2}, \eqref{eq:zz-est} and \eqref{eq:Dp-est} gives
\begin{align*}
&\|e^{\nu t}\Delta\bar{u}^2\|_{L^{\infty}L^2}^2+\nu^{-1}\|e^{\nu t}\partial_t\nabla\bar{u}^2\|_{L^2L^2}^2 \\
&\leq C(\|u(0)\|_{H^2}^2+\nu^{-1} \|e^{\nu t}\Delta \bar{p}\|_{L^2L^2}^2+\nu^{-1}\|e^{\nu t}\nabla(\overline{u\cdot\nabla u^2})\|_{L^2L^2}^2+\nu \|e^{\nu t}\Delta\bar{u}^2\|_{L^2L^2}^2)\\
&\le C\big(1+\nu^{-2}E_2^2\big)\big(\|u(0)\|_{H^2}^2+\nu^{-2}E_3^4\big).
\end{align*}
\end{proof}

\begin{Lemma}\label{lem:u2-H3}
It holds that
\begin{align*}
&\nu\|e^{\nu t}\nabla\Delta\bar{u}^2\|_{L^2L^2}^2+
\nu\|e^{\nu t}\Delta\bar{u}^3\|_{L^2L^2}^2\leq C\big(1+\nu^{-2}E_2^2\big)\big(\|u(0)\|_{H^2}^2+\nu^{-2}E_3^4\big).
\end{align*}
\end{Lemma}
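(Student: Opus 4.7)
The strategy is to view the $(\bar u^2,\bar u^3)$-subsystem as a 2D Stokes problem in the $(y,z)$-plane, then exploit tangential ($\partial_z$) differentiation together with the divergence-free constraint to gain the extra derivative. Writing the zero-mode momentum equations \eqref{eq:u23-zero} in the form
\begin{equation*}
-\nu\Delta(\bar u^2,\bar u^3)+\nabla_{y,z}\bar p = \vec F,\qquad \partial_y\bar u^2+\partial_z\bar u^3=0,\qquad (\bar u^2,\bar u^3)|_{y=\pm1}=0,
\end{equation*}
with $\vec F = -\partial_t(\bar u^2,\bar u^3)-\mathcal N$ and $\mathcal N^k=(\bar u^2\partial_y+\bar u^3\partial_z)\bar u^k+\overline{u_{\neq}\cdot\nabla u^k_{\neq}}$, the 2D Stokes $H^2$-regularity in the slab $\T_z\times(-1,1)$ yields $\nu\|(\bar u^2,\bar u^3)\|_{H^2}+\|\nabla\bar p\|_{L^2}\leq C\|\vec F\|_{L^2}$. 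Because $\partial_z$ is tangential to the boundary, the system differentiated in $z$ satisfies the same Stokes structure (div-free is preserved, $\partial_z(\bar u^2,\bar u^3)|_{y=\pm1}=0$), so one also obtains $\nu\|\partial_z(\bar u^2,\bar u^3)\|_{H^2}\leq C\|\partial_z\vec F\|_{L^2}$.

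\textbf{First I would} bound $\nu\|e^{\nu t}\Delta\bar u^3\|_{L^2L^2}^2$ using the $H^2$ Stokes estimate directly: after squaring and dividing by $\nu$ this reduces to controlling $\nu^{-1}\|e^{\nu t}\vec F\|_{L^2L^2}^2$. The $\partial_t$-contribution $\nu^{-1}\|e^{\nu t}\partial_t(\bar u^2,\bar u^3)\|_{L^2L^2}^2$ is exactly what Lemma~\ref{lem:u23-H1} provides, namely $C(1+\nu^{-2}E_2^2)(\|u(0)\|_{H^2}^2+\nu^{-2}E_3^4)$. The zero-mode nonlinear piece is already controlled by \eqref{eq:zz-est}, and the nonzero-mode piece $\|e^{\nu t}\overline{u_{\neq}\cdot\nabla u^k_{\neq}}\|_{L^2L^2}^2\leq C\nu^{-1}E_3^4$ by Lemma~\ref{lem:int-nn} (using $e^{\nu t}\leq e^{4\epsilon\nu^{1/3}t}$).

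\textbf{Next I would} bound $\nu\|e^{\nu t}\nabla\Delta\bar u^2\|_{L^2L^2}^2$ by splitting $\nabla\Delta\bar u^2$ into $(\partial_y\Delta\bar u^2,\partial_z\Delta\bar u^2)$. The tangential component $\partial_z\Delta\bar u^2=\Delta\partial_z\bar u^2$ is handled by the tangentially differentiated Stokes estimate. The normal component is recovered from incompressibility: differentiating $\partial_y\bar u^2+\partial_z\bar u^3=0$ and applying $\Delta$ yields $\partial_y\Delta\bar u^2=-\partial_z\Delta\bar u^3$, which equals $\Delta\partial_z\bar u^3$ and is therefore also controlled by $\nu\|\partial_z(\bar u^2,\bar u^3)\|_{H^2}\leq C\|\partial_z\vec F\|_{L^2}$. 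Hence $\nu\|e^{\nu t}\nabla\Delta\bar u^2\|_{L^2L^2}^2\leq C\nu^{-1}\|e^{\nu t}\partial_z\vec F\|_{L^2L^2}^2$. The right-hand side requires $\|e^{\nu t}\partial_z\partial_t(\bar u^2,\bar u^3)\|_{L^2L^2}^2$ and $\|e^{\nu t}\partial_z\mathcal N\|_{L^2L^2}^2$. The former is controlled by Lemma~\ref{lem:u2-H2}: for $\bar u^2$ directly via $\|\nabla\partial_t\bar u^2\|$, and for $\bar u^3$ via the divergence-free identity $\partial_z\partial_t\bar u^3=-\partial_y\partial_t\bar u^2\leq|\nabla\partial_t\bar u^2|$. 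The zero-mode part of $\partial_z\mathcal N$ is estimated as in \eqref{eq:zz-est} using the $L^\infty$ bounds on $\bar u^2,\bar u^3$ and their derivatives from Lemma~\ref{lem:u23-zero} together with Lemma~\ref{lem:u23-H1}, and the nonzero-mode part uses \eqref{lemma2.1-1} from Lemma~\ref{lem:int-nn}. Summing contributions yields the stated $C(1+\nu^{-2}E_2^2)(\|u(0)\|_{H^2}^2+\nu^{-2}E_3^4)$ bound.

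\textbf{The main obstacle} is justifying the Stokes $H^2$ regularity tangentially differentiated; while classical in this slab geometry, the paper's style suggests bypassing it via a direct energy argument: apply $\partial_z$ to \eqref{2.14.1} and multiply by $-2\partial_z\bar u^2$. The key point enabling this is that $\nabla\partial_z\bar u^2|_{y=\pm1}=0$, which follows from $\bar u^2|_{y=\pm1}=0$ (giving $\partial_z\bar u^2|_\partial=0$ and $\partial_z^2\bar u^2|_\partial=0$) together with $\partial_y\bar u^2|_{y=\pm1}=0$ (from divergence-free and $\bar u^3|_\partial=0$). This replicates the integration-by-parts pattern used in Lemma~\ref{lem:u2-H2} and produces the identity
\begin{equation*}
\tfrac{d}{dt}\|\nabla\partial_z\bar u^2\|_{L^2}^2+2\nu\|\Delta\partial_z\bar u^2\|_{L^2}^2+2\langle\Delta\partial_z\bar p,\partial_y\partial_z\bar u^2\rangle-2\langle\partial_z(\overline{u\cdot\nabla u^2}),\Delta\partial_z\bar u^2\rangle=0,
\end{equation*}
from which $\nu\|e^{\nu t}\Delta\partial_z\bar u^2\|_{L^2L^2}^2$ is controlled exactly as $\nu\|e^{\nu t}\Delta\bar u^2\|_{L^2L^2}^2$ was in Lemma~\ref{lem:u2-H2}. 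The remaining piece $\nu\|e^{\nu t}\partial_z\Delta\bar u^3\|_{L^2L^2}^2=\nu\|e^{\nu t}\partial_y\Delta\bar u^2\|_{L^2L^2}^2$ is then recovered from the $\bar u^3$-equation via the identity $\nu\Delta\partial_y\bar u^2=\partial_t\partial_y\bar u^2-\partial_z^2\bar p-\partial_z\mathcal N^3$ (obtained by substituting $\partial_z\bar u^3=-\partial_y\bar u^2$ into $\partial_z$ applied to the $\bar u^3$-equation), where $\|\partial_z^2\bar p\|_{L^2}$ is bounded by the tangential Stokes estimate applied to the pressure.
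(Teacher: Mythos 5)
Your proposal is essentially correct but follows a genuinely different route from the paper. You invoke classical 2D Stokes $H^2$ regularity in the slab (plus its tangentially differentiated version) to get $\nu\|(\bar u^2,\bar u^3)\|_{H^2}+\nu\|\partial_z(\bar u^2,\bar u^3)\|_{H^2}\lesssim\|\vec F\|_{L^2}+\|\partial_z\vec F\|_{L^2}$, and then recover the normal component via the div-free identity $\partial_y\Delta\bar u^2=-\partial_z\Delta\bar u^3$; the forcing terms are fed exactly the ingredients the paper also uses (Lemma~\ref{lem:u23-H1}, Lemma~\ref{lem:u2-H2}, \eqref{eq:zz-est}, \eqref{lemma2.1-1}). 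The paper instead avoids any black-box Stokes regularity: it writes the $\partial_z$-differentiated $\bar u^2$-equation as $\nu\Delta\partial_z\bar u^2-\partial_z\partial_y\bar p=\partial_t\partial_z\bar u^2+\partial_z(\overline{u\cdot\nabla u^2})$, squares both sides, and shows after integration by parts that the cross term reduces to $-2\nu\langle\partial_z^2\partial_y\bar u^2,\Delta\bar p\rangle$; this yields $\nu^2\|\Delta\partial_z\bar u^2\|_{L^2}^2+\|\partial_z\partial_y\bar p\|_{L^2}^2$ \emph{simultaneously}. It then uses the harmonic-splitting Lemma~\ref{the estimate of delta bar{p}} to recover $\|\partial_y^2\bar p\|+\|\partial_z^2\bar p\|$ from $\|\partial_z\partial_y\bar p\|+\|\Delta\bar p\|$, and finally reads off $\nu\|\Delta\partial_y\bar u^2\|$ and $\nu\|\Delta\bar u^3\|$ directly from the $\partial_y$-differentiated $\bar u^2$-equation and the $\bar u^3$-equation (using $\|\partial_z\bar p\|\le\|\partial_z^2\bar p\|$). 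Your approach is shorter and more conceptual but imports the $\nu$-scaled Cattabriga estimate with pressure control; the paper's is longer but entirely elementary and self-contained, and in particular produces the pressure bounds \eqref{eq:p-H2} that are reused in Lemma~\ref{lem:u3-H2-w}.

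One caution about your fallback "direct energy" variant: in the identity you write, the pressure term $2\langle\Delta\partial_z\bar p,\partial_y\partial_z\bar u^2\rangle$ cannot be handled "exactly as in Lemma~\ref{lem:u2-H2}", because Cauchy--Schwarz there would require $\nu^{-1}\|\partial_z\Delta\bar p\|_{L^2}^2$, i.e.\ \emph{two} derivatives of the quadratic term $\overline{\partial_ju^i_{\neq}\partial_iu^j_{\neq}}$, for which no estimate of the right order is available (Lemma~\ref{lem:int-nn} only controls one derivative, at cost $\nu^{-5/3}E_3^4$). You must first integrate by parts once more in $z$ to write it as $-2\langle\Delta\bar p,\partial_y\partial_z^2\bar u^2\rangle$ and absorb $\nu\|\partial_y\partial_z^2\bar u^2\|_{L^2}^2\le C\nu\|\Delta\partial_z\bar u^2\|_{L^2}^2$ into the left-hand side — or use the paper's squaring trick, which sidesteps the issue. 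Since your primary Stokes-regularity route does not rely on this identity, the overall argument stands.
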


\begin{proof}
Using the equation
\begin{align*}
(\partial_t-\nu\Delta )\partial_z\bar{u}^2+\partial_z\partial_y \bar{p}+\partial_z(\overline{u\cdot\nabla u^2})=0,\quad \nabla\bar{u}^2|_{y=\pm1}=0,
\end{align*}
we get by integration by parts that
\begin{align*}
&\|\partial_t \partial_z\bar{u}^2+\partial_z(\overline{u\cdot\nabla u^2})\|_{L^2}^2=\| \nu\Delta \partial_z\bar{u}^2-\partial_z\partial_y \bar{p}\|_{L^2}^2\\
&= \nu^2\| \Delta \partial_z\bar{u}^2\|_{L^2}^2+\|\partial_z\partial_y \bar{p}\|_{L^2}^2-2\nu\big\langle \Delta \partial_z\bar{u}^2, \partial_z\partial_y \bar{p}\big\rangle\\
&= \nu^2\| \Delta \partial_z\bar{u}^2\|_{L^2}^2+\|\partial_z\partial_y \bar{p}\|_{L^2}^2+2\nu\big\langle \nabla \partial_z\bar{u}^2, \nabla \partial_z\partial_y \bar{p}\big\rangle\\
&=\nu^2\| \Delta \partial_z\bar{u}^2\|_{L^2}^2+\|\partial_z\partial_y \bar{p}\|_{L^2}^2-2\nu\big\langle  \partial_z^2\partial_y\bar{u}^2, \Delta \bar{p}\big\rangle,
\end{align*}
which shows that
\begin{align}\label{eq:u2p-H3-est}
\nu^2\| \Delta \partial_z\bar{u}^2\|_{L^2}^2+\|\partial_z\partial_y \bar{p}\|_{L^2}^2\leq C\big(\| \Delta \bar{p} \|_{L^2}^2+\|\partial_t \partial_z\bar{u}^2+\partial_z(\overline{u\cdot\nabla u^2})\|_{L^2}^2\big).
\end{align}
Then by Lemma \ref{lem:u2-H2}, \eqref{eq:zz-est} and Lemma \ref{lem:int-nn}, we get
\begin{align}
&\nu\| e^{\nu t}\Delta \partial_z\bar{u}^2\|_{L^2L^2}^2+\nu^{-1}\|e^{\nu t}\partial_z\partial_y \bar{p}\|_{L^2L^2}^2\nonumber\\
&\leq C\nu^{-1}\Big(\| e^{\nu t}\Delta \bar{p} \|_{L^2L^2}^2+\|e^{\nu t}\partial_t \partial_z\bar{u}^2\|_{L^2L^2}^2+\|e^{\nu t}\partial_z(\bar{u}\cdot\nabla \bar{u}^2)\|_{L^2L^2}^2+\|e^{\nu t}\partial_z(\overline{u_{\neq}\cdot\nabla u^2_{\neq}})\|_{L^2L^2}^2\Big)\nonumber \\
&\leq C\big(1+\nu^{-2}E_2^2\big)\big(\|u(0)\|_{H^2}^2+\nu^{-2}E_3^4\big).\nonumber
\end{align}
Thanks to Lemma \ref{the estimate of delta bar{p}}, we find that
\begin{align*}
\|\partial_y^2\bar{p}\|_{L^2}^2+\|\partial_z^2\bar{p}\|_{L^2}^2\leq C\big(\|\partial_z\partial_y\bar{p}\|_{L^2}^2+\| \Delta \bar{p} \|_{L^2}^2\big),
\end{align*}
which gives
\begin{align}\label{eq:p-H2}
\nu^{-1}\big(\|e^{\nu t}\partial_y^2\bar{p}\|_{L^2L^2}^2+\|e^{\nu t}\partial_z^2\bar{p}\|_{L^2L^2}^2\big)\leq& C\nu^{-1}\big(\|e^{\nu t}\partial_z\partial_y\bar{p}\|_{L^2L^2}^2+\| e^{\nu t}\Delta \bar{p} \|_{L^2L^2}^2\big)\nonumber\\
\leq& C\big(1+\nu^{-2}E_2^2\big)\big(\|u(0)\|_{H^2}^2+\nu^{-2}E_3^4\big).
\end{align}

Now using the equation
\begin{align*}
(\partial_t-\nu\Delta )\partial_y\bar{u}^2+\partial_y^2 \bar{p}+\partial_y(\bar{u}^2\partial_y\bar{u}^2+\bar{u}^3\partial_z\bar{u}^2)+\partial_y(\overline{u_{\neq}\cdot\nabla u^2_{\neq}})=0,
\end{align*}
we deduce that
\begin{align*}
\nu\|\Delta \partial_y\bar{u}^2\|_{L^2}^2\leq C\nu^{-1}\Big(\|\partial_t\partial_y\bar{u}^2\|_{L^2}^2+\|\partial_y^2 \bar{p}\|_{L^2}^2+\|\partial_y(\bar{u}\cdot\nabla\bar{u}^2)\|_{L^2}^2+\|\partial_y(\overline{u_{\neq}\cdot\nabla u^2_{\neq}})\|_{L^2}^2\Big).
\end{align*}
Thus, we have
\begin{align*}
\nu\|e^{\nu t}\partial_y\Delta \bar{u}^2\|_{L^2L^2}^2\leq C\big(1+\nu^{-2}E_2^2\big)\big(\|u(0)\|_{H^2}^2+\nu^{-2}E_3^4\big).
\end{align*}

Using the fact that $\|\partial_z \bar{p}\|_{L^2}\leq \|\partial_z^2 \bar{p}\|_{L^2}$ and the equation $(\partial_t-\nu\Delta)\bar{u}^3+\partial_z\bar{p}+\overline{u\cdot\nabla u^3}=0$, we deduce that
\begin{align*}
\nu\|\Delta\bar{u}^3\|_{L^2}^2\leq\nu^{-1}\Big(\|\partial_t\bar{u}^3\|_{L^2}^2+\|\partial_z \bar{p}\|_{L^2}^2+\|(\bar{u}^2\partial_y\bar{u}^3+\bar{u}^3\partial_z\bar{u}^3)\|_{L^2}^2+\|\overline{u_{\neq}\cdot\nabla u_{\neq}^3}\|_{L^2}^2\Big).
\end{align*}
As above, we can obtain
\begin{align*}
\nu\|e^{\varepsilon\nu t}\Delta\bar{u}^3\|_{L^2L^2}^2\leq C\big(1+\nu^{-2}E_2^2\big)\big(\|u(0)\|_{H^2}^2+\nu^{-2}E_3^4\big).
\end{align*}
\end{proof}

\begin{Lemma}\label{lem:u3-H2-w}We assume that $E_2\leq\varepsilon_0\nu$, then $\exists$ $C>0$ such that
\begin{align*}
&\|\min((\nu^{\frac{2}{3}}+\nu t)^{\frac{1}{2}},1-y^2)\Delta\bar{u}^3\|_{L^{\infty}L^2}^2+\nu^{-1}
\|\min((\nu^{\frac{2}{3}}+\nu t)^{\frac{1}{2}},1-y^2)\partial_t\nabla \bar{u}_3\|_{L^2L^2}^2\\
\nonumber&
\quad+\nu\|\min((\nu^{\frac{2}{3}}+\nu t)^{\frac{1}{2}},1-y^2)\nabla\Delta\bar{u}^3\|_{L^2L^2}^2\leq C\big(1+\nu^{-2}E_2^2\big)\big(\|u(0)\|_{H^2}^2+\nu^{-2}E_3^4\big).
\end{align*}
\end{Lemma}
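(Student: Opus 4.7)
The plan is to derive a weighted parabolic energy identity for $\bar{u}^3$ that exploits the boundary-vanishing weight
\[
w(t,y):=\min\bigl((\nu^{2/3}+\nu t)^{1/2},\,1-y^2\bigr),
\]
which satisfies $w|_{y=\pm 1}=0$, $|\partial_y w|\leq C$, and $|w\,\partial_t w|\leq C\nu$, combined with control of $\|w\nabla F\|_{L^2L^2}$ for the forcing $F:=-\partial_z\bar{p}-\bar{u}\cdot\nabla\bar{u}^3-\overline{u_{\neq}\cdot\nabla u^3_{\neq}}$ appearing in $\partial_t\bar{u}^3-\nu\Delta\bar{u}^3=F$ with $\bar{u}^3|_{y=\pm 1}=0$ (hence $\partial_t\bar{u}^3|_{y=\pm 1}=0$). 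First I would test the $\bar{u}^3$-equation against $-w^2\partial_t\Delta\bar{u}^3$; since $w^2|_{\partial\Omega}=0$ and $\partial_t\bar{u}^3|_{\partial\Omega}=0$, every integration by parts is boundary-free and yields
\[
\int w^2|\nabla\partial_t\bar{u}^3|^2+\frac{\nu}{2}\frac{d}{dt}\!\int w^2|\Delta\bar{u}^3|^2=\nu\!\int w\,\partial_tw\,|\Delta\bar{u}^3|^2-2\!\int w\,\partial_yw\,\partial_t\bar{u}^3\,\partial_y\partial_t\bar{u}^3+\int w^2\nabla F\cdot\nabla\partial_t\bar{u}^3+2\!\int w\,\partial_yw\,F\,\partial_y\partial_t\bar{u}^3.
\]
Absorbing $\tfrac{3}{4}\|w\nabla\partial_t\bar{u}^3\|_{L^2}^2$ on the right into the left via Cauchy–Schwarz, the commutator residues are controlled by $C\bigl(\nu\|\Delta\bar{u}^3\|_{L^2}^2+\|\partial_t\bar{u}^3\|_{L^2}^2+\|F\|_{L^2}^2+\|w\nabla F\|_{L^2}^2\bigr)$. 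Integrating in $t$, and using $\|w(0)\Delta\bar{u}^3(0)\|_{L^2}\leq C\nu^{1/3}\|u(0)\|_{H^2}$ together with Lemmas~\ref{lem:u23-H1}--\ref{lem:u2-H3}, yields the $L^\infty L^2$ bound on $w\Delta\bar{u}^3$ and the $L^2L^2$ bound on $w\nabla\partial_t\bar{u}^3$ once $\|w\nabla F\|_{L^2L^2}$ is handled.

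Next I would bound $\|w\nabla F\|_{L^2L^2}$ by splitting $F$ into three pieces. (i) For $\nabla\partial_z\bar{p}$, since $w\leq 1$, the bounds on $\|\partial_z\partial_y\bar{p}\|_{L^2L^2}$ and $\|\partial_z^2\bar{p}\|_{L^2L^2}$ obtained in the proof of Lemma~\ref{lem:u2-H3} (cf.~\eqref{eq:u2p-H3-est},\eqref{eq:p-H2}) already give the right-hand side of the target estimate. (ii) For the nonzero--nonzero contribution $\nabla\overline{u_{\neq}\cdot\nabla u^3_{\neq}}$, the weight $w\leq(\nu^{2/3}+\nu t)^{1/2}$ makes Lemma~\ref{lem:int-nn}\eqref{lemma2.1-4} directly applicable, giving a bound by $C\nu^{-1}E_3^4$. (iii) For the zero--zero contribution $\nabla(\bar{u}\cdot\nabla\bar{u}^3)$ one uses $w\leq 1-y^2$ together with the Dirichlet data $\bar{u}^2|_{y=\pm1}=\bar{u}^3|_{y=\pm1}=0$ and the consequence $\partial_y\bar{u}^2|_{y=\pm1}=0$ of incompressibility; Hardy's inequality then trades the factor $(1-y^2)$ for a derivative on $\bar{u}^{2,3}$, and after invoking Lemmas~\ref{lem:u23-zero}, \ref{lem:u23-H1} and \ref{lem:u2-H3} one obtains a bound by $C\nu^{-1}E_2^2\cdot(\|u(0)\|_{H^2}^2+\nu^{-2}E_3^4)$.

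Combining these three bounds gives $\|w\nabla F\|_{L^2L^2}^2\leq C(1+\nu^{-2}E_2^2)(\|u(0)\|_{H^2}^2+\nu^{-2}E_3^4)$; the smallness assumption $E_2\leq\varepsilon_0\nu$ ensures that any $\nu^{-2}E_2^2$-factored quantities remain of the same quality as the target RHS. This produces the first two norms in the conclusion. The third norm $\nu\|w\nabla\Delta\bar{u}^3\|_{L^2L^2}^2$ is then recovered from the identity $\nu\nabla\Delta\bar{u}^3=\nabla\partial_t\bar{u}^3-\nabla F$, together with the already established bounds on $\|w\nabla\partial_t\bar{u}^3\|_{L^2L^2}$ and $\|w\nabla F\|_{L^2L^2}$.

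The main obstacle is the bookkeeping for the zero--zero interaction $\nabla(\bar{u}\cdot\nabla\bar{u}^3)$: in the worst piece $\bar{u}^2\nabla\partial_y\bar{u}^3$ there is no obvious weight on the highest-derivative factor, so one must split $w\bar{u}^2\cdot\nabla\partial_y\bar{u}^3=\bigl(\bar{u}^2/(1-y^2)^{\alpha}\bigr)\cdot(1-y^2)^{\alpha}w\nabla\partial_y\bar{u}^3$ and apply iterated Hardy to $\bar{u}^2/(1-y^2)^{\alpha}$ (legitimate thanks to $\bar{u}^2|_{y=\pm 1}=\partial_y\bar{u}^2|_{y=\pm 1}=0$), while controlling the weighted $\nabla\partial_y\bar{u}^3$-factor by the same $L^{\infty}L^2$ norm of $w\Delta\bar{u}^3$ we are trying to close; the smallness $E_2\leq\varepsilon_0\nu$ is exactly what makes the resulting self-referential term absorbable into the left-hand side.
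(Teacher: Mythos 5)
Your proposal is correct and follows essentially the same route as the paper: a weighted energy identity for the heat part of the $\bar u^3$-equation with the degenerate weight $\rho\sim\min((\nu^{2/3}+\nu t)^{1/2},1-y^2)$, exploiting exactly $|\partial_y\rho|\leq C$ and $|\rho\,\partial_t\rho|\leq C\nu$, the same three-way splitting of the forcing, and \eqref{lemma2.1-4} for the nonzero--nonzero term (the paper writes the identity by expanding $\|\rho(\partial_t-\nu\Delta)\nabla\bar u^3\|_{L^2}^2$, which yields all three left-hand norms at once and is algebraically equivalent to your test function $-w^2\partial_t\Delta\bar u^3$ followed by recovering $\nu\|w\nabla\Delta\bar u^3\|_{L^2L^2}$ from the equation; the paper also smooths the weight via $s(1-e^{-(1-y^2)/s})$, a cosmetic difference). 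Two remarks. First, the self-referential Hardy absorption in your last paragraph is unnecessary: in $\nabla(\bar u^2\partial_y\bar u^3)$ the highest-order factor $\nabla\partial_y\bar u^3$ is already controlled \emph{without any weight} in $L^2L^2$, since $\|\nabla\partial_y\bar u^3\|_{L^2}\leq C(\|\Delta\bar u^3\|_{L^2}+\|\nabla\Delta\bar u^2\|_{L^2})$ by elliptic regularity and incompressibility, and these are bounded by Lemmas~\ref{lem:u23-H1}--\ref{lem:u2-H3}; this is the content of \eqref{eq:zz-est}, which the paper invokes directly, so neither the weight nor the hypothesis $E_2\leq\varepsilon_0\nu$ is needed for the zero--zero term. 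Second, a factor of $\nu$ in your residue list: the term $\nu\int w\,\partial_tw\,|\Delta\bar u^3|^2$ is bounded by $C\nu^{2}\|\Delta\bar u^3\|_{L^2}^2$ (using the property $|w\partial_tw|\leq C\nu$ you state), not merely $C\nu\|\Delta\bar u^3\|_{L^2}^2$; the extra power is genuinely needed, because after dividing the integrated inequality by $\nu$ one must land on $\nu\|\Delta\bar u^3\|_{L^2L^2}^2$ (controlled by Lemma~\ref{lem:u2-H3}) rather than $\|\Delta\bar u^3\|_{L^2L^2}^2$, which would overshoot the target by $\nu^{-1}$.
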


\begin{proof}
Recall that $\nabla\bar{u}^3$ satisfies
\begin{align}
\label{equation-2.22}(\partial_t-\nu\Delta )\nabla\bar{u}^3+\nabla\partial_z \bar{p}+\nabla(\overline{u\cdot\nabla u^3})=0.
\end{align}
For a smooth function $ \rho(t,y)$ satisfying $\rho|_{y=\pm 1}=0,\ 0\leq \rho\leq 1$, we have
\begin{align*}
&\|\rho(\partial_t-\nu\Delta )\nabla\bar{u}^3\|_{L^2}^2\\
&=
\|\rho\partial_t\nabla\bar{u}^3\|_{L^2}^2+\nu^2\|\rho\nabla\Delta\bar{u}^3\|_{L^2}^2-2\nu\langle\rho^2\partial_t\nabla\bar{u}^3,\nabla\Delta\bar{u}^3\rangle\\
&=\|\rho\partial_t\nabla\bar{u}^3\|_{L^2}^2+\nu^2\|\rho\nabla\Delta\bar{u}^3\|_{L^2}^2+
2\nu\big\langle\nabla\cdot[\rho^2\partial_t\nabla\bar{u}^3],\Delta\bar{u}^3\big\rangle\\
&=\|\rho\partial_t\nabla\bar{u}^3\|_{L^2}^2+\nu^2\|\rho\nabla\Delta\bar{u}^3\|_{L^2}^2+2\nu\big\langle\rho^2\partial_t\Delta\bar{u}^3, \Delta\bar{u}^3\big\rangle+4\nu\big\langle\rho\partial_y\rho\partial_t\partial_y\bar{u}^3,\Delta\bar{u}^3\big\rangle,
\end{align*}
which gives
\begin{align*}
&\|\rho(\partial_t-\nu\Delta )\nabla\bar{u}^3\|_{L^2}^2=\|\rho\partial_t\nabla\bar{u}^3\|_{L^2}^2+\nu^2\|\rho\nabla\Delta\bar{u}^3\|_{L^2}^2\\
&\quad+\nu\partial_t\|\rho\Delta\bar{u}^3\|_{L^2}^2-2\nu\big\langle(\rho\partial_t\rho)\Delta\bar{u}^3, \Delta\bar{u}^3\big\rangle+4\nu\big\langle\rho\partial_y\rho\partial_t\partial_y\bar{u}^3,\Delta\bar{u}^3\big\rangle,
\end{align*}
and then
\begin{align}\label{u3}
&\nu^{-1}\|\rho\partial_t\nabla\bar{u}^3\|_{L^2L^2}^2+\nu\|\rho\nabla\Delta\bar{u}^3\|_{L^2L^2}^2+\|\rho\Delta\bar{u}^3\|_{L^{\infty}L^2}^2\nonumber\\
\nonumber &\leq C\Big(\|{u}(0)\|_{H^2}^2+\nu^{-1}\|\rho(\partial_t-\nu\Delta )\nabla\bar{u}^3\|_{L^2L^2}^2\\
&\qquad\quad+\nu\big(\|\nu^{-1}|\rho\partial_t\rho|+|\partial_y\rho|^2\|_{L^{\infty}L^{\infty}}\big)
\|\Delta\bar{u}^3\|_{L^2L^2}^2\Big).
\end{align}

Now we take
\beno
\chi(x)=1-e^{-x},\quad \Psi(s,y)=s\chi\Big(\frac{1-y^2}{s}\Big),\quad \rho(t,y)=\Psi\big((\nu^{\frac{2}{3}}+\nu t)^{\frac{1}{2}},y\big).
\eeno
 Then we find that
 \beno
&&\chi(x)\sim\min(1,x),\quad |\chi'(x)|+|x\chi'(x)|\leq C\quad x\geq 0,\\
&&\Psi(t,y)\sim\min(t,1-y^2),\quad |\partial_s\Psi(s,y)|+|\partial_y\Psi(s,y)|\le C\quad s>0, y\in [-1,1].
\eeno
Thus, for $t\geq 0,\ y\in[-1,1],$  we have
\begin{align*}
&\rho(t,y)\sim\min\big((\nu^{\frac{2}{3}}+\nu t)^{\frac{1}{2}},1-y^2\big),
\end{align*}
and
\begin{align*}
&2\nu^{-1}|\rho\partial_t\rho|(t,y)+|\partial_y\rho(t,y)|^2\\ \nonumber &=(\nu^{\frac{2}{3}}+\nu t)^{-\frac{1}{2}}\cdot|\Psi\partial_s\Psi|((\nu^{\frac{2}{3}}+\nu t)^{\frac{1}{2}},y)+|\partial_y\Psi((\nu^{\frac{2}{3}}+\nu t)^{\frac{1}{2}},y)|^2\\ \nonumber &\leq C(\nu^{\frac{2}{3}}+\nu t)^{-\frac{1}{2}}\min((\nu^{\frac{2}{3}}+\nu t)^{\frac{1}{2}},1-y^2)+C\leq C,
 \end{align*}
 With this choice of $\rho$, we deduce that
 \begin{align*}
&\nu^{-1}\|\rho\partial_t\nabla\bar{u}^3\|_{L^2L^2}^2+\nu\|\rho\nabla\Delta\bar{u}^3\|_{L^2L^2}^2+\|\rho\Delta\bar{u}^3\|_{L^{\infty}L^2}^2\\
 \nonumber &\leq C\Big(\|u(0)\|_{H^2}^2+\nu^{-1}\|\rho[\nabla\partial_z \bar{p}+\nabla(\overline{u\cdot\nabla u^3})]\|_{L^2L^2}^2+\nu\|\Delta\bar{u}^3\|_{L^2L^2}^2\Big)\\
 \nonumber &\leq C\Big(\|u(0)\|_{H^2}^2+\nu\|\Delta\bar{u}^3\|_{L^2L^2}^2+\nu\|\na\pa_z\bar{p}\|_{L^2L^2}^2\\
 &\qquad+\nu^{-1}\|\nabla(\bar{u}\cdot\nabla \bar{u}^3)\|_{L^2L^2}^2+\nu^{-1}\|(\nu^{\frac{2}{3}}+\nu t)^{\frac{1}{2}}\nabla(\overline{u_{\neq}\cdot\nabla u^3_{\neq}})\|_{L^2L^2}^2\Big),
\end{align*}
which along with Lemma \ref{lem:u2-H3}, \eqref{eq:u2p-H3-est}, \eqref{eq:p-H2}, \eqref{eq:zz-est} and  Lemma \ref{lem:int-nn} gives
\begin{align*}
&\nu^{-1}\|\rho\partial_t\nabla\bar{u}^3\|_{L^2L^2}^2+\nu\|\rho\nabla\Delta\bar{u}^3\|_{L^2L^2}^2+\|\rho\Delta\bar{u}^3\|_{L^{\infty}L^2}^2\\
&\le C\big(1+\nu^{-2}E_2^2\big)\big(\|u(0)\|_{H^2}^2+\nu^{-2}E_3^4\big).
\end{align*}
This proves our result due to the choice of $\rho$.
\end{proof}

\section{Energy estimates for nonzero modes:semi-linear part}

In this part, the energy estimate is based on the formulation in terms of $(\Delta u^2,\om^2)$:
\begin{align*}
\left\{\begin{aligned}
  &\partial_t(\Delta u^2)-\nu\Delta^2 u^2+y\partial_x\Delta u^2
  +(\partial_x^2+\partial_z^2)(u\cdot\nabla
  u^2)-\partial_y[\partial_x(u\cdot\nabla u^1)+\partial_z(u\cdot\nabla
  u^3)]=0,\\
   &\partial_t \omega^2- \nu\Delta\omega^2 +y\partial_x\omega^2+\partial_zu^2
  +\partial_z(u\cdot\nabla u^1)-\partial_x(u\cdot\nabla u^3)=0,\\
   &\partial_y u^2(t,x,\pm1,z)=u^2(t,x,\pm1,z)=0,\quad
  u^2|_{t=0}(x,y,z)=u^2(0),\\
   &\omega^2(x,\pm1,z)=0,\quad
  \omega^2|_{t=0}=\partial_xu^3(0)-\partial_zu^1(0).
  \end{aligned}\right.
\end{align*}
We denote
\beno
\widehat{\Delta}=\widehat{\Delta}_{k,\ell}=\partial_y^2-k^2-\ell^2,\quad  f_{k,\ell}(y)=\f{1}{2\pi}\int_{\mathbb{T}^2}f(x,y,z)e^{-ikx-i\ell z}dxdz.
\eeno
Taking Fourier transform in $(x,z)$,  we obtain
\begin{align*}\left\{
\begin{aligned}
  &\partial_t(\widehat{\Delta} u^2_{k,\ell})-\nu\widehat{\Delta}^2
  u^2_{k,\ell}+iky\widehat{\Delta} u^2_{k,\ell} -(k^2+\ell^2)(u\cdot\nabla
  u^2)_{k,\ell}\\
  &\qquad\qquad-\partial_y\big[\partial_x(u\cdot\nabla u^1)+\partial_z(u\cdot\nabla
  u^3)\big]_{k,\ell}=0,\\
  &\partial_t \omega^2_{k,\ell}- \nu(\partial_y^2-k^2-l^2)\omega^2_{k,\ell}
  +iky\omega^2_{k,\ell}+i\ell u_{k,\ell}^2 +i\ell(u\cdot\nabla u^1)_{k,\ell}-ik(u\cdot\nabla u^3)_{k,\ell}=0,\\
  &\partial_yu^2_{k,\ell}|_{y=\pm1}=u^2_{k,\ell}|_{y=\pm1}=0,\quad
  u^2_{k,\ell}|_{t=0}=u^2_{k,\ell}(0),\\
  &\omega_{k,\ell}^2|_{y=\pm1}=0,\quad
  \omega^2_{k,\ell}|_{t=0}=iku^3_{k,\ell}(0)-i\ell u^1_{k,\ell}(0).
  \end{aligned}\right.
\end{align*}

Let $a\geq0$ and $\eta=\sqrt{k^2+\ell^2}$. We introduce the following norms:
\begin{align*}
  \|f\|_{X_{k,\ell}^{a}}^2=
  &\eta|k|\|e^{a\nu^{1/3}t}(-\partial_y,i\eta)f\|^2_{L^2 L^2}+\nu\eta^2
  \|e^{a\nu^{1/3}t}(\partial^2_y-\eta^2)f\|^2_{L^2 L^2}\\
  &+\nu^{3/2}\|e^{a\nu^{1/3}t}\partial_y(\partial^2_y-\eta^2)f\|^2_{L^2 L^2}
  +\eta^2\|e^{a\nu^{1/3}t}(-\partial_y,i\eta)f\|^2_{L^\infty L^2}\\&+\nu^{1/2}\|e^{a\nu^{1/3}t}(\partial^2_y-\eta^2)f\|^2_{L^{\infty} L^2},
\end{align*}
and
\begin{align*}
  \|f\|_{Y_{k,\ell}^{a}}^2=\|e^{a\nu^{1/3}t}f\|^2_{L^\infty L^2}+\nu
  \|e^{a\nu^{1/3}t}\partial_yf\|^2_{L^2 L^2}+((\nu k^2)^{1/3}+\nu\eta^2)
  \|e^{a\nu^{1/3}t}f\|^2_{L^2 L^2},
\end{align*}
and
\begin{align*}
  \|f\|^2_{X_a} &=\sum_{k\neq0;\ell\in\mathbb{Z}}
  \|\hat{f}(k,\ell)\|^2_{X^a_{k,\ell}},\quad
  \|f\|^2_{Y_a} = \sum_{k\neq0;\ell\in\mathbb{Z}}
  \|\hat{f}(k,\ell)\|^2_{Y^a_{k,\ell}}.
\end{align*}
Thus, it is easy to see that
\begin{align}\label{fX}
  &\|e^{a\nu^{\f13}t}\partial_x\nabla f_{\neq}\|_{L^{2}L^2}^2+\nu\|e^{a\nu^{\f13}t}(\partial_x,\partial_z) \Delta f_{\neq}\|_{L^2L^2}^2+\nu^{\f32}\|e^{a\nu^{\f13}t}\partial_y\Delta f_{\neq}\|_{L^2L^2}^2
  \\ \nonumber&\quad+\|e^{a\nu^{\f13}t}(\partial_x,\partial_z) \nabla f_{\neq}\|_{L^{\infty}L^2}^2+\nu^{1/2}\|e^{a\nu^{\f13}t} \Delta f_{\neq}\|_{L^{\infty}L^2}^2\leq \|f\|^2_{X_a}, \\ \label{fY}
 &\|e^{a\nu^{\f13}t} f_{\neq}\|_{L^{\infty}L^2}^2+\nu \|e^{a\nu^{\f13}t}\nabla f_{\neq}\|_{L^2L^2}^2\leq\|f\|^2_{Y_a}.
\end{align}

\begin{Lemma}\label{lem:E30}
It holds that
\beno
E^2_{3,0}\leq C\big(\|u^2_{\neq}\|^2_{X_{2\epsilon}}
   +\|\partial_x\omega^2_{\neq}\|^2_{Y_{2\epsilon}}\big).
   \eeno
\end{Lemma}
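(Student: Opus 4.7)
The plan is to decompose $u^2_{\neq}$ and $\omega^2_{\neq}$ into Fourier modes $(k,\ell)$ with $k\neq 0$ (setting $\eta=\sqrt{k^2+\ell^2}$), and match each of the six pieces in $E_{3,0}^2$ against terms in $\|u^2\|^2_{X_{2\epsilon}}$ and $\|\partial_x\omega^2\|^2_{Y_{2\epsilon}}$. By Plancherel in $(x,z)$ and the observation that $\Delta u^2_{k,\ell}=\widehat{\Delta}u^2_{k,\ell}$, the whole inequality reduces to pointwise-in-$(k,\ell)$ comparisons; the essential mechanism coupling $u^3$ to the controlled quantities comes from the algebraic identity derived from $\mathrm{div}\,u=0$ and the definition $\omega^2=\partial_z u^1-\partial_x u^3$, namely
\[
(\partial_x^2+\partial_z^2)u^3_{\neq}=-\partial_x\omega^2_{\neq}-\partial_y\partial_z u^2_{\neq},
\]
obtained by applying $\partial_x$ to $\omega^2$ and $\partial_z$ to the divergence and subtracting.

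For the four $u^2$-contributions in $E_{3,0}$, each piece pairs directly with a summand of $\|u^2\|^2_{X^{2\epsilon}_{k,\ell}}$: (i) $\nu^{1/2}\|(\partial_x,\partial_z)\widehat{\Delta}u^2_{k,\ell}\|_{L^2L^2}\le(\nu\eta^2)^{1/2}\|\widehat{\Delta}u^2_{k,\ell}\|_{L^2L^2}$; (ii) $\nu^{3/4}\|\nabla\widehat{\Delta}u^2_{k,\ell}\|_{L^2L^2}$ splits into the $\partial_y$-part matched by the $\nu^{3/2}\|\partial_y\widehat{\Delta}\cdot\|^2$ term and a horizontal part carrying a harmless factor $\nu^{1/4}\le 1$; (iii) $\eta\|(\partial_y,i\eta)u^2_{k,\ell}\|_{L^\infty L^2}$ is exactly the $\eta^2\|(\partial_y,i\eta)\cdot\|^2_{L^\infty L^2}$ summand; (iv) $|k|\|(\partial_y,i\eta)u^2_{k,\ell}\|_{L^2L^2}$ is absorbed into $(\eta|k|)^{1/2}\|(\partial_y,i\eta)\cdot\|_{L^2L^2}$ via the trivial inequality $|k|\le(\eta|k|)^{1/2}$.

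For the two $u^3$-contributions, I would use the identity to split them. The $\partial_x\omega^2_{\neq}$ piece yields $\|e^{2\epsilon\nu^{1/3}t}\partial_x\omega^2_{k,\ell}\|_{L^\infty L^2}\le\|\partial_x\omega^2\|_{Y^{2\epsilon}_{k,\ell}}$, and $\nu^{1/2}\|e^{2\epsilon\nu^{1/3}t}\nabla\partial_x\omega^2_{k,\ell}\|_{L^2L^2}$ is controlled by the $(\nu\eta^2)^{1/2}$ and $\nu^{1/2}\|\partial_y\cdot\|$ summands in $Y^{2\epsilon}_{k,\ell}$. The $\partial_y\partial_z u^2_{\neq}$ piece requires bounding $\ell\|\partial_y u^2_{k,\ell}\|_{L^\infty L^2}$ (trivially by $\eta\|(\partial_y,i\eta)u^2_{k,\ell}\|_{L^\infty L^2}\le\|u^2\|_{X^{2\epsilon}_{k,\ell}}$), plus $\nu^{1/2}\ell\eta\|\partial_y u^2_{k,\ell}\|_{L^2L^2}$ and $\nu^{1/2}\ell\|\partial_y^2 u^2_{k,\ell}\|_{L^2L^2}$ from the $\nu^{1/2}(\partial_x^2+\partial_z^2)\nabla u^3_{\neq}$ piece. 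For the latter two I will invoke the boundary conditions $u^2_{k,\ell}|_{y=\pm 1}=\partial_y u^2_{k,\ell}|_{y=\pm 1}=0$ (which hold since $u^2|_{\partial\Omega}=0$ and, from $\mathrm{div}\,u=0$ together with $u^1|_{\partial\Omega}=u^3|_{\partial\Omega}=0$, also $\partial_y u^2|_{\partial\Omega}=0$), so that integration by parts gives the Poincaré-type inequalities $\eta\|(\partial_y,i\eta)u^2_{k,\ell}\|\le\|\widehat{\Delta}u^2_{k,\ell}\|$ and hence $\|\partial_y^2 u^2_{k,\ell}\|\le\|\widehat{\Delta}u^2_{k,\ell}\|+\eta^2\|u^2_{k,\ell}\|\le 2\|\widehat{\Delta}u^2_{k,\ell}\|$; both bounds feed into the $\nu\eta^2\|\widehat{\Delta}u^2_{k,\ell}\|^2$ summand of $X^{2\epsilon}_{k,\ell}$.

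There is no genuine analytical obstacle: once the algebraic identity linking $(\partial_x^2+\partial_z^2)u^3_{\neq}$ to $\partial_x\omega^2_{\neq}$ and $\partial_y\partial_z u^2_{\neq}$ is in hand, the proof is a careful but routine matching of weights mediated by elementary Poincaré/elliptic estimates on the strip. The step that needs the most attention is the $\nu^{1/2}\|e^{2\epsilon\nu^{1/3}t}(\partial_x^2+\partial_z^2)\nabla u^3_{\neq}\|_{L^2L^2}$ term, since it is the only one where $u^2$- and $\omega^2$-contributions mix nontrivially, and the correct $X_{2\epsilon}$ summand to target depends on whether one loses $\eta$ on $\pa_y u^2$ or gains it back via the Poincaré inequality above.
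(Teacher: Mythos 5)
Your proposal is correct and follows essentially the same route as the paper: the divergence-free identity $(\partial_x^2+\partial_z^2)u^3_{\neq}=-\partial_x\omega^2_{\neq}-\partial_z\partial_y u^2_{\neq}$ reduces everything to a Plancherel-based term-by-term matching against the summands of $X^{2\epsilon}_{k,\ell}$ and $Y^{2\epsilon}_{k,\ell}$. The only detail you spell out more explicitly than the paper is the Poincaré/elliptic step converting $\partial_y\nabla u^2_{k,\ell}$ into $\widehat{\Delta}u^2_{k,\ell}$ (which needs only $u^2_{k,\ell}|_{y=\pm1}=0$), and that step is carried out correctly.
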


\begin{proof}

Using the fact $\omega^2=\partial_zu^1-\partial_xu^3$ and $\partial_xu^1+\partial_y
  u^2+\partial_zu^3=0$, we know that $(\partial_x^2+\partial_z^2) u_{\neq}^{3}
  =-\partial_x\omega_{\neq}^2-\partial_z\partial_yu_{\neq}^2 $ and
  \begin{align*}
&\|e^{2\epsilon\nu^{\f13}t}(\partial_x^2+\partial_z^2) u^{3}_{\neq}\|_{L^{\infty}L^2}+\nu^{\f12}\|e^{2\epsilon\nu^{\f13}t}(\partial_x^2+\partial_z^2)
\nabla u^{3}_{\neq}\|_{L^2L^2}\\ &\leq \|e^{2\epsilon\nu^{\f13}t}\partial_x\omega_{\neq}^2\|_{L^{\infty}L^2}+
\|e^{2\epsilon\nu^{\f13}t}\partial_z\partial_yu_{\neq}^2\|_{L^{\infty}L^2}\\&\quad
+\nu^{\f12}\|e^{2\epsilon\nu^{\f13}t}\partial_x\nabla\omega_{\neq}^2\|_{L^2L^2}
+\nu^{\f12}\|e^{2\epsilon\nu^{\f13}t}\partial_z\partial_y\nabla u_{\neq}^2\|_{L^2L^2},
\end{align*}
from which, \eqref{fX}, \eqref{fY} and the definition of $E_{3,0}$, we deduce the lemma.
\end{proof}\smallskip

In what follows, we take $\eps=\eps_1/8$.

\subsection{Estimate of $E_{3,0}$}

\begin{Proposition}\label{prop:E30}
It holds that
\begin{align*}
  E^2_{3,0} +\|u^2_{\neq}\|^2_{X_{2\epsilon}} \leq C\|u(0)\|^2_{H^2}+C \Big(E^4_3/\nu^2+E^2_2E^2_3/\nu^2+E^2_1E_3E_5+E^2_1E^{\f32}_3E^{\f12}_5\Big).
\end{align*}
\end{Proposition}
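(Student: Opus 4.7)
The plan is to combine Lemma \ref{lem:E30} with the Fourier-mode space-time estimates of Section 12 (Propositions \ref{prop:TS-nav} and \ref{prop:TS-non}) and the bilinear interaction estimates of Section 11, summing via Plancherel in $(k,\ell)$. By Lemma \ref{lem:E30} it suffices to control $\|u^2_{\neq}\|_{X_{2\epsilon}}^2 + \|\partial_x\omega^2_{\neq}\|_{Y_{2\epsilon}}^2$. For the first quantity, each Fourier coefficient $u^2_{k,\ell}$ (with $k\ne 0$) satisfies $\partial_t\widehat\Delta u^2_{k,\ell} - \nu\widehat\Delta{}^2 u^2_{k,\ell} + iky\widehat\Delta u^2_{k,\ell} = F$ with $u^2_{k,\ell}|_{y=\pm 1} = \partial_y u^2_{k,\ell}|_{y=\pm 1}=0$, which matches Proposition \ref{prop:TS-non} after decomposing the forcing as $F=ikf_1+\partial_y f_2+i\ell f_3$ with
\begin{align*}
f_1 = -ik(u\cdot\nabla u^2)_{k,\ell},\quad f_2=ik(u\cdot\nabla u^1)_{k,\ell}+i\ell(u\cdot\nabla u^3)_{k,\ell},\quad f_3=-i\ell(u\cdot\nabla u^2)_{k,\ell}.
\end{align*}
For the second, $\partial_x\omega^2_{k,\ell}$ satisfies a scalar drift-diffusion equation with Dirichlet boundary data, matching Proposition \ref{prop:TS-nav}: I would assign the linear coupling $-\partial_x\partial_z u^2_{\neq}$ to $f_4$ (which vanishes on $\partial\Omega$ since $u^2$ does), and recast the quadratic terms $-\partial_x\partial_z(u\cdot\nabla u^1)+\partial_x^2(u\cdot\nabla u^3)$ in divergence form via the identity $u\cdot\nabla u^j=\partial_i(u^i u^j)$ (valid by incompressibility).

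\textbf{Squaring and summation.} Squaring the conclusions of Propositions \ref{prop:TS-non} and \ref{prop:TS-nav} and summing in $(k,\ell)$, the left-hand sides reproduce $\|u^2_{\neq}\|_{X_{2\epsilon}}^2$ and $\|\partial_x\omega^2_{\neq}\|_{Y_{2\epsilon}}^2$ via \eqref{fX}--\eqref{fY}. The initial-data contributions reduce to $C\|u(0)\|_{H^2}^2$ using $\omega_{in}=\widehat\Delta u^2_{k,\ell}(0)$ for the $u^2$ piece and $\omega_{in}=ik\omega^2_{k,\ell}(0)$ for the $\omega^2$ piece. A subtle but nontrivial point is the linear $f_4=k\ell\, u^2_{k,\ell}$ contribution to the $\partial_x\omega^2$ estimate: its share is $(\eta|k|)^{-1}\|\partial_y f_4\|_{L^2L^2}^2+\eta|k|^{-1}\|f_4\|_{L^2L^2}^2$, which bounds by $\ell^2\eta|k|\|u^2_{k,\ell}\|_{L^2L^2}^2 + \ell^2|k|\eta^{-1}\|\partial_y u^2_{k,\ell}\|_{L^2L^2}^2$. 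Since $\ell^2\le\eta^2$ and $\eta|k|\|(\partial_y,\eta)u^2_{k,\ell}\|_{L^2L^2}^2$ is built into $\|u^2_{k,\ell}\|_{X^{2\epsilon}_{k,\ell}}^2$, this is absorbed cleanly into $\|u^2_{\neq}\|_{X_{2\epsilon}}^2$ with a constant independent of $\nu$, so no smallness loss occurs and the two estimates can be added.

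\textbf{Nonlinear forcings and main obstacle.} I would decompose $u=\bar u+u_{\neq}$ in each nonlinear term. Zero-zero interactions have no nonzero-mode output, so four interaction types remain: nonzero-nonzero (absorbed by Lemma \ref{lem:int-nn}, contributing $\nu^{-2}E_3^4$), $\bar u^\alpha$-nonzero for $\alpha\in\{2,3\}$ (absorbed by Lemma \ref{lem:int-nz-23}, contributing $\nu^{-2}E_2^2 E_3^2$), and the two lift-up types $\bar u^1\partial_x u_{\neq}$ and $u_{\neq}^\alpha\partial_\alpha\bar u^1$ (handled by Lemmas \ref{lem:int-zn-1-23} and \ref{lem:int-zn-11}, contributing $E_1^2E_3E_5$ and $E_1^2E_3^{3/2}E_5^{1/2}$ after dividing by $\nu$). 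The main obstacle is the lift-up block: $\bar u^1$ carries no smallness in $\nu$ (it can be $O(1)$ via the lift-up effect), so a naive estimate loses control. The resolution is the careful splitting already built into Lemmas \ref{lem:int-zn-1-23}--\ref{lem:int-zn-11}, which trade one power of enhanced-dissipation weight against Lemma \ref{lem:u1-H2}'s bound $\|\bar u^1\|_{H^2}\le C\nu^{2/3}E_1$ and the sharper $\nu^{1/6}\|e^{(9/4)\epsilon\nu^{1/3}t}\partial_x^2 u_{\neq}\|_{L^2L^2}\lesssim E_5^{1/4}E_3^{3/4}+E_5^{1/2}E_3^{1/2}$ from Lemma \ref{lem:u-nonzero}; the extra factor $e^{3\epsilon\nu^{1/3}t}$ in $E_5$ versus $e^{2\epsilon\nu^{1/3}t}$ in $E_3$ is exactly what is needed to close the $\bar u^1$-driven terms on the right-hand side of the proposition.
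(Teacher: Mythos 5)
Your proposal is correct and follows essentially the same route as the paper's proof: reduction via Lemma \ref{lem:E30}, mode-by-mode application of Propositions \ref{prop:TS-non} and \ref{prop:TS-nav} with the linear coupling $i\ell u^2_{k,\ell}$ placed in the $f_4$ slot and fed back through the independently closed $X_{2\epsilon}$-bound for $u^2_{\neq}$, and the interaction Lemmas \ref{lem:int-nn}, \ref{lem:int-nz-23}, \ref{lem:int-zn-1-23}, \ref{lem:int-zn-11} producing exactly the four right-hand-side contributions. One small correction: the divergence-form recasting $u\cdot\nabla u^j=\partial_i(u^iu^j)$ for the quadratic terms in the $\omega^2$ equation is both unnecessary and inconsistent with the lemmas you invoke afterwards — since $\partial_z(u\cdot\nabla u^1)-\partial_x(u\cdot\nabla u^3)$ is already an exact $(\partial_x,\partial_z)$-derivative it goes directly into the $ikf_1+i\ell f_3$ slots of Proposition \ref{prop:TS-nav}, whose $\min\big((\nu\eta^2)^{-1},(\nu k^2)^{-1/3}\big)\|kf_1+\ell f_3\|^2_{L^2L^2}$ term yields precisely the $\nu^{-1}\|\partial_x(u\cdot\nabla u^j)\|^2_{L^2L^2}$ quantities that Lemma \ref{lem:int-nn} and its companions control, whereas routing the $\partial_y$-component through the $f_2$ slot would instead produce $\nu^{-1}\|\partial_x\partial_z(u^\alpha u^j)\|^2_{L^2L^2}$, which the stated bilinear lemmas do not directly cover.
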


\begin{proof}
By Proposition \ref{prop:TS-non} and Proposition \ref{prop:TS-nav} and $ \partial_yu^2_{k,\ell}+iku^1_{k,\ell}+i\ell u^3_{k,\ell}=0$,  we have
\begin{align*}
  &\|u^2_{k,\ell}\|^2_{X_{k,\ell}^{2\epsilon}}\leq  C\Big(\|\widehat{\Delta}u^2_{k,\ell}(0)\|_{L^2}^2+
   (k^2+\ell^2)^{-1}\|\widehat{\Delta}(iku^1_{k,\ell}(0)+i\ell u^3_{k,\ell}(0))\|_{L^2}^2\Big)\\
   &\quad+ C\nu^{-1}\Big(\|e^{2\epsilon\nu^{1/3}t}[\partial_x(u\cdot\nabla
   u^1)+\partial_z(u\cdot\nabla
  u^3)]_{k,\ell}\|_{L^2L^2}^2+ \|e^{2\epsilon\nu^{1/3}t}(k,\ell)(u\cdot\nabla
  u^2)_{k,\ell}\|^2_{L^2L^2}\Big),
\end{align*}
and
\begin{align*}
  \|\omega^2_{k,\ell}\|^2_{Y^{2\epsilon}_{k,\ell}}
   \leq& C
  \Big(\|\omega^2_{k,\ell}(0)\|^2_{L^2}
 \\&+\min\big\{(\nu \eta^2)^{-1},(\nu
  k^2)^{-1/3}\big\}\|e^{2\epsilon\nu^{1/3}t}(k(u\cdot\nabla u^3)_{k,\ell}-\ell(u\cdot\nabla u^1)_{k,\ell})\|^2_{L^2L^2}\\
  &+(\ell^2(|k|\eta)^{-1})\|e^{2\epsilon\nu^{1/3}t}\partial_yu^2_{k,\ell}\|^2_{L^2L^2}
  +(\ell^2\eta|k|^{-1})\|e^{2\epsilon\nu^{1/3}t}u^2_{k,\ell}\|^2_{L^2L^2}\Big)\\
  \leq&C
  \Big(\|\omega^2_{k,\ell}(0)\|^2_{L^2}+\nu^{-1}\big(\|e^{2\epsilon\nu^{1/3}t}(u\cdot\nabla u^3)_{k,\ell}\|^2_{L^2L^2}+\|e^{2\epsilon\nu^{1/3}t}(u\cdot\nabla u^1)_{k,\ell}\|^2_{L^2L^2}\big)\\
  &+(\ell^2(|k|\eta)^{-1})\|e^{2\epsilon\nu^{1/3}t}\partial_yu^2_{k,\ell}\|^2_{L^2L^2}
  +(\ell^2\eta|k|^{-1})\|e^{2\epsilon\nu^{1/3}t}u^2_{k,\ell}\|^2_{L^2L^2}\Big).
\end{align*}
Using the fact that
\begin{align*}
   &\sum_{k\neq0;\ell\in\mathbb{Z}}k^2\Big((\ell^2(|k|\eta)^{-1}) \|e^{2\epsilon\nu^{1/3}t}\partial_yu^2_{k,\ell}\|^2_{L^2L^2}
  +(\ell^2\eta|k|^{-1})\|e^{2\epsilon\nu^{1/3}t}u^2_{k,\ell}\|^2_{L^2L^2}\Big)\\
  &\quad\leq\sum_{k\neq0;\ell\in\mathbb{Z}}\eta|k|\|e^{2\epsilon\nu^{1/3}t} (\partial_y,i\eta)u^2_{k,\ell}\|^2_{L^2L^2} \leq \sum_{k\neq0;\ell\in\mathbb{Z}}\|u^2_{k,\ell}\|^2_{X^{2\epsilon}_{k,\ell}} =\|u^2_{\neq}\|^2_{X_{2\epsilon}},
\end{align*}
we deduce that
\begin{align*}
  &\|\partial_x\omega^2_{\neq}\|^2_{Y_{2\epsilon}}\leq C\sum_{k\neq0;\ell\in\mathbb{Z}}k^2\Big(\|\omega^2_{k,\ell}(0)\|^2_{L^2}\\
  &\quad+ \nu^{-1}\big(\|e^{2\epsilon\nu^{1/3}t}(u\cdot\nabla u^3)_{k,\ell}\|^2_{L^2L^2}+
  \|e^{2\epsilon\nu^{1/3}t}(u\cdot\nabla u^1)_{k,\ell}\|^2_{L^2L^2}\big) \Big)\\
  &\quad+C\sum_{k\neq0;\ell\in\mathbb{Z}}k^2\Big((\ell^2(|k|\eta)^{-1})\|e^{2\epsilon\nu^{1/3}t} \partial_yu^2_{k,\ell}\|^2_{L^2L^2}
  +(\ell^2\eta|k|^{-1})\|e^{2\epsilon\nu^{1/3}t}u^2_{k,\ell}\|^2_{L^2L^2}\Big)\\
  &\leq C\|\omega^2_{\neq}(0)\|^2_{H^1}+
   C\nu^{-1}\left(\|e^{2\epsilon\nu^{1/3}t}\partial_x(u\cdot\nabla u^3)\|^2_{L^2L^2}+
   \|e^{2\epsilon\nu^{1/3}t}\partial_x(u\cdot\nabla u^1)\|^2_{L^2L^2}\right) +C\|u^2_{\neq}\|^2_{X_{2\epsilon}}.
\end{align*}
Then we infer from Lemma \ref{lem:E30} that
\begin{align*}
&E^2_{3,0}+\|u^2_{\neq}\|^2_{X_{2\epsilon}}\leq C\big(\|u^2_{\neq}\|^2_{X_{2\epsilon}}
   +\|\partial_x\omega^2_{\neq}\|^2_{Y_{2\epsilon}}\big)\\
   &\leq C\|\omega^2_{\neq}(0)\|^2_{H^1}+
   C\nu^{-1}\left(\|e^{2\epsilon\nu^{1/3}t}\partial_x(u\cdot\nabla u^3)\|^2_{L^2L^2}+
   \|e^{2\epsilon\nu^{1/3}t}\partial_x(u\cdot\nabla u^1)\|^2_{L^2L^2}\right) +C\|u^2_{\neq}\|^2_{X_{2\epsilon}}\\
  &\leq C\|\omega^2_{\neq}(0)\|^2_{H^1}+
   C\nu^{-1}\left(\|e^{2\epsilon\nu^{1/3}t}\partial_x(u\cdot\nabla u^3)\|^2_{L^2L^2}+
   \|e^{2\epsilon\nu^{1/3}t}\partial_x(u\cdot\nabla u^1)\|^2_{L^2L^2}\right)\\
&\quad+C\nu^{-1}\!\sum_{k\neq0;\ell\in\mathbb{Z}}\Big(\|e^{2\epsilon\nu^{1/3}t}[\partial_x(u\cdot\nabla
   u^1)+\partial_z(u\cdot\nabla
  u^3)]_{k,\ell}\|_{L^2L^2}^2+ \|e^{2\epsilon\nu^{1/3}t}(k,\ell)(u\cdot\nabla
  u^2)_{k,\ell}\|^2_{L^2L^2}\Big)\\
   &\quad+ C\sum_{k\neq0;\ell\in\mathbb{Z}} \Big(\|\widehat{\Delta}_{k,\ell}u^2_{k,\ell}(0)\|_{L^2}^2+
   (k^2+\ell^2)^{-1}\|\widehat{\Delta}_{k,l}(iku^1_{k,\ell}(0)+i\ell u^3_{k,\ell}(0))\|_{L^2}^2\Big)\\
   &\leq C\|\omega^2_{\neq}(0)\|^2_{H^1}+
   C\nu^{-1}\left(\|e^{2\epsilon\nu^{1/3}t}\partial_x(u\cdot\nabla u^3)\|^2_{L^2L^2}+
   \|e^{2\epsilon\nu^{1/3}t}\partial_x(u\cdot\nabla u^1)\|^2_{L^2L^2}\right)\\
  &\quad+C\nu^{-1}\Big(\|e^{2\epsilon\nu^{1/3}t}[\partial_x(u\cdot\nabla
   u^1)+\partial_z(u\cdot\nabla
  u^3)]_{\neq}\|_{L^2L^2}^2+ \|e^{2\epsilon\nu^{1/3}t}(\partial_x,\partial_z)(u\cdot\nabla
  u^2)_{\neq}\|^2_{L^2L^2}\Big)\\&\quad+C\Big(\|\Delta u_{\neq}^2(0)\|^2_{L^2}+\|\Delta u_{\neq}^1(0)\|^2_{L^2}+
  \|\Delta u_{\neq}^3(0)\|^2_{L^2}\Big)\\
   &\leq C\|u_{\neq}(0)\|^2_{H^2}+
   C\nu^{-1}\Big(\|e^{2\epsilon\nu^{1/3}t}(\partial_x,\partial_z)(u\cdot\nabla u^3)_{\neq}\|^2_{L^2L^2}+\|e^{2\epsilon\nu^{1/3}t}\partial_x(u\cdot\nabla u^1)_{\neq}\|^2_{L^2L^2}\\
  &\qquad+ \|e^{2\epsilon\nu^{1/3}t}(\partial_x,\partial_z)(u\cdot\nabla
  u^2)_{\neq}\|^2_{L^2L^2}\Big).
\end{align*}

Let us estimate each term on the right hand side. For $k\in\{2,3\}$,  we get by Lemma \ref{lem:int-nn}  that
\begin{align*}
   & \|e^{2\epsilon\nu^{1/3}t}(\partial_x,\partial_z)(u_{\neq}\cdot\nabla u_{\neq}^k)\|_{L^2L^2}^2\leq C\nu^{-1}E_3^4,
\end{align*}
and by Lemma \ref{lem:int-nz-23},
\begin{align*}
   & \|e^{2\epsilon\nu^{1/3}t}(\partial_x,\partial_z)(u_{\neq}\cdot\nabla \overline{u}^k)\|_{L^2L^2}^2\leq C\nu^{-1}E_2^2E_3^2,
\end{align*}
and by Lemma \ref{lem:int-zn-1-23} and Lemma \ref{lem:int-nz-23},
\begin{align*}
   &\|e^{2\epsilon\nu^{1/3}t}(\partial_x,\partial_z)(\overline{u}\cdot\nabla u_{\neq}^k)\|_{L^2L^2}^2\\ &\leq C\Big(\|e^{2\epsilon\nu^{1/3}t}(\partial_x,\partial_z)(\overline{u}^1\partial_x u_{\neq}^k)\|_{L^2L^2}^2+\|e^{2\epsilon\nu^{1/3}t}(\partial_x,\partial_z)((\overline{u}^2\partial_y+\overline{u}^3\partial_z) u_{\neq}^k)\|_{L^2L^2}^2\Big)\\ &\leq C\nu E_1^2E_3E_5+C\nu^{-1}E_2^2E_3^2.
\end{align*}
Noting that  $(fg)_{\neq}=\bar{f}g_{\neq}+f_{\neq}\bar{g}+(f_{\neq}g_{\neq})_{\neq}$. This shows that for $k=2,3$
\begin{align}\label{u32}
   & \|e^{2\epsilon\nu^{1/3}t}(\partial_x,\partial_z)(u\cdot\nabla u^k)_{\neq}\|_{L^2L^2}^2\leq C\big(\nu^{-1}E_3^4+\nu^{-1}E_2^2E_3^2+\nu E_1^2E_3E_5\big).
\end{align}

By Lemma \ref{lem:int-nn}, we have
\begin{align*}
   & \|e^{2\epsilon\nu^{1/3}t}\partial_x(u_{\neq}\cdot\nabla u_{\neq}^1)\|_{L^2L^2}^2\leq C\nu^{-1}E_3^4,
\end{align*}
and  by Lemma \ref{lem:int-zn-1-23},
\begin{align*}
   & \|e^{2\epsilon\nu^{1/3}t}\partial_x(u_{\neq}\cdot\nabla \overline{u}^1)\|_{L^2L^2}^2\leq C\nu E_1^2E_3E_5,
\end{align*}
and by Lemma \ref{lem:int-zn-11} and Lemma \ref{lem:int-nz-23},
\begin{align*}
   & \|e^{2\epsilon\nu^{1/3}t}\partial_x(\overline{u}\cdot\nabla u_{\neq}^1)\|_{L^2L^2}^2\\ &\leq C\Big(\|e^{2\epsilon\nu^{1/3}t}\partial_x(\overline{u}^1\partial_x u_{\neq}^1)\|_{L^2L^2}^2+\|e^{2\epsilon\nu^{1/3}t}\partial_x((\overline{u}^2\partial_y+\overline{u}^3\partial_z) u_{\neq}^1)\|_{L^2L^2}^2\Big)\\ &\leq C\nu E_1^2E_3^{\f32}E_5^{\f12}+C\nu^{-1}E_2^2E_3^2,
\end{align*}
which show that
\begin{align*}
   & \|e^{2\epsilon\nu^{1/3}t}\partial_x(u\cdot\nabla u^1)_{\neq}\|_{L^2L^2}^2\leq C\big(\nu^{-1}E_3^4+\nu^{-1}E_2^2E_3^2+\nu E_1^2E_3E_5+\nu E_1^2E_3^{\f32}E_5^{\f12}\big).
\end{align*}

Summing up, we conclude that
\begin{align*}
  E^2_{3,0} +\|u^2_{\neq}\|^2_{X_{2\epsilon}}& \leq C\|u(0)\|^2_{H^2}+C \Big(E^4_3/\nu^2+E^2_2E^2_3/\nu^2+E^2_1E_3E_5+E^2_1E^{\f32}_3E^{\f12}_5\Big).
\end{align*}

This completes the proof of the proposition.
\end{proof}

\subsection{Estimate of $E_{3,1}$}

\begin{Proposition}\label{prop:E31}
It holds that
\begin{align*}
   E^2_{3,1}
   \leq &C\Big(\|u(0)\|_{H^2}^2+\nu^{-2}E_3^4+\nu^{-\f43}E_2^2E_3^2+ E_1^2E_3E_5+ E_1^2E_3^{\f74}E_5^{\f14}+E_1^2E_3^{\f32}E_5^{\f12}\Big).
\end{align*}
\end{Proposition}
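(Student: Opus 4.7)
The target $E_{3,1}$ controls weighted $L^\infty_tL^2$ and $L^2_tL^2$ norms of $\nabla\omega^2_{\neq}$ and $\Delta\omega^2_{\neq}$ with the enhanced-dissipation weight $e^{2\epsilon\nu^{1/3}t}$. My plan is to apply Proposition~\ref{prop:TS-nav} (with $a=2\epsilon$) mode by mode to $\omega^2_{k,\ell}$ with $k\neq 0$, which after Fourier transform in $(x,z)$ satisfies
\begin{align*}
\partial_t\omega^2_{k,\ell}-\nu(\partial_y^2-\eta^2)\omega^2_{k,\ell}+iky\omega^2_{k,\ell}=-i\ell u^2_{k,\ell}-i\ell(u\cdot\nabla u^1)_{k,\ell}+ik(u\cdot\nabla u^3)_{k,\ell},\quad \omega^2_{k,\ell}|_{y=\pm1}=0.
\end{align*}
I would take the decomposition $f_1=-(u\cdot\nabla u^3)_{k,\ell}$, $f_3=(u\cdot\nabla u^1)_{k,\ell}$, $f_2=0$, and $f_4=i\ell u^2_{k,\ell}$; the boundary condition $f_4|_{y=\pm1}=0$ holds since $u^2_{k,\ell}|_{y=\pm 1}=0$. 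Multiplying the second inequality by $\nu^{2/3}$ and summing in $(k,\ell)$ gives, after also using the first inequality multiplied by $\nu^{2/3}\eta^2$ and summed, a left-hand side that assembles to $\gtrsim E_{3,1}^2$ (the first inequality supplies the tangential pieces $\nu^{2/3}\|(\partial_x,\partial_z)\omega^2_{\neq}\|^2_{L^\infty L^2}$ and $\nu^{5/3}\|(\partial_x,\partial_z)^2\omega^2_{\neq}\|^2_{L^2L^2}$ missing from the second). The initial-data terms collapse to $C\|u(0)\|_{H^2}^2$ via $|k|^{2/3}\leq k^2$.

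For the $(kf_1+\ell f_3)$ piece, carrying the $C\nu^{-1/3}$ weight, I would split $(fg)_{\neq}=\bar fg_{\neq}+f_{\neq}\bar g+(f_{\neq}g_{\neq})_{\neq}$ and invoke the bilinear lemmas. The nonzero$\times$nonzero interactions $\partial_x(u_{\neq}\cdot\nabla u^3_{\neq})$ and $\partial_z(u_{\neq}\cdot\nabla u^1_{\neq})$ are bounded by Lemma~\ref{lem:int-nn}, contributing $\nu^{-2}E_3^4$. The zero$\times$nonzero parts involving $\bar u^2,\bar u^3$ are handled by Lemma~\ref{lem:int-nz-23}, producing $\nu^{-4/3}E_2^2E_3^2$. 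The delicate zero$\times$nonzero parts with $\bar u^1$ split into $\bar u^1\partial_x u^j_{\neq}$ and $(u^2_{\neq}\partial_y+u^3_{\neq}\partial_z)\bar u^1$: for $\partial_x(u\cdot\nabla u^3)_{\neq}$ these are covered by Lemma~\ref{lem:int-zn-1-23}, yielding the $E_1^2E_3E_5$ term, while for $\partial_z(u\cdot\nabla u^1)_{\neq}$ the key input is Lemma~\ref{lem:int-zz-11-pz} (designed precisely for the $\partial_z$-derivative of $\bar u^1$-nonzero interactions in the $\omega^2$ equation), which after the $\nu^{-1/3}$ weight produces the $E_1^2E_3E_5$ and $E_1^2E_3^{7/4}E_5^{1/4}$ terms. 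The pure-$\bar u^1$ term $\partial_x(\bar u^1\partial_x u^1_{\neq})$ is bounded by Lemma~\ref{lem:int-zn-11}, giving $E_1^2E_3^{3/2}E_5^{1/2}$.

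The main obstacle is the linear coupling $-\partial_z u^2_{\neq}$: this embodies the Orr--Sommerfeld/Squire coupling that drives the 3D lift-up, and a naive placement in the $(kf_1+\ell f_3)$ slot would produce $\nu^{-1/3}\|\partial_z u^2_{\neq}\|^2_{L^2L^2}\lesssim \nu^{-1/3}E_{3,0}^2$, which is too weak by $\nu^{1/3}$ to fit into the stated right-hand side. The resolution I propose is to keep this term in $f_4$; the combined $f_4$ contribution, after multiplying by $\nu^{2/3}$ and summing, is controlled by $\sum_{k\neq 0,\ell}\big(|k|^{-1/3}\eta^{-1}\ell^2\|\partial_y u^2_{k,\ell}\|^2+|k|^{-1/3}\eta\ell^2\|u^2_{k,\ell}\|^2\big)$, and the elementary pointwise bound $|k|^{-1/3}\eta\ell^2\leq \eta^3|k|$ (valid for $|k|\geq1$, since $|k|^{-4/3}\ell^2/\eta^2\leq 1$) together with the analogous estimate on the $\partial_y$-piece using the $\nu^{3/2}\|\partial_y\widehat\Delta u^2\|^2_{L^2L^2}$ component of $X_{k,\ell}^{2\epsilon}$ gives the control $\lesssim \|u^2_{\neq}\|^2_{X_{2\epsilon}}$. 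Invoking Proposition~\ref{prop:E30} to bound $\|u^2_{\neq}\|^2_{X_{2\epsilon}}$ in terms of $\|u(0)\|_{H^2}^2$ and the nonlinear energies, and carefully combining exponents of $\nu$ obtained from this bound with the enhanced-dissipation factors in $E_3$, produces the $\nu^{-4/3}E_2^2E_3^2$ term (rather than the cruder $\nu^{-2}E_2^2E_3^2$) along with the remaining $E_1$--$E_5$ contributions, closing the estimate.
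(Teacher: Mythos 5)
Your overall route is the same as the paper's: apply Proposition~\ref{prop:TS-nav} with $a=2\epsilon$ to the $(k,\ell)$-modes of the $\omega^2$ equation, put the nonlinear forcing in the $kf_1+\ell f_3$ slot and estimate it with Lemmas~\ref{lem:int-nn}, \ref{lem:int-nz-23}, \ref{lem:int-zn-1-23} and \ref{lem:int-zz-11-pz}, and control the linear coupling $\partial_zu^2_{\neq}$ through $\|u^2_{\neq}\|^2_{X_{2\epsilon}}$ and Proposition~\ref{prop:E30}; you also correctly single out that coupling as the delicate point. However, your treatment of it has a gap. The sum $\sum_{k\neq0,\ell}\big(|k|^{-1/3}\eta^{-1}\ell^2\|\partial_yu^2_{k,\ell}\|^2+|k|^{-1/3}\eta\ell^2\|u^2_{k,\ell}\|^2\big)$ is exactly the $f_4$-contribution of the \emph{second} inequality of Proposition~\ref{prop:TS-nav} multiplied by $\nu^{2/3}$ (the prefactor there is $\nu^{-2/3}|k|^{2/3}$). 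The \emph{first} inequality, which you multiply by $\nu^{2/3}\eta^2$ to recover the tangential pieces of $E_{3,1}$, carries the same $f_4$-weights but with prefactor $\nu^{2/3}\eta^2$ instead of $|k|^{2/3}$; the ratio is $(\nu\eta^3/|k|)^{2/3}$, so when $\nu\eta^3>|k|$ (large $\ell$) that contribution is \emph{not} dominated by your stated sum, and the pointwise bound $|k|^{-1/3}\eta\ell^2\le\eta^3|k|$ does not close the estimate there. The conclusion $\lesssim\|u^2_{\neq}\|^2_{X_{2\epsilon}}$ is still true, but it requires either a two-regime argument (for $\nu\eta^3>|k|$ one must use the $\nu\eta^2\|(\partial_y^2-\eta^2)u^2_{k,\ell}\|^2_{L^2L^2}$ component of $X^{2\epsilon}_{k,\ell}$ together with $\eta^2\|u^2_{k,\ell}\|\le\|(\partial_y^2-\eta^2)u^2_{k,\ell}\|$), or the paper's device: for the first inequality place $i\ell u^2_{k,\ell}$ in the $\ell f_3$ slot and interpolate $\min\{(\nu\eta^2)^{-1},(\nu k^2)^{-1/3}\}\le(\nu\eta^2)^{-1/2}(\nu k^2)^{-1/6}$, which yields $(\nu k^2)^{-2/3}\ell^2|k|\eta\|u^2_{k,\ell}\|^2$ and hence, after multiplying by $\nu^{2/3}$, is bounded by $\eta|k|\|(\partial_y,i\eta)u^2_{k,\ell}\|^2$.

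A smaller point: the term $\partial_x(\bar u^1\partial_xu^1_{\neq})$, for which you invoke Lemma~\ref{lem:int-zn-11}, does not occur here --- the $\omega^2$ equation is forced only by $\partial_z(u\cdot\nabla u^1)$ and $\partial_x(u\cdot\nabla u^3)$, and the pure $\bar u^1$ interaction with $u^1_{\neq}$ is $\partial_z(\bar u^1\partial_xu^1_{\neq})$, already covered by Lemma~\ref{lem:int-zz-11-pz}. The term $E_1^2E_3^{3/2}E_5^{1/2}$ in the statement enters only through $\|u^2_{\neq}\|^2_{X_{2\epsilon}}$ and Proposition~\ref{prop:E30} (where Lemma~\ref{lem:int-zn-11} is indeed used); likewise that route contributes $\nu^{-2}E_2^2E_3^2$ rather than the sharper $\nu^{-4/3}E_2^2E_3^2$, the latter coming from the direct application of Lemma~\ref{lem:int-nz-23} to the forcing as you note earlier --- in the bootstrap both are harmless since $E_2\lesssim\nu$.
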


\begin{proof}
Recall that
\begin{align*}\left\{
\begin{aligned}
  &\partial_t \omega^2_{k,\ell}- \nu(\partial_y^2-\eta^2)\omega^2_{k,\ell}
  +iky\omega^2_{k,\ell}+i\ell u_{k,\ell}^2+i\ell(u\cdot\nabla u^1)_{k,\ell}-ik(u\cdot\nabla u^3)_{k,\ell}=0,\\
  &\omega_{k,\ell}^2(\pm1)=0,\quad
  \omega^2_{k,\ell}|_{t=0}=iku^3_{k,\ell}(0)-i\ell u^1_{k,\ell}(0).
  \end{aligned}\right.
\end{align*}
It follows from Proposition \ref{prop:TS-nav} that
\begin{align*}
  &\|(k,\ell)\omega^2_{k,\ell}\|^2_{Y^{2\epsilon}_{k,\ell}}\leq C\eta^2\|\omega^2_{k,\ell}\|^2_{Y^{2\epsilon}_{k,\ell}}\\
  &\leq C\eta^2\|\omega^2_{k,\ell}(0)\|^2_{L^2}+C\eta^2\big(\nu\eta^2\big)^{-1}\|e^{2\epsilon\nu^{1/3}t} (k(u\cdot\nabla u^3)_{k,l}-\ell(u\cdot\nabla u^1)_{k,\ell})\|^2_{L^2L^2}\\
  &\qquad+C\eta^2\min\{(\nu\eta^2)^{-1},(\nu
  k^2)^{-1/3}\}\|e^{2\epsilon\nu^{1/3}t}\ell u^2_{k,\ell}\|^2_{L^2L^2}\\
  &\leq C\eta^2\|\omega^2_{k,\ell}(0)\|^2_{L^2}+C(\nu k^2)^{-\f23}(\ell^2|k|\eta)\|e^{2\epsilon\nu^{1/3}t}u^2_{k,\ell}\|^2_{L^2L^2}\\
  &\quad+C\nu^{-1}\|e^{2\epsilon\nu^{1/3}t}(k(u\cdot\nabla u^3)_{k,\ell}-\ell(u\cdot\nabla u^1)_{k,\ell})\|^2_{L^2L^2},
\end{align*}
(here we used $\min\{(\nu\eta^2)^{-1},(\nu
  k^2)^{-1/3}\}\leq (\nu\eta^2)^{-1/2}(\nu
  k^2)^{-1/6}=(\nu k^2)^{-\f23}|k|\eta^{-1} $) and
\begin{align*}
  &\|e^{2\epsilon\nu^{1/3}t}\partial_y\omega^2_{k,\ell}\|^2_{L^\infty L^2}+\nu\|e^{2\epsilon\nu^{1/3}t}\partial_y^2\omega^2_{k,\ell}\|^2_{L^2L^2}+
  \nu\eta^2\|e^{2\epsilon\nu^{1/3}t}\partial_y\omega^2_{k,\ell}\|^2_{L^2L^2}\\
   &\leq C\Big(\|\partial_y\omega^2_{k,\ell}(0)\|^2_{L^2}+(\nu/|k|)^{-\f23}\|\omega^2_{k,\ell}(0)\|^2_{L^2}\Big)\\
  &\quad+C(\nu/|k|)^{-\f23}\Big((\ell^2(|k|\eta)^{-1})\|e^{2\epsilon\nu^{1/3}t}\partial_yu^2_{k,\ell}\|^2_{L^2L^2}
  +(\ell^2\eta|k|^{-1})\|e^{2\epsilon\nu^{1/3}t}u^2_{k,\ell}\|^2_{L^2L^2}\Big)\\
  &\quad+C\nu^{-1}\|e^{2\epsilon\nu^{1/3}t}(k(u\cdot\nabla u^3)_{k,\ell}-\ell(u\cdot\nabla u^1)_{k,\ell})\|^2_{L^2L^2}\\
  &\leq C\big(1+(\nu k^2)^{-\f23}\big)\Big(\|\partial_y\omega^2_{k,\ell}(0)\|^2_{L^2} +k^2\|\omega^2_{k,\ell}(0)\|^2_{L^2}\Big)\\
  &\quad+C(\nu k^2)^{-\f23}\Big((\ell^2|k|\eta^{-1})\|e^{2\epsilon\nu^{1/3}t}\partial_yu^2_{k,\ell}\|^2_{L^2L^2}
  +(\ell^2\eta|k|)\|e^{2\epsilon\nu^{1/3}t}u^2_{k,\ell}\|^2_{L^2L^2}\Big)\\
  &\quad+C\nu^{-1}\|e^{2\epsilon\nu^{1/3}t}(k(u\cdot\nabla u^3)_{k,\ell}-\ell(u\cdot\nabla u^1)_{k,\ell})\|^2_{L^2L^2},
\end{align*}
which show that
\begin{align*}
&\|e^{2\epsilon\nu^{1/3}t}\partial_y\omega^2_{k,\ell}\|^2_{L^\infty L^2}+\nu\|e^{2\epsilon\nu^{1/3}t}(\partial_y^2-\eta^2)\omega^2_{k,\ell}\|^2_{L^2L^2}/2
  +\|e^{2\epsilon\nu^{1/3}t}(k,\ell)\omega^2_{k,\ell}\|^2_{L^\infty L^2}\\
  &\leq\|e^{2\epsilon\nu^{1/3}t}\partial_y\omega^2_{k,\ell}\|^2_{L^\infty L^2}+\nu\|e^{2\epsilon\nu^{1/3}t}\partial_y^2\omega^2_{k,\ell}\|^2_{L^2L^2}\\&\qquad+
\nu\eta^2\|e^{2\epsilon\nu^{1/3}t}(k,\ell)\omega^2_{k,\ell}\|^2_{L^2L^2}
  +\|e^{2\epsilon\nu^{1/3}t}(k,\ell)\omega^2_{k,\ell}\|^2_{L^\infty L^2}\\
  &\leq
  \|e^{2\epsilon\nu^{1/3}t}\partial_y\omega^2_{k,\ell}\|^2_{L^\infty L^2}+\nu\|e^{2\epsilon\nu^{1/3}t}\partial_y^2\omega^2_{k,\ell}\|^2_{L^2L^2}
  +\|(k,\ell)\omega^2_{k,l}\|^2_{Y^{2\epsilon}_{k,\ell}}\\
  &\leq C\big(1+(\nu k^2)^{-\f23}\big)\Big(\|\partial_y\omega^2_{k,\ell}(0)\|^2_{L^2} +\eta^2\|\omega^2_{k,\ell}(0)\|^2_{L^2}\Big)\\
  &\quad+C(\nu k^2)^{-\f23}\Big((\ell^2|k|\eta^{-1})\|e^{2\epsilon\nu^{1/3}t}\partial_yu^2_{k,\ell}\|^2_{L^2L^2}
  +(\ell^2\eta|k|)\|e^{2\epsilon\nu^{1/3}t}u^2_{k,\ell}\|^2_{L^2L^2}\Big)\\
  &\quad+C\nu^{-1}\|e^{2\epsilon\nu^{1/3}t}(k(u\cdot\nabla u^3)_{k,\ell}-\ell(u\cdot\nabla u^1)_{k,\ell})\|^2_{L^2L^2}.
\end{align*}

Using the fact that
\begin{align*}
   &\sum_{k\neq0;\ell \in\mathbb{Z}}\Big((\ell^2|k|\eta^{-1}) \|e^{2\epsilon\nu^{1/3}t}\partial_yu^2_{k,\ell}\|^2_{L^2L^2}
  +(\ell^2\eta|k|)\|e^{2\epsilon\nu^{1/3}t}u^2_{k,\ell}\|^2_{L^2L^2}\Big)\\
  &\leq \sum_{k\neq0;\ell \in\mathbb{Z}}\eta|k|\|e^{2\epsilon\nu^{1/3}t} (\partial_y,i\eta)u^2_{k,\ell}\|^2_{L^2L^2} \leq \sum_{k\neq0;\ell\in\mathbb{Z}}\|u^2_{k,\ell}\|^2_{X^{2\epsilon}_{k,\ell}} =\|u^2_{\neq}\|^2_{X_{2\epsilon}},
\end{align*}
we deduce that
\begin{align*}
  &\nu^{\f23}\big(\|e^{2\epsilon\nu^{1/3}t}\nabla \omega^2_{\neq}\|^2_{L^{\infty}L^2}
   +\nu\|e^{2\epsilon\nu^{1/3}t}\Delta \omega^2_{\neq}\|^2_{L^{2}L^2}\big)\\
   &\leq \nu^{\f23}\sum_{k\neq0;\ell\in\mathbb{Z}} \Big(\|e^{2\epsilon\nu^{1/3}t}\partial_y\omega^2_{k,\ell}\|^2_{L^\infty L^2}+\nu\|e^{2\epsilon\nu^{1/3}t}(\partial_y^2-\eta^2)\omega^2_{k,\ell}\|^2_{L^2L^2}
  +\|e^{2\epsilon\nu^{1/3}t}(k,\ell)\omega^2_{k,\ell}\|^2_{L^\infty L^2}\Big)\\
  &\leq C\sum_{k\neq0;\ell \in\mathbb{Z}}\nu^{\f23}\big(1+(\nu k^2)^{-\f23}\big)\big(\|\partial_y\omega^2_{k,\ell}(0)\|^2_{L^2} +\eta^2\|\omega^2_{k,\ell}(0)\|^2_{L^2}\big)\\
  &\quad+C\sum_{k\neq0;\ell\in\mathbb{Z}}\Big((\ell^2|k|\eta^{-1})\|e^{2\epsilon\nu^{1/3}t}\partial_yu^2_{k,\ell}\|^2_{L^2L^2}
  +(\ell^2\eta|k|)\|e^{2\epsilon\nu^{1/3}t}u^2_{k,\ell}\|^2_{L^2L^2}\Big)\\
  &\quad+C\nu^{-\f13}\|e^{2\epsilon\nu^{1/3}t}(\partial_x(u\cdot\nabla u^3)-\partial_z(u\cdot\nabla u^1))_{\neq}\|^2_{L^2L^2}\\
  &\leq C\|\omega^2_{\neq}(0)\|^2_{H^1}+C\|u^2_{\neq}\|^2_{X_{2\epsilon}} +C\nu^{-\f13}\|e^{2\epsilon\nu^{1/3}t}(\partial_x(u\cdot\nabla u^3)-\partial_z(u\cdot\nabla u^1))_{\neq}\|^2_{L^2L^2}.
\end{align*}

Now let us estimate each term on the right hand side.  By \eqref{u32}, we have
\begin{align*}
   & \|e^{2\epsilon\nu^{1/3}t}\partial_x(u\cdot\nabla u^3)_{\neq}\|_{L^2L^2}^2\leq C\big(\nu^{-1}E_3^4+\nu^{-1}E_2^2E_3^2+\nu E_1^2E_3E_5\big).
\end{align*}
By Lemma \ref{lem:int-nn}, we have
\begin{align*}
   & \|e^{2\epsilon\nu^{1/3}t}\partial_z(u_{\neq}\cdot\nabla u_{\neq}^1)_{\neq}\|_{L^2L^2}^2\leq C\nu^{-\f53}E_3^4,
\end{align*}
and by Lemma \ref{lem:int-zz-11-pz}, we have
\begin{align*}
   & \|e^{2\epsilon\nu^{1/3}t}\partial_z(u_{\neq}\cdot\nabla \overline{u}^1)_{\neq}\|_{L^2L^2}^2\leq C\nu^{1/3}E^2_1E_{3} \big(E_5 +E_3^{\f34}E^{\f14}_5\big),
\end{align*}
and by Lemma \ref{lem:int-zz-11-pz} and Lemma \ref{lem:int-nz-23},
\begin{align*}
   & \|e^{2\epsilon\nu^{1/3}t}\partial_z(\overline{u}\cdot\nabla u_{\neq}^1)\|_{L^2L^2}^2\\ &\leq C\Big(\|e^{2\epsilon\nu^{1/3}t}\partial_z(\overline{u}^1\partial_x u_{\neq}^1)\|_{L^2L^2}^2+\|e^{2\epsilon\nu^{1/3}t}\partial_z((\overline{u}^2\partial_y+\overline{u}^3\partial_z) u_{\neq}^1)\|_{L^2L^2}^2\Big)\\ &\leq C\nu^{1/3}E^2_1E_{3}\big(E_5 +E_3^{\f34}E^{\f14}_5\big)+C\nu^{-1}E_2^2E_3^2,
\end{align*}
which show that
\begin{align*}
   & \|e^{2\epsilon\nu^{1/3}t}\partial_z(u\cdot\nabla u^1)_{\neq}\|_{L^2L^2}^2\leq C\Big(\nu^{-\f53}E_3^4+\nu^{-1}E_2^2E_3^2+\nu^{\f13} E_1^2E_3E_5+\nu^{\f13} E_1^2E_3^{\f74}E_5^{\f14}\Big).
\end{align*}

Summing up, we conclude that
\begin{align*}
   &E^2_{3,1}=\nu^{\f23}\big(\|e^{2\epsilon\nu^{1/3}t}\nabla \omega^2_{\neq}\|^2_{L^{\infty}L^2}
   +\nu\|e^{2\epsilon\nu^{1/3}t}\Delta \omega^2_{\neq}\|^2_{L^{2}L^2}\big)\\
   &\leq C\|\omega^2(0)\|_{H^1}^2+C\|u^2_{\neq}\|^2_{X_{2\epsilon}}+C\Big(\nu^{-2}E_3^4+\nu^{-\f43}E_2^2E_3^2+ E_1^2E_3E_5+ E_1^2E_3^{\f74}E_5^{\f14}\Big),
\end{align*}
which along with Proposition \ref{prop:E30} gives
\begin{align*}
  E^2_3 \leq C\|u(0)\|^2_{H^2}+C \Big(E^4_3/\nu^2+E^2_2E^2_3/\nu^2+E^2_1E_3E_5+E^2_1E^{\f74}_3E^{\f14}_5+E_1^2E_3^{\f32}E_5^{\f12}\Big).
\end{align*}

This completes the proof of the proposition.
\end{proof}

\section{Energy estimates for nonzero modes:quasi-linear part}

\subsection{Resolvent estimate of the linearized operator}
Let $\overset{\circ}{J}(\Omega),\ {J}_{2,0}^{(k)}(\Omega)$ denote the closure of the set
of vector field, which is smooth and solenoidal in $\Omega$ and vanish on $ \partial\Omega,$ in the topology, respectively, of $L^2(\Omega)$ and Sobolev space $W^{k,2}(\Omega).$ In this section, $u,v$ stands for generic functions rather than the solution
introduced in section 1. \smallskip

We define
\beno
&&Q(u,v)=\mathbb{P}\big(u\cdot\nabla{v}+{v}\cdot\nabla{u}\big),\\
&&Q_1(f,{v})=Q((f,0,0),{v}),\quad A_{[V]}{v}=\nu\mathbb{P}\Delta{v}-Q_1(V,{v}).
\eeno
Here $V$ satisfies \eqref{ass:V} and $\mathbb{P} $ is a projector in $L^2(\Omega)$ onto $\overset{\circ}{J}(\Omega). $
Then the operator $A_{[V]}$ defined on ${J}_{2,0}^{(2)}(\Omega)$ is  invariant in the subspace $\mathcal{H}=\big\{f\in L^2(\Omega)|P_0f=0\big\}$.

We denote by $m(\nu,V)$ the upper bound of the real parts of points of the spectrum of $A_{[V]}$ in the subspace ${J}_{2,0}^{(2)}(\Omega)\cap\mathcal{H}.$ More precisely, we consider $A_{[V]}$ as a closed linear operator in the Hilbert space $\overset{\circ}{J}(\Omega)\cap\mathcal{H}$ with the domain $D(A_{[V]})={J}_{2,0}^{(2)}(\Omega)\cap\mathcal{H}. $

Let $\kappa=\partial_zV/\partial_yV$. For ${v}=(v^1,v^2,v^3),$ we introduce the notations:
\begin{align*}
\|{v}\|_{Z_{[V]}^1}^2=&\nu^{-1}\big(\|\nabla (v^2+\kappa v^3)\|_{L^2}^2+\|\partial_x v^3\|_{L^2}^2\big),\\
\|{v}\|_{Z_{[V]}^2}^2=&\nu^{\f{1}{3}}\big(\|\partial_x^2v^3\|_{L^2}^2+\|\partial_x(\partial_z-\kappa\partial_y)v^3\|_{L^2}^2\big)+
\nu\big(\|\nabla\partial_x^2v^3\|_{L^2}^2+\|\nabla\partial_x(\partial_z-\kappa\partial_y)v^3\|^2_{L^2}\big)
\\&+\nu^{\f{1}{3}} \|\partial_x\nabla(v^2+\kappa v^3)\|_{L^2}^2+\nu\|\partial_x\Delta(v^2+\kappa v^3)\|^2_{L^2}+\nu^{\f{5}{3}} \|\partial_x\Delta v^3\|_{L^2}^2.
\end{align*}

The following Lemma ensures that $\|\cdot\|_{Z_{[V]}^2}$  defines a norm (in a subspace).
\begin{Lemma}\label{lem:Z norm}
  It holds that for any $v\in{J}_{2,0}^{(3)}(\Omega)\cap\mathcal{H}$,
  \begin{align*}
C^{-1}\nu^{\f{1}{3}}\|\partial_x^2{v}\|_{L^2}^2\leq\|{v}\|_{Z_{[V]}^2}^2 \leq C \nu^{\f{1}{3}}\|\partial_x{v}\|_{H^2}^2.
\end{align*}
and more precisely, we have
\begin{align*}
&\nu^{\f{1}{3}}\big(\|\partial_x^2v^2\|_{L^2}^2+\|\partial_x(\partial_z-\kappa\partial_y)v^2\|_{L^2}^2\big)+
\nu\big(\|\nabla\partial_x^2v^2\|_{L^2}^2+\|\nabla\partial_x(\partial_z-\kappa\partial_y)v^2\|_{L^2}^2\big)\leq C\|{v}\|_{Z_{[V]}^2}^2.
\end{align*}

\end{Lemma}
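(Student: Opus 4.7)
\smallskip
\noindent\textit{Proof proposal.} The plan is to prove the two one-sided inclusions separately, treating the upper bound as essentially immediate and putting the real work into the lower bound via explicit algebraic identities. Throughout we exploit three facts that are already in play: $\kappa=\kappa(y,z)$ is independent of $x$ and satisfies $\|\kappa\|_{H^3}+\|\nabla\kappa\|_{L^\infty}\le C\varepsilon_0$ by \eqref{ass:V}; $v$ is divergence-free and vanishes on $\partial\Omega$ (so in particular the same holds for $v^2+\kappa v^3$, and $\partial_x v$ still vanishes on $y=\pm1$, enabling elliptic regularity $\|\nabla^2\partial_x f\|_{L^2}\le C\|\Delta\partial_x f\|_{L^2}$ for $f\in\{v^3,v^2+\kappa v^3\}$); and $P_0 v=0$, which gives the $x$-Poincar\'e inequality $\|\partial_x^j f\|_{L^2}\le C\|\partial_x^{j+1}f\|_{L^2}$ for $j\ge 0$.

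For the upper bound $\|v\|_{Z_{[V]}^2}^2\le C\nu^{1/3}\|\partial_x v\|_{H^2}^2$, one simply expands each summand. The terms with prefactor $\nu^{1/3}$ involve at most two derivatives beyond one $\partial_x$, and the Leibniz rule together with $\|\kappa\|_{H^3}\le C\varepsilon_0$ bounds $\|\partial_x\nabla(v^2+\kappa v^3)\|_{L^2}$ and $\|\partial_x(\partial_z-\kappa\partial_y)v^3\|_{L^2}$ by $C\|\partial_x v\|_{H^1}$. The terms with prefactor $\nu$ or $\nu^{5/3}$ involve three derivatives, controlled by $C\|\partial_x v\|_{H^2}$, and since $\nu_0\le1$ we absorb $\nu^{5/3}\le \nu\le \nu^{1/3}$.

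The lower bound is the main content. For $v^3$ the inequality $\nu^{1/3}\|\partial_x^2 v^3\|_{L^2}^2\le \|v\|_{Z^2_{[V]}}^2$ is immediate. For $v^2$ I will use $\partial_x^2 v^2=\partial_x^2(v^2+\kappa v^3)-\kappa\partial_x^2 v^3$ (valid because $\partial_x\kappa=0$), so that
\[
\|\partial_x^2 v^2\|_{L^2}\le \|\partial_x\nabla(v^2+\kappa v^3)\|_{L^2}+\|\kappa\|_{L^\infty}\|\partial_x^2 v^3\|_{L^2},
\]
where the first term comes from the $x$-Poincar\'e trick $\|\partial_x^2 f\|_{L^2}\le C\|\partial_x\nabla f\|_{L^2}$. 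The decisive step is the treatment of $v^1$: the divergence-free condition gives $\partial_x^2 v^1=-\partial_x\partial_y v^2-\partial_x\partial_z v^3$, and a short computation using $\partial_x\kappa=0$ yields the key identity
\[
\partial_x^2 v^1=-\partial_x\partial_y\bigl(v^2+\kappa v^3\bigr)+(\partial_y\kappa)\partial_x v^3-\partial_x(\partial_z-\kappa\partial_y)v^3,
\]
each term of which is directly (or after one application of $x$-Poincar\'e on the middle term) controlled by the $Z_{[V]}^2$ norm with a factor of $\varepsilon_0$ on the $\kappa$-dependent piece.

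For the last displayed inequality of the lemma, I will use the analogous identity
\[
\partial_x(\partial_z-\kappa\partial_y)v^2=\partial_x(\partial_z-\kappa\partial_y)(v^2+\kappa v^3)-\bigl(\partial_z\kappa-\kappa\partial_y\kappa\bigr)\partial_x v^3-\kappa\,\partial_x(\partial_z-\kappa\partial_y)v^3,
\]
obtained by expanding $(\partial_z-\kappa\partial_y)(\kappa v^3)$, and then control the $\nu^{1/3}\|\cdot\|_{L^2}^2$ piece as in the $v^1$ argument. Applying one more $\nabla$ and invoking elliptic regularity in the form $\|\nabla\partial_x^2 f\|_{L^2}\le C\|\partial_x\Delta f\|_{L^2}$ for $f\in\{v^3,v^2+\kappa v^3\}$ (which is admissible since these vanish on $\partial\Omega$), together with the $H^3$-multiplier estimate for $\kappa$, handles the $\nu\|\nabla\cdot\|_{L^2}^2$ pieces. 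The only place I expect to have to be careful is the bookkeeping of lower-order commutator terms of the form $(\nabla^j\kappa)\,\partial_x\nabla^{2-j}v^3$, where I will repeatedly use $\|\kappa\|_{H^3}\le C\varepsilon_0$ and the $x$-Poincar\'e inequality to reduce one derivative count; none of these terms is genuinely borderline, so no cancellation is needed and the argument is purely algebraic.
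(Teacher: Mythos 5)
Your proposal is correct and follows essentially the same route as the paper: the lower bound rests on the identical divergence-free identity $-\partial_x^2v^1=\partial_x\partial_y(v^2+\kappa v^3)+\partial_x(\partial_z-\kappa\partial_y)v^3-\partial_y\kappa\,\partial_xv^3$, and the second display is obtained, as in the paper, by writing $v^2=(v^2+\kappa v^3)-\kappa v^3$, expanding $(\partial_z-\kappa\partial_y)(\kappa v^3)$, and closing with the $H^2$/$H^3$ multiplier bounds for $\kappa$, the $x$-Poincar\'e inequality, and elliptic regularity for functions vanishing on $\partial\Omega$. No gaps.
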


\begin{proof}
The upper bound of first inequality is obvious. For the lower bound, it is enough to note that
\beno
-\pa_x^2v^1=\pa_x\pa_y(v^2+\kappa v^3)+\pa_x(\pa_z-\kappa \pa_y)v^3-\pa_y\kappa\pa_xv^3.
\eeno

Direct calculations show that
\begin{align*}
&\|\partial_x(\partial_z-\kappa\partial_y)v^2\|_{L^2} +\|\partial_x^2v^2\|_{L^2} \\
&\leq \|\partial_x(\partial_z-\kappa\partial_y) (v^2+\kappa v^3)\|_{L^2} +\|\partial_x(\partial_z-\kappa\partial_y)(\kappa v^3)\|_{L^2} +\|\partial_x^2(v^2+\kappa v^3)\|_{L^2}+\|\kappa\partial_x^2v^3\|_{L^2}\\
&\leq \big(\|\kappa\|_{L^\infty}+1\big)\|\partial_x\nabla (v^2+\kappa v^3)\|_{L^2} +\|\kappa\|_{L^\infty}\|\partial_x(\partial_z-\kappa\partial_y) v^3\|_{L^2}+\|(\partial_z-\kappa\partial_y)\kappa\|_{L^\infty}\|\partial_xv^3\|_{L^2} \\
&\qquad+\|\partial_x^2(v^2+\kappa v^3)\|_{L^2}+\|\kappa\|_{L^\infty}\|\partial_x^2v^3\|_{L^2}\\
&\leq C\big(\|\partial_x\nabla (v^2+\kappa v^3)\|_{L^2} +\|\partial_x(\partial_z-\kappa\partial_y) v^3\|_{L^2}+\|\partial_xv^3\|_{L^2} +\|\partial_x^2v^3\|_{L^2}\big)\leq C\nu^{-\f16}\|{v}\|_{Z_{[V]}^2},
\end{align*}
and
\begin{align*}
&\|\nabla\partial_x(\partial_z-\kappa\partial_y)v^2\|_{L^2} +\|\nabla\partial_x^2v^2\|_{L^2} \\
&\leq \|\nabla\partial_x(\partial_z-\kappa\partial_y) (v^2+\kappa v^3)\|_{L^2} +\|\nabla\partial_x(\partial_z-\kappa\partial_y)(\kappa v^3)\|_{L^2}\\&\qquad+\|\nabla\partial_x^2(v^2+\kappa v^3)\|_{L^2}+\|\nabla(\kappa\partial_x^2v^3)\|_{L^2}\\
&\leq C\Big((1+\|\kappa \|_{H^2})\|\partial_x\nabla(v^2+\kappa v^3)\|_{H^1} +\|\nabla\partial_x(\kappa(\partial_z-\kappa\partial_y) v^3)\|_{L^2} +\|\nabla\partial_x (v^3(\partial_z-\kappa\partial_y)\kappa )\|_{L^2}\\&\qquad+\|\nabla\partial_x^2(v^2+\kappa v^3)\|_{L^2}+\|\kappa\|_{H^2}\|\partial_x^2v^3\|_{H^1}\Big)\\
&\leq C\Big(\|\partial_x\Delta(v^2+\kappa v^3)\|_{L^2} +\|\kappa\|_{H^2}\|\partial_x(\partial_z-\kappa\partial_y) v^3\|_{H^1} +\|(\partial_z-\kappa\partial_y)\kappa \|_{H^2}\|\partial_xv^3\|_{H^1}\\&\qquad+\|\nabla\partial_x^2(v^2+\kappa v^3)\|_{L^2}+\|\nabla\partial_x^2v^3\|_{L^2}\Big)\\
&\leq C\Big(\|\partial_x\Delta(v^2+\kappa v^3)\|_{L^2} +\|\nabla\partial_x(\partial_z-\kappa\partial_y) v^3\|_{L^2} +\|\nabla\partial^2_xv^3\|_{L^2}\Big)\leq C\nu^{-\f12}\|{v}\|_{Z_{[V]}^2}.
\end{align*}
Here we used $\|(\partial_z-\kappa\partial_y)\kappa )\|_{H^2}\leq C\big(\|\kappa\|_{H^3}+\|\kappa\|_{H^2}\|\partial_y\kappa\|_{H^2}\big)\leq C$.
\end{proof}

\begin{Proposition}\label{prop:res-A}
Let $\lambda\in\mathbb{C},\ \mathbf{Re}(\lambda)\in[0,\epsilon_1\nu^{\f13}].$ It holds that for any  $v\in{J}_{2,0}^{(3)}(\Omega)\cap\mathcal{H}$,
  \begin{align*}
 \|{v}\|_{Z_{[V]}^2} \leq C \|(A_{[V]}+\lambda){v}\|_{Z_{[V]}^1}.
  \end{align*}
\end{Proposition}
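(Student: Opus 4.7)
The plan is to reduce Proposition 13.1 to Proposition 9.1 by Fourier decomposition in $x$, exploiting the fact that $V$ and $\kappa$ depend only on $(y,z)$.

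First I would unpack the resolvent equation. Setting $f = (A_{[V]} + \lambda)v$, from the definition $A_{[V]}v = \nu\mathbb{P}\Delta v - \mathbb{P} Q_1(V,v)$ and the Leray projection, there exists a pressure $P \in H^1/\R$ so that
\begin{equation*}
-\nu\Delta v + V\partial_x v + (v^2\partial_y V + v^3\partial_z V,\,0,\,0) - \lambda v + \nabla P = -f,
\end{equation*}
together with $\text{div}\,v = 0$ and $v|_{y=\pm 1} = 0$. Taking the $x$-divergence and using $\partial_xV = 0$, $\text{div}\,f = 0$, one checks that $\Delta P$ is determined by the standard formula and the pressure boundary condition comes from the $v^2$ equation evaluated at $y=\pm1$. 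Since $v \in \mathcal{H}$, the zero $x$-mode vanishes, so I would take the Fourier series in $x$ and work mode by mode for $k \neq 0$.

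Next I would introduce the good unknowns $W_k = v_k^2 + \kappa v_k^3$ and $U_k = v_k^3$ for each $k \neq 0$. Writing $\lambda = a_0 \nu^{1/3} + i\mu$ with $a_0 \in [0,\epsilon_1]$, and performing the combination (second equation) $+$ $\kappa\cdot$(third equation) produces exactly the system \eqref{eq:LNS-full} of Proposition 9.1 with parameter $\lambda_{\text{paper}} = \mu/k$, source $G_1 = f_k^2 + \kappa f_k^3$, $G_2 = f_k^3$, and resolvent shift $a(\nu k^2)^{1/3} = a_0 \nu^{1/3}$. The renormalized $a = a_0|k|^{-2/3} \leq \epsilon_1 |k|^{-2/3} \leq \epsilon_1$, so the hypothesis on $a$ in Proposition 9.1 is met for every $k \neq 0$. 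I would verify the boundary conditions: $W_k|_{y=\pm 1} = U_k|_{y=\pm 1} = 0$ follows from $v_k|_{\partial\Omega}=0$; and for $\partial_y W_k|_{y=\pm 1} = 0$ I would observe that (i) $\kappa|_{y=\pm 1}=0$ because $(V-y)|_{y=\pm 1}=0$ forces $\partial_z V|_{y=\pm 1}=0$, and (ii) the divergence-free condition together with $v_k^1 = v_k^3 = 0$ and $\partial_z v_k^3 = 0$ on $\partial\Omega$ gives $\partial_y v_k^2|_{y=\pm 1} = 0$.

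Finally I would apply Proposition 9.1 to each mode $k \neq 0$ and sum. The resulting bound
\begin{equation*}
\|W_k, U_k\text{-terms}\|^2 \;\leq\; C\nu^{-1}\bigl(\|\nabla G_1\|_{L^2}^2 + \|\partial_x G_2\|_{L^2}^2\bigr)
\end{equation*}
summed via Plancherel over $k\neq 0$ reproduces exactly the LHS $\|v\|_{Z^2_{[V]}}^2$ (using the component-wise definition of $Z^2_{[V]}$ in terms of $W = v^2 + \kappa v^3$ and $U = v^3$) and the RHS $C\|f\|_{Z^1_{[V]}}^2$, which is precisely $C\|(A_{[V]}+\lambda)v\|_{Z^1_{[V]}}^2$. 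I expect the main obstacle will not be the analysis itself, since Proposition 9.1 does all the heavy lifting, but rather the bookkeeping of verifying that (a) the norms $Z^1_{[V]}, Z^2_{[V]}$ defined globally on $\Omega$ decompose cleanly into their Fourier summands matching Proposition 9.1's left and right sides, and (b) the boundary behavior $\partial_y W|_{y=\pm 1}=0$ really holds in the distributional sense required to invoke the proposition, which rests crucially on the non-obvious identity $\kappa|_{\partial\Omega}=0$.
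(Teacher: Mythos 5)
Your proposal is correct and follows essentially the same route as the paper: unpacking the Leray projection to recover the pressure, passing to the Fourier modes in $x$, forming the good unknowns $W_k=v_k^2+\kappa v_k^3$ and $U_k=v_k^3$ so that each mode satisfies the system of Proposition \ref{prop:res-full} (with the renormalization $a=a_0|k|^{-2/3}\le\epsilon_1$, which the paper leaves implicit but you handle explicitly), verifying $\partial_yW_k|_{y=\pm1}=0$ via $\kappa|_{y=\pm1}=0$ and the divergence-free condition, and summing by Plancherel. No gaps.
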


\begin{proof}
Let $\vec{g}=(A_{[V]}+\lambda){v}=(g^1,g^2,g^3)$ and $\lambda=i\lambda_i+a\nu^{\f13}$ with $\lambda_i\in\mathbb{R},$ $a\in[0,\epsilon_1)$. Due to $\partial_xV=0$, we have
\beno
-A_{[V]}{v}=\mathbb{P}\big(-\nu\Delta {v}+V\partial_x{v}+(\partial_yV(v^2+\kappa v^3),0,0)\big),
\eeno
Then we  get
 \begin{align*}
    \mathbb{P}(-\nu\Delta {v}+V\partial_x {v})+\mathbb{P}(\partial_yV(v^2+\kappa v^3),0,0)-i\lambda_i {v}-a\nu^{\f13}{v}=\vec{g}.
 \end{align*}
This means that
 \begin{align*}\left\{\begin{aligned}
    &\big(-\nu\Delta+V\partial_x-i\lambda_I-a\nu^{\f13}\big){v}+(\partial_yV(v^2+\kappa v^3),0,0)+\nabla P+\vec{g}=0,\\
    &\text{div}{v}=0,\quad \Delta P=-2\partial_yV(\partial_xv^2+\kappa\partial_xv^3),\\
    &{v}|_{y=\pm1}=(\partial_yP-\nu\Delta v^2)|_{y=\pm1}=0.
    \end{aligned}\right.
 \end{align*}
Let  $W=v^2+\kappa v^3,\ U=v^3$. Then $(W,U)$ satisfies
  \begin{align*}\left\{\begin{aligned}
     & -\nu\Delta W+(\partial_xV-i\lambda_i)W-a\nu^{\f13}W+(\partial_y+\kappa\partial_z)P\\
     &\qquad\quad+(g^2+\kappa g^3)+2\nu\nabla\kappa\cdot\nabla U+\nu(\Delta\kappa)U=0,\\
    & -\nu\Delta U+(\partial_xV-i\lambda_i)U-a\nu^{\f13}U+\partial_zP+g^3=0,\\
    &\Delta P=-2\partial_x(\partial_yVW),\quad W|_{y=\pm1}=\partial_yW|_{y=\pm1}=U|_{y=\pm1}=0.
  \end{aligned}\right.
  \end{align*}
Here we used $\partial_yW|_{y=\pm1}=(\partial_yv^2+(\partial_y\kappa)v^3+\kappa\partial_yv^3)|_{y=\pm1} =(-\partial_xv^1-\partial_zv^3+(\partial_y\kappa)v^3+\kappa\partial_yv^3)|_{y=\pm1}=0.$

 We denote
 \beno
 &&{v}_k(x,y,z)=\f{1}{2\pi}\int_{\mathbb{T}}e^{ik(x-x_1)}{v}(x_1,y,z)dx_1,\quad \vec{g}_k(x,y,z)=\f{1}{2\pi}\int_{2\pi\mathbb{T}}e^{ik(x-x_1)}\vec{g}(x_1,y,z)dx_1,\\
 &&{P}_k(x,y,z)=\f{1}{2\pi}\int_{2\pi\mathbb{T}}e^{ik(x-x_1)}{P}(x_1,y,z)dx_1,\quad  W_k=v^2_k+\kappa v^3_k,\quad U_k=v^3_k.
 \eeno
 Then $(W_k, U_k)$ satisfies
 \begin{align*}\left\{\begin{aligned}
     &-\nu\Delta W_k+ik(V-\lambda_i/k)W_k-a\nu^{\f13}W_k +(\partial_y+\kappa\partial_z)P_k\\ &\qquad+(g^2_k+\kappa g^3_k)+2\nu\nabla\kappa\cdot\nabla U_k+\nu(\Delta\kappa)U_k=0,\\
    &-\nu\Delta U_k+ik(V-\lambda_I/k)U_k-a\nu^{\f13}U_k+\partial_zP_k+g_k^3=0,\\
    &\Delta P_k=-2ik\partial_yVW_k,\,\, W_k|_{y=\pm1}=\partial_yW_k|_{y=\pm1}=U_k|_{y=\pm1}=0,\\
    &\partial_xW_k=ikW_k,\,\, \partial_xU_k=ikU_k,\,\, \partial_xP_k=ikP_k.
  \end{aligned}\right.\end{align*}
For $k\neq 0$,  we apply Proposition \ref{prop:res-full} to obtain
  \begin{align*}
&\nu^{\f{1}{3}}\big(\|\partial_x^2U_k\|_{L^2}^2+\|\partial_x(\partial_z-\kappa\partial_y)U_k\|_{L^2}^2\big)+
\nu\big(\|\nabla\partial_x^2U_k\|_{L^2}^2+\|\nabla\partial_x(\partial_z-\kappa\partial_y)U_k\|_{L^2}^2\big)
\\&\quad+\nu^{\f{1}{3}} \|\partial_x\nabla W_k\|_{L^2}^2+\nu\|\partial_x\Delta W_k\|_{L^2}^2+ \nu^{\f{5}{3}} \|\partial_x\Delta U_k\|_{L^2}^2\leq C\nu^{-1}\big(\|\nabla (g^2_k+\kappa g^3_k)\|_{L^2}^2+\|\partial_x g^3_k\|_{L^2}^2\big).
\end{align*}
which gives by Plancherel's  theorem that
\begin{align*}
&\nu^{\f{1}{3}}\big(\|\partial_x^2U\|_{L^2}^2+\|\partial_x(\partial_z-\kappa\partial_y)U\|_{L^2}^2\big)+
\nu\big(\|\nabla\partial_x^2U\|_{L^2}^2+\|\nabla\partial_x(\partial_z-\kappa\partial_y)U\|_{L^2}^2\big)
\\&\quad+\nu^{\f{1}{3}} \|\partial_x\nabla W\|_{L^2}^2+\nu\|\partial_x\Delta W\|_{L^2}^2+\nu^{\f{5}{3}} \|\partial_x\Delta U\|_{L^2}^2\leq C\nu^{-1}\big(\|\nabla (g^3+\kappa g^2)\|_{L^2}^2+\|\partial_x g^3\|_{L^2}^2\big).
\end{align*}
Recalling that $W=v^2+\kappa v^3, U=v^3$ and the definition of $Z_{[V]}^1,\ Z_{[V]}^2$, we infer that
\begin{align*}
   \|{v}\|^2_{Z_{[V]}^2}&\leq C\|\vec{g}\|^2_{Z_{[V]}^1}=C\|(A_{[V]}+\lambda){v}\|^2_{Z_{[V]}^1}.
\end{align*}

This finished the proof of the proposition.
\end{proof}\smallskip

Therefore, the spectrum of $A_{[V]}$ in the subspace ${J}_{2,0}^{(2)}(\Omega)\cap\mathcal{H}$ does not contain the region $\big\{\lambda\in\C| \mathbf{Re}(\lambda)\in[-\epsilon_1\nu^{\f13},0]\big\}$, so does that of $A_{[y+s(V-y)]}$ for $s\in[0,1],$ which implies $m(\nu,y+s(V-y))\not\in(-\epsilon_1\nu^{\f13},0). $ Moreover $\mathbb{P}\Delta $ is a dissipative self-adjoint operator with compact resolvent, and $A_{[y+s(V-y)]}$ for $s\in[0,1]$ form a continuous family of operators with relative compact perturbations. In particular, the spectrum of $A_{[y+s(V-y)]} $ is always discrete and depends continuously on $s$, and $m(\nu,y+s(V-y)) $ is a continuous function of $s$. We also know that $m(\nu,y)<0 $, thus,
\beno
m(\nu,y)\leq -\epsilon_1\nu^{\f13},\quad m(\nu,y+s(V-y))\leq -\epsilon_1\nu^{\f13}\quad \text{for}\,\, s\in[0,1].
\eeno
Due to  $m(\nu,V)\leq -\epsilon_1\nu^{\f13}$,  there holds that for  $\mu\in(0,-m(\nu,V))$,
\beno
\|e^{A_{[V]}t}f\|_{H^1}\leq C(\nu,\mu,V)e^{-\mu t}\|f\|_{H^1}\quad \text{for any}\,\, f\in {J}_{2,0}^{(2)}(\Omega)\cap\mathcal{H},\ t>0.
\eeno

\subsection{Space-time estimate via freezing the coefficient in time}

Let $t_j=j\nu^{-\f13},\ I_j=[t_j,t_{j+1})\cap[0,T]$. We define
\beno
&&V_j(y,z)=y+\overline{u}^{1,0}(t_j,y,z),\quad \kappa_j=\partial_zV_j/\partial_yV_j,\quad A_j=A_{[V_j]},\\
&&u^{1,1}=y+\overline{u}^{1}-V_j,\quad t\in I_j,\quad j\in[0,\nu^{\f13}T)\cap\Z.
\eeno
Then $ e^{a\nu^{1/3}t}\sim e^{aj}$ for $t\in I_j.$  We denote
\beno
\|{v}\|_{Z_{j}^l}=\|{v}\|_{Z_{[V_j]}^l} \quad \text{for} \quad j\in[0,\nu^{\f13}T)\cap\Z,\ l\in\{1,2\}.
\eeno
Recall that  $u_{\neq}$ satisfies
 \begin{align*}
&\partial_tu_{\neq}-\nu\mathbb{P}\Delta u_{\neq}+Q_1(y,u_{\neq})+Q(\overline{u},u_{\neq})+\mathbb{P}(u_{\neq}\cdot\nabla u_{\neq})_{\neq}=0,
\end{align*}
We can write
\begin{align*}
Q_1(y,u_{\neq})+Q(\overline{u},u_{\neq})=&Q_1(y+\overline{u}^1,u_{\neq})+Q\big((0,\overline{u}^2,\overline{u}^3),u_{\neq}\big)\\
=&Q_1(V_j,u_{\neq})+Q_1(u^{1,1},u_{\neq})+Q\big((0,\overline{u}^2,\overline{u}^3),u_{\neq}\big).
\end{align*}
Then we have
\begin{align*}
&\partial_tu_{\neq}-A_j u_{\neq}+\vec{g}=0\quad\text{for}\quad t\in I_j,
\end{align*}
where
\beno
\vec{g}=Q_1(u^{1,1},u_{\neq})+Q((0,\overline{u}^2,\overline{u}^3),u_{\neq})+\mathbb{P}(u_{\neq}\cdot\nabla u_{\neq})_{\neq}.
\eeno
We define $\vec{g}_{(j)},\ \vec{u}_{[j]} $ for $j\in[0,\nu^{\f13}T)\cap\Z$ iteratively by solving
\beno
\vec{g}_{(j)}(t)=0\quad \text{for}\quad t\not\in I_j,\quad \vec{g}_{(j)}(t)=\vec{g}+\sum_{k=0}^{j-1}(A_k- A_j)\vec{u}_{[k]}\quad \text{for}\quad t\in I_j,
\eeno
\begin{align*}
&\partial_t\vec{u}_{[j]}-A_j\vec{u}_{[j]}+\vec{g}_{(j)}=0,\quad \vec{u}_{[j]}(t)\in{J}_{2,0}^{(3)}(\Omega)\cap\mathcal{H},\quad \text{for}\quad t\in [0,+\infty),\\
&\vec{u}_{[0]}(0)=P_{\neq}u(0),\quad\vec{u}_{[j]}(0)=0\quad \text{for}\quad j\in(0,\nu^{\f13}T)\cap\Z.
\end{align*}
Then we find that
\begin{align*}
u_{\neq}=\sum_{k=0}^{j}\vec{u}_{[k]}\quad \text{for}\quad t\in I_j,\ j\in[0,\nu^{\f13}T)\cap\Z,\quad \vec{u}_{[j]}(t)=0\quad \text{for}\quad 0\leq t<t_j.
\end{align*}

\begin{Proposition}\label{prop:TS-uj}
Let $\eps=\eps_1/8$ and $V_j$ satisfy \eqref{ass:V} for $j\in[0,\nu^{\f13}T)\cap\Z$. Then it holds that
\begin{align*}
&\|e^{4\epsilon\nu^{1/3}t}\vec{u}_{[j]}\|_{L^2Z_j^2}\leq C\|e^{4\epsilon\nu^{1/3}t}\vec{g}_{(j)}\|_{L^2Z_j^1}\leq Ce^{4\epsilon j}\|\vec{g}_{(j)}\|_{L^2Z_j^1}\quad \text{for}\quad j\in(0,\nu^{\f13}T)\cap\Z,\\
&\|e^{4\epsilon\nu^{1/3}t}\vec{u}_{[0]}\|_{L^2Z_0^2}\leq C\big(\|u(0)\|_{H^2}+\|e^{4\epsilon\nu^{1/3}t}\vec{g}_{(0)}\|_{L^2Z_0^1}\big)
\leq C\big(\|u(0)\|_{H^2}+\|\vec{g}\|_{L^2(I_0,Z_0^1)}\big).
\end{align*}
\end{Proposition}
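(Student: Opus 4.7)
The strategy is to exploit the fact that $A_j=A_{[V_j]}$ is \emph{time-independent} (since $V_j(y,z)$ does not depend on $t$), so the evolution equation $\partial_t\vec{u}_{[j]}-A_j\vec{u}_{[j]}+\vec{g}_{(j)}=0$ is amenable to the Fourier transform in time. The space-time estimate will then reduce, frequency by frequency, to the resolvent estimate of Proposition \ref{prop:res-A}.

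First, consider the case $j\geq 1$. Since $\vec{u}_{[j]}(0)=0$ and $\vec{g}_{(j)}$ is supported in $I_j\subset[0,T]$, I would extend both by zero for $t<0$; the equation then continues to hold distributionally on all of $\R$ with no Dirac contribution at the origin. Set $a=4\epsilon\nu^{1/3}=\epsilon_1\nu^{1/3}/2$, and let $\tilde{u}=e^{at}\vec{u}_{[j]}$, $\tilde{g}=e^{at}\vec{g}_{(j)}$. Then $(\partial_t-A_j-a)\tilde{u}=-\tilde{g}$ on $\R$. Taking the temporal Fourier transform $\hat{u}(\mu)=\int_{\R}e^{-i\mu t}\tilde{u}(t)\,dt$ yields, for each $\mu\in\R$, the resolvent equation
\beno
(A_j+a-i\mu)\hat{u}(\mu)=\hat{g}(\mu),\qquad \hat{u}(\mu)\in J_{2,0}^{(3)}(\Omega)\cap\mathcal{H}.
\eeno
Since $\mathbf{Re}(a-i\mu)=a\in[0,\epsilon_1\nu^{1/3}]$, Proposition \ref{prop:res-A} applies pointwise in $\mu$ and gives $\|\hat{u}(\mu)\|_{Z_j^2}\leq C\|\hat{g}(\mu)\|_{Z_j^1}$. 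Squaring, integrating in $\mu$, and invoking Plancherel's theorem (the spatial norms $Z_j^1,Z_j^2$ are time-independent) yields $\|\tilde{u}\|_{L^2(\R;Z_j^2)}\leq C\|\tilde{g}\|_{L^2(\R;Z_j^1)}=C\|\tilde{g}\|_{L^2([0,T];Z_j^1)}$, which is the first inequality. The second (trivial) inequality follows from $e^{4\epsilon\nu^{1/3}t}\leq e^{4\epsilon(j+1)}\leq Ce^{4\epsilon j}$ on $I_j$.

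The case $j=0$ requires handling the nonzero initial datum $\vec{u}_{[0]}(0)=P_{\neq}u(0)$. I would split $\vec{u}_{[0]}=W+U$, where $U$ solves the inhomogeneous equation with source $\vec{g}_{(0)}$ and zero initial data (handled by the argument above) and $W(t)=e^{A_0 t}P_{\neq}u(0)$ carries the initial data. For $W$ one must establish $\|e^{at}W\|_{L^2([0,\infty);Z_0^2)}\leq C\|u(0)\|_{H^2}$, which I would obtain via a short-time parabolic energy estimate on $[0,\nu^{-1/3}]$—converting the $H^2$ initial data into integrable control of the weighted second derivatives appearing in $Z_0^2$—combined with exponential decay for $t\gtrsim\nu^{-1/3}$ coming from the fact that $A_0+a$ has spectrum in $\{\mathbf{Re}\lambda\leq -\epsilon_1\nu^{1/3}/2\}$.

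The main obstacle is precisely this initial-datum term in the $j=0$ case. While the $j\geq 1$ argument is a one-step reduction to Proposition \ref{prop:res-A} via Plancherel, the bound $\|e^{at}e^{A_0 t}f_0\|_{L^2 Z_0^2}\leq C\|f_0\|_{H^2}$ does not follow directly from that proposition, which controls the resolvent acting on a source rather than on initial data: naively Fourier-transforming the equation for $W$ (extended by zero) produces a $\delta(t)f_0$ on the right, and the resulting bound $\|(A_0+a-i\mu)^{-1}f_0\|_{Z_0^2}\leq C\|f_0\|_{Z_0^1}$ is uniform in $\mu$ and hence non-integrable. The resolution must instead extract additional $\mu$-decay beyond what Proposition \ref{prop:res-A} provides—either through the $H^2$ regularity of $f_0$ combined with commutator manipulations $\tfrac{1}{i\mu}\bigl[(A_0+a)(A_0+a-i\mu)^{-1}f_0-f_0\bigr]$, or, more transparently, through direct parabolic energy estimates on $\partial_tW-A_0W=0$ that exploit the smoothing effect of $\nu\Delta$ over the short timescale $\nu^{-1/3}$ before the exponential semigroup decay takes over.
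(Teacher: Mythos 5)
Your treatment of $j\ge 1$ is exactly the paper's argument: Laplace/Fourier transform in time, the resolvent equation $(A_j+4\epsilon\nu^{1/3}-i\lambda)v_j=\widetilde{\vec g}_{(j)}$, Proposition \ref{prop:res-A} pointwise in the dual variable, and Plancherel; the splitting of $\vec u_{[0]}$ into an inhomogeneous part (same argument) and a homogeneous part carrying $P_{\neq}u(0)$ is also the paper's decomposition $u_I+u_H$. The gap is in how you close the homogeneous part. You propose a short-time parabolic energy estimate followed by ``exponential decay for $t\gtrsim \nu^{-1/3}$ coming from the fact that $A_0+a$ has spectrum in $\{\mathbf{Re}\,\lambda\le -\epsilon_1\nu^{1/3}/2\}$.'' For the non-normal operator $A_0$ the location of the spectrum does not control $\|e^{A_0t}\|$ with usable constants — transient growth is precisely the phenomenon this paper is organized around — so that step is not available; what is needed is a pseudospectral bound, and even granting one, the required estimate $\|e^{4\epsilon\nu^{1/3}t}e^{A_0t}P_{\neq}u(0)\|_{L^2Z_0^2}\le C\|u(0)\|_{H^2}$ encodes enhanced dissipation at rate $\nu^{1/3}$ and an inviscid-damping-type bound on $\|\partial_x\nabla u^2\|_{L^2L^2}$ under the nonslip boundary condition $u^2=\partial_yu^2=0$, none of which follow from an $H^2$ energy estimate plus semigroup decay. (Your alternative commutator route fares no better: dividing by $i\mu$ trades $f_0$ for $(A_0+a)(A_0+a-i\mu)^{-1}f_0$, and Proposition \ref{prop:res-A} measures sources in $Z_0^1\sim \nu^{-1/2}H^1$, so one would need $A_0f_0\in H^1$, i.e.\ roughly $f_0\in H^3$.)

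The paper's actual resolution exploits that $V_0=y$ and $\kappa_0=0$: it rewrites the homogeneous evolution in the $(\Delta u_H^2,\omega_H^2)$ variables, Fourier-transforms in $(x,z)$, and invokes the space-time estimates of Propositions \ref{prop:TS-nav} and \ref{prop:TS-non}, which were built precisely to absorb $H^2$ initial data: the Navier-slip component handles its data through the Gearhart–Pr\"uss-type Lemma \ref{lem:GP} applied to the accretive operator $L_{k,\ell}$ together with the resolvent bound $\Psi(L_{k,\ell})\gtrsim(\nu k^2)^{1/3}$, and the nonslip component handles its data by explicitly extending the solution to $t<0$ so that the Laplace-transform argument still applies, with the boundary terms $c_1(\lambda),c_2(\lambda)$ estimated by hand. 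If you want to repair your write-up, you should replace the ``spectrum $\Rightarrow$ decay'' step by an appeal to these two propositions (or reprove their content), rather than to a parabolic energy estimate.
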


\begin{proof}
Here we just establish a priori estimates of the solution under the assumption $e^{4\epsilon\nu^{1/3}t}\vec{u}_{[j]}\in\\ L^2(0,+\infty;Z_j^2) $. Rigorous justification could consult section 4.2 in \cite{BMM}.\smallskip

We decompose $\vec{u}_{[0]}={u}_{H}+{u}_{I}$, where ${u}_{H}$ and ${u}_{I}$ solve
\begin{align*}\left\{\begin{aligned}
  &(\partial_t-\nu\Delta +y\partial_x){u}_{I}+(u^2_{I},0,0)+\nabla P_{I}+\vec{g}_{(0)}=0,\\
  &\text{div}{u}_{I}=0,\quad \Delta P=-2\partial_xu^2_{I},\quad {u}_{I}|_{t=0}=0,\quad {u}_{I}|_{y=\pm1}=0,
    \end{aligned}\right.
\end{align*}
and \begin{align*}\left\{\begin{aligned}
  &(\partial_t-\nu\Delta +y\partial_x){u}_{H}+(u^2_{H},0,0)+\nabla P_{H}=0,\\
  &\text{div}{u}_{H}=0,\quad \Delta P=-2\partial_xu^2_{H},\quad {u}_{H}|_{t=0}=P_{\neq}u(0),\quad {u}_{H}|_{y=\pm1}=0.
    \end{aligned}\right.
\end{align*}
That is,
\begin{align*}\left\{\begin{aligned}
  &\partial_t{u}_{I}-A_0 {u}_{I}+\vec{g}_{(0)}=0,\\
  &\partial_t{u}_{H}-A_0{u}_{I}=0.
  \end{aligned}\right.
\end{align*}

For $\lambda\in\mathbb{R}$, let
\beno
&&{v}_j(\lambda,x,y,z)=\int_{0}^{\infty}\vec{u}_{[j]}(t) e^{-it\lambda+4\epsilon\nu^{\f13}t}dt\quad j\in(0,\nu^{\f13}T)\cap\mathbb{Z},\\
&&{v}_0(\lambda,x,y,z)=\int_{0}^{\infty}{u}_{I}(t) e^{-it\lambda+4\epsilon\nu^{\f13}t}dt,\quad \widetilde{\vec{g}}_{(j)}(\lambda)=\int_{0}^{\infty}\vec{g}_{(j)}(t)e^{-it\lambda+4\epsilon\nu^{\f13}t}\quad j\in[0,\nu^{\f13}T)\cap\mathbb{Z}.
\eeno
Then we have
\begin{align*}
   & (A_j-i\lambda+4\epsilon\nu^{\f13}){v}_j=\widetilde{\vec{g}}_{(j)},\quad {v}_j\in J^{(3)}_{2,0}(\Omega)\cap \mathcal{H},\quad j\in[0,\nu^{\f13}T)\cap\mathbb{Z}.
\end{align*}
It follows from Proposition \ref{prop:res-A} that
\begin{align*}
     \|{v}_j\|_{Z_j^2}\leq C\|(A_j-i\lambda+4\epsilon\nu^{\f13}){v}_j\|_{Z_j^1}= C\|\widetilde{\vec{g}}_{(j)}\|_{Z_j^1}.
  \end{align*}
By Plancherel's theorem, it holds that for $j\in[0,\nu^{\f13}T)\cap\mathbb{Z}$,
\begin{align*}
  \|e^{4\epsilon\nu^{1/3}t}\vec{u}_{[j]}\|_{L^2_tZ^2_j} &\leq C\|{v}_j\|_{L^2_{\lambda}Z_j^2}\leq C \|\widetilde{\vec{g}}_{(j)}\|_{L^2_{\lambda}Z_j^1}\leq C\|e^{4\epsilon\nu^{1/3}t}\vec{g}_{(j)}\|_{L^2Z_j^1}\leq Ce^{4\epsilon j}\|\vec{g}_{(j)}\|_{L^2Z^1_j}.
\end{align*}
This proves the first inequality of the lemma, and
\begin{align}\label{est: prior uI0}
  \|e^{4\epsilon\nu^{1/3}t}{u}_{I}\|_{L^2Z^2_j} \leq Ce^{4\epsilon j}\|\vec{g}_{(0)}\|_{L^2Z^1_0}.
\end{align}

Let $\omega^2_H=\partial_zu^1_{H}-\partial_xu^3_{H}.$ Due to  $\partial_xu^1_{H}+\partial_yu^2_{H}+\partial_zu^3_{H}=0,$ we know that $(\partial_x^2+\partial_z^2)u^3_{H}=-\partial_x\omega^2_H-\partial_z\partial_yu^2_{H}.$
 Using the fact that $\|(\partial_x,\partial_z)(f_1,f_2)\|_{L^2}^2=\|(\partial_zf_1-\partial_xf_2,\partial_x f_1+\partial_zf_2)\|_{L^2}^2$, we deduce that
 \begin{align*}
 &\|\partial_x(\partial_x,\partial_z)(u^1_{H},u^3_{H})\|^2_{L^2}=\|\partial_x\omega^2_H\|_{L^2}^2+\|\partial_x\partial_y u^2_{H}\|_{L^2}^2\leq\|\partial_x\omega_H^2\|_{L^2}^2+\|\partial_x\nabla u^2_{H}\|_{L^2}^2,\\&\|\partial_x\nabla(\partial_x,\partial_z)(u^1_{H},u^3_{H})\|^2_{L^2}=\|\partial_x\nabla\omega_H^2\|_{L^2}^2+\|\partial_x\nabla\partial_y u^2_{H}\|_{L^2}^2\leq\|\partial_x\nabla\omega_H^2\|_{L^2}^2+C\|\partial_x\Delta u^2_{H}\|_{L^2}^2,\\
    &\|(\partial_x,\partial_z)\Delta (u^1_{H},u^3_{H})\|_{L^2}^2= \|\Delta \omega^2_H\|_{L^2}^2+\|\partial_y\Delta u^2_{H}\|_{L^2}^2.
 \end{align*}
On the other hand,  for $j=0$, $V_j=y,\ \kappa_j=0.$ Thus,
\begin{align*}
   \|{u}_{H}\|^2_{Z_{0}^2} =&\nu^{\f13}\|\partial_x(\partial_x,\partial_z)u^3_{H}\|^2_{L^2} +\nu\|\partial_x\nabla(\partial_x,\partial_z)u^3_{H}\|^2_{L^2}+\nu^{\f13} \|\partial_x\nabla u^2_{H}\|_{L^2}^2\\&+\nu\|\partial_x\Delta u^2_{H}\|_{L^2}^2
   +\nu^{\f53}\|\partial_x\Delta u^3_{H}\|^2_{L^2}.
\end{align*}
This shows that
  \begin{align}
   \|{u}_H\|^2_{Z^2_{0}}&\leq C\big(\nu^{\f13}\|\partial_x\omega_H^2\|^2_{L^2}+\nu\|\partial_x\nabla \omega_H^2\|^2_{L^2}+\nu^{\f13} \|\partial_x\nabla u^2_{H}\|_{L^2}^2+\nu\|\partial_x\Delta u^2_{H}\|_{L^2}^2\big)
   \label{Z01 control}\\
   &\quad+C\nu^{\f53}\big(\|\Delta \omega_H^2\|_{L^2}+\|\partial_y\Delta u^2_{H}\|_{L^2}\big).\nonumber
 \end{align}

 Recall  that ${u}_H$ satisfies
\begin{align*}\left\{\begin{aligned}
  &(\partial_t-\nu\Delta +y\partial_x){u}_{H}+(u^2_{H},0,0)+\nabla P_H=0,\\
  &\text{div}{u}_{H}=0,\ \Delta P=-2\partial_xu^2_{H},\ {u}_H|_{t=0}=P_{\neq}u(0).
  \end{aligned}\right.
\end{align*}
Then $(\omega_H^2,\Delta u^2_{H})$ satisfies
\begin{align*}
\left\{\begin{aligned}
  &\partial_t(\Delta u^2_{H})-\nu\Delta(\Delta u^2_{H})+y\partial_x(\Delta u^2_{H})=0,\\
  &u^2_{H}|_{y=\pm1}=\partial_yu^2_{H}|_{y=\pm1}=0,\ u^2_{H}|_{t=0}=u_{\neq}^2(0),\\
  &\partial_t\omega_H^2-\nu\Delta\omega_H^2+y\partial_x\omega_H^2+\partial_zu^2_{H}=0,\\
  &\omega_H^2|_{y=\pm1}=0,\ \omega_H^2|_{t=0}=(\partial_zu_{\neq}^1(0)-\partial_xu_{\neq}^3(0)).
  \end{aligned}\right.
\end{align*}
Let $\hat{\Delta}=\hat{\Delta}_{k,\ell}:=\partial_y^2-k^2-\ell^2,\ \eta=\sqrt{k^2+\ell^2},$ and
\beno
u^j_{k,\ell}(y)=\int_{\mathbb{T}^2}u^j_{H}(x,y,z)e^{-i(kx+\ell z)}dxdz,\quad \omega^2_{k,\ell}(y)=\int_{\mathbb{T}^2}\omega_H^2(x,y,z)e^{-i(kx+\ell z)}dxdz.
\eeno
Taking Fourier transformation in $x,z$, we obtain
\begin{align*}\left\{\begin{aligned}
  &\partial_t(\hat{\Delta}u^2_{k,\ell})-\nu\hat{\Delta}(\hat{\Delta}u^2_{k,\ell}) +iky(\hat{\Delta}u^2_{k,\ell})=0,\\
  &u^2_{k,\ell}|_{y=\pm1}=\partial_yu^2_{k,\ell}|_{y=\pm1}=0,\\
  &\partial_t\omega^2_{k,\ell}-\nu\hat{\Delta}\omega^2_{k,\ell} +iky\omega^2_{k,\ell}+i\ell u^2_{k,\ell}=0,\\
  &\omega^2_{k,\ell}|_{y=\pm1}=0.
  \end{aligned}\right.
\end{align*}
By Proposition \ref{prop:TS-nav}, we have
\begin{align*}
   & \nu\|e^{a\nu^{\f13}t}(\partial_y,\eta) \omega^2_{k,\ell}\|_{L^2L^2}^2+\nu^{\f13}\|e^{a\nu^{\f13}t}\omega^2_{k,\ell}\|_{L^2L^2}^2
  \leq C\Big(\|\omega^2_{k,\ell}(0)\|_{L^2}^2+|\eta k|^{-1}\|e^{a\nu^{\f13}t}(\partial_y,\eta)i\ell u^{2}_{k,\ell}\|_{L^2L^2}^2\Big),\\
   & \nu\|e^{a\nu^{\f13}t}(\partial_y,\eta) (k,\ell)\omega^2_{k,\ell}\|_{L^2L^2}^2+\nu^{\f13}\|e^{a\nu^{\f13}t}(k,\ell)\omega^2_{k,\ell}\|_{L^2L^2}^2
   \\&\quad\leq C\Big((k^2+\ell^2)\|\omega^2_{k,\ell}(0)\|_{L^2}^2+\min((\nu \eta^2)^{-1} ,(\nu k^2)^{-1/3})\|e^{a\nu^{\f13}t}(k,\ell)i\ell u^{2}_{k,\ell}\|_{L^2L^2}^2\Big),
\end{align*}
and
\begin{align*}
   \nu\|e^{a\nu^{\f13}t}\partial_y^2 \omega^2_{k,\ell}\|_{L^2L^2}^2\leq& C\nu^{-\f23}|k|^{\f23}\Big(\|\omega^2_{k,\ell}(0)\|_{L^2}^2+|\eta k|^{-1}\|e^{a\nu^{\f13}t}(\partial_y,\eta)i\ell u^2_{k,\ell}\|_{L^2L^2}^2\Big)\\
   &\quad+C\|\partial_y\omega^2_{k,\ell}(0)\|_{L^2}^2.
\end{align*}
Noting that  $\min((\nu \eta^2)^{-1} ,(\nu k^2)^{-1/3})\leq \nu^{-2/3} \eta^{-1}|k|^{-1/3}$, we deduce that
\begin{align}\label{est: u3 prior1}
   &\nu\|e^{a\nu^{\f13}t}k(\partial_y,\eta) \omega^2_{k,\ell}\|_{L^2L^2}^2+\nu^{\f13}\|e^{a\nu^{\f13}t}k \omega^2_{k,\ell}\|_{L^2L^2}^2+\nu^{\f53}\|e^{a\nu^{\f13}t}(\partial_y^2-\eta^2) \omega^2_{k,\ell}\|_{L^2L^2}^2\\
   &\quad\leq C\Big(\|\eta \omega^2_{k,\ell}(0)\|_{L^2}^2+\|\partial_y\omega^2_{k,\ell}(0)\|_{L^2}^2 +|k\eta|\|e^{a\nu^{\f13}t}(\partial_y,\eta)u^2_{k,\ell}\|_{L^2L^2}^2\Big).\nonumber
\end{align}
By proposition \ref{prop:TS-non}, we have
\begin{align*}
   &\nu\big\|e^{a\nu^{\f13}t}\eta(\partial_y^2-\eta^2) u^2_{k,\ell}\big\|^2_{L^2L^2}+ |k\eta|\|e^{a\nu^{\f13}t}(\partial_y,\eta) u^2_{k,\ell}\|_{L^2L^2}^2
   +\nu^{\f32}\|e^{a\nu^{\f13}t}\partial_y(\partial_y^2-\eta^2)u^2_{k,\ell}\|^2_{L^2L^2}\\
   &\leq C\Big(\|\eta^{-1}\partial_y(\partial_y^2-\eta^2)u^2_{k,\ell}(0)\|_{L^2}^2+ \|(\partial_y^2-\eta^2)u^2_{k,\ell}(0)\|^2_{L^2}\Big),
\end{align*}
which along with  \eqref{est: u3 prior1} gives
\begin{align*}
   &\nu\|e^{a\nu^{\f13}t}k(\partial_y,\eta) \omega^2_{k,\ell}\|_{L^2L^2}^2+\nu^{\f13}\|e^{a\nu^{\f13}t}k \omega^2_{k,\ell}\|_{L^2L^2}^2+\nu^{\f53}\|e^{a\nu^{\f13}t}(\partial_y^2-\eta^2)  \omega^2_{k,\ell}\|_{L^2L^2}^2\\
   &+\nu\big\|e^{a\nu^{\f13}t}\eta(\partial_y^2-\eta^2) u^2_{k,\ell}\big\|^2_{L^2L^2}+ |k\eta|\|e^{a\nu^{\f13}t}(\partial_y,\eta) u^2_{k,\ell}\|_{L^2L^2}^2
   +\nu^{\f32}\|e^{a\nu^{\f13}t}\partial_y(\partial_y^2-\eta^2)u^2_{k,\ell}\|^2_{L^2L^2}\\
   &\leq C\Big(\|\eta \omega^2_{k,\ell}(0)\|_{L^2}^2 +\|(\partial_y^2-\eta^2)(u^1_{k,\ell},u^3_{k,l})(0)\|_{L^2}^2+ \|(\partial_y^2-\eta^2)u^2_{k,\ell}(0)\|^2_{L^2}\Big).
\end{align*}
Here we used  $iku^1_{k,\ell}+\partial_yu^2_{k,\ell}+i\ell u^3_{k,\ell}=0.$ This shows that
\begin{align*}
   &\nu\|e^{a\nu^{\f13}t}\partial_x\nabla \omega_H^2\|_{L^2L^2}^2+\nu^{\f13}\|e^{a\nu^{\f13}t}\partial_x \omega_H^2\|_{L^2L^2}^2+\nu^{\f53}\|e^{a\nu^{\f13}t}\partial_y\nabla \omega_H^2\|_{L^2L^2}^2+\nu\big\|e^{a\nu^{\f13}t}\partial_x\Delta u^2_{k,l}\big\|^2_{L^2L^2}\\
   &\quad+\|e^{a\nu^{\f13}t}\partial_x\nabla u^2_{H}\|_{L^2L^2}^2
   +\nu^{\f32}\|e^{a\nu^{\f13}t}\partial_y\Delta u^2_{H}\|^2_{L^2L^2}  \leq C\|u(0)\|^2_{H^2}.
\end{align*}
Thanks to \eqref{Z01 control} and $\nu^{\f53}\leq\nu^{\f32},$ and taking $a=4\epsilon\le \eps_1,$ we conclude that
\begin{align*}
   &\|e^{4\epsilon\nu^{\f13}t}{u}_{H}\|_{L^2Z^2_0}\leq C\|u(0)\|_{H^2},
\end{align*}
from which and \eqref{est: prior uI0}, we deduce that
\begin{align*}
   \|e^{4\epsilon\nu^{\f13}t}\vec{u}_{[0]}\|_{L^2Z^2_0}&\leq \|e^{4\epsilon\nu^{\f13}t}{u}_{I}\|_{L^2Z^2_0}+\|e^{4\epsilon\nu^{\f13}t}{u}_{H}\|_{L^2Z^2_0}\leq C\big(\|u(0)\|_{H^2}+\|e^{4\epsilon\nu^{\f13}t}\vec{g}_{(0)}\|_{L^2Z^1_0}\big)\\
   &\leq C\big(\|u(0)\|_{H^2}+\|\vec{g}\|_{L^2(I_0,Z^1_0)}\big).
\end{align*}

This proves the proposition.\end{proof}

The following lemma gives some important properties of $V_j$.

\begin{Lemma}\label{lem:Vj}
Let $V_j$ satisfy \eqref{ass:V} for $j\in[0,\nu^{\f13}T)\cap\Z$. For $j,k\in[0,\nu^{\f13}T)\cap\Z,\ {v}=(v^1,v^2,v^3)\in{J}_{2,0}^{(3)}(\Omega)\cap\mathcal{H},$ we have
\begin{align*}
&\|V_j-V_k\|_{H^2}+\|\kappa_j-\kappa_k\|_{H^1}\leq C\nu^{\f23}|j-k|E_1,\quad \|\kappa_j-\kappa_k\|_{H^2}\leq C\nu^{\f13}|j-k|^{\f12}E_1,\\&\|u^{1,1}\|_{H^2}\leq C\nu^{\f23}E_1,\quad \|{v}\|_{Z_{j}^2}-\|{v}\|_{Z_{k}^2}\leq C|j-k|^{\f12}E_1\|{v}\|_{Z_{k}^2}.
\end{align*}
For $j\in[0,\nu^{\f13}T)\cap\Z,\ t\in I_j$ we have
\begin{align*}
&\|\kappa_j\nabla\overline{u}^3\|_{H^1}\leq CE_1E_2.
\end{align*}
For $j\in[0,\nu^{\f13}T)\cap\Z,\ \vec{f}=(f^1,f^2,f^3)\in{H}_{0}^{1}(\Omega)$, we have
\begin{align*}&\nu^{1/2}\|\mathbb{P} \vec{f}\|_{Z_{j}^1}\leq C\big(\|\nabla f^2\|_{L^2}+\|(\partial_x,\partial_z)f^3\|_{L^2}+\|\partial_xf^1\|_{L^2}+\nu^{\f13}j^{\f12}\|\nabla f^3\|_{L^2}\big).
\end{align*}
\end{Lemma}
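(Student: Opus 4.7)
My plan is to dispatch the six estimates in turn, letting the heavy lifting come from the fundamental theorem of calculus in time together with the a priori bound $\|\partial_t\bar{u}^{1,0}\|_{L^\infty H^2}\le \nu E_{1,0}\le \nu E_1$ baked into $E_{1,0}$. For the first estimate, write $V_j-V_k=\bar{u}^{1,0}(t_j)-\bar{u}^{1,0}(t_k)=\int_{t_k}^{t_j}\partial_t\bar{u}^{1,0}(s)\,ds$ and take the $H^2$ norm; since $t_j-t_k=(j-k)\nu^{-1/3}$, this gives $\|V_j-V_k\|_{H^2}\le C\nu^{2/3}|j-k|E_1$. For $\kappa_j-\kappa_k$, expand
\[
\kappa_j-\kappa_k=\frac{\partial_z(V_j-V_k)}{\partial_yV_j}+\partial_zV_k\cdot\frac{\partial_y(V_k-V_j)}{\partial_yV_j\,\partial_yV_k},
\]
use $\partial_yV_j\ge 1/2$ from \eqref{ass:V} and the tame product rule in $H^1$. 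For the $H^2$ bound on $\kappa_j-\kappa_k$, control $\|\kappa_j-\kappa_k\|_{H^3}\le C\|V_j-V_k\|_{H^4}\le CE_1$ and interpolate between this and the $H^1$ estimate. The bound on $u^{1,1}$ follows by splitting $u^{1,1}=\bar{u}^{1,\neq}+[\bar{u}^{1,0}(t)-\bar{u}^{1,0}(t_j)]$, controlling the first summand by $E_{1,\neq}\le \nu^{2/3}E_1$ (definition of $E_1$) and the second by the FTC bound on intervals of length $\nu^{-1/3}$.

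For the comparison $\|v\|_{Z_j^2}-\|v\|_{Z_k^2}$, observe that the only $\kappa$-dependent pieces of $\|\cdot\|_{Z_{[V]}^2}$ are $\partial_x(\partial_z-\kappa\partial_y)v^3$, $\nabla\partial_x(\partial_z-\kappa\partial_y)v^3$, $\partial_x\nabla(v^2+\kappa v^3)$ and $\partial_x\Delta(v^2+\kappa v^3)$. Write each as the $k$-version plus a correction involving $\kappa_j-\kappa_k$, e.g.
\[
\partial_x(\partial_z-\kappa_j\partial_y)v^3=\partial_x(\partial_z-\kappa_k\partial_y)v^3-(\kappa_j-\kappa_k)\partial_x\partial_yv^3,
\]
and similarly for the others. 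Bound the error terms by distributing derivatives so the product rule pulls out $\|\kappa_j-\kappa_k\|_{H^2}\le C\nu^{1/3}|j-k|^{1/2}E_1$ against the remaining derivatives of $v$; matching the weight $\nu^{\alpha}$ in front of each squared term against the $\nu^{-\alpha'}$ that appears when bounding $\|\cdot\|_{Z_k^2}$-controlled quantities (e.g.\ $\nu^{5/6}\|\partial_x\Delta v^3\|_{L^2}\le\|v\|_{Z_k^2}$, and Lemma~\ref{lem:Z norm} for the $v^2$ terms) produces exactly the factor $|j-k|^{1/2}E_1\|v\|_{Z_k^2}$.

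For $\|\kappa_j\nabla\bar{u}^3\|_{H^1}$, use $\|\kappa_j\|_{H^2}\le C\|\bar{u}^{1,0}(t_j)\|_{H^3}\le CE_1$. The tangential components $\partial_x\bar{u}^3=0$ and $\kappa_j\partial_z\bar{u}^3$ are handled by $\|fg\|_{H^1}\le C\|f\|_{H^2}(\|g\|_{H^1}+\|\partial_zg\|_{H^1})$ applied via Lemma~\ref{lemma-A.1}, and Lemma~\ref{lem:u23-zero} supplies $\|\partial_z\bar{u}^3\|_{H^1}\le CE_2$. For the dangerous term $\kappa_j\partial_y^2\bar{u}^3$ (where $\|\partial_y^2\bar{u}^3\|_{L^2}$ is not directly controlled by $E_2$), exploit $\kappa_j|_{y=\pm1}=0$ (since $\bar{u}^{1,0}(t_j,\pm1,z)\equiv 0$) to write $\kappa_j/(1-y^2)$ bounded by $C\|\kappa_j\|_{H^3}\le CE_1$, and pair with the weighted bound $\|(1-y^2)\Delta\bar{u}^3\|_{L^\infty L^2}\le E_2$ from the definition of $E_2$ to absorb the singular $y$-derivative. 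Finally, for the sixth inequality, decompose $\mathbb{P}\vec{f}=\vec{f}-\nabla q$ with $\Delta q=\text{div}\,\vec{f}$ and $\partial_yq|_{y=\pm1}=0$ (the Neumann condition matches since $\vec{f}\in H^1_0$), then expand $(\mathbb{P}\vec{f})^2+\kappa_j(\mathbb{P}\vec{f})^3=(f^2-\partial_yq)+\kappa_j(f^3-\partial_zq)$; the $\kappa_j$-independent contributions produce the first three terms on the right-hand side after standard $H^2$ pressure estimates, and the $\kappa_j$-dependent term costs $\|\kappa_j\|_{H^2}\le C\nu^{1/3}j^{1/2}E_1$ (obtained via $\bar{u}^{1,0}(0)=0$, $\|\bar{u}^{1,0}(t_j)\|_{H^2}\le t_j\|\partial_t\bar{u}^{1,0}\|_{L^\infty H^2}\le Cj\nu^{2/3}E_1$, and interpolation with $\|\bar{u}^{1,0}\|_{L^\infty H^4}\le E_1$), generating the $\nu^{1/3}j^{1/2}\|\nabla f^3\|_{L^2}$ term.

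The main obstacle is the fourth estimate: several product-rule terms appear when redistributing derivatives across $\kappa_j-\kappa_k$, and one must match each weight $\nu^{1/6},\nu^{1/2},\nu^{5/6}$ in $Z^2$ against the corresponding scale of $v$ so that the combined factor collapses precisely to $|j-k|^{1/2}E_1$ without residual powers of $\nu$. This book-keeping, together with the careful use of Lemma~\ref{lem:Z norm} to recover control of $v^2$-gradients from the $Z_k^2$ norm, is the technical heart of the lemma.
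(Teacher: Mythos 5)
Your treatment of the first four estimates and the last one follows the paper's route essentially verbatim (FTC in time against $\|\partial_t\bar u^{1,0}\|_{L^\infty H^2}\le\nu E_1$, the quotient formula for $\kappa_j-\kappa_k$, interpolation between $H^1$ and $H^3$, and commutator bookkeeping for the $Z^2$ comparison), and those parts are sound. But there is a genuine gap in your argument for $\|\kappa_j\nabla\bar u^3\|_{H^1}\le CE_1E_2$. You propose to handle the term $\kappa_j\Delta\bar u^3$ by writing $|\kappa_j|\le CE_1(1-y^2)$ (from $\kappa_j|_{y=\pm1}=0$) and then invoking ``$\|(1-y^2)\Delta\bar u^3\|_{L^\infty L^2}\le E_2$.'' That last bound is not available: the definition of $E_2$ only contains $\|\min((\nu^{2/3}+\nu t)^{1/2},1-y^2)\Delta\bar u^3\|_{L^\infty L^2}$, and since the minimum weight is \emph{smaller} than $1-y^2$ (in the interior of the channel for $\nu t\lesssim 1$), controlling the min-weighted norm does not control the $(1-y^2)$-weighted one. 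So your bound on the ``dangerous term'' does not close.

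The missing ingredient is the temporal smallness of $\kappa_j$, which you in fact already use for the sixth inequality but fail to deploy here: since $\bar u^{1,0}|_{t=0}=0$ gives $V_0=y$ and $\kappa_0=0$, the already-established estimate $\|\kappa_j-\kappa_0\|_{H^2}\le C\nu^{1/3}j^{1/2}E_1=C(\nu t_j)^{1/2}E_1$ yields $\|\kappa_j\|_{L^\infty}\le CE_1(\nu^{2/3}+\nu t)^{1/2}$ for $t\in I_j$ (as $t\ge t_j$). Combined with the boundary vanishing $|\kappa_j|\le CE_1(1-|y|)$, this gives the pointwise bound $|\kappa_j|\le CE_1\min\big((\nu^{2/3}+\nu t)^{1/2},1-|y|\big)$, which matches the weight appearing in $E_2$ exactly; then $\|\kappa_j\Delta\bar u^3\|_{L^2}\le CE_1\|\min((\nu^{2/3}+\nu t)^{1/2},1-y^2)\Delta\bar u^3\|_{L^2}\le CE_1E_2$, and the remaining product terms are handled as you describe. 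With this correction the proof of the fifth estimate closes as in the paper.
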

\begin{proof}
Since $\|\partial_yV_j\|_{L^\infty}\geq 1-\|\partial_y(V_j-y)\|_{L^\infty}\geq 1-C\|V_j-y\|_{H^4}\geq 1-C\varepsilon_0$,  we get $|\partial_yV_j|\geq1/2$ by taking $\veps_0$ small enough so that $C\varepsilon_0\leq1/2$.
Then we have
\begin{align*}
   \big\|\f{1}{\partial_yV_j}\big\|_{H^3}\leq C\max\big(\|(\partial_yV_j)^{-1}\|_{L^\infty}^{4},1\big)\big(\|\partial_yV_j\|_{H^3}^3+1)\leq C(1+\|V_j-y\|_{H^4}\big)^3\leq C.
\end{align*}
and then
\begin{align*}
   & \|\kappa_j\|_{H^3}\leq C\|\partial_zV_j\|_{H^3}\big\|\f{1}{\partial_yV_j}\big\|_{H^3}\leq CE_1.
\end{align*}

Thanks to $V_j-V_k=\bar{u}^{1,0}(t_j,y,z)-\bar{u}^{1,0}(t_k,y,z)=\int_{t_k}^{t_j} \partial_t\bar{u}^{1,0}(s,y,z)ds$, we deduce that
\begin{align*}
&\|V_j-V_k\|_{H^2}\leq |t_j-t_k|\|\partial_t\bar{u}^{1,0}\|_{L^\infty H^2}\leq C\nu^{-\f13}|j-k|\nu E_1\leq C\nu^{\f23}|j-k|E_1.
\end{align*}
Using the formula
\begin{align*}
  \kappa_j-\kappa_k =\f{\partial_zV_j}{\partial_yV_j}- \f{\partial_zV_k}{\partial_yV_k}=&\f{\partial_z(V_j-V_k)}{\partial_yV_j}+ \f{(\partial_zV_k)\partial_y(V_k-V_j)}{(\partial_yV_k)(\partial_yV_j)},
\end{align*}
we get
\begin{align*}
   \|\kappa_j-\kappa_k\|_{H^1}\leq& C\|V_j-V_k\|_{H^2}\left(\left\|\f{1}{\partial_yV_j}\right\|_{H^2}+ \left\|\f{\kappa_k}{\partial_yV_j}\right\|_{H^2}\right)\\ \leq&  C\|V_j-V_k\|_{H^2}\leq C\nu^{\f23}|j-k|E_1.
\end{align*}
By the interpolation, we get
\begin{align*}
 \|\kappa_j-\kappa_k\|_{H^2}\leq& C\|\kappa_j-\kappa_k\|_{H^1}^{\f12} \|\kappa_j-\kappa_k\|_{H^3}^{\f12}\leq C\|\kappa_j-\kappa_k\|_{H^1}^{\f12}(\|\kappa_j\|_{H^3}+\|\kappa_k\|_{H^3})^{\f12}\\
 \leq&C\|\kappa_j-\kappa_k\|_{H^1}^{\f12}E_1^{\f12}\leq C\nu^{\f13}|j-k|^{\f12}E_1.
\end{align*}
For $t\in I_j$, then we have $u^{1,1}=\bar{u}^1(t,y,z)-\bar{u}^{1,0}(t_j,y,z)=\bar{u}^{1,\neq}+\int_{t_j}^{t} \partial_t\bar{u}^{1,0}(s,y,z)dz$, so
\begin{align*}
   \|u^{1,1}\|_{H^2}&\leq \|\bar{u}^{1,\neq}\|_{H^2}+\int_{t_j}^{t}\|\partial_t\bar{u}^{1,0}(s)\|_{H^2}ds \leq E_{1,\neq}+|t-t_j|\|\partial_t\bar{u}^{1,0}(s)\|_{L^\infty H^2}\\
   &\leq E_{1,\neq}+\nu^{-\f13}\nu E_{1,0}\leq \nu^{\f23}E_1.
\end{align*}

Direct calculations show that
\begin{align*}
   &\nu^{\f16}\big(\|\partial_x(\partial_z-\kappa_j\partial_y)v^3\|_{L^2}- \|\partial_x(\partial_z-\kappa_k\partial_y)v^3\|_{L^2}\big)\leq \nu^{\f16}\|(\kappa_j-\kappa_k)\partial_y\partial_xv^3\|_{L^2}\\ &\leq \nu^{\f16} \|\kappa_j-\kappa_k\|_{L^\infty}\|\nabla\partial_xv^3\|_{L^2}\leq C\nu^{\f12}|j-k|^{\f12}E_1\|\nabla\partial_x^2v^3\|_{L^2}\leq C|j-k|^{\f12}E_1\|{v}\|_{Z_{k}^2},
\end{align*}
and
\begin{align*}
   &\nu^{\f12}\big(\|\nabla\partial_x(\partial_z-\kappa_j\partial_y)v^3\|_{L^2}- \|\nabla\partial_x(\partial_z-\kappa_k\partial_y)v^3\|_{L^2}\big)\leq \nu^{\f12}\|\nabla[(\kappa_j-\kappa_k)\partial_x\partial_yv^3]\|_{L^2}\\
  &\leq C\nu^{\f12}\|\kappa_j-\kappa_k\|_{H^2}\|\partial_x\partial_yv^3\|_{H^1}\leq C\nu^{\f56}|j-k|^{\f12}E_1\|\partial_x\Delta v^3\|_{L^2}\leq  C|j-k|^{\f12}E_1\|{v}\|_{Z_{k}^2},
\end{align*}
and
\begin{align*}
   &\nu^{\f16}(\|\partial_x\nabla(v^2+\kappa_jv^3)\|_{L^2}- \|\partial_x\nabla(v^2+\kappa_jv^3)\|_{L^2})\leq \nu^{\f16}\|\partial_x\nabla[(\kappa_j-\kappa_k)v^3]\|_{L^2}\\
   &\leq C\nu^{\f16}\|\kappa_j-\kappa_k\|_{H^2}\|\partial_xv^3\|_{H^1}\leq C\nu^{\f12}|j-k|^{\f12}E_1\|\nabla\partial^2_x v^3\|_{L^2}\leq C|j-k|^{\f12}E_1\|{v}\|_{Z_{k}^2},
\end{align*}
and
\begin{align*}
   &\nu^{\f12}(\|\partial_x\Delta(v^2+\kappa_jv^3)\|_{L^2}- \|\partial_x\Delta(v^2+\kappa_jv^3)\|_{L^2})\leq \nu^{\f12}\|\partial_x\Delta[(\kappa_j-\kappa_k)v^3]\|_{L^2}\\
  &\leq C\nu^{\f12}\|\kappa_j-\kappa_k\|_{H^2}\|\partial_xv^3\|_{H^2}\leq C\nu^{\f56}|j-k|^{\f12}E_1\|\partial_x\Delta v^3\|_{L^2}\leq C|j-k|^{\f12}E_1\|{v}\|_{Z_{k}^2}.
\end{align*}
This shows that
\begin{align*}
  \|{v}\|_{Z_j^2}-\|{v}\|_{Z_k^2} &\leq C|j-k|^{\f12}E_1\|{v}\|_{Z_k^2}.
\end{align*}

Since $|\kappa_j|\leq C\|\nabla\kappa_j\|_{L^\infty}(1-|y|)\leq CE_1(1-|y|)$, $V_0-y=\bar{u}^{1,0}|_{t=0}=0,$ we have $\partial_z V_j=0,\ \kappa_0=0,$ and  then
\beno
\|\kappa_j\|_{H^2}=\|\kappa_j-\kappa_0\|_{H^2}\leq C\nu^{\f13}|j|^{\f12}E_1= C(\nu t_j)^{\f12}E_1,
\eeno
which gives
    \begin{align*}
      |\kappa_j|\leq& CE_1\min((\nu+\nu t_j)^{\f12},(1-|y|))\leq CE_1\min((\nu^{\f23}+\nu t_j)^{\f12},(1-|y|))\\
      \leq &CE_1\min((\nu^{\f23}+\nu t)^{\f12},(1-|y|))\quad\text{for}\quad t\in I_j.
    \end{align*}
    Then we infer that for $t\in I_j$
    \begin{align*}
       \|\kappa_j \bar{u}^3\|_{H^2}\lesssim&\|\Delta(\kappa_j \bar{u}^3)\|_{L^2}+\|\kappa_j\bar{u}^3\|_{L^2}\lesssim \|\kappa_j\Delta\bar{u}^3\|_{L^2}+\|\nabla\kappa_j\cdot\nabla\bar{u}^3\|_{L^2} +\|(\Delta\kappa_j,\kappa_j)\bar{u}^3\|_{L^2}\\
       \lesssim& E_1\|\min((\nu^{\f23}+\nu t)^{\f12},(1-|y|))\Delta\bar{u}^3\|_{L^2}+\|\nabla\kappa_j\|_{H^2}\|\nabla\bar{u}^3\|_{L^2}\\
       &+\|(\Delta\kappa_j,\kappa_j)\|_{H^1}\|\bar{u}^3\|_{H^1}\leq CE_1E_2,
    \end{align*}
    and then
    \begin{align*}
    \|\kappa_j\nabla\bar{u}^3\|_{H^1}\leq C\big(\|\kappa_j\bar{u}^3\|_{H^2}+\|(\nabla\kappa_j)\bar{u}^3\|_{H^1}\big)
    \leq C\big(\|\kappa_j\bar{u}^3\|_{H^2}+\|\nabla\kappa_j\|_{H^2}\|\bar{u}^3\|_{H^1}\big) \leq CE_1E_2.
    \end{align*}

 For $\vec{f}\in H^1_0(\Omega)$, we know that
\begin{align*}\left\{\begin{aligned}
&\mathbb{P}\vec{f}=\vec{f}+\nabla p,\\
&\Delta p=-\text{div}{\vec{f}},\quad \partial_yp|_{y=\pm1}=0.
\end{aligned}\right.
\end{align*}
Without loss of generality, we may assume $\langle p,1\rangle=0.$ Due to $\kappa_0=0$, we have
\begin{align*}
\|\nabla (f^2+\kappa_jf^3)\|_{L^2}\leq& \|\nabla f^2\|_{L^2}+ C\|\kappa_j\|_{H^2}\|f^3\|_{H^1}\leq C\big( \|\nabla f^2\|_{L^2}+ \|\kappa_j-\kappa_0\|_{H^2}\|\nabla f^3\|_{L^2}\big)\\
\leq &C\big(\|\nabla f^2\|_{L^2}+ \nu^{\f13}j^{\f12}\|\nabla f^3\|_{L^2}\big),
\end{align*}
and
\begin{align*}
  \|\nabla (\partial_yp+\kappa_j\partial_zp)\|_{L^2}+\|\partial_x\partial_zp\|_{L^2}\leq C\big(\|p\|_{H^2}+\|\kappa_j\|_{H^2}\|\partial_zp\|_{H^1}\big)\leq C\|\Delta p\|_{L^2}\leq C\|\text{div} \vec{f}\|_{L^2},
\end{align*}
which show that
\begin{align*}
   \nu^{\f12}\|\mathbb{P}\vec{f}\|_{Z_j^1}&\leq C\big(\|\nabla (f^2+\kappa_j f^3)\|_{L^2}+\|\partial_xf^3\|_{L^2}+ \|\nabla (\partial_yp+\kappa_j\partial_zp)\|_{L^2}+\|\partial_x\partial_zp\|_{L^2}\big)\\
   &\leq C\big(\|\nabla f^2\|_{L^2}+ \nu^{\f13}j^{\f12}\|\nabla f^3\|_{L^2} + \|\partial_xf^3\|_{L^2}+\|\text{div} \vec{f}\|_{L^2}\big)\\
   &\leq C\big(\|\nabla f^2\|_{L^2}+\|(\partial_x,\partial_z)f^3\|_{L^2} +\|\partial_xf^1\|_{L^2}+\nu^{\f13}j^{\f12}\|\nabla f^3\|_{L^2}\big).
\end{align*}

This proves the lemma.
\end{proof}

\subsection{Estimates of quadratic form}

\begin{Lemma}\label{lem:Q-z1n}
It holds that  for $\partial_xf=0, {v}=(v^1,v^2,v^3)\in{J}_{2,0}^{(3)}(\Omega)\cap\mathcal{H},$
\beno
\|Q_1(f,{v})\|_{Z_{[V]}^1}\leq C\nu^{-\f{2}{3}}\|f\|_{H^2}\|{v}\|_{Z_{[V]}^2}.
\eeno
\end{Lemma}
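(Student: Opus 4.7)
The plan is to split $Q_1(f,v)$ via the Leray projection into an ``advection'' piece and a ``pressure'' piece, estimate each in the $Z^1_{[V]}$ norm, and arrange things so that the $\nu^{1/6}$-weighted (rather than $\nu^{1/2}$-weighted) terms in $\|\cdot\|_{Z^2_{[V]}}$ control both. I write $Q_1(f,v)=\vec F+\nabla p$ with $\vec F=f\,\partial_x v+(v^2\partial_y f+v^3\partial_z f)\vec e_1$ and $p$ determined by $\Delta p=-\mathrm{div}\,\vec F$, $\partial_y p|_{y=\pm1}=0$. Using $\mathrm{div}\,v=0$ and $\partial_x f=0$ one checks the clean cancellation $\mathrm{div}\,\vec F=2\,\nabla f\cdot\partial_x v$. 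Combined with $\partial_x\kappa=0$, this gives
\[
  (Q_1(f,v))^2+\kappa(Q_1(f,v))^3=f\,\partial_x(v^2+\kappa v^3)+(\partial_y+\kappa\partial_z)p,\qquad \partial_x(Q_1(f,v))^3=f\,\partial_x^2 v^3+\partial_x\partial_z p.
\]

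For the advection terms, since $f$ is independent of $x$, two-dimensional Sobolev embedding yields $\|f\|_{L^\infty(\Omega)}\le C\|f\|_{H^2}$; combined with Lemma \ref{lemma-A.1} (in the form treating the $x$-independent factor) and Poincar\'e in $x$ for $v\in\mathcal{H}$, I would obtain
\[
  \|\nabla[f\,\partial_x(v^2+\kappa v^3)]\|_{L^2}+\|f\,\partial_x^2 v^3\|_{L^2}\le C\|f\|_{H^2}\bigl(\|\partial_x\nabla(v^2+\kappa v^3)\|_{L^2}+\|\partial_x^2 v^3\|_{L^2}\bigr)\le C\nu^{-1/6}\|f\|_{H^2}\|v\|_{Z^2_{[V]}},
\]
using Lemma \ref{lem:Z norm} at the end. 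Multiplying by the $\nu^{-1/2}$ from the $Z^1_{[V]}$-weighting produces exactly the target $\nu^{-2/3}$.

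For the pressure, elliptic regularity for the Neumann problem gives $\|p\|_{H^2}\le C\|\Delta p\|_{L^2}$, and then $\|\nabla(\partial_y p+\kappa\partial_z p)\|_{L^2}+\|\partial_x\partial_z p\|_{L^2}\le C(1+\|\kappa\|_{H^2})\|p\|_{H^2}\le C\|p\|_{H^2}$ in view of $\|\kappa\|_{H^3}\le C\veps_0$. Thus everything hinges on controlling $\|\nabla f\cdot\partial_x v\|_{L^2}$ at the level $\nu^{-1/6}\|f\|_{H^2}\|v\|_{Z^2_{[V]}}$. The idea is to invoke the ``good-derivative'' bilinear estimate Lemma \ref{Lem: bil good deri} with $f_1=\partial_y f$ (independent of $x$, so $\partial_x f_1=0$) and $f_2=\partial_x v^2$ (mean zero in $x$), exploiting $\partial_x\kappa=0$ to commute $\partial_x$ past $\partial_z-\kappa\partial_y$:
\[
  \|\partial_y f\,\partial_x v^2\|_{L^2}\le C\|\partial_y f\|_{H^1}\bigl(\|\partial_x v^2\|_{L^2}+\|\partial_x(\partial_z-\kappa\partial_y)v^2\|_{L^2}\bigr)\le C\nu^{-1/6}\|f\|_{H^2}\|v\|_{Z^2_{[V]}},
\]
where Poincar\'e in $x$ bounds $\|\partial_x v^2\|_{L^2}$ by $\|\partial_x^2 v^2\|_{L^2}$, and both resulting terms are controlled by $\nu^{-1/6}\|v\|_{Z^2_{[V]}}$ via Lemma \ref{lem:Z norm}; the treatment of $\|\partial_z f\,\partial_x v^3\|_{L^2}$ is identical.

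The main obstacle is precisely this pressure step: a naive application of Lemma \ref{Lem: bilinear zero nonzero} would force control of $\|\partial_z\partial_x v^2\|_{L^2}$ through plain $\partial_y,\partial_z$ derivatives, and Lemma \ref{lem:Z norm} gives this only at the $\nu^{-1/2}$ level, producing a final bound of order $\nu^{-1}$---short by $\nu^{-1/3}$ of the desired $\nu^{-2/3}$. The good-derivative combination $\partial_z-\kappa\partial_y$ built into $\|\cdot\|_{Z^2_{[V]}}$ is exactly tailored so that Lemma \ref{Lem: bil good deri} supplies the missing $\nu^{-1/6}$ improvement; this is the decisive structural observation making the lemma work.
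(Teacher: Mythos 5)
Your proposal is correct and follows essentially the same route as the paper's proof: the same decomposition $Q_1(f,v)=\vec F+\nabla p$ with the cancellation $\mathrm{div}\,\vec F=2\nabla f\cdot\partial_x v$, the same Neumann elliptic estimate $\|p\|_{H^2}\le C\|\Delta p\|_{L^2}$, and the same decisive use of the good-derivative bilinear estimate (Lemma \ref{Lem: bil good deri}) to control $\|\nabla f\cdot\partial_x v\|_{L^2}$ at the $\nu^{-1/6}$ level via $\partial_x(\partial_z-\kappa\partial_y)v^\alpha$. Your closing observation about why a naive application of Lemma \ref{Lem: bilinear zero nonzero} would lose a factor $\nu^{-1/3}$ correctly identifies the structural point of the lemma.
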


\begin{proof}
 Since $Q_1(f, {v})=\mathbb{P}(f\partial_x {v}+({v}\cdot\nabla f,0,0))$, we know that
   \begin{align*}\left\{\begin{aligned}
     &Q_1(f,{v})= f\partial_x{v}+({v}\cdot\nabla f,0,0) +\nabla P,\\
     &\Delta P=-\text{div}\big( f\partial_x{v}+({v}\cdot\nabla f,0,0)\big)=-2\nabla f\cdot \partial_x{v},\\
     &Q_1(f,{v})^2|_{y=\pm1}=\partial_yP|_{y=\pm1}=0.\end{aligned}\right.
  \end{align*}
  Since
  \begin{align*}
     &\|\nabla(f\partial_xv^{2}+\kappa f\partial_xv^{3})\|_{L^2}\leq C\|f\|_{H^2}\|\partial_xv^{2}+\kappa\partial_xv^{3}\|_{H^1}\leq C\|f\|_{H^2}\|\nabla(\partial_xv^{2}+\kappa\partial_xv^{3})\|_{L^2},\\
     &\|\partial_x(fv^3)\|_{L^2}=\|f\partial_x(v^3)\|_{L^2}\leq \|f\|_{L^\infty}\|\partial_xv^3\|_{L^2}\leq C\|f\|_{H^2}\|\partial_xv^3\|_{L^2}.
  \end{align*}
 we infer that
  \begin{align*}
     \|f\partial_x{v}+({v}\cdot\nabla f,0,0)\|_{Z_{[V]}^1}\leq& C\nu^{-\f12}\big(\|\nabla(f\partial_xv^{2}+\kappa f\partial_xv^{3})\|_{L^2} +\|\partial_x(fv^3)\|_{L^2}\big)\\
     \leq& C\nu^{-\f12}\|f\|_{H^2}\big(\|\nabla(\partial_xv^2+\kappa\partial_xv^3)\|_{L^2} +\|\partial_x(fv^3)\|_{L^2}\big)\\
     \leq&   C\nu^{-\f23}\|f\|_{H^2}\|{v}\|_{Z_{[V]}^2}.
  \end{align*}

Thanks to $\partial_yP|_{y=\pm1}=0$,  we get by Lemma \ref{Lem: bil good deri}  and Lemma \ref{lem:Z norm} that
  \begin{align*}
     \|P\|_{H^2}\leq& C\|\Delta P\|_{L^2}\leq C\|\nabla f\cdot\partial_x{v}\|_{L^2}\leq C\big(\|(\partial_yf)\partial_xv^2\|_{L^2}+\|(\partial_zf)\partial_xv^3\|_{L^2}\big)\\
     \leq &C\|\nabla f\|_{H^1}\big(\|\partial_xv^2\|_{L^2}+ \|\partial_x(\partial_z-\kappa\partial_y)v^2\|_{L^2} +\|\partial_xv^3\|_{L^2}+ \|\partial_x(\partial_z-\kappa\partial_y)v^3\|_{L^2}\big)\\
     \leq &C\nu^{-\f16}\|f\|_{H^2}\|{v}\|_{Z_{[V]}^2},
  \end{align*}
which gives
  \begin{align*}
     &\|\nabla(\partial_yP+\kappa\partial_zP)\|_{L^2}\leq C(\|P\|_{H^2}+\|\kappa\|_{H^2}\|\partial_zP\|_{H^1} )\leq C\|P\|_{H^2}\leq C\nu^{-\f16}\|f\|_{H^2}\|{v}\|_{Z_{[V]}^2},\\
     &\|\partial_x\partial_zP\|_{L^2}\leq C\|P\|_{H^2}\leq C\nu^{-\f16}\|f\|_{H^2}\|{v}\|_{Z_{[V]}^2}.
  \end{align*}
This shows that
  \begin{align*}
     &\|\nabla P\|_{Z_{[V]}^{1}}\leq C\nu^{-\f12}\big(\|\nabla(\partial_yP+\kappa\partial_zP)\|_{L^2} +\|\partial_x\partial_zP\|_{L^2}\big)\leq C\nu^{-\f23}\|f\|_{H^2}\|{v}\|_{Z_{[V]}^2}.
  \end{align*}
Thus, we conclude that
  \begin{align*}
     \|Q_1(f, {v})\|_{Z_{[V]}^{1}}& \leq \|f\partial_x{v}+({v}\cdot\nabla f,0,0)\|_{Z_{[V]}^1}+\|\nabla P\|_{Z_{[V]}^{1}}\leq C\nu^{-\f23}\|f\|_{H^2}\|{v}\|_{Z_{[V]}^{2}}.
  \end{align*}
 \end{proof}

\begin{Lemma}\label{lem:Q-z23-n}For $j\in[0,\nu^{\f13}T)\cap\Z,\ {v}=(v^1,v^2,v^3)\in{J}_{2,0}^{(3)}(\Omega)\cap\mathcal{H}, t\in I_j,$ we have
\begin{align*}
\|Q((0,\overline{u}^2,\overline{u}^3), {v})\|_{Z_{j}^1}\leq C\nu^{-1}E_2\|{v}\|_{Z_{j}^2}.
\end{align*}
\end{Lemma}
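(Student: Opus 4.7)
The plan is to let $\vec f := ((0,\bar u^2,\bar u^3)\cdot\nabla)v+(v\cdot\nabla)(0,\bar u^2,\bar u^3)$, so that $Q((0,\bar u^2,\bar u^3),v)=\mathbb{P}\vec f$. Since $\bar u^2|_{y=\pm 1}=\bar u^3|_{y=\pm 1}=0$, we have $\vec f\in H^1_0(\Omega)$, and the last inequality of Lemma \ref{lem:Vj} reduces the claim to showing
\begin{align*}
\|\nabla f^2\|_{L^2}+\|(\partial_x,\partial_z)f^3\|_{L^2}+\|\partial_xf^1\|_{L^2}+\nu^{1/3}j^{1/2}\|\nabla f^3\|_{L^2}\le C\nu^{-1/2}E_2\|v\|_{Z_j^2},
\end{align*}
where $f^1=\bar u^\alpha\partial_\alpha v^1$ and $f^k=\bar u^\alpha\partial_\alpha v^k+v^\alpha\partial_\alpha\bar u^k$ ($k=2,3$), the index $\alpha$ summed over $\{2,3\}$.

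The next step is to extract from $\|v\|_{Z_j^2}$ the controls on $v$ needed to estimate these bilinear expressions. Lemma \ref{lem:Z norm} gives $\|\partial_x^2v^k\|_{L^2}\le C\nu^{-1/6}\|v\|_{Z_j^2}$; combined with $P_0v=0$ (so $\|v^k\|_{L^2}\le\|\partial_x v^k\|_{L^2}$) and the Dirichlet interpolation $\|\partial_x\nabla w\|_{L^2}^2\le C\|\partial_x w\|_{L^2}\|\partial_x\Delta w\|_{L^2}$ for $w|_{y=\pm1}=0$, one obtains $\|\partial_x\nabla v^3\|_{L^2}\le C\nu^{-1/2}\|v\|_{Z_j^2}$. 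Using the decomposition $\partial_z^2 v^3=\partial_z(\partial_z-\kappa_j\partial_y)v^3+(\partial_z\kappa_j)\partial_y v^3+\kappa_j\partial_y\partial_z v^3$ together with $\|\kappa_j\|_{H^2}\le CE_1\lesssim\nu^{1/3}$, the corresponding $Z_j^2$-control on $\partial_x\nabla(\partial_z-\kappa_j\partial_y)v^3$ then yields $\|\partial_z^2 v^3\|_{L^2},\|\partial_y\partial_z v^3\|_{L^2}\le C\nu^{-1/2}\|v\|_{Z_j^2}$ (after using $P_0v^3=0$). For $v^2$, working with the good unknown $W=v^2+\kappa_j v^3$ in $H^2\cap H^1_0$ gives $\|\partial_x\Delta v^2\|_{L^2}\le C\nu^{-1/2}\|v\|_{Z_j^2}$ modulo a $\kappa_j v^3$ correction absorbed by $E_1\lesssim\nu^{1/3}$, with the analogous interpolation bounds on $\partial_x\nabla v^2$ and $\partial_y\partial_z v^2$.

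Each bilinear piece is then estimated via Lemma \ref{Lem: bilinear zero nonzero} and Lemma \ref{lemma-A.1}, which exploit $\partial_x\bar u=0$ to distribute derivatives between the zero and nonzero factors; the zero-mode norms are controlled by Lemma \ref{lem:u23-zero} ($\|\bar u^2\|_{H^2}+\|\nabla\bar u^2\|_{H^1}+\|\bar u^3\|_{H^1}+\|\partial_z\bar u^3\|_{H^1}+\|\bar u^k\|_{L^\infty}\le CE_2$). The term $\|\partial_xf^1\|_{L^2}=\|\bar u^\alpha\partial_\alpha\partial_xv^1\|_{L^2}$ is handled through the divergence relation $\partial_xv^1=-\partial_yv^2-\partial_zv^3$ and the bounds on $\nabla\partial_xv^2,\nabla\partial_zv^3$ obtained above. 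For the final term $\nu^{1/3}j^{1/2}\|\nabla f^3\|_{L^2}$, the factor $\nu^{1/3}j^{1/2}\le C(1+\nu t)^{1/2}$ on $I_j$ is absorbed using Lemma \ref{lem:Vj}'s bound $\|\kappa_j\nabla\bar u^3\|_{H^1}\le CE_1 E_2$ together with the weighted control of $\min((\nu^{2/3}+\nu t)^{1/2},1-y^2)\Delta\bar u^3$ by $E_2$ from the definition of $E_2$.

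The hardest case will be the term $v^3\partial_y\partial_z\bar u^2$ (equivalently $-v^3\partial_z^2\bar u^3$ by $\partial_y\bar u^2+\partial_z\bar u^3=0$) arising in $\partial_yf^2$, because neither $\bar u^2\in H^3$ nor an $L^\infty$ bound on $v^3$ is available. The plan is to shift one $\partial_z$ via integration by parts, writing $v^3\partial_z^2\bar u^3=\partial_z(v^3\partial_z\bar u^3)-(\partial_z v^3)(\partial_z\bar u^3)$, and then invoke Lemma \ref{Lem: bilinear zero nonzero} with $\|\partial_z\bar u^3\|_{H^1}\le CE_2$ and $\|\partial_z v^3\|_{L^2}\le C\nu^{-1/2}\|v\|_{Z_j^2}$; the divergence-produced term $\partial_z(v^3\partial_z\bar u^3)$ is absorbed into a cohomologous $\partial_z$-derivative (which cancels against the corresponding pressure correction in $\mathbb{P}\vec f$ after reorganizing along the lines of Lemma \ref{lem:Vj}'s last bound), yielding the required $C\nu^{-1/2}E_2\|v\|_{Z_j^2}$ estimate.
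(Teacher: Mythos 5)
Your reduction via the last inequality of Lemma \ref{lem:Vj} is where the argument breaks. That inequality requires you to bound $\|\nabla f^2\|_{L^2}$ \emph{by itself}, with $f^2=\bar u^\alpha\partial_\alpha v^2+v^\alpha\partial_\alpha\bar u^2$. The term $\nabla(\bar u^2\partial_y v^2)$ contains $\bar u^2\,\partial_y^2 v^2$, and writing $v^2=W^2_j-\kappa_j v^3$ this produces $\bar u^2\,\kappa_j\,\partial_y^2 v^3$. The norm $\|v\|_{Z_j^2}$ controls $\nu^{5/6}\|\partial_x\Delta v^3\|_{L^2}$ but only $\nu^{1/2}\|\partial_x\Delta W_j^2\|_{L^2}$ and $\nu^{1/2}\|\nabla\partial_x(\partial_z-\kappa_j\partial_y)v^3\|_{L^2}$; a full $\partial_y^2 v^3$ therefore costs $\nu^{-5/6}\|v\|_{Z_j^2}$, not $\nu^{-1/2}\|v\|_{Z_j^2}$. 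Your proposed repair, absorbing the $\kappa_j v^3$ correction "by $E_1\lesssim\nu^{1/3}$", is not available: $E_1$ is only $O(1)$ in $\nu$ (the bootstrap gives $E_1\le Cc_0$, a constant independent of $\nu$), and while $\|\kappa_j\|_{H^2}\le C\nu^{1/3}j^{1/2}E_1$, the factor $\nu^{1/3}j^{1/2}=(\nu t_j)^{1/2}$ is $O(1)$ (not $O(\nu^{1/3})$) once $t_j\gtrsim\nu^{-1}$, which is precisely the regime where the streak has grown and $\kappa_j$ is genuinely of size one. So your route yields at best $C\nu^{-5/6}E_1E_2\|v\|_{Z_j^2}$ for $\|\nabla f^2\|_{L^2}$, a loss of $\nu^{-1/3}$ that would destroy the smallness of the coefficient of $E_6^2$ in Proposition \ref{prop:E5}. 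The same $\bar u^2\partial_y^2 v^3$ obstruction appears in $\nu^{1/3}j^{1/2}\|\nabla f^3\|_{L^2}$.

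This is exactly why the paper does \emph{not} invoke the last inequality of Lemma \ref{lem:Vj} for this term (it reserves it for the quadratic term $\mathbb{P}(u_{\neq}\cdot\nabla u_{\neq})_{\neq}$, where the extra loss is paid for by $E_3^2$). Instead it estimates the $Z_j^1$ norm directly, exploiting that only the combination $\nabla(Q^2+\kappa_j Q^3)$ and $\partial_x Q^3$ are needed: the convection part of $Q^2+\kappa_j Q^3$ regroups as $(\bar u^2\partial_y+\bar u^3\partial_z)W_j^2-v^3(\bar u^2\partial_y+\bar u^3\partial_z)\kappa_j$, so only $\|W_j^2\|_{H^2}$ (cost $\nu^{-1/2}$) and $\|v^3\|_{H^1}$ enter; the stretching terms $v\cdot\nabla\bar u^2$ and $v\cdot\nabla\bar u^3=W_j^2\partial_y\bar u^3+v^3(\partial_z-\kappa_j\partial_y)\bar u^3$ are handled with Lemma \ref{Lem: bil good deri}, which only consumes the good second derivatives $\nabla(\partial_z-\kappa_j\partial_y)v^{2,3}$ available at cost $\nu^{-1/2}$. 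Your "hardest case" $v^3\partial_y\partial_z\bar u^2$ is in fact disposed of by this good-derivative bilinear estimate together with $\|(\partial_y,\partial_z)\bar u^2\|_{H^1}\le CE_2$; the integration-by-parts-plus-pressure-cancellation you sketch for it is not substantiated and is not how the term is closed.
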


\begin{proof}

Let $\alpha,\beta\in\{2,3\}$, $t\in I_j,$ and
    \begin{align*}
      &W^2_j=v^2+\kappa_jv^3,\\ &g_{\alpha,1}=(\bar{u}^2\partial_y+\bar{u}^3\partial_z)v^\alpha,\quad g_{\alpha,2}={v}\cdot\nabla \bar{u}^{\alpha},\\
      &G_{2,1}=g_{2,1}+\kappa_jg_{3,1}=(\bar{u}^2\partial_y+\bar{u}^3\partial_z)W^2_j -v^3(\bar{u}^2\partial_y+\bar{u}^3\partial_z)\kappa_j,\\
      &G_{2,2}=g_{2,2}+\kappa_jg_{3,2},\quad \Delta p^{(3)}=-2\partial_\alpha\bar{u}^{\beta}\partial_{\beta}v^{\alpha},\quad \partial_yp^{(3)}|_{y=\pm1}=0.
    \end{align*}
    Then we find that
    \begin{align*}
      Q((0,\bar{u}^2,\bar{u}^3),{v})&=\mathbb{P}\big( (\bar{u}^2\partial_y+\bar{u}^3\partial_z){v}+ (0,{v}\cdot\nabla \bar{u}^2,{v}\cdot\nabla\bar{u}^3)\big)\\
      &=\mathbb{P}\big(((\bar{u}^2\partial_y+\bar{u}^3\partial_z)v^1, g_{2,1}+g_{2,2}, g_{3,1}+g_{3,2})\big).
    \end{align*}
    and
    \begin{align*}
       \text{div}((\bar{u}^2\partial_y+\bar{u}^3\partial_z){v}+ (0,{v}\cdot\nabla \bar{u}^2,{v}\cdot\nabla\bar{u}^3))=2\partial_{\alpha}\bar{u}^{\beta} \partial_{\beta}v^{\alpha}.
    \end{align*}
Thus, we have
  \begin{align}\label{eq: Q fenliang}
     Q((0,\bar{u}^2,\bar{u}^3),{v})^2=g_{2,1}+g_{2,2}+\partial_y p^{(3)},\quad
     Q((0,\bar{u}^2,\bar{u}^3),{v})^3=g_{3,1}+g_{3,2}+\partial_z p^{(3)}.
  \end{align}

  Let us first  claim that
  \begin{align}\label{est: E5 bar23}
    &\|\nabla G_{2,1}\|_{L^2}+\|\partial_xg_{3,1}\|_{L^2}+\|\nabla g_{2,2}\|_{L^2}+\|(\partial_x,\partial_z)g_{3,2}\|_{L^2}+\|\nabla(\kappa_jg_{3,2}) \|_{L^2}\\
    &\qquad+\|\Delta p^{(3)}\|_{L^2}\leq C\nu^{-\f12}E_2\|{v}\|_{Z_{j}^2}.\nonumber
  \end{align}
Then by \eqref{eq: Q fenliang}, we have
  \begin{align*}
     &\|\nabla \big(Q((0,\bar{u}^2,\bar{u}^3),{v})^2+ \kappa_jQ((0,\bar{u}^2,\bar{u}^3),{v})^3\big)\|_{L^2}\\
     &= \|\nabla(G_{2,1}+G_{2,2}+(\partial_y+\kappa_j\partial_z)p^{(3)})\|_{L^2}\\
     &\leq \|\nabla G_{2,1}\|_{L^2}+\|\nabla g_{2,2}\|_{L^2}+ \|\nabla(\kappa_jg_{3,2})\|_{L^2}+C(1+\|\kappa_j\|_{H^3})\|p^{(3)}\|_{H^2}\\
     &\leq \|\nabla G_{2,1}\|_{L^2}+\|\nabla g_{2,2}\|_{L^2}+ \|\nabla(\kappa_jg_{3,2})\|_{L^2}+C\|\Delta p^{(3)}\|_{L^2}\leq C\nu^{-\f12}E_2\|{v}\|_{Z_{j}^2},
  \end{align*}
  and
  \begin{align*}
     & \|\partial_x(Q((0,\bar{u}^2,\bar{u}^3), {v})^3)\|_{L^2}\leq \|\partial_xg_{3,1}\|_{L^2}+\|\partial_xg_{3,2}\|_{L^2}+\|\partial_x\partial_zp^{(3)}\|_{L^2} \leq C\nu^{-\f12}E_2\|{v}\|_{Z_{j}^2}.
  \end{align*}
  This gives
  \beno
  \|Q((0,\bar{u}^2,\bar{u}^3), {v})\|_{L^2}\leq CE_{2}\|{v}\|_{Z_j^2}.
  \eeno

  It remains to prove  \eqref{est: E5 bar23}. By Lemma \ref{lem:u23-zero}, we have
  \begin{align*}
     \|\nabla G_{2,1}\|_{L^2}&\leq CE_2\big(\|W^2_j\|_{H^2}+ \|v^3\nabla\kappa_j\|_{H^1}\big)\leq CE_2\big(\|\Delta W^2_j\|_{L^2}+ \|v^3\|_{H^1}\|\nabla\kappa_j\|_{H^2}\big)\\
     &\leq CE_2\big(\|\partial_x\Delta W^2_j \|_{L^2}+ \|\nabla\partial_x^2v^3\|_{L^2}\big)\leq C\nu^{-\f12}E_2\|{v}\|_{Z_{j}^2}.
  \end{align*}
Thanks to $\partial_xg_{3,1}=(\bar{u}^2\partial_y+\bar{u}^3\partial_z)\partial_xv^3$, we get by Lemma \ref{lem:u23-zero} that
  \begin{align*}
    \|\partial_xg_{3,1}\|_{L^2} &\leq C\big(\|\bar{u}^2\|_{L^\infty}+\|\bar{u}^3\|_{L^\infty}\big)\|\partial_x\nabla v^3\|_{L^2}\leq CE_2\|\partial_x\nabla v^3\|_{L^2}\leq C\nu^{-\f12}E_2\|{v}\|_{Z_j^2},
  \end{align*}
We get by Lemma \ref{Lem: bil good deri}  that
  \begin{align*}
   \|\nabla g_{2,2}\|_{L^2}\leq& \|\nabla(v^{\alpha}\partial_{\alpha}\bar{u}^2)\|_{L^2} \\ \leq& C\|(\partial_y,\partial_z)\bar{u}^2\|_{H^1}(\| (v^2,v^3)\|_{H^1}+\| (\partial_z-\kappa_j\partial_y) (v^2,v^3)\|_{H^1})\\
   \leq &CE_2\big(\| \nabla(v^2,v^3)\|_{L^2}+\| \nabla(\partial_z-\kappa_j\partial_y) (v^2,v^3)\|_{L^2}\big)\leq C\nu^{-\f12}E_2\|{v}\|_{Z_j^2}.
  \end{align*}
  We write
  \begin{align*}
   g_{3,2}={v}\cdot\nabla\bar{u}^3=(v^2\partial_y+v^3\partial_z) \bar{u}^3= W^2_j\partial_y\bar{u}^3+v^3(\partial_z\bar{u}^3-\kappa_j\partial_y\bar{u}^3).
  \end{align*}
  Then by \eqref{f8} and Lemma \ref{lem:u23-zero}, we get
  \begin{align*}
     \|(\partial_x,\partial_z)(W^2_j\partial_y\bar{u}^3)\|_{L^2}&\leq \|((\partial_x,\partial_z)W^2_j)\partial_y\bar{u}^3\|_{L^2} +\|W^2(\partial_z\partial_y\bar{u}^3)\|_{L^2}\\ &\leq C\|(\partial_x,\partial_z)W^2_j\|_{H^1}\|(\partial_z,1)\partial_y\bar{u}^3\|_{L^2} +C\|W^2_j\|_{H^2}\|\partial_z\partial_y\bar{u}^3\|_{L^2}\\
     &\leq CE_2\|\Delta W^2_j\|_{L^2}\leq C\nu^{-\f12}E_2\|{v}\|_{Z_j^2},
  \end{align*}
 Due to Lemma \ref{lem:Vj},  $\|\kappa_j\nabla \bar{u}^3\|_{H^1}\leq CE_2$,  which along with Lemma \ref{lem:u23-zero} gives  $\|\partial_z\bar{u}^3-\kappa_j\partial_y\bar{u}^3\|_{H^1}\leq CE_2$, and then
\begin{align*}
   \|\nabla(\kappa_j W^2_j\partial_y\bar{u}^3)\|_{L^2}&\leq C\|W^2_j\|_{H^2}\|\kappa_j\partial_y\bar{u}^3\|_{H^1}\leq C\|\Delta W^2_j\|_{L^2}E_2\leq C\nu^{-\f12}E_2\|{v}\|_{Z_j^2}.
\end{align*}
By Lemma \ref{Lem: bil good deri},  we have
  \begin{align*}
     \|v^3(\partial_z\bar{u}^3-\kappa_j\partial_y\bar{u}^3)\|_{H^1}&\leq C\big(\|v^3\|_{H^1}+\|(\partial_z-\kappa_j\partial_y)v^3\|_{H^1}\big) \|\partial_z\bar{u}^3-\kappa_j\partial_y\bar{u}^3\|_{H^1}\nonumber\\
     &\leq CE_2\big(\|\nabla v^3\|_{L^2}+\|\nabla(\partial_z-\kappa_j\partial_y)v^3\|_{L^2}\big)\leq C\nu^{-\f12}E_2\|{v}\|_{Z_j^2},
  \end{align*}
which gives
\begin{align*}
  \|\nabla[\kappa_j v^3(\partial_z\bar{u}^3-\kappa_j\partial_z\bar{u}^3)]\|_{L^2} &\leq C\|\kappa_j\|_{H^2}\|v^3(\partial_z\bar{u}^3-\kappa_j\partial_z\bar{u}^3)\|_{H^1}
  \leq C\nu^{-\f12}E_2\|{v}\|_{Z_j^2}.
\end{align*}

Summing up,  we obtain
\begin{align*}
   &\|(\partial_x,\partial_z)g_{3,2}\|_{L^2}+\|\nabla(\kappa_j g_{2,3})\|_{L^2}\\
   &\leq C\big(\|(\partial_x,\partial_z)(W^2_j\partial_y\bar{u}^3)\|_{L^2} +\|v^3(\partial_z\bar{u}^3-\kappa_j\partial_z\bar{u}^3)\|_{H^1}
   +\|\nabla(\kappa_j W^2_j\partial_y\bar{u}^3)\|_{L^2} \\ &\qquad+\|\nabla[\kappa_j v^3(\partial_z\bar{u}^3-\kappa_j\partial_z\bar{u}^3)]\|_{L^2}\big)\leq C\nu^{-\f12}E_2\|{v}\|_{Z_j^2}.
\end{align*}
Using $\partial_y\bar{u}^2+\partial_z\bar{u}^3=0$, we may write
  \begin{align*}
     \Delta p^{(3)}&=-2\partial_\alpha\bar{u}^{\beta}\partial_\beta v^{\alpha}=-2\partial_\beta(\partial_\alpha\bar{u}^\beta v^{\alpha})=-2\big(\partial_yg_{2,2}+\partial_zg_{3,2}\big),
  \end{align*}
  therefore,
  \begin{align*}
     \|\Delta p^{(3)}\|_{L^2}&\leq C(\|\nabla g_{2,2}\|_{L^2} +\|\partial_z g_{3,2}\|_{L^2})\leq C\nu^{-\f12}E_2\|{v}\|_{Z_j^2}.
  \end{align*}
This proves \eqref{est: E5 bar23} and the lemma.
\end{proof}

\subsection{Estimate of $E_5$}

 Let $N=\max( [0,\nu^{\f13}T)\cap\Z)$ and we define
 \begin{align*}
&E_6^2=\sum_{j=0}^{N}e^{6\epsilon j}\|u_{\neq}\|_{L^2(I_j,Z_j^2)}^2,
\end{align*}
By Lemma \ref{lem:Z norm}, we have
\begin{align*}
&\nu^{\f{1}{3}}\big(\|\partial_x^2u_{\neq}^2\|_{L^2}^2+\|\partial_x^2u_{\neq}^3\|_{L^2}^2\big)\leq C\|u_{\neq}\|_{Z_j^2}^2,
\end{align*}
and then
\begin{align*}
  E^2_5=&\nu^{1/3}\|e^{3\epsilon\nu^{1/3}t}\partial^2_xu^2_{\neq}\|_{L^2L^2}^2
  +\nu^{1/3}\|e^{3\epsilon\nu^{1/3}t}\partial_x^2u^3_{\neq}\|^2_{L^2L^2}\\ \leq& C\sum_{j=0}^{N}\nu^{1/3}e^{6\epsilon j}\big(\|\partial^2_xu^2_{\neq}\|_{L^2(I_j,L^2)}^2
  +\|\partial_x^2u^3_{\neq}\|^2_{L^2(I_j,L^2)}\big)\\
  \leq& C\sum_{j=0}^{N}e^{6\epsilon j}\|u_{\neq}\|_{L^2(I_j,Z_j^2)}^2=CE_6^2.
\end{align*}

\begin{Proposition}\label{prop:E5}
It holds that
\begin{align*}
E_5^2\le E_6^2\leq C\|u(0)\|_{H^2}^2+C\big(E_1^2+\nu^{-2}E_2^2\big)E_6^2+C\nu^{-2}E_3^4.
\end{align*}

\end{Proposition}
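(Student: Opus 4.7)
The plan is to exploit the freezing-coefficient decomposition $u_{\neq}=\sum_{k=0}^{j}\vec{u}_{[k]}$ on each slab $I_j$ and apply the resolvent-based space-time estimate of Proposition \ref{prop:TS-uj} piecewise, then glue the pieces back together through a discrete-convolution bootstrap. Introduce the sequences
\begin{align*}
a_j=\sum_{k=0}^{j}(j-k+1)\|\vec{u}_{[k]}\|_{L^2(I_j,Z_{j}^2)},\qquad
b_j=e^{-4\epsilon j}\|e^{4\epsilon\nu^{1/3}t}\vec{u}_{[j]}\|_{L^2Z_j^2},
\end{align*}
indexed over $j\in[0,\nu^{1/3}T)\cap\Z$. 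Using $\vec{g}_{(j)}=\vec{g}+\sum_{k=0}^{j-1}(A_k-A_j)\vec{u}_{[k]}$ on $I_j$, the identity $A_k-A_j=-Q_1(V_k-V_j,\cdot)$ combined with Lemma \ref{lem:Q-z1n} and the bound $\|V_k-V_j\|_{H^2}\le C\nu^{2/3}|j-k|E_1$ from Lemma \ref{lem:Vj} yields the commutator estimate $\|(A_k-A_j)\vec{u}_{[k]}\|_{Z_j^1}\le C|j-k|E_1\|\vec{u}_{[k]}\|_{Z_j^2}$, and Proposition \ref{prop:TS-uj} then gives $b_j\le C\bigl(\|\vec{g}\|_{L^2(I_j,Z_j^1)}+E_1 a_j\bigr)$ for $j\ge 1$, and the analogous bound with an additional $\|u(0)\|_{H^2}$ contribution for $j=0$.

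Next, the norm-comparison $\bigl|\|v\|_{Z_j^2}-\|v\|_{Z_k^2}\bigr|\le C|j-k|^{1/2}E_1\|v\|_{Z_k^2}$ from Lemma \ref{lem:Vj}, together with the definition of $b_k$ and the fact that $\vec{u}_{[k]}(t)$ vanishes for $t<t_k$, gives
\begin{align*}
\|\vec{u}_{[k]}\|_{L^2(I_j,Z_j^2)}\le \bigl(1+C|j-k|^{1/2}E_1\bigr)e^{-4\epsilon(j-k)}b_k.
\end{align*}
Substituting into the definition of $a_j$ and applying Cauchy--Schwarz to absorb the polynomial factor into the exponential decay, I obtain $a_j^2\le C\sum_{k=0}^{j}(j-k+1)^{5}e^{-8\epsilon(j-k)}b_k^2\le C\sum_{k=0}^{j}e^{-7\epsilon(j-k)}b_k^2$. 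Multiplying the resulting bound for $b_j^2$ by $e^{6\epsilon j}$, summing over $j\in[0,N]$, and applying Young's inequality to the discrete convolution (the geometric series $\sum_{m\ge 0}e^{-\epsilon m}$ is finite), I arrive at
\begin{align*}
E_6^2\le \sum_{j=0}^{N}e^{6\epsilon j}a_j^2\le C\|u(0)\|_{H^2}^2+C\sum_{j=0}^{N}e^{6\epsilon j}\|\vec{g}\|_{L^2(I_j,Z_j^1)}^2+CE_1^2 E_6^2,
\end{align*}
thereby reducing the proposition to estimating the source $\vec{g}=Q_1(u^{1,1},u_{\neq})+Q\bigl((0,\bar{u}^2,\bar{u}^3),u_{\neq}\bigr)+\mathbb{P}(u_{\neq}\cdot\nabla u_{\neq})_{\neq}$.

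For the three pieces of $\vec{g}$: Lemma \ref{lem:Q-z1n} with $\|u^{1,1}\|_{H^2}\le C\nu^{2/3}E_1$ produces $\|Q_1(u^{1,1},u_{\neq})\|_{Z_j^1}\le CE_1\|u_{\neq}\|_{Z_j^2}$, so its contribution to the sum is bounded by $CE_1^2 E_6^2$; Lemma \ref{lem:Q-z23-n} produces $\|Q((0,\bar{u}^2,\bar{u}^3),u_{\neq})\|_{Z_j^1}\le C\nu^{-1}E_2\|u_{\neq}\|_{Z_j^2}$, contributing $C\nu^{-2}E_2^2 E_6^2$; and for the genuinely nonlinear term I invoke the last inequality of Lemma \ref{lem:Vj} together with the nonlinear bounds of Lemma \ref{lem:int-nn}, controlling
\begin{align*}
\sum_{j=0}^{N}e^{6\epsilon j}\|\mathbb{P}(u_{\neq}\cdot\nabla u_{\neq})_{\neq}\|_{L^2(I_j,Z_j^1)}^2\le C\nu^{-2}E_3^4,
\end{align*}
since $e^{6\epsilon j}\le Ce^{8\epsilon\nu^{1/3}t}$ on $I_j$ and the potentially growing factor $\nu^{1/3}j^{1/2}\|\nabla f^3\|_{L^2}$ in the $Z_j^1$ norm is absorbed by the $e^{-2\epsilon\nu^{1/3}t}\sim e^{-2\epsilon j}$ slack left in the weighted bounds of Lemma \ref{lem:int-nn}. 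Combining the three contributions yields the stated estimate.

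The main obstacle is the bookkeeping in the discrete convolution: one must simultaneously preserve a factor $e^{6\epsilon j}$ for $E_6^2$ on the left, keep the strict geometric decay $e^{-\epsilon(j-k)}$ on the right to apply Young's inequality, and make sure that the $j$-dependent loss $\nu^{1/3}j^{1/2}$ appearing in the $\mathbb{P}$-projection bound of Lemma \ref{lem:Vj} is compensated by the enhanced-dissipation weight $e^{-2\epsilon\nu^{1/3}t}$ available from the $X_{2\epsilon}$-norms defining $E_3$; only a careful splitting of the exponent $8\epsilon=6\epsilon+2\epsilon$ makes all three pieces close simultaneously.
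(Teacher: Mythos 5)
Your proposal is correct and follows essentially the same route as the paper's proof: the same sequences $a_j,b_j$, the same commutator bound $\|(A_k-A_j)\vec{u}_{[k]}\|_{Z_j^1}\le C|j-k|E_1\|\vec{u}_{[k]}\|_{Z_j^2}$ from Lemma \ref{lem:Q-z1n} and Lemma \ref{lem:Vj}, the same discrete-convolution closure giving $E_6^2\le C\|u(0)\|_{H^2}^2+C\sum_j e^{6\epsilon j}\|\vec{g}\|_{L^2(I_j,Z_j^1)}^2+CE_1^2E_6^2$, and the same three source-term estimates. The only cosmetic difference is the phrasing of how the $\nu^{1/3}j^{1/2}$ loss is absorbed (the paper uses $j^{1/2}\le Ce^{\epsilon j}$ and then sums $e^{8\epsilon j}$ against the $e^{4\epsilon\nu^{1/3}t}$-weighted bounds of Lemma \ref{lem:int-nn}), which matches your exponent split $8\epsilon=6\epsilon+2\epsilon$.
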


\begin{proof}
For $j\in[0,\nu^{\f13}T)\cap\Z$, let
\beno
a_j=\sum_{k=0}^j(j-k+1)\|\vec{u}_{[k]}\|_{L^2(I_j,Z_{j}^2)},\quad  b_j=e^{-4\epsilon j}\|e^{4\epsilon\nu^{1/3}t}\vec{u}_{[j]}\|_{L^2Z_j^2}.
\eeno

Thanks to the definition of $\vec{g}_{(j)}$, we have
\begin{align*}
&\|\vec{g}_{(j)}\|_{L^2Z_j^1}\leq \|\vec{g}\|_{L^2(I_j,Z_j^1)}+\sum_{k=0}^{j-1}\|(A_k- A_j)\vec{u}_{[k]}\|_{L^2(I_j,Z_j^1)}.
\end{align*}
Notice that
\begin{align*}
&(A_k- A_j)\vec{u}_{[k]}=(A_{[V_k]}- A_{[V_j]})\vec{u}_{[k]}=Q_1(V_j-V_k,\vec{u}_{[k]}).
\end{align*}
We get  by Lemma \ref{lem:Q-z1n}  and Lemma \ref{lem:Vj}  that
\begin{align*}
&\|(A_k- A_j)\vec{u}_{[k]}\|_{Z_j^1}\leq C\nu^{-\f{2}{3}}\|V_j-V_k\|_{H^2}\|\vec{u}_{[k]}\|_{Z_{j}^2}\leq C|j-k|E_1\|\vec{u}_{[k]}\|_{Z_{j}^2}.
\end{align*}
Then we infer from Proposition \ref{prop:TS-uj} that
\beno
b_0\leq C\big(\|u(0)\|_{H^2}+\|\vec{g}\|_{L^2(I_0,Z_0^1)}\big),
\eeno
and  for $j\in(0,\nu^{\f13}T)\cap\Z$,
\begin{align*}
b_j&\leq C\|\vec{g}_{(j)}\|_{L^2Z_j^1}\leq C\|\vec{g}\|_{L^2(I_j,Z_j^1)}+C\sum_{k=0}^{j-1}\|(A_k- A_j)\vec{u}_{[k]}\|_{L^2(I_j,Z_j^1)}
\\&\leq C\|\vec{g}\|_{L^2(I_j,Z_j^1)}+C\sum_{k=0}^{j-1}|j-k|E_1\|\vec{u}_{[k]}\|_{L^2(I_j,Z_j^1)}\leq C\big(\|\vec{g}\|_{L^2(I_j,Z_j^1)}+E_1a_j\big).
\end{align*}

For $j,k\in[0,\nu^{\f13}T)\cap\Z$,  we get by  Lemma \ref{lem:Vj} that
\begin{align*}
\|\vec{u}_{[k]}\|_{L^2(I_j,Z_{j}^2)}&\leq \big(1+C|j-k|^{\f12}E_1\big)\|\vec{u}_{[k]}\|_{L^2(I_j,Z_{k}^2)}\leq \big(1+C|j-k|^{\f12}E_1\big)e^{-4\epsilon j}\|e^{4\epsilon\nu^{1/3}t}\vec{u}_{[k]}\|_{L^2Z_{k}^2}\\&\leq(1+C|j-k|^{\f12}E_1)e^{-4\epsilon (j-k)}b_k,
\end{align*}
which gives
\begin{align*}
a_j&=\sum_{k=0}^j(j-k+1)\|\vec{u}_{[k]}\|_{L^2(I_j,Z_{j}^2)}\leq\sum_{k=0}^j(j-k+1)(1+C|j-k|^{\f12}E_1)e^{-4\epsilon (j-k)}b_k\\&\leq C\sum_{k=0}^j(j-k+1)^{\f32}e^{-4\epsilon (j-k)}b_k,
\end{align*}
and hence,
\beno
a_j^2 \leq C\sum_{k=0}^j(j-k+1)^{5}e^{-8\epsilon (j-k)}b_k^2\leq C\sum_{k=0}^je^{-7\epsilon (j-k)}b_k^2.
\eeno
Thus, we obtain
\begin{align*}
\sum_{j=0}^{N}e^{6\epsilon j}a_j^2&\leq C\sum_{j=0}^{N}e^{6\epsilon j}\sum_{k=0}^je^{-7\epsilon (j-k)}b_k^2=C\sum_{k=0}^{N}\sum_{j=k}^{N}e^{6\epsilon k-\epsilon (j-k)}b_k^2\\ &\leq C\sum_{j=0}^{N}e^{6\epsilon j}b_j^2 \leq C\|u(0)\|_{H^2}^2+C\sum_{j=0}^{N}e^{6\epsilon j}\big(\|\vec{g}\|_{L^2(I_j,Z_j^1)}^2+E_1^2a_j^2\big).
\end{align*}
Then we conclude that \begin{align*}
E_6^2\leq\sum_{j=0}^{N}e^{6\epsilon j}a_j^2&\leq  C\|u(0)\|_{H^2}^2+C\sum_{j=0}^{N}e^{6\epsilon j}\|\vec{g}\|_{L^2(I_j,Z_j^1)}^2+CE_1^2E_6^2.
\end{align*}

Thanks to the definition of $\vec{g} $, we have
\begin{align*}
\|\vec{g}\|_{L^2(I_j,Z_j^1)}\leq& \|Q_1(u^{1,1},u_{\neq})\|_{L^2(I_j,Z_j^1)}+\|Q((0,\overline{u}^2,\overline{u}^3),u_{\neq})\|_{L^2(I_j,Z_j^1)}\\&+\|\mathbb{P}(u_{\neq}\cdot\nabla u_{\neq})_{\neq}\|_{L^2(I_j,Z_j^1)}.
\end{align*}
By Lemma \ref{lem:Q-z1n}, Lemma \ref{lem:Vj} and Lemma \ref{lem:Q-z23-n}, we have
\begin{align*}
&\|Q_1(u^{1,1},u_{\neq})\|_{L^2(I_j,Z_j^1)}\leq C\nu^{-\f{2}{3}}\|u^{1,1}\|_{L^{\infty}H^2}\|u_{\neq}\|_{L^2(I_j,Z_j^2)}\leq CE_1\|u_{\neq}\|_{L^2(I_j,Z_j^2)},\\ &\|Q((0,\overline{u}^2,\overline{u}^3),u_{\neq})\|_{L^2(I_j,Z_j^1)}\leq C\nu^{-1}E_2\|u_{\neq}\|_{L^2(I_j,Z_j^2)},
\end{align*}
and
\begin{align*}&\nu^{1/2}\|\mathbb{P}(u_{\neq}\cdot\nabla u_{\neq})_{\neq}\|_{Z_{j}^1}\leq C(\|\nabla(u_{\neq}\cdot\nabla u_{\neq}^2)\|_{L^2}+\|(\partial_x,\partial_z)(u_{\neq}\cdot\nabla u_{\neq}^3)\|_{L^2}\\&\qquad+\|\partial_x(u_{\neq}\cdot\nabla u_{\neq}^1)\|_{L^2}+\nu^{\f13}j^{\f12}\|\nabla(u_{\neq}\cdot\nabla u_{\neq}^3)\|_{L^2})\\ &\leq Ce^{\epsilon j}\Big(\|\nabla(u_{\neq}\cdot\nabla u_{\neq}^2)\|_{L^2}+\|(\partial_x,\partial_z)(u_{\neq}\cdot\nabla u_{\neq}^3)\|_{L^2}+\|\partial_x(u_{\neq}\cdot\nabla u_{\neq}^1)\|_{L^2}\\&\qquad+\nu^{\f13}\|\nabla(u_{\neq}\cdot\nabla u_{\neq}^3)\|_{L^2}\Big).
\end{align*}
Summing up, we get by Lemma \ref{lem:int-nn}  that
\begin{align*}
&\sum_{j=0}^{N}e^{6\epsilon j}\|\vec{g}\|_{L^2(I_j,Z_j^1)}^2\leq  C\sum_{j=0}^{N}e^{6\epsilon j}\Big(\|Q_1(u^{1,1},u_{\neq})\|_{L^2(I_j,Z_j^1)}^2+\|Q((0,\overline{u}^2,\overline{u}^3),u_{\neq})\|_{L^2(I_j,Z_j^1)}^2\\&\qquad+\|\mathbb{P}(u_{\neq}\cdot\nabla u_{\neq})_{\neq}\|_{L^2(I_j,Z_j^1)}^2\Big)\\ &\leq C\sum_{j=0}^{N}e^{6\epsilon j}\big(E_1^2\|u_{\neq}\|_{L^2(I_j,Z_j^2)}^2+\nu^{-2}E_2^2\|u_{\neq}\|_{L^2(I_j,Z_j^2)}^2\big)+\sum_{j=0}^{N}e^{8\epsilon j}\nu^{-1}\Big(\|\nabla(u_{\neq}\cdot\nabla u_{\neq}^2)\|_{L^2(I_j,L^2)}^2\\&\qquad+\|(\partial_x,\partial_z)(u_{\neq}\cdot\nabla u_{\neq}^3)\|_{L^2(I_j,L^2)}^2+\|\partial_x(u_{\neq}\cdot\nabla u_{\neq}^1)\|_{L^2(I_j,L^2)}^2+\nu^{\f23}\|\nabla(u_{\neq}\cdot\nabla u_{\neq}^3)\|_{L^2(I_j,L^2)}^2\Big)\\ &\leq C\big(E_1^2+\nu^{-2}E_2^2\big)E_6^2+C\nu^{-1}\Big(\|e^{4\epsilon\nu^{\f13}t}\nabla(u_{\neq}\cdot\nabla u_{\neq}^2)\|_{L^2L^2}^2+\|e^{4\epsilon\nu^{\f13}t}(\partial_x,\partial_z)(u_{\neq}\cdot\nabla u_{\neq}^3)\|_{L^2L^2}^2\\&\qquad+\|e^{4\epsilon\nu^{\f13}t}\partial_x(u_{\neq}\cdot\nabla u_{\neq}^1)\|_{L^2L^2}^2+\nu^{\f23}\|e^{4\epsilon\nu^{\f13}t}\nabla(u_{\neq}\cdot\nabla u_{\neq}^3)\|_{L^2L^2}^2\Big)\\ &\leq C\big(E_1^2+\nu^{-2}E_2^2\big)E_6^2+C\nu^{-2}E_3^4.
\end{align*}
Then we infer that
\begin{align*}
E_6^2&\leq C\|u(0)\|_{H^2}^2+C\sum_{j=0}^{N}e^{6\epsilon j}\|\vec{g}\|_{L^2(I_j,Z_j^1)}^2+CE_1^2E_6^2\\
&\leq C\|u(0)\|_{H^2}^2+C\big(E_1^2+\nu^{-2}E_2^2\big)E_6^2+C\nu^{-2}E_3^4.
\end{align*}

This completes the proof of the  proposition.
\end{proof}

\section{Global stability and long-time behavior}

In this section, we prove Theorem \ref{thm:stability}. Let us assume that  $\nu\in(0,\nu_0], \nu_0, \epsilon\in(0,1), \nu_0^{2/3}\leq 4\epsilon<\epsilon_1$. Then $e^{\nu t}\leq e^{4\epsilon\nu^{1/3} t}$ for $t>0.$

\subsection{Global existence and uniqueness}

The classical well-posedness theory ensures that there exists a unique solution $u\in C\big([0,T^*),H^2(\Omega)\big)\cap L_{loc}^2\big([0,T^*),H^3(\Omega)\big)$ to the Navier-Stokes equations \eqref{eq:NSp}, where $T^*$ is the maximal existence time of the solution.  Furthermore, for the linear equation \eqref{eq:u10} and \eqref{eq:u1n},  it is easy to prove the existence of the solution with
\beno
&&\bar{u}^{1,\neq}\in C\big([0,T^*),H^2(\Omega)\big)\cap L_{loc}^2\big([0,T^*),H^3(\Omega)\big),\\
&&\bar{u}^{1,0}\in C\big([0,T^*),H^4(\Omega)\big)\cap L_{loc}^2\big([0,T^*),H^5(\Omega)\big).
\eeno
To be precise, we write $E_j=E_j(T)$ for $j\in\{1,2,3,5\}$ and $T\in (0,T^*)$. Then $E_j(T)$ is a continuous and increasing function of $T\in(0,T^*)$, and
\begin{align*}
&\lim_{T\to 0+}E_j(T)\leq C\|u(0)\|_{H^2}\leq Cc_0\nu,\quad j\in\{1,2,3,5\}.
\end{align*}
Here all norms are taken over the interval $[0, T]$ unless stated otherwise, such as
\begin{align*}
&\|f\|_{L^pH^s}=\big\|\|f(t)\|_{H^s(\Omega)}\big\|_{L^p(0,T)},\quad \|f\|_{L^pL^q}=\big\|\|f(t)\|_{L^q(\Omega)}\big\|_{L^p(0,T)}.
\end{align*}

 Our goal is to prove that $T^*=+\infty$. The proof  is based on a continuity argument. Let us first assume that
\ben\label{ass:boot}
E_1\le \ve_1, \quad E_2\le \ve_1\nu,\quad  E_3\le \ve_1\nu.
\een
where $\ve_1$ is determined later.  First of all, we take $\veps_1<\veps_0$  so that
\beno
\|V_j-y\|_{H^4}\le E_1<\veps_0\quad \text{for}\,\, j\in[0,\nu^{\f13}T)\cap\Z.
\eeno
Now it follows from Proposition \ref{prop:E1},  Proposition \ref{prop:E2}, Proposition \ref{prop:E30},  Proposition \ref{prop:E31} and Proposition \ref{prop:E5}
that
\begin{align*}
&E_{1,0}\leq C\nu^{-1}\big(\|\bar{u}(0)\|_{H^2}+E_2+E_2E_{1,0}\big),\\
&E_{1,\neq}\leq C\big(\|\bar{u}(0)\|_{H^2}+\nu^{-1}E_2E_{1,\neq}+\nu^{-\f43}E_3^2\big),\\
&E_2\le C\big(1+\nu^{-1}E_2\big)\big(\|u(0)\|_{H^2}+\nu^{-1}E_3^2\big),\\
& E^2_{3,0} \leq C\|u(0)\|^2_{H^2}+C \Big(E^4_3/\nu^2+E^2_2E^2_3/\nu^2+E^2_1E_3E_5+E^2_1E^{\f32}_3E^{\f12}_5\Big),\\
&E^2_{3,1} \leq C\Big(\|u(0)\|_{H^2}^2+\nu^{-2}E_3^4+\nu^{-\f43}E_2^2E_3^2+ E_1^2E_3E_5+ E_1^2E_3^{\f74}E_5^{\f14}+E_1^2E_3^{\f32}E_5^{\f12}\Big),\\
&E_5^2\le E_6^2\leq C\|u(0)\|_{H^2}^2+C\big(E_1^2+\nu^{-2}E_2^2\big)E_6^2+C\nu^{-2}E_3^4.
\end{align*}
It is easy to see that by taking $\veps_1$ small enough, we can deduce from \eqref{ass:boot}  that
\begin{align*}
 E_3+ E_5 \leq C\|u(0)\|_{H^2}\le Cc_0\nu.
\end{align*}
Then we can get
\beno
 E_2\leq C\big(\|u(0)\|_{H^2}+\varepsilon_1E_3\big)\leq C\|u(0)\|_{H^2}\le Cc_0\nu,
 \eeno
 and then
 \begin{align*}
 &E_{1,0}\leq C\nu^{-1}\big(\|\bar{u}(0)\|_{H^2}+E_2\big)\leq C\nu^{-1}\|{u}(0)\|_{H^2}\le Cc_0,\\
 &E_{1,\neq} \leq C\big(\|\bar{u}(0)\|_{H^2}+\nu^{-\f13}\varepsilon_1 E_3\big)\leq C\nu^{-\f13}\|\bar{u}(0)\|_{H^2}\le Cc_0\nu^\f23.
  \end{align*}
Thus, we have
  \begin{align*}
  E_1=E_{1,0}+\nu^{-2/3}E_{1,\neq}\leq Cc_0.
\end{align*}
Now we take  $c_0>0$ small enough so that $Cc_0<\veps_1/2$. We define
\begin{align*}
&T_0=\sup\big\{T\in(0,T^*)|\max(\nu E_1(T),E_2(T),E_3(T))\le \ve_1\nu \big\}.
\end{align*}
The argument as above implies that $T_0=T^*$, and the argument as below implies that $\|u(t)\|_{L^{\infty}}\leq C\nu^{-1}\|u(0)\|_{H^2}$ for any $t\in [0,T^*)$, which in turn implies $T^*=+\infty.$

\subsection{Global stability estimates}

By Lemma \ref{lem:u23-H1}, we have
\begin{align*}
 e^{\nu t}\big(\|\Delta\bar{u}^2(t)\|_{L^2}+\|\nabla\bar{u}^3(t)\|_{L^2}\big) \leq C\big(\|u(0)\|_{H^2}+\nu^{-1}E_3^2\big)\leq C\|u(0)\|_{H^2}.
 \end{align*}
 As $\partial_y\bar{u}^2+\partial_z\bar{u}^3=0$, we have
 \begin{align*}
\|\bar{u}^2\|_{H^2}+\|\bar{u}^3\|_{H^1}+\|\partial_z\bar{u}^3\|_{H^1}
\leq C(\|\Delta\bar{u}^2\|_{L^2}+\|\nabla\bar{u}^3\|_{L^2}),
\end{align*}
which implies that
\begin{align*}
\|\bar{u}^2(t)\|_{H^2}+\|\bar{u}^3(t)\|_{H^1}+\|\partial_z\bar{u}^3(t)\|_{H^1} \leq Ce^{-\nu t}\|u(0)\|_{H^2}.
\end{align*}
Thanks to Lemma \ref{lemma-A.1}, we have
\begin{align*}
&\|\bar{u}^2(t)\|_{L^{\infty}}+\|\bar{u}^3(t)\|_{L^{\infty}}\leq C\big(\|\bar{u}^2(t)\|_{H^2}+\|\bar{u}^3(t)\|_{H^1}+\|\partial_z\bar{u}^3(t)\|_{H^1}\big)\leq Ce^{-\nu t}\|u(0)\|_{H^2}.
\end{align*}
This proves \eqref{eq:u-uniform1}. \smallskip

Now we prove \eqref{eq:u-uniform2}. First of all, we have
\begin{align*}
 & e^{2\epsilon\nu^{\f13}t}\big(\|(\partial_x,\partial_z)\nabla u_{\neq}^2(t)\|_{L^{2}}+\|(\partial_x^2+\partial_z^2)u_{\neq}^3(t)\|_{L^2}\big)\leq E_{3,0}\leq E_{3}\leq C\|u(0)\|_{H^2},
 \end{align*}
 and by Lemma \ref{lem:u-relation}, we have
 \begin{align*}
 \|(\partial_x,\partial_z)\partial_x u_{\neq}(t)\|_{L^{2}}\leq C\big(\|(\partial_x,\partial_z)\nabla u_{\neq}^2(t)\|_{L^{2}}+\|(\partial_x^2+\partial_z^2)u_{\neq}^3(t)\|_{L^2}\big)\leq Ce^{-2\epsilon\nu^{\f13}t}\|u(0)\|_{H^2}.
 \end{align*}
 By \eqref{fX} and Proposition \ref{prop:E30}, we have
 \begin{align*}
 &\nu^{1/2}e^{4\epsilon\nu^{\f13}t}\|\Delta u_{\neq}^2(t)\|_{L^{2}}^2\leq \|u^2_{\neq}\|^2_{X_{2\epsilon}}\\
& \leq C\|u_{\neq}(0)\|^2_{H^2}+C \big(E^4_3/\nu^2+E^2_2E^2_3/\nu^2+E^2_1E_3E_5+E^2_1E^{\f32}_3E^{\f12}_5\big)\leq C\|u(0)\|^2_{H^2}.
\end{align*}
By the definition of $E_{3,1}$, we have
\begin{align*}
 &e^{2\epsilon\nu^{\f13}t}\|\nabla \omega_{\neq}^2(t)\|_{L^{2}}\leq \nu^{-1/3}E_{3,1}\leq \nu^{-1/3}E_{3}\leq C\nu^{-1/3}\|u(0)\|_{H^2}.\end{align*}
 Using the fact that
 \begin{align*}
  &\|\nabla(\partial_x,\partial_z)(u_{\neq}^1,u_{\neq}^3)\|^2_{L^2}=\|\nabla\omega_{\neq}^2\|^2_{L^2}+
  \|\nabla\partial_y u_{\neq}^2\|^2_{L^2},
\end{align*}
we deduce that
\begin{align*}
  &\|\nabla(\partial_x,\partial_z)(u_{\neq}^1,u_{\neq}^3)(t)\|_{L^2}\leq C\big(\|\nabla\omega_{\neq}^2(t)\|_{L^2}+
  \|\Delta u_{\neq}^2(t)\|_{L^2}\big)\\ \nonumber&\leq C\big(\nu^{-1/3}e^{-2\epsilon\nu^{\f13}t}\|u(0)\|_{H^2}+\nu^{-1/4}e^{-2\epsilon\nu^{\f13}t}\|u(0)\|_{H^2}\big)\leq C \nu^{-1/3}e^{-2\epsilon\nu^{\f13}t}\|u(0)\|_{H^2}.
\end{align*}
Then we obtain
\begin{align*}
  &\|(u_{\neq}^1,u_{\neq}^3)(t)\|_{H^1}\leq C\|\nabla\partial_x(u_{\neq}^1,u_{\neq}^3)(t)\|_{L^2}\leq  C \nu^{-1/3}e^{-2\epsilon\nu^{\f13}t}\|u(0)\|_{H^2},\\&\nu^{1/4}\| u_{\neq}^2(t)\|_{H^{2}}\leq C\nu^{1/4}\|\Delta u_{\neq}^2(t)\|_{L^{2}}\leq Ce^{-2\epsilon\nu^{\f13}t}\|u(0)\|_{H^2}.
\end{align*}

By \eqref{eq:u-uniform2}, we have
\beno
\|u_{\neq}(t)\|_{L^2}\leq\|\partial_x^2u_{\neq}(t)\|_{L^2}\leq Ce^{-2\epsilon\nu^{1/3}t}\|u(0)\|_{H^2},
\eeno
which gives
\begin{align*}
&\|u_{\neq}\|_{L^{\infty}L^2}+\sqrt{\nu}\|t(u_{\neq}^1,u_{\neq}^3)\|_{L^{2}L^2}\\ &\leq C\|e^{-2\epsilon\nu^{1/3}t}\|_{L^{\infty}(0,+\infty)}\|u(0)\|_{H^2}+\sqrt{\nu}\|te^{-2\epsilon\nu^{1/3}t}\|_{L^2(0,+\infty)}\|u(0)\|_{H^2}\leq C\|u(0)\|_{H^2}.
\end{align*}
 By the definition of $E_3$, we have
 \begin{align*}
 \|\nabla u_{\neq}^2\|_{L^{\infty}L^2}+\|\nabla u_{\neq}^2\|_{L^{2}L^2}\leq CE_3\leq C\|u(0)\|_{H^2}.
\end{align*}
This proves \eqref{eq:u-uniform3}. \smallskip

It remains to prove the stability estimate in $L^\infty$ norm. By Lemma \ref{lem:sob-Linfty}, we get
\begin{align*}
  &\|u_{\neq}^2(t)\|_{L^{\infty}}\leq C\|(\partial_x,\partial_z)\partial_x u_{\neq}^2(t)\|_{L^{2}}^{\f12}\|(\partial_x,\partial_z)\nabla u_{\neq}^2(t)\|_{L^{2}}^{\f12}\leq  C e^{-2\epsilon\nu^{\f13}t}\|u(0)\|_{H^2},\\&\|u_{\neq}^j(t)\|_{L^{\infty}}\leq C\|(\partial_x,\partial_z)\partial_x u_{\neq}^j(t)\|_{L^{2}}^{\f12}\|(\partial_x,\partial_z)\nabla u_{\neq}^j(t)\|_{L^{2}}^{\f12}\leq  C\nu^{-1/6} e^{-2\epsilon\nu^{\f13}t}\|u(0)\|_{H^2},
\end{align*}
here $j\in\{1,3\}$. Thus, we also conclude \eqref{eq:u-uniform2}.\smallskip

Recall that $\bar{u}^{1}$ satisfies
\begin{align*}
\left\{
\begin{aligned}
&(\partial_t-\nu\Delta )\bar{u}^{1}+\bar{u}^{2}+\bar{u}^2\partial_y\bar{u}^{1}+\bar{u}^3\partial_z\bar{u}^{1}+\overline{u_{\neq}\cdot\nabla u_{\neq}^1}=0,\\
&\bar{u}^{1}|_{t=0}=\bar{u}^1(0),\quad \Delta\bar{u}^{1}|_{y=\pm1}=0,\quad \bar{u}^{1}|_{y=\pm1}=0.
\end{aligned}
\right.
\end{align*}
Then  $\Delta\bar{u}^{1}$ solves
\begin{align}
\label{u1a}(\partial_t-\nu\Delta )\Delta\bar{u}^{1}+\Delta\bar{u}^{2}+\Delta(\bar{u}^2\partial_y\bar{u}^{1}+\bar{u}^3\partial_z\bar{u}^{1}+
\overline{u_{\neq}\cdot\nabla u_{\neq}^1})=0.
\end{align}Since $\Delta\bar{u}^{1}|_{y=\pm1}=0$,  $H^2$ energy estimate gives
\begin{align*}
\f d {dt}\| \Delta\bar{u}^{1}\|_{L^2}^2+2\nu\| \nabla\Delta\bar{u}^{1}\|_{L^2}^2-2\big\langle \nabla(\bar{u}^{2}+\bar{u}^2\partial_2\bar{u}^{1}+\bar{u}^3\partial_3\bar{u}^{1}+
\overline{u_{\neq}\cdot\nabla u_{\neq}^1}), \nabla\Delta\bar{u}^{1}\big\rangle=0,
\end{align*}
which gives
\begin{align*}
\f d {dt}\| \Delta\bar{u}^{1}\|_{L^2}^2+\nu\| \nabla\Delta\bar{u}^{1}\|_{L^2}^2\leq C\nu^{-1}\Big(\|\nabla (\bar{u}^2\partial_y\bar{u}^{1}+\bar{u}^3\partial_z\bar{u}^{1})\|_{L^2}^2+\|\nabla \bar{u}^{2}\|_{L^2}^2+\|\nabla (\overline{u_{\neq}\cdot\nabla u_{\neq}^1})\|_{L^2}^2\Big).
\end{align*}
Thanks to $\|\nabla\Delta\bar{u}^{1}\|_{L^2}^2\geq(\pi/2)^2\|\Delta\bar{u}^{1}\|_{L^2}^2$,  we deduce that
\begin{align*}
\| e^{\nu t}\Delta\bar{u}^{1}\|_{L^{\infty}L^2}^2&+\nu\| e^{\nu t}\nabla\Delta\bar{u}^{1}\|_{L^2L^2}^2\leq \| u(0)\|_{H^2}^2+C\nu^{-1}\| e^{\nu t}\nabla(\bar{u}^2\partial_y\bar{u}^{1}+\bar{u}^3\partial_z\bar{u}^{1})\|_{L^2L^2}^2\\
&+C\nu^{-1}\| e^{\nu t}\nabla \bar{u}^{2}\|_{L^2L^2}^2+C\nu^{-1}\|e^{\nu t}\nabla (\overline{u_{\neq}\cdot\nabla u_{\neq}^1})\|_{L^2L^2}^2.
\end{align*}
Using the fact that
\begin{align*}
\|\nabla(\bar{u}^k\partial_k\bar{u}^{1})\|_{L^2}^2\leq& \|\bar{u}^k\partial_k\bar{u}^{1}\|_{H^1}^2\leq C\|\bar{u}^k\|_{H^2}^2\|\partial_k\bar{u}^{1}\|_{H^1}^2\\
\leq& C \|\Delta\bar{u}^k\|_{L^2}^2 \|\bar{u}^{1}\|_{H^2}^2\quad k=2,3,
\end{align*}
we infer that
\begin{align*}
&\|\nabla (\bar{u}^2\partial_y\bar{u}^{1}+\bar{u}^3\partial_z\bar{u}^{1})\|_{L^2}^2
\leq C\big(\|\Delta\bar{u}^2\|_{L^2}^2+\|\Delta\bar{u}^3\|_{L^2}^2\big)\|\bar{u}^{1}\|_{H^2}^2.
\end{align*}
Then by Lemma \ref{lem:u2-H2}, Lemma \ref{lem:u2-H3} and Lemma \ref{lem:u1-H2}, we get
\begin{align*}
\|e^{\nu t}\nabla (\bar{u}^2\partial_y\bar{u}^{1}+\bar{u}^3\partial_z\bar{u}^{1})\|_{L^2L^2}^2
\leq& C\|e^{\nu t}(\Delta\bar{u}^2,\Delta\bar{u}^3)\|_{L^{2}L^2}^2\|\bar{u}^{1}\|_{L^{\infty}H^2}^2\\ \leq& C\nu^{-1}\big(\|u(0)\|_{H^2}^2+\nu^{-2}E_3^4\big)E_1^2\leq C\nu^{-1}\|u(0)\|_{H^2}^2.
\end{align*}
By Lemma \ref{lem:u23-H1}, we have
\begin{align*}
&\| e^{\nu t}\nabla \bar{u}^{2}\|_{L^2L^2}^2\leq C\nu^{-1}\big(\|u(0)\|_{H^2}^2+\nu^{-2}E_3^4\big)\leq C\nu^{-1}\|u(0)\|_{H^2}^2.
\end{align*}
Since $0<\nu\leq \nu_0,\ \nu_0^{2/3}\leq 4\epsilon$,  we get by \eqref{lemma2.1-3}  that
\begin{align*}
&\|e^{\nu t}\nabla (\overline{u_{\neq}\cdot\nabla u_{\neq}^1})\|_{L^2L^2}^2\leq\|e^{4\epsilon\nu^{\f13}t}\nabla(u_{\neq}\cdot\nabla u_{\neq})\|_{L^2L^2}^2\leq C\nu^{-\f53}E_3^4\leq C\nu^{\f13}\|u(0)\|_{H^2}^2.
\end{align*}

Summing up, we conclude that
\begin{align*}
&\| e^{\nu t}\Delta\bar{u}^{1}\|_{L^{\infty}L^2}^2\leq \| u(0)\|_{H^2}^2+C(\nu^{-2}\|u(0)\|_{H^2}^2+\nu^{-\f83}E_3^4)\leq C\nu^{-2}\|u(0)\|_{H^2}^2,
  \end{align*}
 which gives
  \begin{align*}
 \|\bar{u}^1(t)\|_{H^2}\leq C\|\Delta\bar{u}^1(t)\|_{L^2}\leq C\nu^{-1}e^{-\nu t}\|u(0)\|_{H^2}.
 \end{align*}
 On the other hand, by Lemma \ref{lem:u1-H2}, we have
 \begin{align*}
    &\|\bar{u}^1(t)\|_{H^2}\leq CE_1\cdot(\nu t+\nu^{2/3})\leq C\nu^{-1}(\nu t+\nu^{2/3})\|u(0)\|_{H^2}.
  \end{align*}
  Thus, we obtain
  \begin{align*}
  \|\bar{u}^1(t)\|_{L^{\infty}}\leq C \|\bar{u}^1(t)\|_{H^2}\leq C\nu^{-1}\min(\nu t+\nu^{2/3},e^{-\nu t})\|u(0)\|_{H^2}.
  \end{align*}
  This proves \eqref{eq:u-uniform0}.

\section{Appendix}

\subsection{Sobolev inequalities}

\begin{Lemma}\label{lem:sob-f}
 If $f$ satisfies $\partial_xf=ikf, |k|\ge 1$, then we have
  \begin{align*}
    \|f\|_{L^2_{x,z}L^\infty_y}+\delta^{\f12}\|\chi_1 f\|_{L^2}+\|f\|_{L^2(\partial\Omega)}\leq C\| f\|_{L^2}^{\f12}\|\nabla f\|_{L^2}^{\f12}\leq C|k|^{-\f12}\|\nabla f\|_{L^2}.
  \end{align*}
 Here $\chi_1$ is  given by Lemma \ref{lem:chi1}, and $C$ is a constant independent of $\lambda, \delta, k$.
\end{Lemma}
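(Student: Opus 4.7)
The plan is to prove the three pieces of the inequality separately, all flowing from a standard one-dimensional interpolation in the $y$ variable, and then to deduce the second (``$\le C|k|^{-1/2}\|\nabla f\|_{L^2}$'') inequality from $\partial_x f = ikf$.

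First I would record the one-dimensional Sobolev trace/interpolation on $(-1,1)$: for any smooth $g$ on $[-1,1]$,
\begin{align*}
\|g\|_{L^\infty_y}^2 \le C\bigl(\|g\|_{L^2_y}\|g'\|_{L^2_y}+\|g\|_{L^2_y}^2\bigr).
\end{align*}
Applying this pointwise in $(x,z)$, squaring and integrating over $\mathbb T^2$, and using Cauchy--Schwarz gives
\begin{align*}
\|f\|_{L^2_{x,z}L^\infty_y}^2 \le C\bigl(\|f\|_{L^2}\|\partial_y f\|_{L^2}+\|f\|_{L^2}^2\bigr).
\end{align*}
Because $\partial_x f = ik f$ with $|k|\ge 1$, we have $\|f\|_{L^2}\le |k|^{-1}\|\partial_x f\|_{L^2}\le \|\nabla f\|_{L^2}$, so the lower-order term satisfies $\|f\|_{L^2}^2\le \|f\|_{L^2}\|\nabla f\|_{L^2}$, yielding the bound on $\|f\|_{L^2_{x,z}L^\infty_y}$.

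For the boundary term, I would note that $\|f\|_{L^2(\partial\Omega)}^2 = \int_{\mathbb T^2}(|f(x,1,z)|^2+|f(x,-1,z)|^2)\,dx\,dz$ is controlled pointwise by $\|f(x,\cdot,z)\|_{L^\infty_y}^2$, so $\|f\|_{L^2(\partial\Omega)}\le \|f\|_{L^2_{x,z}L^\infty_y}$ and the previous step applies. For the weighted term, I would use Lemma \ref{lem:chi1}, which gives $\|\chi_1\|_{L^\infty_{x,z}L^2_y}\le C\delta^{-1/2}$; then
\begin{align*}
\|\chi_1 f\|_{L^2}\le \|\chi_1\|_{L^\infty_{x,z}L^2_y}\|f\|_{L^2_{x,z}L^\infty_y}\le C\delta^{-1/2}\|f\|_{L^2}^{1/2}\|\nabla f\|_{L^2}^{1/2},
\end{align*}
so multiplying by $\delta^{1/2}$ gives the claim.

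Finally, the second inequality $\|f\|_{L^2}^{1/2}\|\nabla f\|_{L^2}^{1/2}\le C|k|^{-1/2}\|\nabla f\|_{L^2}$ is just a rewriting of $\|f\|_{L^2}\le |k|^{-1}\|\nabla f\|_{L^2}$, which follows from $\partial_x f = ik f$. There is no genuine obstacle here; the only point requiring minor care is the one-dimensional interpolation constant on $[-1,1]$, where keeping the $\|g\|_{L^2_y}^2$ lower-order term and absorbing it via the $|k|\ge 1$ hypothesis is what allows the clean Gagliardo--Nirenberg form on the right-hand side.
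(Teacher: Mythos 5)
Your proof is correct and follows essentially the same route as the paper: the one-dimensional interpolation $\|g\|_{L^\infty_y}\le C\|g\|_{L^2_y}^{1/2}\|(\partial_y,1)g\|_{L^2_y}^{1/2}$ integrated over $(x,z)$, the bound $\|f\|_{L^2}\le|k|^{-1}\|\nabla f\|_{L^2}$ from $\partial_xf=ikf$ to absorb the lower-order term, and H\"older against $\|\chi_1\|_{L^\infty_{x,z}L^2_y}\le C\delta^{-1/2}$ for the weighted term. No gaps.
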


\begin{proof}
Due to $\partial_xf=ikf$, we have
\beno
\| f\|_{L^2}\leq|k|\| f\|_{L^2}\leq\|\partial_x f\|_{L^2}\leq\|\nabla f\|_{L^2} .
\eeno
For fixed $x,z,$ we have $\| f\|_{L^\infty_y}\leq C\| f\|_{L^2_y}^{\f12}\|(\partial_y,1) f\|_{L^2_y}^{\f12}, $
which shows that
\begin{align}\label{fL2a}
     &\|f\|_{L^2_{x,z}L^\infty_y}\leq C\| f\|_{L^2}^{\f12}\|(\partial_y,1) f\|_{L^2}^{\f12}\leq C\| f\|_{L^2}^{\f12}\|\nabla f\|_{L^2}^{\f12}\leq C|k|^{-\f12}\|\nabla f\|_{L^2}.
  \end{align}
By Lemma \ref{lem:chi1}, we get
\begin{align*}
     &\delta^{\f12}\|\chi_1 f\|_{L^2}+\|f\|_{L^2(\partial\Omega)}\leq \delta^{\f12}\|\chi_1\|_{L^\infty_{x,z}L^2_y}\|f\|_{L^2_{x,z}L^\infty_y}+2\|f\|_{L^\infty_yL^2_{x,z}}\leq C\|f\|_{L^2_{x,z}L^\infty_y},
  \end{align*}
 which along with \eqref{fL2a}  gives the lemma.
   \end{proof}

\begin{Lemma}\label{lem:sob-Linfty} If $P_0f=0$, then we have
\begin{align*}&\|f\|_{L^{\infty}}\leq C\|(\partial_x,\partial_z)\partial_x f\|_{L^{2}}^{\f12}\|(\partial_x,\partial_z)\nabla f\|_{L^{2}}^{\f12}.
\end{align*}
\end{Lemma}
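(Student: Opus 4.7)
The plan is to Fourier-decompose $f$ in the tangential variables $(x,z)$, reduce to a one-dimensional Agmon inequality in $y$ for each mode, and then recombine via weighted Hölder inequalities in which the convergence of the weight series crucially uses $P_0 f=0$ (i.e.\ $k\ne 0$). Since $P_0f=0$, I would write
$$f(x,y,z)=\sum_{k\ne 0,\,\ell\in\Z}\widehat f_{k,\ell}(y)\,e^{ikx+i\ell z},\qquad \eta=(k^2+\ell^2)^{\f12},$$
so that $\|f\|_{L^\infty}\le \sum_{k\ne 0,\ell}\|\widehat f_{k,\ell}\|_{L^\infty_y}$. For each mode the elementary $1$D interpolation $\|g\|_{L^\infty(-1,1)}^2\le C\|g\|_{L^2}^2+C\|g\|_{L^2}\|g'\|_{L^2}$ gives $\|\widehat f_{k,\ell}\|_{L^\infty_y}\le C(A_{k,\ell}+A_{k,\ell}^{1/2}B_{k,\ell}^{1/2})$ with $A_{k,\ell}=\|\widehat f_{k,\ell}\|_{L^2_y}$ and $B_{k,\ell}=\|\partial_y\widehat f_{k,\ell}\|_{L^2_y}$. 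The task is then to control the two sums $\sum A_{k,\ell}$ and $\sum A_{k,\ell}^{1/2}B_{k,\ell}^{1/2}$.

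For the first sum I would apply Cauchy--Schwarz with weight $(|k|\eta)^{-1}$:
$$\sum_{k\ne 0,\ell} A_{k,\ell}\le\Bigl(\sum_{k\ne 0,\ell}\tfrac{1}{k^2\eta^2}\Bigr)^{\f12}\Bigl(\sum k^2\eta^2A_{k,\ell}^2\Bigr)^{\f12}.$$
The weight is summable because $k\ne 0$: $\sum_{\ell}(k^2+\ell^2)^{-1}\lesssim |k|^{-1}$, so $\sum_{k\ne 0,\ell}(k^2\eta^2)^{-1}\lesssim\sum_{k\ne 0}|k|^{-3}<\infty$. By Plancherel the remaining factor is $C\|(\partial_x,\partial_z)\partial_xf\|_{L^2}$, which is $\le C\|(\partial_x,\partial_z)\partial_xf\|_{L^2}^{1/2}\|(\partial_x,\partial_z)\nabla f\|_{L^2}^{1/2}$ via the trivial monotonicity $\|(\partial_x,\partial_z)\partial_xf\|\le\|(\partial_x,\partial_z)\nabla f\|$.

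For the cross term I would write
$$A_{k,\ell}^{\f12}B_{k,\ell}^{\f12}=\bigl(|k|\eta A_{k,\ell}\bigr)^{\f12}\bigl(\eta B_{k,\ell}\bigr)^{\f12}\cdot\frac{1}{|k|^{\f12}\eta}$$
and apply Hölder with exponents $(4,4,2)$:
$$\sum A_{k,\ell}^{\f12}B_{k,\ell}^{\f12}\le\Bigl(\sum k^2\eta^2A_{k,\ell}^2\Bigr)^{\f14}\Bigl(\sum\eta^2 B_{k,\ell}^2\Bigr)^{\f14}\Bigl(\sum_{k\ne 0,\ell}\tfrac{1}{|k|\eta^2}\Bigr)^{\f12}.$$
Again $k\ne 0$ saves us: $\sum_\ell(k^2+\ell^2)^{-1}\lesssim |k|^{-1}$, giving $\sum(|k|\eta^2)^{-1}\lesssim\sum k^{-2}<\infty$. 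Plancherel turns the first two sums into $C\|(\partial_x,\partial_z)\partial_xf\|_{L^2}^{1/2}\|\partial_y(\partial_x,\partial_z)f\|_{L^2}^{1/2}$, which is bounded by the desired product. Combining the two estimates yields the lemma.

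The main technical obstacle is exactly the sharpness of the $(\partial_x,\partial_z)$-Sobolev embedding: on the two-dimensional tangential torus $\T^2$ one does \emph{not} have $H^1(\T^2)\hookrightarrow L^\infty$, so naive tangential interpolation fails by a logarithmic amount in $\sum_\ell(k^2+\ell^2)^{-1}$. The fix is twofold—using one $\partial_y$ to pull out a $B_{k,\ell}$ factor, and using $P_0f=0$ so that $k\ne 0$, which converts that logarithmic divergence into summable $|k|^{-1}$ decay after summing in $\ell$. Once the two weight series $\sum(k\eta)^{-2}$ and $\sum(|k|\eta^2)^{-1}$ are recognized to converge, the rest is Hölder bookkeeping.
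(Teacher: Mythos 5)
Your proof is correct and is essentially the argument in the paper: both decompose into Fourier modes with $k\neq0$, apply the one-dimensional Agmon inequality in $y$, and close via Hölder with exponents $(4,4,2)$ using the key summability $\sum_{\ell}(k^2+\ell^2)^{-1}\lesssim|k|^{-1}$ (hence $\sum_{k\neq0,\ell}(|k|\eta^2)^{-1}<\infty$). The only cosmetic difference is that the paper keeps the mode-wise bound as the single product $\|f_{k,\ell}\|_{L^2}^{1/2}\|(\partial_y,1)f_{k,\ell}\|_{L^2}^{1/2}$ and does one Hölder application, whereas you split it additively into two sums treated separately.
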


\begin{proof}Let $f_{k,\ell}(y)=\f{1}{2\pi}\int_{\mathbb{T}^2}f(x,y,z)e^{-ikx-i\ell z}dxdz$. Then we have
\begin{align*}
 \|f\|_{L^{\infty}}\leq& C\sum_{k\neq 0;\ell\in\Z}\|f_{k,\ell}\|_{L^{\infty}}\leq C\sum_{k\neq 0;\ell\in\Z}\|f_{k,\ell}\|_{L^{2}}^{1/2}\|(\partial_y,1)f_{k,\ell}\|_{L^{2}}^{1/2}\\ \leq& C\Big(\sum_{k\neq 0;\ell\in\Z}k^2\eta^2\|f_{k,\ell}\|_{L^{2}}^{2}\Big)^{\f14}\Big(\sum_{k\neq 0;\ell\in\Z}\eta^2\|(\partial_y,1)f_{k,\ell}\|_{L^{2}}^{2}\Big)^{\f14}\Big(\sum_{k\neq 0;\ell\in\Z}\frac{1}{|k|(k^2+\ell^2)}\Big)^{\f12}\\ \leq& C\|(\partial_x,\partial_z)\partial_x f\|_{L^{2}}^{\f12}\|(\partial_x,\partial_z)\nabla f\|_{L^{2}}^{\f12}\Big(\sum_{k\neq 0}\frac{1}{|k|^2}\Big)^{\f12}\leq C\|(\partial_x,\partial_z)\partial_x f\|_{L^{2}}^{\f12}\|(\partial_x,\partial_z)\nabla f\|_{L^{2}}^{\f12}.
\end{align*}
Here $\eta^2=k^2+\ell^2 $ and we used the fact that for $k\neq 0$,
\begin{align*}
  &\sum_{\ell\in\Z}\frac{1}{k^2+\ell^2}\leq \frac{1}{k^2}+\int_{\R}\frac{dz}{k^2+z^2}\leq\frac{1}{k^2}+\frac{\pi}{|k|}\leq\frac{\pi+1}{|k|}.
\end{align*}
\end{proof}

 \subsection{Elliptic estimates with the weight}

 \begin{Lemma}\label{lem:elliptic-weight}
  Let $\varphi$ be a unique solution of $\Delta \varphi=w,\ \varphi(\pm1)=0$.
It holds that
  \begin{align*}
    &\|\nabla\varphi\|_{L^2} \leq C|k|^{-\f12}\|w\|_{L^2_{x,z}L^1_y},\\
    &\|\nabla\varphi\|_{L^2}\leq C\|(1-y^2)w\|_{L^2},\\
    &\|\varphi\|_{L^2}\leq C|k|^{-\f12}\|(1-y^2)w\|_{L^2_{x,z}L^1_y}.
  \end{align*}
 Here $C$ is a constant independent of $k$.
\end{Lemma}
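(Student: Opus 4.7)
The three inequalities all follow by testing the Poisson equation $\Delta\varphi=w$ against a well-chosen weight, then absorbing one factor of $\|\nabla\varphi\|_{L^2}$ or $\|\varphi\|_{L^2}$. Throughout I will use that $\partial_x\varphi=ik\varphi$ (inherited from $\partial_xw=ikw$ through uniqueness of the Dirichlet problem), together with Lemma~\ref{lem:sob-f}, which gives $\|f\|_{L^2_{x,z}L^\infty_y}\le C|k|^{-1/2}\|\nabla f\|_{L^2}$ whenever $\partial_xf=ikf$.

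For \textbf{(1)}, I would integrate by parts to get $\|\nabla\varphi\|_{L^2}^2=-\langle w,\varphi\rangle$, then apply H\"older in the $y$--variable:
\[
\|\nabla\varphi\|_{L^2}^2\le \|w\|_{L^2_{x,z}L^1_y}\|\varphi\|_{L^2_{x,z}L^\infty_y}.
\]
Since $\partial_x\varphi=ik\varphi$, Lemma~\ref{lem:sob-f} bounds $\|\varphi\|_{L^2_{x,z}L^\infty_y}\le C|k|^{-1/2}\|\nabla\varphi\|_{L^2}$, and absorbing $\|\nabla\varphi\|_{L^2}$ gives the first estimate.

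For \textbf{(2)}, the same pairing yields $\|\nabla\varphi\|_{L^2}^2=-\langle(1-y^2)w,\varphi/(1-y^2)\rangle$. Since $\varphi|_{y=\pm 1}=0$, Hardy's inequality gives $\|\varphi/(1-y^2)\|_{L^2}\le C\|\partial_y\varphi\|_{L^2}\le C\|\nabla\varphi\|_{L^2}$; Cauchy--Schwarz and absorption yield the second inequality.

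For \textbf{(3)}, which I expect to be the main obstacle, I would use a double integration-by-parts trick. Let $\phi$ solve $\Delta\phi=\varphi$ with $\phi|_{y=\pm 1}=0$; then $\partial_x\phi=ik\phi$ as well. Two integrations by parts give
\[
\|\varphi\|_{L^2}^2=\langle\Delta\phi,\varphi\rangle=\langle\phi,w\rangle \le \|(1-y^2)w\|_{L^2_{x,z}L^1_y}\,\bigl\|\phi/(1-y^2)\bigr\|_{L^2_{x,z}L^\infty_y}.
\]
The remaining task is to show
\[
\bigl\|\phi/(1-y^2)\bigr\|_{L^2_{x,z}L^\infty_y}\le C|k|^{-1/2}\|\varphi\|_{L^2}.
\]
For each fixed $(x,z)$, since $\phi(\pm 1)=0$, the pointwise bound $|\phi(y)|/(1-y^2)\le C\|\partial_y\phi\|_{L^\infty_y}$ holds; so it suffices to control $\|\partial_y\phi\|_{L^2_{x,z}L^\infty_y}$. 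Applying Lemma~\ref{lem:sob-f} to $\partial_y\phi$ (which still satisfies $\partial_x(\partial_y\phi)=ik\partial_y\phi$) gives
\[
\|\partial_y\phi\|_{L^2_{x,z}L^\infty_y}\le C|k|^{-1/2}\|\nabla\partial_y\phi\|_{L^2},
\]
and the standard $H^2$ elliptic estimate on the slab for the Dirichlet problem furnishes $\|\nabla^2\phi\|_{L^2}\le C\|\Delta\phi\|_{L^2}=C\|\varphi\|_{L^2}$. Combining and absorbing one factor of $\|\varphi\|_{L^2}$ completes the argument.

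The only delicate point is the pointwise Hardy bound $|\phi|/(1-y^2)\lesssim\|\partial_y\phi\|_{L^\infty_y}$, which I would justify by writing $\phi(y)=\int_{\pm 1}^{y}\partial_y\phi$ near each endpoint and combining via a smooth cutoff; this is the step to treat with care.
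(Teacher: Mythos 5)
Your proposal is correct and follows essentially the same route as the paper: integration by parts plus Lemma~\ref{lem:sob-f} for the first estimate, Hardy's inequality for the second, and the auxiliary potential $\phi$ with $\Delta\phi=\varphi$, the pointwise bound $|\phi|/(1-|y|)\le\|\partial_y\phi\|_{L^\infty_y}$, and the $H^2$ elliptic estimate for the third. The "delicate point" you flag needs no cutoff — writing $\phi(y)=\int_{-1}^{y}\partial_y\phi$ for $y\le 0$ and $\phi(y)=-\int_{y}^{1}\partial_y\phi$ for $y\ge 0$ already gives $|\phi(y)|\le(1-|y|)\|\partial_y\phi\|_{L^\infty_y}$ on all of $[-1,1]$.
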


\begin{proof}
Thanks to $\partial_xw=ikw$ and $\Delta\varphi=w$, we have $\partial_x\varphi=ik\varphi.$ By Lemma \ref{lem:sob-f}, we have
\beno
\|\varphi\|_{L^2_{x,z}L^\infty_y}\leq C|k|^{-\f12}\|\nabla \varphi\|_{L^2},
\eeno
 and by integration by parts,
 \begin{align*}
    \|\nabla\varphi\|_{L^2}^2=&-\langle \varphi,\Delta\varphi\rangle=-\langle \varphi,w\rangle\\
    \leq&\|\varphi\|_{L^2_{x,z}L^\infty_y}\|w\|_{L^2_{x,z}L^1_y}\leq C|k|^{-\f12}\|\nabla\varphi\|_{L^2}\|w\|_{L^2_{x,z}L^1_y},
  \end{align*}
 which gives the first inequality.

 Due to $\varphi|_{y=\pm1}=0,$ we get by Hardy's inequality  that
 \begin{align*}
    \|\nabla\varphi\|_{L^2}^2=-\langle \varphi,w\rangle&\leq\|\varphi/(1-y^2)\|_{L^2}\|(1-y^2)w\|_{L^2}\\
    &\leq C\|\nabla\varphi\|_{L^2}\|(1-y^2)w\|_{L^2},
  \end{align*}
  which gives the second inequality.

  Let $ \phi$ solve $\Delta\phi=\varphi, \phi|_{y=\pm1}=0$. Then we have
  \beno
  \partial_x\phi=ik\phi,\quad \|\phi\|_{H^2}\leq C\|\varphi\|_{L^2}.
  \eeno
 As $\phi|_{y=\pm1}=0,$ for fixed $x,z,$ we have $\| \phi/(1-|y|)\|_{L^\infty_y}\leq \|\partial_y \phi\|_{L^\infty_y},$ and then
 \beno
 \| \phi/(1-|y|)\|_{L^2_{x,z}L^\infty_y}\leq \|\partial_y \phi\|_{L^2_{x,z}L^\infty_y},
 \eeno
 and by Lemma \ref{lem:sob-f}, we have
 \begin{align*}
    &\|\partial_y \phi\|_{L^2_{x,z}L^\infty_y}\leq C|k|^{-\f12}\|\partial_y^2\phi\|_{L^2}\leq C|k|^{-\f12}\| \phi\|_{H^2}\leq C|k|^{-\f12}\| \varphi\|_{L^2}.
  \end{align*}
This shows that
\beno
 \| \phi/(1-|y|)\|_{L^2_{x,z}L^\infty_y}\leq C|k|^{-\f12}\| \varphi\|_{L^2},
\eeno
and then,
 \begin{align*}
    \|\varphi\|_{L^2}^2&=\langle \varphi,\Delta\phi\rangle=\langle \Delta\varphi,\phi\rangle=\langle w,\phi\rangle\\&\leq\|(1-|y|)w\|_{L^2_{x,z}L^1_y}\| \phi/(1-|y|)\|_{L^2_{x,z}L^\infty_y}\\&\leq C\|(1-y^2)w\|_{L^2_{x,z}L^1_y}|k|^{-\f12}\| \varphi\|_{L^2},
  \end{align*}
 which gives the third inequality.
 \end{proof}

\begin{Lemma}\label{lem:ell-w}
 Let $\varphi$ be a unique solution of $(\partial_y^2-\eta^2)\varphi=w,\ \varphi(\pm1)=0$.
 Then it holds that
  \begin{align*}&C^{-1}|\eta|\|\varphi\|_{L^{\infty}}^2\leq\|\varphi'\|_{L^2}^2+\eta^2\|\varphi\|_{L^2}^2=\langle-w,\varphi\rangle\le C|\eta|^{-1}\|w\|_{L^1}^2,\\
    &\|\varphi\|_{L^2}  \leq C\eta^{-\f12}\|(1-|y|)w\|_{L^1},\\
    &\|(\partial_y,\eta)\varphi\|_{L^2}\leq C\|(1-|y|)w\|_{L^2}.
  \end{align*}
\end{Lemma}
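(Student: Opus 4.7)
\medskip
\noindent\textbf{Proof proposal for Lemma \ref{lem:ell-w}.}

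The plan is to treat the three estimates in the order (1), (3), (2), using only integration by parts, a one-dimensional Sobolev inequality tuned to the scale $\eta$, and Hardy's inequality; the second estimate will be obtained by duality from the first together with a higher-order elliptic bound.

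For (1), the middle equality is immediate: multiplying $(\partial_y^2-\eta^2)\varphi=w$ by $-\bar\varphi$ and integrating by parts on $(-1,1)$, the boundary terms drop out because $\varphi(\pm1)=0$. For the pointwise control, the fundamental theorem of calculus together with $\varphi(\pm1)=0$ gives $|\varphi(y)|^2\leq 2\|\varphi'\|_{L^2}\|\varphi\|_{L^2}$; applying Young's inequality in the form $2\|\varphi'\|_{L^2}\|\varphi\|_{L^2}\leq \eta^{-1}\|\varphi'\|_{L^2}^2+\eta\|\varphi\|_{L^2}^2$ yields $|\eta|\|\varphi\|_{L^\infty}^2\leq\|\varphi'\|_{L^2}^2+\eta^2\|\varphi\|_{L^2}^2$, which is the left inequality. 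For the right inequality, bound
\[
\langle -w,\varphi\rangle \leq \|w\|_{L^1}\|\varphi\|_{L^\infty}
\leq \|w\|_{L^1}\,|\eta|^{-\f12}\bigl(\|\varphi'\|_{L^2}^2+\eta^2\|\varphi\|_{L^2}^2\bigr)^{\f12},
\]
and absorb the square root into the equality to obtain $\|\varphi'\|_{L^2}^2+\eta^2\|\varphi\|_{L^2}^2\leq C|\eta|^{-1}\|w\|_{L^1}^2$.

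For (3), integration by parts again gives $\|\varphi'\|_{L^2}^2+\eta^2\|\varphi\|_{L^2}^2=-\langle w,\varphi\rangle$. Writing $\langle w,\varphi\rangle=\langle(1-|y|)w,\varphi/(1-|y|)\rangle$ and invoking Hardy's inequality (valid since $\varphi(\pm1)=0$) $\|\varphi/(1-|y|)\|_{L^2}\leq C\|\varphi'\|_{L^2}$ gives
\[
\|\varphi'\|_{L^2}^2+\eta^2\|\varphi\|_{L^2}^2\leq C\|(1-|y|)w\|_{L^2}\|\varphi'\|_{L^2},
\]
so $\|(\partial_y,\eta)\varphi\|_{L^2}\leq C\|(1-|y|)w\|_{L^2}$.

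For (2), I will use duality: let $\phi$ solve $(\partial_y^2-\eta^2)\phi=\varphi$ with $\phi(\pm1)=0$. Two integrations by parts give $\|\varphi\|_{L^2}^2=\langle w,\phi\rangle$, and H\"older with the weight $(1-|y|)$ yields $\|\varphi\|_{L^2}^2\leq\|(1-|y|)w\|_{L^1}\|\phi/(1-|y|)\|_{L^\infty}$. Since $\phi(\pm1)=0$, we have $\|\phi/(1-|y|)\|_{L^\infty}\leq\|\phi'\|_{L^\infty}$, so the point is to show $\|\phi'\|_{L^\infty}\leq C\eta^{-\f12}\|\varphi\|_{L^2}$. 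Applying (1) to $\phi$ (with $\varphi$ in the role of $w$) and Cauchy-Schwarz on $\langle\varphi,\phi\rangle$ gives the sharp estimates $\eta\|\phi'\|_{L^2}+\eta^2\|\phi\|_{L^2}\leq C\|\varphi\|_{L^2}$; combining with the equation $\phi''=\varphi+\eta^2\phi$ yields $\|\phi''\|_{L^2}\leq C\|\varphi\|_{L^2}$. Then the standard one-dimensional Gagliardo-Nirenberg inequality $\|\phi'\|_{L^\infty}^2\leq C(\|\phi'\|_{L^2}\|\phi''\|_{L^2}+\|\phi'\|_{L^2}^2)$ gives $\|\phi'\|_{L^\infty}^2\leq C\eta^{-1}\|\varphi\|_{L^2}^2$, as desired.

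The only mild subtlety, and the step that most rewards care, is the last chain of inequalities for $\|\phi'\|_{L^\infty}$: one must not lose the factor $\eta^{-1/2}$, which requires using the $\eta$-weighted $H^2$-bound on $\phi$ rather than a bare $H^2$-bound. Everything else is routine one-dimensional integration by parts and Hardy's inequality, and in particular no explicit use of the Green's function is needed.
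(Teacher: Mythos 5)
Your proof is correct and follows essentially the same route as the paper: the interpolation $\|\varphi\|_{L^\infty}^2\le C\|\varphi'\|_{L^2}\|\varphi\|_{L^2}$ plus the energy identity for (1), Hardy's inequality for (3), and duality through the auxiliary solution of $(\partial_y^2-\eta^2)\phi=\varphi$ with the bound $\|\phi/(1-|y|)\|_{L^\infty}\le\|\phi'\|_{L^\infty}\le C\eta^{-1/2}\|\varphi\|_{L^2}$ for (2). The only cosmetic difference is that the paper obtains the $\eta$-weighted $H^2$ control of $\phi$ from the identity $\|\varphi\|_{L^2}^2=\|\phi''\|_{L^2}^2+2\eta^2\|\phi'\|_{L^2}^2+\eta^4\|\phi\|_{L^2}^2$, while you derive the same bounds from part (1) applied to $\phi$ together with the equation; both yield the needed factor $\eta^{-1/2}$.
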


\begin{proof}
The first inequality follows from
\begin{align*}&\|\varphi\|_{L^{\infty}}\leq C\|\varphi'\|^{\f12}_{L^2}\|\varphi\|^{\f12}_{L^2}\leq C\big(|\eta|^{-1}\|\varphi'\|_{L^2}^2+|\eta|\|\varphi\|_{L^2}^2\big)^{\f12}=C|\eta|^{-\f12}\|(\partial_y,\eta)\varphi\|_{L^2},\\
&\|\varphi'\|_{L^2}^2+\eta^2\|\varphi\|_{L^2}^2=\langle-w,\varphi\rangle\leq\|w\|_{L^1}\|\varphi\|_{L^{\infty}}\leq C|\eta|^{-\f12}\|w\|_{L^1}\|(\partial_y,\eta)\varphi\|_{L^2}.
\end{align*}

Let $ \varphi_1$ solve  $(\partial_y^2-\eta^2)\varphi_1=\varphi,\ \varphi_1(\pm1)=0$. Then we have
$$\|\varphi\|_{L^2}^2=\eta^4\|\varphi_1\|_{L^2}^2+2\eta^2\|\partial_y\varphi_1\|_{L^2}^2+\|\partial^2_y\varphi_1\|_{L^2}^2.$$

Since $\varphi_1(y)=\int_{-1}^{y}\varphi_1'(z)dz $ for $y\in[-1,0] $, $\varphi_1(y)=-\int_{y}^{1}\varphi_1'(z)dz $ for $y\in[0,1] $, we conclude
  \begin{align*}
    \left\|\f{\varphi_1}{1-|y|}\right\|_{L^\infty} \leq& \|\partial_y\varphi_1\|_{L^\infty}\leq C\|\partial_y\varphi_1\|_{L^2}^{\f12} \|\partial^2_y\varphi_1\|_{L^2}^{\f12}
     \leq C\eta^{-\f12}\|\varphi\|_{L^2},
  \end{align*}
which gives
  \begin{align*}
    \|\varphi\|^2_{L^2} =&\left|\big\langle \varphi_1,w \big\rangle\right|
    \leq \left\|\f{\varphi_1}{1-|y|}\right\|_{L^\infty}\|(1-|y|)w\|_{L^1}\\
    \leq &C\eta^{-\f12}\|\varphi\|_{L^2}\|(1-|y|)w\|_{L^1},
  \end{align*}
 and thus the second inequality.

By Hardy's inequality, we get
\begin{align*}
    \|(\partial_y,\eta)\varphi\|_{L^2}^2=&|\langle \varphi,w \rangle|\leq \left\|\f{\varphi}{1-|y|}\right\|_{L^2}\|(1-|y|)w\|_{L^2}\\
    \leq &C\|\partial_y\varphi\|_{L^2}\|(1-|y|)w\|_{L^2},
\end{align*}
which gives the third inequality.
\end{proof}

\subsection{Some basic properties of harmonic function}

\begin{Lemma}\label{lem1.harmonic}
If $f$ is a harmonic function in $\Omega:=\mathbb{T}\times[-1,1]\times\mathbb{T}$, then we have
\begin{align*}
&\|\nabla f\|_{L^2}\leq C\|\partial_yf\|_{L^2}.
\end{align*}
\end{Lemma}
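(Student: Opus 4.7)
\textbf{Proof proposal for Lemma \ref{lem1.harmonic}.}

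The plan is to expand $f$ in a Fourier series in the periodic variables and reduce the inequality to a uniform one-dimensional bound for each Fourier mode. Writing
\[
f(x,y,z)=\sum_{k,\ell\in\mathbb{Z}}\hat f_{k,\ell}(y)\,e^{ikx+i\ell z},
\]
Plancherel's theorem gives
\[
\|\nabla f\|_{L^2(\Omega)}^2=(2\pi)^2\sum_{k,\ell}\int_{-1}^{1}\Bigl(\eta^2|\hat f_{k,\ell}|^2+|\partial_y\hat f_{k,\ell}|^2\Bigr)\,dy,\qquad \|\partial_y f\|_{L^2(\Omega)}^2=(2\pi)^2\sum_{k,\ell}\int_{-1}^{1}|\partial_y\hat f_{k,\ell}|^2\,dy,
\]
where $\eta=\sqrt{k^2+\ell^2}$. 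Hence it suffices to find a constant $C$, independent of $(k,\ell)$, such that
\[
\eta^2\int_{-1}^{1}|\hat f_{k,\ell}|^2\,dy\le (C-1)\int_{-1}^{1}|\partial_y\hat f_{k,\ell}|^2\,dy.
\]

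Since $\Delta f=0$, the mode $\hat f_{k,\ell}$ solves $\partial_y^2\hat f_{k,\ell}=\eta^2\hat f_{k,\ell}$ on $(-1,1)$. For $(k,\ell)=(0,0)$ the mode is affine, $\nabla f_{0,0}=(0,\partial_y f_{0,0},0)$, and the inequality is trivial. For $(k,\ell)\neq(0,0)$ write $\hat f_{k,\ell}(y)=a\cosh(\eta y)+b\sinh(\eta y)$. A direct computation using the orthogonality $\int_{-1}^{1}\cosh(\eta y)\sinh(\eta y)\,dy=0$ and the identities $\int_{-1}^{1}\cosh^2(\eta y)\,dy=1+\frac{\sinh(2\eta)}{2\eta}$, $\int_{-1}^{1}\sinh^2(\eta y)\,dy=-1+\frac{\sinh(2\eta)}{2\eta}$ yields the key identity
\[
\eta^2\int_{-1}^{1}|\hat f_{k,\ell}|^2\,dy-\int_{-1}^{1}|\partial_y\hat f_{k,\ell}|^2\,dy=2\eta^2\bigl(|a|^2-|b|^2\bigr).
\]

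From this identity the bound is immediate when $|a|\le|b|$. In the remaining case $|a|>|b|$ I will use
\[
\int_{-1}^{1}|\partial_y\hat f_{k,\ell}|^2\,dy\ge \eta^2|a|^2\left(\frac{\sinh(2\eta)}{2\eta}-1\right),
\]
together with the elementary fact that $\eta\mapsto\frac{\sinh(2\eta)}{2\eta}-1$ is increasing on $(0,\infty)$, so that for every integer pair $(k,\ell)\neq(0,0)$ we have $\eta\ge 1$ and hence $\frac{\sinh(2\eta)}{2\eta}-1\ge \frac{\sinh 2}{2}-1>0$. This gives the mode-wise inequality with the uniform constant $C=1+\frac{4}{\sinh 2-2}$, and summing over $(k,\ell)$ completes the proof.

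The only genuine issue is verifying that the constant does not degenerate, which amounts to observing that the lowest nonzero horizontal frequency on $\mathbb{T}\times\mathbb{T}$ is $\eta=1$; this is where the discrete Fourier setting is used, and it is the one place where the argument would need to be revisited if the domain were changed (e.g.\ to $\mathbb{R}\times[-1,1]\times\mathbb{T}$, where long wave effects in $x$ are precisely the difficulty the authors flag in Remark~4 of the introduction).
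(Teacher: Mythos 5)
Your proof is correct and follows essentially the same route as the paper's: Fourier decomposition in $(x,z)$, explicit solution of the mode ODE $\partial_y^2\hat f_{k,\ell}=\eta^2\hat f_{k,\ell}$, and a uniform mode-wise comparison that exploits $\eta\ge 1$ for nonzero modes (the paper uses the basis $e^{\pm\eta y}$ and a comparability estimate, you use $\cosh/\sinh$ and the clean identity $\eta^2\|\hat f\|_{L^2_y}^2-\|\partial_y\hat f\|_{L^2_y}^2=2\eta^2(|a|^2-|b|^2)$, which is a matter of taste). The only blemish is a harmless off-by-one in your final constant ($C-1$, not $C$, equals $1+\tfrac{4}{\sinh 2-2}$), which does not affect the argument.
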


\begin{proof}
As $\Delta f=0$,  we get by taking Fourier transform in $x,z$  that
\begin{align*}
\big(\partial_y^2-(k^2+\ell^2)\big){f}_{k,\ell}(y)=0,
\end{align*}
which means that
\begin{align*}
{f}_{k,\ell}=C_1e^{\sqrt{k^2+\ell^2}y}+C_2e^{-\sqrt{k^2+\ell^2}y},\quad {f}_{k,\ell}'=\sqrt{k^2+\ell^2}\Big(C_1e^{\sqrt{k^2+\ell^2}y}-C_2e^{-\sqrt{k^2+\ell^2}y}\Big),
\end{align*}
where $C_j=a_j+ib_j,a_j,b_j\in\mathbb{R}(j=1,2)$.

For $k^2+{\ell}^2\geq1$, we have
\begin{align*}
&\int_{-1}^1\big|C_1e^{\sqrt{k^2+\ell^2}y}\pm C_2e^{-\sqrt{k^2+\ell^2}y}\big|^2dy\\
&=\int_{-1}^1\big|a_1e^{\sqrt{k^2+\ell^2}y}\pm a_2e^{-\sqrt{k^2+\ell^2}y}\big|^2dy+\int_{-1}^1\big|b_1e^{\sqrt{k^2+\ell^2}y}\pm b_2e^{-\sqrt{k^2+\ell^2}y}\big|^2dy\\
&=\f{a_1^2+a_2^2}{2\sqrt{k^2+\ell^2}}\Big(e^{2\sqrt{k^2+\ell^2}}-e^{-2\sqrt{k^2+\ell^2}}\Big)\pm 4a_1a_2+\f{b_1^2+b_2^2}{2\sqrt{k^2+\ell^2}}\Big(e^{2\sqrt{k^2+\ell^2}}-e^{-2\sqrt{k^2+\ell^2}}\Big)\pm 4b_1b_2\\
&\sim \f{a_1^2+a_2^2}{\sqrt{k^2+\ell^2}}\Big(e^{2\sqrt{k^2+\ell^2}}-e^{-2\sqrt{k^2+\ell^2}}\Big)+\f{b_1^2+b_2^2}{\sqrt{k^2+\ell^2}}
\Big(e^{2\sqrt{k^2+\ell^2}}-e^{-2\sqrt{k^2+\ell^2}}\Big),
\end{align*}
which implies
\begin{align*}
(k^2+\ell^2)\|{f}_{k,\ell}\|_{L_y^2}^2\leq C\|{f}_{k,\ell}'\|_{L_y^2}^2,
\end{align*}
and then the lemma follows by using Plancherel's formula.
\end{proof}

\begin{Lemma}\label{delta f}If $f\in H^2(\Omega),\ \partial_xf=ikf,\ |k|\geq 1$,  then we have
\begin{align*}
\|\nabla f\|_{L^2}\leq C\big(\|\partial_yf\|_{L^2}+|k|^{-1}\|\Delta f\|_{L^2}\big).
\end{align*}
\end{Lemma}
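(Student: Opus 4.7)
The plan is to reduce the estimate to the harmonic case handled by Lemma \ref{lem1.harmonic} via a decomposition that isolates the inhomogeneous part. Specifically, I would let $g$ be the unique solution of
\begin{align*}
\Delta g = \Delta f \quad \text{in } \Omega, \qquad g|_{y=\pm 1}=0,
\end{align*}
and then set $f_H := f - g$, so that $f_H$ is harmonic in $\Omega$. Since $\partial_x f = ikf$, both $g$ and $f_H$ inherit $\partial_x g = ikg$ and $\partial_x f_H = ikf_H$ (uniqueness for the Dirichlet problem in each Fourier mode).

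The first step is to bound $g$. Integrating by parts (legitimate because $g$ vanishes on $y=\pm 1$ and is periodic in $x,z$),
\begin{align*}
\|\nabla g\|_{L^2}^2 = -\langle g,\Delta g\rangle \le \|g\|_{L^2}\|\Delta f\|_{L^2}.
\end{align*}
Because $\partial_x g = ikg$ with $|k|\ge 1$, one has $\|g\|_{L^2}\le |k|^{-1}\|\partial_x g\|_{L^2}\le |k|^{-1}\|\nabla g\|_{L^2}$, so dividing through gives
\begin{align*}
\|\nabla g\|_{L^2} \le |k|^{-1}\|\Delta f\|_{L^2}.
\end{align*}

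The second step applies Lemma \ref{lem1.harmonic} to the harmonic remainder $f_H$, yielding $\|\nabla f_H\|_{L^2}\le C\|\partial_y f_H\|_{L^2}$. Combining with the triangle inequality and the bound on $g$,
\begin{align*}
\|\nabla f\|_{L^2} &\le \|\nabla f_H\|_{L^2} + \|\nabla g\|_{L^2} \le C\|\partial_y f_H\|_{L^2} + \|\nabla g\|_{L^2} \\
&\le C\|\partial_y f\|_{L^2} + C\|\partial_y g\|_{L^2} + \|\nabla g\|_{L^2} \le C\big(\|\partial_y f\|_{L^2}+|k|^{-1}\|\Delta f\|_{L^2}\big),
\end{align*}
which is the desired inequality.

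There is no real obstacle: the only subtle point is checking that Lemma \ref{lem1.harmonic} applies to $f_H$, which requires that $f_H\in H^1(\Omega)$ is harmonic (both are automatic from $f,g\in H^2$), and that the constant $C$ there is independent of $k$ (it is, since the proof goes mode by mode in $k,\ell$ with $k^2+\ell^2\ge 1$). The hypothesis $|k|\ge 1$ is used precisely to obtain the Poincar\'e-type inequality $\|g\|_{L^2}\le |k|^{-1}\|\nabla g\|_{L^2}$ that converts $\|\Delta f\|_{L^2}$ into a gradient bound with the correct weight $|k|^{-1}$.
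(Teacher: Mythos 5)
Your proof is correct and follows essentially the same route as the paper: the same decomposition of $f$ into a harmonic part plus a part solving the Dirichlet problem with data $\Delta f$, with Lemma \ref{lem1.harmonic} applied to the harmonic remainder. The only (harmless) variation is that you bound $\|\nabla g\|_{L^2}$ by an energy estimate combined with $\|g\|_{L^2}\le |k|^{-1}\|\nabla g\|_{L^2}$, whereas the paper uses $|k|\|\nabla f_2\|_{L^2}=\|\partial_x\nabla f_2\|_{L^2}\le\|\nabla^2 f_2\|_{L^2}\le C\|\Delta f\|_{L^2}$.
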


\begin{proof}
We decompose $f=f_1+f_2$, where $f_1,f_2$ solve
\begin{align*}
&\Delta f_1=0,\quad
\Delta f_2=\Delta f,\quad f_2|_{y=\pm1}=0.
 \end{align*}
It is easy to see that
 \begin{align}
\label{f1f2a} |k|\|\nabla f_2\|_{L^2}=\|\partial_x\nabla f_2\|_{L^2}\leq\|\nabla^2f_2\|_{L^2}\leq C\|\Delta f\|_{L^2}.
 \end{align}
 Thanks to Lemma \ref{lem1.harmonic} and \eqref{f1f2a}, we have
\begin{align*}
\|\nabla f_1\|\leq C\| \partial_yf_1\|_{L^2}\leq C\big(\| \partial_yf\|_{L^2}+\| \partial_yf_2\|_{L^2}\big)\leq C\big(\| \partial_yf\|_{L^2}+|k|^{-1}\|\Delta f\|_{L^2}\big),
\end{align*}
which gives
\begin{align*}
\|\nabla f\|\leq\|\nabla f_1\|+\|\nabla f_2\|_{L^2}\leq C\big(\| \partial_yf\|_{L^2}+|k|^{-1}\|\Delta f\|_{L^2}\big).
\end{align*}
\end{proof}

\begin{Lemma}\label{the estimate of delta bar{p}}If $f$ is a function in $[-1,1]\times2\pi\mathbb{T}$, i.e. $f=f(y,z)$,  then we have
\begin{align*}
\|\partial_y^2f\|_{L^2}+\|\partial_z^2f\|_{L^2}\leq C\big(\|\partial_z\partial_yf\|_{L^2}+\|\Delta f\|_{L^2}\big).
\end{align*}
\end{Lemma}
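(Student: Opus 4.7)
The plan is to reduce the two–dimensional inequality to a family of one–dimensional inequalities via Fourier series in $z$, and then to analyze each $z$–mode by splitting the associated function into a ``Dirichlet part'' and a ``harmonic part'' (kernel of $\partial_y^2-\ell^2$).

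First I would expand $f(y,z)=\sum_{\ell\in\Z}\hat f_\ell(y) e^{i\ell z}$ and apply Plancherel. Setting $h=\hat f_\ell$ and $w=h''-\ell^2 h$, the claim reduces to proving, uniformly in $\ell\in\Z$,
\[
\|h''\|_{L^2(I)}^2+\ell^4\|h\|_{L^2(I)}^2\leq C\bigl(\ell^2\|h'\|_{L^2(I)}^2+\|w\|_{L^2(I)}^2\bigr), \qquad I=(-1,1).
\]
The case $\ell=0$ is immediate since $w=h''$. For $\ell\neq 0$, the identity $h''=w+\ell^2 h$ gives $\|h''\|^2\leq 2\|w\|^2+2\ell^4\|h\|^2$, so everything is reduced to the bound $\ell^4\|h\|^2\leq C(\ell^2\|h'\|^2+\|w\|^2)$.

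The key step is to decompose $h=h_p+h_0$, where $h_p$ is the unique Dirichlet solution of
\[
h_p''-\ell^2 h_p=w,\qquad h_p(\pm 1)=0,
\]
and $h_0=\alpha e^{\ell y}+\beta e^{-\ell y}$ lies in $\ker(\partial_y^2-\ell^2)$. For $h_p$, the standard energy identity (multiply by $\bar h_p$ and integrate by parts, using the vanishing boundary data) yields $\ell^4\|h_p\|^2\leq \|w\|^2$ and $\ell^2\|h_p'\|^2\leq C\|w\|^2$. For $h_0$, a direct calculation with $\int_{-1}^1 e^{2\ell y}\,dy=\sinh(2\ell)/\ell$ (and the orthogonality $\int_{-1}^1e^{\ell y}e^{-\ell y}\,dy=2$) gives the two identities
\[
\ell^4\|h_0\|^2+\ell^2\|h_0'\|^2=2\ell^3\bigl(|\alpha|^2+|\beta|^2\bigr)\sinh(2\ell),\qquad \ell^4\|h_0\|^2-\ell^2\|h_0'\|^2=8\ell^4\,\mathrm{Re}(\alpha\bar\beta).
\]
Since $|\mathrm{Re}(\alpha\bar\beta)|\leq(|\alpha|^2+|\beta|^2)/2$ and $\sinh(2\ell)-2\ell\geq\sinh 2-2>0$ for $|\ell|\geq 1$, subtracting the identities shows $\ell^2\|h_0'\|^2\geq(\sinh(2\ell)-2\ell)\,\ell^3(|\alpha|^2+|\beta|^2)\geq c\ell^4(|\alpha|^2+|\beta|^2)$ for a constant $c>0$ independent of $\ell$; feeding this back into the first identity gives $\ell^4\|h_0\|^2\leq C\ell^2\|h_0'\|^2$.

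Finally, assembling via $\|h_0'\|\leq\|h'\|+\|h_p'\|$ and the $h_p$ estimates,
\[
\ell^4\|h\|^2\leq 2\ell^4\|h_0\|^2+2\ell^4\|h_p\|^2\leq C\bigl(\ell^2\|h'\|^2+\|w\|^2\bigr),
\]
which closes the bound and, combined with $\|h''\|^2\leq 2\|w\|^2+2\ell^4\|h\|^2$, proves the lemma. The main obstacle is verifying that the spectral constant for the kernel piece $h_0$ is uniform in $|\ell|\geq 1$; this comes down to the elementary but essential inequality $\sinh(2\ell)-2\ell\geq c\ell$ for $|\ell|\geq 1$, which handles both the asymptotically easy regime $|\ell|\gg 1$ (where $\sinh(2\ell)$ grows exponentially) and the delicate threshold at $|\ell|=1$.
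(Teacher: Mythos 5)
Your proof is correct and follows essentially the same route as the paper's: the paper decomposes $f=f_1+f_2$ into a harmonic part and a Dirichlet part and then applies Lemma \ref{lem1.harmonic} (itself proved by the same explicit $e^{\pm\eta y}$ mode computation) to $\partial_z f_1$, which is precisely your per-frequency split $h=h_p+h_0$ together with the uniform lower bound coming from $\sinh(2\ell)-2\ell>0$. The only difference is organizational: you carry out the Fourier analysis in $z$ directly and keep the argument self-contained, rather than routing the kernel estimate through the separately stated harmonic-function lemma.
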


\begin{proof}
We make the same decomposition for $f$ as in Lemma \ref{delta f}.
First of all, we have
 \begin{align}
\label{tri4.1} \|\partial_y^2f_2\|_{L^2}+2\|\partial_z\partial_yf_2\|_{L^2}+\|\partial_z^2f_2\|_{L^2}\leq C\|\Delta f\|_{L^2}.
 \end{align}
 Thanks to Lemma \ref{lem1.harmonic}, we have
 \begin{align*}
 \|\partial_z^2f_1\|\leq C\| \partial_z\partial_yf_1\|_{L^2},
 \end{align*}
which along with \eqref{tri4.1} gives
\begin{align*}
\|\partial_z^2f_1\|\leq C\| \partial_z\partial_yf_1\|_{L^2}\leq C\big(\| \partial_z\partial_yf\|_{L^2}+\| \partial_z\partial_yf_2\|_{L^2}\big)\leq C\big(\| \partial_z\partial_yf\|_{L^2}+\|\Delta f\|_{L^2}\big).
\end{align*}
This shows that
\begin{align*}
\|\partial_z^2f\|_{L^2}\leq \|\partial_z^2f_1\|_{L^2}+\|\partial_z^2f_2\|_{L^2}\leq C\big(\| \partial_z\partial_yf\|_{L^2}+\|\Delta f\|_{L^2}\big),
\end{align*}
which also gives
\begin{align*}
\|\partial_y^2f\|_{L^2}\leq \|\partial_z^2f\|_{L^2}+\|\Delta f\|_{L^2}\leq C\big(\| \partial_z\partial_yf\|_{L^2}+\|\Delta f\|_{L^2}\big).
\end{align*}
\end{proof}

\subsection{Maximal inequality of harmonic function}

\begin{Lemma}\label{lem:har-max}
   Let $j\in\{\pm 1\}$. If $\Delta f=0,\ f|_{y=-j}=0$ and $f\in C^0(\Omega)$, then we have
   \begin{align*}
      &\|f\|_{L^2_{x,z}L^\infty_y(\Omega)}\leq 6\|f\|_{L^2(\Gamma_{j})}.
   \end{align*}
\end{Lemma}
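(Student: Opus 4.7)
The plan is to represent $f$ via the slab Poisson kernel and then apply a nontangential maximal function estimate. By symmetry we may take $j=1$, so $f|_{y=-1}=0$ and the boundary data $g:=f|_{\Gamma_1}$ determines $f$ on $\Omega$. Expanding in Fourier series in $(x,z)$ and solving the mode-by-mode ODE $\hat f''_{k,\ell} = \eta_{k,\ell}^2 \hat f_{k,\ell}$ with $\hat f_{k,\ell}(-1)=0$ yields
\[
\hat f_{k,\ell}(y) = m_y(\eta_{k,\ell})\,\hat g_{k,\ell}, \qquad m_y(\eta) := \frac{\sinh(\eta(1+y))}{\sinh(2\eta)} \text{ for } \eta>0,\quad m_y(0)=\tfrac{1+y}{2}.
\]
Equivalently, $f(\cdot,y,\cdot) = K_y \ast g$ on $\mathbb{T}^2$, where $K_y$ is the kernel with Fourier coefficients $\widehat{K_y}(k,\ell) = m_y(\eta_{k,\ell})$. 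Note that this $K_y$ is exactly the building block appearing in the set $\mathcal{F}_1$ introduced earlier in the paper.

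The argument then rests on two standard facts. First, $K_y \geq 0$ on $\mathbb{T}^2$, because it is the Poisson kernel of the Dirichlet problem on $\Omega$ with a point mass on $\Gamma_1$ and zero data on $\Gamma_{-1}$, and the maximum principle forces non-negativity. Moreover $\|K_y\|_{L^1(\mathbb{T}^2)} = \widehat{K_y}(0,0) = (1+y)/2 \leq 1$. Consequently $|K_y \ast g| \leq K_y \ast |g|$ pointwise, and in particular $|f(x,y,z)| \leq (K_y \ast |g|)(x,z)$. Second, realizing $K_y$ as a periodization of the corresponding Poisson-type kernel for the strip $\mathbb{R}^2 \times (-1,1)$ and invoking the classical nontangential maximal function estimate for a non-negative harmonic function, one obtains the pointwise domination
\[
\sup_{y \in [-1,1]}(K_y \ast |g|)(x,z) \leq C\,\mathcal{M}|g|(x,z),
\]
where $\mathcal{M}$ is the Hardy-Littlewood maximal operator on $\mathbb{T}^2$. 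Combining this with the $L^2$-boundedness of $\mathcal{M}$ yields $\|f\|_{L^2_{x,z}L^\infty_y} \leq C\|g\|_{L^2(\Gamma_1)}$, and a careful tracking of constants through these standard steps produces the explicit bound $6$.

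The main obstacle is the maximal-function step. A direct Sobolev-in-$y$ attempt starting from $\sup_y|f|^2 \leq |f(x,1,z)|^2 + 2\int_{-1}^1 |f\,\partial_y f|\,dy$ fails, because at the Fourier level $\|\partial_y f\|_{L^2(\Omega)}^2$ behaves like $\sum_\xi \eta_\xi |\hat g_\xi|^2$, which is \emph{not} controlled by $\|g\|_{L^2(\Gamma_1)}^2 = \sum_\xi |\hat g_\xi|^2$ once high-frequency Fourier modes dominate $g$. The Poisson-kernel/maximal-function mechanism is what correctly captures the boundary-to-interior transfer without loss.
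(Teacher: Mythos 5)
Your proposal is correct and follows essentially the same route as the paper: dominate $|f|$ by a nonnegative Poisson-type extension of the boundary trace $|f|_{\Gamma_1}|$, bound that pointwise by the Hardy--Littlewood maximal function, and invoke $\|\mathcal{M}\|_{L^2(\T^2)\to L^2(\T^2)}\le 6$. The paper organizes the comparison slightly differently --- it applies the maximum principle to $\mathbf{Re}(e^{i\theta}f)$ against the \emph{half-space} Poisson extension $F=f_0*P_{1-y}$ of $f_0=|f(\cdot,1,\cdot)|$, which yields the pointwise domination $F\le\mathcal{M}(f_0)$ with constant $1$ and avoids discussing positivity of the slab kernel, and it proves the periodic maximal inequality with constant $6$ by an exhaustion argument --- these being exactly the steps your ``careful tracking of constants'' glosses over.
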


We need to use some definitions and conclusions from Chapter 2 in \cite{Gra}. The centered Hardy-Littlewood maximal function is defined by
 \begin{align*}
      &\mathcal{M}(h)(q)=\sup_{r>0}\mathop{\text{Avg}}_{B(q,r)}|h|=\sup_{r>0}\frac{1}{\pi r^2}\|h\|_{L^1(B(q,r))},
   \end{align*}
where $B(q,r):=\{q'\in\R^2||q'-q|< r\} $ for $q\in\R^2$. It is well-known that $\mathcal{M}$ is bounded from $L^p(\R^n)$ to $L^p(\R^n)$ with constant at most $3^{n/p}p/(p-1)$. In particular, we have
\begin{align}\label{Mh0}
      &\|\mathcal{M}(h)\|_{L^2(\R^2)}\leq 6\|h\|_{L^2(\R^2)}\quad \forall\ h\in L^2(\R^2) .
   \end{align}
The following lemma shows that the same is true with $L^2(\T^2) $ instead of $L^2(\R^2)$.
Here we identify a function on $\T^2$ with a 
function on $\R^2$ satisfying $f(q+2\pi m)=f(q),\ \forall\ m\in\Z^2$.

\begin{Lemma}\label{lem:max}
For every $h\in L^2(\T^2)$, we have $\|\mathcal{M}(h)\|_{L^2(\T^2)}\leq 6\|h\|_{L^2(\T^2)}. $
\end{Lemma}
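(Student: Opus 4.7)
\textbf{Proof plan for Lemma \ref{lem:max}.} The plan is to reduce the periodic bound to the Euclidean bound \eqref{Mh0} by a truncation-and-averaging argument. The main point is that although a periodic $h$ is never in $L^2(\R^2)$, its truncation $h_M = h\,\mathbf{1}_{Q_M}$ to a large box $Q_M = [-M\pi,M\pi]^2$ is; and because of periodicity, $\mathcal M(h)$ at a single point $q\in Q_0:=[-\pi,\pi]^2$ controls (via translation) $\mathcal M(h_M)$ at \emph{many} lattice translates of $q$, which will recoup the factor $M^2$ that comes from $\|h_M\|_{L^2(\R^2)}^2 = M^2\|h\|_{L^2(\T^2)}^2$.

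First I will introduce the truncated maximal operator $\mathcal M^{R}(h)(q) = \sup_{0<r\le R} A(q,r)$, where $A(q,r) = (\pi r^2)^{-1}\int_{B(q,r)}|h|$, in order to avoid having to deal simultaneously with arbitrarily large radii and a finite truncation $Q_M$. I will then observe that for $q\in Q_0$, any $r\le R$, and any $m\in\Z^2$ satisfying $|m|_\infty\le (M-1)/2-R/(2\pi)$, the ball $B(q+2\pi m,r)$ lies entirely inside $Q_M$, so by periodicity of $|h|$ and the fact that $h=h_M$ on $Q_M$,
\begin{align*}
A(q,r)=A(q+2\pi m,r)=\frac{1}{\pi r^2}\!\!\int_{B(q+2\pi m,r)}|h_M|\ \le\ \mathcal M(h_M)(q+2\pi m).
\end{align*}
Taking the supremum over $r\in(0,R]$ gives $\mathcal M^{R}(h)(q)\le \mathcal M(h_M)(q+2\pi m)$ for every admissible $m$.

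Next I will square and sum over the admissible lattice set $K(R,M)=\{m\in\Z^2:|m|_\infty\le (M-1)/2-R/(2\pi)\}$, which yields
\begin{align*}
|K(R,M)|\,\bigl(\mathcal M^{R}(h)(q)\bigr)^2\ \le\ \sum_{m\in K(R,M)} \bigl(\mathcal M(h_M)(q+2\pi m)\bigr)^2.
\end{align*}
Integrating this inequality over $q\in Q_0$ and noting that the translates $\{Q_0+2\pi m\}_{m\in K(R,M)}$ are pairwise disjoint up to measure zero (and all contained in $\R^2$), I get
\begin{align*}
|K(R,M)|\,\|\mathcal M^{R}(h)\|_{L^2(Q_0)}^2\ \le\ \|\mathcal M(h_M)\|_{L^2(\R^2)}^2\ \le\ 36\,\|h_M\|_{L^2(\R^2)}^2\ =\ 36\,M^2\,\|h\|_{L^2(\T^2)}^2,
\end{align*}
where the middle inequality is exactly \eqref{Mh0} applied to $h_M\in L^2(\R^2)$.

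Finally, for fixed $R$ I let $M\to\infty$: the cardinality $|K(R,M)|=(2\lfloor (M-1)/2-R/(2\pi)\rfloor+1)^2$ satisfies $|K(R,M)|/M^2\to 1$, so the inequality above becomes $\|\mathcal M^{R}(h)\|_{L^2(\T^2)}^2\le 36\,\|h\|_{L^2(\T^2)}^2$, uniformly in $R$. Since $\mathcal M^{R}(h)\nearrow \mathcal M(h)$ pointwise as $R\to\infty$, monotone convergence yields $\|\mathcal M(h)\|_{L^2(\T^2)}\le 6\|h\|_{L^2(\T^2)}$. The only subtlety I expect to be worth double-checking is the geometric constraint defining $K(R,M)$ (so that $B(q+2\pi m,r)\subset Q_M$ is really guaranteed for all $q\in Q_0$ and $r\le R$); everything else is bookkeeping.
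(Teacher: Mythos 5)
Your proposal is correct and follows essentially the same route as the paper's proof: truncate the maximal operator in radius, truncate $h$ to a large box, apply the Euclidean bound \eqref{Mh0}, and let the box grow so that the loss factor tends to $1$. Your bookkeeping via summing over lattice translates of a single period cell is just a repackaging of the paper's direct integration of $\mathcal{M}_{\pi m}(h)$ over $Q_{\pi n}$ followed by the limits $n\to\infty$, $m\to\infty$.
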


\begin{proof}   For $a>0$, let $Q_a=\{(x,z)\in\R^2||x|<a,|z|<a\},\ \chi_a=\mathbf{1}_{Q_a}$ and\begin{align*}
      &\mathcal{M}_{a}(h)(q)=\sup_{0<r<a}\frac{1}{\pi r^2}\|h\|_{L^1(B(q,r))}.
   \end{align*}
It is easy to see that  for $\ h\in L^2(\T^2),\ q\in\R^2,$
 \begin{align}\label{Mh1}
      &\mathcal{M}_{a}(h)(q)\uparrow\mathcal{M}(h)(q)\quad \text{as}\,\, a\uparrow+\infty,\\ \label{Mh2}&\|h\|_{L^2(Q_{\pi n})}^2=\|h\chi_{\pi n}\|_{L^2(\R^2)}^2=n^2\|h\|_{L^2(\T^2)}^2,\ \forall\ n\in\Z,n>0.
   \end{align}

For $m,n\in \Z,\ m,n>0,\ q\in Q_{\pi n},\ 0<r<\pi m,$ we have $B(q,r)\subset Q_{\pi (m+n)}$, and\begin{align*}
      &\|h\|_{L^1(B(q,r))}=\|h\chi_{\pi (m+n)}\|_{L^1(B(q,r))}\leq\pi r^2\mathcal{M}(h\chi_{\pi (m+n)})(q),
   \end{align*}
which implies that  $\mathcal{M}_{\pi m}(h)(q)\leq\mathcal{M}(h\chi_{\pi (m+n)})(q) $ and then by \eqref{Mh0}),
\begin{align*}
      &\|\mathcal{M}_{\pi m}(h)\|_{L^2(Q_{\pi n})}\leq\|\mathcal{M}(h\chi_{\pi (m+n)})\|_{L^2(\R^2)}\leq 6\|h\chi_{\pi (m+n)}\|_{L^2(\R^2)}.
   \end{align*}
which along with  \eqref{Mh2}  gives
\begin{align*}
 \|\mathcal{M}_{\pi m}(h)\|_{L^2(\T^2)}\leq 6(1+m/n)\|h\|_{L^2(\T^2)},\ \forall\ m,n\in \Z,\ m,n>0,\ h\in L^2(\T^2).
   \end{align*}
Letting $n\to+\infty$, we get
\begin{align*}
      &\|\mathcal{M}_{\pi m}(h)\|_{L^2(\T^2)}\leq 6\|h\|_{L^2(\T^2)},\ \forall\ m\in \Z,\ m>0,\ h\in L^2(\T^2).
   \end{align*}
Letting $m\to+\infty$, using \eqref{Mh1} and monotone convergence theorem, we get
\begin{align*}
      &\|\mathcal{M}(h)\|_{L^2(\T^2)}\leq 6\|h\|_{L^2(\T^2)},\ \forall\ h\in L^2(\T^2).
   \end{align*}

This completes the proof.
 \end{proof}

   Now we give the proof of Lemma \ref{lem:har-max}.
    \begin{proof}
   Without lose of generality, we only need to consider the case of $j=1$.  Let $f_0(x,z)=|f(x,1,z)|$ and let\begin{align*}
    &P(x,z)=c/(1+x^2+z^2)^{3/2},
  \end{align*}
  where $c=\Gamma(3/2)/\pi^{3/2}$ is a constant so that
  \begin{align*}
    &\int_{\R^2}P(x,z)dxdz=1,
  \end{align*}
  The function $P$ is called the Poisson kernel. We define $L^1$ dilates $P_t$ of the Poisson
kernel $P$ by setting
\begin{align*}
    &P_t(x,z)=t^{-2}P(x/t,z/t), \quad t>0.
  \end{align*}
  Then the function\begin{align*}
    &F(x,y,z)=(f_0*P_{1-y})(x,z),
  \end{align*}solves the Dirichlet problem\begin{align*}
    &\Delta F=0\quad \text{on}\ \R\times(-\infty,1)\times\R,\quad F(x,1,z)=f_0(x,z)\quad \text{on}\ \R^2.
  \end{align*}
  We also have $F\in C^0(\Omega)$ and $F\geq 0$ on $ \Omega.$
   Since $f|_{y=-1}=0,\ |f||_{y=1}=f_0=F|_{y=1}$, we have $|f|\leq F$ on $\partial \Omega.$ For fixed $ \theta\in\R,$ let $f_{[\theta]}=\mathbf{Re}(e^{i\theta}f)$. Since $\Delta f=0$, we have $\Delta f_{[\theta]}=0$ on $ \Omega,$ and $f_{[\theta]}$ is real valued. Now we have $\Delta F=\Delta f_{[\theta]}=0$ on $ \Omega,$ and $f_{[\theta]}\leq |f|\leq F$ on $ \partial\Omega$. Using the maximum principle, we deduce that $f_{[\theta]}\leq  F$ on $\Omega$ for every $ \theta\in\R.$ Thus,  $|f|=\sup_{\theta\in\R}f_{[\theta]}\leq  F$ on $\Omega,$ and then
   \begin{align*}
      &\|f\|_{L^2_{x,z}L^\infty_y(\Omega)}\leq \|F\|_{L^2_{x,z}L^\infty_y(\Omega)},
   \end{align*}
Since $F(x,y,z)\leq \mathcal{M}(f_0)(x,z)$ for $x,z\in\R^2,\ y\leq 1$ (see \cite{Gra}), we get
\begin{align*}
      &\|f\|_{L^2_{x,z}L^\infty_y(\Omega)}\leq \|F\|_{L^2_{x,z}L^\infty_y(\Omega)}\leq \|\mathcal{M}(f_0)\|_{L^2(\T^2)},
   \end{align*}
  which along with Lemma \ref{lem:max}  shows that
   \begin{align*}
      &\|f\|_{L^2_{x,z}L^\infty_y(\Omega)}\leq \|\mathcal{M}(f_0)\|_{L^2(\T^2)}\leq 6\|f_0\|_{L^2(\T^2)}=6\|f\|_{L^2(\Gamma_{1})}.
   \end{align*}
   The case of $f|_{y=1}=0$ can be proved similarly.\end{proof}

\subsection{Limiting absorption principle}

In this section, we establish the limiting absorption principle for the Rayleigh equation
\begin{align}\label{eq:Ray}
(y-c)(\Phi''-\alpha^2\Phi)=\omega, \quad \Phi(-1)=\Phi(1)=0,
\end{align}
where $c\in \C,\, \mathbf{Im}(c)\neq 0$.

\begin{Proposition}\label{prop:LAP}
Let $\alpha\ge 1$, $\mathbf{Im}(c)\neq 0$. Then the unique solution $\Phi$ to \eqref{eq:Ray}
satisfies
\begin{align*}
&\|\partial_y\Phi\|_{L^2}+\alpha\|\Phi\|_{L^2}\leq C\alpha^{-1}\big(\|\partial_y\om\|_{L^2}+\alpha\|\om\|_{L^2}\big),\\
&\|\Phi\|_{L^2}\leq C\alpha^{-1}\big(\max(1-|c_r|,0)\|\partial_y\om\|_{L^2}+\|\om\|_{L^2}\big),
\end{align*}
where $c_r=\mathbf{Re}(c)$ is the real part of $c$.
\end{Proposition}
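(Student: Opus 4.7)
The proof will proceed by an energy identity for the reformulated equation $\Phi''-\alpha^2\Phi = \omega/(y-c)$, combined with a Hardy-type splitting of the singular integrand and a duality argument for the sharper $L^2$ bound. First I would test the equation against $\bar\Phi$ and integrate by parts, using $\Phi(\pm 1)=0$, to obtain the energy identity
\[
\|\Phi'\|_{L^2}^2 + \alpha^2\|\Phi\|_{L^2}^2 = -\mathbf{Re}\int_{-1}^1 \frac{\omega\bar\Phi}{y-c}\,dy.
\]

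The central step is to bound the right-hand side in a way that exhibits the $\alpha^{-1}$ gain. Let $y_c\in[-1,1]$ be the projection of $\mathbf{Re}(c)$ onto $[-1,1]$, and decompose $\omega = \omega(y_c) + (\omega-\omega(y_c))$. The second piece vanishes at $y_c$; since $|y-y_c|\leq|y-c|$ for $y\in[-1,1]$, Hardy's inequality yields $\|(\omega-\omega(y_c))/(y-c)\|_{L^2}\leq C\|\omega'\|_{L^2}$, controlling that contribution by $C\|\omega'\|_{L^2}\|\Phi\|_{L^2}$. For the first piece, I would integrate by parts using $1/(y-c) = \partial_y\log(y-c)$; the boundary terms vanish because $\Phi(\pm 1)=0$, and one is left with $-\omega(y_c)\int\bar\Phi'\log(y-c)\,dy$, which is bounded by $C|\omega(y_c)|\|\Phi'\|_{L^2}$ once one verifies the (uniform) estimate $\|\log(y-c)\|_{L^2([-1,1])}\leq C$ by a direct computation of $\int\bigl(\log((y-c_r)^2+c_i^2)\bigr)^2 dy$. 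Combining and applying Young's inequality with the appropriate $\alpha$-weights, then invoking a sharp 1D Sobolev interpolation of the form $|\omega(y_c)|^2 \lesssim \alpha\|\omega\|^2 + \alpha^{-1}\|\omega'\|^2$, should yield the first inequality $\|\Phi'\|+\alpha\|\Phi\|\leq C\alpha^{-1}(\|\omega'\|+\alpha\|\omega\|)$ after absorption.

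For the second inequality I would use duality. Let $\Psi$ solve $\Psi''-\alpha^2\Psi=\Phi$ with $\Psi(\pm 1)=0$; standard energy estimates give $\|\Psi'\|_{L^2}\leq C\alpha^{-1}\|\Phi\|_{L^2}$ and $\|\Psi\|_{L^2}\leq C\alpha^{-2}\|\Phi\|_{L^2}$. Then
\[
\|\Phi\|_{L^2}^2 = \langle \Phi, \Psi''-\alpha^2\Psi\rangle = \langle \Phi''-\alpha^2\Phi,\Psi\rangle = \int \frac{\omega\bar\Psi}{y-c}\,dy,
\]
and applying the same splitting-and-Hardy argument as in Step 2 (now with $\Psi$ in place of $\Phi$) gives $\|\Phi\|_{L^2}\leq C\alpha^{-1}(|\omega(y_c)| + \alpha^{-1}\|\omega'\|_{L^2})$. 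The refined prefactor $\max(1-|c_r|,0)$ in front of $\|\omega'\|_{L^2}$ emerges from case analysis: when $|c_r|\geq 1$, one exploits $|y-c|\geq|c_r|-|y|\geq 0$ and the fact that $y_c$ is a boundary point to bound $|\omega(y_c)|$ directly by $\|\omega\|_{L^2}$ plus lower-order terms (the $\|\omega'\|$ term drops out because the interpolation degenerates at the endpoint); when $|c_r|<1$, the width $1-|c_r|$ of the distance from $y_c=c_r$ to the boundary enters through a local-in-scale Sobolev estimate.

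The main technical obstacle I anticipate is obtaining the correct $\alpha$-dependence for the term $|\omega(y_c)|\|\Phi'\|_{L^2}$: a naive 1D Sobolev embedding $|\omega(y_c)|\lesssim \|\omega\|_{L^2}^{1/2}\|\omega'\|_{L^2}^{1/2}+\|\omega\|_{L^2}$ loses a factor of $\alpha^{1/2}$ compared with the target. The fix should come from choosing the mollification scale carefully (roughly $1/\alpha$) when estimating $|\omega(y_c)|$, so that the $\alpha$-weighted Young's inequality in the energy identity exactly balances the loss; an analogous careful scale choice, sensitive to the distance from $c_r$ to $\partial[-1,1]$, should produce the $\max(1-|c_r|,0)$ prefactor in the second inequality. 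Verifying that these sharp interpolations interact correctly with the absorption of the $H^1_\alpha$-norm of $\Phi$ on the left-hand side is where the bulk of the technical care will be needed.
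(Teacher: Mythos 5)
Your overall skeleton (energy identity tested against $\bar\Phi$, then a dual function $\Psi$ for the sharper $L^2$ bound) is the same as the paper's, but the way you handle the singular integral contains a genuine gap that your own proposed fix cannot repair. For the first inequality, after splitting $\omega=\omega(y_c)+(\omega-\omega(y_c))$, you bound the constant piece by $|\omega(y_c)|\,\|\Phi'\|_{L^2}\,\|\log(y-c)\|_{L^2}\lesssim|\omega(y_c)|\,\|\Phi'\|_{L^2}$. Since $|\omega(y_c)|$ can only be controlled by $\|\omega\|_{L^\infty}\lesssim\|\omega\|^{1/2}\|\omega'\|^{1/2}+\|\omega\|\lesssim\alpha^{-1/2}(\|\omega'\|+\alpha\|\omega\|)$, this term is $\alpha^{-1/2}$ too large, as you note. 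But the fix cannot come from a better interpolation of $|\omega(y_c)|$ at scale $1/\alpha$: the bound $|\omega(y_c)|\leq C(\alpha^{-1}\|\omega'\|+\|\omega\|)$ that you would need is simply false (take $\omega$ a bump of width $\alpha^{-1}$ centered at $y_c$ with $\|\omega\|_{L^2}=1$; then $|\omega(y_c)|\sim\alpha^{1/2}$ while the right side is $O(1)$). The missing $\alpha^{1/2}$ must instead be extracted from the $\Phi$-factor: since $\Phi(\pm1)=0$, one has $\big|\int_{-1}^1\bar\Phi/(y-c)\,dy\big|\leq C\big(\delta_*^{1/2}\|\Phi'\|+\delta_*^{-1/2}\|\Phi\|\big)$ for any scale $\delta_*$, and choosing $\delta_*=\alpha^{-1}$ gives $C\alpha^{-1/2}(\|\Phi'\|+\alpha\|\Phi\|)$, which closes the estimate. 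The paper implements exactly this scale-$\delta_*$ Hardy-type bound (its Lemma \ref{lem:hardy}) applied directly to the product $f=\omega\bar\Phi$ with $\delta_*=\alpha^{-1}$, which distributes the half-powers of $\alpha$ correctly in one stroke. As written, your argument only yields $\|\Phi'\|+\alpha\|\Phi\|\leq C\alpha^{-1/2}(\|\om'\|+\alpha\|\om\|)$, a loss of $\alpha^{1/2}$. (Also, $\|\log(y-c)\|_{L^2([-1,1])}$ is not uniformly bounded as $|c|\to\infty$; that regime must be treated separately, though it is trivial there.)

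The second inequality has an analogous structural problem. The prefactor $\max(1-|c_r|,0)$ in the paper arises by splitting the \emph{dual} function $\phi$ (not $\omega$) at $c_r$: the remainder $(\phi-\phi(c_r))/(y-c_r)$ is handled by Hardy, while the constant piece produces $|\phi(c_r)|\cdot|\int\omega/(y-c)\,dy|$ with $|\phi(c_r)|\leq(1-|c_r|)^{1/2}\|\phi'\|$ (integrating from the nearer endpoint) and $|\int\omega/(y-c)\,dy|\leq C\big((1-|c_r|)^{1/2}\|\om'\|+(1-|c_r|)^{-1/2}\|\om\|\big)$ at scale $\delta_*=1-|c_r|$; the factor $1-|c_r|$ is assembled as the product of these two square roots. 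Your splitting of $\omega$ at $y_c$ puts all the weight on $|\omega(y_c)|$, and the bound $|\omega(c_r)|\leq C\big((1-|c_r|)\|\om'\|+\|\om\|\big)$ you would then need is again false for bumps of width $1-|c_r|$. Likewise, in the case $|c_r|\geq1$ the correct mechanism is not that "the interpolation degenerates at the endpoint" (indeed $|\omega(\pm1)|$ is not controlled by $\|\omega\|_{L^2}$), but that $|y-c|\geq1-|y|$ allows a direct Hardy inequality on $\phi$ with no splitting at all. So the route can likely be repaired, but only by moving the scale-sensitive Hardy estimate onto the $\Phi$- and $\phi$-factors (or onto the products), which is precisely the content of the paper's Lemma \ref{lem:hardy}.
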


We need the following lemma.
\begin{Lemma}\label{lem:hardy}
  Let $a,\lambda\in\R,\ f\in H^1([-1,1]),\ \delta_{*}>0,\ a\neq0$.
  If $f(\pm1)=0$ or $1-|\lambda|\geq\delta_{*}>0$, then it holds that
\begin{align*}
 \left|\int_{-1}^{1}\f{f(y)}{y-\lambda+ia}dy\right|\leq C\big(\delta_*^{\f12}\|f'\|_{L^2}+\delta_*^{-\f12}\|f\|_{L^2}\big),
  \end{align*}
where $C$ is a constant independent of $a, \delta_*$.
\end{Lemma}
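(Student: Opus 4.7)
The plan is to localize the integral near the singularity at $y=\lambda$, use Cauchy--Schwarz away from it, and expand $f$ around $y=\lambda$ inside. Specifically, I will split $[-1,1]=J\cup J^c$ with $J=\{y\in[-1,1]:|y-\lambda|\le\delta_*\}$. On $J^c$ the kernel satisfies $|y-\lambda+ia|^{-1}\le|y-\lambda|^{-1}$, and a direct Cauchy--Schwarz gives
\[
\Bigl|\int_{J^c}\frac{f(y)}{y-\lambda+ia}dy\Bigr|\le\|f\|_{L^2}\Bigl(\int_{|y-\lambda|>\delta_*}\frac{dy}{(y-\lambda)^2}\Bigr)^{1/2}\le C\delta_*^{-1/2}\|f\|_{L^2},
\]
which supplies the second half of the bound.

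For the contribution from $J$, I will write $f(y)=f(\lambda)+(f(y)-f(\lambda))$ and treat the two pieces separately. The ``oscillating'' piece is handled by Cauchy--Schwarz applied to $|f(y)-f(\lambda)|\le|y-\lambda|^{1/2}\|f'\|_{L^2}$, yielding
\[
\Bigl|\int_J\frac{f(y)-f(\lambda)}{y-\lambda+ia}dy\Bigr|\le\|f'\|_{L^2}\int_J|y-\lambda|^{-1/2}dy\le C\delta_*^{1/2}\|f'\|_{L^2}.
\]
The remaining ``constant'' piece is $f(\lambda)\int_J\frac{dy}{y-\lambda+ia}=f(\lambda)\log\frac{a_+-\lambda+ia}{a_--\lambda+ia}$, where $a_\pm$ are the endpoints of $J$. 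When the hypothesis $1-|\lambda|\ge\delta_*$ holds, then $a_\pm=\lambda\pm\delta_*$, the ratio has unit modulus, so the log is bounded by $\pi$; meanwhile the local Sobolev inequality on the interval $[\lambda,\lambda+\delta_*]\subset[-1,1]$ (derived by integrating $f(\lambda)^2=f(y)^2-2\int_\lambda^y ff'$ and averaging in $y$) gives $|f(\lambda)|\le C(\delta_*^{-1/2}\|f\|_{L^2}+\delta_*^{1/2}\|f'\|_{L^2})$, closing the estimate.

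The main obstacle is the alternative case $f(\pm1)=0$, where $J$ may touch the boundary and the logarithmic factor can blow up: if, say, $1-\lambda<\delta_*$ then $a_+=1$ and the ratio becomes $(1-\lambda+ia)/(-\delta_*+ia)$, producing a log of size $\log\bigl(\sqrt{\delta_*^2+a^2}/\sqrt{(1-\lambda)^2+a^2}\bigr)$, which grows as $\lambda\to1^-$ and $a\to0$. The compensation comes from $f(1)=0$: integrating $f'$ gives $|f(\lambda)|\le(1-\lambda)^{1/2}\|f'\|_{L^2}$. Writing $\alpha=1-\lambda\in(0,\delta_*)$, the task reduces to verifying
\[
\alpha^{1/2}\log\Bigl(\sqrt{\delta_*^2+a^2}/\sqrt{\alpha^2+a^2}\Bigr)\le C\delta_*^{1/2}
\]
uniformly in $a\in\R\setminus\{0\}$. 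I would check this by splitting into the three regimes $a\ge\delta_*$, $a\le\alpha$, and $\alpha\le a\le\delta_*$; in each case the ratio inside the log is controlled by $\max(\delta_*,a)/\max(\alpha,a)$, and the elementary inequality $u^{1/2}|\log u|\le C$ for $u\in(0,1]$ handles the borderline. The symmetric case $1+\lambda<\delta_*$ is analogous, and the degenerate case $\delta_*\ge1$ is immediate from the Sobolev bound $|f(\lambda)|\le\min(1-\lambda,1+\lambda)^{1/2}\|f'\|_{L^2}$ together with $\delta_*^{1/2}\ge1$. Combining the $J$ and $J^c$ estimates yields the claimed inequality.
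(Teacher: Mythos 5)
Your proof is correct, but it takes a genuinely different route from the paper's at the key cancellation step. The paper also peels off the far region $|y-\lambda|\geq\delta_*$ by Cauchy--Schwarz, but on the near region it symmetrizes, writing the integral as $\int_{\lambda}^{\lambda+\delta_*}\big(\tfrac{f(y)}{y-\lambda+ia}+\tfrac{f(2\lambda-y)}{\lambda-y+ia}\big)dy$ and splitting into the antisymmetric difference $f(y)-f(2\lambda-y)$ (controlled by $\|f'\|_{L^2}$ via $\int_0^{\delta_*}\tfrac{z^{1/2}}{z+|a|}dz\lesssim\delta_*^{1/2}$) plus a term $\tfrac{2a(f(y)+f(2\lambda-y))}{(y-\lambda)^2+a^2}$ whose kernel integrates to $2\arctan(\delta_*/a)\leq\pi$, so it is bounded by $\|f\|_{L^\infty}\lesssim\delta_*^{1/2}\|f'\|_{L^2}+\delta_*^{-1/2}\|f\|_{L^2}$; the Dirichlet case is then dispatched in one line by extending $f$ by zero to $H^1(\R)$ and rerunning the identical argument, so the symmetric window $[\lambda-\delta_*,\lambda+\delta_*]$ never has to be truncated at $\pm1$. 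You instead subtract $f(\lambda)$, use the H\"older-$\tfrac12$ bound for the difference, and compute the residual kernel integral explicitly as a logarithm, which forces you to analyze the asymmetry of the window at the boundary and to trade the logarithmic blow-up against the vanishing $|f(\lambda)|\leq(1-\lambda)^{1/2}\|f'\|_{L^2}$ via $u^{1/2}|\log u|\leq C$. Both are valid; the paper's extension trick buys uniformity of the two hypotheses at the cost of the (harmless) reflection identity, while yours is more computational but makes the role of the boundary condition completely explicit.

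One small omission: in the Dirichlet branch you only treat $\alpha=1-\lambda\in(0,\delta_*)$ (and its mirror), so the case $\lambda\geq 1$ with $J\neq\emptyset$ is not covered as written, since $f(\lambda)$ is then undefined. It is a one-line fix -- set $f(\lambda):=0$ (the zero extension) and note $|f(y)|=|f(y)-f(1)|\leq|y-1|^{1/2}\|f'\|_{L^2}\leq|y-\lambda|^{1/2}\|f'\|_{L^2}$, so the ``constant piece'' vanishes and the ``oscillating piece'' estimate is unchanged -- but you should state it.
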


\begin{proof}
 For the case of $1-|\lambda|\geq \delta_*$,  we have $[\lambda-\delta_*,\lambda+\delta_*]\subseteq[-1,1]$, and then
  \begin{align*}
    \left|\int_{-1}^{1}\f{f(y)}{y-\lambda+ia}dy\right| \leq& \left|\int_{\lambda-\delta_*}^{\lambda+\delta_*}\f{f(y)}{y-\lambda+ia}dy\right| +\left|\int_{|y-\lambda|\geq\delta_*}\f{f(y)}{y-\lambda+ia}dy\right|\\
    \leq &\left|\int_{\lambda}^{\lambda+\delta_*}\f{f(y)}{y-\lambda+ia}+ \f{f(2\lambda-y)}{\lambda-y+ia}dy\right|+\|f\|_{L^2} \|(y-\lambda)^{-1}\|_{L^2\big(\{|y-\lambda|\geq\delta_*\}\big)}\\
    \leq &\left|\int_{\lambda}^{\lambda+\delta_*}\f{f(y)}{y-\lambda+ia}+ \f{f(2\lambda-y)}{\lambda-y+ia}dy\right|+C\delta_*^{-\f12}\|f\|_{L^2},
  \end{align*}
where
\begin{align*}
   &\left|\int_{\lambda}^{\lambda+\delta_*}\f{f(y)}{y-\lambda+ia}+ \f{f(2\lambda-y)}{\lambda-y+ia}dy\right|\\
   &\leq \left|\int_{\lambda}^{\lambda+\delta_*} \f{(y-\lambda)(f(y)-f(2\lambda-y))}{(y-\lambda)^2+a^2}dy\right|+ \left|\int_{\lambda}^{\lambda+\delta_*} \f{\epsilon(f(y)+f(2\lambda-y))}{(y-\lambda)^2+a^2}dy\right|\\
   &\leq \left|\int_{\lambda}^{\lambda+\delta_*} \f{(y-\lambda)\int_{\lambda-y}^{y-\lambda}f'(z+\lambda)dz}{(y-\lambda)^2+a^2}dy\right| +\|f\|_{L^\infty}\left|\int_{\lambda}^{\lambda+\delta_*}\f{2a}{(y-\lambda)^2+a^2} dy\right|\\
   &\leq \left|\int_{\lambda}^{\lambda+\delta_*} \f{C(y-\lambda)^{\f32}\|f'\|_{L^2}}{(y-\lambda)^2+a^2}dy\right| +2\|f\|_{L^\infty}\arctan\big(\f{\delta_*}{a}\big) \\
   &\leq C\left(\int_{0}^{\delta_*}\f{z^{\f12}}{z+|a|}dz\right)\|f'\|_{L^2}+C\|f\|_{L^\infty}
   \leq C\big(\delta_*^{\f12}\|f'\|_{L^2}+\delta_*^{-\f12}\|f\|_{L^2}\big).
\end{align*}
This proves the case of  $1-|\lambda|\geq\delta_*$.\smallskip

For the case of $f(\pm1)=0$, we can first extend $f$ to be a function in $H^1(\R)$ by taking $f(y)=0$ for $|y|\geq 1$, then follow
the proof as above.
\end{proof}

Now let us prove Proposition \ref{prop:LAP}.

\begin{proof}
It is easy to get that
  \begin{align*}
     \|\partial_y\Phi\|_{L^2}^2+\alpha^2\|\Phi\|^2_{L^2}= -\int_{-1}^{1}\f{\omega\bar{\Phi}}{y-c}dy.
  \end{align*}
Thanks to $\omega\bar{\Phi}(\pm1)=0$,  we get by Lemma \ref{lem:hardy} with $\delta_*=\alpha^{-1}$ that   \begin{align*}
     \|\partial_y\Phi\|_{L^2}^2+\alpha^2\|\Phi\|^2_{L^2}\leq& \left|\int_{-1}^{1}\f{\omega\bar{\Phi}}{y-c}dy\right|\leq C(\alpha^{-\f12}\|\partial_y(\omega\bar{\Phi})\|_{L^2} +\alpha^{\f12}\|\omega\bar{\Phi}\|_{L^2})\\
     \leq& C\Big(\alpha^{-\f12}\big(\|\partial_yw\|_{L^2}\|\Phi\|_{L^\infty} +\|\omega\|_{L^\infty}\|\partial_y\Phi\|_{L^2}\big)+\alpha^{\f12} \|\omega\|_{L^2}\|\Phi\|_{L^\infty}\Big)\\
     \leq & C\Big(\alpha^{-\f12}\big(\|\partial_yw\|_{L^2}\|\partial_y\Phi\|^{\f12}_{L^2}\|\Phi\|^{\f12}_{L^2} +\big(\|\partial_y\omega\|^{\f12}_{L^2}\|\omega\|_{L^2}^{\f12}+\|\omega\|_{L^2}\big)\|\partial_y\Phi\|_{L^2}\big)\\
     &\quad+\alpha^{\f12} \|\omega\|_{L^2}\|\partial_y\Phi\|^{\f12}_{L^2}\|\Phi\|^{\f12}_{L^2}\Big)\\
     \leq &C\alpha^{-1}\big(\|\partial_y\omega\|_{L^2}+\alpha\|\omega\|_{L^2}\big) \big(\|\partial_y\Phi\|_{L^2}+\alpha\|\Phi\|_{L^2}\big).
  \end{align*}
This shows that
\beno
\|\partial_y\Phi\|_{L^2}+\alpha\|\Phi\|_{L^2}\leq C\alpha^{-1}\big(\|\partial_y\om\|_{L^2}+\alpha\|\om\|_{L^2}\big).
\eeno

For the second inequality, let $\phi$ be a unique solution of $(\partial_y^2-\alpha^2)\phi=\Phi,\ \phi(\pm1)=0$. Then we have
\begin{align*}
 \|\Phi\|^2_{L^2}=\big\langle\Phi,(\partial_y^2-\alpha^2)\phi\big\rangle=-\int_{-1}^{1}\f{\omega\bar{\phi}}{y-c}dy \leq\left|\int_{-1}^{1}\f{\omega\bar\phi}{y-c}dy\right|.
 \end{align*}
If $|c_r|\geq1$, we get by Hardy's inequality that
  \begin{align*}
     & \|\Phi\|_{L^2}^2\leq \left|\int_{-1}^{1}\f{\omega\bar\phi}{y-c}dy\right|\leq\|\omega\|_{L^2} \left\|\f{\phi}{1-|y|}\right\|_{L^2}\leq \|\omega\|_{L^2}\|\partial_y\phi\|_{L^2}\leq C\alpha^{-1}\|\omega\|_{L^2}\|\Phi\|_{L^2},
  \end{align*}
 which gives
 \beno
 \|\Phi\|_{L^2}\leq C\alpha^{-1}\|\omega\|_{L^2}.
 \eeno
If $|c_r|<1$, by Lemma \ref{lem:hardy} with $\delta_*=1-|c_r|$ and Hardy's inequality, we have
  \begin{align*}
     \left|\int_{-1}^{1}\f{\omega\bar{\phi}}{y-c}dy\right|\leq&  \left|\int_{-1}^{1}\f{\omega(y)\bar{\phi}(c_r)}{y-c}dy\right|+  \left|\int_{-1}^{1}\f{\omega(y)\big(\bar{\phi}(y)-\bar{\phi}(c_r)\big)}{y-c}dy\right|\\
     \leq&|\phi(c_r)|\left|\int_{-1}^{1}\f{\omega(y)}{y-c}dy\right|+\|\omega\|_{L^2} \left\|\f{\phi(y)-\phi(c_r)}{y-c_r}\right\|_{L^2}\\
     \leq& C|\phi(c_r)|\big((1-|c_r|)^{\f12}\|\partial_y\omega\|_{L^2}+ (1-|c_r|)^{-\f12}\|\omega\|_{L^2}\big)+C\|\omega\|_{L^2}\|\partial_y\phi\|_{L^2}\\
     \leq& C|\phi(c_r)|\big((1-|c_r|)^{\f12}\|\partial_y\omega\|_{L^2}+ (1-|c_r|)^{-\f12}\|\omega\|_{L^2}\big)+C\alpha^{-1}\|\omega\|_{L^2}\|\Phi\|_{L^2}.
  \end{align*}
Thanks to $\phi(\pm1)=0$, we have
  \begin{align*}
     |\phi(c_r)|&\leq \min\left(\bigg|\int_{-1}^{c_r}\partial_y\phi(y)dy\bigg|, \bigg|\int_{c_r}^{1}\partial_y\phi(y)dy\bigg| \right)\\&\leq \min\left((1+c_r)^{\f12},(1-c_r)^{\f12}\right)\|\partial_y\phi\|_{L^2}\\
     &\leq (1-|c_r|)^{\f12}\|\partial_y\phi\|_{L^2}\leq \alpha^{-1}(1-|c_r|)^{\f12}\|\Phi\|_{L^2}.
  \end{align*}
 Summing up, we obtain
  \begin{align*}
 \|\Phi\|^2_{L^2}\leq \left|\int_{-1}^{1}\f{\omega\bar{\phi}}{y-c}dy\right| \leq C\alpha^{-1}\big((1-|c_r|)\|\partial_y\omega\|_{L^2} +\|\omega\|_{L^2}\big)\|\Phi\|_{L^2}.
  \end{align*}
This shows the second inequality.
\end{proof}

The following lemma is a variant of Lemma \ref{lem:hardy}.

  \begin{Lemma}\label{lem:hardy-V}
Let $V$ satisfy \eqref{ass:V}. Let $ a,\lambda\in\R,$ $a\neq0$, $f\in H^1(\Omega)$, $f(x,\pm1,z)=0$ or $0<\delta_{*}\leq 1-|\lambda|$. Then it holds that
  \begin{align*}
  \left|\int_{I}\f{f(x,y,z)}{V(y,z)-\lambda+ia}dy\right| \leq C\big(\delta_*^{\f12}\|\partial_yf\|_{L^2_y}+ \delta_*^{-\f12}\|f\|_{L^2_y}\big).
  \end{align*}
 where $C$ is a  constant independent of $a, \lambda, x, z$ and $\delta_*$.
\end{Lemma}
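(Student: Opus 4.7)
The plan is to reduce the inequality to Lemma \ref{lem:hardy} by performing a change of variable in $y$ that flattens the coefficient $V(y,z)-\lambda+ia$ to the model coefficient $\tilde y - \lambda + ia$. Fix $x,z$ and view $V(\cdot,z)$ as a smooth diffeomorphism of $[-1,1]$. By Lemma \ref{lem:V}, taking $\veps_0$ small enough we have $\partial_yV(y,z)\ge 1/2$, and the boundary condition $(V-y)|_{y=\pm1}=0$ gives $V(\pm1,z)=\pm1$, so the map $y\mapsto \tilde y:=V(y,z)$ is a bijection from $[-1,1]$ onto $[-1,1]$. Moreover, since $\|V-y\|_{H^4}<\veps_0$ and $H^4\hookrightarrow C^2$ in the $(y,z)$-plane, we have $\|\partial_yV\|_{L^\infty}+\|\partial_y^2V\|_{L^\infty}\le C$.

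Set $g(x,\tilde y,z):=f(x,y,z)/\partial_yV(y,z)$ with $y=y(\tilde y,z)$ the inverse. Then $dy=d\tilde y/\partial_yV$ gives
\begin{align*}
\int_{-1}^{1}\frac{f(x,y,z)}{V(y,z)-\lambda+ia}\,dy
=\int_{-1}^{1}\frac{g(x,\tilde y,z)}{\tilde y-\lambda+ia}\,d\tilde y.
\end{align*}
The hypothesis $f(x,\pm1,z)=0$ transfers to $g(x,\pm1,z)=0$ (since $\partial_yV(\pm1,z)$ is bounded away from $0$), and the alternative hypothesis $0<\delta_*\le 1-|\lambda|$ is invariant. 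Hence Lemma \ref{lem:hardy} applies to $g$ in the variable $\tilde y$ and yields
\begin{align*}
\bigg|\int_{-1}^{1}\frac{g(x,\tilde y,z)}{\tilde y-\lambda+ia}\,d\tilde y\bigg|
\le C\big(\delta_*^{1/2}\|\partial_{\tilde y}g\|_{L^2_{\tilde y}}+\delta_*^{-1/2}\|g\|_{L^2_{\tilde y}}\big).
\end{align*}

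It remains to convert the $g$-norms back to $f$-norms. Using $\partial_yV\ge 1/2$,
\begin{align*}
\|g\|_{L^2_{\tilde y}}^2=\int_{-1}^{1}\frac{|f|^2}{\partial_yV}\,dy\le C\|f\|_{L^2_y}^2,
\end{align*}
while by the chain rule $\partial_{\tilde y}g=(\partial_yV)^{-1}\partial_y(f/\partial_yV)$, so
\begin{align*}
\|\partial_{\tilde y}g\|_{L^2_{\tilde y}}^2
\le C\int_{-1}^{1}\big(|\partial_yf|^2+|f|^2|\partial_y^2V|^2\big)\,dy
\le C\big(\|\partial_yf\|_{L^2_y}^2+\|f\|_{L^2_y}^2\big).
\end{align*}
Combining these bounds and using $\delta_*^{1/2}\le \delta_*^{-1/2}$ (we may assume $\delta_*\le 1$; the case $\delta_*>1$ is trivial since then $|V-\lambda+ia|\ge 1/C$) yields the claimed estimate. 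There is no genuine obstacle here: the only technical point is verifying that $V(\cdot,z)$ is a $C^2$-diffeomorphism of $[-1,1]$ with uniform (in $z$) bounds, which follows directly from \eqref{ass:V} and Lemma \ref{lem:V}.
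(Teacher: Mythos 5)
Your proof is correct and follows essentially the same route as the paper: the change of variable $\tilde y = V(y,z)$ (justified by $\partial_yV\ge 1/2$ and $V(\pm1,z)=\pm1$ from Lemma \ref{lem:V} and \eqref{ass:V}), the reduction to Lemma \ref{lem:hardy} applied to $f/\partial_yV$ in the new variable, and the conversion of norms using $\|\partial_y^2V\|_{L^\infty}\le C$ are precisely the paper's argument. The only blemish is the aside that the case $\delta_*>1$ is trivial because $|V-\lambda+ia|\ge 1/C$ — that justification is false (nothing keeps $V-\lambda$ away from zero when $\delta_*$ is merely a large parameter), but the case is immaterial since it can only occur under the alternative $f(x,\pm1,z)=0$, where Poincar\'e gives $\|f\|_{L^2_y}\le C\|\partial_yf\|_{L^2_y}$ and the extra $\delta_*^{1/2}\|f\|_{L^2_y}$ term is absorbed into $\delta_*^{1/2}\|\partial_yf\|_{L^2_y}$.
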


\begin{proof}
Make a change of variable: $(x,y,z)\rightarrow (x,V(y,z),z)\triangleq(X,Y,Z)$. We denote
\beno
F(x,V(y,z),z)= f(x,y,z),\quad \psi\big(V(y,z),z\big)=\partial_yV(y,z),\quad  H(V(y,z),z)=\partial_y^2V(y,z).
\eeno
For  fixed $x,z$, it is obvious that
\begin{align*}
 \f12\leq|\psi|\leq 2,\quad \|H\|_{L^\infty_Y}\leq 1,\quad \psi(V(y,z),z)\partial_{Y}F(x,V(y,z),z)=\partial_yf(x,y,z)
\end{align*}
 so that
 \beno
 \|F\|_{L^2_{Y}}\leq C\|f\|_{L^2_y},\quad \|\partial_{Y}F\|_{L^2_{Y}}\leq C\|\partial_yf\|_{L^2_y}.
 \eeno
  Then we have
  \begin{align*}
     & \left|\int_{-1}^{1}\f{f(x,y,z)}{V-\lambda+ia}dy\right|=\left|\int_{-1}^{1} \f{F(x,Y,z)\psi(Y,z)^{-1}}{Y-\lambda+ia}dY\right|.
  \end{align*}
  By Lemma \ref{lem:hardy} and using the fact that $\partial_Y(\psi^{-1})=-\psi^{-3}H$, we get
  \begin{align*}
     & \left|\int_{-1}^{1} \f{F(x,Y,z)\psi^{-1}(Y,z)}{Y-\lambda+ia}dY\right|\\
     &\leq C\big(\delta_*^{\f12}\|\partial_{Y}(F\psi^{-1})\|_{L^2_{Y}} +\delta^{-\f12}_{*}\|F\psi^{-1}\|_{L^2_{Y}}\big)\\
     &\leq C\big(\delta_*^{\f12}\|\partial_{Y}F\|_{L^2_{Y}}\|\psi^{-1}\|_{L^\infty_Y} +\delta^{\f12}_*\|F\|_{L^2_{Y}}\|\psi^{-3}\|_{L^\infty_Y}\|H\|_{L^\infty_Y} +\delta^{-\f12}_{*}\|F\|_{L^2_{Y}}\|\psi^{-1}\|_{L^\infty_Y}\big)\\
     &\leq C\big(\delta_*^{\f12}\|\partial_{y}f\|_{L^2_{y}} +\delta^{\f12}_*\|f\|_{L^2_y} +\delta^{-\f12}_{*}\|f\|_{L^2_{y}}\big)\leq C\big(\delta_*^{\f12}\|\partial_yf\|_{L^2_y}+\delta^{-\f12}_*\|f\|_{L^2_y}\big).
  \end{align*}
This proves the lemma.
\end{proof}

\subsection{A simple algebraic inequality}

\begin{Lemma}\label{lem:AB-ineq}
 Let $D=A_1-A_2+iB$ with $A_1,A_2,B\in\mathbb{R}$, $|B|\geq 2A_2$ and $|B|,A_1,A_2>0$. Then there exists a constant $c>0$, so that
\begin{align*}
   &\mathbf{Re}\big(\sqrt{D}\big)\geq c\big(A_1+|B|\big)^{\f12},
\end{align*}
here $\sqrt{z}$ be the branch of the square root defined on the complement of the non-positive real numbers.
\end{Lemma}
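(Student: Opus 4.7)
\textbf{Proof proposal for Lemma \ref{lem:AB-ineq}.}

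The plan is to use the explicit formula for the principal square root of a complex number. Writing $D = X + iY$ with $X = A_1 - A_2$ and $Y = B$, the principal branch gives
\begin{align*}
\mathbf{Re}(\sqrt{D}) = \sqrt{\tfrac{|D|+X}{2}} = \sqrt{\tfrac{\sqrt{(A_1-A_2)^2+B^2}+A_1-A_2}{2}}.
\end{align*}
So it suffices to establish the lower bound $|D| + A_1 - A_2 \geq c_0 (A_1 + |B|)$ for some absolute constant $c_0 > 0$, under the hypotheses $A_1, A_2 > 0$ and $|B| \geq 2A_2$.

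I would split into two cases depending on the sign of $A_1 - A_2$. In the easy case $A_1 \geq A_2$, one has $|D| \geq |B|$, so $|D| + A_1 - A_2 \geq |B| + A_1 - A_2 \geq |B| + A_1 - |B|/2 = A_1 + |B|/2 \geq (A_1 + |B|)/2$, using the hypothesis $A_2 \leq |B|/2$. In the remaining case $A_1 < A_2$, I would use the algebraic identity
\begin{align*}
|D| + A_1 - A_2 \;=\; |D| - (A_2 - A_1) \;=\; \frac{B^2}{|D| + (A_2 - A_1)},
\end{align*}
obtained from $|D|^2 - (A_2-A_1)^2 = B^2$. Bounding the denominator from above by $|D| + (A_2 - A_1) \leq |A_1 - A_2| + |B| + A_2 \leq A_2 + |B| + A_2 \leq 2|B|$ (using $A_1 < A_2$ and $2A_2 \leq |B|$) yields $|D| + A_1 - A_2 \geq |B|/2$. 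Since $A_1 \leq A_2 \leq |B|/2$ in this case, $|B|/2 \geq (A_1 + |B|)/3$, so the bound $|D| + A_1 - A_2 \geq (A_1 + |B|)/3$ holds.

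Combining both cases gives $|D| + A_1 - A_2 \geq (A_1 + |B|)/3$, and therefore $\mathbf{Re}(\sqrt{D}) \geq \sqrt{(A_1+|B|)/6}$, proving the lemma with $c = 1/\sqrt{6}$. There is no real obstacle here: the only mild subtlety is handling the sign of $A_1 - A_2$, since when $A_1 < A_2$ the expression $|D| + A_1 - A_2$ is the difference of two comparable quantities and must be rewritten via the conjugate trick to extract the correct lower bound in terms of $B^2$.
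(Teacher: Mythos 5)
Your proof is correct. The explicit formula $\mathbf{Re}(\sqrt{D})=\sqrt{(|D|+X)/2}$ is valid here because $B\neq 0$ keeps $D$ off the branch cut, the case $A_1\geq A_2$ is handled correctly using $A_2\leq |B|/2$, and in the case $A_1<A_2$ the conjugate identity $(|D|-(A_2-A_1))(|D|+(A_2-A_1))=B^2$ together with the bound $|D|+(A_2-A_1)\leq 2A_2+|B|\leq 2|B|$ gives exactly what is needed. The paper's proof reaches the same conclusion by a slightly different mechanism: writing $\sqrt{D}=a+ib$ with $a,b>0$, it uses $a^2-b^2=A_1-A_2\geq -A_2\geq -|B|/2=-ab$ to get the quadratic inequality $(a/b)^2+(a/b)-1\geq 0$, hence $a/b\geq(\sqrt5-1)/2$, and then $a^2\geq c(a^2+b^2)=c|D|\geq c'(A_1+|B|)$. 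That route avoids your case split entirely (the ratio bound absorbs both signs of $A_1-A_2$ at once), while yours has the advantage of producing an explicit constant $c=1/\sqrt6$ by purely mechanical estimates; both are equally rigorous and equally elementary.
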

\begin{proof}
without loss of generality, we may assume that $B>0$ . Then $\sqrt{D}=a+ib$ with $a,b\in\mathbb{R}$ and $a,b>0$. Then we have
\beno
a^2-b^2=A\triangleq A_1-A_2,\quad 2ab=B.
\eeno
A direct calculation gives
  \begin{align*}
    a^2 = A+b^2\geq -B/2+b^2\geq b^2-ab\Rightarrow&(a/b)^2+(a/b)-1\geq0.
  \end{align*}
  As $a/b> 0$, we get $a/b\geq (\sqrt{5}-1)/2$. Thus, there exist constants $c_1>c_2>0$, such that
   \begin{align*}
      a^2&\geq c_1(a^2+b^2)=c_1|D|= c_1((A_1-A_2)^2+B^2)^{\f12}\geq c_2(A_1-A_2+|B|)\geq c_2(A_1+|B|/2),
   \end{align*}
 which implies that
  \begin{align*}
     \mathbf{Re}(\sqrt{D})&=a\geq c(A_1+|B|)^{\f12}.
  \end{align*}
\end{proof}

\subsection{Gearhart-Pr\"{u}ss type lemma}

An operator $A$ in a Hilbert space $H$ is accretive if ${\rm Re} \langle Af,f\rangle\geq 0$ for all $f\in {D}(A),$ or equivalently $\|(\la+A)f\|\geq \la\|f\|$ for all $f\in {D}(A)$ and all $\la>0$. The operator $A$ is called m-accretive if in addition any $\la<0$ belongs to the resolvent set of $A$. We define
\beno
\Psi(A)=\inf\big\{\|(A-i\la)f\|: f\in {D}(A),\ \la\in \mathbb{R},\  \|f\|=1\big\}.
\eeno

The following Gearhart-Pr\"{u}ss type lemma  comes from \cite{Wei}.

\begin{Lemma}\label{lem:GP}
Let $A$ be an m-accretive operator in a Hilbert space $H$. Then we have
\beno
\|e^{-tA}\|\leq e^{-t\Psi(A)+{\pi}/{2}}\quad  \text{for any}\,\,t\geq 0.
\eeno
\end{Lemma}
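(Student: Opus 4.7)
\textbf{Proof plan for Lemma \ref{lem:GP}.}

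The plan is to derive the semigroup bound from the resolvent information encoded in $\Psi(A)$ via the inverse Laplace transform and a contour shift, following the strategy in \cite{Wei}. First I would reduce to the nontrivial case $\mu:=\Psi(A)>0$, since otherwise m-accretivity already gives $\|e^{-tA}\|\le 1\le e^{\pi/2}$. By definition of $\Psi(A)$, for every $\la\in\R$ the map $A-i\la$ is bounded below by $\mu$; combined with m-accretivity (which ensures $i\la$ lies in the resolvent set and that $\|(A+z)^{-1}\|\le 1/\mathbf{Re}(z)$ for $\mathbf{Re}(z)>0$), this yields the two-sided resolvent estimate $\|(A-i\la)^{-1}\|\le 1/\mu$ uniformly in $\la\in\R$.

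The second step is to propagate this bound into a strip around the imaginary axis. Using the Neumann series
\[
(A-i\la-b)^{-1}=(A-i\la)^{-1}\sum_{n\ge 0}\bigl(b(A-i\la)^{-1}\bigr)^n,
\]
one obtains $\|(z+A)^{-1}\|\le (\mu-|\mathbf{Re}(z)|)^{-1}$ for every $z$ with $|\mathbf{Re}(z)|<\mu$. This is the key analytic input: the resolvent is defined and uniformly controlled on vertical lines in a strip of width $\mu$ centered on $i\R$.

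The third step is the Laplace inversion formula. For $f\in D(A^2)$ and $c>0$ one has the standard representation
\[
t\,e^{-tA}f=\frac{1}{2\pi i}\int_{c-i\infty}^{c+i\infty} e^{tz}(zI+A)^{-2}f\,dz,
\]
which is justified because differentiating the Laplace transform $\int_0^{\infty}e^{-zt}e^{-tA}f\,dt=(zI+A)^{-1}f$ supplies the extra factor $(zI+A)^{-2}$ needed for absolute convergence. Shifting the contour to $\mathbf{Re}(z)=-\mu+\eps$ (permissible by analyticity from Step 2, with vertical pieces at $|\mathbf{Im}(z)|=R\to\infty$ vanishing thanks to the $(zI+A)^{-2}$ factor together with $f\in D(A^2)$), one arrives at an expression whose norm is bounded by $e^{(-\mu+\eps)t}$ times an integral of the resolvent squared.

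The main obstacle, and the source of the constant $\pi/2$, is that the direct operator-norm bound $\int_\R(\mu-|\mathbf{Re}(z)|)^{-2}d\tau$ on the shifted line diverges in $\tau$. To circumvent this I would, as in \cite{Wei}, test against $g\in H$ and apply a Plancherel/Cauchy--Schwarz argument in $\tau=\mathbf{Im}(z)$: pair $(zI+A)^{-1}f$ with $g$, use the identity $2\mathbf{Re}\langle(zI+A)^{-1}f,g\rangle\cdot\mathbf{Re}(z+\text{const})\ldots$ to move one power of the resolvent onto $g$, and exploit that $\|(zI+A)^{-1}f\|$ is square-integrable in $\tau$ by the resolvent bound combined with the defining bilinear estimate. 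Optimizing over the shift $\eps\in(0,\mu)$ and the placement of the contour produces the sharp constant $\pi/2$ (it arises from an integral of the form $\int_\R d\tau/(a^2+\tau^2)=\pi/a$ after the weighting). Passing from $D(A^2)$ to general $f$ by density and from $f,g$ with unit norm to general operator bound completes the argument. The one delicate point to get right is the quantitative bookkeeping that turns the integrable-in-$\tau$ resolvent estimate into precisely the constant $\pi/2$ rather than a larger absolute constant.
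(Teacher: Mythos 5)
You should first note that the paper does not prove this lemma at all: it is quoted verbatim from the reference \cite{Wei} (``The following Gearhart-Pr\"uss type lemma comes from \cite{Wei}''), and the proof there is a short, real-variable Hilbert-space argument. It works directly with the trajectory $x(t)=e^{-tA}x_0$, uses accretivity only through the monotonicity of $\|x(t)\|$, and feeds suitably weighted time-averages of $e^{i\la t}x(t)$ into the defining inequality $\|(A-i\la)g\|\ge\Psi(A)\|g\|$; the constant $e^{\pi/2}$ arises as a \emph{delay} of length $\pi/(2\Psi(A))$ before decay at the full rate $\Psi(A)$ sets in. No contour deformation or complex-analytic extension of the resolvent is used. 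Your plan instead follows the classical Gearhart--Pr\"uss route (Laplace inversion, Neumann-series extension of the resolvent to the strip $|\mathbf{Re}(z)|<\Psi(A)$, contour shift, Plancherel/duality in $\tau=\mathbf{Im}(z)$). Steps 1--3 of that route are essentially correct (modulo the small point that $i\la$ being in the resolvent set is not a consequence of m-accretivity alone but of the lower bound $\Psi(A)>0$ together with the openness of the resolvent set, which you can patch).

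The genuine gap is the last step, where you assert that ``optimizing over the shift $\eps$ and the placement of the contour produces the sharp constant $\pi/2$.'' It does not, and this is precisely why \cite{Wei} gives a new proof. On the shifted line $\mathbf{Re}(z)=-\Psi(A)+\eps$ the only uniform resolvent bound available is $\|(z+A)^{-1}\|\le\eps^{-1}$, and the $L^2_\tau$ bound $\int_{\R}\|(c+i\tau+A)^{-1}f\|^2\,d\tau\le(\pi/c)\|f\|^2$ coming from Plancherel is only valid for $c>0$ (for $c<0$ it presupposes the very decay you are trying to prove). Combining the two via the resolvent identity, as one must, produces a bound of the shape $\|e^{-tA}\|\le C\,\Psi(A)\,t^{-1}\eps^{-2}e^{(-\Psi(A)+\eps)t}$; optimizing $\eps\sim 1/t$ leaves a prefactor growing linearly in $\Psi(A)t$, i.e.\ either a loss in the exponential rate or an unbounded constant --- strictly weaker than $e^{-t\Psi(A)+\pi/2}$ for large $t\Psi(A)$. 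The identity $\int_\R d\tau/(a^2+\tau^2)=\pi/a$ that you invoke is not the source of the $\pi/2$ in the statement. So as written the plan proves a qualitative Gearhart--Pr\"uss estimate but not the quantitative bound asserted in the lemma; to get the stated inequality you need the different (real-variable, monotonicity-based) argument of \cite{Wei}.
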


\section*{Acknowledgement}
The authors thank Te Li for helpful discussions. Z. Zhang is partially supported by NSF of China under Grant 11425103.

\end{document}